\documentclass[10pt,reqno]{amsart}
\usepackage{amssymb,mathrsfs,graphicx,subfigure, enumerate}
\usepackage{amsmath,amsfonts,amssymb,amscd,amsthm,bbm}
\usepackage{extpfeil}
\usepackage{xcolor}
\usepackage{soul}

\usepackage{graphicx,colortbl}

\usepackage{hyperref}

\usepackage{float}
\usepackage{epsfig}
\usepackage{caption}
\usepackage{bm}
\graphicspath{{Figure/}}
\allowdisplaybreaks
\usepackage{comment}
\usepackage[margin=0.853in]{geometry}
\usepackage{textcase}

 \title[Hydrodynamic limit of the Boltzmann equation]
{Hydrodynamic Limit of the Boltzmann Equation toward Generic Riemann Solutions with Shocks\\[2.1em]
{\normalfont\mdseries\itshape\fontsize{10}{10}\selectfont
\NoCaseChange{In honor of Professor Costas Dafermos's 85th birthday}}}

\author[Choe]{Mingi Choe}
\address[Mingi Choe]{\newline Department of Mathematical Sciences \newline Korea Advanced Institute of Science and Technology, Daejeon  34141, Republic of Korea}
\email{mgchoe@kaist.ac.kr}

\author[Kang]{Moon-jin Kang}
\address[Moon-jin Kang]{\newline Department of Mathematical Sciences \newline Korea Advanced Institute of Science and Technology, Daejeon  34141, Republic of Korea}
\email[Moonjin Kang]{moonjinkang@kaist.ac.kr}

\author[Kim]{Chanwoo Kim}
\address[Chanwoo Kim]{\newline Department of Mathematics, \newline University of Wisconsin-Madison, Madison, WI, 53717, USA}
\email[Chanwoo Kim]{chanwookim.math@gmail.com; chanwoo.kim@wisc.edu}

\newtheorem{theorem}{Theorem}[section]
\newtheorem{lemma}{Lemma}[section]
\newtheorem{corollary}{Corollary}[section]
\newtheorem{proposition}{Proposition}[section]
\newtheorem{remark}{Remark}[section]

\let\hide\iffalse
\let\unhide\fi

\newcommand{\R}{\mathbb{R}}

\newcommand{\norm}[1]{\bigl\lVert#1\bigr\rVert}

\newcommand{\beq}{\begin{equation}}
\newcommand{\eeq}{\end{equation}}

\newcommand{\eps}{\varepsilon }

\newcommand{\myparas}[1]{\left(#1\right)}
\newcommand{\myparam}[1]{\left\{#1\right\}}
\newcommand{\myparab}[1]{\left[#1\right]}

\newcommand{\abs}[1]{\left| #1\right|}

\newcommand{\wtilde}[1]{\widetilde{#1}}
\newcommand{\shock}[2]{\bigl((#1^{S_{#2}})^{-X_{#2}}\bigr)}
\newcommand{\shockw}[2]{\bigl(#1^{S_{#2}}\bigr)^{-X_{#2}}}
\newcommand{\shockp}[2]{#1^{S_{#2},-X_{#2}}}

\newcommand{\y}[1]{\overline{#1}}

\sethlcolor{yellow}

\begin{document}

\date{\today}
\subjclass{76P05, 35Q20, 82B40} \keywords{Boltzmann equation, Hydrodynamic limit, Compressible Euler system, Riemann problem, Shock}

\thanks{\textbf{Acknowledgment.} MC and MJK were partially supported by the National Research Foundation of Korea  (RS-2024-00361663  and NRF-2019R1A5A1028324). 
CK is partially supported by NSF-CAREER 2047681. This material is partly based upon work supported by the National Science Foundation under Grant No. DMS-2424139, while one of the authors (C.K.) was in residence at the Simons Laufer Mathematical Sciences Institute in Berkeley, California, during the Fall 2025 semester.}

\begin{abstract}
We establish the hydrodynamic limit of the one-dimensional Boltzmann equation with hard-sphere collisions toward Riemann solutions of the compressible Euler system. The Riemann solutions covered by our result include generic superpositions of elementary waves: either two shock waves and a contact discontinuity, or a rarefaction wave, a contact discontinuity, and a shock wave. For suitably well-prepared initial data and sufficiently small wave strength, we prove that the corresponding Boltzmann solution exists globally in time and converges, as the Knudsen number vanishes, to the local Maxwellian associated with the Riemann solution in $L^2([0,T]\times\mathbb R_x\times\mathbb R^3_\xi)$ for any $T>0$. The proof combines the macro--micro decomposition with a kinetic adaptation of the $a$-contraction method, in which the propagation speeds of Boltzmann shocks are determined by Rakine-Hugoniot speed with dynamical modulation parameters. The resulting coercive control of the shock translation modes, together with layer analysis and the uniform bound of the Shifts, allows us to pass to the Knudsen limit in the full space-time domain. To the best of our knowledge, this is the first rigorous result on the hydrodynamic limit  towards generic Riemann solutions containing shocks: either the shock--contact--shock case or  the rarefaction--contact--shock case, in a global space-time energy norm without removing neighborhoods of either the initial time or the shock layers. In the special case of a single shock, the argument further yields a sharp quantitative description of the kinetic shock layer, up to the dynamically selected Shift.
\end{abstract}

\maketitle

\tableofcontents


\section{Introduction}
\setcounter{equation}{0}

\subsection{The problem and the main theorem}

The derivation of fluid equations from kinetic theory is one of the classical
problems originating in the works of Maxwell and Boltzmann and in Hilbert's
sixth problem.  In the kinetic description of a rarefied gas, the state of the
gas is described by a distribution function \(f(t,x,\xi)\), depending on
position and molecular velocity.  By contrast, the macroscopic equations of
fluid mechanics involve only finitely many fields, such as density, momentum,
and temperature.  Thus the passage from kinetic theory to fluid dynamics is,
at its core, a closure problem: the Boltzmann equation contains an infinite
hierarchy of velocity moments, whereas the fluid equations retain only the
moments associated with the collision invariants.

The small parameter governing this passage is the Knudsen number
\[
        \kappa=\frac{\lambda_{\mathrm{mfp}}}{L},
\]
the ratio between the molecular mean free path \(\lambda_{\mathrm{mfp}}\) and
the macroscopic length scale \(L\).  In the regime \(\kappa\ll1\), collisions
occur on a much shorter scale than the macroscopic variation of the gas.  The
collision operator is therefore dominant, and the distribution is expected to
relax rapidly toward a local Maxwellian.  Since local Maxwellians are
parametrized only by density, velocity, and temperature, this relaxation
provides the formal closure mechanism leading to the compressible Euler
equations.

The smooth compressible Euler limit of the Boltzmann equation has a long
history.  Nishida obtained an early analytic result \cite{Nishida1978} which
may be viewed as a rigorous realization of the Maxwellian closure mechanism.
Indeed, multiplying the Boltzmann equation by the collision invariants gives
exact balance laws for mass, momentum, and energy.  These balance laws are not
closed in general, because the stress tensor and the heat flux involve higher
velocity moments of the distribution function.  If, however, the distribution is
a local Maxwellian, the non-equilibrium stress and heat flux vanish, and the
macroscopic balances close to the compressible Euler system.  Nishida justified
this closure in the zero-mean-free-path limit for small analytic perturbations
of an absolute Maxwellian, using the spectral theory of the linearized
Boltzmann equation and an abstract Cauchy--Kowalewski theorem in a scale of
analytic Banach spaces.

It is useful to distinguish this moment-closure viewpoint from the
asymptotic-expansion viewpoint.  In the former, the main issue is to prove that
the kinetic solution is confined near the local Maxwellian manifold and that the
higher-moment closure defects vanish in the limit.  In the latter, one seeks an
order-by-order representation
\[
        F^\kappa
        =
        M[U]
        +
        \kappa F_1
        +
        \kappa^2 F_2
        +\cdots,
\]
where the leading term \(M[U]\) is a local Maxwellian and the higher-order
terms are obtained by solving linearized collision equations subject to
solvability conditions.  Hilbert and Chapman--Enskog expansions give detailed
higher-order information, including Navier--Stokes corrections, initial layers,
boundary layers, and shock-layer corrections.  At the same time, such expansions
are tied to the regularity of the leading macroscopic profile and become
delicate, or break down, when the limiting Euler solution develops
discontinuities.

Caflisch \cite{Caflisch1980} and Ukai--Asano \cite{UkaiAsano1983} developed
Hilbert-expansion approaches to the smooth compressible Euler limit, including
the treatment of initial layers.  More recent works refined these arguments in
\(L^2\)-\(L^\infty\) frameworks, notably the work of Guo--Jang--Jiang
\cite{GuoJangJiang2009}, which revisited Caflisch's compressible Euler limit
and proved the validity of the Hilbert expansion before shock formation.

These smooth-limit results justify the kinetic-to-fluid transition only as long
as the limiting Euler flow remains smooth.  In this sense, they describe the
pre-shock classical regime of compressible gas dynamics.  The restriction is
not merely technical; it reflects the structure of the known methods.  The
usual expansion and stability arguments are organized around a smooth local
Maxwellian field and require uniform control of the macroscopic derivatives.
When shocks form, the limiting Maxwellian becomes discontinuous, higher-gradient
corrections and kinetic layers enter the asymptotics, and the smooth-limit
framework no longer applies directly.

A notable contrast is provided by the Euler--Poisson setting.  Guo--Jang
\cite{GuoJang2010} obtained a global Hilbert expansion for the
Vlasov--Poisson--Boltzmann system toward the compressible Euler--Poisson system,
precisely because the limiting Euler--Poisson flow remains globally smooth.  The
self-consistent electric field suppresses shock formation for small
irrotational perturbations through the dispersive \emph{Klein--Gordon effect}
first exploited by Guo \cite{Guo1998EP}.  

For the pure compressible Euler equations, by contrast, shock formation is an
intrinsic feature.  As a hyperbolic system of conservation laws, the Euler
system develops shocks even from smooth initial data.  Thus the
Boltzmann--Euler limit must ultimately be understood beyond the lifespan of
classical Euler solutions, in regimes where the limiting fluid profile contains
shock waves and contact discontinuities.

A complete kinetic theory of shock formation would have to connect the
pre-shock smooth Euler-limit regime with the post-shock kinetic layer regime.
Such a theory would require a simultaneous description of the emerging Euler
discontinuity, the inner Boltzmann shock layer, and its dynamically selected
location.  For the fluid equations themselves, shock formation from smooth data
has been studied in depth, from the classical one-dimensional theory to modern
multidimensional works such as those of Christodoulou \cite{Christodoulou2007},
Luk--Speck \cite{LukSpeck2018}, and Buckmaster--Shkoller--Vicol
\cite{BuckmasterShkollerVicol2022,BuckmasterShkollerVicol2023}, among others.
There has also been recent progress on the interaction between shock formation
and small viscous regularization, for instance in the works of
Chaturvedi--Graham \cite{ChaturvediGraham2022} and
Anderson--Chaturvedi--Graham \cite{AndersonChaturvediGraham2025}.  At the
kinetic level, however, a corresponding result that passes through the shock
formation time and connects the smooth Euler limit to the emerging Boltzmann
shock layer appears to remain largely open.

The limitation of smooth-limit theories is also consistent with a more
critical viewpoint, advocated by Slemrod, according to which the
Boltzmann-to-Euler passage beyond the smooth regime may face a genuine
viscosity--capillarity obstruction
\cite{Slemrod2013Hilbert,Slemrod2013Admissibility,Slemrod2018Failure}.\footnote{%
This viewpoint should be regarded as stronger than the standard smooth-limit
theory. The author argues, by a shock-scaling consideration, that in nonsmooth hydrodynamics
``there is no concept of `small' or `negligible' higher derivative terms''
\cite[p.~1498]{Slemrod2013Hilbert}.  In a later formulation, he states that
``Hilbert's program will fail because of the appearance of van der
Waals--Korteweg capillarity terms''
\cite[p.~1]{Slemrod2018Failure}.  We cite this as a critical viewpoint,
rather than as a settled consensus on the Boltzmann--Euler limit.
}
Building on the exact summation of the Chapman--Enskog expansion for
Grad-type moment systems by Gorban--Karlin
\cite{GorbanKarlin2014BAMS}, Slemrod emphasized that the passage from the
Boltzmann equation to the compressible Euler equations beyond the smooth regime
is not a straightforward continuation of the pre-shock theory.  In particular,
once shocks or other nonsmooth structures are present, higher-gradient
corrections cannot simply be regarded as negligible: the limiting process may
retain a viscosity--capillarity mechanism of Korteweg type, and the capillarity
contributions need not vanish in the sense of distributions.

 This viewpoint should, in the authors' view, be distinguished from a
negative statement about kinetic-to-Euler limits themselves.  In a different
incompressible scaling, the previous work {of Bae--Kim} \cite{BaeKim2026QCA} justifies the
Boltzmann--Euler closure beyond the class of smooth classical Euler flows,
without using either a Hilbert expansion or a Chapman--Enskog expansion.  The
argument relies instead on the exact macroscopic balance laws, together with
quantitative estimates on the kinetic closure defects.  These estimates show
that the non-hydrodynamic moments vanish in the limit, and hence that the
limiting dynamics is governed by the incompressible Euler equations even in
rough solution classes.  Thus, for the purposes of the present discussion, the
obstruction emphasized by Slemrod should be interpreted chiefly as a limitation
of formal expansion methods in nonsmooth regimes, rather than as a general
obstruction to kinetic-to-Euler limits.

 The present paper develops a related but distinct non-expansion mechanism for
the compressible post-shock regime.  We do not study the formation of shocks
from smooth Euler flows.  Rather, the limiting Euler profile is already a
Riemann wave: a discontinuous composite pattern containing shocks and a contact
discontinuity.  The problem is therefore to justify the Boltzmann--Euler limit
after the inviscid dynamics has entered the nonsmooth regime.

This is precisely the regime in which the formal smooth-limit theories cease
to be adequate.  The difficulty is not only to show that the kinetic closure
defects vanish.  Across each shock, the Boltzmann solution resolves the Euler
jump through an \(O(\kappa)\)-scale kinetic layer.  This inner layer carries a
translation mode: shifting the shock profile costs essentially no leading-order
energy.  Consequently, even a small imbalance in the conserved quantities can
produce a non-negligible displacement of the shock location.  The shock
position is therefore not a parameter that can be fixed in advance; it must be
selected dynamically as part of the limiting process.

We consider the one-dimensional Boltzmann equation
\begin{equation}\label{eq:bee0}
    f_t+\xi_1 f_x = \frac{1}{\kappa}\mathcal N(f,f),
\end{equation}
where \(\xi=(\xi_1,\xi_2,\xi_3)\in\mathbb R^3\), \(x\in\mathbb R\), and
\(\mathcal N\) denotes the hard-sphere collision operator.  After transforming to Lagrangian mass coordinates, which we still denote by $x$, one obtains
\begin{equation}\label{eq:bslk}
    f_t+\frac{\xi_1-u_1}{v}f_x = \frac{1}{\kappa}\mathcal N(f,f)
\end{equation}
where the specific volume $v=\rho^{-1}$. 
As the Knudsen number \(\kappa\to0\), the above equation is expected to converge
to the one-dimensional compressible Euler system in Lagrangian coordinates:
\begin{equation}\label{eq:cesL}
\begin{aligned}
    & v_t-u_{1x}=0,\\
    & u_{1t}+p_x=0,\\
    & u_{it}=0,\qquad i=2,3,\\
    & \left(\theta+\frac{|u|^2}{2}\right)_t+(pu_1)_x=0.
\end{aligned}
\end{equation}
In this setting, the natural limiting objects are Riemann solutions to
\eqref{eq:cesL}.
 The analysis of already-formed shocks in kinetic equations rests first on the
existence and structure theory of kinetic shock profiles.  For the Boltzmann
equation, Caflisch and Nicolaenko constructed small-amplitude shock profile
solutions \cite{CaflischNicolaenko1982}, writing the profile as a singular
perturbative expansion in the shock strength whose leading terms agree with
the corresponding Navier--Stokes shock profile.  A fundamental issue in using
such profiles as genuine distribution functions is their nonnegativity; this
was addressed by Liu and Yu \cite{LiuYu2004CMP} through a macro--micro
decomposition and energy method for Boltzmann shock layers.

 The kinetic shock-profile theory was further developed through the
invariant-manifold approach of Liu--Yu and Pogan--Zumbrun for steady Boltzmann
flows \cite{LiuYu2013ARMA,PoganZumbrun2019}.  This viewpoint treats the steady
Boltzmann equation as an infinite-dimensional dynamical system in the spatial
variable and provides a structural framework for kinetic layers connecting
Maxwellian end states.  More recently, Albritton--Bedrossian--Novack
constructed weak shock profiles for the Landau equation
\cite{AlbrittonBedrossianNovack2024}, extending the kinetic shock-profile
theory beyond the hard-sphere and angular-cutoff Boltzmann setting to the
plasma collision model.  These works provide the kinetic shock layers that
underlie hydrodynamic-limit results with already-formed shocks and also serve
as the profile building blocks in the present paper.

Building on the kinetic shock-profile theory, Yu studied the
zero-Knudsen-number limit toward piecewise smooth Euler flows containing shock
waves \cite{Yu2005}.  The formulation there is made, for simplicity, in a
periodic spatial setting and on a fixed finite time interval, with a finite
number of isolated noninteracting shock curves.  The proof is based on a highly
delicate matched-asymptotic construction: a generalized Hilbert expansion is
constructed away from the shock curves, exact Boltzmann shock profiles are
inserted in the shock layers, and the resulting approximate solution is
corrected through conservation-law constraints and a macro--micro stability
analysis.  This method requires successive matching and cancellation of the
residual macroscopic fluxes generated by the shock-layer patching, together
with a careful bookkeeping of outgoing diffusion waves and shock-location
corrections.

The complexity of this strategy reflects the intrinsic difficulty of the
compressible shock regime.  As emphasized by Slemrod, the passage from
Boltzmann to compressible Euler beyond the smooth regime is not a
straightforward continuation of the pre-shock theory: in the presence of
discontinuities, higher-gradient corrections, kinetic layers, and the selection
of the shock location become part of the limiting process.  Therefore a
shock-layer-corrected Hilbert expansion must justify, at the same time, the
outer Euler expansion, the inner Boltzmann shock layer, the matching between
them, the residual conservation defects produced by the patching, and the
stability of the final corrected approximation.  This makes the approach
powerful but necessarily rather elaborate.

Another result particularly close to the Riemann-problem setting of the present
paper is due to Huang, Wang, Wang, and Yang \cite{HuangWangWangYang2013}, who
justified the Boltzmann--Euler limit for Riemann solutions containing a generic
superposition of shock waves, rarefaction waves, and a contact discontinuity.
Their proof is based on a carefully corrected approximate-wave construction:
auxiliary hyperbolic waves with different backgrounds are introduced to capture
the extra masses generated by the hyperbolic approximation of rarefaction waves
and the diffusion approximation of contact discontinuities.  The perturbation is
then controlled by an energy method around this corrected approximate Riemann
pattern.
A limitation of this approach, however, is that convergence is formulated only away from the shock and initial layers. {More precisely, neighborhoods of both the shock front and the initial time must be excluded, and the convergence estimate depends on the size of these excluded neighborhoods.}  This reflects the structure of the method and the topology for convergence.  The corrected
approximate-wave construction is designed to control the macroscopic defects
of the Riemann solution on the Euler scale, while the shock layer itself lives on
the kinetic scale 
\[
        \frac{x-\sigma t}{\kappa}.
\]
Inside this layer, derivatives of the shock profile are of size
\(O(\kappa^{-1})\), and the leading difficulty is the translation mode of the
kinetic shock.  
The anti-derivative method, without time modulation of the shock location, does not provide direct control of the superposition of the shock and rarefaction waves inside the shock and initial layers.

The present paper develops a different approach. We do not rely on a shock-layer-corrected Hilbert expansion around a prescribed discontinuous Euler flow, nor do we use the anti-derivative method. Instead, our argument is based on a
kinetic adaptation of the \(a\)-contraction method for shocks, together with BV compactness of the shifts and control of the shock layers and approximate waves in the limit. The $a$-contraction method was first developed in \cite{KV-ARMA16,Vasseur-16} for the stability of Riemann shocks, and extended to the stability of large perturbations of viscous shocks for viscous conservation laws as in \cite{EEK-BNSF,Kang-JMPA21,KO,KV-Poincare17,KV-JEMS21,KV-Inven21,KVW-CMP21}. 
{For extensions of the method to the time-asymptotic stability of small perturbations of Riemann solutions for the Navier-Stokes system, we refer to} \cite{han2023large,KangLee,KVW23,kang2025time}.
Recently, in \cite{wang2025}, {the method was applied to the Boltzmann equation} \eqref{eq:bslk} {to study its long-time behavior near a composite wave consisting of a viscous shock, a rarefaction wave, and a viscous contact wave.}     
{The key idea of the $a$-contraction method is to combine the relative entropy method with a suitable weight and shift to quantify the orbital stability of shock waves.} The weight encodes the jump strength and characteristic speed of a shock, and {the shift is defined through a modulation equation designed to control the translational mode of the shock in the energy estimate.} 

On the other hand, the relative entropy method alone is sufficient to establish the stability of regular solutions other than shocks.    
The relative entropy method was first introduced by Dafermos \cite{Dafermos1} and DiPerna \cite{DiPerna} to prove the stability of Lipschitz solutions to the hyperbolic conservation laws endowed with a convex entropy.  
It was also used for stochastic particle systems \cite{Y1,OVY,Va} and later adapted to the kinetic framework in \cite{Go,Saint}.  In the kinetic setting,
this method is naturally compatible with Lions' notion of
\emph{dissipative solutions} and is tailored to weak--strong stability. 
For other applications of the method to studies of asymptotic limits, we refer to \cite{B-G-L-2,B-G-L-1,B-V,B-T-V,G-J-V-2,G-S,KV-M3AS,L-M,M-S,M-V,S}.
 
In our setting, the \(a\)-contraction method is combined with the
macro--micro structure of the Boltzmann equation.  The Boltzmann shock profiles
are retained as part of the target profile, but their locations are not fixed
by an order-by-order correction of a matched asymptotic expansion.  This is a
key difference from the shock-layer construction of Yu, where the insertion of
inner Boltzmann shock profiles into an outer Hilbert expansion creates residual
macroscopic fluxes, and the shock locations must be corrected along the
construction together with outgoing diffusion waves in order to restore the
conservation constraints.

 Here, by contrast, the shock locations are encoded by dynamically determined Shifts.
The Shifts are selected through modulation equations to produce the coercive damping needed to control the translation modes of the shocks. Thus the shock shift
in the present paper is not a bookkeeping correction in a formal expansion, but
a coercive unknown of the stability estimate.  The remaining kinetic errors are
then controlled by the microscopic dissipation of the linearized Boltzmann
operator.

Thus, our result should not be viewed as an implementation of the
generalized Hilbert-expansion construction for another wave pattern, nor as an energy method based on anti-derivatives.

In this paper, we justify the hydrodynamic limit from \eqref{eq:bslk} to
\eqref{eq:cesL} for generic Riemann solutions containing shocks, more precisely for composite waves consisting either of a \(1\)-shock,
a \(2\)-contact discontinuity, and a \(3\)-shock, or of a \(1\)-rarefaction wave,
a \(2\)-contact discontinuity, and a \(3\)-shock.  Our result shows that, for
suitably well-prepared initial data and sufficiently small wave strength, the
corresponding Boltzmann solution exists globally and converges in $L^2$, as
\(\kappa\to0\), to the local Maxwellian associated with the Riemann solution.  
{
\begin{theorem}[Informal statements of Theorem~\ref{thm:main} and Theorem~\ref{cor:rarefaction-contact-shock}]
Consider a Riemann solution of the compressible Euler system \eqref{eq:cesL}
consisting of a \(1\)-shock, a \(2\)-contact discontinuity, and a
\(3\)-shock or a \(1\)-rarefaction, a \(2\)-contact discontinuity, and a \(3\)-shock, with sufficiently small total wave strength. Then, for suitably
well-prepared initial data and sufficiently small Knudsen number \(\kappa>0\),
the corresponding solution to the Boltzmann equation \eqref{eq:bslk} exists
globally and converges, as \(\kappa\to0\), to the local {Maxwellian} associated with the Riemann solution, more precisely, for any $T>0$,
\[
\int_0^T \iint_{\mathbb R\times\mathbb R^3}
\bigl|f^\kappa-M[v^E,u^E,\theta^E]\bigr|^2
\,d\xi\,dx\,dt
\longrightarrow 0
\qquad\text{as }\kappa\to0.
\] 
\end{theorem}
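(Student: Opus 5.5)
The plan is to construct an approximate composite profile at the kinetic scale and prove a uniform-in-$\kappa$ stability estimate for the Boltzmann solution around it. Then, in the full space-time $L^2$ norm, the profile itself converges to the Maxwellian of the Riemann solution.

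\textbf{Step 1: the composite profile.} In the scaled variables $(t/\kappa,x/\kappa)$ we take the composite wave $(\bar v,\bar u,\bar\theta)$ as a superposition of:
\begin{itemize}
\item the Boltzmann (macroscopic) shock profiles $\bigl((v^{S_i})^{-X_i}\bigr)$ for $i=1,3$, each shifted by an unknown $X_i(t)$;
\item the viscous contact wave $(v^C,u^C,\theta^C)$, built from the nonlinear diffusion equation for $\theta$;
\item in the second case, a smooth approximate rarefaction replacing the $1$-shock.
\end{itemize}
The profile distribution is $\bar F=M[\bar v,\bar u,\bar\theta]$ plus the non-fluid part of the shock profiles. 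We use the macro--micro decomposition $f=M[v,u,\theta]+G$, with $G=L_M^{-1}P_1(\xi_1 M_x)\kappa+\Theta$. The interaction errors between the waves are exponentially small in the separation of the wave speeds, and the contact and rarefaction errors are $O(\delta\kappa^{\alpha})$ in time-integrated norms.

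\textbf{Step 2: the $a$-contraction estimate.} We use the relative entropy $\int a(t,x)\,\rho\,\eta(U\mid\bar U)\,dx$, where $a$ is a weight that is monotone across each shock with variation of order the shock strength $\delta_i$. The shifts are defined by the modulation ODEs
\[
\dot X_i=-\frac{M}{\delta_i}\int a\,(\partial_x\bar U^{S_i})^{-X_i}\cdot\nabla\eta(U\mid\bar U)\,dx-\sigma_i .
\]
Substituting these makes the bad translation-mode terms of the form $\dot X_i\times(\cdots)$ produce a negative quadratic term. The weighted Poincar\'e inequality on the shock layers, together with the sign of $a_x$, then gives coercivity of the leading hyperbolic terms. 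The microscopic part $\Theta$ is handled by the energy estimate of the Boltzmann equation in a weighted $L^2_\xi(M_*^{-1/2})$ norm, using the coercivity of $L_M$. Higher derivatives are closed by standard energy estimates.

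The resulting a priori estimate is uniform in $\kappa$, with the bound
\[
\sup_t\|U-\bar U\|_{H^1}^2+\int_0^{t}\bigl(\textstyle\sum_i\delta_i|\dot X_i-\sigma_i|^2+\text{dissipation}\bigr)\,ds\le C(\varepsilon_0^2+\delta_0^{1/3}),
\]
stated in the scaled variables. A continuation argument then yields global existence.

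\textbf{Main obstacle.} The hardest step is the interplay of the weight and shift with the contact wave and the rarefaction. The cross terms between $a_x$ and the contact profile, and the kinetic terms involving $\partial_x$ of the shock profile (of size $O(\kappa^{-1})$ in original variables), must be absorbed into the good terms. My first attempt is to localize $a$ near each shock and use the decay of the contact derivative there.

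\textbf{Step 3: the limit.} Returning to the original variables, the estimate shows the following:
\begin{itemize}
\item $\|f-\bar F\|_{L^2}$ is small uniformly on $[0,T]$.
\item The shift satisfies $|\kappa X_i(t/\kappa)-0|\to0$. This holds because $\dot X_i$ stays bounded and close to $\sigma_i$ in an averaged sense; more precisely, $\kappa\int_0^{t/\kappa}|\dot X_i-\sigma_i|\,ds\le C\sqrt{t\kappa}$ by Cauchy--Schwarz against the dissipation, so the shifted shock location converges to $\sigma_i t$.
\end{itemize}
Next, $\bar F$ converges to $M[v^E,u^E,\theta^E]$ in $L^2([0,T]\times\mathbb R\times\mathbb R^3)$, since:
\begin{itemize}
\item the shock layers have width $O(\kappa)$ and bounded amplitude;
\item the contact wave width is $O(\sqrt{\kappa t})$;
\item the rarefaction approximation converges in $L^2_{loc}$, including near $t=0$, by the bounded amplitude.
\end{itemize}
Combining these by the triangle inequality gives the convergence stated in the theorem.
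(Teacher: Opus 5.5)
Your overall architecture matches the paper's: shifted Boltzmann shock profiles plus the viscous contact wave (or a smoothed rarefaction), the macro--micro decomposition, $a$-contraction with modulated shifts in the scaled variables, continuation, rescaling, the Cauchy--Schwarz bound $|X_i^\kappa(\tau)|\le C(T\kappa/\delta_i)^{1/2}$, and $L^2$ convergence of the profile. However, two ingredients, as you state them, would not close.

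First, the weight cannot have variation of order $\delta_i$. The $a$-contraction needs an amplitude $\lambda$ with $\delta_i\ll\lambda\ll1$. The paper takes $a_1=1+\delta_1^{-1/2}(v^{S_1}-v_-)$ (and similarly $a_3$), so that $|\partial_x a_i|=\delta_i^{-1/2}|\partial_x v^{S_i}|$. After completing squares in $\mathcal B_2-\mathfrak G$, the good terms $\mathfrak G_{1i},\mathfrak G_{2i}$ control the deviations $\phi+\psi_1/\sigma_i^{\mathrm m}$ and $\zeta-\frac{2\theta_i}{3\sigma_i^{\mathrm m}v_i}\psi_1$ from the characteristic direction, with the large factor $\delta_i^{-1/2}$. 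This is exactly what lets $\mathcal B^{\mathrm{sh}}$, which has size $\int|\partial_x v^{S_i}|\,|(\phi,\zeta)|^2dx$, be rewritten as a multiple of $\delta_i\int_0^1w_i^2\,dy_i$ with only $\delta_i^{1/4}$-losses. The sharp Poincar\'e inequality (Lemma~\ref{lem:poincare}) and the specific choice of $\mathfrak m_i$ then close a constant-sensitive inequality. With variation $O(\delta_i)$, these good terms are only as large as $\mathcal B^{\mathrm{sh}}$ itself, the Young splitting costs an $O(1)$ fraction of the main term, and the comparison fails. (In your convention the shift ODE should also read $\dot X_i=\sigma_i-\frac{\mathfrak m_i}{\delta_i}\int\cdots$, and it is $\kappa X_i(\tau/\kappa)-\sigma_i\tau$ that tends to zero.)

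Second, and more seriously, your \emph{main obstacle} paragraph targets the wrong contact-wave terms. The cross terms between $a_x$ and the contact profile are harmless. $a_x$ is concentrated on the shocks, which separate linearly from the contact, so these terms decay exponentially in time (Lemmas~\ref{lem:intsk} and~\ref{lem:wave-interaction-L2}). The real difficulty is the bulk contribution of $\theta^C_t\Phi(v/\bar v)$, $u^C_{1x}\phi^2,\dots$ in the entropy identity. This produces $\frac{\delta_C}{1+t}\int e^{-cx^2/(1+t)}|(\phi,\psi,\zeta)|^2dx$ with no sign. Because of the factor $(1+t)^{-1}$ it is not time-integrable given only $\sup_t\|(\phi,\psi,\zeta)\|_{L^2}^2$. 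A Gronwall argument would give growth $(1+t)^{C\delta_C}$, which is not uniform on the scaled horizon $[0,T/\kappa]$, so the bootstrap $\mathcal E\le\varepsilon$ cannot be kept as $\kappa\to0$.

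The paper closes this with the Gaussian-weighted (heat-kernel) estimate of Lemma~\ref{lem:weighted-gaussian}, testing the macroscopic system against multipliers built from $\omega_G=(1+t)^{-1/2}e^{-\beta x^2/(1+t)}$. Correspondingly, on the kinetic side it subtracts the contact Chapman--Enskog part $\widetilde G_C$ and proves dissipation only for $\widetilde G_{\mathrm{rem}}$. This is necessary because $\int\|\widetilde G_C\|^2_{\nu,M_\#}dx$ is of order $\delta_C^2(1+t)^{-1/2}$, which is not time-integrable. Neither ingredient appears in your plan, and localizing $a$ does not reach them.

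Horizon-independence is also what your Step~3 uses implicitly. $\|f^\kappa-\bar f^\kappa\|$ does not vanish because the perturbation is small; it vanishes through the Jacobian:
$\int_0^T\!\int\|f^\kappa-\bar f^\kappa\|_{M_\#}^2dy\,d\tau=\kappa^2\int_0^{T/\kappa}\!\int\|f-\bar f\|_{M_\#}^2dx\,dt\le\kappa T\sup_{t\le T/\kappa}\int\|f-\bar f\|_{M_\#}^2dx$.
This is $O(\kappa)$ only if the scaled a priori bound holds uniformly on $[0,T/\kappa]$.
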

}
As a consequence, when the Riemann solution is a single shock, our general estimate yields a
\textit{global} quantitative estimate for the shock, not merely a \textit{local}
away-from-the-shock convergence theorem.   
{{The dependence on the distance from the modulated shock location is explicit. More precisely,} $y=\sigma \tau+X^\kappa(\tau)$, {where} $X^\kappa(\tau)$ {is a dynamical shift for the shock component, with an exponential tail at the kinetic scale} \(\kappa\){, as follows.}}

To formulate this consequence, let \(M^{E,\kappa}(\tau,y,\xi)\) be the Maxwellian associated with the Riemann shock whose location is
modulated by {{a shift}} \(X^\kappa(\tau)\):
\[
M_{E,\kappa}(\tau,y,\xi)
:=
\begin{cases}
M_-(\xi), & y\le \sigma \tau+X^\kappa(\tau),\\[1mm]
M_+(\xi), & y> \sigma \tau+X^\kappa(\tau).
\end{cases}
\]
\begin{corollary}[Informal statement of Theorem~\ref{thm:single-shock-away}]
For a sufficiently small single shock and suitably well-prepared initial data,
the corresponding Boltzmann solution converges to the Maxwellian
associated with the Riemann shock, with the shock location modulated by a suitable
time-dependent shift \(X^\kappa(\tau)\). More precisely, for every finite time interval \([0,T]\),
\begin{align*}
\left\|
f^\kappa(\tau,y,\cdot)-M_{E,\kappa}(\tau,y,\cdot)
\right\|_{M_\#}
\le
C\,\kappa 
+
C e^{-c|y-\sigma \tau-X^\kappa(\tau)|/\kappa},
\qquad
(\tau,y)\in[0,T]\times\mathbb R.
\end{align*}
In particular, the convergence is uniform on every region staying a fixed
positive distance from the modulated shock wave.
\end{corollary}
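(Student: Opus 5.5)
The plan is a triangle inequality through the kinetic shock profile. Write the Boltzmann solution as $f^\kappa = \bar F^\kappa + (f^\kappa-\bar F^\kappa)$, where $\bar F^\kappa(\tau,y,\xi)$ is the Boltzmann shock profile of Caflisch--Nicolaenko/Liu--Yu type. This profile is written in the kinetic variable $(y-\sigma\tau-X^\kappa(\tau))/\kappa$ and shifted by the same dynamical shift $X^\kappa$ produced by the $a$-contraction modulation equation. Then
\[
\bigl\|f^\kappa-M_{E,\kappa}\bigr\|_{M_\#}\le \bigl\|f^\kappa-\bar F^\kappa\bigr\|_{M_\#}+\bigl\|\bar F^\kappa-M_{E,\kappa}\bigr\|_{M_\#},
\]
and the two terms are estimated separately, pointwise in $(\tau,y)$.

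\textbf{Profile term.} I would use the known structure of small-amplitude kinetic shock profiles. The macroscopic part $(\bar v,\bar u,\bar\theta)(\zeta)$ is close to the Navier--Stokes profile, and it converges to $(v_\pm,u_\pm,\theta_\pm)$ exponentially as $\zeta\to\pm\infty$, with rate $\sim\delta$, the shock strength. The microscopic part $\bar G$ satisfies $|\bar G|_{M_\#}\lesssim |\bar v_\zeta|$ and decays at the same rate. Hence, for $y>\sigma\tau+X^\kappa$,
\[
\|\bar F^\kappa - M_+\|_{M_\#}\lesssim |(\bar v,\bar u,\bar\theta)-(v_+,u_+,\theta_+)| + |\bar G| \lesssim e^{-c|y-\sigma\tau-X^\kappa|/\kappa},
\]
and symmetrically for $y\le\sigma\tau+X^\kappa$. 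The constant $c$ depends on $\delta$, but $\delta$ is fixed.

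\textbf{Perturbation term.} This is the real content. I would invoke the uniform-in-time energy estimate from the main theorem, specialized to a single shock. There is then no rarefaction and no contact, so there are no interaction errors and no approximate-wave defects. In the rescaled variables $(t,x)=(\tau/\kappa,y/\kappa)$ the estimate gives
\[
\sup_t\bigl(\|(\phi,\psi,\zeta)\|_{H^1}^2+\|\tilde G\|^2\bigr)+\int_0^\infty \mathcal D\,dt\le C E_0,
\]
where $E_0$ measures the well-prepared initial perturbation. For single-shock data one can take $E_0\lesssim\kappa^2$, or the perturbation identically zero plus $O(\kappa)$ microscopic correction. Returning to $y=\kappa x$, Sobolev embedding in one dimension gives $\|\cdot\|_{L^\infty_x}\lesssim\|\cdot\|_{L^2_x}^{1/2}\|\partial_x\cdot\|_{L^2_x}^{1/2}$, so the macroscopic perturbation is $O(\kappa)$ pointwise.

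\textbf{Microscopic part.} The main obstacle is that $f^\kappa-\bar F^\kappa$ also has a microscopic component, and a pointwise-in-$y$ bound in the velocity norm $\|\cdot\|_{M_\#}$ needs an $L^\infty_x L^2_\xi$ control of $\tilde G$, not just an $L^2$ bound. My first approach is to include $\partial_x\tilde G$ in the energy functional. The paper's macro--micro energy estimates typically contain $\|\partial_x f\|$-type higher-order norms, and I would check that these carry the same $\kappa$-scaling. Then the same one-dimensional Sobolev interpolation, applied with values in $L^2_\xi(M_\#^{-1/2})$, gives $\sup_y\|\tilde G\|_{M_\#}\lesssim\kappa$.

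If only time-integrated dissipation of $\partial_x\tilde G$ is available, I would fall back on a bootstrap. Writing the microscopic equation as $\tilde G=\kappa L^{-1}(\ldots)$ expresses $\tilde G$ through macroscopic derivatives, which are bounded pointwise. The Maxwellian difference $M[v,u,\theta]-M[\bar v,\bar u,\bar\theta]$ is controlled by the macroscopic perturbation through Lipschitz dependence of $M$ on its parameters, since $M_\#$ is chosen close to the states. Combining these bounds with the profile term gives $C\kappa+Ce^{-c|y-\sigma\tau-X^\kappa|/\kappa}$ uniformly on $[0,T]$.
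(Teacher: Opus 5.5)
Your proposal follows the paper's proof essentially step for step. It uses the triangle inequality through the shifted kinetic profile $\bar f^\kappa$, the exponential tail bound of Lemma~\ref{lem:single-shock-tail}, the single-shock estimate of Proposition~\ref{prop:priest-single}, and Lipschitz dependence of $M[U]$ on $U$. That energy already contains $\int\|\widetilde G\|_{M_\#}^2+\|\widetilde G_x\|_{M_\#}^2\,dx$, so your first option for the microscopic part, combined with the Hilbert-valued Sobolev Lemma~\ref{lem:H1-Linfty-Hilbert-single}, is exactly what the paper uses, and your fallback is unnecessary.

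The one divergence is where the $C\kappa$ comes from. The pointwise norm is invariant under $(\tau,y)=\kappa(t,x)$, so the perturbation term is bounded only by the normalized energy $C\mathcal E(0)$. The $O(\kappa)$ therefore genuinely requires a normalized initial energy of order $\kappa$, which is what you assume explicitly. The paper instead passes from $C\mathcal E^\kappa(0)$ to $C\kappa\,\mathcal E(0)$ in its last line, although \eqref{eq:initic2-main} alone gives only a $\kappa$-independent normalized bound; your explicit hypothesis is the reading of ``suitably well-prepared'' that actually produces the $C\kappa$ term.
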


\subsection{Main difficulties and strategy}
We now describe the main difficulties and the main ideas of the proof. The first
difficulty is the presence of shock waves in the limiting Euler profile.  Across
a shock, the limiting Maxwellian is discontinuous, and therefore the Boltzmann
solution cannot converge uniformly through the shock layer.  Moreover, the shock
location is not fixed at the level of the kinetic perturbation.  Even a small
amount of mass, momentum, or energy imbalance can produce a displacement of the
shock wave.  For this reason the limiting profile must be modulated by
time-dependent shifts.

We first normalize the Boltzmann equation with $\kappa=1$ as in \eqref{eq:bee}.
Since the Boltzmann equation contains microscopic degrees of freedom that
are invisible at the Euler level,  we use the macro--micro decomposition around
the local Maxwellian to write
\[
        f=M+G,
\]
and further decompose the microscopic part $G$ relative to the shock
profiles. Here, the fluid state {associated with} the Maxwellian $M$ satisfies the Navier-Stokes-Fourier-type system \eqref{eq:NSF1} coupled with microscopic variables. 
The  macro--micro decomposition is also used to write the Boltzmann shock $F^{S_i}=M^{S_i}+ G^{S_i}$. 
The coercivity of the linearized collision operator supplies
the basic microscopic dissipation, while the higher-order estimates close the
terms involving derivatives of the microscopic component.

The key idea for controlling the macroscopic part of the perturbation is based on the a-contraction method. We consider a shock-adapted weight \(a(t,x)\) as in \eqref{eq:wefun} and study the weighted
relative entropy
\[
        \int_{\mathbb R} a(t,x)\eta(U|\bar U)(t,x)\,dx,
\]
where \(\eta(U|\bar U)\) is the relative entropy between the fluid state
\(U=(v,u,\theta)\) and the composite approximate wave
\(\bar U=(\bar v,\bar u,\bar\theta)\) as in \eqref{eq:rel-entropy}.  
As in \eqref{eq:weighted-entropy}, the evolution of the weighted relative entropy produces the modulation part 
\[
\sum_{i\in\{1,3\}}  \dot X_i(t)Y_i(t),
\]
the bad terms $\mathcal{J}^{\mbox {bad}}$, the good terms $\mathcal{J}^{\mbox {good}}$, and the kinetic part $\mathcal{J}^{\mbox {kinetic}}$. 
Based on the $a$-contraction method, the terms localized by derivatives of shock or weight in $\mathcal{J}^{\mbox {bad}}$  are controlled by the modulation part with suitable shift and the diffusion from $\mathcal{J}^{\mbox {good}}$. To control the contact discontinuity, we consider the associated viscous contact wave constructed in \cite{huang2010hydrodynamic}. 
The contact component contributes Gaussian space-time
weights of the form
\[
        (1+t)^{-1}
        \exp\left(-\frac{c|x|^2}{1+t}\right),
\]
which cannot simply be absorbed by the shock dissipation. These terms are
handled by a separate weighted estimate and by the decay properties of the
viscous contact wave.
On the other hand, the interaction of the waves is not easy to control, since {the 1-shock and the 3-shock are shifted independently.}
This requires that the locations of the two shocks are well-separated as in \eqref{eq:shiftsp1}. {Using this separation, we localize the diffusion term with the cutoff introduced in} \eqref{eq:intske}{, so that the resulting localized bad terms can be controlled by the corresponding localized dissipation through the $a$-contraction method.} 

Since the fluid system \eqref{eq:NSF1} is coupled with the microscopic part, the above relative entropy estimates for the macroscopic variables should be coupled with microscopic energy-dissipation estimates for the kinetic remainder. 
{These estimates are derived} under the a priori smallness assumption on the perturbation, as in Proposition \ref{prop:priest}. The a priori estimates provide stability of the viscous waves, as in \cite{wang2025}. 
{However, {to obtain the desired singular limit, we need a careful layer analysis and estimates for the shifts} after scaling back to the $\kappa$-scaled equation \eqref{eq:bslk}.} 

The proof is organized as follows. Section 2 states the main theorem and the well-preparedness assumptions. Section 3 collects the necessary preliminaries on the macro--micro decomposition, viscous contact waves, and Boltzmann shock profiles. Section 4 constructs the composite approximate wave and introduces the perturbation system, the dynamical shifts, and the main a priori proposition. Section 5 proves the main theorem assuming the a priori proposition, including the global continuation argument, the rescaling to the original Knudsen number, and the convergence to the shifted Riemann solution. Section 6 establishes the low-order weighted relative-entropy estimates. Section 7 proves the higher-order macroscopic and microscopic estimates. Section 8 completes the proof of the main a priori proposition. Section 9 records the single-shock consequence and the rarefaction--contact--shock extension.

\section{Statement of the main result}
\setcounter{equation}{0}

\subsection{Composite Euler waves and shifted Maxwellians}
In this subsection, we describe the composite Euler waves that arise as the limiting profiles and the corresponding shifted local Maxwellians.

Fix a constant right end state
\[
U_+:=(v_+,u_{+},\theta_+)\in \mathbb{R}_+\times\mathbb{R}^3\times\mathbb{R}_+, \qquad u_+ := (u_{1+},0,0).
\]
For sufficiently small wave strength $\delta>0$, let
\[
U^E=(v^E,u^E,\theta^E)(t,x)
\]
be a Riemann solution to the one-dimensional compressible Euler system \eqref{eq:cesL}, connecting the left state
\[
U_-:=(v_-,u_{-},\theta_-), \qquad u_- := (u_{1-},0,0)
\]
to $U_+$ through two intermediate states $U_*$ and $U^*$, with
\[
\delta:=|\theta_+-\theta_-|.
\]
We shall consider the following two wave patterns:
\begin{itemize}
    \item[(a)] a superposition of a $1$-rarefaction wave, a $2$-contact discontinuity, and a $3$-shock wave;
    \item[(b)] a superposition of a $1$-shock wave, a $2$-contact discontinuity, and a $3$-shock wave.
\end{itemize}

We denote by
\[
U_{R_1}^E,\qquad U_C^E,\qquad U_{S_1}^E,\qquad U_{S_3}^E
\]
the corresponding elementary Euler wave patterns, whenever they are present. Thus, in case (a),
\[
U^E = U_{R_1}^E + U_C^E + U_{S_3}^E - U_* - U^*,
\]
whereas in case (b),
\[
U^E = U_{S_1}^E + U_C^E + U_{S_3}^E - U_* - U^*,
\]
where
\[
U_* = (v_*,u_*,\theta_*), \quad  U^* = (v^*,u^*,\theta^*), \quad u_* = (u_{1*},0,0), \quad u^* = (u_1^*,0,0). 
\]

{Let $M[U]$ denote the local Maxwellian associated with the fluid state $U=(v,u,\theta)$ as below.
\begin{align*}
M[U] := \frac{1}{v(2\pi\theta)^{3/2}}e^{-\frac{|\xi-u|^2}{2\theta}}
\end{align*}}
In the presence of shock waves, the limiting profile is described up to suitable dynamical shifts along the shock components only. 
To denote a function $f$ shifted by $h$, we use the notation:
\[
f^{-h}(x)= f(x-h).
\]
{For simplicity, for a shift function} $X_3=X_3(t)${, we set}
\begin{equation}\label{eq:def-shifted-profile-rs}
U^{X_3}
:=
U_{R_1}^E + U_C^E + \bigl(U_{S_3}^E\bigr)^{-X_3} - U_* - U^*,
\end{equation}
and
\begin{equation}\label{eq:def-shifted-maxwellian-rs}
M_{X_3}[U^E]
:=
M[U^{X_3}],
\end{equation}
for the rarefaction--contact--shock case.

Similarly, for shift functions $X_1=X_1(t)$, $X_3=X_3(t)$ and $X=(X_1,X_3)$, we denote
\begin{equation}\label{eq:def-shifted-profile-ss}
U^{X}
:=
\bigl(U_{S_1}^E\bigr)^{-X_1}
+
U_C^E
+
\bigl(U_{S_3}^E\bigr)^{-X_3}
-
U_*
-
U^*,
\end{equation}
and
\begin{equation}\label{eq:def-shifted-maxwellian-ss}
M_{X}[U^E]
:=
M[U^{X}],
\end{equation}
for the shock--contact--shock case.

In addition, let $f^{S_3}$ denote the $3$-Boltzmann shock profile connecting the two Maxwellians $M[U^*]$ and $M[U_+]$. In the shock--contact--shock case, we also denote by $f^{S_1}$ the $1$-Boltzmann shock profile connecting $M[U_-]$ and $M[U_*]$.

\subsection{Well-prepared initial data}
We next specify the class of admissible initial data for the Boltzmann equation \eqref{eq:bslk}. Let
\[
U_0^E(x):=U^E(0,x)
\]
be the initial Riemann profile. We shall consider families of nonnegative smooth initial data
\[
\{f_0^\kappa\}_{\kappa>0}
\]
on $\mathbb{R}_x\times\mathbb{R}_\xi^3$ such that for a sufficiently small constant $\varepsilon_1>0$,
\begin{align}
& \iint_{\mathbb{R}\times\mathbb{R}^3}
\frac{|f^\kappa_0-M[U_0^E]|^2}{M_\#}\,d\xi\,dx
+
\kappa^2
\iint_{\mathbb{R}\times\mathbb{R}^3}
\frac{|f^\kappa_{0t}|^2+|f^\kappa_{0x}|^2}{M_\#}\,d\xi\,dx
\notag\\
&\qquad
+
\kappa^4
\iint_{\mathbb{R}\times\mathbb{R}^3}
\frac{|f^\kappa_{0xx}|^2+|f^\kappa_{0tx}|^2}{M_\#}\,d\xi\,dx
\le
\kappa \varepsilon_1^2.
\label{eq:initic2-main}
\end{align}
Here $M_\#:=M[U_\#]$ is a fixed global Maxwellian. 
We refer to \eqref{eq:initic2-main} as the well-preparedness condition. The appearance of the time derivatives $f^\kappa_{0t}$ and $f^\kappa_{0tx}$ reflects the higher-order compatibility built into the perturbative framework.

{
\begin{remark}

In Theorem \ref{thm:main}, {the upper bound for} $\eps_1$ {is chosen sufficiently small to ensure the global existence of the solution} $f^\kappa$ {for each} $\kappa$.  
More precisely, after normalizing \eqref{eq:initic2-main}, $f_0(x,\xi):=f^\kappa(\kappa t, \kappa x, \xi)|_{t=0}$ satisfies
 \begin{align} \label{eq:initda}
\mathcal E_{\mathrm{ini}}^2 := & \iint_{\mathbb{R}\times\mathbb{R}^3}
\frac{|f_0-M[U_0^E]|^2}{M_\#}\,d\xi\,dx
+
\iint_{\mathbb{R}\times\mathbb{R}^3}
\frac{|f_{0t}|^2+|f_{0x}|^2}{M_\#}\,d\xi\,dx
\nonumber\\
&\qquad
+
\iint_{\mathbb{R}\times\mathbb{R}^3}
\frac{|f_{0xx}|^2+|f_{0tx}|^2}{M_\#}\,d\xi\,dx
\le
 \varepsilon_1^2.
\end{align}
{The} $\kappa${-uniform smallness of} $\mathcal{E}_{\rm ini}$ {therefore yields the required smallness of the initial energy} $\mathcal{E}(0)$ {in Proposition} \ref{prop:priest}{, up to the small profile contribution in} \eqref{eq:pxt}{. This motivates the well-preparedness condition} \eqref{eq:initic2-main}.
 \end{remark}
}

\subsection{Statement of the main theorem}
We now state the precise form of the hydrodynamic limit theorem. 
{
\begin{theorem}[Hydrodynamic limit near a shock--contact--shock profile]\label{thm:main}
Fix a constant end state
\[
U_+:=(v_+,u_{+},\theta_+)\in \mathbb{R}_+\times\mathbb{R}^3\times\mathbb{R}_+, \qquad u_+ := (u_{1+},0,0).
\]
Then there exist positive constants \(\varepsilon_0\), \(\delta_0\), and \(\kappa_0\), together with a global Maxwellian $M_\#$, 
such that the following statement holds.

Let \(0<\delta\le \delta_0\), and let
\[
U^E=(v^E,u^E,\theta^E)(t,x)
\]
be a Riemann solution to the one-dimensional compressible Euler system \eqref{eq:cesL}, connecting
\[
U_-:=(v_-,u_{-},\theta_-), \, \quad u_- :=(u_{1-},0,0)
\quad \text{to} \quad
U_+,
\]
through two intermediate states \(U_*\) and \(U^*\), with total wave strength
\[
\delta=\delta_1+\delta_C+\delta_3,
\]
{where} \(\delta_C\) {is the strength of the contact discontinuity,} \(\delta_1:=|v_--v_*|\) {the strength of the 1-shock wave, and} \(\delta_3:=|v^*-v_+|\) {the strength of the 3-shock wave.}
Assume that \(U^E\) is a superposition of a \(1\)-shock wave, a \(2\)-contact discontinuity, and a \(3\)-shock wave. 
For each \(0<\varepsilon_1\le \varepsilon_0\) and \(0<\kappa\le \kappa_0\), let \(f_0^\kappa\) be a nonnegative smooth initial datum satisfying the well-preparedness condition \eqref{eq:initic2-main}.

 {Then there exists} a unique solution $f^\kappa$
 to the Boltzmann equation \eqref{eq:bslk} on a time interval \([0,T]\), where \(T>0\) is arbitrary.
Moreover, 
\begin{align}\label{eq:hlmt1-main}
&\int_0^T \iint_{\mathbb{R}\times\mathbb{R}^3}
\left|f^\kappa-M[U^E]\right|^2
\,d\xi\,dx\,dt\nonumber\\
&\qquad\le
C\kappa^{1/2}\Bigl[\kappa^{1/2}\bigl((\varepsilon_1+\delta_0)^2+\delta_0^{1/2}\bigr)T
+
\delta_C\,T^{3/2}
+
\delta_0^2T^{3/2}\Bigr].
\end{align}
Here, the positive constant \(C\) is independent of \(\kappa\) and \(T\).
\end{theorem}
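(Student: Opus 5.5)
The plan is to work in the normalized variables $t=\tau/\kappa$, $x=y/\kappa$ and prove global existence together with uniform-in-time bounds there. We then scale back and split the $L^2$ error into three parts: (i) the perturbation from the composite viscous/kinetic wave; (ii) the difference between the composite wave and the shifted Riemann Maxwellian $M_X[U^E]$; (iii) the difference between $M_X[U^E]$ and $M[U^E]$, which comes from the shifts.

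For the global part, write $f=M+G$ and build the composite profile $\bar U$ from the 1- and 3-Boltzmann shock profiles shifted by $X_1,X_3$, together with the viscous contact wave of \cite{huang2010hydrodynamic}. Define the shifts by the modulation ODEs, and invoke the a priori estimate, Proposition~\ref{prop:priest}. It gives
\[
\sup_t\mathcal E(t)^2+\int_0^\infty \mathcal D\,dt\le C(\mathcal E(0)^2+\delta_0^{1/2}),
\qquad |\dot X_i(t)|\le C\bigl(\|U-\bar U\|_{L^\infty}+\delta_C(1+t)^{-1}\bigr)\ \text{(roughly)}.
\]
Continuation from local existence, using \eqref{eq:initda} and the smallness of $\varepsilon_1,\delta_0$, yields global existence on $[0,\infty)$, hence on $[0,T/\kappa]$. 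Uniqueness is standard in this energy class.

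Next, scale back. An $L^2_x$ norm picks up a factor $\kappa$, so $\int_0^T\!\!\iint |f^\kappa-\bar M^\kappa|^2 \le \kappa\,\cdot$ (normalized quantities over $[0,T/\kappa]$). The macroscopic perturbation bounded by $\sup\mathcal E^2\lesssim \varepsilon_1^2+\delta_0^2$ times length $T/\kappa$ in $t$ gives $\kappa\cdot(\varepsilon_1+\delta_0)^2 T$, matching the first term. The microscopic part of the profiles, $G^{S_i}$, is $O(\delta_i^2)$ and localized, and is absorbed there too. For the profile-versus-Riemann error: each shock profile differs from its jump by $\delta_i e^{-c\delta_i|x-\sigma_i t-X_i|}$, which has $L^2_y$ mass $\kappa\delta_i$ per time. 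The viscous contact wave differs from the contact discontinuity by $\delta_C$ on a diffusive zone of width $\sqrt{\kappa\tau}$ in $y$; integrating in $\tau$ gives $\delta_C\kappa^{1/2}T^{3/2}$.

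Finally, the shift term. Here $|M_X[U^E]-M[U^E]|^2$ integrated in $y$ is $\lesssim \delta_i^2\,\kappa|X_i(\tau/\kappa)|$. We need $\kappa|X_i(t)|\lesssim \kappa^{1/2}\tau^{1/2}$ or better. Using $|\dot X_i|\le C\|U-\bar U\|_{L^\infty}+\dots$ and Cauchy--Schwarz against the dissipation, in particular the weighted localized term $Y_i$ controlled in Proposition~\ref{prop:priest}, we get $|X_i(t)|\le Ct^{1/2}$ in normalized time. Scaled, this is $\kappa|X_i|\le C\kappa^{1/2}\tau^{1/2}$, so integrating gives $\delta_0^2\kappa^{1/2}T^{3/2}$. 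The separation condition \eqref{eq:shiftsp1} ensures the two shifted shocks never meet the contact zone in a way that breaks these estimates.

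\textbf{Main obstacle.} I expect the main obstacle to be the shift bound. It needs an $L^2_t$ dissipation control of the quantity driving $\dot X_i$, uniform in $\kappa$, and this has to be extracted from the $a$-contraction structure rather than from $L^\infty_t$ energy, which would only give linear growth and hence an $O(1)$ error.
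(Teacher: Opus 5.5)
Your proposal follows the paper's proof essentially step by step. It uses global continuation via Proposition~\ref{prop:priest}, the same three-way splitting into the perturbation error, the profile-versus-Riemann error, and the shift error, and it controls the shift by Cauchy--Schwarz as $|X_i^\kappa(\tau)|\le \kappa^{1/2}T^{1/2}\bigl(\int_0^{T/\kappa}|\dot X_i|^2\,dt\bigr)^{1/2}$. The obstacle you flag is in fact immediate, because $\int_0^T\delta_i|\dot X_i|^2\,dt$ already appears on the left-hand side of \eqref{eq:pries}; the only cost is a factor $\delta_i^{-1/2}$ in the constant.
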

}
\section{Preliminaries on the Boltzmann equation and wave profiles}

\setcounter{equation}{0}

\subsection{Macro-micro decomposition and fluid-type formulation}
We now turn to the normalized problem obtained by scaling out the Knudsen number. More precisely, although the original Boltzmann equation was introduced in \eqref{eq:bee0} and \eqref{eq:bslk} with Knudsen number $\kappa$, throughout the a priori analysis we work with the rescaled equation in which $\kappa=1$. The dependence on the original Knudsen number will be recovered at the end of the proof by a scaling argument.

Thus, in this section we consider the one-dimensional Boltzmann equation in Eulerian coordinates
\begin{equation}\label{eq:bee}
    f_t+\xi_1 f_x = \mathcal N(f,f),
\end{equation}
where $\xi=(\xi_1,\xi_2,\xi_3)\in\mathbb{R}^3$, $x\in\mathbb{R}$, and $\mathcal N$ is the hard-sphere collision operator:
\begin{equation}\label{eq:Qdef}
\mathcal N(g,h)(\xi)
=
\iint_{\mathbb{R}^3\times \mathbb{S}_+^2}
|(\xi-\xi_*)\cdot \Omega|
\Bigl(g(\xi')h(\xi_*')-g(\xi)h(\xi_*)\Bigr)
\,d\Omega\,d\xi_*,
\end{equation}
where
\begin{equation}\label{eq:postcoll}
\xi'+\xi_*'=\xi+\xi_*,
\qquad
|\xi'|^2+|\xi_*'|^2=|\xi|^2+|\xi_*|^2,
\end{equation}
and
\[
\mathbb{S}_+^2
:=
\left\{
\Omega\in\mathbb{S}^2:(\xi-\xi_*)\cdot\Omega\ge0
\right\}.
\]

Passing from Eulerian to Lagrangian coordinates, {we continue to denote the spatial coordinate by} $x${. Then} \eqref{eq:bee} {becomes}
\begin{equation}\label{eq:bel}
    f_t+\frac{\xi_1-u_1}{v}f_x = \mathcal N(f,f),
\end{equation}
where $u_1$ is the first component of the fluid velocity and $v=\rho^{-1}$ is the specific volume.

For a solution $f(t,x,\xi)$ of \eqref{eq:bel}, we employ the macro--micro decomposition relative to the local Maxwellian; see \cite{liu2004energy}. The macroscopic part corresponds to the fluid variables, namely the density $\rho$, the momentum $\rho u$, and the total energy $\rho\bigl(e+\frac{|u|^2}{2}\bigr)$, where $u=(u_1,u_2,u_3)$ denotes the fluid velocity and $e$ the internal energy.

More precisely, let $M$ be the normalized local Maxwellian, and define
\begin{equation}\label{eq:mami}
    P_0^{[M]} g := \sum_{i=0}^4 \langle g,\chi^{[M]}_i\rangle \chi^{[M]}_i,
    \qquad
    P_1^{[M]} g := g-P_0^{[M]} g,
\end{equation}
where
\begin{equation}\label{eq:chi}
\chi_0=\frac{M}{\sqrt{\rho}},
\qquad
\chi_i=\frac{\xi_i-u_i}{\sqrt{R\theta\,\rho}}M
\quad (i=1,2,3),
\qquad
\chi_4=
\frac{1}{\sqrt{6\rho}}
\left(
\frac{|\xi-u|^2}{R\theta}-3
\right)M,
\end{equation}
and
\begin{equation}\label{eq:inner}
    \langle g,h\rangle_{M} := \int_{\mathbb{R}^3}\frac{g(\xi)h(\xi)}{M(\xi)}\,d\xi.
\end{equation}

Here, $P_0^{[M]}$ and $P_1^{[M]}$ denote the macroscopic and microscopic projections, respectively. We write
\[
f=M+G,
\qquad
M:=P_0^{[M]}f,
\qquad
G:=P_1^{[M]}f.
\]

In addition, we also define useful norms by

\begin{align}
\norm{g}_{M_\#}^2 := \int_{\mathbb R^3} \frac{|g|^2}{M_\#} d\xi, \qquad \norm{g}_{\nu,M_\#}^2 := \int_{\mathbb R^3} \frac{(1+|\xi|)|g|^2}{M_\#} d\xi
\end{align}

where $M_\#$ is a given global Maxwellian. We sometimes set \(\norm{\cdot}_{L^2_{\xi}(M_\#)}:=\norm{\cdot}_{M_\#}\) to emphasize the function-space norm.

Integrating \eqref{eq:bel} against the collisional invariants
\[
1,\quad \xi_1,\quad \xi_2,\quad \xi_3,\quad \frac{|\xi|^2}{2},
\]
we obtain the macroscopic fluid-type system
\begin{equation}\label{eq:NSl}
\begin{aligned}
& v_t-u_{1x}=0,\\
& u_{1t}+p_x=-\int \xi_1^2 G_x\,d\xi,\\
& u_{it}=-\int \xi_1\xi_i G_x\,d\xi,\qquad i=2,3,\\
& \left(\theta+\frac{|u|^2}{2}\right)_t+(pu_1)_x
=
-\int \frac{1}{2}\xi_1|\xi|^2 G_x\,d\xi.
\end{aligned}
\end{equation}

On the other hand, substituting $f=M+G$ into \eqref{eq:bel} and applying the microscopic projection, we obtain
\begin{equation}\label{eq:mile}
\begin{aligned}
G_t
-\frac{u_1}{v}G_x
+\frac{1}{v}P_1^{[M]}(\xi_1 M_x)
+\frac{1}{v}P_1^{[M]}(\xi_1 G_x)
=
L_MG+\mathcal N(G,G),
\end{aligned}
\end{equation}
where
\[
L_MG:=\mathcal N(M,G)+\mathcal N(G,M)
\]
is the linearized collision operator around $M$.

Applying $L_M^{-1}$ to \eqref{eq:mile}, we decompose the microscopic part into a diffusion term and a remainder term:
\begin{equation}\label{eq:G}
    G
    =
    \frac{1}{v}L_M^{-1}\Bigl(P_1^{[M]}(\xi_1M_x)\Bigr)
    +\Pi_1,
\end{equation}
with
\begin{equation}\label{eq:Ger}
    \Pi_1
    :=
    L_M^{-1}
    \left(
    G_t
    -\frac{u_1}{v}G_x
    +\frac{1}{v}P_1^{[M]}(\xi_1G_x)
    -\mathcal N(G,G)
    \right).
\end{equation}

Substituting \eqref{eq:G} into \eqref{eq:NSl}, we arrive at the fluid-type system
\begin{equation}\label{eq:NSF1}
\begin{aligned}
& v_t-u_{1x}=0,\\
& u_{1t}+p_x
=
\frac{4}{3}\left(\frac{\mu(\theta)u_{1x}}{v}\right)_x
-\int \xi_1^2 \Pi_{1x}\,d\xi,\\
& u_{it}
=
\left(\frac{\mu(\theta)u_{ix}}{v}\right)_x
-\int \xi_1\xi_i \Pi_{1x}\,d\xi,
\qquad i=2,3,\\
& \left(\theta+\frac{|u|^2}{2}\right)_t+(pu_1)_x
=
\left(\frac{\alpha_{\rm{th}}(\theta)\theta_x}{v}\right)_x
+\frac{4}{3}\left(\frac{\mu(\theta)u_1u_{1x}}{v}\right)_x
\\
&\qquad \qquad \qquad \qquad \qquad +\sum_{i=2}^3 \left(\frac{\mu(\theta)u_i u_{ix}}{v}\right)_x
-\int \frac{1}{2}\xi_1|\xi|^2 \Pi_{1x}\,d\xi.
\end{aligned}
\end{equation}
Here $\mu(\theta)$ and $\alpha_{\rm{th}}(\theta)$ denote the viscosity and heat conductivity, respectively. {{For hard-sphere collisions,} $\frac{\alpha_{\rm{th}}(\theta)}{\mu(\theta)}$ {is constant; see} \cite{kawashima1979fluid}{.} so we will use the following notation $\gamma:=\frac{\alpha_{\rm{th}}(\theta)}{\mu(\theta)}$.} 

\subsection{Viscous contact wave}
For the viscous contact wave, following \cite{huang2008contact} and \cite{huang2010hydrodynamic}, we consider the self-similar solution $\Theta^{\mathrm{sim}}$ to the nonlinear diffusion equation
\begin{align}
\begin{aligned}
& \Theta_t^{\mathrm{sim}}
=
\frac{9p_*}{10}
\left(
\frac{\alpha_{\rm{th}}(\Theta^{\mathrm{sim}})\Theta_x^{\mathrm{sim}}}{\Theta^{\mathrm{sim}}}
\right)_x,\\
& \Theta^{\mathrm{sim}}(t,-\infty)=\theta_*,
\qquad
\Theta^{\mathrm{sim}}(t,\infty)=\theta^*
\end{aligned}
\end{align}
where $U_*=(v_*,u_*,\theta_*)$ is the left state and $U^*=(v^*,u^*,\theta^*)$ is the right state. 
Using this self-similar profile, we define the viscous contact wave $(v^C,u^C,\theta^C)(t,x)$ by
\begin{align}
\begin{aligned}\label{eq:ctident0}
& v^C(t,x):=\frac{2\Theta^{\mathrm{sim}}(t,x)}{3p_*},\\
& u_1^C(t,x):=u_{1*}+\frac{3\alpha_{\rm{th}}(\Theta^{\mathrm{sim}})\Theta_x^{\mathrm{sim}}}{5\Theta^{\mathrm{sim}}},\\
& u_i^C(t,x)\equiv 0,\qquad i=2,3,\\
& \theta^C(t,x):=\Theta^{\mathrm{sim}}(t,x).
\end{aligned}
\end{align}
Then $(v^C,u^C,\theta^C)$ satisfies
\begin{align}
\begin{aligned}\label{eq:ctident}
& v_t^C-u_{1x}^C=0,\\
& u_{1t}^C+p_x^C=\frac{4}{3}\left(\frac{\mu(\theta^C)u_{1x}^C}{v^C}\right)_x+Q_1^C,\\
& u_i^C=0,\qquad i=2,3,\\
& \theta_t^C+p^C u_{1x}^C
=
\left(\frac{\alpha_{\rm{th}}(\theta^C)\theta_x^C}{v^C}\right)_x
+\frac{4}{3}\mu(\theta^C)\frac{(u_{1x}^C)^2}{v^C}
+Q_2^C,
\end{aligned}
\end{align}
where
\begin{align}
\begin{aligned}
Q_1^C
 :=
u_{1t}^C-\frac{4}{3}\left(\frac{\mu(\theta^C)u_{1x}^C}{v^C}\right)_x,\ \ \   
Q_2^C
 :=
-\frac{4}{3}\mu(\theta^C)\frac{(u_{1x}^C)^2}{v^C}.
\end{aligned}
\end{align}
{Moreover,} 
\begin{align}
\label{eq:contact-Q-bounds}
Q_1^C=O(1)\delta_C(1+t)^{-3/2}e^{-c_0x^2/(1+t)},
\qquad
Q_2^C=O(1)\delta_C(1+t)^{-2}e^{-2c_0x^2/(1+t)}.
\end{align}

  The construction above is standard; see \cite{huang2010hydrodynamic}. For the convenience of the reader, we record the Gaussian bounds needed below.

\begin{lemma}\label{lem:contact-gaussian}
Let
\[
\delta_C:=|\theta^*-\theta_*|.
\]
Then there exists a positive constant $c_0>0$ such that
\begin{align}
\label{eq:Theta-sim-x}
|\Theta_x^{\mathrm{sim}}(t,x)|
\le
C\delta_C(1+t)^{-1/2}e^{-c_0x^2/(1+t)},
\end{align}
and consequently we have \eqref{eq:contact-Q-bounds}:
\begin{align} \notag
Q_1^C=O(1)\delta_C(1+t)^{-3/2}e^{-c_0x^2/(1+t)},
\qquad
Q_2^C=O(1)\delta_C(1+t)^{-2}e^{-2c_0x^2/(1+t)}.
\end{align}
\end{lemma}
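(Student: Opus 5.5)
The plan is to reduce everything to an ODE in the self-similar variable and read the Gaussian decay off an explicit integrating factor. We write
\[
\Theta^{\mathrm{sim}}(t,x)=\phi(\zeta),\qquad \zeta:=\frac{x}{\sqrt{1+t}},
\]
which is the self-similar form used in \cite{huang2008contact,huang2010hydrodynamic}. We take the existence of such a profile $\phi$, with $\phi(-\infty)=\theta_*$ and $\phi(+\infty)=\theta^*$, from those works.

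\textbf{Step 1: an integrable first-order equation.} Substituting into the diffusion equation gives
\[
-\frac{\zeta}{2}\phi'=\frac{9p_*}{10}\,w',\qquad w:=\frac{\alpha_{\rm th}(\phi)\phi'}{\phi}.
\]
Since $\phi'=\phi w/\alpha_{\rm th}(\phi)$, this is a linear first-order equation for $w$:
\[
w'(\zeta)=-\frac{5\zeta\,\phi}{9p_*\,\alpha_{\rm th}(\phi)}\,w .
\]
Hence
\[
w(\zeta)=w(0)\exp\Bigl(-\int_0^\zeta \frac{5s\,\phi(s)}{9p_*\alpha_{\rm th}(\phi(s))}\,ds\Bigr).
\]
Two consequences follow. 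First, $w$, and therefore $\phi'$, never vanishes unless it vanishes identically. So $\phi$ is monotone and takes values between $\theta_*$ and $\theta^*$. Second, by continuity and positivity of $\alpha_{\rm th}$ on that compact range, $\phi/\alpha_{\rm th}(\phi)$ is bounded above and below by positive constants. This gives two-sided Gaussian bounds
\[
c|w(0)|e^{-C\zeta^2}\le |w(\zeta)|\le |w(0)|e^{-c\zeta^2},
\]
and the same bounds hold for $|\phi'|$, with different constants.

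\textbf{Step 2: size of $w(0)$.} Because $\phi'$ has a fixed sign,
\[
\delta_C=\int_{\mathbb R}|\phi'|\,d\zeta\ge c|w(0)|\int_{\mathbb R} e^{-C\zeta^2}\,d\zeta,
\]
so $|w(0)|\le C\delta_C$. Combining this with Step 1 gives $|\phi'(\zeta)|\le C\delta_C e^{-c_0\zeta^2}$. Since $\Theta^{\mathrm{sim}}_x=(1+t)^{-1/2}\phi'(\zeta)$, this is exactly \eqref{eq:Theta-sim-x}.

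\textbf{Step 3: the bounds on $Q_1^C$ and $Q_2^C$.} By \eqref{eq:ctident0},
\[
u_1^C-u_{1*}=\frac35\,(1+t)^{-1/2}w(\zeta).
\]
The $x$-derivative and $t$-derivative of this expression are therefore
\[
u^C_{1x}=\frac35(1+t)^{-1}w'(\zeta),\qquad u^C_{1t}=-\frac{3}{10}(1+t)^{-3/2}\bigl(w+\zeta w'\bigr).
\]
The ODE gives $|w'|\le C|\zeta||w|$, and the polynomial factors $|\zeta|$ and $\zeta^2$ are absorbed by slightly lowering the Gaussian exponent. This yields
\[
|u_{1x}^C|\le C\delta_C(1+t)^{-1}e^{-c_0\zeta^2},\qquad |u_{1t}^C|\le C\delta_C(1+t)^{-3/2}e^{-c_0\zeta^2}.
\]
For the second-order term $\bigl(\mu(\theta^C)u^C_{1x}/v^C\bigr)_x$, we differentiate once more in $x$. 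Each extra derivative produces a factor $(1+t)^{-1/2}$ times either $w''$ or $\phi'$. We control $w''$ by differentiating the ODE for $w$, which gives a polynomial in $\zeta$ times $w$ plus a term involving $\phi'$, both with Gaussian decay. This term is therefore $O(\delta_C)(1+t)^{-3/2}e^{-c_0x^2/(1+t)}$, which gives the bound on $Q_1^C$.

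Finally, $Q_2^C$ is quadratic in $u^C_{1x}$. Hence
\[
|Q_2^C|\le C\delta_C^2(1+t)^{-2}e^{-2c_0\zeta^2}\le C\delta_C(1+t)^{-2}e^{-2c_0\zeta^2},
\]
using $\delta_C\le1$.

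\textbf{Main obstacle.} The only delicate point is Step 2. We need the lower Gaussian bound together with the fixed sign of $\phi'$ in order to turn the total jump $\delta_C$ into a bound on the amplitude $w(0)$. The remaining steps are routine bookkeeping of Gaussian factors.
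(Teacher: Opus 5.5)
Your argument is correct. Note that the paper does not prove this lemma: it calls the construction standard, points to \cite{huang2010hydrodynamic}, and states the bounds, with \cite{huang2008contact} cited for the companion Lemma~\ref{lem:contact-wave-estimate0}. So the paper's only route is citation.

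You instead give a self-contained derivation from the self-similar ODE. Setting $w=\alpha_{\rm th}(\phi)\phi'/\phi$ turns the profile equation into the linear equation $w'=-\frac{5\zeta\phi}{9p_*\alpha_{\rm th}(\phi)}\,w$. Its explicit solution gives monotonicity and two-sided Gaussian bounds. The fixed sign of $\phi'$ then converts $\int_{\mathbb R}|\phi'|\,d\zeta=\delta_C$ into the amplitude bound $|w(0)|\le C\delta_C$. The bounds on $Q_1^C$ and $Q_2^C$ follow from $u_1^C-u_{1*}=\frac35(1+t)^{-1/2}w(\zeta)$ together with one further differentiation of the ODE.

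Compared with the citation, your route buys the following:
\begin{itemize}
\item The only external input is existence of the profile.
\item The constants visibly depend only on $\alpha_{\rm th}$ on the range of $\phi$, so they are uniform for small $\delta_C$.
\item The same bookkeeping also gives the higher-derivative and tail bounds of Lemma~\ref{lem:contact-wave-estimate0}.
\item It shows that $Q_2^C$ is actually $O(\delta_C^2)$; the stated $O(\delta_C)$ is a weakening.
\end{itemize}

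Two small points to make explicit in a write-up. First, the single exponent $c_0$ in the statement comes from taking the minimum of the exponents produced at each step, since absorbing the polynomial factors $|\zeta|,\zeta^2$ lowers the exponent. This is harmless, because a Gaussian bound with exponent $c$ implies one with any smaller exponent. Second, you use $\zeta$ and $\phi$ for the self-similar variable and profile, which clashes with the paper's perturbation variables $(\phi,\psi,\zeta)$, so rename them.
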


\begin{lemma}[\cite{huang2008contact}]\label{lem:contact-wave-estimate0}
    Let $\delta_C$ denote the {strength} of the contact discontinuity as 
    \[
        \delta_C := |v^*-v_*|
    \]
    The viscous contact discontinuity $(v^C,u_1^C,\theta^C)(t,x)$ defined in \eqref{eq:ctident0} satisfies
    \begin{align} \label{eq:ctident1}
        (v^C-v_*,u_1^C - u_{1*}, \theta^C -\theta_*) & = O(1) \delta_C e^{-\frac{c_0x^2}{1+t}}, && \forall x<0, \notag\\
        (v^C-v^*,u_1^C - u_1^*, \theta^C - \theta^*) & = O(1) \delta_C e^{-\frac{c_0x^2}{1+t}}, && \forall x>0, \notag\\
        (\partial_x^n v^C, \partial_x^n \theta^C)(t,x) &= O(1)\delta_C (1+t)^{-\frac{n}{2}}e^{-\frac{c_0x^2}{1+t}}, && \forall x\in \R, \quad n=1,2,\ldots, \notag\\
        \partial_x^n u_1^C(t,x) &= O(1) \delta_C(1+t)^{-\frac{1+n}{2}}e^{-\frac{c_0x^2}{1+t}}, && \forall x\in \R, \quad n=1,2, \ldots, 
    \end{align}
    where $c_0>0$ is a constant.
\end{lemma}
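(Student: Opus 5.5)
The plan is to exploit the self-similar structure of $\Theta^{\mathrm{sim}}$ and reduce every estimate in \eqref{eq:ctident1} to pointwise bounds on a single profile of one variable. By uniqueness of the self-similar solution of the nonlinear diffusion problem, we may write
\[
\Theta^{\mathrm{sim}}(t,x)=\Phi(\zeta),\qquad \zeta:=\frac{x}{\sqrt{1+t}}.
\]
Here $\Phi$ solves the ODE
\[
-\tfrac{\zeta}{2}\Phi'=\tfrac{9p_*}{10}\Bigl(\tfrac{\alpha_{\rm th}(\Phi)\Phi'}{\Phi}\Bigr)'
\]
with $\Phi(-\infty)=\theta_*$ and $\Phi(+\infty)=\theta^*$. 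Since the contact discontinuity has $p_*=p^*$, we have $v_*=2\theta_*/(3p_*)$ and $v^*=2\theta^*/(3p_*)$. Hence $|v^*-v_*|$ and $|\theta^*-\theta_*|$ are comparable, and it suffices to measure everything in terms of $|\theta^*-\theta_*|$.

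\textbf{Step 1: Gaussian bound on $\Phi'$.} Set $w:=\alpha_{\rm th}(\Phi)\Phi'/\Phi$. The ODE becomes the linear first-order equation $w'=-\frac{5\zeta}{9p_*}\frac{\Phi}{\alpha_{\rm th}(\Phi)}\,w$, whose solution is
\[
w(\zeta)=w(0)\exp\Bigl(-\int_0^\zeta \frac{5s\,\Phi(s)}{9p_*\alpha_{\rm th}(\Phi(s))}\,ds\Bigr).
\]
In particular $\Phi$ is monotone and lies between $\theta_*$ and $\theta^*$. On that compact range the ratio $\Phi/\alpha_{\rm th}(\Phi)$ is bounded above and below by positive constants, so $|\Phi'(\zeta)|\le C|\Phi'(0)|e^{-2c_0\zeta^2}$. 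Integrating over $\mathbb R$ gives $\theta^*-\theta_*=\int\Phi'$. Because $\Phi'$ has one sign and is bounded below by a Gaussian, this yields $|\Phi'(0)|\le C\delta_C$. This step is the source of \eqref{eq:Theta-sim-x} in Lemma~\ref{lem:contact-gaussian}.

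\textbf{Step 2: end-state bounds.} For $\zeta<0$ we write $\Phi-\theta_*=\int_{-\infty}^\zeta\Phi'$. The Gaussian tail estimate then gives $|\Phi-\theta_*|\le C\delta_C e^{-c_0\zeta^2}$, after shrinking $c_0$ to absorb polynomial factors; the case $\zeta>0$ is symmetric. This gives the first two lines of \eqref{eq:ctident1} for $\theta^C$, and for $v^C$ through the linear relation $v^C=2\theta^C/(3p_*)$.

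For $u_1^C$ we observe that $u_{1*}=u_1^*$ across a contact discontinuity. Therefore, on both sides,
\[
u_1^C-u_{1*}=\tfrac35\,\alpha_{\rm th}(\Theta)\Theta_x/\Theta=O(1)\,\delta_C(1+t)^{-1/2}e^{-c_0x^2/(1+t)},
\]
which is even better than the stated bound.

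\textbf{Step 3: higher derivatives.} We have $\partial_x^n\Theta^{\mathrm{sim}}=(1+t)^{-n/2}\Phi^{(n)}(\zeta)$. Differentiating the first-order relation for $w$ inductively expresses $\Phi^{(n)}$ as a polynomial in $\zeta$ and in the lower derivatives $\Phi',\dots,\Phi^{(n-1)}$, with coefficients that are smooth functions of $\Phi$. Hence $|\Phi^{(n)}|\le C_n\delta_C e^{-c_0\zeta^2}$, after again reducing $c_0$ so that the Gaussian absorbs the polynomial weights. This gives the bounds for $\partial_x^nv^C$ and $\partial_x^n\theta^C$.

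Since $u_1^C$ is a smooth function of $\Theta$ times $\Theta_x$, the Leibniz rule shows that $\partial_x^nu_1^C$ is a sum of products of derivatives of $\Theta$ containing $n+1$ derivatives in total. This produces the factor $(1+t)^{-(n+1)/2}$ together with at least one factor bounded by $\delta_C$; the remaining factors are $O(1)$.

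The only genuinely nontrivial point is the existence and uniqueness of the self-similar profile, which I would take from \cite{huang2008contact}. Beyond that, the main care is in tracking the constant $c_0$: it must be reduced at most finitely many times, once to absorb the polynomial factors in Step 2 and once for each derivative order $n$ needed. Everything else follows from the explicit exponential formula for $w$.
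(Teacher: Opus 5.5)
Your proof is correct. The paper gives no argument for this lemma; it quotes it from \cite{huang2008contact}. What you wrote is the standard derivation behind that citation, and you carry out all of it except the existence of the self-similar profile, which you import exactly as the paper does. The steps are: pass to the profile in $x/\sqrt{1+t}$; integrate the linear first-order equation for the flux $w=\alpha_{\rm th}(\Phi)\Phi'/\Phi$ to get two-sided Gaussian bounds; normalize $|\Phi'(0)|\lesssim\delta_C$ through $\int\Phi'=\theta^*-\theta_*$; and transfer to $v^C$ and $u_1^C$ using $p_*=p^*$ and $u_{1*}=u_1^*$.

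One small tightening is worth making. The lemma asserts a single $c_0$ for all $n$, but shrinking $c_0$ once per derivative order, as you propose, would make it depend on $n$. You can avoid this by showing inductively that $\Phi^{(n)}$ is a polynomial of degree at most $n-1$ in $\zeta$ and in $\Phi'$ alone. Its coefficients are smooth in $\Phi$, and every monomial is divisible by $\Phi'$. Hence $|\Phi^{(n)}|\le C_n(1+|\zeta|)^{n-1}|\Phi'|\le C_n\delta_C e^{-c_0\zeta^2}$ after a single halving of the exponent $2c_0$, uniformly in $n$.

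Similarly, $u_1^C-u_{1*}=\tfrac35(1+t)^{-1/2}w(\zeta)$ and $w^{(n)}=-\tfrac{5}{9p_*}\bigl(\zeta\Phi^{(n)}+(n-1)\Phi^{(n-1)}\bigr)$. These give the bounds on $\partial_x^n u_1^C$ directly, without the Leibniz bookkeeping.

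In Step 2 no shrinking is needed either, since $s^2\ge\zeta^2+(s-|\zeta|)^2$ for $s\ge|\zeta|$.
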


It follows in particular from Lemma~\ref{lem:contact-wave-estimate0} that 
\[
    (v^C,u_1^C,\theta^C)(t,x) \to (v_*,u_{1*},\theta_*) \quad \text{as }x\to -\infty,
\]

and 
\[
    (v^C,u_1^C,\theta^C)(t,x) \to (v^*,u_1^*,\theta^*) \quad \text{as }x\to +\infty.
\]
{Furthermore, restoring the dependence on the heat-conductivity coefficient in terms of the Knudsen number \(\kappa\), the viscous contact wave $(v^{C,\kappa},u_1^{C,\kappa},\theta^{C,\kappa})$ with respect to $\kappa>0$ satisfies, for any \(1\le p<\infty\), 
\begin{equation}\label{eq:contma}
    \Bigl\| (v^{C,\kappa},u_1^{C,\kappa},\theta^{C,\kappa})(t,\cdot)-(v^c,u_1^c,\theta^c)(t,\cdot) \Bigr\|_{L^p(\R)}\le C\delta_C\kappa^{\frac{1}{2p}}(1+t)^{\frac{1}{2p}}.
\end{equation}
Consequently, for each fixed \(T>0\), 
\begin{align}\label{eq:hydrolimit-contact}
    (v^{C,\kappa},u_1^{C,\kappa},\theta^{C,\kappa})\longrightarrow (v^c,u_1^c,\theta^c)\quad \text{in }L^\infty(0,T;L^p(\R)) \quad \text{as }\kappa \to 0.
\end{align}
Here $(v^c,u_1^c,\theta^c)$ is the inviscid contact discontinuity. This convergence should be understood on a fixed finite time interval.}

\subsection{Boltzmann shock profile}
We briefly recall the mechanism behind these standard results. The steady Boltzmann
profile equation may be viewed as an infinite-dimensional dynamical system near an end
Maxwellian. For small-amplitude shocks, one constructs a local invariant/center manifold
around equilibrium and reduces the profile equation to a finite-dimensional slow dynamics
whose leading part is the corresponding compressible Navier--Stokes shock ODE. The
Boltzmann shock profile is then obtained as a heteroclinic orbit on this reduced
manifold connecting the two end states. In this framework, the exponential decay,
monotonicity, and higher-derivative bounds for the macroscopic profile follow from the
reduced shock dynamics, while the bounds for the microscopic component \(G^{S_i}\) and
the remainder \(\Pi_1^{S_i}\) are recovered through the macro--micro decomposition and
the coercive invertibility of the linearized collision operator on the microscopic
subspace; see \cite{LiuYu2013ARMA,pogan2018center}.

{
For \(i=1,3\), set
\[
z_i:=x-\sigma_i t,
\]
where \(\sigma_i\) is the \(i\)-th shock speed. We denote by
\[
F^{S_i,\kappa}(t,x,\xi):=F^{S_i,\kappa}(z_i,\xi)
=
F^{S_i,\kappa}(x-\sigma_i t,\xi)
\]
the traveling profile of the steady \(i\)-th Boltzmann shock profile in
Lagrangian coordinates. Hence, when acting on \(F^{S_i}\), we have
\[
\partial_t=-\sigma_i\partial_{z_i},
\qquad
\partial_x=\partial_{z_i}.
\]
Then \(F^{S_i,\kappa}\) satisfies
\begin{equation}\label{eq:bse}
-\sigma_i (F^{S_i,\kappa})_{z_i}
-\frac{u_1^{S_i,\kappa}-\xi_1}{v^{S_i,\kappa}}(F^{S_i,\kappa})_{z_i}
=
\frac{1}{\kappa}\mathcal N(F^{S_i},F^{S_i}).
\end{equation}
Here, the end states are 
\begin{align*}
& F^{S_1,\kappa}(-\infty,\xi)=M[U_-],
\qquad
F^{S_1,\kappa}(+\infty,\xi)=M[U_*], 
\qquad u_*=(u_{1*},0,0),\\
& F^{S_3,\kappa}(-\infty,\xi)=M[U^*],
\qquad
F^{S_3,\kappa}(+\infty,\xi)=M[U_+], \qquad u^*=(u^{1*},0,0).
\end{align*}

As in \eqref{eq:mami}, we decompose the shock profile into its macroscopic and microscopic parts:
\begin{align*}
& M^{S_i,\kappa}(z,\xi):=M_{[v^{S_i,\kappa},u^{S_i,\kappa},\theta^{S_i,\kappa}]}(z,\xi),\\
& F^{S_i,\kappa}=M^{S_i,\kappa}+G^{S_i,\kappa},
\qquad
M^{S_i,\kappa}=P_0^{[M^{S_i,\kappa}]}F^{S_i,\kappa},
\qquad
G^{S_i,\kappa}:=P_1^{[M^{S_i,\kappa}]}F^{S_i,\kappa},
\end{align*}
where
\begin{align*}
& P_0^{[M^{S_i,\kappa}]}g:=\sum_{k=0}^4\langle g,\chi_k^{S_i^0,\kappa}\rangle_{M^{S_i}}\chi_k^{S_i^0,\kappa},
\qquad
P_1^{[M^{S_i,\kappa}]}g:=g-P_0^{[M^{S_i,\kappa}]}g, \\
& \chi_0^{S_i^0,\kappa}=\frac{M^{S_i,\kappa}}{\sqrt{\rho^{S_i,\kappa}}},
\qquad
\chi_j^{S_i^0,\kappa}=\frac{\xi_j-u_j^{S_i,\kappa}}{\sqrt{R\theta^{S_i,\kappa}\rho^{S_i,\kappa}}}M^{S_i,\kappa},
\quad j=1,2,3,\\
& \chi_4^{S_i^0,\kappa}
=
\frac{1}{\sqrt{6\rho^{S_i,\kappa}}}
\left(
\frac{|\xi-u^{S_i,\kappa}|^2}{R\theta^{S_i,\kappa}}-3
\right)M^{S_i,\kappa}.
\end{align*}

Set $P_j:=P_j^{[M]},\,P_j^{S_i^0,\kappa}:=P_j^{[M^{S_i,\kappa}]}$ for $j=0,1$. By the macro--micro decomposition, the macroscopic part satisfies the fluid-type system
\begin{equation}\label{eq:NSls}
\begin{aligned}
& -\sigma_i (v^{S_i,\kappa})_{z_i}-(u_1^{S_i,\kappa})_{z_i}=0,\\
& -\sigma_i (u_1^{S_i,\kappa})_{z_i}+(p^{S_i,\kappa})_{z_i}
=
\frac{4}{3}\kappa\left(\frac{\mu(\theta^{S_i,\kappa})u_{1z_i}^{S_i,\kappa}}{v^{S_i,\kappa}}\right)_{z_i}
-\int \xi_1^2(\Pi_1^{S_i,\kappa})_{z_i}\,d\xi,\\
& u_j^{S_i,\kappa}\equiv0,\qquad j=2,3,\\
& -\sigma_i (\theta^{S_i,\kappa})_{z_i}+p^{S_i,\kappa}(u_1^{S_i,\kappa})_{z_i}
=
\kappa\left(\frac{\alpha_{\rm{th}}(\theta^{S_i,\kappa})\theta_{z_i}^{S_i,\kappa}}{v^{S_i,\kappa}}\right)_{z_i}
+\frac{4}{3}\kappa\mu(\theta^{S_i,\kappa})\frac{(u_{1z_i}^{S_i,\kappa})^2}{v^{S_i}}
\\
&\qquad \qquad \qquad \qquad \qquad \qquad \qquad -\int \xi_1\left(\frac{|\xi|^2}{2}-u_1^{S_i,\kappa}\xi_1\right)(\Pi_1^{S_i,\kappa})_{z_i}\,d\xi,
\end{aligned}
\end{equation}
while the microscopic part satisfies
\begin{equation}\label{eq:Gers}
\begin{aligned}
& -\sigma_i G_{z_i}^{S_i,\kappa}
-\frac{u_1^{S_i,\kappa}}{v^{S_i,\kappa}}G_{z_i}^{S_i,\kappa}
+\frac{1}{v^{S_i,\kappa}}P_1^{S_i^0,\kappa}(\xi_1 G_{z_i}^{S_i,\kappa})
+\frac{1}{v^{S_i,\kappa}}P_1^{S_i^0,\kappa}(\xi_1 M_{z_i}^{S_i,\kappa})
=
\frac{1}{\kappa}L_{M^{S_i,\kappa}}G^{S_i,\kappa}+\frac{1}{\kappa}\mathcal N(G^{S_i,\kappa},G^{S_i,\kappa}),\\
& G^{S_i,\kappa}
=
\kappa\frac{1}{v^{S_i,\kappa}}(L_{M^{S_i,\kappa}})^{-1}\!\Bigl(P_1^{S_i^0,\kappa}(\xi_1M_{z_i}^{S_i,\kappa})\Bigr)
+\Pi_1^{S_i,\kappa},\\
& \Pi_1^{S_i,\kappa}
:=
\kappa(L_{M^{S_i,\kappa}})^{-1}
\left(
-\sigma_i G_{z_i}^{S_i,\kappa}
-\frac{u_1^{S_i,\kappa}}{v^{S_i,\kappa}}G_{z_i}^{S_i,\kappa}
+\frac{1}{v^{S_i,\kappa}}P_1^{S_i^0,\kappa}(\xi_1G_{z_i}^{S_i,\kappa})
-\frac{1}{\kappa}\mathcal N(G^{S_i,\kappa},G^{S_i,\kappa})
\right).
\end{aligned}
\end{equation}

The following properties of the shock profile will be used later.

\begin{lemma}[\cite{HuangWangWangYang2013},\cite{LiuYu2013ARMA}]\label{lem:bs}
For a given right end state $(v_R,u_{1R},0,0,\theta_R)$, there exists a constant $C>0$ such that the following holds.
For any left end state $(v_L,u_{1L},0,0,\theta_L)$ sufficiently close to $(v_R,u_{1R},0,0,\theta_R)$ and connected to it by an $i$-shock curve, $i=1,3$, with shock strength
\[
\delta_i:=|v_R-v_L|
\sim |u_{1R}-u_{1L}|
\sim |\theta_R-\theta_L|,
\]
there exists a unique shock profile
\[
(v^{S_i,\kappa}(z),u^{S_i,\kappa}(z),\theta^{S_i,\kappa}(z))
\]
associated with \eqref{eq:bse}. Moreover,
\begin{align*}
& (v^{S_1,\kappa})'(z)<0,\qquad (u_1^{S_1,\kappa})'(z)<0,\qquad (\theta^{S_1,\kappa})'(z)>0,\\
& (v^{S_3,\kappa})'(z)>0,\qquad (u_1^{S_3,\kappa})'(z)<0,\qquad (\theta^{S_3,\kappa})'(z)<0,
\qquad \forall z\in\mathbb{R},\\
& |(v^{S_i,\kappa}(z)-v_L,\ u_1^{S_i,\kappa}(z)-u_{1L},\ \theta^{S_i,\kappa}(z)-\theta_L)|
\le
C\delta_i e^{-C\frac{\delta_i}{\kappa}|z|},
\qquad z<0,\\
& |(v^{S_i,\kappa}(z)-v_R,\ u_1^{S_i,\kappa}(z)-u_{1R},\ \theta^{S_i,\kappa}(z)-\theta_R)|
\le
C\delta_i e^{-C\frac{\delta_i}{\kappa}|z|},
\qquad z>0,\\
& |((v^{S_i,\kappa})'(z),(u_1^{S_i,\kappa})'(z),(\theta^{S_i,\kappa})'(z))|
\le
C\frac{\delta_i^2}{\kappa} e^{-C\frac{\delta_i}{\kappa}|z|},
\qquad \forall z\in\mathbb{R},\\
& |\partial_z^k ((v^{S_i,\kappa})(z),(u_1^{S_i,\kappa})(z),(\theta^{S_i,\kappa})(z))|
\le
C\frac{\delta_i^{k-1}}{\kappa^{k-1}}|((v^{S_i,\kappa})'(z),(u_1^{S_i,\kappa})'(z),(\theta^{S_i,\kappa})'(z))|, \quad k\ge 2.
\end{align*}
\end{lemma}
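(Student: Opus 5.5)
The plan is to reduce to the case $\kappa=1$ by scaling and then follow the invariant-manifold construction of Liu--Yu and Pogan--Zumbrun, reading off every quantitative property from a one-dimensional reduced ODE. If $F^{S_i}(z,\xi)$ solves \eqref{eq:bse} with $\kappa=1$, then $F^{S_i,\kappa}(z,\xi):=F^{S_i}(z/\kappa,\xi)$ solves \eqref{eq:bse} with general $\kappa$. Under this scaling, decay of the form $e^{-C\delta_i|z|}$ becomes $e^{-C\frac{\delta_i}{\kappa}|z|}$, and each $z$-derivative gains a factor $\kappa^{-1}$. So all the stated bounds, including the higher-derivative bound $|\partial_z^k(\cdot)|\le C(\delta_i/\kappa)^{k-1}|(\cdot)'|$, follow from the $\kappa=1$ statements with the constants unchanged. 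It therefore suffices to treat $\kappa=1$.

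For $\kappa=1$, I view the steady equation \eqref{eq:bse} as an evolution problem in $z$ for the perturbation $F^{S_i}-M[U_R]$, where $U_R:=(v_R,u_{1R},0,0,\theta_R)$. I would invoke the center-manifold theorem of \cite{LiuYu2013ARMA,pogan2018center} in the weighted space $L^2_\xi(M_\#^{-1})$. Its hypotheses are the coercivity of $L_{M}$ on the microscopic subspace and a small spectral gap of $(\xi_1-u_1)/v+\sigma_i$ restricted to the macroscopic subspace. I take the shock speed $\sigma_i$ close to the $i$-th characteristic speed $\lambda_i(U_R)$, with $|\sigma_i-\lambda_i|=O(\delta_i)$ by Rankine--Hugoniot. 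Then the operator $\bigl(\sigma_i+(\xi_1-u_{1})/v\bigr)$ acting on $\ker L$ has exactly one eigenvalue crossing zero, so the center manifold near $M[U_R]$ is one-dimensional, up to the trivial conserved directions. Those conserved directions are eliminated using the integrated Rankine--Hugoniot relations, namely the integrated form of the first, second and fourth equations of \eqref{eq:NSls}.

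Parametrizing the manifold by the $i$-th characteristic coordinate $w$, the reduced flow takes the viscous-Burgers form
\[
w'=c_i\bigl(w-w_L\bigr)\bigl(w-w_R\bigr)\bigl(1+O(\delta_i)\bigr),\qquad c_i\neq0,
\]
where $c_i\neq0$ by genuine nonlinearity of the $i$-th field. For small $\delta_i$ this scalar ODE has, up to translation, a unique heteroclinic orbit from $w_L$ to $w_R$. It is strictly monotone, satisfies $|w-w_{L/R}|\le C\delta_ie^{-C\delta_i|z|}$ and $|w'|\le C\delta_i^2e^{-C\delta_i|z|}$, and its higher derivatives obey $|w^{(k)}|\le C\delta_i^{k-1}|w'|$, obtained by differentiating the ODE. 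Uniqueness is understood modulo translation, fixed by a normalization such as $w(0)=(w_L+w_R)/2$.

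The macroscopic variables $(v,u_1,\theta)$ are smooth functions of $w$ whose derivatives along the manifold are $r_i(U)+O(\delta_i)$, with $r_i$ the $i$-th right eigenvector. Inspecting the sign pattern of $r_1$ and $r_3$ for the Lagrangian Euler system then gives the stated signs. For $S_1$ we have $v'<0$, $u_1'<0$, $\theta'>0$; for $S_3$ we have $v'>0$, $u_1'<0$, $\theta'<0$. These hold pointwise because the $O(\delta_i)$ corrections cannot reverse a leading-order nonzero sign. The amplitude relations $\delta_i\sim|u_{1R}-u_{1L}|\sim|\theta_R-\theta_L|$ follow from the same eigenvector structure. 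The microscopic part $G^{S_i}$ and the remainder $\Pi_1^{S_i}$ are graphs over $w$ on the manifold and are controlled via \eqref{eq:Gers} and the invertibility of $L_M$, with bounds $O(|w'|)$ and $O(\delta_i|w'|)$ respectively. These bounds feed back consistently into \eqref{eq:NSls}.

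The main obstacle is the rigorous center-manifold reduction itself. The difficulty is that the steady Boltzmann equation is ill-posed as a $z$-evolution: the operator $\xi_1$ is unbounded, its inverse is singular near $\xi_1=-\sigma_i v+u_1$, and the spectrum is not sectorial. I would not redo this step. Instead I would cite the weighted-space construction of \cite{pogan2018center}, and \cite{LiuYu2013ARMA} for positivity, and only verify the reduced-ODE computation, the monotonicity and the $\delta_i$-dependent constants.
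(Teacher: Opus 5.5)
Your proposal is correct and follows essentially the paper's route. Both arguments rest on the Liu--Yu invariant-manifold reduction of the steady Boltzmann equation to a scalar Burgers-type ODE $\eta_y=a_0(\eta)(\eta-\eta_-)(\eta-\eta_+)$, from which monotonicity, the exponential tails and the bounds $|\partial_z^k U|\lesssim\delta_i^{k-1}|U'|$ (with $|U'|\sim|\eta_y|$) are read off.

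There are a few differences. You treat the $1$-shock directly through the sign pattern of $r_1$, whereas the paper obtains it from the $3$-shock by the reflection $z\mapsto -z$, $u_1\mapsto -u_1$. You also make the $\kappa$-scaling $F^{S_i,\kappa}(z)=F^{S_i}(z/\kappa)$ explicit, which the paper leaves implicit. Finally, a small correction: the transport coefficient in \eqref{eq:bse} is $(\xi_1-u_1)/v-\sigma_i$, not $(\xi_1-u_1)/v+\sigma_i$.
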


\begin{lemma} [\cite{HuangWangWangYang2013},\cite{LiuYu2013ARMA}]\label{lem:bs1}
Under the assumptions of Lemma~\ref{lem:bs}, there exists a unique Boltzmann shock profile $F^{S_i,\kappa}(z,\xi)$ solving \eqref{eq:bse}. Moreover,
\begin{align*}
& \left(\int \frac{(1+|\xi|)|G^{S_i,\kappa}|^2}{M_\#}\,d\xi\right)^{1/2}
\le
C\delta_i^2 e^{-C\frac{\delta_i}{\kappa}|z|},\\
& \left(\int \frac{(1+|\xi|)|\partial_z^k G^{S_i,\kappa}|^2}{M_\#}\,d\xi\right)^{1/2}
\le
C\frac{\delta_i^k}{\kappa^{k}}
\left(\int \frac{(1+|\xi|)|G^{S_i,\kappa}|^2}{M_\#}\,d\xi\right)^{1/2}, \quad k\ge1.
\end{align*}
where \(M_\#\) is a fixed global Maxwellian close to the shock profile. In addition, {after setting} $\kappa=1$ {for the a priori estimates,} we have the following estimate:
\begin{align*}
& \left|\int \xi_1\varphi_\ell(\xi)\Pi_1^{S_i}\,d\xi\right|
\le
C\delta_i
\left(\int \frac{(1+|\xi|)|G^{S_i}|^2}{M_\#}\,d\xi\right)^{1/2},
\qquad \ell=1,4,\\
& \left|\int \xi_1\varphi_\ell(\xi)(\Pi_1^{S_i})_x\,d\xi\right|
\le
C\delta_i^2
\left(\int \frac{(1+|\xi|)|G^{S_i}|^2}{M_\#}\,d\xi\right)^{1/2},
\qquad \ell=1,4,\\
& \int \xi_1\varphi_\ell(\xi)\Pi_1^{S_i}\,d\xi=0,
\qquad \ell=2,3,
\end{align*}
where \(M_\#\) is a fixed global Maxwellian close to the shock profile, and
\[
\varphi_\ell(\xi)=\xi_\ell,\quad \ell=1,2,3,
\qquad
\varphi_4(\xi)=\frac{|\xi|^2}{2}.
\]
{Here and below, we suppress the subscript} $\kappa${.}
\end{lemma}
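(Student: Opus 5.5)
The plan is to treat existence and the pointwise profile bounds as input from the invariant-manifold construction of Liu--Yu and the positivity result of Liu--Yu \cite{LiuYu2004CMP,LiuYu2013ARMA}, exactly as in Lemma~\ref{lem:bs}. What I will actually derive are the quantitative microscopic bounds, using the macro--micro structure \eqref{eq:Gers}. First I fix $M_\#=M[U_\#]$ with $\tfrac12\max\theta^{S_i}<\theta_\#<\min\theta^{S_i}$ and $U_\#$ close to the end states; this is possible because $\delta_i$ is small. With this choice, $L_{M^{S_i}}^{-1}$ is bounded on the microscopic subspace in the weighted norms $\norm{\cdot}_{M_\#}$ and $\norm{\cdot}_{\nu,M_\#}$, in the form
\[
\norm{\nu^{-1/2}\cdots}\to\norm{\nu^{1/2}\cdots},
\]
uniformly in $z$. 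I will use this standard Grad--Caflisch fact throughout.

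The bound on $G^{S_i}$ comes from the second line of \eqref{eq:Gers}. The leading term $\kappa (v^{S_i})^{-1}L^{-1}P_1(\xi_1 M^{S_i}_z)$ is controlled pointwise by $C\kappa|(v,u,\theta)^{S_i\prime}|$, which is at most $C\delta_i^2e^{-C\delta_i|z|/\kappa}$ by Lemma~\ref{lem:bs}. The remainder $\Pi_1^{S_i}$ I will treat as a perturbation. It equals $\kappa L^{-1}$ applied to $O(|G_z|)$ terms plus $L^{-1}\mathcal N(G,G)$. In the exponentially weighted space $\sup_z e^{C\delta_i|z|/\kappa}\norm{\cdot}_{\nu,M_\#}$, the scaling $\partial_z\sim\delta_i/\kappa$ makes the first contribution of size $O(\delta_i)\cdot\norm{G}$, and the quadratic term is $O(\delta_i^2)\norm{G}$. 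A bootstrap, which can be run inside the center-manifold construction, then closes $\norm{G}\le C\delta_i^2e^{-C\delta_i|z|/\kappa}$. For the derivative bounds I differentiate the microscopic equation $k$ times. Each $z$-derivative falling on macroscopic coefficients produces a factor $\delta_i/\kappa$ by the last estimate of Lemma~\ref{lem:bs}. Induction on $k$, with the same absorption of $\Pi$-type terms, gives $\norm{\partial_z^kG}\le C(\delta_i/\kappa)^k\norm{G}$.

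For the $\Pi_1^{S_i}$ moment estimates I set $\kappa=1$. Since $\int\xi_1\varphi_\ell\Pi_1\,d\xi$ is bounded by $C\norm{\Pi_1}_{\nu,M_\#}$, it suffices to show $\norm{\Pi_1}_{\nu,M_\#}\le C\delta_i\norm{G}_{\nu,M_\#}$. This follows from the formula in \eqref{eq:Gers}: $\norm{G_z}\le C\delta_i\norm{G}$ by the previous step, and $\norm{L^{-1}\mathcal N(G,G)}\le C\norm{G}^2\le C\delta_i^2\norm{G}$. Differentiating once gains one more factor $\delta_i$, either from $\partial_z$ acting on $G$ or from $\partial_z$ acting on $M^{S_i}$ inside $L_{M^{S_i}}^{-1}$ and $P_1^{S_i^0}$. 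That gives the $\delta_i^2$ bound for $(\Pi_1^{S_i})_x$.

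The vanishing for $\ell=2,3$ is a symmetry argument. The reflections $\xi_j\mapsto-\xi_j$ for $j=2,3$ commute with $\mathcal N$ and with the transport part of \eqref{eq:bse}, and they preserve the end Maxwellians because $u_2=u_3=0$. By uniqueness, $F^{S_i}$ is therefore even in $\xi_2$ and in $\xi_3$. Hence $M^{S_i}$, $G^{S_i}$, $L^{-1}$ and $\Pi_1^{S_i}$ all preserve this parity, and $\xi_1\xi_\ell\Pi_1^{S_i}$ is odd in $\xi_\ell$, so it integrates to zero. I expect the main obstacle to be the first bootstrap: making the constants in the $\nu$-weighted bounds for $L_{M^{S_i}}^{-1}$ and $\mathcal N$ uniform in $z$ and $\delta_i$ relative to the single fixed $M_\#$. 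I would handle this by quoting the Caflisch--Nicolaenko and Liu--Yu weighted estimates, rather than reproving the infinite-dimensional center-manifold reduction.
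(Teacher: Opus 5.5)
There is a genuine gap in how you derive the two bounds on $G^{S_i,\kappa}$.

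\emph{Where the bootstrap fails.} In \eqref{eq:Gers}, $\Pi_1^{S_i,\kappa}$ contains $\kappa L_{M^{S_i,\kappa}}^{-1}$ applied to \emph{first $z$-derivatives} of $G^{S_i,\kappa}$. In your space $\sup_z e^{C\delta_i|z|/\kappa}\norm{\cdot}_{\nu,M_\#}$ nothing controls $G_z$ by $G$. So the claim that ``the scaling $\partial_z\sim\delta_i/\kappa$'' makes this term $O(\delta_i)\norm{G}$ already assumes the $k=1$ case of the second estimate you are trying to prove.

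The induction in $k$ has the same defect. The term $\partial_z^k\Pi_1^{S_i,\kappa}$ contains $\kappa L^{-1}\partial_z^{k+1}G^{S_i,\kappa}$, so every step needs the next derivative and the hierarchy never closes. You also cannot recover $G_z$ from the microscopic equation, because its transport coefficient $(\xi_1-u_1^{S_i}-\sigma_i v^{S_i})/v^{S_i}$ vanishes on a hyperplane in $\xi$. The steady problem is therefore a singular ODE in $z$, which is exactly why an invariant-manifold reduction is needed.

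Even granting derivative control, comparing with $\norm{G}$ rather than with $\kappa|M_z|$ requires a lower bound $\norm{G}_{\nu,M_\#}\gtrsim\kappa|(u_{1z}^{S_i},\theta_z^{S_i})|$, which you do not state. So the real obstacle is this loss of one derivative, not the uniformity of constants with respect to $M_\#$ that you flag.

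\emph{The missing idea.} The paper's proof uses the fact that on the Liu--Yu invariant manifold the microscopic part is slaved to a scalar slow variable. The profile is parametrized by $\eta$ solving $\eta_y=a_0(\eta)(\eta-\eta_-)(\eta-\eta_+)$, and $G^{S_3}=\sqrt{M_\#}\,\Gamma(\chi(\eta(y)),\xi)$ with $\chi=b(\eta,\chi)(\eta-\eta_-)(\eta-\eta_+)$. Hence $\norm{G^{S_3}}_{\nu,M_\#}=|\chi|\,c_0(\chi)\sim|\eta_y|\lesssim\delta_3^2e^{-c\delta_3|y|}$. Each $y$-derivative produces a factor $\chi'(\eta)\eta_y$ with $|\chi'(\eta)|=O(\delta_3)$ and $|\eta_y|\sim|\chi|$, so the ratio bounds follow from the chain rule with no derivative loss. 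The $1$-shock case then follows from the $3$-shock case by the reflection $z\mapsto-z$, $u_1\mapsto-u_1$. Once you invoke this structure, as you say you would, the macro--micro bootstrap is unnecessary.

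Given the bounds on $G^{S_i}$, the rest of your proposal is sound. The estimate $\norm{\Pi_1^{S_i}}_{\nu,M_\#}\le C\delta_i\norm{G^{S_i}}_{\nu,M_\#}$ and its differentiated version follow from \eqref{eq:Gers} and the weighted bounds for $L_M^{-1}$ and $\mathcal N$, as you describe. Your argument that reflection in $\xi_2,\xi_3$ together with uniqueness kills the $\ell=2,3$ moments is also correct.
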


\begin{lemma}[\cite{wang2025}]\label{lem:bs2}
Under the assumptions of Lemmas~\ref{lem:bs},\ref{lem:bs1} and after normalizing for a priori estimates, the following hold:
\begin{align*}
& |(u_1^{S_1})'-\sigma_*(v^{S_1})'|
\le C\delta_1 |(v^{S_1})'|,
\qquad
|(u_1^{S_3})'+\sigma^*(v^{S_3})'|
\le C\delta_3 |(v^{S_3})'|,\\
& |(\theta^{S_1})'+p_*(v^{S_1})'|
\le C\delta_1 |(v^{S_1})'|,
\qquad
|(\theta^{S_3})'+p^*(v^{S_3})'|
\le C\delta_3 |(v^{S_3})'|,
\end{align*}
and
\begin{align*}
& \left|
\frac{p^{S_1}-p_-}{v^{S_1}-v_-}
-
\frac{p^{S_1}-p_*}{v^{S_1}-v_*}
-
\frac{5p_*}{9v_*^2}
\left(
\frac{10\mu(\theta_*)-9\alpha_{\rm{th}}(\theta_*)}{10\mu(\theta_*)+3\alpha_{\rm{th}}(\theta_*)}+3
\right)
\right|
\le C\delta_1^2,\\
& \left|
\frac{p^{S_3}-p_+}{v^{S_3}-v_+}
-
\frac{p^{S_3}-p^*}{v^{S_3}-v^*}
-
\frac{5p^*}{9(v^*)^2}
\left(
\frac{10\mu(\theta^*)-9\alpha_{\rm{th}}(\theta^*)}{10\mu(\theta^*)+3\alpha_{\rm{th}}(\theta^*)}+3
\right)
\right|
\le C\delta_3^2
\end{align*}
where $p_*=\frac{2\theta_*}{3v_*}$, $p^*=\frac{2\theta^*}{3v^*}$, $\sigma_*=\sqrt{\frac{5p_*}{3v_*}}$, and $\sigma^*=\sqrt{\frac{5p^*}{3v^*}}$.
\end{lemma}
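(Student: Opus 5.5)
The plan is to derive everything from the integrated profile system \eqref{eq:NSls} with $\kappa=1$. Viscous and kinetic terms are treated as perturbations controlled by Lemma~\ref{lem:bs} and Lemma~\ref{lem:bs1}. Throughout, the key smallness is
\[
|\partial_z^2(v^{S_i},u_1^{S_i},\theta^{S_i})|\le C\delta_i\,|(v^{S_i},u_1^{S_i},\theta^{S_i})'| .
\]
We also need $\|G^{S_i}\|_{\nu,M_\#}\le C|(v^{S_i},u_1^{S_i},\theta^{S_i})'|$ pointwise. This comes from \eqref{eq:Gers}: the leading part of $G^{S_i}$ is $L^{-1}P_1(\xi_1 M_z^{S_i})$, which is linear in the macroscopic derivatives, and $\Pi_1^{S_i}$ is of higher order.

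\textbf{Derivative relations.} The first equation of \eqref{eq:NSls} gives the exact identity $(u_1^{S_i})'=-\sigma_i(v^{S_i})'$. The Rankine--Hugoniot relations give $\sigma_1^2=-\frac{p_*-p_-}{v_*-v_-}=\sigma_*^2+O(\delta_1)$ with $\sigma_1<0$, and analogously $\sigma_3=\sigma^*+O(\delta_3)$. Here we use that the Hugoniot curve is tangent to the isentrope at second order, and that $-\partial_v p|_{s}=\frac{5p}{3v}$ for a monatomic gas. This yields the two velocity bounds.

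For the temperature, consider the energy equation $-\sigma_i\theta'+p\,u_1'=R_i$. The right-hand side $R_i$ consists of $\kappa(\alpha_{\rm th}\theta'/v)'$, of the quadratic term $\mu (u_1')^2/v$, and of the $\Pi_1$-moment. By the bounds above together with Lemma~\ref{lem:bs1}, each of these is $O(\delta_i)|(v',\theta')|$. Substituting $u_1'=-\sigma_i v'$ gives $\theta'=-p\,v'+O(\delta_i)|(v',\theta')|$. A one-line absorption argument shows that $|\theta'|\lesssim|v'|$, which closes the estimate. Finally, replacing $p^{S_i}$ by $p_*$ or $p^*$ costs another $O(\delta_i)$.

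\textbf{Difference-quotient identity.} We write $p^{S_i}$ along the profile as a function of $v=v^{S_i}$, which is legitimate since $v^{S_i}$ is monotone. We then Taylor expand this function to second order about the end state $v_*$ (respectively $v^*$). The difference of the two chord slopes then equals the quadratic coefficient times $(v_*-v_-)$, plus cubic errors. Hence the constant in the statement is identified through the curvature $\frac{d^2p}{dv^2}$ along the orbit, normalized by the shock strength as implicit in the statement.

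This curvature is not that of the isentrope. To compute it, we integrate the momentum and energy equations of \eqref{eq:NSls} from $\pm\infty$ and obtain the reduced Navier--Stokes shock ODE system
\[
\tfrac43\mu\,u_1'/v=-\sigma(u_1-u_{1*})+(p-p_*)+\cdots,\qquad \alpha_{\rm th}\,\theta'/v=\cdots .
\]
We then solve for $d\theta/dv$ along the orbit to first order in $(v-v_*)$. Since the ratio $\gamma=\alpha_{\rm th}/\mu$ is constant, the slope $d\theta/dv$ becomes $-p_*+\lambda(v-v_*)$, where $\lambda$ is an explicit rational function of $\mu$ and $\alpha_{\rm th}$. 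This produces the factor $\frac{10\mu-9\alpha_{\rm th}}{10\mu+3\alpha_{\rm th}}$. The remaining term $+3$ comes from the intrinsic curvature of $p=\frac{2\theta}{3v}$. The $\Pi_1$-moments and higher-order Taylor terms contribute $O(\delta_i^2)$ by Lemma~\ref{lem:bs1}.

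\textbf{Main obstacle.} We expect the hardest step to be this second-order center-manifold-type computation. The slope $d\theta/dv$ must be pinned down exactly at order $(v-v_*)$, and at the same time the kinetic remainder $\Pi_1^{S_i}$ must be shown to enter only at the next order. Our first attempt would be to eliminate $u_1'$ and $\theta'$ via the integrated equations, and then to linearize the resulting $2\times2$ slow system at the end state, reading $\lambda$ off from its slow eigendirection.
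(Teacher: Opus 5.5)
Your first part (the velocity and temperature relations) is fine, and your identification of the constant is correct. Set
\[
w=v-v_*,\quad q=\theta-\theta_*+p_*w,\quad D=\sigma_1^2w+p-p_*,\quad N=q-\tfrac{\sigma_1^2}{2}w^2,\quad \gamma=\alpha_{\rm th}/\mu .
\]
The integrated momentum equation gives $-\frac43\mu\sigma_1v'/v=D+O(\delta_1|v'|)$. The integrated total-energy equation, minus $u_1$ times the momentum equation, gives $\alpha_{\rm th}\theta'/v=-\sigma_1N+O(\delta_1|v'|)$. One has exactly $p-p_*=\frac{2q}{3v}-\frac{5p_*w}{3v}$. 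The quadratic balance then gives $q=\beta\epsilon_1w+cw^2+\cdots$, where $\epsilon_1=\sigma_1^2-\sigma_*^2$, $\beta$ is a constant, and $c=\frac{5p_*}{6v_*}\frac{10-9\gamma}{10+3\gamma}$. Hence $p-p_*=a_*w+Kw^2+\cdots$ with
\[
K=\frac{5p_*}{3v_*^2}+\frac{2c}{3v_*}=\frac{5p_*}{9v_*^2}\Bigl(\frac{10-9\gamma}{10+3\gamma}+3\Bigr).
\]
You are also right that the displayed constant must be multiplied by the shock strength. The left-hand side is $(v_--v_*)$ times the second divided difference of $p(v)$ at $v_*,v,v_-$. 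So the target is $K(v_--v_*)$, not $K(v_*-v_-)$ as you wrote (resp.\ $K(v_+-v^*)$ for the $3$-shock). Your route is genuinely different from the paper's. The paper does not argue directly: it imports the $3$-shock estimates from \cite{wang2025}, which rest on the smooth scalar invariant-manifold parametrization of \cite{LiuYu2013ARMA}. It then transfers them to the $1$-shock by the reflection $z\mapsto-z$, $u_1\mapsto-u_1$.

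The gap is in how you control the remainder in the chord-slope identity. A one-sided Taylor expansion of $p$ in $v$ about $v_*$ ``with cubic errors'' requires $p$ (equivalently $\theta$) to be $C^3$ in $v$ along the orbit, uniformly in $\delta_1$. Lemmas~\ref{lem:bs}--\ref{lem:bs1} do not give this. They bound $z$-derivatives, and passing to $v$-derivatives divides by $v'$, which vanishes at the end states; for instance $\frac{d^2\theta}{dv^2}=\frac{\theta''v'-\theta'v''}{(v')^3}$ is only bounded by $C\delta_1/|v'|$. The integrated equations do not pin down the slope either. The $\Pi_1$-moments are only known to be $O(\delta_1|v'|)$, i.e.\ of relative size $\delta_1$, so they can move $d\theta/dv$ by $O(\delta_1)$, the same order as $\lambda(v-v_*)$. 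Linearizing at the end state yields only the tangent direction, not the curvature.

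What you can get rigorously is the graph. Use $|dq/dw|\le C\delta_1$ (from your first part), $|v'|\le C\delta_1|w|$, and the identity $\theta'+p_*v'=-\frac{v}{\mu\sigma_1}\bigl[\frac{\sigma_1^2}{\gamma}N+\frac34p_*D\bigr]+O(\delta_1|v'|)$. These give $p-p_*=a_*w+Kw^2+R_*$ with only $|R_*|\le C\delta_1^2|w|$. But then $\frac{p-p_-}{v-v_-}$ contains $\frac{R_*(v)-R_*(v_-)}{v-v_-}$, which is uncontrolled as $z\to-\infty$.

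The fix within your framework is a two-sided argument. Make the same expansion about $v_-$, so that, uniformly along the orbit,
\[
\frac{p-p_*}{v-v_*}=a_*+K(v-v_*)+O(\delta_1^2),\qquad \frac{p-p_-}{v-v_-}=a_-+K(v-v_-)+O(\delta_1^2).
\]
Evaluating each at the opposite end gives $a_--a_*=2K(v_--v_*)+O(\delta_1^2)$. Subtracting the two expansions then yields $K(v_--v_*)+O(\delta_1^2)$. This uses only the pointwise bounds of Lemma~\ref{lem:bs1}. The alternative is to import the smooth invariant-manifold structure, as the paper implicitly does.
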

}
\section{Main proposition : a priori estimates}
\setcounter{equation}{0}
\subsection{Composite approximate wave}
We now introduce the composite approximate wave, consisting of the viscous contact wave and the two viscous shock waves shifted by the dynamical shifts $X_i(t)$, $i=1,3$, to be determined later. For convenience, for any macroscopic profile $h^{S_i}$ we write
\[
\bigl(h^{S_i}\bigr)^{-X_i}(t,x)
:=
h^{S_i}(x-\sigma_i t-X_i(t)), \qquad i=1,3
\]
and
the microscopic profile $g^{S_i}$ we write
\[
\bigl(g^{S_i}\bigr)^{-X_i}(t,x,\xi)
:=
g^{S_i}(x-\sigma_i t-X_i(t),\xi), \qquad i=1,3
\]
We define
\begin{equation}\label{eq:unvar}
\begin{aligned}
\bar v(t,x)
&:=
\bigl(v^{S_1}\bigr)^{-X_1}(t,x)
+
v^C(t,x)
+
\bigl(v^{S_3}\bigr)^{-X_3}(t,x)
-v_*-v^*,\\
\bar u(t,x) &:= (\bar u_1, 0, 0)(t,x), \\
\bar u_1(t,x)
&:=
\bigl(u_1^{S_1}\bigr)^{-X_1}(t,x)
+
u_1^C(t,x)
+
\bigl(u_1^{S_3}\bigr)^{-X_3}(t,x)
-u_{1*}-u_1^*,\\
\bar\theta(t,x)
&:=
\bigl(\theta^{S_1}\bigr)^{-X_1}(t,x)
+
\theta^C(t,x)
+
\bigl(\theta^{S_3}\bigr)^{-X_3}(t,x)
-\theta_*-\theta^*,\\
\bar U &:= \bigl(\bar v, \bar u_1,0,0,\bar \theta \bigr),\\
\bar p (t,x)&:= p(\bar v(t,x), \bar \theta(t,x)),\\
\shockp{p}{i} (t,x)&:=\shockw{p}{i}(t,x),\qquad i=1,3,\\
\bar M(t,x,\xi)
&:=M[\bar U](\xi),\\
\bar f(t,x,\xi)
&:=
\bar M(t,x,\xi)
+
\bigl(G^{S_1}\bigr)^{-X_1}(t,x,\xi)
+
\bigl(G^{S_3}\bigr)^{-X_3}(t,x,\xi).
\end{aligned}
\end{equation}

We record the elementary identity
\begin{equation}\label{eq:rbsws}
\begin{aligned}
\partial_t\left( \bigl(v^{S_i}\bigr)^{-X_i}\right)
&=
-(\sigma_i+\dot X_i(t))\,\partial_x \left( \bigl(v^{S_i}\bigr)^{-X_i} \right)\\
&=
-\dot X_i(t)\,\partial_x \left(\bigl(v^{S_i}\bigr)^{-X_i}\right)
+\partial_x \left(\bigl(u_1^{S_i}\bigr)^{-X_i}\right),
\qquad i=1,3,
\end{aligned}
\end{equation}
where in the second equality we used the profile equation
\[
-\sigma_i (v^{S_i})'-(u_1^{S_i})'=0.
\]
By combining the equations for the viscous contact wave and the two shifted shock profiles {(see} \eqref{eq:ctident} {and} \eqref{eq:NSls}{)}, we obtain that the composite ansatz $(\bar v,\bar u,\bar\theta)$ satisfies

\begin{align}\label{eq:ansasy}
&\bar v_t
+\dot X_1(t)\partial_x \shockw{v}{1}
+\dot X_3(t)\partial_x \shockw{v}{3}
-\bar u_{1x}
=0,\nonumber\\
&\bar u_{1t}
+\dot X_1(t)\partial_x \shockw{u_1}{1}
+\dot X_3(t)\partial_x \shockw{u_1}{3}
+\bar p_x\nonumber\\
&\qquad \qquad =
\frac{4}{3}\left(\frac{\mu(\bar\theta)\bar u_{1x}}{\bar v}\right)_x
+Q_1
-\sum_{i\in\{1,3\}}\int \xi_1^2\partial_x\shockw{\Pi_1}{i}\,d\xi,\nonumber\\
&\bar u_i=0,\qquad i=2,3,\nonumber\\
&\bar\theta_t
+\dot X_1(t)\partial_x \shockw{\theta}{1}
+\dot X_3(t)\partial_x \shockw{\theta}{3}
+\bar p\,\bar u_{1x}\nonumber\\
&\,=
\left(\frac{\alpha_{\rm{th}}(\bar\theta)\bar\theta_x}{\bar v}\right)_x
+\frac{4}{3}\mu(\bar\theta)\frac{\bar u_{1x}^2}{\bar v}
+Q_2 -\sum_{i\in\{1,3\}}\int \xi_1\left(\frac{|\xi|^2}{2}-\bigl(u_1^{S_i}\bigr)^{-X_i}\xi_1\right)\partial_x \shockw{\Pi_1}{i}\,d\xi,
\end{align}
where
\begin{equation}\label{eq:Q1Q2}
\begin{aligned}
Q_1
&:=
Q_1^C
+\Bigl(
\bar p
-\shockp{p}{1}
-\shockp{p}{3}
-p^C
\Bigr)_x\\
&\qquad
+\frac{4}{3}\left(
\frac{\mu\bigl(\shockw{\theta}{1}\bigr)\bigl(\partial_x\shockw{u_{1}}{1}\bigr)}{\shockw{v}{1}}
+
\frac{\mu\bigl(\shockw{\theta}{3}\bigr)\bigl(\partial_x\shockw{u_1}{3}\bigr)}{\shockw{v}{3}}
+
\frac{\mu(\theta^C)u_{1x}^C}{v^C}
-
\frac{\mu(\bar\theta)\bar u_{1x}}{\bar v}
\right)_x\\
&=: Q_1^C + Q_1^I,
\end{aligned}
\end{equation}
and
\begin{equation}\label{eq:Q2Q2}
\begin{aligned}
Q_2
&:=
Q_2^C
+
\Bigl(
\bar p\,\bar u_{1x}
-\shockp{p}{1}\bigl(\partial_x\shockw{u_1}{1}\bigr)
-\shockp{p}{3}\bigl(\partial_x\shockw{u_1}{3}\bigr)
-p^Cu_{1x}^C
\Bigr)\\
&\quad
+\left(
\frac{\alpha_{\rm{th}}\bigl(\shockw{\theta}{1}\bigr)\bigl(\partial_x\shockw{\theta}{1}\bigr)}{\shockw{v}{1}}
+
\frac{\alpha_{\rm{th}}\bigl(\shockw{\theta}{3}\bigr)\bigl(\partial_x\shockw{\theta}{3}\bigr)}{\shockw{v}{3}}
+
\frac{\alpha_{\rm{th}}(\theta^C)\theta_x^C}{v^C}
-
\frac{\alpha_{\rm{th}}(\bar\theta)\bar\theta_x}{\bar v}
\right)_x\\
&\quad
+\frac{4}{3}\mu(\theta^C)\frac{(u_{1x}^C)^2}{v^C}
+\frac{4}{3}\mu\bigl(\shockw{\theta}{1}\bigr)\frac{\bigl(\partial_x\shockw{u_1}{1}\bigr)^2}{\shockw{v}{1}}
+\frac{4}{3}\mu\bigl(\shockw{\theta}{3}\bigr)\frac{\bigl(\partial_x\shockw{u_1}{3}\bigr)^2}{\shockw{v}{3}}
\\
&\quad -\frac{4}{3}\mu(\bar\theta)\frac{\bar u_{1x}^2}{\bar v}\\
&=: Q_2^C + Q_2^I
\end{aligned}
\end{equation}

\subsection{Perturbation variables}
To implement the relative entropy method, we introduce the perturbation variables around the composite approximate wave \eqref{eq:unvar}. Define
\begin{equation}\label{eq:pevar}
\begin{aligned}
\phi(t,x)
&:=v(t,x)-\bar v(t,x),\\
\psi (t,x) &= (\psi_1,\psi_2,\psi_3)(t,x):=u(t,x)-\bar u(t,x),\\
\zeta (t,x) &:= \theta(t,x)-\bar \theta(t,x), \\
\widetilde f(t,x,\xi)
    &:=f(t,x,\xi)
    -
    \bigl(F^{S_1}\bigr)^{-X_1}(t,x,\xi)
    -
    \bigl(F^{S_3}\bigr)^{-X_3}(t,x,\xi),\\
\widetilde G(t,x,\xi)
&:=
G(t,x,\xi)
-
\bigl(G^{S_1}\bigr)^{-X_1}(t,x,\xi)
-
\bigl(G^{S_3}\bigr)^{-X_3}(t,x,\xi).
\end{aligned}
\end{equation}
We summarize the notation used in the perturbation formulation. 
The function \(f\) denotes the dynamic solution to the Boltzmann equation, 
whereas \(\bar f\) denotes the composite approximate distribution. 
We denote by \(\bar M\) the local Maxwellian associated with the composite 
macroscopic profile. After subtracting the shock contributions, we write 
\(\widetilde f\) for the shock-subtracted distribution and 
\(\widetilde G\) for the corresponding shock-subtracted microscopic perturbation.

Subtracting the composite ansatz system \eqref{eq:ansasy} from the full system, we obtain
\begin{equation}\label{eq:pesy1}
\begin{aligned}
& \phi_t
-\dot X_1(t)\left(\bigl(v^{S_1}\bigr)^{-X_1}\right)_x
-\dot X_3(t)\left(\bigl(v^{S_3}\bigr)^{-X_3}\right)_x
-\psi_{1x}=0,\\
& \psi_{1t}
-\dot X_1(t)\left(\bigl(u_1^{S_1}\bigr)^{-X_1}\right)_x
-\dot X_3(t)\left(\bigl(u_1^{S_3}\bigr)^{-X_3}\right)_x
+(p-\bar p)_x\\
& \qquad=
\frac{4}{3}\left(
\frac{\mu(\theta)u_{1x}}{v}
-
\frac{\mu(\bar\theta)\bar u_{1x}}{\bar v}
\right)_x
-Q_1
-\int \xi_1^2\Bigl(\Pi_1-\bigl(\Pi_1^{S_1}\bigr)^{-X_1}-\bigl(\Pi_1^{S_3}\bigr)^{-X_3}\Bigr)_x\,d\xi,
\end{aligned}
\end{equation}
\begin{equation}\label{eq:pesy2}
\begin{aligned}
& \psi_{it}
=
\left(\frac{\mu(\theta)\psi_{ix}}{v}\right)_x
-\int \xi_1\xi_i \Pi_{1x}\,d\xi,
\qquad i=2,3,\\
& \zeta_t
-\dot X_1(t)\left(\bigl(\theta^{S_1}\bigr)^{-X_1}\right)_x
-\dot X_3(t)\left(\bigl(\theta^{S_3}\bigr)^{-X_3}\right)_x
+\bigl(pu_{1x}-\bar p\,\bar u_{1x}\bigr)\\
& \qquad=
\left(
\frac{\alpha_{\rm{th}}(\theta)\theta_x}{v}
-
\frac{\alpha_{\rm{th}}(\bar\theta)\bar\theta_x}{\bar v}
\right)_x
+\frac{4}{3}\left(
\frac{\mu(\theta)u_{1x}^2}{v}
-
\frac{\mu(\bar\theta)\bar u_{1x}^2}{\bar v}
\right)
+\frac{\mu(\theta)(\psi_{2x}^2+\psi_{3x}^2)}{v}
-Q_2\\
& \qquad\quad
-\int \xi_1\frac{|\xi|^2}{2}\Bigl(\Pi_1-\bigl(\Pi_1^{S_1}\bigr)^{-X_1}-\bigl(\Pi_1^{S_3}\bigr)^{-X_3}\Bigr)_x\,d\xi
+\sum_{j=2}^3 \psi_j\int \xi_1\xi_j\Pi_{1x}\,d\xi\\
& \qquad\quad
+\left(
u_1\int \xi_1^2\Pi_{1x}\,d\xi
-\sum_{i\in\{1,3\}}\bigl(u_1^{S_i}\bigr)^{-X_i}\int \xi_1^2\left(\bigl(\Pi_1^{S_i}\bigr)^{-X_i}\right)_x\,d\xi
\right),
\end{aligned}
\end{equation}
and
\begin{equation}\label{eq:pesy3}
\begin{aligned}
\widetilde G_t-L_M\widetilde G
&=
\dot X_1(t)\left(\bigl(G^{S_1}\bigr)^{-X_1}\right)_x
+\dot X_3(t)\left(\bigl(G^{S_3}\bigr)^{-X_3}\right)_x
+\frac{u_1}{v}\widetilde G_x
-\frac{1}{v}P_1(\xi_1\widetilde G_x)\\
&\quad
+\sum_{i\in\{1,3\}}\left(\frac{u_1}{v}-\frac{(u_1^{S_i})^{-X_i}}{(v^{S_i})^{-X_i}}\right)\left(\bigl(G^{S_i}\bigr)^{-X_i}\right)_x\\
&\quad
-\sum_{i\in\{1,3\}}\left(
\frac{1}{v}P_1\bigl(\xi_1\partial_x\shockw{G}{i}\bigr)
-\frac{1}{(v^{S_i})^{-X_i}}P_1^{S_i}\bigl(\xi_1\partial_x\shockw{G}{i}\bigr)
\right)\\
&\quad
-\frac{1}{v}P_1(\xi_1 M_x)
+\sum_{i\in\{1,3\}}\frac{1}{(v^{S_i})^{-X_i}}P_1^{S_i}\bigl(\xi_1\partial_x\shockw{M}{i}\bigr)
\\
&\quad
+\mathcal N(\widetilde G,\widetilde G) +\mathcal N\bigl((G^{S_1})^{-X_1},\widetilde G\bigr) +\mathcal N\bigl(\widetilde G,(G^{S_1})^{-X_1}\bigr)
\\
&\quad +\mathcal N\bigl((G^{S_3})^{-X_3},\widetilde G\bigr)
+\mathcal N\bigl(\widetilde G,(G^{S_3})^{-X_3}\bigr)\\
&\quad +\mathcal N\bigl((G^{S_1})^{-X_1},(G^{S_3})^{-X_3}\bigr) +\mathcal N\bigl((G^{S_3})^{-X_3},(G^{S_1})^{-X_1}\bigr)
\\
&\quad +\sum_{i=1,3}\bigl(L_M-L_i^S\bigr)\bigl(G^{S_i}\bigr)^{-X_i}
\end{aligned}
\end{equation}

where

\begin{align*}
L_i^{S_i}:=L_{\left(M^{S_i}\right)^{-X_i}},\quad i=1,3, \quad P_j^{S_i} := P_j^{[\shockw{M}{i}]}, \quad i=1,3,\,j=0,1.
\end{align*}

\subsection{Dynamical shifts and Weight function}
To handle the two shock waves, we introduce weight functions associated with the $1$-shock and the $3$-shock; see \cite{huang2026timeasymptoticstabilitycompositewave,han2023large}. Define
\begin{equation}\label{eq:aweight}
\begin{aligned}
a_1(z)&:=1+\frac{1}{\sqrt{\delta_1}}\bigl(v^{S_1}(z)-v_-\bigr),\\
a_3(z)&:=1+\frac{1}{\sqrt{\delta_3}}\bigl(v^{S_3}(z)-v^*\bigr),
\end{aligned}
\end{equation}
where
\[
\delta_1:=|v_--v_*|,
\qquad
\delta_3:=|v_+-v^*|
\]
denote the strengths of the $1$-shock and the $3$-shock, respectively.

For the shifted shock profiles, we set
\begin{equation}\label{eq:wefun}
a(t,x)
:=
a_1(x-\sigma_1 t-X_1(t))
+
a_3(x-\sigma_3 t-X_3(t))
-1.
\end{equation}
Before defining the shift coefficients, we recall the constants
\(p_*, p^*, \sigma_*,\sigma^*\), which are fixed throughout the argument.
\begin{align*}
p_* = \frac{2\theta_*}{3v_*},\quad p^*=\frac{2\theta^*}{3v^*},\quad \sigma_* = \sqrt{\frac{5p_*}{3v_*}},\quad \sigma^*=\sqrt{\frac{5p^*}{3v^*}}.
\end{align*}
These constants are chosen according to the admissible range of the shock
strengths and the associated shock speeds, and they will be used in the
definition of the coefficients \(\mathfrak m_1\) and \(\mathfrak m_3\) below.
We now define the dynamical shifts $(X_1,X_3)$ by the system of ordinary differential equations
\begin{equation}\label{eq:shift}
\begin{aligned}
\dot X_1(t)
&=
-\frac{\mathfrak m_1}{\delta_1}
\int_{\mathbb R}
a
\left(
\left((u_1^{S_1})^{-X_1}\right)_x\psi_1
+
\frac{\left((v^{S_1})^{-X_1}\right)_x\,\y{p}}{\y{v}}\phi
+
\frac{\left((\theta^{S_1})^{-X_1}\right)_x}{\y{\theta}}\zeta
\right)\,dx,\\
\dot X_3(t)
&=
-\frac{\mathfrak m_3}{\delta_3}
\int_{\mathbb R}
a
\left(
\left((u_1^{S_3})^{-X_3}\right)_x\psi_1
+
\frac{\left((v^{S_3})^{-X_3}\right)_x\,\y{p}}{\y{v}}\phi
+
\frac{\left((\theta^{S_3})^{-X_3}\right)_x}{\y{\theta}}\zeta
\right)\,dx,\\
X_1(0)&=X_3(0)=0,
\end{aligned}
\end{equation}
where the constants $\mathfrak m_i$, $i=1,3$ are {defined as follows:}

\begin{align}
\mathfrak m_1 := \frac{20}{3}\frac{p_*}{\sigma_*^3v_*^2}\frac{5+3\gamma}{10+3\gamma}, \qquad \mathfrak m_3:=\frac{20}{3}\frac{p^*}{(\sigma^*)^3(v^*)^2}\frac{5+3\gamma}{10+3\gamma}.
\end{align}

To separate the two shock regions, we introduce the cutoff functions $\varphi_1$ and $\varphi_3$ by
\begin{equation}\label{eq:intske}
\varphi_1(t,x)
:=
\begin{cases}
1, & x\le \dfrac{X_1(t)+\sigma_1 t}{2},\\[2mm]
0, & x\ge \dfrac{X_3(t)+\sigma_3 t}{2},\\[2mm]
\text{linearly decreasing from $1$ to $0$}, & \dfrac{X_1(t)+\sigma_1 t}{2}<x<\dfrac{X_3(t)+\sigma_3 t}{2},
\end{cases}
\end{equation}

and 

\begin{equation}\label{eq:intske1}
\varphi_3(t,x):=1-\varphi_1(t,x).
\end{equation}

\subsection{Main proposition}

To prove the global existence on the time interval $[0,T]$, we shall close the following a priori estimate. Define

\begin{equation}\label{eq:mic2}
    \begin{aligned}
    & \widetilde G_{\text{rem}}
    :=
    \widetilde G-\widetilde G_C,
    \qquad
    \widetilde G_C
    :=
    \frac{3}{2v\theta}
    L_M^{-1}P_1
    \left[
    \xi_1M
    \left(
    \xi_1u_{1x}^C+\frac{|\xi-u|^2}{2\theta}\theta_x^C
    \right)
    \right].
    \end{aligned}
\end{equation}
and 
\begin{equation}\label{eq:priass}
\begin{aligned}
\mathcal{E}(T)^2
:=
\sup_{t\in[0,T]}
\Biggl\{
&\|(\phi,\psi,\zeta)(t)\|_{H^1_x}^2
+\int_{\mathbb R}\norm{\wtilde G_{\text{rem}}}_{M_\#}^2\,+\norm{\wtilde G_x}_{M_\#}^2\,dx\\
&\qquad+\int_{\mathbb R}\norm{\wtilde G_t}_{M_\#}^2\,+\norm{\wtilde f_{xx}}_{M_\#}^2\,+\norm{\wtilde f_{tx}}_{M_\#}^2\,dx\Biggr\}.
\end{aligned}
\end{equation}

The main a priori estimate is stated as follows.

\begin{proposition}\label{prop:priest}
Let $U_+:=(v_+,u_{1+},0,0,\theta_+)\in\mathbb R_+\times\mathbb R^3\times\mathbb R_+$. Then there exist positive constants $\delta_0$, $\varepsilon$, and $C$, together with a global Maxwellian $M_\#:=M[U_\#]$, independent of $T$, such that the following holds.

Suppose that $(U,f)=(v,u,\theta,f)$ solves \eqref{eq:NSF1} and \eqref{eq:bel} on $[0,T]$ for some $T>0$. Set $\delta:=\delta_1+\delta_C+\delta_3$. For any $0<\delta<\delta_0$, let $(\bar v,\bar u,\bar\theta)$ be the composite wave defined in \eqref{eq:unvar}, where the dynamical shifts $X_1$ and $X_3$ are the absolutely continuous solutions to \eqref{eq:shift} associated with the weight function \eqref{eq:wefun}. Assume that
\begin{align} \label{eq:prioriass}
\mathcal{E}(T)^2\le \varepsilon^2.
\end{align}
Then
\begin{equation}\label{eq:pries}
\begin{aligned}
\mathcal{E}(T)^2
&+\sum_{i=1,3}\int_0^T \delta_i |\dot X_i(t)|^2\,dt
+\sum_{i=1,3}\int_0^T \mathcal G_i^S(t)\,dt\\
&+\sum_{|\beta|=1}\int_0^T \|\partial^\beta(\phi,\psi,\zeta)(t)\|_{L^2_x}^2\,dt+\int_0^T
\|(\phi_{xx},\psi_{xx},\zeta_{xx},\phi_{xt},\psi_{xt},\zeta_{xt})(t)\|_{L^2_x}^2\,dt\\
&+\int_0^T\int_{\mathbb R} \norm{\widetilde G_{\textup{rem}}}_{\nu,M_\#}^2 + \norm{\widetilde G_t}_{\nu,M_\#}^2 + \norm{\widetilde G_x}_{\nu,M_\#}^2 + \norm{\widetilde G_{tx}}_{\nu,M_\#}^2 + \norm{\widetilde G_{xx}}_{\nu,M_\#}^2 \,dx\,dt\\
&\le
C\bigl(\mathcal{E}(0)^2+\delta_0^{1/2}\bigr),
\end{aligned}
\end{equation}
where
\begin{equation}\label{eq:skGt}
\mathcal G_i^S(t)
:=
\int_{\mathbb R}
\bigl|\partial_x(v^{S_i})^{-X_i}(t,x)\bigr|
\,|\varphi_i(t,x)|^2
\,|(\phi,\psi,\zeta)(t,x)|^2\,dx,
\qquad i=1,3,
\end{equation}
and $\varphi_i$ is defined in \eqref{eq:intske}.
\end{proposition}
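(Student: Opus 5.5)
The estimate \eqref{eq:pries} will be proved by combining a weighted relative entropy estimate for the macroscopic perturbation $(\phi,\psi,\zeta)$ with energy--dissipation estimates for the microscopic remainder. Throughout we work under the a priori bound \eqref{eq:prioriass}, with $\varepsilon,\delta_0$ small.

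\textbf{Step 1: zeroth-order weighted relative entropy.} We compute $\frac{d}{dt}\int_{\mathbb R} a\,\eta(U|\bar U)\,dx$ with the weight \eqref{eq:wefun}, using the perturbation system \eqref{eq:pesy1}--\eqref{eq:pesy2}. The result splits into four pieces: the modulation part $\sum_{i}\dot X_i Y_i$, the bad terms localized by $(v^{S_i})^{-X_i}_x$ or $a_x$, the good terms, and the kinetic terms involving $\Pi_1-\sum_i(\Pi_1^{S_i})^{-X_i}$.

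The good terms are of two kinds. One is the diffusion $\int a(\psi_{1x}^2+\zeta_x^2)$. The other is the weight-derivative term $\int a_x\,\sigma_i\,|p-\bar p|^2$, which has a good sign because $a_1$ is decreasing with $\sigma_1<0$, and $a_3$ is increasing with $\sigma_3>0$.

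By the choice \eqref{eq:shift}, $\dot X_iY_i$ contains $-\frac{\delta_i}{\mathfrak m_i}|\dot X_i|^2$ up to small errors. We then localize each shock with $\varphi_i$ from \eqref{eq:intske}, which is possible because the shifts stay separated: $|\dot X_i|\le C\varepsilon$ and $\sigma_1<0<\sigma_3$. On the $i$-th region we carry out the $a$-contraction argument of \cite{KV-Inven21,KVW23,wang2025}.
\begin{itemize}
\item We expand around the leading bad term $\frac{1}{\sqrt{\delta_i}}\int |(v^{S_i})_x|\,|p-\bar p|^2$.
\item We pass to the variable $y=(v^{S_i}-v_L)/\delta_i$.
\item We apply the sharp weighted Poincaré inequality on $[0,1]$, with the constants from Lemma~\ref{lem:bs2}; these are exactly what $\mathfrak m_i$ was designed to match.
\end{itemize}
This yields $\mathcal G_i^S$ and absorbs the bad terms.

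The remaining errors are handled as follows.
\begin{itemize}
\item The contact error $Q^C$ of Lemma~\ref{lem:contact-gaussian} is treated with the heat-kernel weighted estimate of \cite{huang2010hydrodynamic}. This estimate controls $\int(1+t)^{-1}e^{-cx^2/(1+t)}|(\phi,\psi,\zeta)|^2$ by the diffusion terms plus $C\delta_C$.
\item The interaction errors $Q^I$ are exponentially small in $t$ by the separation of the waves. Together with the contact error, they integrate to $C\delta_0^{1/2}$.
\item The kinetic terms are bounded by $\eta\,\|\psi_{x},\zeta_x\|^2$ plus the microscopic dissipation of $\widetilde G$, using Lemma~\ref{lem:bs1}.
\end{itemize}

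\textbf{Step 2: higher-order macroscopic estimates.} We bound $\|(\phi,\psi,\zeta)_x\|$ and $\|\phi_x\|$ by the standard route.
\begin{itemize}
\item Multiply the momentum equation by $-\phi_x/v$ to get dissipation of $\phi_x$ from the viscous term.
\item Differentiate \eqref{eq:pesy1}--\eqref{eq:pesy2} in $x$ and $t$.
\item Use the equations to convert $t$-derivatives into $x$-derivatives.
\end{itemize}
The terms produced by shock derivatives have size $\delta_i^2$. They are controlled by $\mathcal G_i^S$ together with $\delta_i|\dot X_i|^2$, since $|\dot X_i|\lesssim \|(\phi,\psi,\zeta)\|_{L^\infty}\cdot\delta_i$-localized integrals.

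\textbf{Step 3: microscopic estimates.} Pair \eqref{eq:pesy3} with $\widetilde G/M_\#$, and likewise its $x$, $t$, $xx$, $tx$ derivatives. Each pairing uses the coercivity $-\langle L_M g,g\rangle\ge c\|g\|_{\nu}^2$ on the microscopic subspace.

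Two source terms need separate treatment.
\begin{itemize}
\item The term $-\frac1v P_1(\xi_1M_x)+\sum_i(\cdots)$ equals the linear part $\frac1vP_1(\xi_1(M-\bar M)_x)$ plus the contact contribution. The contact contribution is exactly what $\widetilde G_C$ in \eqref{eq:mic2} removes, which is why only $\widetilde G_{\rm rem}$ appears at zeroth order.
\item For the top-order terms $\widetilde f_{xx}$ and $\widetilde f_{tx}$, we work with the full $f$ equation \eqref{eq:bel} rather than the $G$ equation, to avoid a derivative loss from $P_1(\xi_1\widetilde G_x)$. We then recover the macroscopic components through Step 2.
\end{itemize}
The nonlinear collision terms are bounded by $C\varepsilon$ times the dissipation, using the standard $\nu$-weighted bilinear estimate.

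\textbf{Step 4: closing.} Take a suitable linear combination of Steps 1--3, with small coefficients on the higher orders. This absorbs all cross terms and yields \eqref{eq:pries}.

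The main obstacle is Step 1, in particular the sharp Poincaré-based absorption for each shock in the presence of the second shock and the contact wave. The delicate points are that the cutoff $\varphi_i$ introduces its own derivative errors, of size $1/t$ times the integrand, in the intermediate region, and that the constants $\mathfrak m_i$ must match the coefficients of Lemma~\ref{lem:bs2} exactly. For these we follow the two-shock treatment of \cite{han2023large,huang2026timeasymptoticstabilitycompositewave}, adapted to the extra kinetic flux terms.
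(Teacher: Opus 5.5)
Your architecture is the paper's: weighted relative entropy with the weight $a$ and the shifts \eqref{eq:shift}, the cutoffs $\varphi_i$ with the change of variables $y_i$ and the weighted Poincar\'e inequality producing $\mathcal G_i^S$, a heat-kernel-type estimate for the contact wave, subtraction of $\widetilde G_C$ at zeroth order, and the full $\widetilde f$ equation at top order.

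What does not work as stated is your closing recipe, a linear combination ``with small coefficients on the higher orders''. The kinetic terms in the zeroth-order identity (e.g.\ $K_4=\iint(a\psi_1)_x\xi_1^2\widetilde\Pi_1\,d\xi\,dx$) must be split by Young's inequality with a small factor on $\|\psi_{1x}\|_{L^2}^2$, so that $\mathcal D_{\mathrm{mac}}$ can absorb it. Since $\widetilde\Pi_1$ contains $L_M^{-1}\widetilde G_t$ and $L_M^{-1}\bigl(\frac1vP_1(\xi_1\widetilde G_x)\bigr)$, the zeroth-order bound therefore carries $C\int_0^T\!\!\int_{\mathbb R}\bigl(\|\widetilde G_t\|_{\nu,M_\#}^2+\|\widetilde G_x\|_{\nu,M_\#}^2\bigr)\,dx\,dt$ with a constant that is not small (Lemma~\ref{lem:K1245}). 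In the same way, the $\widetilde G_x$ estimate carries $O(1)$ multiples of $\|(\psi_{xx},\zeta_{xx})\|_{L^2}^2$ and $\int\|\widetilde G_{xx}\|_{\nu,M_\#}^2\,dx$. The first-order macroscopic estimate carries $O(1)$ multiples of $\int(\|\widetilde G_{xx}\|_{\nu,M_\#}^2+\|\widetilde G_{tx}\|_{\nu,M_\#}^2)\,dx$. Such terms are absorbed only if the higher-order microscopic estimates enter with \emph{large} weights. This works because those estimates see lower-order quantities only through factors $\delta_0+\varepsilon$ or $\delta_C$, and it is how the paper closes: Proposition~\ref{prop:high-order-closure} first, then inserted into the zeroth-order bound. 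Small weights are needed only for the estimates that recover $\phi_x$, $\phi_{xx}$ and the $t$-derivatives, which contain $O(1)$ multiples of lower-order macroscopic dissipation. Relatedly, the Gaussian term is not controlled ``by the diffusion terms plus $C\delta_C$''. Lemma~\ref{lem:weighted-gaussian} needs $\int_0^T\|\phi_x\|_{L^2}^2$, which $\mathcal D_{\mathrm{mac}}$ does not contain, and $\sup_t\|(\phi,\psi,\zeta)\|_{L^2}^2$ with $O(1)$ constants. So it can only be invoked after the $\phi_x$ estimate, and it closes only because $\mathcal W_C$ always enters with the prefactor $\delta_C$.

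Two further ingredients are missing.

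First, the localization by $\varphi_i$ degenerates at $t=0$, where both shocks sit at $x=0$. There $|\varphi_{ix}|\lesssim 1/t$, and the cutoff error left by Young's inequality is $\frac{C}{\delta_* t^2}\int\eta(U\mid\bar U)\,dx$ with $\delta_*=O(1)$. This term can be neither absorbed nor integrated near $t=0$. The paper handles $[0,1]$ by a crude short-time bound (Lemma~\ref{lem:short-time-estimate}) and uses $\int_1^\infty t^{-2}\,dt<\infty$ on $[1,T]$ in a Gr\"onwall-type argument. The constants stay $T$-independent because the error is quadratic in $\varphi_{ix}$; an error of order $t^{-1}\int\eta\,dx$ would not be integrable at infinity.

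Second, in the $t$-differentiated microscopic estimates ($\widetilde G_t$ and $\widetilde f_{tx}$), the term $\partial_t\bigl(\dot X_i\,\partial_x(G^{S_i})^{-X_i}\bigr)$ produces $\ddot X_i$. This must be bounded by differentiating the shift equation (Lemma~\ref{lem:ddotX}), which in turn requires the bound on $(\phi_t,\psi_t,\zeta_t)$ from Lemma~\ref{lem:macro-time-derivative}. Differentiating \eqref{eq:pesy3} in $t$ ``likewise'' does not account for this by itself.
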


The remainder of the paper is devoted to the proof of Proposition~\ref{prop:priest}. The global existence argument is then completed by combining Proposition~\ref{prop:priest} with the local-in-time existence result proved in Appendix.

\subsection{Wave interaction estimates}
First, by the a priori assumption \eqref{eq:prioriass} together with the Sobolev embedding, we have
\[
    \bigl\|(\phi,\psi,\zeta)\bigr\|_{L^\infty((0,T)\times \R)} \le C\varepsilon
\]
In particular, \(v,u,\theta \in L^\infty((0,T)\times\R)\).

Next, using the ODE \eqref{eq:shift} and Lemma~\ref{lem:bs}, we obtain
\begin{align}
    |\dot{X}_1(t)|+|\dot{X}_3(t)| &\le \sum_{i\in\{1,3\}} \frac{C}{\delta_i} \bigl\|(\phi,\psi,\zeta)\bigr\|_{L^\infty(\R)}\int_\R |\partial_x (\shockw vi)|\,dx \notag\\
    &\le C\bigl\|(\phi,\psi,\zeta)\bigr\|_{L^\infty(\R)} \le C \varepsilon . \label{eq:shift-speed-bound}
\end{align}

Hence, by taking \(\varepsilon>0\) sufficiently small, we may assume that 
\[
    |\dot{X}_1(t)|+|\dot{X}_3(t)| \le \varepsilon
\]
with \(\varepsilon\) small compared to the shock speeds. Since \(X_1(0)=X_3(0)=0\), it follows from \eqref{eq:shift-speed-bound} that 
\[
    X_1(t)\le -\frac{\sigma_1}{2}t, \qquad X_3(t)\ge -\frac{\sigma_3}{2}t, \qquad t>0.
\]
Equivalently, 
\begin{align}\label{eq:shiftsp1}
    X_1(t)+\sigma_1 t \le \frac{\sigma_1}{2} t,\qquad
    X_3(t)+\sigma_3 t \ge \frac{\sigma_3}{2} t, \qquad t>0.
\end{align}
This proves that the shifts are well-separated.

The following lemma quantifies the weak interaction between the two separated shock waves.
\begin{lemma}[\cite{huang2026timeasymptoticstabilitycompositewave}]\label{lem:intsk}
Assume $X_1(t)\leq -\frac{\sigma_1}{2}t$ and $X_3(t)\geq-\frac{\sigma_3}{2}t$. There exist positive constants $\delta_0$ and $C$ such that if
\[
\delta_1,\delta_C,\delta_3\in(0,\delta_0),
\]
then for all $t>0$ and $x\in\mathbb R$,
\begin{equation}\label{eq:intsk1}
\begin{aligned}
&\varphi_3(t,x)\,\bigl|\partial_x(v^{S_1})^{-X_1}(t,x)\bigr|
\le
C\delta_1^2 e^{-C\delta_1 t},
\quad
\varphi_1(t,x)\,\bigl|\partial_x(v^{S_3})^{-X_3}(t,x)\bigr|
\le
C\delta_3^2 e^{-C\delta_3 t},\\
&\int_{\mathbb R}\varphi_3(t,x)\,\bigl|\partial_x(v^{S_1})^{-X_1}(t,x)\bigr|\,dx
\le
C\delta_1 e^{-C\delta_1 t},
\quad
\int_{\mathbb R}\varphi_1(t,x)\,\bigl|\partial_x(v^{S_3})^{-X_3}(t,x)\bigr|\,dx
\le
C\delta_3 e^{-C\delta_3 t}.
\end{aligned}
\end{equation}
\end{lemma}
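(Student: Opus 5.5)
We only need the pointwise exponential decay of the shock profiles (Lemma~\ref{lem:bs}, with $\kappa=1$) together with the separation \eqref{eq:shiftsp1} of the shifted shock centers. Consider the first estimate, the one for $\varphi_3|\partial_x(v^{S_1})^{-X_1}|$. By \eqref{eq:intske} and \eqref{eq:intske1}, $\varphi_3$ vanishes for $x\le (X_1(t)+\sigma_1 t)/2$ and satisfies $0\le\varphi_3\le1$. Hence it suffices to bound $|\partial_x(v^{S_1})^{-X_1}|$ on the half-line $x\ge (X_1+\sigma_1 t)/2$.

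The key point is the distance from the $1$-shock center $c_1(t):=\sigma_1 t+X_1(t)$ to this half-line. Since $\sigma_1<0$, \eqref{eq:shiftsp1} gives $c_1(t)\le \frac{\sigma_1}{2}t<0$. For $x\ge c_1/2$ we therefore have
\[
x-c_1\ge -\tfrac{c_1}{2}\ge \tfrac{|\sigma_1|}{4}t .
\]
Lemma~\ref{lem:bs} gives $|(v^{S_1})'(z)|\le C\delta_1^2e^{-C\delta_1|z|}$ with $z=x-c_1$. Consequently, on the support of $\varphi_3$,
\[
|\partial_x(v^{S_1})^{-X_1}|\le C\delta_1^2e^{-C\delta_1|\sigma_1|t/4}.
\]
Because $\sigma_1$ is bounded away from zero uniformly for small $\delta$ (it is close to $-\sigma_*$), this is the claimed pointwise bound after relabeling $C$.

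For the integral bound we split the exponential. On $z\ge|\sigma_1|t/4$ we write
\[
e^{-C\delta_1 z}\le e^{-\frac{C}{2}\delta_1|\sigma_1|t/4}\,e^{-\frac{C}{2}\delta_1 z},
\]
and then integrate in $z$ over $[|\sigma_1|t/4,\infty)$:
\[
\int\varphi_3|\partial_x(v^{S_1})^{-X_1}|\,dx\le C\delta_1^2e^{-c\delta_1 t}\int_0^\infty e^{-\frac C2\delta_1 z}\,dz\le C\delta_1e^{-c\delta_1 t}.
\]
The estimates for $\varphi_1|\partial_x(v^{S_3})^{-X_3}|$ are symmetric. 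Here $\varphi_1$ vanishes for $x\ge c_3/2$ with $c_3:=\sigma_3t+X_3(t)\ge\frac{\sigma_3}{2}t>0$. For $x\le c_3/2$ this gives $c_3-x\ge c_3/2\ge\sigma_3t/4$, and the same splitting applies.

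The one point that needs care is that the cutoff in \eqref{eq:intske} must be well defined, i.e.\ we need $c_1/2<c_3/2$. This follows from \eqref{eq:shiftsp1}, which gives $c_1<0<c_3$ for $t>0$. We also need the shift bounds $X_1\le-\sigma_1t/2$ and $X_3\ge-\sigma_3t/2$, which the lemma assumes; they hold because $|\dot X_i|\le\varepsilon$ is small compared with $|\sigma_i|$. At $t=0$ both centers coincide at the origin, but the bounds there are trivial: $\varphi_i\le1$ together with the $\delta_i^2$ and $\delta_i$ size bounds from Lemma~\ref{lem:bs}. Apart from this, the argument is a direct computation and presents no real obstacle.
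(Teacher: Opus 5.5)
Your proposal is correct and is essentially the paper's proof. Both arguments combine three facts: the support of $\varphi_3$ (resp.\ $\varphi_1$) is $\{x\ge (\sigma_1t+X_1)/2\}$ (resp.\ $\{x\le(\sigma_3t+X_3)/2\}$); the shift hypothesis forces the shock centers apart linearly, $\sigma_1t+X_1\le\frac{\sigma_1}{2}t<0<\frac{\sigma_3}{2}t\le\sigma_3t+X_3$; and Lemma~\ref{lem:bs} gives the exponential tail $C\delta_i^2e^{-C\delta_i|z|}$. Together these keep the support a distance of order $t$ from the relevant shock center, which yields both the pointwise and the integrated bounds. The only cosmetic difference is that the paper integrates the exponential tail directly over the half-line $[c_1t/2,\infty)$ instead of splitting the exponential first, and this is the same computation.
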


\begin{lemma}[\cite{huang2026timeasymptoticstabilitycompositewave}]\label{lem:wave-interaction-L2}
Assume $X_1(t)\leq -\frac{\sigma_1}{2}t$ and $X_3(t)\geq-\frac{\sigma_3}{2}t$. There exists a positive constant \(C\), independent of \(T\), \(\delta_1\), \(\delta_3\), and \(\delta_C\), such that for all \(t\in[0,T]\),
\begin{align}
\label{eq:wave-interaction-L2}
\left\|
|\partial_x(v^{S_1})^{-X_1}|
\,|(v^C-v_*,\theta^C-\theta_*)|
\right\|_{L^2_x}
&\le
C\delta_1^{3/2}\delta_C e^{-C \delta_1 t},
\\
\label{eq:wave-interaction-L2b}
\left\|
|\partial_x(v^{S_1})^{-X_1}|
\,|(v^{S_3}-v^*,\theta^{S_3}-\theta^*)^{-X_3}|
\right\|_{L^2_x}
&\le
C\delta_1^{3/2}\delta_3
\bigl(e^{-C\delta_1 t}+e^{-C\delta_3 t}\bigr).
\end{align}
Similarly,
\begin{align}
\label{eq:wave-interaction-L2-3}
\left\|
|\partial_x(v^{S_3})^{-X_3}|
\,|(v^C-v^*,\theta^C-\theta^*)|
\right\|_{L^2_x}
&\le
C\delta_3^{3/2}\delta_C e^{-C\delta_3 t},
\\
\label{eq:wave-interaction-L2-3b}
\left\|
|\partial_x(v^{S_3})^{-X_3}|
\,|(v^{S_1}-v_*,\theta^{S_1}-\theta_*)^{-X_1}|
\right\|_{L^2_x}
&\le
C\delta_3^{3/2}\delta_1
\bigl(e^{-C\delta_1 t}+e^{-C\delta_3 t}\bigr).
\end{align}
\end{lemma}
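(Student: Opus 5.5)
We prove Lemma~\ref{lem:wave-interaction-L2} by combining the pointwise exponential decay of the shock derivative from Lemma~\ref{lem:bs} (with $\kappa=1$) with the Gaussian localization of the contact wave from Lemma~\ref{lem:contact-wave-estimate0} and the separation \eqref{eq:shiftsp1}. All four estimates have the same structure, so we treat \eqref{eq:wave-interaction-L2} and \eqref{eq:wave-interaction-L2b} in detail; the $3$-shock versions follow by reflection.

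\textbf{The contact estimate.} Set $z_1:=x-\sigma_1t-X_1(t)$. Lemma~\ref{lem:bs} gives
\[
|\partial_x(v^{S_1})^{-X_1}|\le C\delta_1^2e^{-C\delta_1|z_1|}.
\]
We split $\mathbb R$ at the midpoint $x_m(t):=\tfrac12(\sigma_1t+X_1(t))$. By \eqref{eq:shiftsp1} and $\sigma_1<0$, we have $x_m\le \tfrac{\sigma_1}{4}t<0$.

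\emph{Region $x\le x_m$.} Here $x<0$, so Lemma~\ref{lem:contact-wave-estimate0} gives $|(v^C-v_*,\theta^C-\theta_*)|\le C\delta_C e^{-c_0x^2/(1+t)}\le C\delta_C$. Moreover $|x|\ge |x_m|\ge c t$. To extract decay in $t$, we use the Gaussian factor and separate two cases:
\begin{itemize}
\item if $|z_1|\ge |x|/2$, the shock factor gives $e^{-C\delta_1 |x|}\le e^{-C\delta_1 t}$;
\item otherwise $|x|\ge ct$, so $e^{-c_0x^2/(1+t)}\le e^{-c t}\le e^{-C\delta_1 t}$.
\end{itemize}
In both cases the $L^2$ norm is bounded by
\[
C\delta_1^2\delta_C e^{-C\delta_1 t}\Bigl(\int e^{-C\delta_1|z_1|}dz_1\Bigr)^{1/2}\le C\delta_1^{3/2}\delta_C e^{-C\delta_1 t},
\]
after halving the exponents.

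\emph{Region $x\ge x_m$.} Here $|z_1|\ge x-\sigma_1t-X_1\ge |x_m|\ge ct$, so the shock factor itself gives $e^{-C\delta_1 |z_1|/2}e^{-C\delta_1 t}$. The contact factor is bounded by $C\delta_C$, since $|v^C-v_*|\le |v^*-v_*|+\dots\le C\delta_C$ globally. The same integral then yields the bound.

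\textbf{The shock--shock estimate.} For \eqref{eq:wave-interaction-L2b} we argue the same way, with the distance between shock centers $\sigma_3t+X_3-(\sigma_1t+X_1)\ge ct$. At the midpoint, either $|z_1|\gtrsim t$ or $|z_3|\gtrsim t$. In the first case the $\delta_1$-exponential supplies $e^{-C\delta_1 t}$. In the second case, the tail bound $|v^{S_3}-v^*|\le C\delta_3e^{-C\delta_3|z_3|}$ for $z_3<0$ supplies $e^{-C\delta_3 t}$. We take the square root of the $\delta_1^4/\delta_1$ integral as before.

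\textbf{Main obstacle.} The only delicate point is ensuring the decay rate is $\delta_1 t$ uniformly and does not degenerate. This relies on having $\delta_1\lesssim 1$, so that the Gaussian rate $c t$ and the separation rate $ct$ dominate $C\delta_1 t$. We also use that the constants in \eqref{eq:shiftsp1} are independent of $\delta$, which holds because the shock speeds are bounded away from zero.
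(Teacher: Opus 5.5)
Your proof is correct and follows essentially the same route as the paper: split $\mathbb R$ into two regions, let the shock-derivative tail supply $e^{-C\delta_1 t}$ on one side and the contact Gaussian (or, for the shock--shock term, the $3$-shock tail where $z_3<0$) supply the decay on the other, and integrate $\delta_1^4e^{-2C\delta_1|z_1|}$ to obtain $\delta_1^{3}$ before taking square roots. The only difference is that you split at midpoints of the \emph{shifted} shock centers, which uses exactly the lemma's hypothesis \eqref{eq:shiftsp1}; the paper instead splits at the unshifted points $\sigma_1 t/2$ and $\sigma_3 t/2$ and additionally invokes $|X_i|\le C\varepsilon t$. Also, your case distinction in the region $x\le x_m$ is unnecessary, since $|x|\ge ct$ there already makes the Gaussian factor decay.
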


\vspace{0.5cm}

 \begin{lemma}[Derivative wave-interaction estimate]\label{lem:wave-interaction-derivative}
For \(|\alpha|=1,2\), there exists a positive constant \(C\), independent of \(T\), \(\delta_1\), \(\delta_3\), and \(\delta_C\), such that for all \(t\in[0,T]\),
\begin{align}
\label{eq:wave-interaction-derivative-1}
&\int_{\mathbb R}
\bigl|\partial_x^\alpha(v^C-v_*)\bigr|^2 \bigl|(v^{S_1})_x^{-X_1}\bigr|^2\,dx
+
\int_{\mathbb R}
\bigl|\partial_x^\alpha\bigl((v^{S_3}-v^*)^{-X_3}\bigr)\bigr|^2
\bigl|(v^{S_1})_x^{-X_1}\bigr|^2\,dx
\nonumber\\
&\qquad\le
C\delta_1^{3}
\delta_C^2 e^{-C\delta_1 t}
+
C\delta_1^3\delta_3^{2(|\alpha|+1)}\bigl(e^{-C\delta_3 t}+e^{-C\delta_1 t}\bigr).
\end{align}
Similarly,
\begin{align}
\label{eq:wave-interaction-derivative-3}
&\int_{\mathbb R}
\bigl|\partial_x^\alpha(v^C-v^*)\bigr|^2 \bigl|(v^{S_3})_x^{-X_3}\bigr|^2\,dx
+
\int_{\mathbb R}
\bigl|\partial_x^\alpha\bigl((v^{S_1}-v_*)^{-X_1}\bigr)\bigr|^2
\bigl|(v^{S_3})_x^{-X_3}\bigr|^2\,dx
\nonumber\\
&\qquad\le
C\delta_3^{3}
\delta_C^2 e^{-C\delta_3 t}
+
C\delta_3^3\delta_1^{2(|\alpha|+1)}\bigl(e^{-C\delta_3 t}+e^{-C\delta_1 t}\bigr).
\end{align}
\end{lemma}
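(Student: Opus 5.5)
We prove \eqref{eq:wave-interaction-derivative-1}; \eqref{eq:wave-interaction-derivative-3} follows symmetrically by exchanging the roles of the two shocks and the end states $U_*$ and $U^*$. The plan is to estimate the two integrals separately. In each, we insert the pointwise bounds of Lemma~\ref{lem:contact-wave-estimate0} and Lemma~\ref{lem:bs} (with $\kappa=1$), and we use the separation \eqref{eq:shiftsp1} to produce exponential decay in time.

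\emph{Contact term.} For $|\alpha|\ge1$, Lemma~\ref{lem:contact-wave-estimate0} gives
\[
|\partial_x^\alpha(v^C-v_*)|\le C\delta_C(1+t)^{-|\alpha|/2}e^{-c_0x^2/(1+t)}\le C\delta_C .
\]
Lemma~\ref{lem:bs} gives
\[
|(v^{S_1})_x^{-X_1}|\le C\delta_1^2e^{-C\delta_1|x-\sigma_1t-X_1(t)|}.
\]
We split $\mathbb R$ at $x=\frac{\sigma_1 t+X_1(t)}{2}$, which is at most $\frac{\sigma_1}{4}t<0$ by \eqref{eq:shiftsp1}.
\begin{itemize}
\item On $\{x\ge \frac{\sigma_1t+X_1}{2}\}$ we have $|x-\sigma_1t-X_1|\ge \frac{|\sigma_1|}{4}t$. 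Hence the shock factor is bounded by $C\delta_1^4e^{-C\delta_1 t}e^{-C\delta_1|x-\sigma_1t-X_1|}$, and the remaining $x$-integral gives a factor $\delta_1^{-1}$. This contributes $C\delta_1^3\delta_C^2e^{-C\delta_1t}$.
\item On $\{x\le\frac{\sigma_1t+X_1}{2}\}$ we have $|x|\ge \frac{|\sigma_1|}{4}t$. Hence $e^{-2c_0x^2/(1+t)}\le e^{-ct^2/(1+t)}\le Ce^{-ct}\le Ce^{-C\delta_1 t}$. Integrating $|(v^{S_1})_x|^2$ in $x$ gives $C\delta_1^3$, so this piece is also bounded by $C\delta_1^3\delta_C^2e^{-C\delta_1t}$.
\end{itemize}

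\emph{Shock--shock term.} Lemma~\ref{lem:bs} bounds the derivative of the $3$-shock by
\[
|\partial_x^\alpha (v^{S_3})^{-X_3}|\le C\delta_3^{|\alpha|+1}e^{-C\delta_3|x-\sigma_3t-X_3|}.
\]
We split at the midpoint $m(t)=\frac{(\sigma_1t+X_1)+(\sigma_3t+X_3)}{2}$. By \eqref{eq:shiftsp1}, the distance between the two shock centers is at least $\frac{\sigma_3-\sigma_1}{2}t\ge ct$.
\begin{itemize}
\item For $x\le m(t)$, the $3$-shock factor is at most $C\delta_3^{2(|\alpha|+1)}e^{-C\delta_3t}$, and $\int|(v^{S_1})_x|^2\,dx\le C\delta_1^3$. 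This gives $C\delta_1^3\delta_3^{2(|\alpha|+1)}e^{-C\delta_3t}$.
\item For $x\ge m(t)$, we bound the $3$-shock factor by $C\delta_3^{2(|\alpha|+1)}$. We then write $|(v^{S_1})_x|^2\le C\delta_1^4e^{-C\delta_1 t}e^{-C\delta_1|x-\sigma_1t-X_1|}$ and integrate in $x$. This gives $C\delta_1^3\delta_3^{2(|\alpha|+1)}e^{-C\delta_1t}$.
\end{itemize}
Summing the four pieces yields \eqref{eq:wave-interaction-derivative-1}.

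The only delicate point is the bookkeeping of the $\delta$-powers. Exponential decay in the \emph{far} region must be traded for one factor $\delta_1$ through the integral $\int e^{-C\delta_1|z|}\,dz\sim\delta_1^{-1}$. This has to be done without losing the $\delta_1^3$ prefactor, which is why we keep one exponential for integration and use the other for time decay. Uniformity of the constants in $T$ and in the strengths follows because every constant comes from Lemma~\ref{lem:bs}, Lemma~\ref{lem:contact-wave-estimate0} and \eqref{eq:shiftsp1}.
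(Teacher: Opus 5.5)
Your argument is correct and is essentially the paper's. The paper's proof only invokes the region-splitting of Lemma~\ref{lem:wave-interaction-L2} (regions $A_1,A_2$ for the contact term and $\Omega_1,\Omega_2$ for the shock--shock term) together with the pointwise bounds of Lemma~\ref{lem:bs} and \eqref{eq:Theta-sim-x}; your splits at $\frac{\sigma_1t+X_1(t)}{2}$ and at the midpoint of the two shock centers, using \eqref{eq:shiftsp1} directly, carry out that sketch with the $\delta$-bookkeeping made explicit.
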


\begin{proof}
  {As in Lemma}~\ref{lem:wave-interaction-L2}, we split the real line and compute the wave interaction. Using Lemma \eqref{lem:bs} and \eqref{eq:Theta-sim-x}, we can obtain \eqref{eq:wave-interaction-derivative-1} and \eqref{eq:wave-interaction-derivative-3}.
\end{proof}

\vspace{0.5cm}

\begin{lemma}[Wave-interaction estimate, \cite{kang2025time}]\label{lem:QI-term} Let $X_i$ ($i=1,\,3$) be the shift defined by \eqref{eq:shift} and $Q_i^I$($i=1,\,2$) be defined in \eqref{eq:Q1Q2} and \eqref{eq:Q2Q2}. There exists a positive constant $C$, independent of $T$, $\delta_1$, $\delta_3$, and $\delta_C$, such that for all $t\in[0,T]$,
\begin{equation}\label{eq:intskfull}
    \norm{Q_i^I}_{L^2} \leq C\delta_1^{\frac{3}{2}}(\delta_3+\delta_C)e^{-C\delta_1t}+C\delta_3^{\frac{3}{2}}(\delta_1+\delta_C)e^{-C\delta_3t}
\end{equation}
\end{lemma}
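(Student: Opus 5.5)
The estimate \eqref{eq:intskfull} for $Q_i^I$ will be proved by writing each interaction term as a sum of products in which at least two different waves appear. Each such product is then bounded by the pointwise profile bounds of Lemma~\ref{lem:bs} and Lemma~\ref{lem:contact-wave-estimate0}, and the spatial integrals are handled with the separation \eqref{eq:shiftsp1} together with Lemmas~\ref{lem:wave-interaction-L2} and~\ref{lem:wave-interaction-derivative}.

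\textbf{Step 1: algebraic reduction.} Write $\bar v=v_1+v_C+v_3-v_*-v^*$ with $v_1:=(v^{S_1})^{-X_1}$, $v_3:=(v^{S_3})^{-X_3}$, $v_C:=v^C$, and similarly for $\bar u_1$ and $\bar\theta$. Take the pressure part of $Q_1^I$ first. By the mean value theorem,
\[
p(\bar v,\bar\theta)-p(v_1,\theta_1)-p(v_C,\theta_C)-p(v_3,\theta_3)+p_*+p^*
\]
is quadratic in the cross differences $(v_1-v_*)$, $(v_C-v_*)$ or $(v_C-v^*)$, and $(v_3-v^*)$. (The extra constants $p_*+p^*$ are harmless after differentiation.)

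Differentiating in $x$ gives terms of the form $|\partial_x v_i|\,|w_j-w_j^{\rm end}|$, with $i\neq j$ ranging over the three waves, plus terms carrying $\partial_x v_C$. The viscous parts of $Q_1^I$ and $Q_2^I$ (the $\mu$ and $\alpha_{\rm th}$ terms and the quadratic dissipation) are expanded in the same way. Each term contains a derivative of one wave, up to second order, multiplied by the deviation or derivative of another wave. No self-interaction survives, since each single wave satisfies its own equation exactly.

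\textbf{Step 2: estimate each class in $L^2$.} There are three classes of terms.
\begin{itemize}
\item Shock--contact terms $|\partial_x^k v_1|\,|v_C-v_*|$ with $k=1,2$. Use Lemma~\ref{lem:bs} to reduce $\partial_x^2 v_1$ to a multiple of $\partial_x v_1$, since $\kappa=1$ and $\delta_1$ is small. Then Lemma~\ref{lem:wave-interaction-L2} gives the bound $C\delta_1^{3/2}\delta_C e^{-C\delta_1 t}$.
\item Shock--shock terms. These are handled by \eqref{eq:wave-interaction-L2b} and \eqref{eq:wave-interaction-L2-3b}.
\item Terms where the derivative falls on the contact wave or on the other shock, such as $\partial_x v_C\cdot\partial_x v_1$ or $\partial_x^2 v_3\,(v_1-v_*)$. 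These are controlled by Lemma~\ref{lem:wave-interaction-derivative}. Alternatively, split $\mathbb R$ at $x=(X_1+\sigma_1t)/2$ and at $x=(X_3+\sigma_3 t)/2$. On each piece, one factor is exponentially small: it is either $e^{-c\delta_i|x-\sigma_it-X_i|}\le e^{-c\delta_i t}$, or a Gaussian tail of $v_C$ at distance $\gtrsim t$, which is bounded by $e^{-ct}$ and hence by $e^{-C\delta_i t}$.
\end{itemize}
Products of two contact-wave quantities do not occur, because they are already part of $Q^C$ and not of $Q^I$.

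\textbf{Step 3: collect the bounds.} Summing the three classes gives \eqref{eq:intskfull}. The main obstacle is keeping the sharp power $\delta_i^{3/2}$ in the bound. Using only $\|\partial_x v_i\|_{L^2}\sim\delta_i^{3/2}$ from Lemma~\ref{lem:bs}, each derivative factor must be paired with an $L^\infty$ deviation of the other wave, of size $\delta_j$ or $\delta_C$, and that deviation must be localized where it decays. Terms containing $(\partial_x v_1)^2$ paired with the other waves need the same care: one $\delta_1^{2}$ factor bounds the first derivative in $L^\infty$, and the remaining factor is then estimated by the $L^2$ interaction bounds above.
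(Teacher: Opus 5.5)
Note first that the paper does not prove this lemma; it only cites \cite{kang2025time}. Your proposal therefore has to stand on its own, and it breaks in your third class.

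\textbf{Contact-derivative terms.} Lemma~\ref{lem:wave-interaction-derivative} pairs a derivative of one wave with the shock \emph{derivative} $|(v^{S_i})_x^{-X_i}|$. It says nothing about a contact derivative multiplied by an undifferentiated shock deviation, and that is the leading contact interaction.
\begin{itemize}
\item By \eqref{eq:ctident0}, $p^C\equiv p_*$ and $\theta^C_x=\tfrac{3p_*}{2}v^C_x$. Hence the contact part of $(\bar p-p^{S_1,-X_1}-p^C-p^{S_3,-X_3})_x$ is exactly $\frac{p_*-\bar p}{\bar v}\,v^C_x$, where to leading order $\bar p-p_*\approx(p^{S_1}-p_*)^{-X_1}+(p^{S_3}-p^*)^{-X_3}$.
\item Your splitting bounds this term only by $C\delta_C(1+t)^{-1/4}\bigl(\delta_1e^{-c\delta_1t}+\delta_3e^{-c\delta_3t}\bigr)$, plus Gaussian tails. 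The reason is that $\|v^C_x\|_{L^2}\sim\delta_C(1+t)^{-1/4}$ carries no power of $\delta_i$. Pairing it with the $L^\infty$ size of a shock deviation cannot produce the extra $\delta_i^{1/2}$ your Step~3 relies on.
\item This loss is real, not a bookkeeping artifact. By the monotonicity in Lemma~\ref{lem:bs}, $p^{S_1}<p_*$ and $p^{S_3}<p^*$ everywhere, so $p_*-\bar p>0$ to leading order.
\item At $t=0$, the $O(1)$-width layer where $v^C_x(0,\cdot)$ lives lies inside both shock layers, which have width $\sim\delta_i^{-1}$. There $p_*-\bar p\sim\delta_1+\delta_3$.
\item Integrate $Q_1^I(0,\cdot)$ over a fixed interval $[-R,R]$ containing the contact layer. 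The viscous pieces are $x$-derivatives of quantities that are of lower order at $x=\pm R$. The shock-derivative pieces contribute only $O(R\delta_i^2(\delta_j+\delta_C))$. Cauchy--Schwarz then gives $\|Q_1^I(0)\|_{L^2}\gtrsim(\delta_1+\delta_3)\delta_C$.
\item By contrast, the right-hand side of \eqref{eq:intskfull} is $O(\delta^{5/2})$ when $\delta_1\sim\delta_3\sim\delta_C\sim\delta$.
\end{itemize}

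\textbf{Shock--shock summation.} Your summation of the shock--shock class has a separate gap. Each of \eqref{eq:wave-interaction-L2b} and \eqref{eq:wave-interaction-L2-3b} carries both rates $e^{-C\delta_1t}+e^{-C\delta_3t}$. Summing them therefore leaves $\delta_1^{3/2}\delta_3e^{-C\delta_3t}$ and $\delta_3^{3/2}\delta_1e^{-C\delta_1t}$. These are not dominated by the right-hand side of \eqref{eq:intskfull} uniformly in $\delta_1/\delta_3$; take $\delta_3\ll\delta_1$ and $t\sim\delta_3^{-1}$. This again reflects the true size of the term. Near the $1$-shock, $(\bar p-p^{S_1,-X_1}-p^C-p^{S_3,-X_3})_x$ contains $(v^{S_3}-v^*)^{-X_3}\,\partial_x(v^{S_1})^{-X_1}$ with a nonvanishing coefficient. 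The $3$-shock tail is essentially constant across the much narrower $1$-shock layer.

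What your decomposition does establish, using $\delta_i^{1/2}\le1$, $(1+t)^{-1/4}\le1$ and $e^{-ct}\le e^{-C\delta_it}$, is
\[
\|Q_i^I\|_{L^2}\le C\delta_1(\delta_3+\delta_C)e^{-C\delta_1t}+C\delta_3(\delta_1+\delta_C)e^{-C\delta_3t}.
\]
This weaker form is exactly what Lemma~\ref{lem:S1S2} uses downstream. The power $\delta_i^{3/2}$ in \eqref{eq:intskfull} cannot be attained for these terms. So it is the statement, not only your proof, that needs to be weakened.
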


\vspace{0.5 cm}

\section{Proof of the main result}
\setcounter{equation}{0}

We here complete the proof of Theorem \ref{thm:main} by combining the a priori estimates with the local existence.  The proof of Proposition \ref{prop:priest} will be handled in the subsequent sections.
\subsection{Global continuation}
To complete the continuation argument, we establish a local-in-time existence result around the unshifted composite approximate wave. Define
\begin{equation}\label{eq:lceans}
\begin{bmatrix}
\hat v\\
\hat u_1\\
\hat u_2\\
\hat u_3\\
\hat \theta
\end{bmatrix}(t,x)
:=
\begin{bmatrix}
v^{S_1}(x-\sigma_1 t)+v^C(t,x)+v^{S_3}(x-\sigma_3 t)-v_*-v^*\\
u_1^{S_1}(x-\sigma_1 t)+u_1^C(t,x)+u_1^{S_3}(x-\sigma_3 t)-u_{1*}-u_1^*\\
0\\
0\\
\theta^{S_1}(x-\sigma_1 t)+\theta^C(t,x)+\theta^{S_3}(x-\sigma_3 t)-\theta_*-\theta^*
\end{bmatrix},
\end{equation}
where two intermediate states
\[
U_* = (v_*,u_*,\theta_*), \quad U^* = (v^*,u^*,\theta^*), \quad u_*=(u_{1*},0,0), \quad u^*=(u^*_1,0,0)
\]
and let
\begin{equation}\label{eq:hatM}
\hat M:=M[\hat U],\qquad \hat U :=(\hat v,\hat u,\hat\theta).
\end{equation}

We shall use the following local existence result.

\begin{proposition}\label{prop:lcext}
For any sufficiently small constant $R>0$, there exists a positive time $T_{\mathrm{loc}}=T_{\mathrm{loc}}(R)$ such that the following holds. Define $h:= f-\hat M$. 
Assume that
\begin{equation}\label{eq:lcext-ass}
\sum_{|\alpha|+|\beta|\le2,\ |\beta|\le1} \Bigl\|\norm{\partial_x^\alpha\partial_t^\beta(f_0-\hat M)}_{M_\#} \Bigr\|_{L^2_x} \le \frac{R}{2}
\end{equation}
for some global Maxwellian $M_\#$. Then the Cauchy problem for the normalized Lagrangian Boltzmann equation \eqref{eq:bel} admits a unique nonnegative solution $f$ on $[0,T_*]$ such that
\begin{equation}\label{eq:lcext-conc}
\sup_{t\in[0,T_{\mathrm{loc}}]}
\sum_{|\alpha|+|\beta|\le2,\ |\beta|\le1} \Bigl\|\norm{\partial_x^\alpha\partial_t^\beta \,h}_{M_\#} \Bigr\|_{L^2_x} 
\le R.
\end{equation}
Here, the time derivatives of $f_0$  are understood through the Boltzmann equation evaluated at $t=0$. 
\end{proposition}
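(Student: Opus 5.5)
We prove Proposition~\ref{prop:lcext} by the standard iteration for the perturbation $h=f-\hat M$, using energy estimates in the weighted space $L^2_x L^2_\xi(M_\#^{-1})$ with up to two $x$-derivatives, at most one of which may be replaced by $\partial_t$.

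\textbf{Step 1 (equation for $h$).} Since $\hat M$ is an explicit smooth profile, substituting $f=\hat M+h$ into \eqref{eq:bel} gives
\[
h_t+\frac{\xi_1-u_1}{v}h_x = L_{\hat M}h+\mathcal N(h,h)+\mathcal R,\qquad \mathcal R:=-\hat M_t-\frac{\xi_1-u_1}{v}\hat M_x+\mathcal N(\hat M,\hat M).
\]
Here $\mathcal N(\hat M,\hat M)=0$, and $(v,u_1)$ are the moments of $f=\hat M+h$. By Lemma~\ref{lem:bs} and Lemma~\ref{lem:contact-wave-estimate0}, $\mathcal R$ and its derivatives up to second order are bounded in $L^2_x(M_\#^{-1})$, uniformly in $t$ on bounded intervals. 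We fix $M_\#$ with temperature greater than $\sup\hat\theta$ and less than twice $\inf\hat\theta$ (possible for small wave strength). This makes $\hat M/M_\#^{1/2}$ decay in $\xi$ and keeps the norms of \eqref{eq:lcext-ass} comparable to the natural ones.

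\textbf{Step 2 (linear iteration).} Set $h^{0}=h_0$. Define $h^{n+1}$ as the solution of the linear transport problem
\[
h^{n+1}_t+\frac{\xi_1-u_1^n}{v^n}h^{n+1}_x+\nu(\xi)h^{n+1}=K_{\hat M}h^{n+1}+\mathcal N(h^n,h^n)+\mathcal R,
\]
where $\nu$ is the hard-sphere collision frequency, $K$ is the compact part of the linearized operator, and $(v^n,u_1^n)$ are the moments of $\hat M+h^n$. This problem is solvable by characteristics together with a fixed point in $K$. We then apply $\partial_x^\alpha\partial_t^\beta$ with $|\alpha|+|\beta|\le 2$ and $|\beta|\le1$, and take the $M_\#^{-1}$-weighted inner product. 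The transport term produces only the commutator $\partial_x\bigl(\tfrac{\xi_1-u_1^n}{v^n}\bigr)|\partial h|^2$. The $\xi_1$-growth of this commutator is absorbed by $\nu(\xi)\sim 1+|\xi|$, provided $\|h^n\|_{H^2}$ is small (this gives $v^n$ bounded below).

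The nonlinear term is handled by the standard hard-sphere estimate
\[
\bigl|\langle \mathcal N(g_1,g_2),g_3\rangle_{M_\#}\bigr|\le C\bigl(\|g_1\|_{M_\#}\|g_2\|_{\nu,M_\#}+\|g_1\|_{\nu,M_\#}\|g_2\|_{M_\#}\bigr)\|g_3\|_{\nu,M_\#},
\]
combined with the Sobolev bound $H^1_x\subset L^\infty_x$. This yields
\[
\frac{d}{dt}\mathcal H^{n+1}+c\,\mathcal D_\nu^{n+1}\le C\bigl(1+\mathcal H^{n+1}\bigr)+C(\mathcal H^n)^2,
\]
where $\mathcal H$ denotes the square of the left side of \eqref{eq:lcext-conc} at time $t$. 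Gronwall then shows that $\sup_{[0,T_{\rm loc}]}\mathcal H^{n}\le R^2$ for all $n$, provided $\mathcal H(0)\le R^2/4$ and $T_{\rm loc}(R)$ is small.

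\textbf{Step 3 (limit, uniqueness, positivity).} The same estimates at the $L^2$ level, applied to the differences $h^{n+1}-h^n$, give a contraction. So $h^n$ converges to a solution, and the uniform bounds pass to the limit by weak lower semicontinuity. Uniqueness follows from the same difference estimate. For nonnegativity we write
\[
\mathcal N(f,f)=\mathcal N^+(f,f)-f\,\nu(f)
\]
and use the monotone iteration
\[
\partial_t f^{n+1}+\tfrac{\xi_1-u_1}{v}f^{n+1}_x+\nu(f^n)f^{n+1}=\mathcal N^+(f^n,f^n),
\]
which preserves $f^{n}\ge0$; equivalently, we argue along characteristics with Duhamel's formula. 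Finally, the initial time derivatives are defined through the equation at $t=0$, so \eqref{eq:lcext-ass} controls the $\partial_t$ components of $\mathcal H(0)$.

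\textbf{Main obstacle.} The mixed $\partial_t\partial_x$ level is the delicate point. Differentiating the coefficient $(\xi_1-u_1)/v$ in time brings in $u_{1t}$ and $v_t$. These are moments of $h_t$ and $h_x$, respectively, and both are controlled within $\mathcal H$. The danger is the term $\partial_x(u_1/v)\,\partial_t h$, which is cubic in small quantities but requires $u_{1x}\in L^\infty$, i.e. $h\in H^2_x$. This is exactly why the norm in \eqref{eq:lcext-conc} contains both $\partial_x^2 h$ and $\partial_x\partial_t h$. We plan to close it by bounding every moment through $\|\partial h\|_{L^2_x(M_\#^{-1})}$, using the Cauchy--Schwarz inequality in $\xi$.
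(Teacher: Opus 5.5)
The paper does not actually prove Proposition~\ref{prop:lcext}. It is announced as ``proved in Appendix'', but no such argument appears there, so your proposal has to be judged on its own. Your route is the standard one and is sound in outline: Picard iteration for $h=f-\hat M$ in the $M_\#^{-1}$-weighted space with $\partial_x^\alpha\partial_t^\beta$, $|\alpha|+|\beta|\le2$, $|\beta|\le1$, then an $L^2$-level contraction for differences, then a separate positivity argument. As written, however, four points need repair.

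\textbf{(i) The iterated inequality.} The inequality $\frac{d}{dt}\mathcal H^{n+1}+c\,\mathcal D_\nu^{n+1}\le C(1+\mathcal H^{n+1})+C(\mathcal H^n)^2$ is not what the estimates give. The loss part is $\mathcal N^-(g_1,g_2)=g_1\,\nu(g_2)$ with $\nu(g_2)\lesssim(1+|\xi|)\|g_2\|_{M_\#}$. Hence a term such as $\langle\partial_x^2\mathcal N(h^n,h^n),\partial_x^2h^{n+1}\rangle_{M_\#}$ costs either $\|\partial_x^2h^n\|_{L^2_x(\nu,M_\#)}$ or $\|h^n\|_{L^\infty_x(\nu,M_\#)}\lesssim\|h^n\|_{H^1_x(\nu,M_\#)}$. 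Either way you need the \emph{dissipation of the previous iterate}. So you must propagate the pair $\bigl(\sup_t\mathcal H^n,\int_0^t\mathcal D_\nu^n\bigr)$. With $\mathcal H^n\le R^2$, the extra term $CR^2\mathcal D^n_\nu$ is harmless, and the induction closes with $T_{\rm loc}\sim R^2$.

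\textbf{(ii) The $K$-part of $L_{\hat M}$.} The bilinear bound you quote, applied to $\mathcal N(\hat M,h)+\mathcal N(h,\hat M)$, only gives $C\|\hat M\|_{M_\#}\|h\|_{\nu,M_\#}^2$ with an $O(1)$ constant. The $\nu$-damping cannot absorb that. You need one of the following:
\begin{itemize}
\item a bound for $K_{\hat M}$ in $L^2(M_\#^{-1})$ without loss of weight, i.e.\ Grad's kernel estimate with weight $(\hat M/M_\#)^{1/2}$, valid when $\hat\theta<2\theta_\#$;
\item the G{\aa}rding inequality $\langle L_{\hat M}g,g\rangle_{M_\#}\le -c\|g\|_{\nu,M_\#}^2+C\|g\|_{M_\#}^2$. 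This follows from Lemma~\ref{lem:collision-coercivity} applied to $P_1^{[\hat M]}g$, together with $L_{\hat M}P_0^{[\hat M]}g=0$ and the finite rank of $P_0^{[\hat M]}$.
\end{itemize}

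\textbf{(iii) The choice of $M_\#$.} Your normalization $\sup\hat\theta<\theta_\#<2\inf\hat\theta$ is disjoint from the paper's $\theta_\#<\hat\theta<2\theta_\#$ in Lemma~\ref{lem:collision-coercivity}. The latter is the $M_\#$ of Proposition~\ref{prop:priest}. Since the local result feeds the continuation argument, \eqref{eq:lcext-conc} must hold in that norm. Nothing in your scheme actually uses $\theta_\#>\hat\theta$. What you need is only $\hat\theta<2\theta_\#$, both for $\hat M,\mathcal R\in L^2(M_\#^{-1})$ and for (ii). So simply run the argument with the paper's $M_\#$.

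\textbf{(iv) Positivity.} The scheme $\partial_tf^{n+1}+\frac{\xi_1-u_1}{v}f^{n+1}_x+\nu(f^n)f^{n+1}=\mathcal N^+(f^n,f^n)$ preserves nonnegativity, but it is not monotone. You still have to do one of the following:
\begin{itemize}
\item prove that it converges in your energy class, via an $L^2$ contraction as in Step~3 using $\nu(f^n)\ge c(1+|\xi|)$ for $f^n$ near $\hat M$, and then identify the limit with your solution by uniqueness;
\item or estimate the negative part of the constructed solution directly.
\end{itemize}

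With these repairs the argument proves the proposition.
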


By the shift estimate \eqref{eq:shift-speed-bound},
\begin{align*}
\norm{v^{S_i}-\shockw{v}{i}}_{H^1}^2 \le& C \delta_i + \int_{-|X_i|}^0 (v^{S_i}-v_R)^2 dx + \int_0^{|X_i|}(v^{S_i}-v_L)^2 dx \\
\le& C\delta_i(1+t)
\end{align*}
where $v_R$ is the right state of {the }$i${-shock} and $v_L$ is the left state of {the }$i${-shock} for $i=1,3$. Therefore, in the continuation argument, all perturbation norms are understood with respect to the shifted composite profile $\bar{U}$, not the unshifted profile $\hat U$. This is legitimate because the local well-posedness and coercivity estimates are invariant under spatial translations of the shock profiles.

We now verify the smallness of the initial energy.  Applying
\eqref{eq:initda} at \(t=0\), gives
\begin{align}\label{eq:pxt}
\begin{aligned} 
\mathcal E(0)^2 
&\le
C\mathcal E_{\mathrm{ini}}^2
+
C(\delta_1+\delta_C+\delta_3)^2 .
\end{aligned}
\end{align}
{
We first justify that the microscopic part can be controlled by \(f-\hat M\). Set \(U=(v,u,\theta)\), \(\hat U=(\hat v,\hat u, \hat \theta) \), and \(h= f_0-\hat M_0\) where $\hat{M}_0 := M[\hat{U}|_{t=0}]$. Since \(G=f-M[U]\) is microscopic part, 
\begin{align*}
    \int_{\R^3} G\psi(\xi)\,d\xi =0,\qquad \psi=(1,\xi,|\xi|^2).
\end{align*} 
Hence 
\begin{align*}
    \mathcal A(U) -\mathcal A(\hat U) = \int_{\R^3} h\,\psi(\xi)\,d\xi \qquad \mathcal A(U):=\int_{\R^3}M[U]\psi(\xi)\,d\xi = \Bigl(
    \frac1{v} ,\, \frac{u}{v} ,\, \frac{1}{v}\Bigl(\frac{|u|^2}{2}+\frac32 \theta\Bigr)
    \Bigr).
\end{align*}
The moment map \(\mathcal A\) is invertible on the compact set under consideration;
\begin{align*}
|U-\hat U| \le C\Bigl|\int_{\R^3} h\, \psi(\xi)\, d\xi \Bigr| \le C \bigl\|h\bigr\|_{M_\#}.
\end{align*}
Consequently,
\begin{align*}
    \norm{U-\hat U}_{L_x^2}^2 \le C \norm{f-\hat M}_{L^2_x(L_\xi^2(M_\#))}^2.
\end{align*}
Moreover, since 
\begin{align}\label{eq:GfMuM}
    G = (f-\hat M) - (M[U]-\hat M),
\end{align}
the smooth dependence of the Maxwellian on \(U\) gives 
\begin{align*}
    \int \norm{G}_{M_\#}^2 dx \le C \norm{f-\hat M}_{L_x^2(L_\xi(M_\#))}^2 \qquad (\text{by }\eqref{eq:Maxwellian-Lipschitz-single}). 
\end{align*}
So, 
\begin{align*}
    \int \norm{\widetilde G_{\mathrm{rem}}}_{M_\#}^2 dx \le C \norm{f-\hat M}_{L_x^2(L_\xi(M_\#))}^2 + C(\delta_1+\delta_C+\delta_3)^2.
\end{align*}
Note that 
\[
\begin{aligned}
\left\|f_0-\hat M_0\right\|_{L_x^2(L_\xi^2(M_\#))}^2 &\le \left\|f_0-M[U^E_0]+M[U^E_0]-\hat M_0\right\|_{L_x^2(L_\xi^2(M_\#))}^2\\
&\le \left\|f_0-M[U^E_0]\right\|_{L_x^2(L_\xi^2(M_\#))}^2 +\left\|M[U^E_0]-\hat M_0\right\|_{L_x^2(L_\xi^2(M_\#))}^2 \\
&\le \left\|f_0-M[U^E_0]\right\|_{L_x^2(L_\xi^2(M_\#))}^2 +\left\|U_0^E-\hat U|_{t=0}\right\|_{L_x^2}^2 \qquad (\text{by }\eqref{eq:Maxwellian-Lipschitz-single})\\
&\le \mathcal E_{\mathrm{ini}}^2 + C(\delta_1+\delta_C+\delta_3)^2
\end{aligned}
\]
at time $t=0$. 
Similarly, we can apply for $\partial^{\alpha}(U-\hat{U})$ where $\partial^\alpha = \partial^{\alpha_0}_t\partial^{\alpha_1}_x$ and $|\alpha|=|\alpha_0|+|\alpha_1|\le 2,\, |\alpha_0|\le 1$.

When $|\alpha|=1$, 

\begin{align*}
    \Bigl|\int_{\mathbb \R^3} \partial^\alpha h \psi(\xi) d\xi \Bigr|&=\Bigl| D\mathcal A(U)\partial^\alpha U - D\mathcal A(\hat U) \partial^\alpha \hat U \Bigr| \\
    &= \Bigl|D \mathcal A(U) \partial^\alpha(U-\hat U) + \bigl(D\mathcal A(U)-D\mathcal A(\hat U)\bigr)\partial^\alpha \hat U\Bigr|
\end{align*}
Since $D\mathcal A$ is invertible, 
\begin{align*}
     |\partial^\alpha(U-\hat U)| &\le C\Bigl|\int_{\mathbb \R^3} \partial^\alpha h \psi(\xi) d\xi \Bigr| + |\partial^\alpha \hat U||D\mathcal A(U)-D\mathcal A(\hat U)| \\
    &\le  C\Bigl|\int_{\mathbb \R^3} \partial^\alpha h \psi(\xi) d\xi \Bigr| + C(\delta_1+\delta_C+\delta_3)|U-\hat U|.
\end{align*}
Therefore, $|\alpha|=1$, 
\begin{align*}
    \|\partial^\alpha(U-\hat U)\|_{L_x^2}^2
    \le
    C\left\|\partial^\alpha(f-\hat M)|_{t=0}\right\|_{L_x^2(L_\xi^2(M_\#))}^2 + C(\delta_1+\delta_C+\delta_3)^2 \left\|(f-\hat M)|_{t=0}\right\|_{L_x^2(L_\xi^2(M_\#))}^2
\end{align*}

\begin{align*}
    \left\|\partial^\alpha(f-\hat M)|_{t=0}\right\|_{L_x^2(L_\xi^2(M_\#))}^2 &\le  \left\|\partial^\alpha f_0\right\|_{L_x^2(L_\xi^2(M_\#))}^2 +\left\|\partial^\alpha \hat M_0\right\|_{L_x^2(L_\xi^2(M_\#))}^2 \\
    &\le \mathcal E_{\mathrm{ini}}^2 + C(\delta_1+\delta_C+\delta_3)^2
\end{align*}
and
\begin{align*}
\left\|\partial^\alpha \widetilde G|_{t=0} \right\|^2_{L^2_x(L^2_\xi(M_\#))} \le& \left\|\partial^\alpha (f-\hat M)|_{t=0} \right\|^2_{L^2_x(L^2_\xi(M_\#))} + \left\|\partial^\alpha (M[U]-\hat M)|_{t=0} \right\|^2_{L^2_x(L^2_\xi(M_\#))} \qquad (\text{by }\eqref{eq:GfMuM})\\
\le& \mathcal E_{\mathrm{ini}}^2 + C(\delta_1+\delta_C+\delta_3)^2.
\end{align*}
at time $t=0$.}
When $|\alpha|=2$ and $|\alpha_0|\le1$, these are controlled by $f$ itself from the definition of $\mathcal E(t)$, \eqref{eq:priass}. Combining the above estimates with the initial data \eqref{eq:initda} yields
\begin{align}\label{eq:priass-2}
    \mathcal E(0)^2
    \le C\varepsilon_1^2
    + C(\delta_1+\delta_C+\delta_3)^2 .
\end{align}

Now, we prove the global existence of the normalized solution. By the local-in-time existence result, Proposition~\ref{prop:lcext},
there exists a unique nonnegative solution on a short time interval.
On the other hand, Proposition~\ref{prop:priest} yields an a priori estimate
which improves the a priori bound in \eqref{eq:prioriass}, provided that the initial perturbation
and the wave strengths are sufficiently small.
Therefore, a standard continuity argument implies that the normalized solution can be continued
globally in time on any interval \([0,T]\), and the estimate \eqref{eq:pries} holds uniformly on
\([0,T]\).

In particular, the shifts \(X_1\) and \(X_3\) are absolutely continuous and satisfying \eqref{eq:shiftsp1} on every finite interval,
and all estimates in Proposition~\ref{prop:priest} are available for arbitrary \(T>0\).

\subsection{Rescaling to the Knudsen number}

Let \(f\) denote the global normalized solution furnished by the global continuation argument above.
For \(0<\kappa\le \kappa_0\), define the rescaled solution by
\begin{align}
f^\kappa(\tau,y,\xi):=
f\!\left(\frac{\tau}{\kappa},\frac{y}{\kappa},\xi\right),
\qquad
\bar f^{\,\kappa}(\tau,y,\xi):=
\bar f\!\left(\frac{\tau}{\kappa},\frac{y}{\kappa},\xi\right).
\label{eq:rescaled-solution}
\end{align}
Then \(f^\kappa\) solves the Boltzmann equation \eqref{eq:bslk} with Knudsen number
\(\kappa\), and \(\bar f^{\,\kappa}\) is the corresponding rescaled composite
approximate wave.

For the shifts, define
\begin{align}
X_i^\kappa(\tau):=
\kappa X_i\!\left(\frac{\tau}{\kappa}\right),
\qquad i=1,3.
\label{eq:rescaled-shifts}
\end{align}

\begin{lemma}\label{lem:rescaled-perturbation}
For every \(T>0\),
\begin{align}
\int_0^T\iint_{\mathbb R\times\mathbb R^3}
\frac{|f^\kappa-\bar f^{\,\kappa}|^2}{M_\#}\,d\xi\,dy\,d\tau
\le
C\kappa T\,\bigl(\mathcal{E}(0)^2+\delta_0^{1/2}\bigr).
\label{eq:rescaled-perturbation}
\end{align}
In particular, using the well-preparedness condition
\eqref{eq:initic2-main}, one has
\begin{align}
\int_0^T\iint_{\mathbb R\times\mathbb R^3}
\frac{|f^\kappa-\bar f^{\,\kappa}|^2}{M_\#}\,d\xi\,dy\,d\tau
\le
C\kappa\bigl((\varepsilon_1+\delta_0)^2+\delta_0^{1/2}\bigr)T .
\label{eq:rescaled-perturbation-sharp}
\end{align}
\end{lemma}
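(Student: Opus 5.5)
The plan is a change of variables followed by the uniform-in-time bound from Proposition~\ref{prop:priest}. With $t=\tau/\kappa$ and $x=y/\kappa$, so that $d\tau\,dy=\kappa^2\,dt\,dx$, the definitions \eqref{eq:rescaled-solution} give
\begin{align*}
\int_0^T\iint\frac{|f^\kappa-\bar f^{\,\kappa}|^2}{M_\#}\,d\xi\,dy\,d\tau
=\kappa^2\int_0^{T/\kappa}\iint\frac{|f-\bar f|^2}{M_\#}\,d\xi\,dx\,dt
\le \kappa^2\cdot\frac{T}{\kappa}\sup_{t\in[0,T/\kappa]}\int\norm{(f-\bar f)(t)}_{M_\#}^2dx .
\end{align*}
Since the normalized solution is global and the estimate \eqref{eq:pries} holds with $T$ replaced by $T/\kappa$ and constants independent of time, it suffices to prove a pointwise-in-time bound
\begin{align*}
\int\norm{(f-\bar f)(t)}_{M_\#}^2dx\le C\,\mathcal E(T/\kappa)^2+C\delta_0^{1/2}\le C\bigl(\mathcal E(0)^2+\delta_0^{1/2}\bigr).
\end{align*}
This yields \eqref{eq:rescaled-perturbation}. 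Then \eqref{eq:priass-2} gives $\mathcal E(0)^2\le C\varepsilon_1^2+C\delta_0^2\le C(\varepsilon_1+\delta_0)^2$, which gives \eqref{eq:rescaled-perturbation-sharp}.

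To prove the pointwise bound, I would split
\[
f-\bar f=(M[U]-M[\bar U])+\widetilde G_{\rm rem}+\widetilde G_C,
\]
using $f=M+G$, the definition of $\bar f$ in \eqref{eq:unvar}, and $\widetilde G=\widetilde G_{\rm rem}+\widetilde G_C$ from \eqref{eq:mic2}.
\begin{itemize}
\item \emph{Macroscopic piece.} By the smooth (Lipschitz) dependence of $M[U]$ on $U$, with respect to $M_\#$ chosen close to all states under consideration, $\int\norm{M[U]-M[\bar U]}_{M_\#}^2dx\le C\|(\phi,\psi,\zeta)\|_{L^2}^2\le C\mathcal E^2$. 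The $L^\infty$ smallness of $(\phi,\psi,\zeta)$ keeps all states in a compact set near $U_\#$.
\item \emph{Remainder piece.} The term $\widetilde G_{\rm rem}$ is directly part of $\mathcal E$ in \eqref{eq:priass}.
\item \emph{Contact piece.} From the explicit formula \eqref{eq:mic2} and the boundedness of $L_M^{-1}P_1$ on weighted spaces, $\norm{\widetilde G_C}_{M_\#}\le C(|u^C_{1x}|+|\theta^C_x|)$. By Lemma~\ref{lem:contact-wave-estimate0},
\[
\int(|u^C_{1x}|^2+|\theta^C_x|^2)\,dx\le C\delta_C^2(1+t)^{-1/2}\le C\delta_0^2,
\]
which is absorbed into $C\delta_0^{1/2}$.
\end{itemize}

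The main obstacle is the uniformity of this contact contribution and of the constants with respect to the long normalized time $T/\kappa$. The time-integrated bounds in \eqref{eq:pries} cannot be used for the zeroth-order term, since they control derivatives of $(\phi,\psi,\zeta)$ and weighted $\widetilde G$ norms rather than $\int_0^{T/\kappa}\|(\phi,\psi,\zeta)\|_{L^2}^2dt$. I therefore rely on the sup-in-time part of $\mathcal E$, which costs the factor $T/\kappa$ and leaves exactly $\kappa T$. One must also check that the constant in the Maxwellian Lipschitz estimate, and the choice of $M_\#$, do not depend on the shifts $X_i$. This holds because the estimate is pointwise in $(t,x)$ and uses only the ranges of $U$ and $\bar U$.
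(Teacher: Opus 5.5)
Your proposal is correct and follows the paper's proof essentially step for step. The paper also changes variables to get the factor $\kappa T$ times a supremum over $[0,T/\kappa]$, and it uses the same splitting $f-\bar f=(M-\bar M)+\widetilde G_C+\widetilde G_{\mathrm{rem}}$. The three pieces are controlled by the Maxwellian Lipschitz lemma, by \eqref{eq:GCe} together with the contact-wave decay, and by \eqref{eq:pries}, and the sharpened form then follows from \eqref{eq:priass-2}.
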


\begin{proof}
Let \(t=\tau/\kappa\) and \(x=y/\kappa\). Then
\[
d\tau\,dy = \kappa^2\,dt\,dx,
\]
and therefore
\begin{align*}
& \int_0^T\int_{\mathbb R}
\norm{f^\kappa-\bar f^{\,\kappa}}_{M_\#}^2 \,dy\,d\tau \\
& \qquad =
\kappa^2
\int_0^{T/\kappa}\int_{\mathbb R}
\norm{f-\bar f}_{M_\#}^2\,dx\,dt
\\
& \qquad \le
\kappa T
\sup_{0\le t\le T/\kappa}
\int_{\mathbb R}
\norm{f-\bar f}_{M_\#}\,dx 
\\
&\qquad \le \kappa T
\sup_{0\le t\le T/\kappa}  \Biggl[\int_{\R} \norm{M-\bar M}_{M_\#}^2 \, dx+ \int_{\R} \norm{\wtilde G}_{M_\#}^2 \, dx \Biggr]\, \qquad \qquad \qquad (\text{by }\eqref{eq:unvar}, \eqref{eq:pevar}) \\
&\qquad \le \kappa T
\sup_{0\le t\le T/\kappa} \Biggl[ \norm{U-\bar U}_{L^2_x}^2 + \int_{\R} \norm{\wtilde G_C}_{M_\#}^2 \, dx + \int_{\R} \norm{\wtilde G_{\text{rem}}}_{M_\#}^2 \, dx \Biggr] \quad (\text{by } \eqref{eq:mic2}, \eqref{eq:Maxwellian-Lipschitz-single})\\
&\qquad \le C\kappa T\,\bigl(\mathcal{E}(0)^2+\delta_0^{1/2}\bigr),\end{align*}
by \eqref{eq:GCe} and \eqref{eq:pries}. The sharpened bound \eqref{eq:rescaled-perturbation-sharp}
follows from the well-preparedness hypothesis.
\end{proof}
{
\begin{lemma}\label{lem:shift-compactness}
For each fixed \(T>0\), the families \(\{X_i^\kappa\}_{0<\kappa\le\kappa_0}\),
\(i=1,3\), are bounded in \(BV(0,T)\). More precisely, 
\begin{align}
\bigl|X_i^\kappa(t)\bigr| \le C T^{1/2} \kappa^{1/2},
\qquad t\in[0,T] \text{ and }i=1,3.
\label{eq:shift-compactness}
\end{align}
Here, $C$ is a positive constant that is independent of $\kappa$ and $T$. 
In the rarefaction--contact--shock case, the same conclusion holds for the single family
\(\{X_3^\kappa\}\).
\end{lemma}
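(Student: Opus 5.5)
The bound follows from the a priori estimate \eqref{eq:pries} of Proposition~\ref{prop:priest}, specifically the dissipation term $\sum_{i}\int_0^T\delta_i|\dot X_i(t)|^2\,dt$, together with the scaling \eqref{eq:rescaled-shifts}. Since the global continuation argument makes \eqref{eq:pries} available on every interval $[0,T']$ with a constant independent of $T'$, I apply it on $[0,T/\kappa]$:
\begin{align*}
\int_0^{T/\kappa}|\dot X_i(s)|^2\,ds\le \frac{C}{\delta_i}\bigl(\mathcal E(0)^2+\delta_0^{1/2}\bigr)\le C,
\end{align*}
where $\delta_i$ is a fixed (though small) shock strength and $\mathcal E(0)^2\le C(\varepsilon_1^2+\delta^2)$ by \eqref{eq:priass-2}. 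The constant therefore depends on the wave strengths but not on $\kappa$ or $T$.

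Next I write the rescaled shift as an integral. Since $X_i^\kappa(\tau)=\kappa X_i(\tau/\kappa)$, we have $\dot X_i^\kappa(\tau)=\dot X_i(\tau/\kappa)$, and $X_i(0)=0$ gives
\begin{align*}
|X_i^\kappa(t)|=\kappa\Bigl|\int_0^{t/\kappa}\dot X_i(s)\,ds\Bigr|\le \kappa\Bigl(\frac{t}{\kappa}\Bigr)^{1/2}\Bigl(\int_0^{T/\kappa}|\dot X_i|^2\,ds\Bigr)^{1/2}\le C\,t^{1/2}\kappa^{1/2}\le CT^{1/2}\kappa^{1/2}
\end{align*}
by Cauchy--Schwarz, which is \eqref{eq:shift-compactness}.

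For the $BV(0,T)$ bound, I bound the total variation by
\begin{align*}
\int_0^T|\dot X_i^\kappa(\tau)|\,d\tau=\kappa\int_0^{T/\kappa}|\dot X_i(s)|\,ds\le \kappa\Bigl(\frac{T}{\kappa}\Bigr)^{1/2}C=C(\kappa T)^{1/2},
\end{align*}
which is uniformly bounded for $\kappa\le\kappa_0$. Combined with the $L^\infty$ bound, this gives boundedness in $BV(0,T)$. An alternative route is the crude bound $|\dot X_i|\le C\varepsilon$ from \eqref{eq:shift-speed-bound}, which gives total variation at most $C\varepsilon T$, but that does not yield the $\kappa^{1/2}$ decay.

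The rarefaction--contact--shock case is identical, using the analogous a priori estimate for the single shift $X_3$.

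The main point to check is that the $\delta_i|\dot X_i|^2$ term in \eqref{eq:pries} holds with $T$-independent constants, so that applying it on the long interval $[0,T/\kappa]$ costs nothing. It also has to be decided whether the factor $\delta_i^{-1/2}$ is absorbed into $C$; I would absorb it, since the $\delta_i$ are fixed.
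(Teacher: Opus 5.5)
Your proposal is correct and follows the paper's proof essentially verbatim: you use $\dot X_i^\kappa(\tau)=\dot X_i(\tau/\kappa)$ and $X_i(0)=0$ to bound $|X_i^\kappa(t)|$ by $\kappa\int_0^{T/\kappa}|\dot X_i|\,dt$, apply Cauchy--Schwarz, and invoke the $T$-independent bound $\int_0^{T/\kappa}\delta_i|\dot X_i|^2\,dt\le C$ from \eqref{eq:pries}, absorbing $\delta_i^{-1/2}$ into the constant exactly as the paper does. Your explicit total-variation bound $C(\kappa T)^{1/2}$ is the same quantity the paper uses to dominate $|X_i^\kappa(t)|$, so the $BV(0,T)$ claim is handled the same way.
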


\begin{proof}
By \eqref{eq:rescaled-shifts},
\[
\frac{d}{d\tau}X_i^\kappa(\tau)=\dot X_i\!\left(\frac{\tau}{\kappa}\right),
\]
which together with $X_i^{\kappa}(0) = 0$ implies that for each $t\in[0,T]$, 
\begin{align*}
\left|X_i^\kappa(t)\right|\le \int_0^T \left| \dot{X}_i^\kappa (\tau) \right| \, d\tau
&=
\int_0^T \left|\dot X_i\!\left(\frac{\tau}{\kappa}\right)\right|\,d\tau
=
\kappa\int_0^{T/\kappa} |\dot X_i(t)|\,dt \\
&\leq \kappa^{1/2}T^{1/2} \myparas{\int_0^{T/\kappa}\abs{\dot{X}_i}^2}^{1/2}\leq \widetilde C T^{1/2} \delta_i^{-1/2}\kappa^{1/2},
\end{align*}
where for the last inequality, we have used 
\(\int_0^{T/\kappa}\delta_i|\dot X_i|^2 dt\le C \) by \eqref{eq:pries}.
\end{proof}
}
\subsection{Convergence to the shifted Riemann solution}
{
\begin{proof}[\textbf{Proof of Theorem~\ref{thm:main}}]

Let \(f\) be the global normalized solution obtained above, and let \(f^\kappa\) be the
rescaled solution \eqref{eq:rescaled-solution}. Then \(f^\kappa\) exists uniquely on \([0,T]\)
for every \(T>0\), since \(f\) exists globally on \([0,\infty)\).

By Lemma~\ref{lem:shift-compactness}, after extraction of a subsequence if necessary, we may assume
\[
X_i^\kappa \to X_i^0
\qquad\text{in }L^1(0,T),
\qquad i=1,3.
\]

We first compare \(f^\kappa\) with the rescaled composite approximate wave.
By \eqref{eq:rescaled-perturbation-sharp},
\begin{align}
\int_0^T\iint_{\mathbb R\times\mathbb R^3}
\frac{|f^\kappa-\bar f^{\,\kappa}|^2}{M_\#}\,d\xi\,dy\,d\tau
\le
C\kappa\bigl((\varepsilon_1+\delta_0)^2+\delta_0^{1/2}\bigr)T .
\label{eq:main-step-1}
\end{align}

Next, we compare \(\bar f^{\,\kappa}\) with the shifted Euler Maxwellian profile.
Using Lemma~\ref{lem:bs}, \eqref{eq:contma} and \eqref{eq:Maxwellian-Lipschitz-single}, we obtain
\begin{align}
\int_0^T\iint_{\mathbb R\times\mathbb R^3}
\frac{|\bar f^{\,\kappa}-M_{X^\kappa}[U^E]|^2}{M_\#}
\,d\xi\,dy\,d\tau
\le
C\delta_C\,\kappa^{1/2}T^{3/2}+ C(\delta_1+\delta_3)\kappa T.
\label{eq:main-step-2}
\end{align}

Finally, by observing that the height of the difference between the two
shifted Euler shocks is of order the shock strength, while its width in
\(x\) is the difference of the shifts for each shock wave,
for each \(\tau\in[0,T]\),
\begin{align*}
\iint_{\mathbb R\times\mathbb R^3}
\frac{|M_{X^\kappa}[U^E]
-
M[U^E]|^2}{M_\#}\,d\xi\,dy
\le C \int \left|(U^E)^{X^\kappa}-(U^E)\right|^2 dy \le
C\delta_0^2\sum_{i=1,3}|X_i^\kappa(\tau)|.
\end{align*}
Integrating in \(\tau\) yields
\begin{align}
\int_0^T\iint_{\mathbb R\times\mathbb R^3}
\frac{|M_{X^\kappa}[U^E]
-
M[U^E]|^2}{M_\#}\,d\xi\,dy\,d\tau \le &
C\delta_0^2\sum_{i=1,3}\|X_i^\kappa\|_{L^1(0,T)} \nonumber \\
\le & C\delta_0^2T^{3/2}\kappa^{1/2} \qquad \text{by }\eqref{eq:shift-compactness}
\label{eq:main-step-3}
\end{align}

Combining \eqref{eq:main-step-1}, \eqref{eq:main-step-2}, and \eqref{eq:main-step-3},
and using that \(M_\#\) is uniformly bounded from above by positive constants,
we obtain
\begin{align*}
&\int_0^T \iint_{\mathbb R\times\mathbb R^3}
\left|f^\kappa-M[U^E]\right|^2
\,d\xi\,dy\,d\tau\\
&\quad \le
C\kappa\bigl((\varepsilon_1+\delta_0)^2+\delta_0^{1/2}\bigr)T
+
C\delta_C\,\kappa^{1/2}T^{3/2} + C(\delta_1+\delta_2)\kappa T +
C\delta_0^2T^{3/2}\kappa^{1/2}.
\end{align*}
This is exactly \eqref{eq:hlmt1-main}. 
\end{proof}
}

\section{Low order estimates}

\setcounter{equation}{0}

\subsection{Relative Entropy Method}
For simplicity, we introduce the following notation for the left and right end states of the two shock profiles:
\begin{equation}\label{eq:ridx}
\begin{aligned}
&v_1:=v_*,\qquad \theta_1:=\theta_*,\qquad p_1:=p_*,\qquad \sigma_{1}^{\text{m}}:=\sigma_*,\\
&v_3:=v^*,\qquad \theta_3:=\theta^*,\qquad p_3:=p^*,\qquad \sigma_{3}^{\text{m}}:=\sigma^*.
\end{aligned}
\end{equation}

We define
\begin{equation}\label{eq:repy}
\Phi(z):=z-1-\ln z,
\end{equation}
and the relative entropy density by
\begin{equation}\label{eq:rel-entropy}
\eta(U \mid \bar U)
:=
\frac{2}{3}\bar\theta\,\Phi\!\left(\frac{v}{\bar v}\right)
+\bar\theta\,\Phi\!\left(\frac{\theta}{\bar\theta}\right)
+\sum_{k=1}^3\frac{\psi_k^2}{2},
\end{equation}
where
\[
U=(v,u,\theta),\qquad \bar U=(\bar v,\bar u,\bar\theta), \qquad u = (u_1,u_2,u_3), \qquad \bar u = (\bar u_1,0,0).
\]

Differentiating \eqref{eq:rel-entropy} with respect to $t$, we obtain
\begin{equation}\label{eq:etpcom}
\begin{aligned}
\partial_t\eta(U \mid \bar U)
=
\bar\theta_t
\left(
\frac{2}{3}\Phi\!\left(\frac{v}{\bar v}\right)
+
\Phi\!\left(\frac{\theta}{\bar\theta}\right)
\right)
+\sum_{i=1}^3 \psi_i\psi_{it}
+\frac{2}{3}\bar\theta \partial_t\Phi\!\left(\frac{v}{\bar v}\right)
+\bar\theta \partial_t\Phi\!\left(\frac{\theta}{\bar\theta}\right).
\end{aligned}
\end{equation}

Substituting the perturbation system \eqref{eq:pesy1}--\eqref{eq:pesy3} into \eqref{eq:etpcom}, integrating over $\mathbb R_x$ with the weight $a(t,x)$, and using the identity
\[
\frac{\bar p\,\phi}{v}\psi_{1x}+\psi_{1x}(p-\bar p)-\frac{\zeta}{\theta}(pu_{1x}-\bar p\,\bar u_{1x})
=
\frac{\bar p\,\phi}{\bar v}\psi_{1x}
-\frac{\zeta}{\theta}(p-\bar p)\bar u_{1x},
\]
together with
\[
\frac{\bar v\bar p}{v}=\frac{2\bar\theta}{3v}=\frac{\bar\theta p}{\theta},
\]
we arrive at the weighted relative entropy identity
\begin{equation}\label{eq:weighted-entropy}
\frac{d}{dt}\int_{\mathbb R} a(t,x)\eta(U \mid \bar U)\,dx
=
\sum_{i\in\{1,3\}}\dot X_i(t)\,\mathcal Y_i(U)
+\mathcal J^{\mathrm{bad}}(U)
+\mathcal J^{\mathrm{kinetic}}(U)
-\mathcal J^{\mathrm{good}}(U).
\end{equation}

Here the modulation functionals are defined by
\begin{equation}\label{eq:Yi}
\begin{aligned}
\mathcal Y_i(U)
&:=
-\int_{\mathbb R}\partial_x\bigl(a_i^{-X_i}\bigr)\,\eta(U \mid \bar U)\,dx
+\int_{\mathbb R}a
\left(
\frac{2}{3}\Phi\!\left(\frac{v}{\bar v}\right)
+
\Phi\!\left(\frac{\theta}{\bar\theta}\right)
\right)
\partial_x \shockw{\theta}{i}\,dx\\
&\quad
+\int_{\mathbb R}a
\left(
\frac{\partial_x \shockw{v}{i}\,\bar p}{\bar v}\phi
+\frac{\partial_x \shockw{\theta}{i}}{\bar\theta}\zeta
+\psi_1 \partial_x \shockw{u_1}{i}
\right)\,dx,
\qquad i=1,3.
\end{aligned}
\end{equation}

We further decompose
\begin{equation}\label{eq:Yi2}
\mathcal Y_i(U)=\sum_{j=1}^6 \mathcal Y_{ij}, \qquad i=1,3
\end{equation}
where
\begin{equation}\label{eq:Yij}
\begin{aligned}
&\mathcal Y_{i1}:=\int_{\mathbb R} a \partial_x(u_1^{S_i})^{-X_i} \psi_1\,dx,
\qquad
&&\mathcal Y_{i2}:=\int_{\mathbb R} a \frac{\partial_x\bigl((v^{S_i})^{-X_i}\bigr)\bar p}{\bar v}\phi\,dx,
\\
&\mathcal Y_{i3}:=\int_{\mathbb R} a \frac{\partial_x(\theta^{S_i})^{-X_i}}{\bar\theta}\zeta\,dx, \qquad 
&&\mathcal Y_{i4}:=\frac{2}{3}\int_{\mathbb R} a \partial_x \bigl((\theta^{S_i})^{-X_i}\bigr) \Phi\!\left(\frac{v}{\bar v}\right)\,dx,
\\
&\mathcal Y_{i5}:=\int_{\mathbb R} a \partial_x\bigl((\theta^{S_i})^{-X_i}\bigr) \Phi\!\left(\frac{\theta}{\bar\theta}\right)\,dx, \qquad 
&&\mathcal Y_{i6}:=-\int_{\mathbb R}\partial_x\bigl(a_i^{-X_i}\bigr)\,\eta(U \mid \bar U)\,dx.
\end{aligned}
\end{equation}

The good, bad, and kinetic parts in \eqref{eq:weighted-entropy} are given by
\begin{align}\label{eq:good1}
\mathcal J^{\mathrm{good}}(U)
:=&
\sum_{i\in\{1,3\}}\sigma_i\int_{\mathbb R}\partial_x \bigl(a_i^{-X_i}\bigr)\,\eta(U \mid \bar U)\,dx\nonumber\\
&\quad +\int_{\mathbb R} a
\left(
\frac{\alpha_{\rm{th}}(\bar\theta)}{v\theta}\zeta_x^2
+\frac{4}{3}\frac{\mu(\bar\theta)}{v}\psi_{1x}^2
+\frac{\mu(\theta)}{v}\sum_{k=2}^3\psi_{kx}^2
\right)\,dx =:\mathfrak G(U)+\mathcal D_{\mathrm{mac}}(U)
\end{align}

\begin{equation}\label{eq:bad1}
\mathcal J^{\mathrm{bad}}(U)
:=
\sum_{\ell=1}^7 \mathcal{B}_\ell(U)+\mathcal S_1(U)+\mathcal S_2(U),
\end{equation}
and
\begin{equation}\label{eq:kint1}
\mathcal J^{\mathrm{kinetic}}(U)
:=
\sum_{\ell=1}^6 K_\ell(U).
\end{equation}

More explicitly, the macroscopic remainder terms are defined by
\begin{align*}
&\mathcal{B}_1:=\int_{\mathbb R} a\,\mathcal J_1\,dx,\\
&\mathcal{J}_1:= \myparab{\frac{2}{3}\myparas{-\sigma_1\partial_x\shockw{\theta}{1}+\theta_t^C-\sigma_3\partial_x\shockw{\theta}{3}}\Phi\myparas{\frac{v}{\y{v}}}-\frac{\y{p}\y{u}_{1x}}{v\y{v}}\phi^2} \\
&\qquad+\myparab{\myparas{-\sigma_1\partial_x\shockw{\theta}{1}+\theta_t^C-\sigma_3\partial_x\shockw{\theta}{3}}\Phi\myparas{\frac{\theta}{\y{\theta}}}-\frac{\y{u}_{1x}}{\theta}\zeta\myparas{p-\y{p}}+\y{p}\y{u}_{1x}\frac{\zeta^2}{\theta\y{\theta}}}\\
&\qquad =:\mathcal{J}_2+\mathcal{J}_3,\\
&\mathcal{B}_2:=\int_{\mathbb R} a_x\,\psi_1(p-\bar p)\,dx,\\
&\mathcal{B}_3:=-\int_{\mathbb R}a_x\left[
\frac{\zeta}{\theta}\left(\alpha_{\rm{th}}(\theta)\frac{\theta_x}{v}-\alpha_{\rm{th}}(\bar\theta)\frac{\bar\theta_x}{\bar v}\right) +\frac{4}{3}\psi_1\left(\mu(\theta)\frac{u_{1x}}{v}-\mu(\bar\theta)\frac{\bar u_{1x}}{\bar v}\right)\right.\\
&\left.\qquad \qquad \qquad \qquad +\frac{\mu(\theta)}{v}\sum_{i=2}^3\psi_i\psi_{ix}\right]dx,\\
&\mathcal{B}_4:=\int_{\mathbb R} a\,\mathcal R_4^{\text{mac}}\,dx,\qquad
\mathcal{B}_5:=\int_{\mathbb R} a\,\mathcal R_5^{\text{mac}}\,dx,\qquad
\mathcal{B}_6:=\int_{\mathbb R} a\,\mathcal R_6^{\text{mac}}\,dx,\\
& \mathcal S_1:=-\int_{\mathbb R} a\,\psi_1 Q_1\,dx,\qquad
\mathcal S_2:=-\int_{\mathbb R} a\,\frac{\zeta}{\bar\theta}Q_2\,dx,
\end{align*}
where
\begin{align*}
    & \mathcal{R}_4^{\text{mac}} := \biggl[\frac{4}{3}\frac{\mu\bigl(\y{\theta}\bigr)}{v\y{v}}\y{u}_{1x}\psi_{1x}\phi+\frac{\alpha_{\rm{th}}\bigl(\y{\theta}\bigr)}{\theta^2}\zeta\theta_x\myparas{\frac{\theta_x}{v}-\frac{\y{\theta}_x}{\y{v}}}+\frac{\alpha_{\rm{th}}\bigl(\y{\theta}\bigr)}{v\y{v}\y{\theta}}\y{\theta}_x\zeta_x\phi\biggr.\\
    &\biggl.\qquad \qquad \qquad \qquad +\frac{4}{3}\frac{\zeta}{\theta}\myparas{\mu(\theta)\frac{u_{1x}^2}{v}-\mu\bigl(\y{\theta}\bigr)\frac{\y{u}_{1x}^2}{\y{v}}}+\frac{\mu(\theta)}{v}\sum_{i=2}^3 \psi_{ix}^2\biggr], \\
    &\mathcal{R}_5^{\text{mac}} := \myparab{-\frac{\zeta^2}{\theta\y{\theta}}\myparam{\myparas{\alpha_{\rm{th}}\bigl(\y{\theta}\bigr)\frac{\y{\theta}_x}{\y{v}}}_x+\frac{4}{3}\mu\myparas{\y{\theta}}\frac{\y{u}_{1x}^2}{v}}}, \,\\
    &\mathcal{R}_6^{\text{mac}} := \myparab{-\myparas{\frac{\zeta}{\theta}}_x\frac{\theta_x}{v}\myparas{\alpha_{\rm{th}}(\theta)-\alpha_{\rm{th}}\bigl(\y{\theta}\bigr)}-\frac{4}{3}\frac{u_{1x}\psi_{1x}}{v}\myparas{\mu(\theta)-\mu\bigl(\y{\theta}\bigr)}}
\end{align*}

\noindent while the kinetic terms are
\begin{align*}
K_1&:=-\iint a\frac{\zeta}{\theta}\left(\xi_1\frac{|\xi|^2}{2}\right)\Bigl(\Pi_1-(\Pi_1^{S_1})^{-X_1}-(\Pi_1^{S_3})^{-X_3}\Bigr)_x\,d\xi\,dx,\\
K_2&:=\iint a\frac{\zeta}{\theta}
\left[
u_1\xi_1^2\Pi_{1x}
-\sum_{i\in\{1,3\}}(u_1^{S_i})^{-X_i}\xi_1^2\partial_x(\Pi_1^{S_i})^{-X_i}
\right]d\xi\,dx,\\
K_3&:=\iint a\frac{\zeta}{\theta}\sum_{i=2}^3\psi_i\,\xi_1\xi_i\,\Pi_{1x}\,d\xi\,dx,\\
K_4&:=-\iint a\psi_1\,\xi_1^2\Bigl(\Pi_1-(\Pi_1^{S_1})^{-X_1}-(\Pi_1^{S_3})^{-X_3}\Bigr)_x\,d\xi\,dx,\\
K_5&:=-\iint a\sum_{i=2}^3\psi_i\,\xi_1\xi_i\,\Pi_{1x}\,d\xi\,dx,\\
K_6&:=\iint a\frac{\zeta^2}{\theta\bar\theta}
\sum_{i\in\{1,3\}}
\xi_1\left(\frac{|\xi|^2}{2}-(u_1^{S_i})^{-X_i}\xi_1\right)(\partial_x\Pi_1^{S_i})^{-X_i}\,d\xi\,dx.
\end{align*}

The detailed analysis of the leading shock terms, the macroscopic remainders, the kinetic contributions, and the coercive part of $\mathcal J^{\mathrm{good}}$ will be carried out in the following subsections.

\subsection{Control of the leading shock contributions}
We now isolate the leading shock contributions contained in the bad part of the weighted relative entropy identity $\mathcal{B}_1$. First, we focus on $\mathcal{J}_2$. {Recall that}  
\begin{equation}\label{eq:J2-def}
\mathcal J_2
=
\frac{2}{3}\Bigl(-\sigma_1\partial_x(\theta^{S_1})^{-X_1}+\theta_t^C-\sigma_3\partial_x(\theta^{S_3})^{-X_3}\Bigr)
\Phi\!\left(\frac{v}{\bar v}\right)
-\frac{\bar p\,\bar u_{1x}}{v\bar v}\phi^2,
\end{equation}
where
\[
\bar u_{1x}
=
\partial_x(u_1^{S_1})^{-X_1}+u_{1x}^C+\partial_x(u_1^{S_3})^{-X_3}.
\]
Accordingly, we split
\begin{equation}\label{eq:J2-split}
\mathcal J_2=\mathfrak j_{21}+\mathfrak j_{22}+\mathfrak j_{23},
\end{equation}
with
\begin{align}
\mathfrak j_{21}
&:=
-\frac{2}{3}\sigma_1\partial_x(\theta^{S_1})^{-X_1}
\Phi\!\left(\frac{v}{\bar v}\right)
-\frac{\bar p\,\partial_x(u_1^{S_1})^{-X_1}}{v\bar v}\phi^2,
\label{eq:J21}
\\
\mathfrak j_{22}
&:=
\frac{2}{3}\theta_t^C
\Phi\!\left(\frac{v}{\bar v}\right)
-\frac{\bar p\,u_{1x}^C}{v\bar v}\phi^2,
\label{eq:J22}
\\
\mathfrak j_{23}
&:=
-\frac{2}{3}\sigma_3\partial_x(\theta^{S_3})^{-X_3}
\Phi\!\left(\frac{v}{\bar v}\right)
-\frac{\bar p\,\partial_x(u_1^{S_3})^{-X_3}}{v\bar v}\phi^2.
\label{eq:J23}
\end{align}

Similarly, we can investigate the term $\mathcal{J}_3$ as $\mathcal{J}_2$. Here, 
\begin{equation}\label{eq:J3-def}
\mathcal J_3
=
\Bigl(-\sigma_1\partial_x(\theta^{S_1})^{-X_1}+\theta_t^C-\sigma_3\partial_x(\theta^{S_3})^{-X_3}\Bigr)
\Phi\!\left(\frac{\theta}{\bar\theta}\right)
-\frac{\bar u_{1x}}{\theta}\zeta(p-\bar p)
+\bar p\,\bar u_{1x}\frac{\zeta^2}{\theta\bar\theta}.
\end{equation}
We also split
\begin{equation}\label{eq:J3-split}
\mathcal J_3=\mathfrak j_{31}+\mathfrak j_{32}+\mathfrak j_{33},
\end{equation}
where
\begin{align}
\mathfrak j_{31}
&:=
-\sigma_1\partial_x(\theta^{S_1})^{-X_1}
\Phi\!\left(\frac{\theta}{\bar\theta}\right)
-\frac{(p-\bar p)\zeta \partial_x (u_1^{S_1})^{-X_1}}{\theta}
+\bar p\,\partial_x (u_1^{S_1})^{-X_1}\frac{\zeta^2}{\theta\bar\theta},
\label{eq:J31}
\\
\mathfrak j_{32}
&:=
\theta_t^C\Phi\!\left(\frac{\theta}{\bar\theta}\right)
-\frac{u_{1x}^C}{\theta}\zeta(p-\bar p)
+\bar p\,u_{1x}^C\frac{\zeta^2}{\theta\bar\theta},
\label{eq:J32}
\\
\mathfrak j_{33}
&:=
-\sigma_3\partial_x(\theta^{S_3})^{-X_3}
\Phi\!\left(\frac{\theta}{\bar\theta}\right)
-\frac{(p-\bar p)\zeta \partial_x(u_1^{S_3})^{-X_3}}{\theta}
+\bar p\,\partial_x (u_1^{S_3})^{-X_3}\frac{\zeta^2}{\theta\bar\theta}.
\label{eq:J33}
\end{align}

We use Taylor expansions
\begin{equation}\label{eq:taylor-Phi}
\Phi\!\left(\frac{v}{\bar v}\right)
=
\frac{\phi^2}{2\bar v^2}+O(|\phi|^3),
\qquad
\Phi\!\left(\frac{\theta}{\bar\theta}\right)
=
\frac{\zeta^2}{2\bar\theta^2}+O(|\zeta|^3),
\end{equation}
together with \eqref{eq:ctident} and Lemma \ref{lem:contact-gaussian}
\begin{equation}\label{eq:thetaC-t}
\theta_t^C
=
-p^C u_{1x}^C
+\left(\frac{\alpha_{\rm{th}}(\theta^C)\theta_x^C}{v^C}\right)_x
+\frac{4}{3}\mu(\theta^C)\frac{(u_{1x}^C)^2}{v^C}
+Q_2^C,
\end{equation}
where
\begin{equation}\label{eq:Q2C-bound}
Q_2^C
=
O(1)\delta_C(1+t)^{-2}e^{-2c_0x^2/(1+t)}.
\end{equation}
Moreover, see (4.35) in \cite{kang2025time}, by Lemma~\ref{lem:bs},
\begin{equation}\label{eq:pbar-endstate}
|\bar p- p_1|+|\bar p- p_3|\le C\delta_0.
\end{equation}

The next estimates follow from \eqref{eq:taylor-Phi}--\eqref{eq:pbar-endstate}, the exponential localization of the shock profiles, and the Gaussian estimate for the viscous contact wave. See (4.33) with $\gamma=\frac{5}{3}$ in \cite{kang2025time}:
\begin{align}
\mathfrak j_{21}
&\le
-\frac{4 p_1\,\sigma_1^{\text{m}}}{3 v_1^2}\partial_x(v^{S_1})^{-X_1}\phi^2
+
C(\delta_0+\varepsilon)|\partial_x(v^{S_1})^{-X_1}|\phi^2,
\label{eq:J21-est}
\\
\mathfrak j_{22}
&\le
C\delta_C(1+t)^{-1}e^{-C_1|x|^2/(1+t)}\phi^2,
\label{eq:J22-est}
\\
\mathfrak j_{23}
&\le
\frac{4p_3\,\sigma_3^{\text{m}}}{3 v_3^2}\partial_x(v^{S_3})^{-X_3}\phi^2
+
C(\delta_0+\varepsilon)|\partial_x(v^{S_3})^{-X_3}|\phi^2.
\label{eq:J23-est}
\end{align}
Similarly,
\begin{align}\label{eq:J3-est}
\mathcal J_3
\le& \sum_{i\in\{1,3\}} \frac{\sigma_i^{\text{m}} |\partial_x(v^{S_i})^{-X_i}|}{2v_i\theta_i} \left(\frac{2}{3}\zeta-2p_i\phi \right) \zeta + C\delta_C(1+t)^{-1}e^{-C_1|x|^2/(1+t)}|(\phi,\zeta)|^2 \nonumber \\
&\qquad+ C(\delta_0+\varepsilon)\bigl(|\partial_x(v^{S_1})^{-X_1}|+|\partial_x(v^{S_3})^{-X_3}|\bigr)|(\phi,\zeta)|^2.
\end{align}

Consequently, we obtain the following estimate for the leading shock contribution:
\begin{equation}\label{eq:nbad1}
\begin{aligned}
\int_{\mathbb R}a(\mathcal J_2+\mathcal J_3)\,dx
\le\;&
\sum_{i=1,3}\int_{\mathbb R}a\,|\partial_x(v^{S_i})^{-X_i}|
\left(
\frac{4p_i\sigma_i^{\text{m}}}{3v_i^2}\phi^2
+\frac{\sigma_i^{\text{m}}}{3v_i\theta_i}\zeta^2
-\frac{p_i\sigma_i^{\text{m}}}{v_i\theta_i}\phi\zeta
\right)\,dx\\
&+
C\delta_C(1+t)^{-1}\int_{\mathbb R}ae^{-C_1|x|^2/(1+t)}|(\phi,\zeta)|^2\,dx\\
&+
C(\delta_0+\varepsilon)\int_{\mathbb R}
\bigl(|\partial_x(v^{S_1})^{-X_1}|+|\partial_x(v^{S_3})^{-X_3}|\bigr)|(\phi,\zeta)|^2\,dx.
\end{aligned}
\end{equation}
The remaining dominant terms are estimated in the following lemma by using the a-contraction method.

\subsection{Decomposition of the weighted entropy dissipation}
We now rewrite the weighted relative entropy identity \eqref{eq:weighted-entropy} by using \eqref{eq:nbad1}. More precisely, we decompose the right-hand side into modulation terms, macroscopic error terms, microscopic error terms, and coercive contributions:
\begin{equation}\label{eq:energy2}
\begin{aligned}
&\frac{d}{dt}\int_{\mathbb R}a\,\eta(U\mid\bar U)\,dx \\
&\quad \le
\sum_{i\in\{1,3\}}\dot X_i(t)\mathcal Y_i(U)
+\mathcal B^{\text{sh}} + \sum_{l=2}^6 \mathcal B_l + \mathcal B^{\text{res}}
+\mathcal S_1+\mathcal S_2
+\sum_{l=1}^6 K_l
-\mathfrak G(U)-\mathcal D_{\mathrm{mac}}(U).
\end{aligned}
\end{equation}
Here, the reformulated error terms \(\mathcal B^{\text{sh}}, \, \mathcal B^{\text{res}}\) are defined by
\begin{align*}
& \mathcal B^{\text{sh}}:=\sum_{i\in\{1,3\}}\int_{\mathbb R}a|\partial_x(v^{S_i})^{-X_i}|
\left(
\frac{4p_i\sigma_{i}^{\text{m}}}{3v_i^2}\phi^2
+\frac{\sigma_{i}^{\text{m}}}{3v_i\theta_i}\zeta^2
-\frac{p_i\sigma_{i}^{\text{m}}}{v_i\theta_i}\phi\zeta
\right)\,dx, \\
& \mathcal B^{\text{res}}:=C\delta_C\frac{1}{1+t}\int_{\mathbb R}ae^{-C_1|x|^2/(1+t)}|(\phi,\zeta)|^2\,dx
\\
&\qquad \qquad+C(\delta_0+\varepsilon)\int_{\mathbb R}\bigl(|\partial_x(v^{S_1})^{-X_1}|+|\partial_x(v^{S_3})^{-X_3}|\bigr)|(\phi,\zeta)|^2\,dx.
\end{align*}

\begin{lemma}\label{lem:actr1}
For $t>0$, there exist positive constants \(C\) and \(\alpha\) such that
\begin{align}\label{eq:macro1}
& -\sum_{i=1,3}\frac{\delta_i}{2M_i}|\dot X_i|^2 + \mathcal B^{\textup{sh}} + \mathcal B_2 - \mathfrak G - \frac34 \mathcal D_{\mathrm{mac}} \\
& \quad \le\; -\alpha\sum_{i=1,3}\mathcal{G}_i^S+C\delta_1^{\frac{4}{3}}\bigl(\delta_3^{\frac{4}{3}}+\delta_C^{\frac{4}{3}}\bigr)e^{-C\delta_1 t} + C\delta_3^{\frac{4}{3}}\bigl(\delta_1^{\frac{4}{3}}+\delta_C^{\frac{4}{3}}\bigr)e^{-C\delta_3 t}\nonumber\\
& \qquad + C\left(\sum_{i=1,3}\delta_i^2e^{-C\delta_i t}+\frac{1}{\delta_*t^2}\right)\int_{\mathbb R}\eta(U \mid \bar U)\,dx - \frac34\int_{\mathbb R}a\frac{\mu(\theta)}{v}\sum_{k=2}^3\psi_{kx}^2\,dx.
\end{align}
Recall that
\begin{equation} \label{eq:maccoreciv}
    \mathcal{G}_i^S := \int_{\mathbb R}\Bigl|\partial_x\shockw{v}{i}\Bigr|\,|\varphi_i(\phi,\psi,\zeta)|^2\,dx.
\end{equation}
\end{lemma}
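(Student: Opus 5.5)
This is the standard $a$-contraction argument for two shocks, localized by the cutoffs $\varphi_1,\varphi_3$. We first split every shock-localized integral in $\mathcal B^{\mathrm{sh}}$, $\mathcal B_2$, $\mathfrak G$ and the $\psi_1,\zeta$ parts of $\mathcal D_{\mathrm{mac}}$ according to $1=\varphi_1^2+(1-\varphi_1^2)$ near the $1$-shock, and similarly with $\varphi_3$ near the $3$-shock. The cross pieces are $|\partial_x(v^{S_1})^{-X_1}|(1-\varphi_1^2)$ and its $3$-shock analogue. By Lemma~\ref{lem:intsk} these are pointwise $\le C\delta_i^2e^{-C\delta_i t}$, so they go into the term $C\sum_i\delta_i^2e^{-C\delta_it}\int\eta\,dx$. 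Since $a_x=\partial_x(a_1^{-X_1})+\partial_x(a_3^{-X_3})$ with $|\partial_x a_i|\le \delta_i^{-1/2}|\partial_x v^{S_i}|$, the same splitting applies to $\mathcal B_2$ and to $\mathfrak G$.

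After this reduction the problem becomes, for each $i$, a single-shock inequality in the localized variables $\varphi_i(\phi,\psi_1,\zeta)$. We follow the Navier--Stokes--Fourier computation of \cite{kang2025time} and \cite{wang2025}, taking the structural constants from Lemma~\ref{lem:bs2}.

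\textbf{Step 1: diagonalize.} We use Lemma~\ref{lem:bs2} to replace $\partial_x u_1^{S_i}$ and $\partial_x\theta^{S_i}$ by $\mp\sigma_i^{\rm m}\partial_x v^{S_i}$ and $-p_i\partial_x v^{S_i}$, up to $O(\delta_i)$ errors. We then pass to the variables $y_i=(v^{S_i})^{-X_i}$-scaled in $[0,1]$, with the weighted perturbation $w=\varphi_i\bigl(p-\bar p\bigr)$ together with a combination of $\psi_1$ and $\zeta$. In these variables the leading part of $\mathcal B^{\rm sh}+\mathcal B_2$ becomes a quadratic form whose bad direction is $p-\bar p$, of size $O(\delta_i^{-1/2}\cdot\delta_i)$ through $a_x$. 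All other directions are absorbed by $\mathfrak G$: its coefficient $\sigma_i|\partial_x a_i|\sim\delta_i^{-1/2}|\partial_x v^{S_i}|$ dominates the $O(1)|\partial_x v^{S_i}|$ terms since $\delta_i$ is small.

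\textbf{Step 2: handle the bad direction.} For the part $\int a_x\psi_1(p-\bar p)$, in the $p-\bar p$ direction we complete the square against the $\psi_1$ part of $\mathfrak G$. What remains is
\[
\frac{C}{\sqrt{\delta_i}}\int|\partial_x v^{S_i}|\,|\varphi_i(p-\bar p)|^2dx .
\]
This is controlled by the sharp weighted Poincar\'e inequality of Kang--Vasseur on $[0,1]$, applied to $w$ after subtracting its mean $\bar w$. The fluctuation is controlled by the diffusion $\frac34\mathcal D_{\mathrm{mac}}$ localized by $\varphi_i$; the mean part is controlled by the modulation term. Since $\dot X_i$ is defined by \eqref{eq:shift} to equal $-\frac{\mathfrak m_i}{\delta_i}\mathcal Y_{i1}+\mathcal Y_{i2}+\mathcal Y_{i3}$, the leading part of $\dot X_i$ is $\approx-\mathfrak m_i\bar w\cdot(\text{const})$. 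Hence $-\frac{\delta_i}{2\mathfrak m_i}|\dot X_i|^2$ supplies $-c\,\delta_i\bar w^2$, and the constant $\mathfrak m_i$ is exactly the value for which the expansion constant from Lemma~\ref{lem:bs2} (the $\frac{10\mu-9\alpha_{\rm th}}{10\mu+3\alpha_{\rm th}}+3$ term) makes the resulting quadratic form in $(\bar w,\ w-\bar w)$ negative definite. This yields $-\alpha\mathcal G_i^S$.

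\textbf{Step 3: errors.} The derivatives of $\varphi_i$ are $O(1/t)$ and supported between the shocks. They produce $\frac{1}{\delta_*t^2}\int\eta$ after Young's inequality against the diffusion. Interactions with the contact wave and the other shock, arising from $\bar p-p^{S_i}$ and $\bar\theta-\theta^{S_i}$, are handled by Lemma~\ref{lem:wave-interaction-L2} and Young's inequality with exponents $3/2,3$; this gives the $\delta_1^{4/3}(\delta_3^{4/3}+\delta_C^{4/3})e^{-C\delta_1t}$ terms. The transverse terms $\psi_2,\psi_3$ are simply kept from $\mathcal D_{\mathrm{mac}}$.

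The main obstacle is Step 2. The Poincar\'e argument must be run on the cutoff-localized variable $\varphi_i w$, and the cubic remainders, $O(\delta_i)$ profile defects and $\varphi_i$-commutator terms have to be shown small relative to the $\delta_i^{-1/2}$-weighted good terms uniformly in $t$. I would first verify the definiteness computation for a single shock with $\varphi_i\equiv1$, then track the commutators.
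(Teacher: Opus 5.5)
Your overall architecture matches the paper: the cutoff splitting via Lemma~\ref{lem:intsk}, one shock at a time, Poincar\'e for the fluctuation, the shift for the mean, $|\varphi_{ix}|\le C/t$ producing $\frac{1}{\delta_*t^2}\int\eta\,dx$, and interactions via Lemma~\ref{lem:wave-interaction-L2}. Step~2, however, has a genuine gap.

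You claim that after completing the square in $\psi_1$ a positive remainder $\frac{C}{\sqrt{\delta_i}}\int|\partial_x v^{S_i}|\,|\varphi_i(p-\bar p)|^2dx$ survives, and that Lemma~\ref{lem:poincare} plus the shift absorbs it. This cannot close. In the variable $y_i$ one has $\int|\partial_x (v^{S_i})^{-X_i}|\,w^2dx=\delta_i\int_0^1w^2dy_i$ and $dy_i/dx\approx c\,\delta_i y_i(1-y_i)$. Hence the diffusion gives at most $c\,\delta_i\int_0^1|w-\bar w|^2dy_i$, and $-\frac{\delta_i}{2\mathfrak m_i}|\dot X_i|^2$ gives at most $c\,\delta_i\bar w^2$. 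Your remainder equals $C\sqrt{\delta_i}\int_0^1 w^2dy_i$, which is larger by a factor $\delta_i^{-1/2}$. There is a second problem: $p-\bar p$ contains $\phi$, while $\mathcal D_{\mathrm{mac}}$ contains no $\phi_x$, since viscosity and heat conduction act only on $\psi$ and $\zeta$. So the weighted Poincar\'e inequality cannot be applied to $\varphi_i(p-\bar p)$ at all.

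The missing idea is that the $\delta_i^{-1/2}$-sized hyperbolic part is degenerate, so nothing of that size survives. Write $p-\bar p\approx\frac{2}{3v}\zeta-\frac{p}{v}\phi$ and use $(\sigma_i^{\rm m})^2=\frac{5p_i}{3v_i}$. Then the Cauchy--Schwarz bound $\bigl(\frac{2}{3v}\zeta-\frac{p}{v}\phi\bigr)^2\le(\sigma_i^{\rm m})^2\bigl(\frac{p}{v}\phi^2+\frac{\zeta^2}{\theta}\bigr)$ is sharp. The term $\frac{|\partial_x a_i|}{2|\sigma_i|}(p-\bar p)^2$ you produce is therefore exactly cancelled, at leading order, by the $\phi$- and $\zeta$-parts of $\mathfrak G$, which you had already set aside for the other directions. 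Consequently $\mathcal B_2-\mathfrak G\le-\sum_i(\mathfrak G_{1i}+\mathfrak G_{2i}+\mathfrak G_{3i})+B_{\rm new}+\dots$ with no positive leftover. The kernel is the characteristic direction $\phi\approx-\psi_1/\sigma_i$, $\zeta\approx\frac{2\theta_i}{3v_i\sigma_i}\psi_1$, on which $p-\bar p\approx\sigma_i\psi_1$.

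The only genuinely bad term is the $O(1)$-scale $\mathcal B^{\rm sh}$. The large weight $\delta_i^{-1/2}$ of $\mathfrak G_{1i},\mathfrak G_{2i}$ is what lets you project onto $w_i=\varphi_i\psi_1$, at cost $C\delta_i^{1/4}(\mathfrak G_{1i}+\mathfrak G_{2i})$: $\mathcal B^{\rm sh}$ itself, the shift functionals $\mathcal Y_{i2},\mathcal Y_{i3}$, and the heat-conduction term through $\mathfrak z_i=\varphi_i\zeta$. The argument then runs as follows:
\begin{itemize}
\item $\mathcal B^{\rm sh}$ is at most roughly $\alpha_i\delta_i\int_0^1 w_i^2dy_i$.
\item Poincar\'e applied to $w_i$, whose derivative does appear in $\mathcal D_{\mathrm{mac}}$, gives $-2\alpha_i\delta_i\int_0^1(w_i-\bar w_i)^2dy_i$.
\item $\mathfrak m_i$ is chosen so that the shift term cancels the mean part.
\item The net $-\alpha_i\delta_i\int_0^1w_i^2dy_i$, together with $\mathfrak G_{1i},\mathfrak G_{2i},\mathfrak G_{3i}$, yields $-\alpha\mathcal G_i^S$.
\end{itemize}
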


\begin{proof}
We next combine the modulation terms, the principal shock contribution \(\mathcal B^{\text{sh}}\), and the dissipative part \(\mathcal D_{\mathrm{mac}}(U)\) in order to recover the coercive shock functional \(\mathcal G_i^S\).

\subsubsection*{Estimate of \(\mathcal B_2-\mathfrak G\)}
See (4.46) with $R=\frac{2}{3}$ in \cite{kang2025time}. By a similar argument, we obtain the following estimate.
\begin{equation}\label{eq:nbad2}
\begin{aligned}
\mathcal B_2(U)-\mathfrak G(U)
\le\;&
-\sum_{i\in\{1,3\}}\mathfrak G_{1i}(U)
-\sum_{i\in\{1,3\}}\mathfrak G_{2i}(U)
-\sum_{i\in\{1,3\}}\mathfrak G_{3i}(U)
+B_{\mathrm{new}}(U)\\
&+
C\sum_{i\in\{1,3\}}\delta_i\sqrt{\delta_i}e^{-C\delta_i t}
\int_{\mathbb R}\eta(U \mid \bar U)\,dx,
\end{aligned}
\end{equation}
where
\begin{align}
\begin{aligned}
& \mathfrak G_{1i}(U):=\frac{\theta_i\sigma_{i}^{\text{m}}}{3v_i^2}\int_{\mathbb R}|\partial_x(a_i^{-X_i})|\varphi_i\left(\phi+\frac{\psi_1}{\sigma_i^{\text{m}}}\right)^2dx,\\
& \mathfrak G_{2i}(U):=\frac{\sigma_{i}^{\text{m}}}{2\theta_i}\int_{\mathbb R}|\partial_x(a_i^{-X_i})|\varphi_i\left(\zeta-\frac{2\theta_i}{3\sigma_i^{\text{m}} v_i}\psi_1\right)^2dx,\\
& \mathfrak G_{3i}(U):=\frac{\sigma_{i}}{2}\sum_{j=2}^3\int_{\mathbb R}\partial_x(a_i^{-X_i})\varphi_i \psi_j^2dx=\frac{1}{2\sqrt{\delta_i}}\sum_{j=2}^3\int_\mathbb{R}\sigma_i\partial_x\shockw{v}{i}\varphi_i\psi_j^2 dx,
\end{aligned}
\end{align}
and
\begin{align}
\begin{aligned}
B_{\mathrm{new}}(U):=\;&
C\sum_{i\in\{1,3\}}\delta_i\int_{\mathbb R}|\partial_x(a_i^{-X_i})||(\phi,\psi,\zeta)|^2dx
+
C\sum_{i\in\{1,3\}}\int_{\mathbb R}|\partial_x(a_i^{-X_i})||(\phi,\psi,\zeta)|^3dx\\
&+
C\int_{\mathbb R}|\partial_x(a_1^{-X_1})|
\Bigl(|(v^{S_3}-v^*,\theta^{S_3}-\theta^*)|+|(v^C-v_*,\theta^C-\theta_*)|\Bigr)
|(\phi,\psi_1,\zeta)|^2dx\\
&+
C\int_{\mathbb R}|\partial_x(a_3^{-X_3})|
\Bigl(|(v^{S_1}-v_-,\theta^{S_1}-\theta_-)|+|(v^C-v_*,\theta^C-\theta_*)|\Bigr)
|(\phi,\psi_1,\zeta)|^2dx.
\end{aligned}
\end{align}
Indeed, 
\begin{align*}
\mathcal B_2(U)-\mathfrak G(U)
&=
\int_{\mathbb R} a_x \psi_1(p-\bar p)\,dx
-\sum_{i\in\{1,3\}}\sigma_i \int_{\mathbb R} \partial_x(a_i^{-X_i}) \eta(U\mid \bar U)\,dx\\
&=
\sum_{i\in\{1,3\}}
\int_{\mathbb R}\partial_x(a_i^{-X_i})
\left[
\psi_1(p-\bar p)
-\sigma_i\left(
\frac23\bar\theta\Phi\!\left(\frac{v}{\bar v}\right)
+\bar\theta\Phi\!\left(\frac{\theta}{\bar\theta}\right)
+\frac{|\psi|^2}{2}
\right)
\right]dx.
\end{align*}
Using
\begin{align*}
\Phi(z)=\frac{(z-1)^2}{2}+O(|z-1|^3),
\quad
p-\bar p=\frac{2}{3v}\zeta-\frac{\bar p}{v}\phi,
\end{align*}
we complete squares separately for the \(1\)-shock and \(3\)-shock contributions and obtain \eqref{eq:nbad2}. Here, {the good terms} $\mathfrak G_{3i}$ {provide coercivity for the transverse velocity components included in} $\mathcal{G}_i^S${.}

It remains to estimate \(B_{\mathrm{new}}\). Using \(\varphi_1+\varphi_3=1\), Lemmas~\ref{lem:bs}--\ref{lem:intsk}, the Sobolev inequality, and Young's inequality, we obtain

\begin{align}
\begin{aligned}
B_{\mathrm{new}}(U)\le\;&
C(\varepsilon+\sqrt{\delta_0})
\left(
\sum_{i\in\{1,3\}}(\mathfrak G_{1i}+\mathfrak G_{2i})+\mathcal D_{\mathrm{mac}}(U)+\sum_{i=1,3}\mathcal G_i^S
\right)\\
&+
C\sum_{i\in\{1,3\}}\varepsilon\sqrt{\delta_i}\delta_i e^{-C\delta_i t}
\int_{\mathbb R}\eta(U \mid \bar U)\,dx\\
&+
C\delta_1^{\frac{4}{3}}\bigl(\delta_C^{\frac{4}{3}}+\delta_3^{\frac{4}{3}}\bigr)e^{-C\delta_1t}+ C\delta_3^{\frac{4}{3}}\bigl(\delta_C^{\frac{4}{3}}+\delta_1^{\frac{4}{3}}\bigr)e^{-C\delta_3t}.
\end{aligned}
\end{align}

\subsubsection*{Estimate of the shift terms}

We now estimate the modulation term
\[
-\sum_{i=1,3}\frac{\delta_i}{2\mathfrak m_i}|\dot X_i|^2.
\]
Set
\begin{equation}\label{eq:newvar}
\begin{aligned}
& w_1:=\varphi_1\psi_1,\qquad w_3:=\varphi_3\psi_1,\\
& y_1:= -\frac{\left(v^{S_1}\right)^{-X_1}-v_-}{\delta_1},\qquad
  y_3:= \frac{\left(v^{S_3}\right)^{-X_3}-v^*}{\delta_3},\\
& z_1:=x-\sigma_1 t-X_1(t),\qquad z_3:=x-\sigma_3 t-X_3(t),
\end{aligned}
\end{equation}
so that
\[
\frac{dy_1}{dz_1}=-\frac{\partial_x\shockw{v}{1}}{\delta_1}>0,
\qquad
\frac{dy_3}{dz_3}=\frac{\partial_x\shockw{v}{3}}{\delta_3}>0.
\]

Using the decomposition \(\mathcal Y_i=\sum_{j=1}^6\mathcal Y_{ij}\), we write the shift ODE in the form
\begin{equation}\label{eq:dsaf}
\dot X_i(t)=-\frac{\mathfrak m_i}{\delta_i}(\mathcal Y_{i1}+\mathcal Y_{i2}+\mathcal Y_{i3}),
\qquad i=1,3.
\end{equation}
For the \(1\)-shock, the terms \(\mathcal Y_{11},\mathcal Y_{12},\mathcal Y_{13}\) can be compared with \(\int_0^1 w_1\,dy_1\). More precisely,
\begin{align}
\left|\mathcal Y_{11}+\delta_1\sigma_{1}^{\text{m}}\int_0^1w_1\,dy_1\right|
&\le
C\delta_1(\sqrt{\delta_0}+\delta_0)\int_0^1|w_1|\,dy_1 \nonumber \\
&\quad +
C\int_{\mathbb R}|\partial_x\shockw{v}{1}|\varphi_3|\psi_1|\,dx,
\\
\left|\mathcal Y_{12}+\frac{p_1\delta_1}{\sigma_{1}^{\text{m}}v_1}\int_0^1w_1\,dy_1\right|
&\le
C\delta_1(\delta_0+\sqrt{\delta_0})\int_0^1|w_1|\,dy_1
\nonumber\\
&\quad +
C\sqrt{\delta_1}\int_{\mathbb R}|\partial_x(a_1^{-X_1})|\varphi_1\left|\phi-\frac{\psi_1}{\sigma_1}\right|dx \nonumber \\
&\quad +
C\int_{\mathbb R}\left|\partial_x\shockw{v}{1}\right|\varphi_3|\phi|\,dx,
\\
\left|\mathcal Y_{13}+\frac{2}{3}\frac{p_1\delta_1}{\sigma_{1}^{\text{m}}v_1}\int_0^1w_1\,dy_1\right|
&\le
C\delta_1(\delta_0+\sqrt{\delta_0}+\varepsilon)\int_0^1|w_1|\,dy_1
\nonumber\\
&\quad
+
C\sqrt{\delta_1}\int_{\mathbb R}|\partial_x(a_1^{-X_1})|\varphi_1\left|\zeta-\frac{2\theta_-}{3\sigma_1v_-}\psi_1\right|dx\nonumber\\
&\quad +
C\int_{\mathbb R}\varphi_3\left|\partial_x\shockw{\theta}{1}\right||\zeta|\,dx.
\end{align}
Hence,
\begin{align}
\left|\dot X_1-2\sigma_1^{\text{m}}\mathfrak m_1\int_0^1w_1\,dy_1\right|^2
\le\;&
C(\delta_0+\sqrt{\delta_0}+\varepsilon)^2\int_0^1 w_1^2\,dy_1
+\frac{C}{\delta_1}(\mathfrak G_{11}+\mathfrak G_{21})
\nonumber\\
&+C\delta_1e^{-C\delta_1 t}\int_{\mathbb R}\eta(U \mid \bar U)\,dx.
\end{align}
Therefore,
\begin{equation}
\begin{aligned}
-\frac{\delta_1}{2\mathfrak m_1}|\dot X_1|^2
\le\;&
-(\sigma_1^{\text{m}})^2\mathfrak m_1\delta_1\left(\int_0^1 w_1\,dy_1\right)^2
+
C\delta_1(\delta_0+\sqrt{\delta_0}+\varepsilon)^2\int_0^1w_1^2\,dy_1
\\
&+
C\sqrt{\delta_1}(\mathfrak G_{11}+\mathfrak G_{21})
+
C\delta_1^2e^{-C\delta_1 t}\int_{\mathbb R}\eta(U\mid\bar U)\,dx.
\end{aligned}
\end{equation}
The \(3\)-shock case is treated in the same way.

\subsubsection*{Estimate of \(\mathcal B^{\text{sh}}\)}

Recall that
\[
\mathcal B^{\text{sh}}(U)
=
\sum_{i\in\{1,3\}}
\int_{\mathbb R}
a\Bigl|\partial_x\shockw{v}{i}\Bigr|
\left(
\frac{4p_i\sigma_{i}^{\text{m}}}{3v_i^2}\phi^2
+\frac{\sigma_{i}^{\text{m}}}{3v_i\theta_i}\zeta^2
-\frac{\sigma_{i}^{\text{m}} p_i}{v_i\theta_i}\phi\zeta
\right)\,dx.
\]
We decompose
\[
\mathcal B_1=\sum_{i\in\{1,3\}}(B_{11}^i+B_{12}^i+B_{13}^i)
\]
according to the three terms inside the bracket. By splitting \(\phi\) and \(\zeta\) into the favorable part on \(\varphi_i\) and the exponentially small interaction part on \(1-\varphi_i^2\), and using Lemma~\ref{lem:intsk}, we obtain
\begin{align}
B_{11}^1
&\le
\frac{4p_1}{3v_1^2\sigma_{1}^{\text{m}}}(1+\sqrt{\delta_0}+\delta_1^{1/4})\delta_1\int_0^1|w_1|^2\,dy_1
\nonumber\\
&\quad +
C\delta_1^{1/4}\mathfrak G_{11}
+
C\delta_1^2e^{-C\delta_1 t}\int_{\mathbb R}\eta(U\mid\bar U)\,dx,
\\
B_{12}^1
&\le
\frac{p_1}{3\sigma_{1}^{\text{m}}v_1^2}(1+\sqrt{\delta_0}+\delta_1^{1/4})\delta_1\int_0^1|w_1|^2\,dy_1
\nonumber\\
&\quad +
C\delta_1^{1/4}\mathfrak G_{21}
+
C\delta_1^2e^{-C\delta_1 t}\int_{\mathbb R}\eta(U\mid\bar U)\,dx,
\\
B_{13}^1
&\le
\frac{\sigma_{1}^{\text{m}}p_1}{v_1\theta_1}(1+\sqrt{\delta_0}+\delta_1^{1/4})\delta_1\int_0^1|w_1|^2\,dy_1
\nonumber\\
&\quad +
C\delta_1^{1/4}(\mathfrak G_{11}+\mathfrak G_{21})
+
C\delta_1^2e^{-C\delta_1 t}\int_{\mathbb R}\eta(U\mid\bar U)\,dx.
\end{align}
The same estimate holds for the \(3\)-shock contribution. Consequently,
\begin{equation}
\begin{aligned}
\mathcal B^{\text{sh}}
\le\;&
\sum_{i\in\{1,3\}}
\frac{20p_i}{9v_i^2\sigma_{i}^{\text{m}}}(1+\sqrt{\delta_0}+\delta_i^{1/4})
\delta_i\int_0^1|w_i|^2\,dy_i
\\
&+
C\sum_{i\in\{1,3\}}\delta_i^{1/4}(\mathfrak G_{1i}+\mathfrak G_{2i})
+
C\sum_{i\in\{1,3\}}\delta_i^2e^{-C\delta_i t}
\int_{\mathbb R}\eta(U \mid \bar U)\,dx.
\end{aligned}
\end{equation}

\subsubsection*{Estimate of \(\mathcal D_{\mathrm{mac}}\)}

Recall that
\[
\mathcal D_{\mathrm{mac}}(U)
=
\int_{\mathbb R}a
\left(
\frac{\alpha_{\rm{th}}(\bar\theta)}{v\theta}\zeta_x^2
+
\frac{4}{3}\frac{\mu(\bar\theta)}{v}\psi_{1x}^2
+
\frac{\mu(\theta)}{v}\sum_{k=2}^3\psi_{kx}^2
\right)dx.
\]
Using the cutoff decomposition and the change of variable \(x\mapsto y_i\), together with Lemmas~\ref{lem:bs1} and \ref{lem:bs2}, we obtain
\begin{equation}
\begin{aligned}\label{eq:diffusion}
-\mathcal D_{\mathrm{mac}}(U)
\le\;&
-\sum_{i\in\{1,3\}}2\alpha_i\bigl(1-C(\sqrt{\delta_0}+\varepsilon+\delta_i+\delta_*)-\delta_i^{1/4}\bigr)\delta_i\int_0^1 w_i^2\,dy_i
\\
&+
\sum_{i\in\{1,3\}}\frac{20+12\gamma}{10+3\gamma}\alpha_i\delta_i\bigl( w_i^{\mathrm{avg}}\bigr)^2
+
\sum_{i\in\{1,3\}}C\delta_i^{1/4}\mathfrak G_{2i}
\\
&+
\frac{C}{\delta_*t^2}\int_{\mathbb R}\eta(U|\bar U)\,dx
-
\int_{\mathbb R}a\frac{\mu(\theta)}{v}\sum_{k=2}^3\psi_{kx}^2\,dx,
\end{aligned}
\end{equation}
where
\[
\alpha_i=\frac{20}{9}\frac{p_i}{\sigma_{i}^{\text{m}} v_i^2},
\qquad
 w_i^{\mathrm{avg}}:=\int_0^1 w_i\,dy_i,
\qquad
\gamma=\frac{\alpha_{\rm{th}}(\bar\theta)}{\mu(\bar\theta)}.
\]

More precisely, to investigate the diffusion term, we decompose as follows.
\begin{align}
\begin{aligned}
\wtilde{D}_{u_1}(U)=\int_\mathbb{R} a \myparas{\frac{4}{3}\frac{\mu(\y{\theta})}{v}\psi_{1x}^2}dx,\quad \wtilde{D}_{\theta}(U)=\int_\mathbb{R} a \myparas{\frac{\alpha_{\rm{th}}(\y{\theta})}{v\theta}\zeta_x^2}dx.
\end{aligned}
\end{align}

We focus on $\wtilde{D}_{u_1}$. Observe that

\begin{align}
\begin{aligned}
\wtilde{D}_{u_1}(U)\geq \sum_{i\in\{1,3\}}\int_\mathbb{R} a \varphi_i^2 \frac{4}{3}\frac{\mu(\y{\theta})}{v}\psi_{1x}^2dx.
\end{aligned}
\end{align}

Using the Young inequality, we have the following.

\begin{align}
\begin{aligned}
\int_\mathbb{R} a \frac{\mu(\y{\theta})}{v}\abs{\myparas{\varphi_i
\psi_1}_x}^2dx\leq (1+\delta_*)\int_\mathbb{R} a\frac{\mu(\y{\theta})}{v}\varphi_i^2\psi_{1x}^2dx+\frac{C}{\delta_*}\int_\mathbb{R} a \frac{\mu(\y{\theta})}{v}\abs{\varphi_{ix}}^2\abs{\psi_1}^2dx
\end{aligned}
\end{align}

for any sufficiently small $\delta_*>0$ that is chosen later.

Since $\varphi_1^2+\varphi_3^2\leq 1$, we get the following estimate, 

\begin{align}
\begin{aligned}
-\wtilde{D}_{u_1}(U)\leq -\frac{1}{1+\delta_*}\frac{4}{3}\sum_{i\in\{1,3\}}\int_\mathbb{R} a \frac{\mu(\y{\theta})}{v}\abs{\myparas{\varphi_i\psi_1}_x}^2dx+\frac{4}{3}\frac{C}{\delta_*}\int_\mathbb{R} a\frac{\mu(\y{\theta})}{v}\myparas{\varphi_{ix}}^2\abs{\psi_1}^2dx.
\end{aligned}
\end{align}

For clarity, each term is defined as follows.

\begin{align}
\begin{aligned}
&J_1 := -\frac{1}{1+\delta_*}\sum_{i\in\{1,3\}}\int_\mathbb{R} a \frac{\mu(\y{\theta})}{v}\abs{\myparas{\varphi_i\psi_1}_x}^2dx,\\
&J_2:= \frac{C}{\delta_*}\int_\mathbb{R} a\frac{\mu(\y{\theta})}{v}\myparas{\varphi_{ix}}^2\abs{\psi_1}^2dx.
\end{aligned}
\end{align}

By the change of variable(cf. \eqref{eq:newvar}), we have

\begin{align}
\begin{aligned}
J_1 = -\frac{1}{1+\delta_*}\sum_{i\in\{1,3\}}\int_0^1 a \frac{\mu(\y{\theta})}{v}\frac{dy_i}{dx}\abs{(w_i)_{y_i}}^2dy_i.
\end{aligned}
\end{align}

From Lemmas \ref{lem:bs1} and \ref{lem:bs2}, we have the following. 
\begin{align}
\begin{aligned}
\frac{dy_1}{dz_1}\geq \Biggl(\frac{1}{y_1(1-y_1)}\frac{4}{3}\frac{\mu\Bigl(\shockw{\theta}{1}\Bigr)}{\shockw{v}{1}}\Biggr)^{-1}(A\alpha_1\delta_1-C\delta_1^2)>0
\end{aligned}
\end{align}
where $A := \frac{10}{10+\gamma}$.

Therefore, we can get the following estimate,
\begin{align}
\begin{aligned}
J_1&\leq -\frac{1}{1+\delta_*}\sum_{i\in\{1,3\}}\int_0^1 a \frac{\shockw{v}{i}}{v}\frac{\mu(\y{\theta})}{\mu\myparas{\shockw{\theta}{i}}}\frac{3}{4}y_i(1-y_i)(A\alpha_i\delta_i-C\delta_i^2)\abs{\myparas{w_i}_{y_i}}^2dy_i\\
&\leq -\frac{1}{1+\delta_*}\sum_{i\in\{1,3\}}\int_0^1 \frac{3}{4}(1-2\sqrt{\delta_0})(1-C\varepsilon)(A\alpha_i-C\delta_i)\delta_iy_i(1-y_i)\abs{\myparas{w_i}_{y_i}}^2dy_i.
\end{aligned}
\end{align}
Since $\frac{1}{1+\delta_*}\geq 1-\delta_*$ and
\begin{align}
\begin{aligned}
(1-2\sqrt{\delta_0})(1-C\varepsilon)(A\alpha_i-C\delta_i)(1-\delta_*)\geq A\alpha_i(1-C(\sqrt{\delta_0}+\varepsilon+\delta_i+\delta_*)),
\end{aligned}
\end{align}

we organize the above estimate as follows.

\begin{align}
\begin{aligned}
J_1\leq -\sum_{i\in\{1,3\}}\int_0^1 \frac{3}{4}A\alpha_i(1-C(\sqrt{\delta_0}+\varepsilon+\delta_i+\delta_*))\delta_iy_i(1-y_i)\abs{\myparas{w_i}_{y_i}}^2dy_i.
\end{aligned}
\end{align}

Now, we consider another term which is singular at time $t=0$.
By \eqref{eq:shiftsp1},
\begin{equation*}
    \frac{1}{2}\myparas{X_3(t)+\sigma_3t-X_1(t)-\sigma_1t}\geq \frac{\sigma_3-\sigma_1}{4}t>0, \quad T\geq t>0
\end{equation*}
we have 
\begin{align}
\begin{aligned}\label{eq:cutof1}
&\abs{\varphi_{ix}}\leq \frac{4}{\sigma_3-\sigma_1}\frac{1}{t},\quad \abs{\varphi_{ix}}^2\leq \frac{16}{(\sigma_3-\sigma_1)^2}\frac{1}{t^2}, \quad T\geq t>0, \quad \forall x\in \mathbb{R}.
\end{aligned}
\end{align}
From this, we can compute 
\begin{align}
\begin{aligned}
&J_2=\frac{C}{\delta_*}\int_\mathbb{R} a \frac{\mu(\y{\theta})}{v}\abs{\varphi_{ix}}^2\abs{\psi_1}^2dx\leq \frac{C}{\delta_*t^2}\int_\mathbb{R} \eta(U\mid\bar{U})dx.
\end{aligned}
\end{align}
Thus, we have the estimate of $\wtilde{D}_{u_1}$ as follows:
\begin{align}
\begin{aligned}
&-\wtilde{D}_{u_1}\leq-\sum_{i\in\{1,3\}}\int_0^1\frac{3}{4}A\alpha_i\myparas{1-C\myparas{\sqrt{\delta_0}+\varepsilon+\delta_i+\delta_*}}\delta_iy_i(1-y_i)\abs{\myparas{w_i}_{y_i}}^2dy_i\\
&\qquad \qquad \qquad +\frac{C}{\delta_*t^2}\int_\mathbb{R}\eta(U\mid\bar{U})dx.
\end{aligned}
\end{align}

{We treat} $\wtilde{D}_{\theta_1}(U)$ {in the same manner.} Take $\mathfrak z_i = \varphi_i \zeta$ for $i=1,3$. Then, we have the following estimate.

\begin{align}
\begin{aligned}\label{eq:diffusionx}
-\mathcal D_{\mathrm{mac}}(U)\leq&-\sum_{i\in\{1,3\}}\int_0^1A\alpha_i\myparas{1-C\myparas{\sqrt{\delta_0}+\varepsilon+\delta_i+\delta_*}}\delta_iy_i(1-y_i)\abs{\myparas{w_i}_{y_i}}^2dy_i\\
&-\sum_{i\in\{1,3\}}\int_0^1\frac{3}{4}A\alpha_i\frac{\gamma}{\theta_i}\myparas{1-C\myparas{\sqrt{\delta_0}+\varepsilon+\delta_i+\delta_*}}\delta_iy_i(1-y_i)\abs{\myparas{\mathfrak z_i}_{y_i}}^2dy_i\\
& +\frac{C}{\delta_*t^2}\int_\mathbb{R}\eta(U\mid\bar{U})dx-\int_\mathbb{R} a \myparas{\frac{\mu(\theta)}{v}\sum_{k=2}^3\psi_{kx}^2}dx.
\end{aligned}
\end{align}

Now, to replace $\mathfrak z_i$ by $w_i$, we compute these as follows.

\begin{align}
\begin{aligned}
&\int_0^1 \abs{\mathfrak z_i}^2 dy_i \geq \myparas{\frac{2\theta_i}{3\sigma_i^{\text{m}}v_i}}^2\myparas{1-\delta_i^{\frac{1}{4}}}\int_0^1 \abs{w_i}^2 dy_i-\frac{C}{\delta_i^{\frac{1}{4}}}\int_0^1 \myparab{\varphi_i\myparas{\zeta-\frac{2\theta_i}{3\sigma_i^{\text{m}}v_i}\psi_{1}}}^2dy_i,\\
&\myparas{\int_0^1 \mathfrak z_i dy_i}^2\leq 2\myparab{\int_0^1 \frac{2\theta_i}{3\sigma_i^{\text{m}}v_i}\varphi_i\psi_{1}dy_i}^2+2\myparab{\int_0^1 \varphi_i\myparas{\zeta-\frac{2\theta_i}{3\sigma_i^{\text{m}}v_i}\varphi_i\psi_{1}}dy_i}^2.
\end{aligned}
\end{align}
Now, we introduce the following useful lemma for estimating \eqref{eq:diffusionx}. 
\begin{lemma}[\cite{KV-JEMS21}] \label{lem:poincare}
    For any $f:[0,1]\rightarrow \mathbb{R}$ satisfying $\int_0^1 y(1-y)\abs{f'}^2 dy<\infty$,
    \begin{equation}\label{eq:poincare}
        \int_0^1 \abs{f-\int_0^1 f dy}^2 dy \leq \frac{1}{2} \int_0^1 y(1-y)\abs{f'}^2 dy.
    \end{equation}
\end{lemma}

By the Poincare inequality \eqref{eq:poincare}, we have 
\begin{align}
\begin{aligned}
&-\mathcal D_{\mathrm{mac}}(U)\\
&\quad \le -\sum_{i\in\{1,3\}}2A\alpha_i\myparas{1-C\myparas{\sqrt{\delta_0}+\varepsilon+\delta_i+\delta_*}}\delta_i\myparab{\int_0^1 w_i^2 dy_i-\bigl(w_i^{\mathrm{avg}}\bigr)^2}\\
&\quad \quad-\sum_{i\in\{1,3\}}\frac{3}{4}2A\alpha_i\frac{\gamma}{\theta_i}\myparas{1-C\myparas{\sqrt{\delta_0}+\varepsilon+\delta_i+\delta_*}}\delta_i\myparab{\int_0^1z_i^2 dy_i-\bigl(z_i^{\mathrm{avg}}\bigr)^2}\\
&\quad \quad+\frac{C}{\delta_*t^2}\int_\mathbb{R}\eta(U\mid\bar{U})dx-\int_\mathbb{R} a \myparas{\frac{\mu(\theta)}{v}\sum_{k=2}^3\psi_{kx}^2}dx \\
&\quad \le  -\sum_{i\in\{1,3\}}2A\alpha_i\myparas{1-C\myparas{\sqrt{\delta_0}+\varepsilon+\delta_*}}\delta_i\myparam{1+\frac{3}{4}\frac{\gamma}{\theta_i}\myparas{\frac{2\theta_i}{3\sigma_i^{\text{m}}v_i}}^2\myparas{1-\delta_i^{\frac{1}{4}}}}\int_0^1 w_i^2 dy_i\\
&\quad \quad +\sum_{i\in\{1,3\}}2A\alpha_i\myparas{1-C\myparas{\sqrt{\delta_0}+\varepsilon+\delta_*}}\delta_i\myparam{1+\frac{3}{4}2\frac{\gamma}{\theta_i}\myparas{\frac{2\theta_i}{3\sigma_i^{\text{m}}v_i}}^2}\bigl(w_i^{\mathrm{avg}}\bigr)^2\\
&\quad \quad +\sum_{i\in\{1,3\}}\frac{3}{2}A\alpha_i\frac{\gamma}{\theta_i}\myparas{1-C\myparas{\sqrt{\delta_0}+\varepsilon+\delta_*}}\delta_i\myparas{\frac{C}{\delta_i^{\frac{1}{4}}}-2}\int_0^1 \myparab{\varphi_i\myparas{\zeta-\frac{2\theta_i}{3\sigma_i^{\text{m}}v_i}\psi_{1i}}}^2 dy_i\\
&\quad \quad +\frac{C}{\delta_*t^2}\int_\mathbb{R}\eta(U\mid\bar{U})dx-\int_\mathbb{R} a \myparas{\frac{\mu(\theta)}{v}\sum_{k=2}^3\psi_{kx}^2}dx.
\end{aligned}
\end{align}
Also, we control {the third term on the right-hand side of the above inequality as follows:}
\begin{align}
\begin{aligned}
& \sum_{i\in\{1,3\}}\frac{3}{2}A\alpha_i\frac{\gamma}{\theta_i}\myparas{1-C\myparas{\sqrt{\delta_0}+\varepsilon+\delta_*}}\delta_i\myparas{\frac{C}{\delta_i^{\frac{1}{4}}}-2}\int_0^1 \myparab{\varphi_i\myparas{\zeta-\frac{2\theta_i}{3\sigma_i^{\text{m}}v_i}\psi_{1}}}^2 dy_i \\
& \le \sum_{i\in\{1,3\}}\frac{3}{2}CA\alpha_i\frac{\gamma}{\theta_i}\myparas{1-C\myparas{\sqrt{\delta_0}+\varepsilon+\delta_*}}\delta_i^{\frac{3}{4}}\int_0^1\varphi_i \myparab{\myparas{\zeta-\frac{2\theta_i}{3\sigma_i^{\text{m}}v_i}\psi_{1}}}^2 dy_i\\
& \le \sum_{i\in\{1,3\}}C\delta_i^{\frac{1}{4}}\mathfrak G_{2i}.
\end{aligned}
\end{align}
We organize {the first term on the right-hand side of the above inequality as follows:}
\begin{align}
\begin{aligned}
&-\sum_{i\in\{1,3\}}2A\alpha_i\myparas{1-C\myparas{\sqrt{\delta_0}+\varepsilon+\delta_*}-\delta_i^{\frac{1}{4}}}\delta_i\myparam{1+\frac{3}{4}\frac{\gamma}{\theta_i}\myparas{\frac{2\theta_i}{3\sigma_i^{\text{m}}v_i}}^2}\int_0^1 w_i^2 dy_i\\
\le&-\sum_{i\in\{1,3\}}2\alpha_i\myparas{1-C\myparas{\sqrt{\delta_0}+\varepsilon+\delta_*}-\delta_i^{\frac{1}{4}}}\delta_i\int_0^1 w_i^2 dy_i.
\end{aligned}
\end{align}

Therefore, we have
\begin{align}
\begin{aligned}
& \sum_{i\in\{1,3\}}2A\alpha_i\myparas{1-C\myparas{\sqrt{\delta}+\varepsilon+\delta_*}}\delta_i\myparam{1+\frac{3}{2}\frac{\gamma}{\theta_i}\myparas{\frac{2\theta_i}{3\sigma_iv_i}}^2}\bigl(w_i^{\mathrm{avg}}\bigr)^2\\
\leq & \sum_{i\in\{1,3\}}\frac{20+12\gamma}{10+3\gamma} \alpha_i\delta_i\bigl(w_i^{\mathrm{avg}}\bigr)^2.
\end{aligned}
\end{align}

{Thus, we obtain} \eqref{eq:diffusion}{.}

\subsubsection*{Conclusion of the dominant estimate}
Choose $\delta_*=O(1)>0$ such that 
\begin{align*}
    1-C\myparas{\sqrt{\delta_0}+\varepsilon+\delta_*}-\delta_i^{\frac{1}{4}}>\frac{1}{3} , \quad i=1,3. 
\end{align*}

Collecting the estimates above, we find
\begin{align*}
&-\sum_{i\in\{1,3\}}\frac{\delta_i}{2\mathfrak m_i}|\dot X_i|^2
+\mathcal B^{\text{sh}}+\mathcal B_2-\mathfrak G-\frac34D\\
&\quad\le\;
-\alpha\sum_{i\in\{1,3\}}\mathcal G_i^S
+C\delta_1^{\frac{4}{3}}\bigl(\delta_C^{\frac{4}{3}}+\delta_3^{\frac{4}{3}}\bigr)e^{-C\delta_1t}+ C\delta_3^{\frac{4}{3}}\bigl(\delta_C^{\frac{4}{3}}+\delta_1^{\frac{4}{3}}\bigr)e^{-C\delta_3t}
\\
&\qquad +\left(\sum_{i\in\{1,3\}}C\delta_i^2e^{-C\delta_i t}+\frac{C}{\delta_*t^2}\right)\int_{\mathbb R}\eta(U|\bar U)\,dx
-\frac34\int_{\mathbb R}a\frac{\mu(\theta)}{v}\sum_{k=2}^3\psi_{kx}^2\,dx,
\end{align*}
for some positive constant \(\alpha=O(1)\). This proves Lemma~\ref{lem:actr1}.\end{proof}

\subsection{Estimate of the macroscopic error terms}
In this subsection, we estimate the remaining macroscopic error terms
\[
\mathcal B_3,\dots,\mathcal B_6,\, \mathcal B^{\text{res}}, \mathcal S_1,\, \mathcal S_2.
\]
Here the term \(\mathcal B_2\) has already been incorporated into the dominant combination treated in the previous subsection. The terms considered below are all of lower order and will be controlled either by the dissipation \(\mathcal D_{\mathrm{mac}}(U)\), or by the smallness of the wave strengths and the bootstrap bound \(\mathcal{E}(T)^2\le \varepsilon^2\) in \eqref{eq:priass}.

We begin with the term involving the derivative of the weight function.

\begin{lemma}\label{lem:B3}
For any sufficiently small constant \(\lambda>0\), there exists a constant \(C_\lambda>0\) such that
\begin{align}\label{eq:B3-est}
&|\mathcal B_3|
\le
\lambda \mathcal D_{\mathrm{mac}}(U)
+
C_\lambda
\int_{\mathbb R}\bigl(\delta_1|\partial_x(v^{S_1})^{-X_1}|+\delta_3|\partial_x(v^{S_3})^{-X_3}|\bigr)|(\phi,\psi,\zeta)|^2\,dx \\
&\qquad + C_\lambda
\int_{\mathbb R}
\bigl(|\partial_x(v^{S_1})^{-X_1}|^2+|\partial_x(v^{S_3})^{-X_3}|^2+|u_{1x}^C|^2+|\theta_x^C|^2\bigr)
|(\phi,\psi,\zeta)|^2\,dx.
\end{align}
\end{lemma}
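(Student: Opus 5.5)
The estimate of $\mathcal B_3$ rests on two facts. First, $a_x$ is explicitly small and localized:
\[
a_x=\delta_1^{-1/2}\partial_x(v^{S_1})^{-X_1}+\delta_3^{-1/2}\partial_x(v^{S_3})^{-X_3},
\qquad\text{so}\qquad
|a_x|\le \sum_{i\in\{1,3\}}\delta_i^{-1/2}|\partial_x(v^{S_i})^{-X_i}|.
\]
Second, each bracket in $\mathcal B_3$ is a difference of fluxes that splits into a term linear in the perturbation derivatives plus a term involving derivatives of the background $\bar U$. Since $a\ge 1/2$, the dissipation $\mathcal D_{\mathrm{mac}}$ controls $\int(\zeta_x^2+|\psi_x|^2)\,dx$ up to a constant. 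The plan is therefore to expand each flux difference, Cauchy--Schwarz every product, and place each factor in one of the three groups on the right of \eqref{eq:B3-est}.

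\textbf{Step 1 (expansion).} Write
\[
\alpha_{\rm th}(\theta)\frac{\theta_x}{v}-\alpha_{\rm th}(\bar\theta)\frac{\bar\theta_x}{\bar v}
=\alpha_{\rm th}(\theta)\frac{\zeta_x}{v}+\Bigl(\frac{\alpha_{\rm th}(\theta)}{v}-\frac{\alpha_{\rm th}(\bar\theta)}{\bar v}\Bigr)\bar\theta_x .
\]
Do the same for the $\mu$-term with $\psi_{1x}$ and $\bar u_{1x}$. The coefficient difference is $O(|\phi|+|\zeta|)$, because $v,\theta$ stay in a compact set by the a priori bound $\|(\phi,\psi,\zeta)\|_{L^\infty}\le C\varepsilon$. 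The transverse term $\frac{\mu(\theta)}{v}\psi_i\psi_{ix}$ is already of the linear-in-derivative type.

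\textbf{Step 2 (derivative terms).} These have the form $\int|a_x||(\phi,\psi,\zeta)||(\psi_x,\zeta_x)|\,dx$. Apply Young's inequality with parameter $\lambda$:
\[
\lambda\mathcal D_{\mathrm{mac}}+C_\lambda\int|a_x|^2|(\phi,\psi,\zeta)|^2\,dx .
\]
For the second piece, since $\delta_i^{-1}|\partial_x(v^{S_i})^{-X_i}|^2\le C\delta_i|\partial_x(v^{S_i})^{-X_i}|$ by Lemma~\ref{lem:bs} (the derivative is $\le C\delta_i^2$ at $\kappa=1$),
\[
|a_x|^2\le 2\sum_i\delta_i^{-1}|\partial_x(v^{S_i})^{-X_i}|^2\le C\sum_i\delta_i|\partial_x(v^{S_i})^{-X_i}| .
\]
This produces exactly the first weighted term in \eqref{eq:B3-est}.

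\textbf{Step 3 (background-derivative terms).} These are bounded by $C\int|a_x|\,|\bar U_x|\,|(\phi,\psi,\zeta)|^2\,dx$, using $|\bar U_x|\le C\bigl(\sum_i|\partial_x(v^{S_i})^{-X_i}|+|u^C_{1x}|+|\theta^C_x|\bigr)$ from Lemma~\ref{lem:bs2} and \eqref{eq:ctident0}. Use $|a_x|\,|\bar U_x|\le\frac12(|a_x|^2+|\bar U_x|^2)$. The $|a_x|^2$ part goes into the first group as in Step 2. The $|\bar U_x|^2$ part gives precisely the last group in \eqref{eq:B3-est}, including the contact terms $|u_{1x}^C|^2+|\theta_x^C|^2$ and the shock terms $|\partial_x(v^{S_i})^{-X_i}|^2$. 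Cross terms such as $|\partial_x(v^{S_1})^{-X_1}|\,|\partial_x(v^{S_3})^{-X_3}|$ are absorbed by the same inequality.

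\textbf{Main obstacle.} The delicate point is the factor $\delta_i^{-1/2}$ in $a_x$. A naive bound $|a_x|\le C$ would lose all smallness. It has to be converted through $\delta_i^{-1}|\partial_x v^{S_i}|^2\lesssim\delta_i|\partial_x v^{S_i}|$, which is only available because the profile derivative is $O(\delta_i^2)$ with decay rate $\delta_i$. Everything else is routine Cauchy--Schwarz. No cutoff $\varphi_i$ is needed here, since the claimed bound involves no localization.
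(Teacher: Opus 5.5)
Your proof is correct and takes essentially the same route as the paper: you bound $|a_x|$ by $\sum_i\delta_i^{-1/2}|\partial_x(v^{S_i})^{-X_i}|$, split each flux difference into a perturbation-derivative part and a background part using the Lipschitz dependence of $\alpha_{\rm th}$, $\mu$ and $1/v$ on the state, and close with Young's inequality against $\mathcal D_{\mathrm{mac}}$. Your version also makes explicit two steps that the paper's short proof leaves implicit: the conversion $\delta_i^{-1}|\partial_x(v^{S_i})^{-X_i}|^2\le C\delta_i|\partial_x(v^{S_i})^{-X_i}|$, and the splitting $|a_x|\,|\bar U_x|\le\frac12\bigl(|a_x|^2+|\bar U_x|^2\bigr)$.
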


\begin{proof}
By the definition of the weight \(a\), we have
\[
|a_x|
\le
\frac{C}{\sqrt{\delta_1}}|\partial_x(v^{S_1})^{-X_1}|
+
\frac{C}{\sqrt{\delta_3}}|\partial_x(v^{S_3})^{-X_3}|.
\]
Since \(\delta_1,\delta_3\) are sufficiently small and the shock derivatives are exponentially localized, the factor \(a_x\) is supported only in the shock region and is controlled by the shock profiles.

Using the smoothness of \(\alpha_{\rm{th}}\) and \(\mu\), together with
\[
\alpha_{\rm{th}}(\theta)-\alpha_{\rm{th}}(\bar\theta)=O(|\zeta|),
\qquad
\mu(\theta)-\mu(\bar\theta)=O(|\zeta|),
\]
and Young's inequality, we obtain
\begin{align*}
&|\mathcal B_3|
\le
\lambda \mathcal D_{\mathrm{mac}}(U)
+
C_\lambda
\int_{\mathbb R}\bigl(\delta_1|\partial_x(v^{S_1})^{-X_1}|+\delta_3|\partial_x(v^{S_3})^{-X_3}|\bigr)|(\phi,\psi,\zeta)|^2\,dx \\
&\qquad + C_\lambda
\int_{\mathbb R}
\bigl(|\partial_x(v^{S_1})^{-X_1}|^2+|\partial_x(v^{S_3})^{-X_3}|^2+|u_{1x}^C|^2+|\theta_x^C|^2\bigr)
|(\phi,\psi,\zeta)|^2\,dx.
\end{align*}
This proves \eqref{eq:B3-est}.
\end{proof}

We next estimate the quadratic remainder terms.

\begin{lemma}\label{lem:B4B5B6}
For any sufficiently small constant \(\lambda>0\), there exists a constant \(C_\lambda>0\) such that
\begin{align}\label{eq:B4B5B6-est}
&|\mathcal B_4|+|\mathcal B_5|+|\mathcal B_6|\nonumber\\
&\qquad \le
\lambda \mathcal D_{\mathrm{mac}}(U) +C_\lambda
\int_{\mathbb R}
\bigl(|\partial_x(v^{S_1})^{-X_1}|^2+|\partial_x(v^{S_3})^{-X_3}|^2+|u_{1x}^C|^2+|\theta_x^C|^2\bigr)
|(\phi,\psi,\zeta)|^2\,dx.
\end{align}
\end{lemma}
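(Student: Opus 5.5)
The three remainders are integrands of $a\,\mathcal R^{\rm mac}_\ell$, and $a$ is bounded above and below by positive constants ($|a-1|\le C\sqrt{\delta_0}$). I would therefore estimate each $\mathcal R^{\rm mac}_\ell$ pointwise and then integrate. Throughout I would use three facts.
\begin{itemize}
\item The a priori bound $\|(\phi,\psi,\zeta)\|_{L^\infty}\le C\varepsilon$ from the bootstrap, so $v,\theta$ stay in a compact set away from zero.
\item The decomposition $\bar U_x=\partial_x(U^{S_1})^{-X_1}+U^C_x+\partial_x(U^{S_3})^{-X_3}$, which gives
\[
|\bar v_x|+|\bar u_{1x}|+|\bar\theta_x|\le C\bigl(|\partial_x(v^{S_1})^{-X_1}|+|\partial_x(v^{S_3})^{-X_3}|+|u^C_{1x}|+|\theta^C_x|\bigr)=:C\Lambda .
\]
Here Lemma~\ref{lem:bs} is used to compare $u_1^{S_i}$ and $\theta^{S_i}$ derivatives with $v^{S_i}$ derivatives, and Lemma~\ref{lem:contact-wave-estimate0} handles the contact wave.
\item The identities $\theta_x=\bar\theta_x+\zeta_x$ and $u_{1x}=\bar u_{1x}+\psi_{1x}$.
\end{itemize}
The target structure is: every term either contains two factors of $(\psi_x,\zeta_x)$ multiplied by something small, or one such factor times $\Lambda|(\phi,\psi,\zeta)|$, or is of the form $\Lambda^2|(\phi,\zeta)|^2$. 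Young's inequality then reduces everything to $\lambda\mathcal D_{\rm mac}+C_\lambda\int\Lambda^2|(\phi,\psi,\zeta)|^2$.

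\textbf{Step 1: $\mathcal B_6$.} Since $|\alpha_{\rm th}(\theta)-\alpha_{\rm th}(\bar\theta)|+|\mu(\theta)-\mu(\bar\theta)|\le C|\zeta|\le C\varepsilon$ and $(\zeta/\theta)_x=\zeta_x/\theta-\zeta\theta_x/\theta^2$, I expand $\theta_x$ and $u_{1x}$. The purely perturbative pieces $\varepsilon(\zeta_x^2+\psi_{1x}^2)$, together with the cubic piece $|\zeta||\zeta_x|^2$, are absorbed into $\lambda\mathcal D_{\rm mac}$ once $\varepsilon\le\lambda$. The mixed pieces $|\zeta||\zeta_x|\Lambda$ and $|\zeta||\psi_{1x}|\Lambda$ are handled by Young, giving $\lambda(\zeta_x^2+\psi_{1x}^2)+C_\lambda\Lambda^2\zeta^2$. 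The remaining piece $\zeta^2\Lambda^2$ is directly of the target form.

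\textbf{Step 2: $\mathcal B_4$.} I would treat the terms one at a time.
\begin{itemize}
\item The term $\bar u_{1x}\psi_{1x}\phi$ is bounded by $\lambda\psi_{1x}^2+C_\lambda\Lambda^2\phi^2$.
\item The term $\bar\theta_x\zeta_x\phi$ is handled the same way.
\item For the term $\zeta\theta_x(\theta_x/v-\bar\theta_x/\bar v)$, I write the bracket as $\zeta_x/v+\bar\theta_x(1/v-1/\bar v)=\zeta_x/v+O(\Lambda|\phi|)$. This yields $|\zeta|(|\zeta_x|+\Lambda)(|\zeta_x|+\Lambda|\phi|)$, which splits into $\varepsilon\zeta_x^2$ plus Young-type products bounded by $\lambda\zeta_x^2+C_\lambda\Lambda^2|(\phi,\zeta)|^2$. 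It also produces the cubic term $\Lambda^2|\zeta||\phi|\le C\Lambda^2|(\phi,\zeta)|^2$.
\item For the term $\zeta\bigl(\mu(\theta)u_{1x}^2/v-\mu(\bar\theta)\bar u_{1x}^2/\bar v\bigr)$, I write the difference as $O(\psi_{1x}^2+|\bar u_{1x}||\psi_{1x}|+\bar u_{1x}^2|(\phi,\zeta)|)$ and treat it in the same way.
\item The transverse term $\frac{\mu(\theta)}{v}\sum\psi_{ix}^2$ deserves a remark. As written it carries no small factor. I expect it is really a bookkeeping remnant that cancels against, or is already accounted for by, the $\psi_{kx}^2$ part of $\mathcal D_{\rm mac}$, since the equation for $\zeta$ in \eqref{eq:pesy2} contains $\mu(\theta)(\psi_{2x}^2+\psi_{3x}^2)/v$ and it is multiplied by $\zeta/\theta$ in the entropy computation. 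I would therefore reinterpret it as $\frac{\zeta}{\theta}\frac{\mu(\theta)}{v}\sum\psi_{ix}^2$, which is $\le C\varepsilon\,\mathcal D_{\rm mac}$.
\end{itemize}
This last point is the main obstacle: checking the precise form of the term so that a factor $\zeta$ (hence $\varepsilon$) is present.

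\textbf{Step 3: $\mathcal B_5$.} Here the factor $(\alpha_{\rm th}(\bar\theta)\bar\theta_x/\bar v)_x$ contains the second derivative $\bar\theta_{xx}$, which is not of the form $\Lambda^2$.
\begin{itemize}
\item For the shock parts, Lemma~\ref{lem:bs} gives $|\partial_x^2\theta^{S_i}|\le C\delta_i|\partial_x v^{S_i}|$. This would only give a bound by $\delta_i|\partial_xv^{S_i}|\zeta^2$, which is the first-order-in-$\Lambda$ form appearing in Lemma~\ref{lem:B3} rather than $\Lambda^2$. To obtain the stated form, I would instead integrate by parts: move one derivative onto $a\zeta^2/(\theta\bar\theta)$. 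This produces $\Lambda\,|\zeta||\zeta_x|$, bounded by $\lambda\zeta_x^2+C_\lambda\Lambda^2\zeta^2$, plus $|a_x|\Lambda\zeta^2$. Since $|a_x|\le C\delta_i^{-1/2}|\partial_xv^{S_i}|$ and $|\partial_xv^{S_i}|\le C\delta_i^2$, this last term is bounded by $C\Lambda^2\zeta^2$ after checking the shock and contact contributions separately; cross terms are bounded by $\Lambda^2$ through Cauchy--Schwarz.
\item The second piece of $\mathcal B_5$ is $\mu\bar u_{1x}^2\zeta^2$, which is trivially $\le C\Lambda^2\zeta^2$.
\end{itemize}
Collecting Steps 1--3 gives \eqref{eq:B4B5B6-est}.
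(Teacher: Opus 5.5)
Your handling of $\mathcal B_4$ and $\mathcal B_6$ is fine, and both of your diagnoses are correct. The paper's own proof (pointwise Young's inequality plus $|\bar u_{1x}|^2+|\bar\theta_x|^2\le C\Lambda^2$) addresses neither of them.

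\begin{itemize}
\item The transverse term in $\mathcal R_4^{\mathrm{mac}}$ must carry the factor $\zeta/\theta$. It arises from multiplying the $\zeta$-equation by $\zeta/\theta$; without that factor it would cancel the transverse part of $\mathcal D_{\mathrm{mac}}$ exactly.
\item The term $\bar\theta_{xx}$ in $\mathcal R_5^{\mathrm{mac}}$ is not pointwise controlled by $\Lambda^2$.
\end{itemize}

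\textbf{The gap is in your repair of $\mathcal B_5$.} After integrating by parts, the piece you write as $|a_x|\Lambda\zeta^2$ is
\[
\int_{\mathbb R} a_x\,\frac{\alpha_{\mathrm{th}}(\bar\theta)\,\bar\theta_x}{\theta\bar\theta\bar v}\,\zeta^2\,dx,
\qquad
a_x=\sum_{i=1,3}\delta_i^{-1/2}\,\partial_x(v^{S_i})^{-X_i}.
\]
Its diagonal part has size $\delta_i^{-1/2}|\partial_x(v^{S_i})^{-X_i}|^2\zeta^2$. The two facts you cite combine only to $|a_x|\le C\delta_i^{3/2}$. That leaves a term $C\delta_i^{3/2}|\partial_x(v^{S_i})^{-X_i}|\zeta^2$, which is exactly the first-order type of Lemma~\ref{lem:B3} that you were trying to avoid. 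Bounding it by $C\Lambda^2\zeta^2$ instead costs a factor $\delta_i^{-1/2}$. The same factor appears in the cross terms you treat by Cauchy--Schwarz.

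\textbf{This cannot be fixed by rearranging.} Take $\phi=\psi=0$ and $\zeta\equiv\varepsilon'$ on an interval of length $L\gg\delta_i^{-1}$ that contains one shock layer and lies far from the other waves, with slow decay outside.
\begin{itemize}
\item The $\zeta\zeta_x$ and $\partial_x(\theta\bar\theta)^{-1}$ contributions, and the $\bar u_{1x}^2$ part of $\mathcal B_5$, are all $O(\varepsilon'^2\delta_i^3)$.
\item On the layer, $a_x\bar\theta_x\le -c\,\delta_i^{-1/2}|\partial_x(v^{S_i})^{-X_i}|^2$, by the monotonicity in Lemma~\ref{lem:bs} and Lemma~\ref{lem:bs2}.
\item Also $\int|\partial_x v^{S_i}|^2dx\ge c\,\delta_i^3$.
\item Hence $|\mathcal B_5|\ge c\,\varepsilon'^2\delta_i^{5/2}$, while the right-hand side of \eqref{eq:B4B5B6-est} is $O(\lambda\varepsilon'^2/L+C_\lambda\varepsilon'^2\delta_i^3)$.
\end{itemize}
Letting $L\to\infty$ forces $C_\lambda\ge c\,\delta_i^{-1/2}$. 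So the estimate as stated, with $C_\lambda$ independent of the wave strengths, cannot be proved.

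What your argument does give is \eqref{eq:B4B5B6-est} plus two extra pieces: a term $C\sum_{i}\delta_i^{3/2}\int|\partial_x(v^{S_i})^{-X_i}|\,\zeta^2dx$, and shock--contact interaction remainders, which are controlled through the separation of the waves as in Lemma~\ref{lem:intsk}. This weaker form is all that Proposition~\ref{prop:macro-error} needs, since that proposition immediately trades $|\partial_x(v^{S_i})^{-X_i}|^2$ for $C\delta_i^2|\partial_x(v^{S_i})^{-X_i}|$. Alternatively, the diagonal part of this term is negative, so you can simply discard it in the energy identity instead of placing it under $|\mathcal B_5|$.
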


\begin{proof}
All the terms in \(\mathcal B_4\), \(\mathcal B_5\), and \(\mathcal B_6\) are at least quadratic in the perturbation variables. Moreover, each coefficient depends smoothly on the background state \(\bar U\), hence is uniformly bounded under the bootstrap bound \(\mathcal{E}(T)^2\le \varepsilon^2\) in \eqref{eq:priass} and the smallness of \(\delta_0\).

For example,
\[
\left|
\frac{\mu(\bar\theta)}{v\bar v}\bar u_{1x}\psi_{1x}\phi
\right|
\le
\lambda \frac{\mu(\bar\theta)}{v}\psi_{1x}^2
+
C_\lambda |\bar u_{1x}|^2\,\phi^2,
\]
and similarly
\[
\left|
\frac{\alpha_{\rm{th}}(\bar\theta)}{v\bar v\theta}\bar\theta_x\zeta_x\phi
\right|
\le
\lambda \frac{\alpha_{\rm{th}}(\bar\theta)}{v\theta}\zeta_x^2
+
C_\lambda |\bar\theta_x|^2\,\phi^2.
\]
All remaining terms are treated in the same way, using
\[
|\mu(\theta)-\mu(\bar\theta)|+|\alpha_{\rm{th}}(\theta)-\alpha_{\rm{th}}(\bar\theta)|
\le C|\zeta|
\]
and the fact that
\[
|\bar u_{1x}|^2+|\bar\theta_x|^2
\le
C\bigl(|\partial_x(v^{S_1})^{-X_1}|^2+|\partial_x(v^{S_3})^{-X_3}|^2+|u_{1x}^C|^2+|\theta_x^C|^2\bigr).
\]
Therefore \eqref{eq:B4B5B6-est} follows after summing all contributions.
\end{proof}

The contact-wave error term is estimated as follows.

\begin{lemma}\label{lem:B7}
There exists a positive constant \(C\) such that
\begin{align}\label{eq:B7-est}
&|\mathcal B^{\textup{res}}|
\nonumber\\
&\le
\frac{C\delta_C}{1+t}
\int_{\mathbb R}e^{-\frac{C_1|x|^2}{1+t}}|(\phi,\zeta)|^2\,dx
+
C(\delta_0+\varepsilon)
\int_{\mathbb R}
\bigl(|\partial_x(v^{S_1})^{-X_1}|+|\partial_x(v^{S_3})^{-X_3}|\bigr)|(\phi,\zeta)|^2\,dx.
\end{align}
\end{lemma}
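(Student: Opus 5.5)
Lemma~\ref{lem:B7} is essentially a bookkeeping statement: $\mathcal B^{\textup{res}}$ was defined in the decomposition \eqref{eq:energy2} as an explicit sum of two weighted integrals carrying the weight $a$. The plan is to remove $a$ from both integrands using a uniform bound $a\le C$ and absorb that constant into $C$. The real work is therefore only to check that the weight is bounded above uniformly in $t$, $x$, the shifts, and $\delta_1,\delta_3$.

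\textbf{Step 1: uniform bound on the weight.} From \eqref{eq:aweight} and \eqref{eq:wefun},
\[
a(t,x)=1+\frac{1}{\sqrt{\delta_1}}\bigl((v^{S_1})^{-X_1}-v_-\bigr)+\frac{1}{\sqrt{\delta_3}}\bigl((v^{S_3})^{-X_3}-v^*\bigr).
\]
By the monotonicity in Lemma~\ref{lem:bs}, $(v^{S_1})^{-X_1}$ lies between $v_*$ and $v_-$, so $\bigl|(v^{S_1})^{-X_1}-v_-\bigr|\le\delta_1$. Likewise $\bigl|(v^{S_3})^{-X_3}-v^*\bigr|\le\delta_3$. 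It follows that
\[
|a-1|\le\sqrt{\delta_1}+\sqrt{\delta_3}\le 2\sqrt{\delta_0},
\]
and hence $1/2\le a\le 2$ once $\delta_0$ is small. This holds for every value of the shifts $X_1,X_3$, so no information from \eqref{eq:shift} is needed.

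\textbf{Step 2: insert the bound.} In the first integral of $\mathcal B^{\textup{res}}$ the factor $\delta_C(1+t)^{-1}e^{-C_1|x|^2/(1+t)}|(\phi,\zeta)|^2$ is nonnegative, so $a\le2$ gives the first term of \eqref{eq:B7-est} with $C$ replaced by $2C$. The second integral in $\mathcal B^{\textup{res}}$ carries no weight at all, so it already coincides with the second term of \eqref{eq:B7-est}.

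\textbf{Step 3: the absolute value.} Both summands of $\mathcal B^{\textup{res}}$ are nonnegative, so $|\mathcal B^{\textup{res}}|=\mathcal B^{\textup{res}}$ and the estimate follows.

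The only point needing care is Step 1, namely that the $\delta_i^{-1/2}$ factors in $a$ do not blow up. This is exactly where the bound $|v^{S_i}-\text{end state}|\le\delta_i$ enters. Absorbing these terms into the dissipation $\mathcal G_i^S$ and the contact-wave weighted estimate is a separate task, handled later, and is not part of this lemma.
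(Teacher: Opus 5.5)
Your proof is correct and matches the paper, which simply says the lemma is immediate from the definition of $\mathcal B^{\textup{res}}$. Your Step~1 only makes explicit the implicit fact that $1/2\le a\le 2$, via $|(v^{S_1})^{-X_1}-v_-|\le\delta_1$ and $|(v^{S_3})^{-X_3}-v^*|\le\delta_3$; this bound also gives $\mathcal B^{\textup{res}}\ge0$, so the absolute value causes no trouble, and taking the constant $2C$ covers both terms.
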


\begin{proof}
This is immediate from the definition of \(\mathcal B^{\text{res}}\).
\end{proof}

We now estimate the profile-error terms \(\mathcal S_1\) and \(\mathcal S_2\).

\begin{lemma}\label{lem:S1S2}
There exists a constant \(C>0\) such that
\begin{equation}\label{eq:S1S2-est}
|\mathcal S_1|+|\mathcal S_2|
\le
C(\varepsilon+\delta_0)\myparam{\frac{\delta_C}{\myparas{1+t}^{\frac{5}{4}}}+\delta_1\myparas{\delta_3+\delta_C}e^{-C\delta_1t}+\delta_3\myparas{\delta_1+\delta_C}e^{-C\delta_3t}}
\end{equation}
\end{lemma}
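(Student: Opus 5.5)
The plan is to bound $\mathcal S_1=-\int a\psi_1 Q_1\,dx$ and $\mathcal S_2=-\int a\frac{\zeta}{\bar\theta}Q_2\,dx$ by splitting $Q_j=Q_j^C+Q_j^I$ as in \eqref{eq:Q1Q2}--\eqref{eq:Q2Q2}, and treating the contact part and the interaction part separately. Throughout, the weight $a$ and $1/\bar\theta$ are bounded above by constants, since $|a-1|\le C\sqrt{\delta_0}$ and $\bar\theta$ stays near $\theta_*$. The a priori assumption \eqref{eq:prioriass} gives $\|(\phi,\psi,\zeta)\|_{L^\infty_x}\le C\|(\phi,\psi,\zeta)\|_{H^1_x}\le C\varepsilon$ and $\|(\psi_1,\zeta)\|_{L^2_x}\le \varepsilon$.

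\emph{Interaction part.} By Cauchy--Schwarz and Lemma~\ref{lem:QI-term},
\begin{align*}
\Bigl|\int a\psi_1Q_1^I\,dx\Bigr|+\Bigl|\int a\frac{\zeta}{\bar\theta}Q_2^I\,dx\Bigr|
&\le C\|(\psi_1,\zeta)\|_{L^2}\bigl(\|Q_1^I\|_{L^2}+\|Q_2^I\|_{L^2}\bigr)\\
&\le C\varepsilon\Bigl(\delta_1^{3/2}(\delta_3+\delta_C)e^{-C\delta_1t}+\delta_3^{3/2}(\delta_1+\delta_C)e^{-C\delta_3t}\Bigr).
\end{align*}
Since $\delta_i^{3/2}\le\delta_i$, this is dominated by the last two terms of \eqref{eq:S1S2-est}.

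\emph{Contact part.} Here the $L^2$ route loses too much time decay, so I would use an $L^\infty$--$L^1$ pairing. For $Q_2^C$, \eqref{eq:contact-Q-bounds} gives $\|Q_2^C\|_{L^1_x}\le C\delta_C(1+t)^{-3/2}$. Pairing with $\|\zeta\|_{L^\infty}\le C\varepsilon$ yields $C\varepsilon\delta_C(1+t)^{-3/2}\le C\varepsilon\delta_C(1+t)^{-5/4}$.

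For $Q_1^C$, the same pairing gives only $\|Q_1^C\|_{L^1}\le C\delta_C(1+t)^{-1}$, which is not integrable in time; this is the main obstacle. I would first try interpolation: by Cauchy--Schwarz, $|\int a\psi_1Q_1^C\,dx|\le \|\psi_1\|_{L^2}\|Q_1^C\|_{L^2}$, and $\|Q_1^C\|_{L^2}\le C\delta_C(1+t)^{-3/2+1/4}=C\delta_C(1+t)^{-5/4}$. Combined with $\|\psi_1\|_{L^2}\le\varepsilon$, this gives exactly $C\varepsilon\delta_C(1+t)^{-5/4}$, which explains the exponent $5/4$ in \eqref{eq:S1S2-est}. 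The same $L^2$ bound also works for $Q_2^C$, where it gives $(1+t)^{-7/4}$. So both contact terms should be handled by $L^2$ pairing with the Gaussian $L^2$ norms; the explicit computation
\[
\|(1+t)^{-3/2}e^{-c_0x^2/(1+t)}\|_{L^2_x}\sim(1+t)^{-3/2}(1+t)^{1/4}
\]
is the only real calculation needed.

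Summing the interaction and contact bounds gives \eqref{eq:S1S2-est} with prefactor $C\varepsilon\le C(\varepsilon+\delta_0)$.

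The one point to double-check is whether the $\delta_0$ in the prefactor needs some term bounded without the factor $\varepsilon$. For example, if $Q^I$ contains pieces not covered by Lemma~\ref{lem:QI-term}, I would absorb the extra $\delta_i^{1/2}\le\delta_0^{1/2}$ factors coming from $\delta_i^{3/2}=\delta_i^{1/2}\cdot\delta_i$ into the constant.
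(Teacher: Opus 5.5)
Your proof is correct and follows the paper's route. Like the paper, you split $Q_j=Q_j^C+Q_j^I$ and use Cauchy--Schwarz, pairing $\|(\psi_1,\zeta)\|_{L^2_x}\le\varepsilon$ against $\|Q_j^I\|_{L^2_x}$ from Lemma~\ref{lem:QI-term} and against the Gaussian $L^2_x$ norms of $Q_j^C$; that Gaussian computation is exactly where the $(1+t)^{-5/4}$ comes from, and your $L^\infty$--$L^1$ detour for $Q_2^C$ is unnecessary but harmless.
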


\begin{proof}
Recall that
\[
\mathcal S_1=-\int_{\mathbb R}a\,\psi_1Q_1\,dx,
\qquad
\mathcal S_2=-\int_{\mathbb R}a\,\frac{\zeta}{\bar\theta}Q_2\,dx.
\]
Using the decomposition of \(Q_1\) and \(Q_2\) in \eqref{eq:Q1Q2}--\eqref{eq:Q2Q2}, together with the Gaussian bound for the viscous contact wave and the exponential localization of the shock profiles, we obtain
\begin{align*}
    \abs{\mathcal S_1}+\abs{\mathcal S_2} &\leq \myparas{\norm{Q_1^I}_{L^2}+\norm{Q_1^C}_{L^2}+\norm{Q_2^I}_{L^2}+\norm{Q_2^C}_{L^2}}\norm{\myparas{\psi,\zeta}}_{L^2}\\
    &\leq C(\varepsilon+\delta_0)\myparam{\frac{\delta_C}{\myparas{1+t}^{\frac{5}{4}}}+\delta_1\myparas{\delta_3+\delta_C}e^{-C\delta_1t}+\delta_3\myparas{\delta_1+\delta_C}e^{-C\delta_3t}}
\end{align*}
The desired estimate then follows from Young's inequality.
\end{proof}

Collecting the previous lemmas, we obtain the following macroscopic estimate.

\begin{proposition}\label{prop:macro-error}
For any sufficiently small constant \(\lambda>0\), there exist constants \(C>0\) and \(C_\lambda>0\) such that
\begin{equation}\label{eq:macro-error-final}
\begin{split}
&\sum_{i=3}^6 |\mathcal B_i| + |\mathcal B^{\textup{res}}| + |\mathcal S_1| + |\mathcal S_2|\\&
\le
\lambda \mathcal D_{\mathrm{mac}}(U)
+
C_\lambda \delta_C\frac{1}{1+t}
\int_{\mathbb R}e^{-C_1|x|^2/(1+t)}|(\phi,\psi,\zeta)|^2\,dx\\
&
+
C_\lambda(\delta_0+\varepsilon)
\int_{\mathbb R}
\bigl(|\partial_x(v^{S_1})^{-X_1}|+|\partial_x(v^{S_3})^{-X_3}|\bigr)
|(\phi,\psi,\zeta)|^2\,dx\\
& + C(\varepsilon+\delta_0)\myparam{\frac{\delta_C}{\myparas{1+t}^{\frac{5}{4}}}+\delta_1\myparas{\delta_3+\delta_C}e^{-C\delta_1t}+\delta_3\myparas{\delta_1+\delta_C}e^{-C\delta_3t}}.
\end{split}
\end{equation}
\end{proposition}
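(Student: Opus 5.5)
The plan is simply to sum the four preceding lemmas, Lemmas~\ref{lem:B3}, \ref{lem:B4B5B6}, \ref{lem:B7} and \ref{lem:S1S2}. I would then reduce every weight that appears on their right-hand sides to one of three model weights: the contact Gaussian $\delta_C(1+t)^{-1}e^{-C_1|x|^2/(1+t)}$, the shock density $(\delta_0+\varepsilon)|\partial_x(v^{S_i})^{-X_i}|$, or the explicit source terms already present in \eqref{eq:S1S2-est}. I would fix $\lambda$ first and apply Lemmas~\ref{lem:B3} and \ref{lem:B4B5B6} with $\lambda/2$ each, so that the dissipation contributions add up to at most $\lambda\mathcal D_{\mathrm{mac}}(U)$.

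Next I would convert the quadratic shock weights. By Lemma~\ref{lem:bs} one has $|\partial_x(v^{S_i})^{-X_i}|\le C\delta_i^2$, hence
\[
|\partial_x(v^{S_i})^{-X_i}|^2\le C\delta_i^2|\partial_x(v^{S_i})^{-X_i}|\le C\delta_0|\partial_x(v^{S_i})^{-X_i}|.
\]
The linear weight $\delta_i|\partial_x(v^{S_i})^{-X_i}|$ from Lemma~\ref{lem:B3} is bounded by $\delta_0|\partial_x(v^{S_i})^{-X_i}|$ directly. Both therefore fit into the second line of \eqref{eq:macro-error-final}, with a constant depending on $\lambda$. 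The term of $\mathcal B^{\rm res}$ carrying the factor $(\delta_0+\varepsilon)$ is already of this form, so nothing needs to be done there.

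For the contact weights, Lemma~\ref{lem:contact-wave-estimate0} with $n=1$ gives
\[
|u_{1x}^C|^2+|\theta_x^C|^2\le C\delta_C^2(1+t)^{-1}e^{-2c_0x^2/(1+t)}.
\]
Since $\delta_C^2\le\delta_C$, this is bounded by $C\delta_C(1+t)^{-1}e^{-C_1|x|^2/(1+t)}$, provided $C_1\le 2c_0$. The constant $C_1$ is the one already appearing in $\mathcal B^{\rm res}$, and I would shrink it if necessary so that $C_1\le c_0$. The Gaussian term coming from Lemma~\ref{lem:B7} involves only $(\phi,\zeta)$, and I would bound it trivially by the full $|(\phi,\psi,\zeta)|^2$. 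Finally, the $\mathcal S_1,\mathcal S_2$ bound is copied verbatim from Lemma~\ref{lem:S1S2}.

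I do not expect a genuine obstacle here. The only point requiring care is the bookkeeping of the Gaussian exponents: the exponents $c_0$ and $2c_0$ from the contact estimates and the $C_1$ in $\mathcal B^{\rm res}$ must be unified by taking the smallest one. A second point is that the constants must not be allowed to depend on $t$, $T$ or the shifts, which holds because every bound used is pointwise and translation invariant in $X_i$.
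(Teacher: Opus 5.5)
Your proposal is correct and follows essentially the paper's own argument: the paper proves this proposition in one line, by summing Lemmas~\ref{lem:B3}--\ref{lem:S1S2}. Your extra bookkeeping just fills in the details the paper leaves implicit, and it is sound: you linearize the squared shock weights via $|\partial_x(v^{S_i})^{-X_i}|\le C\delta_i^2$, absorb the contact weights using Lemma~\ref{lem:contact-wave-estimate0} together with $\delta_C^2\le\delta_C$, and unify the Gaussian exponents by taking the smaller one.
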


\begin{proof}
This follows immediately from Lemmas~\ref{lem:B3}--\ref{lem:S1S2}.
\end{proof}

We also need the contribution of the remaining modulation terms
\[
\sum_{i=1,3}\dot X_i\sum_{j=4}^6\mathcal Y_{ij},
\]
which are of lower order.

\begin{lemma}\label{lem:Y456}
There exists a constant \(C>0\) such that
\begin{equation}\label{eq:Y456-est}
\left|
\sum_{i=1,3}\dot X_i\sum_{j=4}^6 \mathcal Y_{ij}
\right|
\le
\sum_{i=1,3}\frac{\delta_i}{8\mathfrak m_i}|\dot X_i|^2
+ C(\delta_0+\varepsilon)^2\sum_{i=1,3}\mathcal G_i^S +
C (\delta_0+\varepsilon)^2
\sum_{i=1,3}\delta_i^2e^{-C\delta_i t}.
\end{equation}
\end{lemma}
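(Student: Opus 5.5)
The plan is to estimate each of $\mathcal Y_{i4},\mathcal Y_{i5},\mathcal Y_{i6}$ by a quadratic functional of $(\phi,\psi,\zeta)$ weighted by $|\partial_x(v^{S_i})^{-X_i}|$, and then absorb the product with $\dot X_i$ by Young's inequality. First, since $\Phi(z)=\tfrac12(z-1)^2+O(|z-1|^3)$ and $\|(\phi,\psi,\zeta)\|_{L^\infty}\le C\varepsilon$, we have $\Phi(v/\bar v)+\Phi(\theta/\bar\theta)\le C|(\phi,\zeta)|^2$ and $\eta(U\mid\bar U)\le C|(\phi,\psi,\zeta)|^2$. By Lemma~\ref{lem:bs2}, $|\partial_x(\theta^{S_i})^{-X_i}|\le C|\partial_x(v^{S_i})^{-X_i}|$, and by \eqref{eq:aweight}, $|\partial_x(a_i^{-X_i})|=\delta_i^{-1/2}|\partial_x(v^{S_i})^{-X_i}|$. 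Since $a$ is bounded, this gives
\[
\sum_{j=4}^6|\mathcal Y_{ij}|\le C\delta_i^{-1/2}\int_{\mathbb R}|\partial_x(v^{S_i})^{-X_i}|\,|(\phi,\psi,\zeta)|^2\,dx .
\]

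Next, we split this integral with $1=\varphi_i^2+(1-\varphi_i^2)$, where $1-\varphi_i^2\le 2\varphi_{i'}$ and $i'$ is the other shock index. The $\varphi_i^2$ part is exactly $\mathcal G_i^S$. The complementary part is bounded using Lemma~\ref{lem:intsk}: we have $\varphi_{i'}|\partial_x(v^{S_i})^{-X_i}|\le C\delta_i^2e^{-C\delta_it}$ pointwise and $\int\varphi_{i'}|\partial_x(v^{S_i})^{-X_i}|\,dx\le C\delta_ie^{-C\delta_it}$. Together with $|(\phi,\psi,\zeta)|^2\le C\varepsilon^2$, this gives a bound of order $\varepsilon^2\delta_ie^{-C\delta_it}$. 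Hence
\[
|\mathcal Y_{i4}|+|\mathcal Y_{i5}|+|\mathcal Y_{i6}|\le C\delta_i^{-1/2}\bigl(\mathcal G_i^S+\varepsilon^2\delta_ie^{-C\delta_it}\bigr).
\]

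Finally, Young's inequality gives
\[
|\dot X_i|\,\bigl|\textstyle\sum_j\mathcal Y_{ij}\bigr|\le \frac{\delta_i}{8\mathfrak m_i}|\dot X_i|^2+\frac{C}{\delta_i}\bigl|\textstyle\sum_j\mathcal Y_{ij}\bigr|^2 .
\]
This is the main obstacle. The crude bound above yields $\delta_i^{-2}(\mathcal G_i^S)^2$, which is not of the form $C(\delta_0+\varepsilon)^2\mathcal G_i^S$, so the factor $\delta_i^{-1/2}$ coming from the weight must be beaten.

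To handle it, we exploit that each $\mathcal Y_{ij}$ with $j\ge4$ is cubic-type relative to the shift structure. We bound $(\mathcal G_i^S)^2\le \mathcal G_i^S\cdot\sup|(\phi,\psi,\zeta)|^2\int|\partial_x(v^{S_i})^{-X_i}|\,dx\le C\varepsilon^2\delta_i\,\mathcal G_i^S$. This yields $\frac{C}{\delta_i}\cdot\delta_i^{-1}\cdot\varepsilon^2\delta_i\,\mathcal G_i^S=C\varepsilon^2\delta_i^{-1}\mathcal G_i^S$, which still loses $\delta_i^{-1}$.

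We therefore first try the alternative route of keeping $\dot X_i$ bounded via \eqref{eq:shift}. The shift ODE gives $|\dot X_i|\le C\delta_i^{-1}\int|\partial_x(v^{S_i})^{-X_i}||(\phi,\psi,\zeta)|\,dx\le C\varepsilon$, so we estimate directly $|\dot X_i||\mathcal Y_{ij}|\le C\varepsilon\,\delta_i^{-1/2}(\mathcal G_i^S+\dots)$. The precise form of \eqref{eq:Y456-est} indicates that the intended treatment is to use the $a$-contraction scaling, as in the analogous estimate in \cite{kang2025time}. There, $|\partial_x(a_i^{-X_i})|\sim\delta_i^{-1/2}\delta_i^2e^{-c\delta_i|z|}$ and $\mathcal G_i^S$ is measured in the variable $y_i$. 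Combining this with Cauchy--Schwarz in $y_i$, so that $\int|\partial_x v^{S_i}||f|^2dx=\delta_i\int_0^1|f|^2dy_i$, and with the smallness $\delta_i^{1/2}\le\delta_0^{1/2}$ absorbed into $(\delta_0+\varepsilon)^2$, the $\delta_i$ powers should balance. I would check this bookkeeping carefully, and if necessary pay with an extra factor $\delta_0^{1/2}$ that is harmless in the final closure together with the good term $-\alpha\mathcal G_i^S$ from Lemma~\ref{lem:actr1}.
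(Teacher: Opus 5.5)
\paragraph{A genuine gap in the key step.} Your first two steps are fine, but the argument stops exactly where the content of the lemma lies. After Young's inequality you must bound
\[
\frac{C}{\delta_i}\Bigl|\sum_{j}\mathcal Y_{ij}\Bigr|^2\le\frac{C}{\delta_i^{2}}\Bigl(\int_{\mathbb R}|\partial_x(v^{S_i})^{-X_i}|\,|(\phi,\psi,\zeta)|^2dx\Bigr)^2 .
\]
Both of your attempts lose a negative power of $\delta_i$.

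The final paragraph does not repair this. Passing to $y_i$ is only a rewriting: $\frac{C}{\delta_i^2}\bigl(\delta_i\int_0^1|f|^2dy_i\bigr)^2=C\bigl(\int_0^1|f|^2dy_i\bigr)^2$. Bounding one factor by $\sup|f|^2\le C\varepsilon^2$ again gives $C\varepsilon^2\delta_i^{-1}\int|\partial_x(v^{S_i})^{-X_i}||f|^2dx$.

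Nor can the loss be ``paid for'' with a factor $\delta_0^{1/2}$. The defect is a large factor $\delta_i^{-1}$. Absorbing $C\varepsilon^2\delta_i^{-1}\mathcal G_i^S$ into $-\alpha\mathcal G_i^S$ would force $\varepsilon^2\ll\delta_i$, and the argument does not allow this. The continuation step needs $C(\mathcal E(0)^2+\delta_0^{1/2})<\varepsilon^2$, hence $\varepsilon^2\gg\delta_0\ge\delta_i$.

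\paragraph{The missing idea.} Estimate one of the two copies of the integral with the norms the other way around. Put the $L^\infty$ norm on the shock derivative: $|\partial_x(v^{S_i})^{-X_i}|\le C\delta_i^2$ by Lemma~\ref{lem:bs} with $\kappa=1$. Put the $L^2$ norm on the perturbation: $\|(\phi,\psi,\zeta)\|_{L^2_x}\le\mathcal E(T)\le\varepsilon$ by the a priori assumption. Do not use $\int|\partial_x(v^{S_i})^{-X_i}|dx\le C\delta_i$ together with the $L^\infty$ bound on $(\phi,\psi,\zeta)$. This gives
\[
\int_{\mathbb R}|\partial_x(v^{S_i})^{-X_i}|\,|(\phi,\psi,\zeta)|^2dx\le C\delta_i^2\varepsilon^2 ,
\]
which gains $\delta_i^2$ instead of $\delta_i$ and exactly cancels the $\delta_i^{-2}$:
\[
\frac{C}{\delta_i^2}\Bigl(\int_{\mathbb R}|\partial_x(v^{S_i})^{-X_i}|\,|(\phi,\psi,\zeta)|^2dx\Bigr)^2\le C\varepsilon^2\int_{\mathbb R}|\partial_x(v^{S_i})^{-X_i}|\,|(\phi,\psi,\zeta)|^2dx .
\]
After this, your cutoff splitting finishes the proof.

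This is how the paper controls $\frac{C}{\delta_i}|\mathcal Y_{ij}|^2$. One copy of $\mathcal Y_{ij}$ is bounded by this global smallness, and the other by $\mathcal G_i^S$ plus the interaction remainder. For $j=4,5$ the weight factor $\delta_i^{-1/2}$ is absent, so one even gains an extra $\delta_i$.

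\paragraph{A minor point on the interaction part.} To get the remainder in the exact form of \eqref{eq:Y456-est}, use the pointwise bound $\varphi_{i'}|\partial_x(v^{S_i})^{-X_i}|\le C\delta_i^2e^{-C\delta_it}$ from Lemma~\ref{lem:intsk} together with the $L^2$ bound. This gives $C\varepsilon^2\delta_i^2e^{-C\delta_it}$. Your $L^1$ version $\varepsilon^2\delta_ie^{-C\delta_it}$, squared and divided by $\delta_i^2$ as in your scheme, leaves a remainder whose time integral is of order $\varepsilon^4/\delta_i$.
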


\begin{proof}
By the definitions of \(Y_{i4}\), \(Y_{i5}\), and \(Y_{i6}\), together with
\[
\Phi\!\left(\frac{v}{\bar v}\right)\sim \phi^2,
\qquad
\Phi\!\left(\frac{\theta}{\bar\theta}\right)\sim \zeta^2,
\]
and the bootstrap bound \(\mathcal{E}(T)^2\le \varepsilon^2\) in \eqref{eq:priass}, we obtain

\begin{align*}
\abs{\mathcal Y_{i4}} \le & C \int \abs{\partial_x\shockw{\theta}{i}} \abs{\phi}^2 dx \\
\le & C \int \abs{\varphi_i\partial_x\shockw{\theta}{i}} \abs{\phi}^2 dx + C \int \abs{(1-\varphi_i^2)\partial_x\shockw{\theta}{i}} \abs{\phi}^2 dx \\
\le & C \mathcal{G}_i^S + C \int \abs{(1-\varphi_i)\partial_x\shockw{\theta}{i}}\abs{\phi}^2 dx  \\
\le & C \mathcal{G}_i^S + C(\varepsilon+\delta_0)^2\delta_i^2e^{-C\delta_it},
\end{align*}

\begin{align*}
\frac{C}{\delta_i}\abs{\mathcal Y_{i4}}^2 \le& \frac{C\abs{\mathcal Y_{i4}}}{\delta_i}\abs{\int a\partial_x\shockw{\theta}{i} \Phi\left(\frac{v}{\bar v}\right) dx}\\
\le& C\delta_i(\varepsilon+\delta_0)^2\left(\mathcal{G}_i^S +(\varepsilon+\delta_0)^2\delta_i^2e^{-C\delta_it}\right).
\end{align*}

{The same argument applied to} $Y_{i5}$ {gives}

\begin{equation}\label{eq:Y45-est}
\frac{C}{\delta_i}\bigl(|\mathcal Y_{i4}|^2+|\mathcal Y_{i5}|^2\bigr)
\le
C\delta_i(\delta_0+\varepsilon)^2
\left(\mathcal G_i^S+(\delta_0+\varepsilon)^2\delta_i^2e^{-C\delta_i t}\right),
\qquad i=1,3.
\end{equation}
Similarly, observe that
\begin{align}\label{eq:Y6-est}
\frac{C}{\delta_i}|\mathcal Y_{i6}|^2 \le & \frac{C}{\delta_i^2}\myparas{\int \abs{\partial_x\shockw{v}{i}}\abs{(\phi,\psi,\zeta)}^2 dx}^2 \nonumber\\
\le & C(\delta_0+\varepsilon)^2\myparas{\mathcal G_i^S+(\delta_0+\varepsilon)^2\delta_i^2e^{-C\delta_i t}}, \qquad i=1,3.
\end{align}

Therefore, by Young's inequality,
\begin{align*}
\left|
\sum_{i=1,3}\dot X_i\sum_{j=4}^6\mathcal Y_{ij}
\right|
&\le
\sum_{i=1,3}\left(
\frac{\delta_i}{8\mathfrak m_i}|\dot X_i|^2
+
\frac{C}{\delta_i}\sum_{j=4}^6|\mathcal Y_{ij}|^2
\right)\\
&\le
\sum_{i=1,3}\frac{\delta_i}{8\mathfrak m_i}|\dot X_i|^2
+
C(\delta_0+\varepsilon)^2\sum_{i=1,3}\mathcal G_i^S \\
&\qquad +
C (\delta_0+\varepsilon)^2
\sum_{i=1,3}\delta_i^2e^{-C\delta_i t},
\end{align*}
which proves \eqref{eq:Y456-est}.
\end{proof}

As a consequence, combining Proposition~\ref{prop:macro-error} with Lemma~\ref{lem:Y456}, we obtain
\begin{equation}\label{eq:macro-error-plus-shift}
\begin{aligned}
&\sum_{l=3}^6 |\mathcal B_l| + \mathcal B^{\text{res}} + |\mathcal S_1| + |\mathcal S_2|
+\left|
\sum_{i=1,3}\dot X_i\sum_{j=4}^6 \mathcal Y_{ij}
\right|\\
&\le
\sum_{i=1,3}\frac{\delta_i}{8\mathfrak M_i}|\dot X_i|^2
+
\frac{1}{32} \mathcal D_{\mathrm{mac}}(U)
+
C \delta_C\frac{1}{1+t}
\int_{\mathbb R}e^{-\frac{C_1|x|^2}{1+t}}|(\phi,\psi,\zeta)|^2\,dx\\
&\quad
+
C(\delta_0+\varepsilon)\sum_{i=1,3}\mathcal G_i^S + C(\varepsilon+\delta_0)\myparam{\frac{\delta_C}{\myparas{1+t}^{\frac{5}{4}}}+\delta_1\delta_0e^{-C\delta_1t}+\delta_3\delta_0e^{-C\delta_3t}}.
\end{aligned}
\end{equation}

\subsection{Estimate of the microscopic error terms}
In this subsection, we estimate the kinetic error terms
\[
K_1,\dots,K_6
\]
appearing in the weighted entropy identity \eqref{eq:energy2}. These terms arise from the microscopic part of the perturbation and will be controlled by the microscopic dissipation together with the smallness of the macroscopic perturbation and the exponential localization of the composite wave.

For convenience, we introduce the microscopic dissipation functional
\begin{equation}\label{eq:mic-diss}
\mathcal D_{\mathrm{mic}}(t)
:=
\int_{\mathbb R}\norm{\widetilde{G}_{\text{rem}}}_{\nu,M_\#}^2\,dx.
\end{equation}

We first estimate the terms involving the difference
\begin{align}\label{eq:pi1w}
\widetilde{\Pi}_1:=\,\Pi_1-(\Pi_1^{S_1})^{-X_1}-(\Pi_1^{S_3})^{-X_3}.
\end{align}
 
\begin{lemma}\label{lem:K1245}
There exists a positive constant \(C\) such that
\begin{align}
\begin{aligned}
&|K_1|+|K_2|+|K_3|+|K_4|+|K_5|\\
&\quad \le\; C(\delta_0+\varepsilon)\sum_{i\in\{1,3\}}\delta_i|\dot X_i|^2 + C\delta_0\sum_{i\in\{1,3\}}\mathcal G_i^S + \frac1{20}\mathcal D_{\mathrm{mac}}(U) + C(\delta_0+\varepsilon)\mathcal D_{\mathrm{mic}}(t) \\
&\qquad + C\delta_0(\delta_0+\varepsilon)\bigl(\delta_1e^{-C\delta_1 t}+\delta_3e^{-C\delta_3 t}\bigr) + C\frac{\delta_C}{1+t}\int_{\mathbb R}e^{-\frac{2c_0x^2}{1+t}}|(\phi,\psi,\zeta)|^2\,dx\\
&\qquad + \frac{C\delta_C}{(1+t)^{\frac{5}{4}}} + C\int_{\mathbb R}\norm{\widetilde{G}_t}_{\nu,M_\#}^2+\norm{\widetilde{G}_x}_{\nu,M_\#}^2\,dx .
\end{aligned}
\label{eq:K1245-est}
\end{align}
\end{lemma}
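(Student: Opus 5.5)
The plan is to integrate by parts in $x$ in every term containing $\partial_x\widetilde\Pi_1$ or $\Pi_{1x}$. This moves the derivative onto $a\zeta/\theta$, $a\psi_1$ or $a\psi_i$, after which each term splits into a part controlled by macroscopic dissipation and a part controlled by microscopic quantities. Concretely, for $K_1$ and $K_4$ we write
\[
K_4=\iint \bigl(a_x\psi_1+a\psi_{1x}\bigr)\xi_1^2\,\widetilde\Pi_1\,d\xi\,dx,
\]
and similarly for $K_1$ with $(a\zeta/\theta)_x$. The factor $\psi_{1x}$ or $\zeta_x$ is absorbed by $\frac1{20}\mathcal D_{\mathrm{mac}}$ through Young's inequality. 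The factor $a_x\psi_1$ is bounded by $\delta_i^{-1/2}|\partial_x(v^{S_i})^{-X_i}||\psi_1|$. Its pairing with $\widetilde\Pi_1$ is then controlled by $\delta_i^{-1}|\partial_x (v^{S_i})^{-X_i}|^2|\psi_1|^2\lesssim \delta_i|\partial_x(v^{S_i})^{-X_i}||\psi_1|^2$, which splits into $C\delta_0\mathcal G_i^S$ plus a cutoff tail. By Lemma~\ref{lem:intsk}, that tail contributes $C\delta_0(\delta_0+\varepsilon)\delta_ie^{-C\delta_it}$.

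The key step is to bound $\int|\int\xi_1\varphi_\ell\widetilde\Pi_1 d\xi|^2dx$. From \eqref{eq:Ger} and \eqref{eq:Gers}, $\widetilde\Pi_1$ is $L^{-1}$ applied to a combination of terms. The first group consists of $\widetilde G_t$, $\widetilde G_x$ and the quadratic collision terms, all measured in $\|\cdot\|_{\nu,M_\#}$. The second group contains the shift term $\dot X_i\partial_x(G^{S_i})^{-X_i}$, which arises because $\partial_t$ of the shifted profile produces $-\dot X_i\partial_x$; this gives the contribution $C(\delta_0+\varepsilon)\delta_i|\dot X_i|^2$, using Lemma~\ref{lem:bs1} ($\|\partial_xG^{S_i}\|\lesssim\delta_i^3e^{-c\delta_i|z|}$, so its $L^2_x$ norm squared is at most $\delta_i^5$). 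The third group consists of differences of the linearized operators $L_M^{-1}-L_{M^{S_i}}^{-1}$ and projections $P_1-P_1^{S_i}$ acting on shock quantities; these are of size $|(\phi,\psi,\zeta)|+|\bar U-(U^{S_i})^{-X_i}|$ times shock-localized functions. The $(\phi,\psi,\zeta)$ part goes into $\mathcal G_i^S$ and the wave-interaction part into the exponential terms via Lemma~\ref{lem:wave-interaction-L2}. The contact piece of $G$ must be separated explicitly. Since $\widetilde G=\widetilde G_C+\widetilde G_{\rm rem}$, the terms $\widetilde G_{C,t}$ and $\widetilde G_{C,x}$ are bounded by second derivatives of the contact wave, which are $O(\delta_C(1+t)^{-3/2}e^{-c_0x^2/(1+t)})$ by Lemma~\ref{lem:contact-wave-estimate0}. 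Paired against $\psi,\zeta$ these give the Gaussian-weighted term and, after Cauchy--Schwarz with $\|(\psi,\zeta)\|_{L^2}\le\varepsilon$, the $\delta_C(1+t)^{-5/4}$ term. The nonlinear term $\mathcal N(\widetilde G,\widetilde G)$ is bounded via the standard estimate $\|\nu^{-1/2}\mathcal N(g,h)\|\lesssim\|g\|_\nu\|h\|+\|g\|\|h\|_\nu$ together with the Sobolev bound $\sup_x\|\widetilde G\|\lesssim\varepsilon$, giving $C\varepsilon\mathcal D_{\mathrm{mic}}$ plus a contact contribution.

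$K_2$, $K_3$ and $K_5$ contain the full $\Pi_{1x}$ rather than $\widetilde\Pi_1$. For these I would first write $\Pi_1=\widetilde\Pi_1+\sum_i(\Pi_1^{S_i})^{-X_i}$. In $K_3$ and $K_5$ the shock part vanishes, because $\int\xi_1\xi_i\Pi_1^{S_i}d\xi=0$ for $i=2,3$ by Lemma~\ref{lem:bs1}, and the remainder is handled as before with the factors $\psi_i$. For $K_2$ we write $u_1=\bar u_1+\psi_1$. The $\psi_1$ piece is cubic and small, while the combination $\bar u_1-(u_1^{S_i})^{-X_i}$ multiplying $\partial_x(\Pi_1^{S_i})^{-X_i}$ is a wave-interaction term. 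By Lemma~\ref{lem:bs1}, $|\int\xi_1^2\partial_x\Pi_1^{S_i}|\lesssim\delta_i^2\|G^{S_i}\|$, which is again bounded by $\mathcal G_i^S$ plus exponential tails. The main obstacle I expect is the bookkeeping of the commutator terms $L_M^{-1}-L_{M^{S_i}}^{-1}$ inside $\widetilde\Pi_1$: one must verify that they carry enough shock localization, specifically a factor $|\partial_x(v^{S_i})^{-X_i}|$, to be absorbed into $\delta_0\mathcal G_i^S$ rather than producing a non-decaying $\|(\phi,\psi,\zeta)\|^2$ term. I would check this first, using $\|G^{S_i}\|_{\nu}\lesssim \delta_i^{-1}|\partial_x v^{S_i}|$, which follows by comparing Lemma~\ref{lem:bs1} with Lemma~\ref{lem:bs}.
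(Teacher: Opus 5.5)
Your outline is the paper's route. You integrate by parts in $x$, peel off $\frac1{20}\mathcal D_{\mathrm{mac}}$ and the $a_x$-localized piece (which gives $C\delta_0\mathcal G_i^S$ plus a cutoff tail via Lemma~\ref{lem:intsk}), and decompose $\widetilde\Pi_1$ as in \eqref{eq:Pi-perturb} into the $\widetilde G_t,\widetilde G_x$ part, the collision terms, the shift term $-\dot X_i(L_i^S)^{-1}\partial_x(G^{S_i})^{-X_i}$ and the commutator $J$. You split $K_2$ as in \eqref{eq:K2-split}, and you kill the shock part of $K_3,K_5$ by the vanishing transverse moments.

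The step that fails as written is the contact bookkeeping. You separate $\widetilde G_C$ inside $\widetilde G_t,\widetilde G_x$, where nothing is needed: the full $C\int\|\widetilde G_t\|_{\nu,M_\#}^2+\|\widetilde G_x\|_{\nu,M_\#}^2\,dx$ is admissible on the right of \eqref{eq:K1245-est}. (Also, $\partial_x\widetilde G_C$ is not purely a second derivative of the contact wave; it contains $(v_x,u_x,\theta_x)$, hence $\phi_x$ and shock derivatives, times $(u^C_{1x},\theta^C_x)$.) Conversely, in the undifferentiated collision term you put $\sup_x\|\widetilde G\|_{M_\#}\lesssim\varepsilon$ on the full $\widetilde G$. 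Then $\iint(1+|\xi|)^{-1}|\mathcal N(\widetilde G,\widetilde G)|^2M_\#^{-1}\,d\xi\,dx\lesssim\varepsilon^2\int\|\widetilde G\|_{\nu,M_\#}^2\,dx$ contains $\varepsilon^2\int\|\widetilde G_C\|_{\nu,M_\#}^2\,dx\approx\varepsilon^2\int|(u^C_{1x},\theta^C_x)|^2\,dx\sim\varepsilon^2\delta_C^2(1+t)^{-1/2}$, by \eqref{eq:GCe} and Lemma~\ref{lem:contact-gaussian}. That ``contact contribution'' is not dominated by any term of \eqref{eq:K1245-est} and is not even integrable in time. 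This is exactly why $\widetilde G_C$ is subtracted in \eqref{eq:mic2}; compare Remark~\ref{rem:kc4-alpha0}.

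The repair is to expand $\widetilde G=\widetilde G_C+\widetilde G_{\mathrm{rem}}$ in $\mathcal N(\widetilde G,\widetilde G)$ and in $\mathcal N(\widetilde G,(G^{S_i})^{-X_i})$, always putting the $L^\infty_x$ norm on the profile-type factor. Remainder--remainder gives $C(\delta_0+\varepsilon)^2\mathcal D_{\mathrm{mic}}$ by the one-dimensional Sobolev bound for $\widetilde G_{\mathrm{rem}}$. Contact--remainder gives $\sup_x\|\widetilde G_C\|_{\nu,M_\#}^2\,\mathcal D_{\mathrm{mic}}\lesssim\delta_C^2\mathcal D_{\mathrm{mic}}$. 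Contact--contact is quartic, $\int\|\widetilde G_C\|_{\nu,M_\#}^4\,dx\lesssim\delta_C^4(1+t)^{-3/2}$. Finally, $\mathcal N(\widetilde G_C,(G^{S_i})^{-X_i})$ becomes the shock--contact interaction $\int|\partial_x(v^{S_i})^{-X_i}|^2|(u^C_{1x},\theta^C_x)|^2\,dx$, which decays exponentially; this is what \eqref{eq:K4-QGG}--\eqref{eq:K4-mixed} record.

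Two smaller points. First, your sketch drops a term that the Gaussian-weighted term on the right of \eqref{eq:K1245-est} is needed for. The factor $(a\zeta/\theta)_x$ also contains $-a\zeta\theta_x/\theta^2$, and the $\widetilde\Pi_1$-part of $K_2$ produces $a\zeta u_{1x}/\theta$. Writing $\theta_x=\zeta_x+\bar\theta_x$ and $u_{1x}=\psi_{1x}+\bar u_{1x}$, the contact parts $\zeta\theta^C_x$ and $\zeta u^C_{1x}$ are controlled neither by $\mathcal D_{\mathrm{mac}}$ nor by $\mathcal G_i^S$. They are controlled only by $\frac{\delta_C^2}{1+t}\int e^{-2c_0x^2/(1+t)}|\zeta|^2\,dx$ (the paper's $K_{13}$). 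Second, the comparison you plan to verify first does not follow from Lemmas~\ref{lem:bs} and \ref{lem:bs1}, which give only upper bounds with unrelated exponential rates. It follows from the scalar parametrization in the appendix, \eqref{eq:macro-zeta-comparison}--\eqref{eq:G-chi-relation}, which gives $\|G^{S_i}\|_{\nu,M_\#}\sim|\chi(\eta)|\sim|\eta_y|\sim|\partial_xv^{S_i}|$, i.e.\ with no $\delta_i^{-1}$ loss. With it, the commutator $J$ closes as in \eqref{eq:J-est1}--\eqref{eq:interaction-est}.
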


\begin{proof}

\noindent\textit{Control of \(K_4\).}
By definition,
\[
K_4
=
-\iint a\psi_1\,\xi_1^2
\widetilde{\Pi}_{1x}\,d\xi\,dx.
\]
Integrating by parts in \(x\), we obtain
\begin{equation}\label{eq:K4-ibp}
K_4
=
\iint (a\psi_1)_x\,\xi_1^2
\widetilde{\Pi}_1\,d\xi\,dx.
\end{equation}
Hence, for arbitrary \(\alpha,\beta>0\),
\begin{align}
|K_4|
\le\;&
\frac{\alpha}{2}\int_{\mathbb R}(a_x\psi_1)^2\,dx
+\frac{\alpha^{-1}}{2}\int_{\mathbb R}
\left(
\int_{\mathbb R^3}\xi_1^2
\widetilde{\Pi}_1\,d\xi
\right)^2dx
\nonumber\\
&+
\frac{\beta}{2}\int_{\mathbb R}(a\psi_{1x})^2\,dx
+\frac{\beta^{-1}}{2}\int_{\mathbb R}
\left(
\int_{\mathbb R^3}\xi_1^2
\widetilde{\Pi}_1\,d\xi
\right)^2dx.
\label{eq:K4-Young}
\end{align}

We now decompose the perturbative microscopic flux. Recall that
\begin{align}
\mathcal N(\widetilde G,\widetilde G)
=&-\mathcal N\bigl((G^{S_1})^{-X_1},\widetilde G\bigr)-\mathcal N\bigl(\widetilde G,(G^{S_1})^{-X_1}\bigr)-\mathcal N\bigl((G^{S_1})^{-X_1},(G^{S_1})^{-X_1}\bigr)+\mathcal N(G,G)\nonumber\\
&-\mathcal N\bigl(\widetilde G,(G^{S_3})^{-X_3}\bigr)-\mathcal N\bigl((G^{S_1})^{-X_1},(G^{S_3})^{-X_3}\bigr)-\mathcal N\bigl((G^{S_3})^{-X_3},\widetilde G\bigr)\nonumber\\
&-\mathcal N\bigl((G^{S_3})^{-X_3},(G^{S_3})^{-X_3}\bigr)-\mathcal N\bigl((G^{S_3})^{-X_3},(G^{S_1})^{-X_1}\bigr).
\label{eq:Q-tilde-decomp}
\end{align}
Accordingly,
\begin{align}
& \widetilde{\Pi}_1
=
L_M^{-1}
\left(
\widetilde G_t-\frac{u_1}{v}\widetilde G_x+\frac1vP_1(\xi_1\widetilde G_x)-\mathcal N(\widetilde G,\widetilde G)
\right)
\nonumber\\
&\qquad
-L_M^{-1}
\left[
\mathcal N\bigl(\widetilde G,(G^{S_1})^{-X_1}+(G^{S_3})^{-X_3}\bigr)
+
\mathcal N\bigl((G^{S_1})^{-X_1}+(G^{S_3})^{-X_3},\widetilde G\bigr)
\right]
\nonumber\\
&\qquad
-L_M^{-1}
\left[
\mathcal N\bigl((G^{S_1})^{-X_1},(G^{S_3})^{-X_3}\bigr)
+
\mathcal N\bigl((G^{S_3})^{-X_3},(G^{S_1})^{-X_1}\bigr)
\right]
\nonumber\\
&\qquad
-\dot X_1(L_{1}^{S})^{-1}\partial_x(G^{S_1})^{-X_1}
-\dot X_3(L_{3}^{S})^{-1}\partial_x(G^{S_3})^{-X_3}
+J,
\label{eq:Pi-perturb}
\end{align}
where
\begin{align}
J:=\;&
\sum_{i\in\{1,3\}}
\bigl(L_M^{-1}-(L_{i}^{S})^{-1}\bigr)
\left((G^{S_i})_t^{-X_i}-\mathcal N\bigl((G^{S_i})^{-X_i},(G^{S_i})^{-X_i}\bigr)\right)
\nonumber\\
&+
\sum_{i\in\{1,3\}}
\left(
\frac{(u_1^{S_i})^{-X_i}}{(v^{S_i})^{-X_i}}(L_{i}^{S})^{-1}
-\frac{u_1}{v}L_M^{-1}
\right)\partial_x(G^{S_i})^{-X_i}
\nonumber\\
&+
\sum_{i\in\{1,3\}}
\left(
\frac1vL_M^{-1}P_1\xi_1
-\frac1{(v^{S_i})^{-X_i}}(L_{i}^{S})^{-1}P_1^{S_i}\xi_1
\right)\partial_x(G^{S_i})^{-X_i}.
\label{eq:J-term}
\end{align}

Using the weighted inverse estimate for \(L_M^{-1}\) and the collision estimates from Appendix~A, we have
\begin{align}
\int_{\mathbb R}
\left(
\int_{\mathbb R^3}\xi_1^2L_M^{-1}\widetilde G_t\,d\xi
\right)^2dx
&\le
C\iint\frac{(1+|\xi|)^{-1}|\widetilde G_t|^2}{M_\#}\,d\xi\,dx,
\label{eq:K4-Gt}
\\
\int_{\mathbb R}
\left(
\int_{\mathbb R^3}\xi_1^2L_M^{-1}\left(\frac{u_1}{v}\widetilde G_x\right)\,d\xi
\right)^2dx
&\le
C\iint\frac{(1+|\xi|)^{-1}|\widetilde G_x|^2}{M_\#}\,d\xi\,dx,
\label{eq:K4-Gx}
\\
\int_{\mathbb R}
\left(
\int_{\mathbb R^3}\xi_1^2L_M^{-1}\left(\frac1vP_1(\xi_1\widetilde G_x)\right)\,d\xi
\right)^2dx
&\le
C\iint\frac{(1+|\xi|)^{-1}|\widetilde G_x|^2}{M_\#}\,d\xi\,dx.
\label{eq:K4-P1Gx}
\end{align}
Moreover,
\begin{align}
& \int_{\mathbb R}
\left(
\int_{\mathbb R^3}\xi_1^2L_M^{-1}\mathcal N(\widetilde G,\widetilde G)\,d\xi
\right)^2dx \nonumber \\
&\qquad \le
C(\delta_0+\varepsilon)\mathcal{D}_{\text{mic}}
+
C\delta_C^4
\left(
\frac1{(1+t)^4}+\frac1{(1+t)^2}
\right)
\int_{\mathbb R}e^{-c_0x^2/(1+t)}\,dx.
\label{eq:K4-QGG}
\end{align}

Here we used
\begin{align} \label{eq:GCe}
\norm{\widetilde{G}_C}_{\nu,M_\#}^2
\approx
|(u_{1x}^C,\theta_x^C)|^2.
\end{align}

Similarly,
\begin{align}
&\int_{\mathbb R}
\left(
\int_{\mathbb R^3}\xi_1^2L_M^{-1}\mathcal N\bigl(\widetilde G,(G^{S_1})^{-X_1}\bigr)\,d\xi
\right)^2dx \nonumber\\
&\qquad \le
C\delta_1^2\mathcal{D}_{\text{mic}}
+
C\int |(u_{1x}^C,\theta_x^C)|^2 |\partial_x\bigl((v^{S_1})^{-X_1}\bigr)|^2\,dx,
\label{eq:K4-mixed}
\end{align}
and the same estimate holds for the remaining mixed and interaction terms involving \((G^{S_3})^{-X_3}\).

Next, we estimate the term \(J\). Observe that
\[
L_M\bigl(L_M^{-1}-(L_{1}^{S})^{-1}\bigr)h
=
\mathcal N(M_1-M,(L_{1}^{S})^{-1}h)+\mathcal N((L_{1}^{S})^{-1}h,M_1-M)
\]
and
\begin{align*}
    &L_M (L_{1}^{S})^{-1} \shock{G}{1}_x \\
    &\quad = \mathcal N\Bigl(M-\shockw{M}{1},(L_{1}^{S})^{-1} \partial_x\shockw{G}{1}\Bigr) + \mathcal N\Bigl(\shockw{M}{1},(L_{1}^{S})^{-1} \partial_x\shockw{G}{1}\Bigr)\\
    &\qquad + \mathcal N\Bigl((L_{1}^{S})^{-1} \partial_x\shockw{G}{1},M-\shockw{M}{1}\Bigr)+\mathcal N\Bigl((L_{1}^{S})^{-1} \partial_x\shockw{G}{1},\shockw{M}{1}\Bigr)\\
    &\quad =\mathcal N\Bigl(M-
    \shockw{M}{1},(L_{1}^{S})^{-1} \partial_x\shockw{G}{1}\Bigr)\\
    &\qquad +\mathcal N\Bigl((L_{1}^{S})^{-1} \partial_x\shockw{G}{1},M-\shock{M}{1}\Bigr) +\partial_x\shockw{G}{1}.
\end{align*}
Using the identity
\[
M(a)-M(a')
=
\int_0^1 D_aM(a_s)\,(a-a')\,ds,
\qquad a_s:=a'+s(a-a'),
\]
with \(a=(v,u,\theta)\) and \(a'=(v',u',\theta')\), we infer
\[
\norm{M-\shockw{M}{1}}_{\nu,M_\#}^2
\le
C\Bigl|\bigl(v-\shockw{v}{1},u-\shockw{u}{1},\theta-\shockw{\theta}{1}\bigr)\Bigr|^2.
\]

Therefore,
\begin{align}
&\int_{\mathbb R}
\left(
\int_{\mathbb R^3}\xi_1^2\bigl(L_M^{-1}-(L_{1}^{S})^{-1}\bigr)\partial_x\shockw{G}{1}\,d\xi
\right)^2dx\nonumber\\
&\qquad \le
C\delta_1
\int_{\mathbb R}\left|\partial_x\shockw{v}{1}\right|^2
\left|\left(v-\shockw{v}{1},u-\shockw{u}{1},\theta-\shockw{\theta}{1}\right)\right|^2\,dx.
\label{eq:J-est1}
\end{align}
Using the decomposition
\begin{align}
&\left(v-\shockw{v}{1},u-\shockw{u}{1} ,\theta-\shockw{\theta}{1}\right)
\nonumber\\
&\qquad =\bigl(v-\y{v},u-\y{u},\theta-\y{\theta}\bigr)+
\left(\shockw{v}{3}-v^*,\shockw{u}{3}-u^*,\shockw{\theta}{3}-\theta^*\right)\nonumber\\
&\qquad \quad +\bigl(v^C-v_*,u^C-u_*,\theta^C-\theta_*\bigr)
\end{align}
the first part is controlled by \(\mathcal G_1^S\) of \eqref{eq:skGt}, while the second part is estimated by Lemma~\ref{lem:wave-interaction-L2}. Hence,
\begin{align}
&\int_{\mathbb R}\left|\partial_x\shockw{v}{1}\right|^2 \left|\left(v-\shockw{v}{1},u-\shockw{u}{1},\theta-\shockw{\theta}{1}\right)\right|^2\,dx\nonumber\\
&\qquad \le\;
C\delta_1\mathcal G_1^S
+
C\delta_1^3e^{-C\delta_1 t}\int_{\mathbb R}\eta(U\mid\bar U)\,dx +C(\delta_1^2\delta_3^2+\delta_1^2\delta_C^2)\delta_1e^{-C\delta_1t}\nonumber \\
&\qquad \qquad +C\delta_1^3\delta_3(\delta_3e^{-C\delta_3t})\nonumber \\
&\qquad \le\;C\delta_1\mathcal G_1^S + C(\delta_0+\varepsilon)^2\delta_1^3e^{-C\delta_1 t}+C\delta_0^4\bigl(\delta_1e^{-C\delta_1 t}+\delta_3e^{-C\delta_3t}\bigr).
\label{eq:interaction-est}
\end{align}

The same estimate holds for the \(3\)-shock.

Collecting \eqref{eq:K4-Young}--\eqref{eq:interaction-est} and choosing \(\alpha,\beta\) sufficiently small, we conclude that
\begin{align}
|K_4|
\le\;&
C\delta_0^4\sum_{i\in\{1,3\}}\delta_i|\dot X_i|^2
+
C\delta_0\sum_{i\in\{1,3\}}\mathcal G_i^S
+
C\delta_0(\varepsilon+\delta_0)^2\bigl(\delta_1e^{-C\delta_1 t}+\delta_3e^{-C\delta_3 t}\bigr)
\nonumber\\
&+\frac{C\delta_C}{(1+t)^{\frac{5}{4}}}
+\frac{C\delta_C^2}{1+t}\int_{\mathbb R}e^{-2c_0x^2/(1+t)}|(\phi,\zeta)|^2\,dx+\frac1{200}\mathcal D_{\mathrm{mac}}(U)\nonumber\\
&+C(\delta_0+\varepsilon)\mathcal D_{\mathrm{mic}}(t)+
C\int\norm{\wtilde{G}_t}_{\nu,M_\#}^2+\norm{\wtilde{G}_x}_{\nu,M_\#}^2\,dx.
\label{eq:K4-final}
\end{align}

\medskip

\noindent\textit{Control of \(K_1\).}
By definition,
\[
K_1
=
-\iint a\frac{\zeta}{\theta}\left(\xi_1\frac{|\xi|^2}{2}\right)
\widetilde{\Pi}_{1x}\,d\xi\,dx.
\]
Integrating by parts in \(x\), we obtain
\[
K_1
=
\iint
\left(a\frac{\zeta}{\theta}\right)_x
\left(\xi_1\frac{|\xi|^2}{2}\right)
\widetilde{\Pi}_{1}\,d\xi\,dx.
\]
Hence, by Young's inequality,

\begin{align}
|K_1|
\le\;&
\underbrace{\frac{\alpha_1}{2}\int_{\mathbb R}\left(a_x\frac{\zeta}{\theta}\right)^2\,dx}_{K_{11}}
+\frac{\alpha_1^{-1}}{2}\int_{\mathbb R}
\left(
\int_{\mathbb R^3}\xi_1\frac{\abs{\xi}^2}{2}
\widetilde{\Pi}_{1}\,d\xi
\right)^2dx
\nonumber\\
&+
\underbrace{\frac{\alpha_2}{2}\int_{\mathbb R}\left(a\frac{\zeta_x}{\theta}\right)^2\,dx}_{K_{12}}
+\frac{\alpha_2^{-1}}{2}\int_{\mathbb R}
\left(
\int_{\mathbb R^3}\xi_1\frac{\abs{\xi}^2}{2}
\widetilde{\Pi}_{1}\,d\xi
\right)^2dx \nonumber\\
&+
\underbrace{\frac{\alpha_3}{2}\int_{\mathbb R}\left(a\frac{\zeta\y{\theta}_x}{\theta^2}\right)^2\,dx}_{K_{13}}
+\frac{\alpha_3^{-1}}{2}\int_{\mathbb R}
\left(
\int_{\mathbb R^3}\xi_1\frac{\abs{\xi}^2}{2}
\widetilde{\Pi}_{1}\,d\xi
\right)^2dx \nonumber\\
&+
\underbrace{\frac{\alpha_4}{2}\int_{\mathbb R}\left(a\frac{\zeta\zeta_x}{\theta^2}\right)^2\,dx}_{K_{14}}
+\frac{\alpha_4^{-1}}{2}\int_{\mathbb R}
\left(
\int_{\mathbb R^3}\xi_1\frac{\abs{\xi}^2}{2}
\widetilde{\Pi}_{1}\,d\xi
\right)^2dx.
\label{eq:K1-Young}
\end{align}

The terms $K_{11}$ and $K_{13}$ are controlled by $\mathcal{G}_i^S$, and the terms $K_{12}$ and $K_{14}$ are controlled by the diffusion term $\mathcal D_{\mathrm{mac}}(U)$.

{Arguing as for} $K_4${,}

\begin{align}
& |K_1|\le\;
C\delta_0^4\sum_{i\in\{1,3\}}\delta_i|\dot X_i|^2
+
C\delta_0\sum_{i\in\{1,3\}}\mathcal G_i^S
+
C\delta_0(\varepsilon+\delta_0)^2\bigl(\delta_1e^{-C\delta_1 t}+\delta_3e^{-C\delta_3 t}\bigr)
\nonumber\\
& \qquad \quad+\frac{C\delta_C}{(1+t)^{\frac{5}{4}}}
+\frac{C\delta_C^2}{1+t}\int_{\mathbb R}e^{-\frac{2c_0x^2}{1+t}}|(\phi,\zeta)|^2\,dx+\frac1{200}\mathcal D_{\mathrm{mac}}(U)\nonumber\\
& \qquad \quad
+C(\delta_0+\varepsilon)\mathcal D_{\mathrm{mic}}(t)+
C\int\norm{\wtilde{G}_t}_{\nu,M_\#}^2+\norm{\wtilde{G}_x}_{\nu,M_\#}^2\,dx.
\label{eq:K1-final}
\end{align}

\medskip

\noindent\textit{Control of \(K_2\).}
We write
\begin{align}
K_2
&=
\iint a\frac{\zeta}{\theta}
\bigg[
u_1\xi_1^2\Pi_{1x}
-\sum_{i\in\{1,3\}}(u_1^{S_i})^{-X_i}\xi_1^2\partial_x(\Pi_1^{S_i})^{-X_i}
\bigg]d\xi\,dx
\nonumber\\
&=
\iint a\frac{\zeta}{\theta}u_1\xi_1^2
\widetilde{\Pi}_{1x}\,d\xi\,dx
\nonumber\\
&\quad
+
\sum_{i\in\{1,3\}}
\iint a\frac{\zeta}{\theta}\bigl(u_1-(u_1^{S_i})^{-X_i}\bigr)\xi_1^2\partial_x\bigl((\Pi_1^{S_i})^{-X_i}\bigr)\,d\xi\,dx
\nonumber\\
&=:J^{\text{mic}}_1+J^{\text{mic}}_2.
\label{eq:K2-split}
\end{align}

The term \(J_1^{\text{mic}}\) is treated exactly as \(K_4\). For \(J_2^{\text{mic}}\), using the exponential decay of \(\bigl((\Pi_1^{S_i})^{-X_i}\bigr)_x\) (See Lemma \ref{lem:bs1}), we obtain
\begin{align}
|J_2^{\text{mic}}|
\le\;&
C\delta_1^2\mathcal G_1^S
+
C\delta_1^2(\delta_1e^{-C\delta_1 t}+\delta_3e^{-C\delta_3 t})
\nonumber\\
&+
C\delta_3^2\mathcal G_3^S
+
C\delta_3^2(\delta_1e^{-C\delta_1 t}+\delta_3e^{-C\delta_3 t}).
\label{eq:J2-est}
\end{align}
Hence,
\begin{align}
|K_2|
\le\;&
C\delta_0^4\sum_{i\in\{1,3\}}\delta_i|\dot X_i|^2
+
C\delta_0\sum_{i\in\{1,3\}}\mathcal G_i^S
+
\frac1{200}\mathcal D_{\mathrm{mac}}(U)
+
C(\delta_0+\varepsilon)\mathcal D_{\mathrm{mic}}(t)
\nonumber\\
&+C\delta_0(\varepsilon+\delta_0)^2\bigl(\delta_1e^{-C\delta_1 t}+\delta_3e^{-C\delta_3 t}\bigr)+\frac{C\delta_C}{1+t}\int_{\mathbb R}e^{-\frac{2c_0x^2}{1+t}}|(\phi,\zeta)|^2\,dx\nonumber\\
&+\frac{C\delta_C^4}{(1+t)^{\frac{5}{4}}}+
C\int\norm{\wtilde{G}_t}_{\nu,M_\#}^2+\norm{\wtilde{G}_x}_{\nu,M_\#}^2\,dx.
\label{eq:K2-final}
\end{align}

Finally, the mixed transverse kinetic terms \(K_3\) and \(K_5\) are controlled the same way as \(K_4\), since
\[
\int_{\mathbb R^3}\xi_1\xi_j\bigl((\Pi_1^{S_1})^{-X_1}+(\Pi_1^{S_3})^{-X_3}\bigr)\,d\xi=0,
\qquad j=2,3.
\]
Therefore, \(K_3\) and \(K_5\) have the same bound as \(K_4\). Summing the above estimates yields \eqref{eq:K1245-est}.
\end{proof}

Finally, we estimate the quadratic microscopic interaction term.

\begin{lemma}\label{lem:K6}
There exists a positive constant \(C\) such that
\begin{equation}\label{eq:K6-est}
|K_6|
\le
C\delta_0\sum_{i=1,3}\mathcal G_i^S
+
\frac{C\delta_C}{1+t}\int_{\mathbb R}e^{-\frac{2c_0x^2}{1+t}}|(\phi,\zeta)|^2\,dx.
\end{equation}
\end{lemma}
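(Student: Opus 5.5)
The plan is to show first that the microscopic shock moment appearing in $K_6$ is pointwise controlled by the macroscopic shock derivative with a small prefactor. After that, the estimate reduces to splitting $\zeta^2$ with the cutoffs $\varphi_i$ from \eqref{eq:intske}. The cross-region in that split is the delicate step, and the last paragraph isolates what I have not yet resolved there.

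\emph{Step 1: pointwise moment bound.} In
\[
K_6=\iint a\frac{\zeta^2}{\theta\bar\theta}\sum_{i\in\{1,3\}}\xi_1\Bigl(\frac{|\xi|^2}{2}-(u_1^{S_i})^{-X_i}\xi_1\Bigr)\partial_x(\Pi_1^{S_i})^{-X_i}\,d\xi\,dx ,
\]
the velocity moment is a combination of $\int\xi_1\varphi_4\,\partial_x\Pi_1^{S_i}\,d\xi$ and $u_1^{S_i}\int\xi_1\varphi_1\,\partial_x\Pi_1^{S_i}\,d\xi$. Lemma~\ref{lem:bs1} (with $\kappa=1$) bounds each of these by $C\delta_i^2\|G^{S_i}\|_{\nu,M_\#}\le C\delta_i^4e^{-C\delta_i|z_i|}$. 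For small weak shocks the macroscopic profile derivative satisfies the two-sided bound $|\partial_z v^{S_i}|\sim\delta_i^2e^{-C\delta_i|z_i|}$ (upper bound by Lemma~\ref{lem:bs}, lower bound from the reduced Navier--Stokes shock ODE). After adjusting the decay constant, this gives
\[
\Bigl|\int\xi_1\Bigl(\tfrac{|\xi|^2}{2}-(u_1^{S_i})^{-X_i}\xi_1\Bigr)\partial_x(\Pi_1^{S_i})^{-X_i}d\xi\Bigr|\le C\delta_i^2\,|\partial_x(v^{S_i})^{-X_i}| .
\]
Using $a,\theta,\bar\theta\sim1$ under \eqref{eq:prioriass}, it follows that
\[
|K_6|\le C\delta_0^2\sum_{i=1,3}\int_{\mathbb R}|\partial_x(v^{S_i})^{-X_i}|\,\zeta^2\,dx .
\]

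\emph{Step 2: splitting with the cutoffs.} I would write $\zeta^2=\varphi_i^2\zeta^2+(1-\varphi_i^2)\zeta^2$. The localized part is bounded directly by $\mathcal G_i^S$ from \eqref{eq:skGt}, so it contributes $C\delta_0^2\mathcal G_i^S\le C\delta_0\mathcal G_i^S$.

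\emph{Step 3: the cross-region (main obstacle).} The remaining term is $\delta_0^2\int(1-\varphi_1^2)|\partial_x(v^{S_1})^{-X_1}|\zeta^2\,dx$, together with its $3$-shock analogue. By \eqref{eq:shiftsp1}, on the support of $1-\varphi_1$ one has $z_1\gtrsim t+|x-\tfrac{X_1+\sigma_1t}{2}|$. I would cover this support by two zones.
\begin{itemize}
\item Contact zone, $|x|\lesssim\sqrt{1+t}$. Here $\delta_1^2e^{-C\delta_1 z_1}\le C\delta_1^2e^{-C\delta_1t}$, and this should be bounded by $C(1+t)^{-1}e^{-2c_0x^2/(1+t)}$. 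The prefactor $\delta_0^2$ is then absorbed into $C\delta_C/(1+t)$ provided $\delta_0^2\lesssim\delta_C$; this is the step that requires comparing the wave strengths.
\item Zone beyond the midpoint toward the $3$-shock. Here $\varphi_3\approx1$. I would bound the $1$-shock tail by the $3$-shock density $|\partial_x(v^{S_3})^{-X_3}|$, which enters $\mathcal G_3^S$ with weight $\varphi_3^2$.
\end{itemize}
The symmetric argument handles the $3$-shock cross term.

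What remains unresolved is the far-field comparison of exponential tails. Three comparisons are needed: $e^{-C\delta_1 z_1}$ against the Gaussian for $|x|\gg\sqrt t$, $e^{-C\delta_1 z_1}$ against $e^{-C\delta_3|z_3|}$ to the right of the $3$-shock, and the $\delta_0^2$ versus $\delta_C$ comparison in the contact zone. None of these holds for free, so Step 3 needs a new idea. Any leftover from the cross-region must be absorbed into the two terms on the right-hand side of \eqref{eq:K6-est}; an exponentially decaying source term cannot be kept, because the statement contains none.
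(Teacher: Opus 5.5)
Your Steps~1--2 are exactly the paper's argument. The paper also uses Lemma~\ref{lem:bs1} to reduce $|K_6|$ to $C\sum_{i}\delta_i^2\int_{\mathbb R}|\partial_x(v^{S_i})^{-X_i}|\,\zeta^2\,dx$, and then splits with the cutoffs $\varphi_i$. One repair is needed in Step~1. You cannot ``adjust the decay constant'' to compare an exponential upper bound on $G^{S_i}$ with an exponential lower bound on $\partial_z v^{S_i}$. The two rates are unrelated constants, and weakening either bound moves the rates in the wrong direction. The clean justification is the scalar parametrization recalled in the appendix, which gives, pointwise,
\[
\|G^{S_i}\|_{\nu,M_\#}\sim|\chi(\eta)|\sim|(\eta-\eta_-)(\eta-\eta_+)|\sim|\eta_y|\sim|\partial_z v^{S_i}|.
\]

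The obstruction you isolate in Step~3 is genuine, and no new idea will remove it. Nothing on the stated right-hand side controls $\delta_1^2\int\varphi_3|\partial_x(v^{S_1})^{-X_1}|\zeta^2\,dx$ uniformly in $t$. To see this, take $\zeta$ supported where $\varphi_1=0$, at distance $\sim t$ from the origin and from both shocks. Then:
\begin{itemize}
\item $\mathcal G_1^S=0$;
\item the Gaussian term carries a factor $e^{-ct}$ with an $O(1)$ rate, times an arbitrarily small $\delta_C$;
\item $\mathcal G_3^S$ decays at rate $\sim\delta_3$;
\item the $1$-shock tail decays only at rate $\sim\delta_1$, which is slower when $\delta_1\ll\delta_3$.
\end{itemize}

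The paper does not overcome this either. Its proof bounds the right-hand side by the two stated terms \emph{plus} $C\delta_0(\varepsilon+\delta_0)^2\bigl(\delta_1e^{-C\delta_1t}+\delta_3e^{-C\delta_3t}\bigr)$, and then asserts \eqref{eq:K6-est} anyway. The statement should carry that extra term.

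With the term allowed, your Step~3 is one line. Since $0\le\varphi_1\le1$, you have $1-\varphi_1^2\le2\varphi_3$. Combining this with $\|\zeta\|_{L^\infty}\le C\varepsilon$ (from \eqref{eq:prioriass} and Sobolev) and the integral bound in Lemma~\ref{lem:intsk} gives
\[
\delta_1^2\int_{\mathbb R}(1-\varphi_1^2)\,|\partial_x(v^{S_1})^{-X_1}|\,\zeta^2\,dx\le C\varepsilon^2\delta_1^3e^{-C\delta_1t},
\]
and symmetrically for $i=3$. No contact-zone splitting, far-field tail comparison, or $\delta_0^2$-versus-$\delta_C$ condition is needed.

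The amended term is harmless downstream. It is time-integrable, since $\int_0^\infty\delta_ie^{-C\delta_it}\,dt=O(1)$. Moreover, Proposition~\ref{prop:micro-error} already carries $C\delta_0(\delta_0+\varepsilon)\bigl(\delta_1e^{-C\delta_1t}+\delta_3e^{-C\delta_3t}\bigr)$ from the bounds on $K_1,\dots,K_5$.
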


\begin{proof}
Using the exponential decay of \((\Pi_1^{S_i})^{-X_i}\) (See Lemma \ref{lem:bs1}) and Young's inequality, we obtain
\begin{align*}
|K_6|
&\le
C
\sum_{i\in\{1,3\}}
\iint
a\frac{\zeta^2}{\theta\bar\theta}
\left|
\xi_1\left(\frac{|\xi|^2}{2}-(u_1^{S_i})^{-X_i}\xi_1\right)
\partial_x(\Pi_1^{S_i})^{-X_i}
\right|\,d\xi\,dx\\
&\le C\sum_{i\in\{1,3\}} \int \abs{\zeta}^2 \left\{\abs{\int \xi_1\frac{\abs{\xi}^2}{2}\partial_x\shockw{\Pi_1}{i} d\xi}+\abs{\int \xi_1^2\partial_x\shockw{\Pi_1}{i} d\xi}\right\}dx\\
&\le
C
\sum_{i\in\{1,3\}}
\delta_i^2 \int_{\mathbb R}|\partial_x(v^{S_i})^{-X_i}|\,\zeta^2\,dx.
\end{align*}
The right-hand side is bounded by
\[
C\delta_0\sum_{i=1,3}\mathcal G_i^S
+
\frac{C\delta_C}{1+t}\int_{\mathbb R}e^{-\frac{2c_0x^2}{1+t}}|(\phi,\zeta)|^2\,dx + C\delta_0(\varepsilon+\delta_0)^2\bigl(\delta_1e^{-C\delta_1 t}+\delta_3e^{-C\delta_3 t}\bigr),
\]
which proves \eqref{eq:K6-est}.
\end{proof}

Collecting the above estimates, we obtain the following proposition.

\begin{proposition}\label{prop:micro-error}
There exists a positive constant \(C\) such that
\begin{align}
\begin{aligned}
\sum_{j=1}^6 |K_j| \le&\;
C\delta_0\sum_{i\in\{1,3\}}\delta_i|\dot X_i|^2+C\delta_0\sum_{i\in\{1,3\}}\mathcal G_i^S+\frac{1}{20}\mathcal D_{\mathrm{mac}}(U)+C(\delta_0+\varepsilon)\mathcal D_{\mathrm{mic}}(t)\\
& +C\delta_0(\delta_0+\varepsilon)\bigl(\delta_1e^{-C\delta_1 t}+\delta_3e^{-C\delta_3 t}\bigr)+\frac{C\delta_C}{1+t}\int_{\mathbb R}e^{-\frac{2c_0x^2}{1+t}}|(\phi,\zeta)|^2\,dx\\
& +C\frac{\delta_C}{(1+t)^{\frac{5}{4}}} +
C\int\norm{\wtilde{G}_t}_{\nu,M_\#}^2+\norm{\wtilde{G}_x}_{\nu,M_\#}^2\,dx.
\end{aligned}
\label{eq:micro-error-final}
\end{align}
\end{proposition}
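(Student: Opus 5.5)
This proposition is a bookkeeping statement: it follows by summing the bounds already established in Lemma~\ref{lem:K1245} and Lemma~\ref{lem:K6}. The plan is to take \eqref{eq:K1245-est} for $K_1,\dots,K_5$ and \eqref{eq:K6-est} for $K_6$, add them, and absorb the resulting coefficients into the form displayed in \eqref{eq:micro-error-final}.

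\textbf{Step 1 (the first five terms).} Lemma~\ref{lem:K1245} already contains every term appearing on the right-hand side of \eqref{eq:micro-error-final}, with two coefficients to adjust. The shift term carries $C(\delta_0+\varepsilon)\sum_i\delta_i|\dot X_i|^2$. The individual bounds \eqref{eq:K4-final}, \eqref{eq:K1-final} and \eqref{eq:K2-final} actually carry the much smaller coefficient $C\delta_0^4$, so I will cite those bounds directly; this is consistent with the stated $C\delta_0$. Alternatively, one keeps $C(\delta_0+\varepsilon)$ and notes that it is equally admissible for the later absorption into $\frac{\delta_i}{2\mathfrak m_i}|\dot X_i|^2$. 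The second adjustment is the Gaussian term, which involves $e^{-2c_0x^2/(1+t)}|(\phi,\psi,\zeta)|^2$; this is exactly the form in the proposition.

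\textbf{Step 2 (the term $K_6$).} The proof of Lemma~\ref{lem:K6} actually yields the additional contribution $C\delta_0(\varepsilon+\delta_0)^2(\delta_1e^{-C\delta_1t}+\delta_3e^{-C\delta_3t})$. Since $(\varepsilon+\delta_0)^2\le(\varepsilon+\delta_0)$ for small parameters, this is bounded by the exponentially decaying term $C\delta_0(\delta_0+\varepsilon)(\cdots)$ already present in \eqref{eq:micro-error-final}. The remaining parts of $K_6$ are a $C\delta_0\mathcal G_i^S$ term and a Gaussian term with coefficient $C\delta_C$, and both match the proposition.

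\textbf{Step 3 (the dissipation coefficient).} Checking that the $\frac1{20}\mathcal D_{\mathrm{mac}}$ coefficient survives the summation is the only point requiring care. Each of $K_1,K_2,K_4$ contributes $\frac1{200}\mathcal D_{\mathrm{mac}}$, and $K_3,K_5$ contribute the same amount since they are bounded as $K_4$. This gives a total of at most $\frac{5}{200}<\frac1{20}$. The remaining terms, namely those in $\mathcal D_{\mathrm{mic}}$, $\widetilde G_t$, $\widetilde G_x$ and $\delta_C(1+t)^{-5/4}$, sum with enlarged constants $C$.
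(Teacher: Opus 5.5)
Your route is the paper's own, since its proof of this proposition simply adds \eqref{eq:K1245-est} and \eqref{eq:K6-est}. Two of your catches are real discrepancies that this one-line argument glosses over, and you handle both correctly. The first is the coefficient $C(\delta_0+\varepsilon)$ of $\sum_i\delta_i|\dot X_i|^2$ in \eqref{eq:K1245-est}. This is not bounded by $C\delta_0$ unless $\varepsilon\lesssim\delta_0$, which is not assumed. So only your first option, citing the $C\delta_0^4$ in \eqref{eq:K1-final}, \eqref{eq:K2-final}, \eqref{eq:K4-final}, proves the statement as written; the ``alternatively'' option only serves the later absorption. The second is the term $C\delta_0(\varepsilon+\delta_0)^2(\delta_1e^{-C\delta_1t}+\delta_3e^{-C\delta_3t})$, which is produced in the proof of Lemma~\ref{lem:K6} but dropped from \eqref{eq:K6-est}. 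Your dissipation count $5/200<1/20$ is also correct.

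One claim in Step~1 is wrong, however. The Gaussian term in \eqref{eq:K1245-est} carries $|(\phi,\psi,\zeta)|^2$, whereas \eqref{eq:micro-error-final} only allows $|(\phi,\zeta)|^2$. Since $|(\phi,\psi,\zeta)|^2\ge|(\phi,\zeta)|^2$, the lemma as stated gives a strictly weaker bound, so it is not ``exactly the form in the proposition.'' The repair is the same device you already use for the shift coefficient. Take the Gaussian contributions from the individual bounds \eqref{eq:K1-final}, \eqref{eq:K2-final}, \eqref{eq:K4-final}, and for $K_3,K_5$ from the paper's assertion that they obey the $K_4$ bound. All of these carry only $\frac{C\delta_C}{1+t}\int_{\mathbb R}e^{-\frac{2c_0x^2}{1+t}}|(\phi,\zeta)|^2\,dx$, using $\delta_C^2\le\delta_C$ where needed. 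With that change, the summation closes exactly as you describe.
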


\begin{proof}
Lemma~\ref{lem:K1245} already gives the stated bound for \(K_1\), \(K_2\),\(K_3\), \(K_4\), and \(K_5\). 
Lemma~ \ref{lem:K6} provides the corresponding bounds for the remaining term \(K_6\). 
Adding these inequalities yields \eqref{eq:micro-error-final}.
\end{proof}

Combining Lemma~\ref{lem:actr1}, Proposition~\ref{prop:macro-error}, Lemma~\ref{lem:Y456}, and Proposition~\ref{prop:micro-error}, we arrive at the following preliminary zeroth-order inequality.

\begin{lemma}\label{lem:zero-prelim}
There exist positive constants \(\alpha_0\) and \(C\) such that
\begin{align}
\begin{aligned}
&\frac{d}{dt}\int_{\mathbb R}a\,\eta(U\mid\bar U)\,dx + \alpha_0\sum_{i=1,3}\mathcal{G}_i^S+\frac{1}{40}\mathcal D_{\mathrm{mac}}(U)+\sum_{i=1,3}\frac{\delta_i}{8\mathfrak m_i}|\dot X_i|^2\\
&\,\le
\left(
\sum_{i=1,3}C\delta_i^2e^{-C\delta_i t}
+\frac{C}{\delta_*t^2}
\right)
\int_{\mathbb R}\eta(U\mid\bar U)\,dx +
C\delta_0\bigl(\delta_1e^{-C\delta_1 t}+\delta_3e^{-C\delta_3 t}\bigr)\\
&\quad +C\frac{\delta_C}{(1+t)^{\frac{5}{4}}}
+C\frac{\delta_C}{1+t}\int_{\mathbb R}e^{-\frac{2c_0|x|^2}{1+t}}|(\phi,\zeta)|^2\,dx+C(\delta_0+\varepsilon)\mathcal D_{\mathrm{mic}}(t)\\
&\quad+
C\int\norm{\wtilde{G}_t}_{\nu,M_\#}^2+\norm{\wtilde{G}_x}_{\nu,M_\#}^2\,dx.
\end{aligned}
\label{eq:zero-prelim}
\end{align}
\end{lemma}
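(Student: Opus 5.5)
Starting from the weighted identity \eqref{eq:energy2}, I will split the modulation part using \eqref{eq:dsaf}. Since $\dot X_i=-\frac{\mathfrak m_i}{\delta_i}(\mathcal Y_{i1}+\mathcal Y_{i2}+\mathcal Y_{i3})$, we get
\[
\dot X_i(\mathcal Y_{i1}+\mathcal Y_{i2}+\mathcal Y_{i3})=-\frac{\delta_i}{\mathfrak m_i}|\dot X_i|^2 .
\]
Hence
\[
\sum_i\dot X_i\mathcal Y_i=-\sum_i\frac{\delta_i}{\mathfrak m_i}|\dot X_i|^2+\sum_i\dot X_i\sum_{j=4}^6\mathcal Y_{ij}.
\]
I will write $-\frac{\delta_i}{\mathfrak m_i}|\dot X_i|^2=-\frac{\delta_i}{2\mathfrak m_i}|\dot X_i|^2-\frac{\delta_i}{2\mathfrak m_i}|\dot X_i|^2$. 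The first half will feed Lemma~\ref{lem:actr1}, and the second half absorbs the error terms. Likewise I split $-\mathcal D_{\mathrm{mac}}=-\frac34\mathcal D_{\mathrm{mac}}-\frac14\mathcal D_{\mathrm{mac}}$.

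For the main step, I apply Lemma~\ref{lem:actr1} to
\[
-\sum_i\frac{\delta_i}{2\mathfrak m_i}|\dot X_i|^2+\mathcal B^{\rm sh}+\mathcal B_2-\mathfrak G-\tfrac34\mathcal D_{\mathrm{mac}} .
\]
This gives $-\alpha\sum_i\mathcal G_i^S$, interaction constants of size $C\delta_i^{4/3}(\cdots)^{4/3}e^{-C\delta_it}\le C\delta_0(\delta_1e^{-C\delta_1t}+\delta_3e^{-C\delta_3t})$, and the term $(C\delta_i^2e^{-C\delta_it}+C/(\delta_*t^2))\int\eta$. The transverse dissipation term it carries is nonpositive, so I simply discard it.

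Next I bound the remaining terms $\sum_{l=3}^6\mathcal B_l+\mathcal B^{\rm res}+\mathcal S_1+\mathcal S_2+\sum_i\dot X_i\sum_{j\ge4}\mathcal Y_{ij}$ by \eqref{eq:macro-error-plus-shift}. The pieces it produces are handled as follows:
\begin{itemize}
\item[(i)] $\frac{\delta_i}{8\mathfrak m_i}|\dot X_i|^2$ is absorbed by the reserved $\frac{\delta_i}{2\mathfrak m_i}|\dot X_i|^2$.
\item[(ii)] $\frac1{32}\mathcal D_{\mathrm{mac}}$ is absorbed by the reserved $\frac14\mathcal D_{\mathrm{mac}}$.
\item[(iii)] $C(\delta_0+\varepsilon)\sum\mathcal G_i^S$ is absorbed by $\alpha\sum\mathcal G_i^S$ for small $\delta_0,\varepsilon$.
\item[(iv)] The source terms $C(\varepsilon+\delta_0)\delta_C(1+t)^{-5/4}$ and $C(\varepsilon+\delta_0)\delta_0\delta_ie^{-C\delta_it}$ go directly to the right-hand side.
\end{itemize}
The contact Gaussian weight appears with $C_1$ in place of $2c_0$. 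I will take $C_1\ge 2c_0$ (shrinking $c_0$ if necessary), so that $e^{-C_1x^2/(1+t)}\le e^{-2c_0x^2/(1+t)}$. Similarly, the $\psi$ component in that Gaussian term will be handled by the same bound in \eqref{eq:J22-est}-type form; if needed I replace $|(\phi,\zeta)|$ by $|(\phi,\psi,\zeta)|$, which is harmless for the later Gaussian estimate.

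Then I use Proposition~\ref{prop:micro-error} for $\sum_j|K_j|$. Its terms are handled the same way:
\begin{itemize}
\item[(i)] $C\delta_0\delta_i|\dot X_i|^2\le\frac{\delta_i}{8\mathfrak m_i}|\dot X_i|^2$ for small $\delta_0$.
\item[(ii)] $C\delta_0\sum\mathcal G_i^S$ is absorbed into the remaining part of $\alpha\sum\mathcal G_i^S$.
\item[(iii)] $\frac1{20}\mathcal D_{\mathrm{mac}}$ is taken from the reserved quarter.
\item[(iv)] The remaining terms ($\mathcal D_{\mathrm{mic}}$, $\widetilde G_t$, $\widetilde G_x$, and the Gaussian and decay sources) stay on the right-hand side.
\end{itemize}

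Finally I track the constants. The shift coefficient is $\frac{\delta_i}{2\mathfrak m_i}-\frac{\delta_i}{8\mathfrak m_i}-\frac{\delta_i}{8\mathfrak m_i}\ge\frac{\delta_i}{8\mathfrak m_i}$ once $\delta_0$ is small. The diffusion coefficient is $\frac14-\frac1{32}-\frac1{20}>\frac1{40}$. Taking $\alpha_0=\alpha/2$ gives \eqref{eq:zero-prelim}. The main obstacle is purely bookkeeping: the dissipation and shift budgets must close with fixed numerical fractions. I also need to confirm that Lemma~\ref{lem:actr1} was applied with exactly the half shift term, rather than the full term used in Lemma~\ref{lem:Y456}, so that no term is double counted.
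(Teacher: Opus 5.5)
Your proposal is correct and follows the paper's proof. The paper simply combines \eqref{eq:energy2} with Lemma~\ref{lem:actr1}, Lemma~\ref{lem:Y456}, Proposition~\ref{prop:macro-error} and Proposition~\ref{prop:micro-error}, and absorbs the small multiples of $\sum_i\mathcal G_i^S$, $\delta_i|\dot X_i|^2$ and $\mathcal D_{\mathrm{mac}}$; your explicit budget (the identity $\dot X_i(\mathcal Y_{i1}+\mathcal Y_{i2}+\mathcal Y_{i3})=-\frac{\delta_i}{\mathfrak m_i}|\dot X_i|^2$ from \eqref{eq:dsaf}, half of it fed to Lemma~\ref{lem:actr1}, and $\frac14-\frac1{32}-\frac1{20}>\frac1{40}$) makes that one-line absorption explicit. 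Your remark on the Gaussian term is also accurate: Lemmas~\ref{lem:B3}--\ref{lem:B4B5B6} produce $|(\phi,\psi,\zeta)|^2$ rather than $|(\phi,\zeta)|^2$, so the statement should carry $\psi$ there, which is harmless because $\mathcal W_C$ in Lemma~\ref{lem:weighted-gaussian} already includes it.
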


\begin{proof}
See Lemma~\ref{lem:actr1}, Lemma~\ref{lem:Y456}, Proposition~\ref{prop:macro-error} and Proposition~\ref{prop:micro-error}.
Combining \eqref{eq:macro1}, \eqref{eq:macro-error-final}, and \eqref{eq:micro-error-final} and choosing \(\delta_0+\varepsilon\) sufficiently small, we absorb the lower-order shock terms into the principal coercive term \(\sum_i\mathcal G_i^S\), and the dissipation contributions into \(\mathcal D_{\mathrm{mac}}(U)\). This proves \eqref{eq:zero-prelim}.
\end{proof}

\subsection{Short-time estimates}

\begin{lemma}[Short-time estimates]\label{lem:short-time-estimate}
For $t\le 1$, there exist constants $C>0$ and $\wtilde{c}_0>0$ such that
\begin{align}
\begin{aligned}\label{eq:final-short-estimate}
    &\left.\int \eta(U\mid\bar{U}) dx\right|_{t=1} +\wtilde{c}_0\int_0^1 \left[
\sum_{i=1,3}\mathcal G_i^S(s)
+
\mathcal D_{\mathrm{mac}}(U)(s)
+
\sum_{i=1,3}\delta_i|\dot X_i(s)|^2\right] ds \\
&\le \left.\int \eta(U\mid\bar{U}) dx\right|_{t=0} + C(\delta_0+\varepsilon) \int_0^1 \int \norm{\wtilde{G}_{\textup{rem}}}_{\nu,M_\#} \,dx\,ds \\
&\qquad +C\int_0^1
\int \norm{\wtilde{G}_t}_{\nu,M_\#}^2 + \norm{\wtilde{G}_x}_{\nu,M_\#}^2\,dx\,ds + C\delta_0.
\end{aligned}
\end{align}
\end{lemma}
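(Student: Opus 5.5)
The point of the short-time lemma is that the preliminary inequality \eqref{eq:zero-prelim} is unusable near $t=0$. Its coefficient $\frac{C}{\delta_* t^2}$ comes only from the cutoff derivatives $|\varphi_{ix}|\le \frac{4}{(\sigma_3-\sigma_1)t}$ in \eqref{eq:cutof1}, which blow up because the two shocks start from the same point. So on $[0,1]$ I would redo the decomposition of Lemma~\ref{lem:actr1} with a time-independent localization, and otherwise reuse the proof of Lemma~\ref{lem:zero-prelim}.

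\textbf{Step 1: a time-independent cutoff on $[0,1]$.} For $t\le1$, \eqref{eq:shift-speed-bound} gives $|X_i(t)|\le C\varepsilon$, so both shocks stay inside a fixed interval $|x|\le C$. I would not separate them with $\varphi_i$. Instead I would handle the $1$-shock and $3$-shock parts of $\mathcal B^{\rm sh}$, $\mathcal B_2-\mathfrak G$ and $\mathcal D_{\rm mac}$ in the same way as before, except that the cross terms (a $1$-shock derivative multiplied by $1-\varphi_1$-type factors) are now bounded crudely. By Lemma~\ref{lem:bs}, $|\partial_x (v^{S_i})^{-X_i}|\le C\delta_i^2$, so these cross terms are at most $C\delta_0^2\int\eta\,dx$ and $C\delta_0$-size profile errors.

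Concretely, I would run the Poincaré argument of Lemma~\ref{lem:poincare} with $\varphi_i$ replaced by a smooth partition $\chi_1+\chi_3=1$ with $|\chi_{ix}|\le C$, for example a cutoff centered at $x=0$ between the two profiles. The $J_2$ term then becomes $\frac{C}{\delta_*}\int\eta\,dx$ rather than $\frac{C}{\delta_* t^2}\int\eta\,dx$. Since the two profiles overlap near $t=0$, the coercive quantity $\mathcal G_i^S$ is recovered only up to an error $C\int \big(|\partial_x(v^{S_1})^{-X_1}|\,|\partial_x(v^{S_3})^{-X_3}|\big)^{1/2}|(\phi,\psi,\zeta)|^2\,dx\le C\delta_0\int\eta\,dx$. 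This error is of the same type as the Grönwall coefficient and can be treated the same way.

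\textbf{Step 2: the modified zeroth-order inequality.} With this change, Lemma~\ref{lem:actr1}, Proposition~\ref{prop:macro-error}, Lemma~\ref{lem:Y456} and Proposition~\ref{prop:micro-error} combine exactly as in Lemma~\ref{lem:zero-prelim} and give, for $t\le1$,
\[
\frac{d}{dt}\int a\eta\,dx+\tilde c_0\Big(\sum_i\mathcal G_i^S+\mathcal D_{\rm mac}+\sum_i\delta_i|\dot X_i|^2\Big)\le C\int\eta\,dx+C\delta_0+C(\delta_0+\varepsilon)\mathcal D_{\rm mic}+C\!\int\!\big(\|\widetilde G_t\|^2_{\nu,M_\#}+\|\widetilde G_x\|^2_{\nu,M_\#}\big)dx .
\]
In deriving this I would use three bounds that are harmless for $t\le1$: $(1+t)^{-5/4}\le1$, $e^{-C\delta_i t}\le1$, and the contact Gaussian term $\frac{\delta_C}{1+t}\int e^{-cx^2/(1+t)}|(\phi,\zeta)|^2\,dx\le C\delta_C\int\eta\,dx$.

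\textbf{Step 3: Grönwall on $[0,1]$.} Since $a\in[1-\sqrt{\delta_0},1+\sqrt{\delta_0}]$, the weighted and unweighted entropies are equivalent. Grönwall on $[0,1]$ then bounds $\sup_{[0,1]}\int\eta\,dx$ by $e^{C}$ times (initial entropy plus the integrated right-hand side). Inserting this bound back into the $C\int\eta\,dx$ term and integrating the inequality over $[0,1]$ gives \eqref{eq:final-short-estimate}, with the constant $e^{C}$ absorbed into $C$. The conclusion has a unit coefficient in front of the initial entropy, which I would either read as ``up to a constant'' or obtain by keeping the Grönwall factor out of the leading term. The $\mathcal D_{\rm mic}$ term matches the $\widetilde G_{\rm rem}$ integral in the statement.

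\textbf{Main obstacle.} The hard part is Step 1: recovering the coercivity $\sum_i\mathcal G_i^S$, and in particular the sharp Poincaré constants, when the two shock layers overlap. I would try first to accept losing sharpness only on the overlap region. The loss there is $O(\delta_0)\int\eta\,dx$, and that is exactly what the Grönwall argument on a unit time interval can absorb.
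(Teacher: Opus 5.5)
Your argument works in substance, but it takes a heavier route than the paper, and the step you call the main obstacle is not needed.

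\textbf{What the paper does.} On $[0,1]$ the paper never uses Lemma~\ref{lem:actr1}, the cutoffs $\varphi_i$, or the Poincar\'e inequality. It starts from the raw identity \eqref{eq:energy2-1} and keeps only the trivially good terms:
\begin{itemize}
\item the modulation term $-\frac{\delta_i}{\mathfrak m_i}|\dot X_i|^2$ produced by $\dot X_i(\mathcal Y_{i1}+\mathcal Y_{i2}+\mathcal Y_{i3})$;
\item half of $\mathcal D_{\mathrm{mac}}$;
\item $\mathfrak G\ge0$, which is simply dropped.
\end{itemize}
It then adds $\lambda\sum_i\mathcal G_i^S$ to both sides and bounds everything else crudely, as in \eqref{eq:coarsebd}. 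No coercivity is needed on a unit time interval. Since $|\partial_x(v^{S_i})^{-X_i}|\le C\delta_i^2$ and $|a_x|\le C\delta_0^{3/2}$, the terms $\mathcal B_1$, $\mathcal B_2$ and $\mathcal G_i^S$ itself are all $O(\delta_0^{3/2})\int\eta\,dx$, and the contact terms are $O(\delta_C)\int\eta\,dx$. The bootstrap bound $\|(\phi,\psi,\zeta)\|_{L^2}^2\le\varepsilon^2$ turns all of these into an additive $C\delta_0$ after integrating over $[0,1]$, without any Gr\"onwall step.

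\textbf{Why your Step 1 is unnecessary.} Your partition $\chi_1+\chi_3=1$ buys nothing here. At $t\le1$ there is also no region ``between'' the profiles: both have width of order $\delta_i^{-1}$ and centers within $O(1)$ of the origin. The overlap error you write is just the crude $C\delta_0^2\int\eta\,dx$ bound in disguise.

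\textbf{What your route costs.} The cutoff-derivative term $\frac{C}{\delta_*}\|\chi_{ix}\|_{L^\infty}^2\int\eta\,dx$ has an $O(1)$ coefficient. It cannot be absorbed through the bootstrap, since that would leave a $C\varepsilon^2$ error, which is fatal for closing the bootstrap. You are therefore forced into Gr\"onwall. Gr\"onwall with an $O(1)$ rate necessarily puts a factor $1+Ce^{C}$ in front of $\int\eta\,dx|_{t=0}$; ``keeping the Gr\"onwall factor out of the leading term'' is not possible.

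This weaker form still suffices for Proposition~\ref{prop:zero-order}, where a constant multiple of the initial energy is allowed. It is not, however, the stated inequality. The paper's route gives coefficient $1$ up to the factor $1+O(\sqrt{\delta_0})$ coming from the weight $a$. Your route gains only that it does not use the bootstrap to convert $\int\eta\,dx$ into a constant.
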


\begin{proof}

Recall that \eqref{eq:energy2},

\begin{align}\label{eq:energy2-1}
\frac{d}{dt}\int a\eta(U\mid\bar{U}) dx \le \sum_{i\in\{1,3\}}\dot{X}_i\mathcal Y_i(U)+\sum_{l=1}^6 \mathcal B_l + \mathcal S_1+\mathcal S_2+ \sum_{l=1}^6 K_l -\mathfrak G(U)-\mathcal D_{\mathrm{mac}}(U).
\end{align}

For any $\lambda>0$, rearrange \eqref{eq:energy2-1},

\begin{align}\label{eq:energy2-re}
&\frac{d}{dt}\int a\eta(U\mid\bar{U}) dx + \sum_{i\in\{1,3\}}\frac{\delta_i}{4\mathfrak m_i}\abs{\dot{X}_i}^2 +\frac{1}{2}\mathcal D_{\mathrm{mac}}(U) +\lambda \sum_{i\in\{1,3\}}\mathcal{G}_i^S \nonumber\\
&\le \sum_{i\in\{1,3\}} \frac{C}{\delta_i}\sum_{j=4}^6 |\mathcal Y_{ij}|^2 + \sum_{l=1}^6 \mathcal B_l + \mathcal S_1 + \mathcal S_2 + \sum_{l=1}^6 K_l -\mathfrak G(U) - \frac{1}{2}\mathcal D_{\mathrm{mac}}(U) +\lambda \sum_{i\in\{1,3\}}\mathcal{G}_i^S.
\end{align}

For $t\le 1$, we obtain the following rough estimate using \eqref{eq:macro-error-plus-shift}, \eqref{eq:micro-error-final} and the bootstrap assumptions. 

\begin{align}\label{eq:coarsebd}
    \begin{aligned}
        &\sum_{i=1,3}\myparas{\frac{C}{\delta_i}\sum_{j=4}^6\abs{\mathcal Y_{ij}}^2}+\sum_{l=1}^6 \abs{\mathcal B_l} + \abs{\mathcal S_1} + \abs{\mathcal S_2} + \sum_{l=1}^6 \abs{K_l}  \\
        &\qquad - \frac{1}{2}\mathcal D_{\mathrm{mac}}(U) - \mathfrak G(U) + \lambda \sum_{i\in\{1,3\}}\mathcal{G}_i^S\\
        &\qquad \le C_\lambda \delta_0 + C(\delta_0+\varepsilon)  \int \norm{\wtilde{G}_{\text{rem}}}_{\nu,M_\#}^2 \,dx + C\int\norm{\wtilde{G}_t}_{\nu,M_\#}^2+\norm{\wtilde{G}_x}_{\nu,M_\#}^2\,dx.
    \end{aligned}
\end{align}
Using \eqref{eq:coarsebd}, we have \eqref{eq:final-short-estimate}.

\end{proof}

\subsection{Estimate of the microscopic dissipation}
In this subsection, we estimate the microscopic energy associated with \(\widetilde G_{\text{rem}}\) and recover the microscopic dissipation needed to close the zeroth-order energy estimate. 

Recall that
\[
\widetilde G=\widetilde G_{C}+\widetilde G_{\text{rem}},
\qquad
\widetilde G_C
=
\frac{3}{2v\theta}L_M^{-1}P_1
\left[
\xi_1M\left(\xi_1u_{1x}^C+\frac{|\xi-u|^2}{2\theta}\theta_x^C\right)
\right].
\]
We work in this subsection with the microscopic energy
\begin{equation}\label{eq:Emic}
\mathcal E_{\mathrm{mic}}(t)
:=
\int_{\mathbb R}\norm{\wtilde G_{\text{rem}}}_{M_\#}^2\,dx.
\end{equation}

Subtracting the equation satisfied by \(\widetilde G_C\) from the microscopic perturbation equation, we obtain
\begin{align}
\left(\widetilde G_{\text{rem}}\right)_t-L_M\widetilde G_{\text{rem}}
=\;&
\sum_{i=1,3}\dot X_i \shock{G}{i}_x
+\frac{u_1}{v}\widetilde G_x
-\frac1vP_1(\xi_1\widetilde G_x)\nonumber\\
&+\sum_{i=1,3}\left(\frac{u_1}{v}-\frac{(u_1^{S_i})^{-X_i}}{(v^{S_i})^{-X_i}}\right)\partial_x\shockw{G}{i}\nonumber\\
&-\sum_{i=1,3}
\left[
\frac1vP_1\bigl(\xi_1\partial_x\shockw{G}{i}\bigr)
-
\frac1{(v^{S_i})^{-X_i}}P_1^{S_i}\bigl(\xi_1\partial_x\shockw{G}{i}\bigr)
\right]
\nonumber\\
&+\mathcal N(\widetilde G,\widetilde G) +
2\sum_{i=1,3}\mathcal N\bigl((G^{S_i})^{-X_i},\widetilde G\bigr)
+
2\mathcal N\bigl((G^{S_1})^{-X_1},(G^{S_3})^{-X_3}\bigr)\nonumber\\
&+\sum_{i=1,3}\bigl(L_M-L_i^{S}\bigr)(G^{S_i})^{-X_i}
\nonumber\\
&-
\widetilde G_{Ct}
-\frac{3}{2v\theta}P_1
\left[
\xi_1M\left(\xi\cdot\psi_x+\frac{|\xi-u|^2}{2\theta}\zeta_x\right)
\right]
+\mathcal R_{\mathrm{prof}},
\label{eq:G1-detailed}
\end{align}
where
\begin{align}
\mathcal R_{\mathrm{prof}}
:=
\sum_{i=1,3}
\left[
\frac1{(v^{S_i})^{-X_i}}P_1^{S_i}\bigl(\xi_1\partial_x\bigl((M^{S_i})^{-X_i}\bigr)\bigr)
-
\frac{3}{2v\theta}P_1
\left(
\xi_1M\left(\partial_x\bigl((u_1^{S_i})^{-X_i}\bigr)+\frac{|\xi-u|^2}{2\theta}\partial_x\bigl((\theta^{S_i})^{-X_i}\bigr)\right)
\right)
\right].
\label{eq:Rprof}
\end{align}

We now estimate the right-hand side term by term.

\begin{proposition}\label{prop:mic-diss}
There exists a positive constant \(C\) such that
\begin{align}
\begin{aligned}
& \frac{d}{dt}\mathcal E_{\mathrm{mic}}(t)
+
\int_{\mathbb R} \norm{\wtilde{G}_{\textup{rem}}}_{\nu,M_\#}\,dx \\
&\qquad \le\;
C\delta_0\sum_{i=1,3}\mathcal G_i^S + C\delta_0(\delta_0+\varepsilon)\sum_{i=1,3}\delta_i|\dot X_i|^2 + C\|(\psi_x,\zeta_x)\|_{L^2_x}^2 \\
&\quad \qquad + C\delta_0\|(\phi_t,\psi_t,\zeta_t)\|_{L^2_x}^2
+ C\int_{\mathbb R} \norm{\wtilde{G}_x}_{\nu,M_\#} \,dx \\
&\quad \qquad + C\frac{\delta_C}{(1+t)^{\frac{5}{4}}} + C(\delta_0+\varepsilon)\delta_1^2e^{-C\delta_1 t}
+ C(\delta_0+\varepsilon)\delta_3^2e^{-C\delta_3 t}.
\end{aligned}
\label{eq:mic-diss-final}
\end{align}
\end{proposition}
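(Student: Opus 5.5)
The plan is the standard microscopic energy estimate. We test \eqref{eq:G1-detailed} against $\widetilde G_{\text{rem}}/M_\#$, integrate over $\mathbb R_x\times\mathbb R^3_\xi$, and control each forcing term by either dissipation or the quantities on the right of \eqref{eq:mic-diss-final}.

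The left-hand side produces $\tfrac12\frac{d}{dt}\mathcal E_{\mathrm{mic}}$. The term $-\langle L_M\widetilde G_{\text{rem}},\widetilde G_{\text{rem}}\rangle_{M_\#}$ yields the coercive bound
\[
-\iint \frac{\widetilde G_{\text{rem}}L_M\widetilde G_{\text{rem}}}{M_\#}\,d\xi\,dx\ \ge\ \sigma_0\,\mathcal D_{\mathrm{mic}}(t).
\]
To get this we use two facts. First, $\widetilde G_{\text{rem}}$ is purely microscopic with respect to $M$, since both $\widetilde G$ and $\widetilde G_C$ lie in the range of $P_1$ up to the shock terms, and those are handled via $P_1-P_1^{S_i}$. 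Second, $|M-M_\#|$ is small, of size $O(\delta_0+\varepsilon)$, once $M_\#$ is chosen close to all states. The standard comparison of $L_M$ with $L_{M_\#}$ then costs only $C(\delta_0+\varepsilon)\mathcal D_{\mathrm{mic}}$, which is absorbed.

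The right-hand side is treated group by group, always using Cauchy--Schwarz in $\xi$ with the $\nu$-weight and Young's inequality with a small parameter.
\begin{enumerate}
\item[(i)] \emph{Transport terms.} $\frac{u_1}{v}\widetilde G_x$ and $\frac1vP_1(\xi_1\widetilde G_x)$ are bounded by $\eta\mathcal D_{\mathrm{mic}}+C_\eta\int\norm{\widetilde G_x}_{\nu,M_\#}^2dx$.
\item[(ii)] \emph{Shift term.} $\dot X_i\,\partial_x(G^{S_i})^{-X_i}$ is handled with Lemma~\ref{lem:bs1}, which gives $\norm{\partial_x G^{S_i}}_{\nu,M_\#}\le C\delta_i^3e^{-c\delta_i|z|}$. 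Its $L^2_x$ norm is therefore $O(\delta_i^{5/2})$, and the term is bounded by $\eta\mathcal D_{\mathrm{mic}}+C\delta_i^5|\dot X_i|^2$. This is within $C\delta_0(\delta_0+\varepsilon)\delta_i|\dot X_i|^2$.
\item[(iii)] \emph{Profile-coefficient differences.} These are the terms with $\frac{u_1}{v}-\frac{(u_1^{S_i})^{-X_i}}{(v^{S_i})^{-X_i}}$, $P_1-P_1^{S_i}$, $L_M-L_i^S$, and the part of $\mathcal R_{\mathrm{prof}}$ coming from $M$ versus $(M^{S_i})^{-X_i}$. Each contains a factor $|U-(U^{S_i})^{-X_i}|$ times a shock derivative. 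We split $U-(U^{S_i})^{-X_i}=(U-\bar U)+(\text{other wave})-(\text{end state})$ exactly as in \eqref{eq:interaction-est}. The first piece gives $C\delta_0\mathcal G_i^S$ after the cutoff decomposition and Lemma~\ref{lem:intsk}. The interaction pieces give $C(\delta_0+\varepsilon)\delta_i^2e^{-C\delta_it}$ via Lemma~\ref{lem:wave-interaction-L2}.
\item[(iv)] \emph{Leading part of $\mathcal R_{\mathrm{prof}}$.} By the Chapman--Enskog identity, $P_1(\xi_1 M_x)$ equals the displayed expression in $(u_{1x},\theta_x)$, so the leading part of $\mathcal R_{\mathrm{prof}}$ cancels. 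The remainder is of the same type as (iii).
\item[(v)] \emph{Collision terms.} $\mathcal N(\widetilde G,\widetilde G)$ and the mixed terms are estimated with the standard bound $|\langle\mathcal N(f,g),h\rangle|\le C(\norm f\norm g_\nu+\norm f_\nu\norm g)\norm h_\nu$. The $\widetilde G_{\text{rem}}$--$\widetilde G_{\text{rem}}$ part is controlled using the Sobolev bound $\sup_x\norm{\widetilde G}\le C\varepsilon$ from \eqref{eq:prioriass}. The $\widetilde G_C$ parts are controlled with \eqref{eq:GCe} and the Gaussian bounds of Lemma~\ref{lem:contact-wave-estimate0}, giving $C\delta_C(1+t)^{-5/4}$. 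The shock--shock interaction uses Lemma~\ref{lem:intsk}.
\item[(vi)] \emph{Fluid-gradient and contact terms.} The term $\frac{3}{2v\theta}P_1[\xi_1M(\xi\cdot\psi_x+\dots\zeta_x)]$ is bounded by $\eta\mathcal D_{\mathrm{mic}}+C\|(\psi_x,\zeta_x)\|^2_{L^2}$. The term $\widetilde G_{Ct}$ involves time derivatives of $M$ and hence of $(v,u,\theta)=(\bar v,\bar u,\bar\theta)+(\phi,\psi,\zeta)$, multiplied by $(u^C_{1x},\theta^C_x)$. It produces $C\delta_0\|(\phi_t,\psi_t,\zeta_t)\|^2$ together with $\delta_C$ Gaussian terms, and the latter integrate to $C\delta_C(1+t)^{-5/4}$.
\end{enumerate}
Finally, we choose $\eta$ small and absorb all the $\eta\mathcal D_{\mathrm{mic}}$ contributions.

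The step I expect to be the main obstacle is (iii)--(iv). The shock forcing $P_1^{S_i}(\xi_1\partial_x M^{S_i})$ is only $O(\delta_i^2)$ pointwise, so it is not integrable in time on its own. It must therefore be cancelled exactly by the projection of $\xi_1 M_x$ before smallness is used, leaving only differences that are linear in the perturbation. My first attempt would be to write $M_x=\bar M_x+(M-\bar M)_x$, expand $\bar M_x$ along the three waves, and check that the shock parts cancel against $\mathcal R_{\mathrm{prof}}$ and the $(L_M-L^S_i)$ terms up to terms of size $|(\phi,\psi,\zeta)|\,|\partial_x(v^{S_i})^{-X_i}|$, which then feed into $\mathcal G_i^S$.
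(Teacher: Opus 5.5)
Your proposal is correct and follows essentially the paper's route: you test \eqref{eq:G1-detailed} against $\widetilde G_{\text{rem}}/M_\#$, use coercivity of $L_M$ on $\mathfrak Z_M^\perp$, and then bound the remaining terms one by one with Young's inequality, Lemma~\ref{lem:bs1}, the cutoff decomposition and the wave-interaction lemmas. Those terms are the shift, transport, coefficient/projector-mismatch and $\mathcal R_{\mathrm{prof}}$ terms, the collision terms (split via $\widetilde G=\widetilde G_{\text{rem}}+\widetilde G_C$), the $(\psi_x,\zeta_x)$-source and $\widetilde G_{Ct}$. Two simplifications: $(G^{S_i})^{-X_i}$ already lies in $\mathfrak Z_M^\perp$, since that space consists of the functions with vanishing moments against $1,\xi,|\xi|^2$ and does not depend on $M$, so coercivity needs no $P_1-P_1^{S_i}$ correction; and the cancellation you flag as the main obstacle is already built into \eqref{eq:G1-detailed}, where the shock forcing enters only through the difference term $\mathcal R_{\mathrm{prof}}$.
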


\begin{proof}
Multiply \eqref{eq:G1-detailed} by \(\widetilde G_{\text{rem}}/M_\#\) and integrate over \(\mathbb R_x\times\mathbb R^3_\xi\). Since \(\widetilde G_{\text{rem}}\in\mathfrak Z_M^\perp\), the coercivity estimate for the linearized collision operator yields
\[
-\iint \frac{\widetilde G_{\text{rem}}L_M\widetilde G_{\text{rem}}}{M_\#}\,d\xi\,dx
\ge
\lambda_{\mathrm{mic}}
\int \norm{\widetilde{G}_{\text{rem}}}_{\nu,M_\#}^2\,dx
\]
for some \(\lambda_{\mathrm{mic}}>0\).

We estimate the source terms one by one.

For the shift terms, Young's inequality gives
\begin{align}
\iint \dot X_i (G^{S_i}_x)^{-X_i}\frac{\widetilde G_{\text{rem}}}{M_\#}\,d\xi\,dx
\le\;&
\frac{\lambda_{\mathrm{mic}}}{32}
\int \norm{\widetilde{G}_{\text{rem}}}_{\nu,M_\#}^2\,dx
+
C\delta_i^5|\dot X_i|^2 .
\label{eq:mic-shift-est}
\end{align}
Similarly,
\begin{align}
\iint
& \left(\frac{u_1}{v}-\frac{(u_1^{S_i})^{-X_i}}{(v^{S_i})^{-X_i}}\right)
\bigl((G^{S_i})^{-X_i}\bigr)_x\frac{\widetilde G_{\text{rem}}}{M_\#}\,d\xi\,dx \nonumber\\
& \, \le\;\frac{\lambda_{\mathrm{mic}}}{32}\int \norm{\widetilde{G}_{\text{rem}}}_{\nu,M_\#}^2\,dx
+C\delta_i^2\int_{\mathbb R}|\partial_x\bigl((v^{S_i})^{-X_i}\bigr)|^2|(u_1-(u_1^{S_i})^{-X_i},\,v-(v^{S_i})^{-X_i})|^2\,dx .
\label{eq:mic-mismatch-est}
\end{align}
Using the decomposition
\[
U-(U^{S_1})^{-X_1}
=
(U-\bar U)+(\bar U-(U^{S_1})^{-X_1}),
\]
the first part is controlled by \(\mathcal G_1^S\) of \eqref{eq:skGt}, while the second part is estimated by Lemma~\ref{lem:wave-interaction-L2}. Thus the last integral is bounded by
\[
C\delta_0\mathcal G_i^S
+
C\delta_0^3\bigl(\delta_1e^{-C\delta_1 t}+\delta_3e^{-C\delta_3 t}\bigr)
+
C\delta_i^3e^{-C\delta_i t}\int_{\mathbb R}\eta(U\mid \bar U)\,dx .
\]

Next, note that
\[
P_1(\xi_1h)=\xi_1h-\sum_{j=0}^4\langle \xi_1h,\chi_j\rangle\chi_j.
\]
Hence
\begin{align}
-\iint \frac1vP_1(\xi_1\widetilde G_x)\frac{\widetilde G_{\text{rem}}}{M_\#}\,d\xi\,dx
\le\;&
C\int \norm{\widetilde{G}_x}_{\nu,M_\#}^2\,dx
+
\frac{\lambda_{\mathrm{mic}}}{32}
\int \norm{\widetilde{G}_{\text{rem}}}_{\nu,M_\#}^2\,dx .
\label{eq:mic-P1Gx-est}
\end{align}
Also,
\begin{align}
& -\iint
\frac{3}{2v\theta}
P_1\left[
\xi_1M\left(\xi\cdot\psi_x+\frac{|\xi-u|^2}{2\theta}\zeta_x\right)
\right]
\frac{\widetilde G_{\text{rem}}}{M_\#}\,d\xi\,dx \nonumber \\
& \qquad \le\;
C\|(\psi_x,\zeta_x)\|_{L^2_x}^2
+
\frac{\lambda_{\mathrm{mic}}}{64}
\int \norm{\widetilde G_{\text{rem}}}_{\nu,M_\#}^2 \,dx .
\label{eq:mic-source-est}
\end{align}

We next treat the profile-difference terms. A representative estimate is
\begin{align}
&\iint
\left[
-\frac{3\bigl((u_1^{S_1})^{-X_1}\bigr)_x}{2v\theta}P_1(\xi_1^2M)
+
\frac{3\bigl((u_1^{S_1})^{-X_1}\bigr)_x}{2(v^{S_1})^{-X_1}(\theta^{S_1})^{-X_1}}
P_1^{S_1}\xi_1^2(M^{S_1})^{-X_1}
\right]
\frac{\widetilde G_{\text{rem}}}{M_\#}\,d\xi\,dx
\nonumber\\
&\qquad\le
C\delta_1^2\mathcal G_1^S
+
C\delta_0^3\bigl(\delta_1e^{-C\delta_1 t}+\delta_3e^{-C\delta_3 t}\bigr)
+
\frac{\lambda_{\mathrm{mic}}}{64}
\int \norm{\widetilde G_{\text{rem}}}_{\nu,M_\#}^2\,dx .
\label{eq:mic-profile-est}
\end{align}
The same estimate holds for the \(3\)-shock contribution. Likewise, the terms
\[
-\sum_{i=1,3}\left[
\frac1vP_1\bigl(\xi_1\bigl((G^{S_i})^{-X_i}\bigr)_x\bigr)
-
\frac1{(v^{S_i})^{-X_i}}P_1^{S_i}\bigl(\xi_1\bigl((G^{S_i})^{-X_i}
\bigr)_x\bigr)
\right]
\]
and
\[
\sum_{i=1,3}(L_M-L_1^S)(G^{S_i})^{-X_i}
\]
are profile-difference terms of the same type and are estimated in the same way.

For the nonlinear term, Lemmas~\ref{lem:collision-gain-loss}, \ref{lem:collision-coercivity} in Appendix give
\begin{align}
\iint \mathcal N(\widetilde G,\widetilde G)\frac{\widetilde G_{\text{rem}}}{M_\#}\,d\xi\,dx
\le\;&
C(\delta_0+\varepsilon)\int \norm{\widetilde{G}_{\text{rem}}}_{\nu,M_\#}^2\,dx
\nonumber\\
&+\frac{\lambda_{\mathrm{mic}}}{64}\int \norm{\widetilde{G}_{\text{rem}}}_{\nu,M_\#}^2\,dx +\frac{C\delta_C^4}{(1+t)^{\frac{5}{4}}}
\label{eq:mic-Q-est}
\end{align}
Here we used
\[
\norm{\widetilde{G}_C}_{\nu,M_\#}^2
\approx |(u_{1x}^C,\theta_x^C)|^2 .
\]
The mixed terms \(\mathcal N((G^{S_i})^{-X_i},\widetilde G)\) and \(\mathcal N((G^{S_1})^{-X_1},(G^{S_3})^{-X_3})\) are controlled in the same way and contribute only lower-order shock terms and exponentially decaying remainders.

Finally, differentiating \(\widetilde G_C\) in time yields
\[
\iint \widetilde G_{Ct}\frac{\widetilde G_{\text{rem}}}{M_\#}\,d\xi\,dx
\le
C\delta_0\|(\phi_t,\psi_t,\zeta_t)\|_{L^2_x}^2
+
\frac{\lambda_{\mathrm{mic}}}{64}
\int \norm{\widetilde{G}_{\text{rem}}}_{\nu,M_\#}^2\,dx .
\]

Collecting \eqref{eq:mic-shift-est}--\eqref{eq:mic-Q-est}, choosing \(\delta_0+\varepsilon\) sufficiently small, and absorbing the microscopic coercive terms into the left-hand side, we obtain \eqref{eq:mic-diss-final}.
\end{proof}

\subsection{Closure of the zeroth-order estimate}
We now combine the preliminary zeroth-order inequality obtained in Lemma~\ref{lem:zero-prelim} with the microscopic energy estimate from Proposition~\ref{prop:mic-diss}. This yields the zeroth-order bound in the form needed for the full a priori estimate.

We begin with the equivalence between the relative entropy and the square of the perturbation variables.

\begin{lemma}\label{lem:rel-entropy-equiv}
Under the bootstrap bound \(\mathcal{E}(T)^2\le \varepsilon^2\) in \eqref{eq:priass}, there exist positive constants \(c\) and \(C\) such that
\begin{equation}\label{eq:rel-entropy-equiv}
c\,|(\phi,\psi,\zeta)|^2
\le
\eta(U\mid\bar U)
\le
C\,|(\phi,\psi,\zeta)|^2.
\end{equation}
Consequently,
\begin{equation}\label{eq:weighted-rel-entropy-equiv}
c\int_{\mathbb R}a\,|(\phi,\psi,\zeta)|^2\,dx
\le
\int_{\mathbb R}a\,\eta(U\mid\bar U)\,dx
\le
C\int_{\mathbb R}a\,|(\phi,\psi,\zeta)|^2\,dx.
\end{equation}
\end{lemma}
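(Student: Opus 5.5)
The plan is to reduce the claim to a pointwise property of the convex function $\Phi(z)=z-1-\ln z$ on a compact interval around $z=1$, and then to use the uniform bounds on the background and on the perturbation. First I will fix the range of all quantities. By Lemma~\ref{lem:bs}, Lemma~\ref{lem:contact-wave-estimate0} and the definition \eqref{eq:unvar}, the composite profile satisfies $|\bar v-v_+|+|\bar\theta-\theta_+|\le C\delta_0$. Hence $\bar v,\bar\theta$ lie in a fixed compact subset of $(0,\infty)$, say $[\underline c,\overline c]$, for $\delta_0$ small. The bootstrap bound $\mathcal E(T)\le\varepsilon$ and the one-dimensional Sobolev embedding give $\|(\phi,\psi,\zeta)\|_{L^\infty}\le C\varepsilon$. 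Consequently the ratios satisfy $v/\bar v\in[1-C\varepsilon,1+C\varepsilon]$ and $\theta/\bar\theta\in[1-C\varepsilon,1+C\varepsilon]$.

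Next comes the key scalar estimate. Since $\Phi(1)=\Phi'(1)=0$ and $\Phi''(z)=z^{-2}$, Taylor's formula with integral remainder gives, for $z\in[1/2,2]$,
\begin{equation*}
\frac{1}{8}(z-1)^2\le \Phi(z)=\int_0^1(1-s)\frac{(z-1)^2}{(1+s(z-1))^2}\,ds\le 2(z-1)^2 .
\end{equation*}
I will apply this with $z=v/\bar v$, where $z-1=\phi/\bar v$, and with $z=\theta/\bar\theta$, where $z-1=\zeta/\bar\theta$. Combined with $\bar v,\bar\theta\in[\underline c,\overline c]$, this gives
\begin{equation*}
c\,(\phi^2+\zeta^2)\le \tfrac23\bar\theta\,\Phi(v/\bar v)+\bar\theta\,\Phi(\theta/\bar\theta)\le C(\phi^2+\zeta^2).
\end{equation*}
The velocity part $\sum_k\psi_k^2/2$ is already exactly quadratic. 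Adding the two pieces yields \eqref{eq:rel-entropy-equiv}.

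For the weighted statement \eqref{eq:weighted-rel-entropy-equiv}, I only need that $a$ is positive and bounded. From \eqref{eq:aweight}, \eqref{eq:wefun} and the monotonicity in Lemma~\ref{lem:bs} we have $|a-1|\le C(\sqrt{\delta_1}+\sqrt{\delta_3})$, so $1/2\le a\le 2$ for $\delta_0$ small. Multiplying \eqref{eq:rel-entropy-equiv} by $a\ge0$ and integrating in $x$ then gives the claim.

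There is no genuine obstacle here. The only point needing care is to make sure that the smallness of $\varepsilon$ and $\delta_0$ keeps the ratios $v/\bar v$ and $\theta/\bar\theta$ inside $[1/2,2]$, so that the constants are uniform in $t$ and $x$. This follows from the $L^\infty$ bound above.
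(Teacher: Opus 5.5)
Your proof is correct and follows the same route as the paper. It combines uniform bounds on $\bar v,\bar\theta$ and on $(\phi,\zeta)$ (from the bootstrap bound and the one-dimensional Sobolev embedding) with the quadratic behavior of $\Phi(z)=z-1-\ln z$ near $z=1$, and uses $|a-1|\le C(\sqrt{\delta_1}+\sqrt{\delta_3})$ for the weighted version; your explicit Taylor bound $\frac18(z-1)^2\le\Phi(z)\le 2(z-1)^2$ on $[1/2,2]$ just makes quantitative the ``standard convexity property'' that the paper invokes.
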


\begin{proof}
Since \(v,\theta,\bar v,\bar\theta\) remain uniformly away from zero and infinity under \eqref{eq:priass}, the standard convexity property of
\[
\Phi(z)=z-1-\ln z
\]
implies
\[
\Phi\!\left(\frac{v}{\bar v}\right)\sim \left|\frac{\phi}{\bar v}\right|^2,
\qquad
\Phi\!\left(\frac{\theta}{\bar\theta}\right)\sim \left|\frac{\zeta}{\bar\theta}\right|^2.
\]
Together with the definition of \(\eta(U\mid\bar U)\), this yields \eqref{eq:rel-entropy-equiv}. Since the weight \(a\) is uniformly positive and bounded for sufficiently small \(\delta_0\), \eqref{eq:weighted-rel-entropy-equiv} follows.
\end{proof}

\begin{lemma}\label{lem:time-away-lem}
For $t\in[1,T]$, there exist constants $c_0>0$ and $C>0$ such that

\begin{align}
\eta(U\mid\bar{U})(t)&+c_0\int_1^t\left[\sum_{i=1,3}\mathcal G_i^S(s)+\mathcal D_{\mathrm{mac}}(U)(s)+\sum_{i=1,3}\delta_i|\dot X_i(s)|^2\right]ds \nonumber\\
&\le C\eta(U\mid\bar{U})(1)+C\int_1^t \frac{\delta_C}{1+s}\int_{\mathbb R}e^{-\frac{2c_0|x|^2}{1+s}}|(\phi,\zeta)|^2\,dx\,ds\label{eq:zero-order-inter}\\
&\quad+C\delta_0\int_1^t \|(\phi_t,\psi_t,\zeta_t)\|_{L^2_x}^2\,ds +C(\delta_0+\varepsilon)\int_1^t
\mathcal{D}_{\textup{mic}}\,ds \nonumber\\
&\quad+ C\int_1^t
\int_{\mathbb R}
\norm{\widetilde{G}_t}_{\nu,M_\#}^2+\norm{\widetilde{G}_x}_{\nu,M_\#}^2\,dx\,ds + C\delta_0^{1/2}\nonumber
\end{align}
\end{lemma}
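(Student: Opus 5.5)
The plan is to integrate the preliminary zeroth-order inequality of Lemma~\ref{lem:zero-prelim} over $[1,t]$, absorb the microscopic dissipation using Proposition~\ref{prop:mic-diss}, and then control the time-singular and exponentially weighted entropy terms by a Gronwall-type argument. Throughout, Lemma~\ref{lem:rel-entropy-equiv} lets us pass freely between $\int a\,\eta(U\mid\bar U)\,dx$ and $\int\eta(U\mid\bar U)\,dx$ (read $\eta(U\mid\bar U)(t)$ in the statement as the spatial integral). The restriction $t\ge1$ matters because the cutoff-induced term $\frac{C}{\delta_* s^2}\int\eta\,dx$ coming from \eqref{eq:cutof1} is integrable only away from $s=0$.

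\textbf{Step 1: combine with the microscopic estimate.} Multiply \eqref{eq:mic-diss-final} by a small constant $\nu>0$ and add it to \eqref{eq:zero-prelim}. The term $C(\delta_0+\varepsilon)\mathcal D_{\mathrm{mic}}$ on the right of \eqref{eq:zero-prelim} is kept, since it appears in the statement. On the other side, $\nu\mathcal D_{\mathrm{mic}}$ appears on the left. The contributions from $\nu$ times the right-hand side of \eqref{eq:mic-diss-final} are handled as follows.
\begin{itemize}
\item $C\nu\|(\psi_x,\zeta_x)\|^2_{L^2}$ is absorbed into $\frac1{40}\mathcal D_{\mathrm{mac}}$, since $a$ is bounded below. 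The same applies to $C\nu\delta_0\mathcal G^S_i$ and $C\nu\delta_0(\delta_0+\varepsilon)\delta_i|\dot X_i|^2$.
\item The term $C\nu\delta_0\|(\phi_t,\psi_t,\zeta_t)\|^2$ is kept.
\item The term $C\nu\int\|\widetilde G_x\|_{\nu}$ is merged with the $\widetilde G_t,\widetilde G_x$ terms. Where the norm is not squared, I would use the version with squared norms from Young's inequality, as in the proof.
\item The source terms $\delta_C(1+s)^{-5/4}$ and $\delta_0\delta_i e^{-C\delta_i s}$ are integrable in time with total at most $C\delta_0$, hence $\le C\delta_0^{1/2}$.
\end{itemize}
Actually $\nu$ is not essential; one may even drop the microscopic energy, but keeping it gives positivity.

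\textbf{Step 2: integrate and apply Gronwall.} Integrating from $1$ to $t$ yields
$$y(t)+c_0\int_1^t(\cdots)\,ds\le Cy(1)+\int_1^t h(s)\,y(s)\,ds+(\text{stated terms}),$$
where $y=\int a\eta\,dx$ and $h(s)=C\sum_i\delta_i^2e^{-C\delta_i s}+\frac{C}{\delta_* s^2}$. The key point is that $\int_1^\infty h\,ds\le C\sum_i\delta_i+C/\delta_*$ is bounded uniformly in $T$, because $\delta_*=O(1)$ is fixed in Lemma~\ref{lem:actr1}. Gronwall's inequality then gives the bound with constant $e^{\int h}\le C$. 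The right-hand side terms are nonnegative and nondecreasing in $t$, so the Gronwall factor multiplies them harmlessly. The dissipation integral is recovered by reinserting the bound on $\sup y$ into $\int_1^t hy\le C\sup y$.

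\textbf{Main obstacle.} The main obstacle is the contact Gaussian term $\frac{\delta_C}{1+s}\int e^{-2c_0x^2/(1+s)}|(\phi,\zeta)|^2$. Its weight is not integrable in time, so it cannot go through Gronwall and must be left on the right as stated; it is later controlled by the separate heat-kernel weighted estimate. A secondary point is that the bound must stay independent of $T$. This holds because every Gronwall weight is $s$-integrable on $[1,\infty)$, and the microscopic nonsquared norms are reconciled with squared norms via Young's inequality, contributing at most $C\delta_0^{1/2}$.
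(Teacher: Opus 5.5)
Your argument is correct, and it differs from the paper's at the one step that matters. The paper's proof integrates \eqref{eq:zero-prelim} over $[1,t]$ and asserts that its right-hand side, being integrable away from $t=0$, contributes only $O(\delta_0^{1/2})$. That holds for the pure source terms. It does not hold for the entropy-weighted term $\frac{C}{\delta_* s^2}\int\eta\,dx$. Direct integration leaves $C\delta_*^{-1}\sup_{[1,t]}\int\eta\,dx$, and this coefficient is large, because $\delta_*$ is a fixed small constant from Lemma~\ref{lem:actr1}. So the term can neither be absorbed into the left side nor replaced by the bootstrap bound $\varepsilon^2$ without spoiling the closure.

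Your Gronwall step uses only $\int_1^\infty h\,ds\le C\delta_0+C\delta_*^{-1}$, and it is exactly what handles this term. The cost is a fixed factor $e^{C/\delta_*}$ in the constants. That is harmless, because $\delta_*$ is fixed before $\delta_0$ and $\varepsilon$ are chosen.

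Two side remarks. First, your Step 1 is unnecessary and slightly counterproductive. The stated left-hand side contains no microscopic energy, and the term $C\delta_0\int_1^t\|(\phi_t,\psi_t,\zeta_t)\|^2_{L^2_x}\,ds$ on the right is nonnegative, so integrating \eqref{eq:zero-prelim} alone suffices. Adding $\nu$ times \eqref{eq:mic-diss-final} produces a boundary term $\nu\,\mathcal E_{\mathrm{mic}}(1)$, which is not among the stated right-hand terms.

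Second, the unsquared norms in \eqref{eq:mic-diss-final} are a typo; its proof yields squared norms. They cannot be "reconciled by Young's inequality," since an $L^1_x$ quantity on $\mathbb R$ is not controlled by an $L^2_x$ one. Simply use the squared version.
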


\begin{proof}
The right-hand side of \eqref{eq:zero-prelim} is integrable away from time 0, and hence {contributes} only \(O(\delta_0^{1/2})\) after time integration over $[1,t]$.
\end{proof}

Next, define the total zeroth-order energy
\begin{equation}\label{eq:E-zero}
\mathcal{E}_0(t)
:=
\int_{\mathbb R}a(t,x)\eta(U\mid\bar U)(t,x)\,dx
+
\kappa_1
\mathcal E_{\mathrm{mic}}(t),
\end{equation}
where \(\kappa_1>0\) is a sufficiently small constant to be chosen later.

By Lemma~\ref{lem:rel-entropy-equiv}, the positivity and boundedness of \(a\), and the definition of \(\mathcal{E}_0(t)\), there exist positive constants \(c\) and \(C\) such that
\begin{align}\label{eq:E0-equiv}
&c\left(
\|(\phi,\psi,\zeta)(t)\|_{L^2_x}^2
+
\mathcal E_{\mathrm{mic}}(t)
\right)
\le
\mathcal{E}_0(t)\nonumber\\
&\qquad \qquad \le
C\left(
\|(\phi,\psi,\zeta)(t)\|_{L^2_x}^2
+
\mathcal E_{\mathrm{mic}}(t)
\right).
\end{align}

We now add Lemma~\ref{lem:zero-prelim} and \(\kappa_0\) times Proposition~\ref{prop:mic-diss}. Since \(a(t,x)\sim 1\) for sufficiently small \(\delta_0\), the weighted microscopic dissipation \(\mathcal D_{\mathrm{mic}}(t)\) is equivalent to the unweighted quantity
\[
\int_{\mathbb R}
\norm{\widetilde{G}_{\text{rem}}}_{\nu,M_\#}^2\,dx.
\]
Hence, choosing \(\kappa_1>0\) sufficiently small and then taking \(\delta_0+\varepsilon\) sufficiently small depending on \(\kappa_1\), we obtain the following proposition.

\begin{proposition}\label{prop:zero-order}
Under the assumptions of Proposition~\ref{prop:priest}, there exists a positive constant \(C\) such that for all \(t\in[0,T]\),
\begin{align}
\begin{aligned}
&\|(\phi,\psi,\zeta)(t)\|_{L^2_x}^2
+
\mathcal E_{\mathrm{mic}}(t)
\\
&\quad
+\int_0^t
\left[
\sum_{i=1,3}\mathcal G_i^S(s)
+
\mathcal D_{\mathrm{mac}}(U)(s)
+
\sum_{i=1,3}\delta_i|\dot X_i(s)|^2
+
\int_{\mathbb R} \norm{\widetilde{G}_{\textup{rem}}}_{\nu,M_\#}^2 \,dx
\right]ds
\\
&\le
C\left(
\|(\phi,\psi,\zeta)(0)\|_{L^2_x}^2
+
\mathcal E_{\mathrm{mic}}(0)
+\delta_0^{1/2}
\right)
+
C\delta_C\int_0^t \frac{1}{1+s}\int_{\mathbb R}e^{-\frac{2c_0|x|^2}{1+s}}|(\phi,\zeta)|^2\,dx\,ds
\\
&\quad
+
C\int_0^t
\int_{\mathbb R}
\norm{\widetilde{G}_t}_{\nu,M_\#}^2+\norm{\widetilde{G}_x}_{\nu,M_\#}^2\,dx\,ds
+
C\delta_0\int_0^t \|(\phi_t,\psi_t,\zeta_t)\|_{L^2_x}^2\,ds .
\end{aligned}
\label{eq:zero-order-prop}
\end{align}
\end{proposition}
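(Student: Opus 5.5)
The plan is to combine the short-time estimate of Lemma~\ref{lem:short-time-estimate} on $[0,1]$ with the long-time estimate of Lemma~\ref{lem:time-away-lem} on $[1,t]$. The two-interval split is forced by the cutoff term $\frac{C}{\delta_* t^2}\int\eta\,dx$: it is integrable on $[1,\infty)$ but not near $t=0$. In each interval we add $\kappa_1$ times the microscopic estimate of Proposition~\ref{prop:mic-diss}, and then use the norm equivalence \eqref{eq:E0-equiv} to pass from $\mathcal E_0(t)$ to $\|(\phi,\psi,\zeta)(t)\|_{L^2_x}^2+\mathcal E_{\rm mic}(t)$.

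\textbf{Step 1 (summing the estimates).} Add Lemma~\ref{lem:zero-prelim}, or its short-time substitute, to $\kappa_1\times$\eqref{eq:mic-diss-final}. The bad terms produced by the microscopic estimate are
\[
C\kappa_1\delta_0\sum_i\mathcal G_i^S,\qquad C\kappa_1\delta_0(\delta_0+\varepsilon)\sum_i\delta_i|\dot X_i|^2,\qquad C\kappa_1\|(\psi_x,\zeta_x)\|_{L^2_x}^2 .
\]
\begin{itemize}
\item The first two are absorbed by $\alpha_0\sum_i\mathcal G_i^S$ and $\sum_i\frac{\delta_i}{8\mathfrak m_i}|\dot X_i|^2$ once $\delta_0+\varepsilon$ is small.
\item The third is absorbed by $\frac1{40}\mathcal D_{\rm mac}$ once $\kappa_1$ is small, using $a\sim1$, $\mu,\alpha_{\rm th}\ge c>0$ and $v,\theta\sim1$, so that $\mathcal D_{\rm mac}\gtrsim\|(\psi_x,\zeta_x)\|_{L^2}^2$.
\end{itemize}
Conversely, the term $C(\delta_0+\varepsilon)\mathcal D_{\rm mic}$ coming from Lemma~\ref{lem:zero-prelim} is absorbed by $\kappa_1\int\|\widetilde G_{\rm rem}\|_{\nu,M_\#}^2dx$ once $\delta_0+\varepsilon\ll\kappa_1$. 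The term $C\int\|\widetilde G_x\|_{\nu}^2$ coming from the microscopic estimate is simply kept on the right-hand side.

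\textbf{Step 2 (time integration).} Integrate the summed inequality in time. The explicit forcing terms are all integrable over $(0,\infty)$ with total mass $O(\delta_0)$:
\[
\int_0^\infty\Bigl(\delta_1e^{-C\delta_1 s}+\delta_3e^{-C\delta_3 s}\Bigr)ds=O(1),\qquad \int_0^\infty\frac{\delta_C}{(1+s)^{5/4}}\,ds=O(\delta_C),\qquad \int_0^\infty\delta_i^2e^{-C\delta_is}\,ds=O(\delta_i).
\]
Since the first of these is multiplied by $\delta_0$ and the others carry $\delta_C$ or $\delta_i$, every forcing contribution is $O(\delta_0)\le O(\delta_0^{1/2})$.

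\textbf{Step 3 (the $\int\eta$ coefficients).} The terms $\bigl(\sum_iC\delta_i^2e^{-C\delta_i s}+\frac{C}{\delta_*s^2}\bigr)\int\eta$ are handled by the bootstrap bound $\int\eta\le C\varepsilon^2$. On $[1,t]$ this gives a contribution of $C\varepsilon^2$, whose size is not small relative to $\mathcal E(0)^2$. I therefore plan the following.
\begin{itemize}
\item Use Grönwall instead. On $[1,T]$ the coefficient $g(s)=C\delta_i^2e^{-C\delta_i s}+\frac{C}{\delta_* s^2}$ satisfies $\int_1^\infty g\le C(\delta_0+\delta_*^{-1})$, which is a fixed constant. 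Grönwall then multiplies everything by $e^{C}$ and produces exactly the constant $C$ appearing in \eqref{eq:zero-order-inter}.
\item On $[0,1]$, Lemma~\ref{lem:short-time-estimate} already absorbs this term, with only an additive $C\delta_0$.
\end{itemize}
Chaining the two intervals, the value of $\eta$ at time $1$ is bounded by its initial value plus $C\delta_0$ plus the kinetic terms on $[0,1]$. Together with $\mathcal E_{\rm mic}(1)$, obtained by integrating the microscopic inequality on $[0,1]$, this gives
\[
C\bigl(\|(\phi,\psi,\zeta)(0)\|^2+\mathcal E_{\rm mic}(0)+\delta_0^{1/2}\bigr).
\]

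\textbf{Step 4 (remaining terms).} Three terms cannot be closed at the zeroth-order level and are kept on the right-hand side, as in \eqref{eq:zero-order-prop}:
\begin{itemize}
\item the contact Gaussian term $\delta_C\int\frac1{1+s}\int e^{-2c_0x^2/(1+s)}|(\phi,\zeta)|^2$, which is handled later by the separate heat-kernel weighted estimate;
\item the terms $\|\widetilde G_t\|_\nu^2$ and $\|\widetilde G_x\|_\nu^2$, which are controlled by the higher-order estimates;
\item the term $\delta_0\|(\phi_t,\psi_t,\zeta_t)\|^2$.
\end{itemize}

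\textbf{Main obstacle.} The delicate step is Step 3. One must check that the Grönwall factor remains a harmless constant independent of $T$, which relies on $\delta_*=O(1)$ being fixed. One must also check that the short-time interval does not lose the coercive terms $\int_0^1\bigl(\mathcal G^S+\mathcal D_{\rm mac}+\delta_i|\dot X_i|^2\bigr)$, which Lemma~\ref{lem:short-time-estimate} provides.
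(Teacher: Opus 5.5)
Your proposal is correct and follows the paper's own route. You use Lemma~\ref{lem:short-time-estimate} on $[0,1]$ and Lemma~\ref{lem:time-away-lem} on $[1,t]$, add $\kappa_1$ times the integrated microscopic estimate \eqref{eq:mic-diss-final} (with $\kappa_1$ small and then $\delta_0+\varepsilon$ small relative to $\kappa_1$), and finish with the equivalence \eqref{eq:E0-equiv}. Your explicit Gr\"onwall treatment of the coefficient $\bigl(\sum_i C\delta_i^2e^{-C\delta_i s}+\frac{C}{\delta_* s^2}\bigr)\int\eta\,dx$ on $[1,t]$ is a needed refinement of the paper's one-line proof of Lemma~\ref{lem:time-away-lem}. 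That proof counts this contribution as $O(\delta_0^{1/2})$, although it integrates to an $O(\delta_*^{-1})$ multiple of $\sup_s\int\eta\,dx$. With $\delta_*$ fixed, your $T$-independent factor $e^{C(\delta_0+\delta_*^{-1})}$ is what justifies the constant in front of $\eta(U\mid\bar U)(1)$ in \eqref{eq:zero-order-inter}.
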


\begin{proof}
See Lemma~\ref{lem:short-time-estimate}, Proposition~\ref{prop:mic-diss}, and Lemma~\ref{lem:time-away-lem}. Integral \eqref{eq:mic-diss-final} over $[0,T]$. Collecting all of these, we have the following estimate.

There exist constants $c_0>0$ and $C>0$ such that
\begin{align}
&\mathcal{E}_0(t)+
c_0\int_0^t
\left[
\sum_{i=1,3}\mathcal G_i^S(s)
+
\mathcal D_{\mathrm{mac}}(U)(s)
+
\sum_{i=1,3}\delta_i|\dot X_i(s)|^2\right] ds\nonumber\\
&\qquad +c_0 \int_0^t \int_{\mathbb R}\norm{\widetilde{G}_{\textup{rem}}}_{\nu,M_\#}^2\,dx ds\nonumber\\
&\le C\mathcal{E}_0(0)+
C\int_0^t \frac{\delta_C}{1+s}\int_{\mathbb R}e^{-\frac{2c_0|x|^2}{1+s}}|(\phi,\zeta)|^2\,dx\,ds+
C\delta_0\int_0^t \|(\phi_t,\psi_t,\zeta_t)\|_{L^2_x}^2\,ds\nonumber\\
&\qquad+C\int_0^t\int_{\mathbb R}\norm{\widetilde G_t}^2_{\nu,M_\#}+\norm{\widetilde G_x}^2_{\nu,M_\#}\,dx\,ds
+C\delta_0^{1/2}.
\label{eq:zero-order-final}
\end{align}

The estimate \eqref{eq:zero-order-prop} follows immediately from \eqref{eq:Emic}, \eqref{eq:E0-equiv} and \eqref{eq:zero-order-final}.
\end{proof}

\section{Higher order estimates}
\setcounter{equation}{0}

\subsection{High-order macroscopic estimates}

The high-order macroscopic estimates are derived by the standard differentiated energy method for the fluid part. Since the argument closely follows the established framework in \cite{wang2025}, \cite{kang2025time}, and \cite{huang2026timeasymptoticstabilitycompositewave}, we record only the main differentiated identities and the resulting estimates, emphasizing the terms that interact with the microscopic component and the dynamical shifts.

\subsubsection{Time-derivative estimates}

The proof of the energy estimate of \(\norm{(\phi_t,\psi_t,\zeta_t)}_{L^2_x}^2\) follows from a standard energy estimate. For the reader's {convenience}, we provide the proof in the Appendix.

\begin{lemma}[Time-derivative estimate]
\label{lem:macro-time-derivative}
    There exists $C>0$ such that 
\begin{align}
\begin{aligned}\label{eq:time-derivative-est}
&\norm{\myparas{\phi_t,\psi_t,\zeta_t}}_{L^2}^2 \\
&\leq C(\delta_0+\varepsilon)\sum_{i=1,3}\delta_i\abs{\dot{X}_i}^2 + C\delta_0(\mathcal{G}_1^S+\mathcal{G}_3^S) +  \frac{C\delta_C}{1+t} \int e^{-\frac{2c\abs{x}^2}{1+t}}\abs{\myparas{\phi,\psi,\zeta}}^2 dx \\
&\qquad + C\norm{\myparas{\phi_x,\psi_x,\zeta_x}}_{L^2}^2 + \frac{C\delta_C}{(1+t)^{\frac{5}{4}}} + C\delta_0(\delta_0+\varepsilon)(\delta_1e^{-C\delta_1t}+\delta_3e^{-C\delta_3t})\\
&\qquad + C \int \norm{\widetilde{G}_x}_{\nu,M_\#}^2 \,dx
\end{aligned}
\end{align}
\end{lemma}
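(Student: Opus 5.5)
Although the lemma is called an "energy estimate", the quantity $\|(\phi_t,\psi_t,\zeta_t)\|_{L^2}^2$ is controlled pointwise in time, with no time integration and no $\frac{d}{dt}$ term. So the plan is not to differentiate the system in $t$. Instead, we read off $(\phi_t,\psi_t,\zeta_t)$ directly from the perturbation system \eqref{eq:pesy1}--\eqref{eq:pesy2} and bound each term on the right-hand side in $L^2_x$.

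\textbf{Step 1: the $\phi$-equation.} From the first equation of \eqref{eq:pesy1},
\[
\phi_t=\psi_{1x}+\sum_{i=1,3}\dot X_i\,\partial_x(v^{S_i})^{-X_i}.
\]
Since $\|\partial_x(v^{S_i})^{-X_i}\|_{L^2}^2\le C\delta_i^3$ by Lemma~\ref{lem:bs}, this gives
\[
\|\phi_t\|_{L^2}^2\le C\|\psi_{1x}\|^2_{L^2}+C\delta_0^2\sum_{i}\delta_i|\dot X_i|^2 .
\]

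\textbf{Step 2: the $\psi_1$- and $\zeta$-equations.} We proceed the same way, term by term.
\begin{itemize}
\item The shift terms contribute $\delta_i^3|\dot X_i|^2$.
\item The pressure difference $(p-\bar p)_x$ and the first-order terms $p u_{1x}-\bar p\bar u_{1x}$ split into a part bounded by $C|(\phi_x,\psi_x,\zeta_x)|$ and a part bounded by $C|(\bar v_x,\bar u_{1x},\bar\theta_x)||(\phi,\psi,\zeta)|$.
\item The background derivatives are controlled by shock derivatives $|\partial_x(v^{S_i})^{-X_i}|$ and by contact derivatives, which carry the Gaussian factor of Lemma~\ref{lem:contact-wave-estimate0}.
\end{itemize}
For the shock-weighted pieces we write $|\partial_x v^{S_i}|^2\le C\delta_i^2|\partial_x v^{S_i}|$. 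We then split with $\varphi_i$ and $1-\varphi_i$: the $\varphi_i$ part gives $C\delta_0\mathcal G_i^S$, and the complementary part gives $C\delta_0(\delta_0+\varepsilon)\delta_ie^{-C\delta_it}$ by Lemma~\ref{lem:intsk} and the bootstrap bound. The contact pieces give $\frac{C\delta_C}{1+t}\int e^{-2c|x|^2/(1+t)}|(\phi,\psi,\zeta)|^2\,dx$. The source terms $Q_1,Q_2$ are bounded by \eqref{eq:contact-Q-bounds} and Lemma~\ref{lem:QI-term}: $\|Q^C\|_{L^2}^2\lesssim\delta_C(1+t)^{-5/2}$, which is within the claimed $\delta_C(1+t)^{-5/4}$, and the interaction part $Q^I$ yields the exponentially decaying term.

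\textbf{Step 3: the main obstacle.} The hard part is the terms that are second order in $x$. These are the viscous terms $\bigl(\mu(\theta)u_{1x}/v-\mu(\bar\theta)\bar u_{1x}/\bar v\bigr)_x$ and the heat-flux analogue, together with the kinetic fluxes $\int\xi_1^2\widetilde\Pi_{1x}\,d\xi$. The estimate is pointwise in time, with $\|(\phi_x,\psi_x,\zeta_x)\|^2$ and $\int\|\widetilde G_x\|^2$ on the right but no $\psi_{xx},\zeta_{xx}$.

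I would avoid expanding these viscous terms. The key observation is that the viscous terms originate from the microscopic flux: \eqref{eq:NSF1} was obtained from \eqref{eq:NSl} via \eqref{eq:G}. So I would work instead with the conservation form \eqref{eq:NSl} minus its profile counterpart, where the right-hand side is simply
\[
-\int\xi_1^2\Bigl(\widetilde G+\widetilde G_C\text{-type terms}\Bigr)_x\,d\xi ,
\]
up to the profile corrections $\sum_i(G^{S_i})^{-X_i}$, which are already subtracted in $\widetilde G$. The velocity moments of $\widetilde G_x$ are bounded by $C\|\widetilde G_x\|_{\nu,M_\#}$.

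The remaining mismatch between $\bar U$'s profile equations (which carry explicit viscosity) and the ansatz $\bar f$ is localized at the shocks and the contact. It is again estimated by $\delta_0\mathcal G_i^S$, the contact Gaussian term, $\delta_C(1+t)^{-5/4}$, and the exponential interaction remainders, as in Lemma~\ref{lem:K1245}.

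\textbf{Step 4: transverse components.} The equations for $\psi_2,\psi_3$ are handled identically, using $u_{it}=-\int\xi_1\xi_iG_x\,d\xi$ from \eqref{eq:NSl}.

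Summing Steps 1, 2 and 4, and using Cauchy–Schwarz on the nonlinear products with the $L^\infty$ bound $C\varepsilon$, gives \eqref{eq:time-derivative-est}.
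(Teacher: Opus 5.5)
Your proposal is correct, and once you make the switch in Step 3 it is essentially the paper's proof. The paper likewise bypasses \eqref{eq:pesy1}--\eqref{eq:pesy2}: it subtracts the conservation-form equations of the solution and the shifted Boltzmann shocks, together with the Navier--Stokes-form contact equations, and then bounds $\phi_t,\psi_{1t},\psi_{it},\zeta_t$ term by term with Young's inequality, exactly as in your Steps 1, 2 and 4. One clarification: no $\widetilde G_C$-type correction appears on the right-hand side, because $\widetilde G=G-(G^{S_1})^{-X_1}-(G^{S_3})^{-X_3}$ already contains the contact's microscopic part. The contact wave therefore contributes only explicit profile sources (its viscous and heat-flux terms and $Q_1^C,Q_2^C$), and the only shock mismatch is $(u_1-(u_1^{S_i})^{-X_i})\int\xi_1^2\,\partial_x(G^{S_i})^{-X_i}\,d\xi$ in the $\zeta$-equation.
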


\subsubsection{Coercive estimate for \(\phi_x\)}

The previous lemmas controls the full first-order spatial energy, but it does not provide a sufficiently direct coercive estimate for \(\phi_x\). To recover this missing control, we derive a compensating estimate by coupling the differentiated mass equation with the differentiated first momentum equation.

At this stage, it is important to avoid using the Chapman--Enskog expansion at the differentiated level, since such an expansion leads to a loss of regularity and obstructs the bootstrap closure. For this reason, all microscopic contributions will be estimated directly in terms of \(\widetilde G\), rather than through further Chapman--Enskog substitutions.

\begin{lemma}[Compensating estimate for \(\phi_x\)]
    There exists \(C>0\) such that
\begin{align}\label{eq:low-order-differential}
&\frac{d}{dt}\int \myparas{\frac{2}{3}\mu(\y{\theta})\phi_x^2-v\psi_1\phi_x} dx + \int \frac{2\theta}{3v}\phi_x^2 dx \nonumber\\
&\leq C(\delta_0+\varepsilon)\sum_{i=1,3} \mathcal{G}_i^S + C\delta_0(\delta_0+\varepsilon)(\delta_1e^{-C\delta_1t}+\delta_3e^{-C\delta_3t})+ C\frac{\delta_C}{(1+t)^{\frac{5}{4}}} \nonumber \\
&\quad+ C(\delta_0+\varepsilon) \sum_{i=1,3}\delta_i\abs{\dot{X}_i}^2 +  C\norm{(\psi_x,\zeta_x)}_{L^2}^2 + C \norm{\phi_t}_{L^2}^2 +C\varepsilon^2 \norm{\psi_{1xx}}_{L^2}^2 \nonumber \\
& \quad+ C \frac{\delta_C}{1+t} \int e^{-\frac{2c\abs{x}^2}{1+t}}\abs{\myparas{\phi,\psi,\zeta}}^2\, dx +C\int \norm{\widetilde G_{xx}}_{\nu,M_\#}^2 + \norm{\widetilde G_{xt}}_{\nu,M_\#}^2 \, dx \nonumber\\
&\quad+ C(\delta_0+\varepsilon)\int \norm{\widetilde G_{\textup{rem}}}_{\nu,M_\#}^2 + \norm{\widetilde G_t}_{\nu,M_\#}^2 + \norm{\widetilde G_{x}}_{\nu,M_\#}^2 \, dx.
\end{align}
\end{lemma}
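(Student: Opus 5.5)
The plan is the standard Kawashima/Matsumura–Nishida compensating argument. Differentiate the $\phi$-equation in \eqref{eq:pesy1} in $x$, and write the $\psi_1$-equation so that the viscous term $\frac43(\mu(\theta)u_{1x}/v)_x$ is expressed through $\phi_{xt}$. Since $v_t=u_{1x}$, we have $\frac43\frac{\mu(\theta)u_{1x}}{v}=\frac43\mu(\theta)(\ln v)_t$. Its $x$-derivative contains $\frac43\mu(\theta)\frac{\phi_{xt}}{v}$ plus terms with $\bar v$, $\bar u_1$, the shift terms $\dot X_i\partial_x^2(v^{S_i})^{-X_i}$, and $\theta_x$-commutators.

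The pressure term $(p-\bar p)_x = \frac{2}{3v}\zeta_x-\frac{2\theta}{3v^2}\phi_x+(\text{background gradients})\times(\phi,\zeta)$ supplies the good term $-\frac{2\theta}{3v^2}\phi_x$. Multiplying the momentum equation by $-v\phi_x$ (up to the normalizing factor) and integrating then gives:
\begin{itemize}
\item the time derivative of $\int(\frac23\mu(\bar\theta)\phi_x^2 - v\psi_1\phi_x)\,dx$;
\item the dissipation $\int\frac{2\theta}{3v}\phi_x^2\,dx$;
\item a family of remainders.
\end{itemize}
In the term $-\int v\psi_{1t}\phi_x$ I integrate by parts in time and use $\phi_{xt}=\psi_{1xx}+\dot X_i\partial_x^2(v^{S_i})^{-X_i}$. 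This gives $\int v\psi_{1x}^2$-type terms after a further integration by parts in $x$, plus the $\|\phi_t\|^2$ and $v_t=u_{1x}$ commutators. These account for the $C\|(\psi_x,\zeta_x)\|^2$ and $C\|\phi_t\|^2$ on the right.

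The remainders are handled as follows.
\begin{itemize}
\item \emph{Background-gradient terms.} Products of $|\bar U_x|$ or $|\bar U_{xx}|$ with $(\phi,\psi,\zeta)\phi_x$ are bounded via Lemma~\ref{lem:bs} and Lemma~\ref{lem:contact-wave-estimate0}. The shock parts go through the splitting $\varphi_i+(1-\varphi_i)$, giving $C\delta_0\mathcal G_i^S$ plus the exponentially decaying interaction tails of Lemma~\ref{lem:intsk}. The contact parts give the Gaussian-weighted term.
\item \emph{Shift terms.} Terms like $\dot X_i\int|\partial_x^2(v^{S_i})^{-X_i}||\phi_x|$ are handled by Young's inequality as $\delta_i|\dot X_i|^2$ times a small constant plus $\delta_i^{-1}\|\partial_x^2 v^{S_i}\|_{L^2}^2$-weighted contributions. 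Since $|\partial_x^2 v^{S_i}|\le C\delta_i|\partial_x v^{S_i}|$, these contribute $C(\delta_0+\varepsilon)\delta_i|\dot X_i|^2$ and small multiples of $\|\phi_x\|^2$, which are absorbed.
\item \emph{Profile and interaction terms.} The $Q_1$ terms are bounded by Lemma~\ref{lem:QI-term} and \eqref{eq:contact-Q-bounds}, giving the $\delta_C(1+t)^{-5/4}$ and $\delta_0(\delta_0+\varepsilon)e^{-C\delta_it}$ contributions.
\item \emph{Cubic term.} Using Sobolev and $\mathcal E\le\varepsilon$, the term $\mu(\theta)-\mu(\bar\theta)$ times $\psi_{1xx}\phi_x$ gives $C\varepsilon^2\|\psi_{1xx}\|^2$ plus absorbable pieces.
\end{itemize}

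The main obstacle is the kinetic term $-\int\xi_1^2\widetilde\Pi_{1x}\,d\xi$ paired with $v\phi_x$. As the text warns, I will not substitute a Chapman–Enskog expansion for $\widetilde\Pi_1$. Instead I use the decomposition \eqref{eq:Pi-perturb}, differentiated once in $x$. The linear pieces $L_M^{-1}(\widetilde G_{xt})$ and $L_M^{-1}(\widetilde G_{xx})$ (and the $x$-derivative falling on $L_M^{-1}$, which gives background or perturbation gradients times $\widetilde G_t,\widetilde G_x$) are bounded by the weighted inverse estimates \eqref{eq:K4-Gt}--\eqref{eq:K4-P1Gx}. Combined with Young's inequality against $\frac{1}{8}\int\frac{2\theta}{3v}\phi_x^2$, this produces $C\int(\|\widetilde G_{xx}\|^2_{\nu}+\|\widetilde G_{xt}\|^2_\nu)$ and $C(\delta_0+\varepsilon)$ times the lower norms.

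The nonlinear $\mathcal N(\widetilde G,\widetilde G)_x$ and the mixed collision terms are bounded as in \eqref{eq:K4-QGG}--\eqref{eq:K4-mixed} with one more derivative, with the $\widetilde G_C$ part contributing only contact-decay terms. The shift parts $\dot X_i(L_i^S)^{-1}\partial_x^2 G^{S_i}$ and the $J$-term are bounded exactly as in \eqref{eq:J-est1}--\eqref{eq:interaction-est}.

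Collecting all of this and absorbing the small multiples of $\int\phi_x^2$ into the left side yields \eqref{eq:low-order-differential}. The point requiring care is keeping the $\phi_x$ coefficient on the left strictly positive. I would do this by choosing all Young constants relative to $\inf\frac{2\theta}{3v}$ before fixing $\delta_0$ and $\varepsilon$.
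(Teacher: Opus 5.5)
Your proposal is correct and follows essentially the paper's route. You multiply the momentum equation by $-v\phi_x$ and convert the viscous term into $\frac{d}{dt}\int\frac23\mu(\bar\theta)\phi_x^2$ via $v_t=u_{1x}$; the leftover $(\mu(\theta)-\mu(\bar\theta))\psi_{1xx}\phi_x$ term yields $C\varepsilon^2\|\psi_{1xx}\|_{L^2}^2$. You then extract $\frac{2\theta}{3v}\phi_x^2$ from the pressure and bound the moment of $\widetilde\Pi_{1x}$ through \eqref{eq:Pi-perturb} differentiated once, which is exactly what the paper packages as Lemma~\ref{lem:kc6}, with Lemma~\ref{lem:kc5} handling the commutator.

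The only difference is cosmetic. The paper moves the $x$-derivative in $v\psi_1\phi_{xt}$ onto $v\psi_1$ and controls $\|\phi_t\|_{L^2}$ by the continuity equation. You instead substitute $\phi_{xt}=\psi_{1xx}+\sum_i\dot X_i\partial_x^2(v^{S_i})^{-X_i}$ first and integrate by parts afterwards, which works equally well.
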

 
\begin{proof}
    We have 
    \begin{align*}
        &\myparas{\frac{2}{3}\mu(\y{\theta})\phi_x^2-v\psi_1\phi_x}_t + v_t \psi_1 \phi_x + (v\psi_1\phi_t)_x-(v_x\psi_1+v\psi_{1x})\phi_t + \frac{2}{3}\frac{\theta}{v}\phi_x^2 \\
        &+ \sum_{i=1,3}\dot{X}_i\phi_x\myparam{-\frac{4}{3}\mu(\y{\theta})\partial_{xx}\shockw{v}{i}+v\partial_x\shockw{u_1}{i}}\\
        &=\frac{2}{3}v\phi_x\myparas{-\frac{\theta}{v^2}+\frac{\y{\theta}}{\y{v}^2}}\y{v}_x+v\phi_xQ_1+v\phi_x \int \xi_1^2  \widetilde{\Pi}_{1x} \,d\xi \\
        &+\frac{2}{3}\phi_x\zeta_x+\frac{2}{3}v\phi_x\y{\theta}_x\myparas{\frac{1}{v}-\frac{1}{\y{v}}}+\frac{2}{3}\mu'(\y{\theta})\y{\theta}_t\phi_x^2-\frac{4}{3}\mu(\y{\theta})v\phi_x\y{u}_{1xx}\myparas{\frac{1}{v}-\frac{1}{\y{v}}}\\
        &-\frac{4}{3}\phi_x\myparas{\mu(\theta)-\mu(\y{\theta})}_xu_{1x}-\frac{4}{3}\phi_x\mu'(\y{\theta})\y{\theta}_x\myparas{\psi_{1x}-\frac{\phi}{\y{v}}\y{u}_{1x}}-\frac{4}{3}\mu(\y{\theta})v\phi_x\y{u}_{1x}\myparas{\frac{\y{v}_x}{\y{v}^2}-\frac{v_x}{v^2}}\\
        &+\frac{4}{3}\phi_xv\myparas{\mu(\theta)-\mu(\y{\theta})}\myparas{\frac{u_{1x}}{v}}_x+\frac{4}{3}\frac{\mu(\y{\theta})}{v}\phi_x\psi_{1x}v_x.
    \end{align*}

 We single out the term involving \(\phi_t\), since it requires a direct use of the continuity equation. The remaining terms are estimated in the same way as the corresponding macroscopic error terms in the first-order spatial energy estimate.

\begin{align*}
-\int (v_x\psi_1+v\psi_{1x})\phi_t\,dx
\le\;&
\int \abs{\phi_x\psi_1\phi_t} + \abs{\bar v_x\psi_1\phi_t} + \abs{\phi\psi_{1x}\phi_t} + \abs{\bar v\psi_{1x}\phi_t}\,dx \\
\le\;&
\Bigl(\frac{1}{128}+C(\varepsilon+\delta_0)\Bigr)\int \abs{\phi_x}^2\,dx
+ C\|\phi_t\|_{L^2}^2
+ C\|\psi_{1x}\|_{L^2}^2.
\end{align*}
Here we use the continuity equation
\[
\phi_t=\psi_{1x}+\sum_{i=1,3}\dot X_i \partial_x(v^{S_i})^{-X_i},
\]
which yields
\[
\|\phi_t\|_{L^2}^2
\le
C\|\psi_{1x}\|_{L^2}^2
+
C\sum_{i=1,3}|\dot X_i|^2\|\partial_x(v^{S_i})^{-X_i}\|_{L^2}^2
\le
C\|\psi_{1x}\|_{L^2}^2
+
C\sum_{i=1,3}\delta_i^3|\dot X_i|^2.
\]
Therefore,
\begin{align*}
-\int (v_x\psi_1+v\psi_{1x})\phi_t\,dx
\le\;&
\Bigl(\frac{1}{128}+C(\varepsilon+\delta_0)\Bigr)\int \abs{\phi_x}^2\,dx
+ C\|\psi_{1x}\|_{L^2}^2
+ C\sum_{i=1,3}\delta_i^3|\dot X_i|^2 .
\end{align*}

The remaining terms on the right-hand side are lower-order macroscopic error terms or microscopic coupling terms. They are estimated exactly as in the first-order spatial energy estimate, using the previously established bounds for \(Q_1\), the profile derivatives of \(\bar U\), and the microscopic moments involving \(\widetilde{\Pi}_1\). Collecting all these estimates, we obtain the desired inequality.\end{proof}

\subsubsection{First-order spatial estimates}

We begin with the first-order spatial estimates for the macroscopic perturbation \((\phi,\psi,\zeta)\).
The first lemma provides the basic differentiated \(H^1_x\)-energy inequality for \((\phi_x,\psi_x,\zeta_x)\), together with the corresponding dissipation on \(\psi_{xx}\) and \(\zeta_{xx}\). 
The remaining terms are lower-order error terms generated by the composite profile, the dynamical shifts, and the coupling with the microscopic component. For the reader's {convenience}, we provide the proof in the Appendix.

\begin{lemma}[First-order spatial energy estimate] \label{lem:fosee}
    There exists \(C>0\) such that
    \begin{align}
        \begin{aligned}\label{eq:eoppz1}
            &\frac{d}{dt}\int \myparas{\frac{\y{p}\theta}{2v}\phi_x^2+\frac{3\y{p}v}{4}\psi_{1x}^2+\sum_{k=2}^3 \frac{\psi_{kx}^2}{2}+\frac{\zeta_x^2}{2}} dx \\
            &\quad + \int 2\mu(\y{\theta})\y{p}\psi_{1xx}^2 + \sum_{k=2}^3 \frac{\mu(\theta)}{v} \psi_{kxx}^2 + \frac{\alpha_{\rm{th}}(\y{\theta})}{v}\zeta_{xx}^2 dx \\
            &\qquad \le C(\delta_0+\varepsilon) \sum_{i=1,3} \delta_i\abs{\dot{X}_i}^2 + C \delta_0 (\mathcal{G}_1^S+\mathcal{G}_3^S) + C\frac{\delta_C}{1+t}\int e^{\frac{-2c\abs{x}^2}{1+t}}\abs{(\phi,\zeta)}^2 dx +C\delta_0\norm{\phi_{xx}}_{L^2}^2\\
            &\, \qquad + C(\delta_0+\varepsilon)\sum_{\abs{\beta}=1}\norm{\partial^\beta(\phi,\psi,\zeta)}_{L^2}^2 +  C\frac{\delta_C}{(1+t)^{5/4}} +C(\varepsilon+\delta_0)\delta_0\myparas{\delta_3 e^{-C\delta_3t} + \delta_1 e^{-C\delta_1t}}\\
            &\, \qquad + C(\delta_0+\varepsilon)\int \norm{\widetilde G_{\text{rem}}}_{\nu,M_\#}^2 + \norm{\widetilde G_t}_{\nu, M_\#}^2 + \norm{\widetilde G_x}_{\nu, M_\#}^2 dx +C\int \norm{\widetilde G_{tx}}_{\nu,M_\#}^2 + \norm{\widetilde G_{xx}}_{\nu,M_\#}^2 \,dx.
        \end{aligned}
    \end{align}
\end{lemma}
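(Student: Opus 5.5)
We differentiate the perturbation system \eqref{eq:pesy1}--\eqref{eq:pesy2} once in \(x\) and test each equation with a multiplier chosen so the leading hyperbolic couplings cancel. Concretely, we multiply the \(x\)-derivative of the \(\phi\)-equation by \(\frac{\bar p\theta}{v}\phi_x\). We multiply the \(\psi_1\)-equation by \(-\frac{3\bar p v}{2}\psi_{1xx}\), the \(\psi_k\)-equations (\(k=2,3\)) by \(-\psi_{kxx}\), and the \(\zeta\)-equation by \(-\zeta_{xx}\), then integrate over \(\mathbb R\). The weights are fixed by the pressure law \(p=\frac{2\theta}{3v}\): after integration by parts, the principal first-order terms pair into exact derivatives. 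These are \(\phi_x\psi_{1xx}\) from the mass equation against \(p_x\) in the momentum equation, and \(\psi_{1xx}\zeta_x\) from \((p-\bar p)_x\) against \(\bar p\,\psi_{1x}\) in the energy equation. Their contribution is therefore only commutator terms involving \(\bar U_x\), \((\phi,\zeta)_x\) times profile derivatives, or cubic terms. The time derivatives of the weights \(\bar p\theta/v\) and \(\bar p v\) are bounded by \(|\bar U_t|+|U_t|\le C(\delta_0+\varepsilon)\) times localized profile factors, and are absorbed into the \((\delta_0+\varepsilon)\sum_{|\beta|=1}\|\partial^\beta(\cdot)\|^2\) term. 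The viscous terms produce the dissipation \(2\mu(\bar\theta)\bar p\psi_{1xx}^2+\frac{\mu}{v}\psi_{kxx}^2+\frac{\alpha_{\rm th}}{v}\zeta_{xx}^2\), up to terms with \(\bar U_x\) or \(U_x\) that Young's inequality splits into a small multiple of the dissipation plus \(C\delta_0\|\phi_{xx}\|^2\). The \(\phi_{xx}\) term appears because \(\big(\frac{\mu\bar u_{1x}}{v}\big)_{x}\) contains \(v_{xx}\).

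Next we handle the source terms. The shift terms \(\dot X_i\,\partial_x^2(v^{S_i})^{-X_i}\phi_x\) and their analogues are bounded by \(|\dot X_i|\,\|\partial_x^2 v^{S_i}\|_{L^2}\|\phi_x\|\le C\delta_i^{5/2}|\dot X_i|\,\|\phi_x\|\), and Young's inequality gives \(C(\delta_0+\varepsilon)\delta_i|\dot X_i|^2\) plus a small first-order term. Terms with profile derivatives times zeroth-order perturbations, such as \(\bar u_{1xx}\phi\,\psi_{1xx}\), are split with the cutoffs \(\varphi_i\). On \(\varphi_i\) we use \(|\partial_x^2v^{S_i}|^2\le C\delta_i^2|\partial_xv^{S_i}|\), so they are bounded by \(\delta_0\mathcal G_i^S\). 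Off \(\varphi_i\), Lemma~\ref{lem:intsk} and Lemma~\ref{lem:wave-interaction-derivative} give the exponentially decaying remainders. The contact parts follow from Lemma~\ref{lem:contact-wave-estimate0} and give the Gaussian-weighted term. The \(Q_1,Q_2\) terms are treated with Lemma~\ref{lem:QI-term} and \eqref{eq:contact-Q-bounds}: we pair \(\|Q_{ix}\|\) or \(\|Q_i\|\) with \(\|\psi_{xx}\|\) after one integration by parts, which yields \(\delta_C(1+t)^{-5/4}\) and the interaction decay.

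The main obstacle is the kinetic flux \(\int\xi_1^2\widetilde\Pi_{1xx}\,d\xi\) tested against \(\psi_{1xx}\), together with its energy analogue. Here we deliberately avoid a further Chapman--Enskog substitution. We integrate by parts only once, so the term becomes \(\int\psi_{1xx}\int\xi_1^2\widetilde\Pi_{1x}\,d\xi\,dx\). We then expand \(\widetilde\Pi_{1x}\) using the \(x\)-derivative of \eqref{eq:Pi-perturb}. The part \(L_M^{-1}(\widetilde G_{tx}, \widetilde G_{xx})\) is bounded by \(\lambda\|\psi_{1xx}\|^2+C\int\|\widetilde G_{tx}\|^2_{\nu}+\|\widetilde G_{xx}\|^2_{\nu}\), using the weighted inverse estimates \eqref{eq:K4-Gt}--\eqref{eq:K4-P1Gx}. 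The commutators produced when \(\partial_x\) falls on \(L_M^{-1}\), \(u_1/v\), or \(P_1\) carry a factor \(|U_x|\le C(\delta_0+\varepsilon)\). They give \((\delta_0+\varepsilon)\int\|\widetilde G_{\rm rem}\|^2+\|\widetilde G_t\|^2+\|\widetilde G_x\|^2\), together with \(\widetilde G_C\) contributions via \eqref{eq:GCe}. The nonlinear term \(\mathcal N(\widetilde G,\widetilde G)_x\) is handled the same way using \(\|\widetilde G\|_{L^\infty_x}\le C\varepsilon\). The \(\dot X_i\) and \(J\) pieces are treated exactly as in the \(K_4\) estimate, now with one more derivative on the exponentially localized profiles. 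This costs an extra factor \(\delta_i\) and still closes, giving the \(\mathcal G_i^S\), \(\delta_i|\dot X_i|^2\) and interaction terms. Choosing \(\lambda\) small and absorbing into the left-hand dissipation yields \eqref{eq:eoppz1}.
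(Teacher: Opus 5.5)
Your proposal is correct and follows essentially the same route as the paper: the same multipliers $\frac{\bar p\theta}{v}\phi_x$, $-\frac{3\bar p v}{2}\psi_{1xx}$, $-\psi_{kxx}$, $-\zeta_{xx}$, the same cancellation of the $\phi_x\psi_{1xx}$ and $\zeta_x\psi_{1xx}$ couplings, fluid errors handled as in the zeroth-order analysis, and the kinetic moment $\int\psi_{1xx}\int\xi_1^2\widetilde\Pi_{1x}\,d\xi\,dx$ controlled by expanding $\widetilde\Pi_{1x}$ exactly as in Lemma~\ref{lem:kc6}. One small correction, which does not affect validity: $\bigl(\mu\bar u_{1x}/v\bigr)_x$ contains only $v_x$, so viscosity produces no $\phi_{xx}$; in the paper the $C\delta_0\|\phi_{xx}\|_{L^2}^2$ term comes from integrating the shift term by parts into $\dot X_i\,\phi_{xx}\,\partial_x(v^{S_i})^{-X_i}$, and your direct Cauchy--Schwarz bound on $\dot X_i\,\partial_x^2(v^{S_i})^{-X_i}\phi_x$ avoids it altogether.
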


\subsubsection{Second-order spatial estimates}

\begin{lemma}[Second-order spatial estimate for \(\phi\)]
    There exists \(C>0\) such that
\begin{align}
\begin{aligned}
&-\frac{d}{dt} \int \phi_{xx}\psi_{1x}\,dx + \int \frac{p}{2v} \phi_{xx}^2\,dx \\
&\leq C\delta_0 (\mathcal{G}^S_1+\mathcal{G}^S_3)+ C(\delta_0+\varepsilon)\norm{(\phi_x,\psi_x,\zeta_x)}_{L^2}^2 + C \norm{(\psi_{1xx},\zeta_{xx})}_{L^2}^2 \\
&\quad +  C\delta_C \frac{1}{1+t} \int e^{-\frac{2c_0\abs{x}^2}{1+t}}\abs{\myparas{\phi,\zeta}}^2 dx +C(\delta_0+\varepsilon) \sum_{i=1,3}\delta_i\abs{\dot{X}_i}^2\\
&\quad + C\delta_0(\delta_0+\varepsilon)(\delta_1e^{-C\delta_1t}+\delta_3e^{-C\delta_3t})+ C\frac{\delta_C}{(1+t)^{\frac{5}{4}}} +C \int \norm{\widetilde G_{xx}}_{\nu,M_\#}^2\, dx. 
\end{aligned}
\label{eq:phi-xx-est}
\end{align}
\end{lemma}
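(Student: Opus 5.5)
We obtain the $\phi_{xx}$ coercivity from the standard compensating identity: differentiate the momentum equation once in $x$ and test it against $\phi_{xx}$. We take the $x$-derivative of the $\psi_1$-equation in \eqref{eq:pesy1} and multiply by $-\phi_{xx}$. On the left, the time term is rewritten as
\[
-\psi_{1xt}\phi_{xx}=-\bigl(\psi_{1x}\phi_{xx}\bigr)_t+\psi_{1x}\phi_{xxt},
\]
and in the last term we substitute $\phi_{xxt}=\psi_{1xxx}+\sum_i\dot X_i\partial_{xxx}(v^{S_i})^{-X_i}$ from the mass equation. Integration by parts gives $\int\psi_{1x}\psi_{1xxx}\,dx=-\|\psi_{1xx}\|_{L^2}^2$. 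The shift contribution is bounded by $|\dot X_i|\,\|\partial_{xxx}(v^{S_i})^{-X_i}\|_{L^2}\|\psi_{1x}\|_{L^2}$, and since $\|\partial_{xxx}(v^{S_i})^{-X_i}\|_{L^2}\le C\delta_i^{7/2}$ by Lemma~\ref{lem:bs}, this is absorbed into $C(\delta_0+\varepsilon)\bigl(\delta_i|\dot X_i|^2+\|\psi_{1x}\|^2\bigr)$.

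The coercive term comes from the pressure. We expand
\[
(p-\bar p)_{xx}=\Bigl(\frac{2\zeta}{3v}-\frac{\bar p\phi}{v}\Bigr)_{xx}=-\frac{p}{v}\phi_{xx}+\frac{2}{3v}\zeta_{xx}+\text{l.o.t.}
\]
Multiplying by $-\phi_{xx}$ yields $\int\frac{p}{v}\phi_{xx}^2\,dx$, up to corrections of size $(\delta_0+\varepsilon)\|\phi_{xx}\|^2$. We keep half of this, namely $\int\frac{p}{2v}\phi_{xx}^2\,dx$, and use the other half to absorb the remaining terms by Young's inequality. The $\zeta_{xx}$ term gives $C\|\zeta_{xx}\|^2$. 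The lower-order terms carry factors $\bar U_x$ or $\bar U_{xx}$ times $(\phi,\zeta)$ or their first derivatives. Where a shock derivative appears, we split it with the cutoffs $\varphi_i$ and use Lemma~\ref{lem:intsk} and Lemma~\ref{lem:wave-interaction-derivative}; this gives $C\delta_0\mathcal G_i^S$ plus exponentially decaying interaction terms. Where the contact wave appears, Lemma~\ref{lem:contact-wave-estimate0} gives the Gaussian term with prefactor $\delta_C/(1+t)$. Products of derivatives of the perturbation such as $\phi_x\zeta_x$ are controlled with $\|(\phi_x,\zeta_x)\|_{L^\infty}\le C\varepsilon$, which places them in $C(\delta_0+\varepsilon)\|(\phi_x,\psi_x,\zeta_x)\|^2$ and in the absorbed part of $\|\phi_{xx}\|^2$. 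The remaining pieces of the shift terms, those containing $\partial_{xx}(u_1^{S_i})^{-X_i}$, are handled by Young's inequality exactly as before.

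The viscous term $\frac43\bigl(\frac{\mu(\theta)u_{1x}}{v}-\frac{\mu(\bar\theta)\bar u_{1x}}{\bar v}\bigr)_{xx}$ is where we must be careful. Its leading part is $\frac43\frac{\mu}{v}\psi_{1xxx}$, and testing this against $\phi_{xx}$ would require three derivatives of $\psi$. We therefore do not differentiate it a second time. Instead we integrate by parts once, moving one derivative onto $\phi_{xx}$ and producing $\phi_{xxx}$, and then replace $\phi_{xxx}$ using the mass equation: $\phi_{xxt}=\psi_{1xxx}+\dots$ relates $\phi_{xxx}$ to time derivatives only in combination with other terms, so this replacement still needs checking. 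The cleaner alternative, and the one we choose, is the classical trick: write $\frac{\mu}{v}\psi_{1xxx}=\frac{\mu}{v}\phi_{xxt}+(\text{shift terms})$ and recognize $\int\frac{\mu}{v}\phi_{xxt}\phi_{xx}\,dx=\frac{1}{2}\frac{d}{dt}\int\frac{\mu}{v}\phi_{xx}^2\,dx-\frac12\int\bigl(\frac{\mu}{v}\bigr)_t\phi_{xx}^2\,dx$. This yields a positive extra term in the energy, consistent with the structure of \eqref{eq:low-order-differential}, or it can be moved into the energy functional; the statement displays only $-\int\phi_{xx}\psi_{1x}\,dx$, so we may need to absorb this term into the total energy in Section 8. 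The derivative terms coming from $\mu(\theta)-\mu(\bar\theta)$ are at most quadratic and are bounded by $C\varepsilon\|\psi_{1xx}\|$ and similar quantities.

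The kinetic terms are $\phi_{xx}\int\xi_1^2\widetilde\Pi_{1xx}\,d\xi$ together with the shock $\Pi$-corrections. For these we follow the approach of the $\phi_x$ lemma and avoid any Chapman--Enskog substitution: we write $\widetilde\Pi_1$ through \eqref{eq:Pi-perturb} directly in terms of $\widetilde G$. After Cauchy--Schwarz and the weighted bound on $L_M^{-1}$, these are dominated by $\frac18\|\phi_{xx}\|^2+C\int\|\widetilde G_{xx}\|_{\nu,M_\#}^2\,dx$. Here the $\widetilde G_{xt}$ piece is folded in through the equation, and the profile terms go into the exponential and $\mathcal G^S$ remainders. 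The term $Q_1$ is bounded by Lemma~\ref{lem:QI-term} and \eqref{eq:contact-Q-bounds}, giving $\delta_C(1+t)^{-5/4}$ plus exponentially decaying contributions. The main obstacle is the viscous third-derivative term, and the kinetic $\widetilde\Pi_{1xx}$ term requires regularity bookkeeping at the same level.
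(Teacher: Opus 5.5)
There is a genuine gap, and it comes from the form of the momentum equation you differentiate. Taking $\partial_x$ of the $\psi_1$-equation in \eqref{eq:pesy1} produces two terms that do not close at this derivative level.

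The first is the viscous term. Its leading part $\frac43\frac{\mu(\theta)}{v}\psi_{1xxx}$ can only be handled by your $\phi_{xxt}$-trick. That trick puts $\frac{2\mu}{3v}\phi_{xx}^2$ into the energy, so what you would prove is not the displayed inequality, whose energy is exactly $-\int\phi_{xx}\psi_{1x}\,dx$; you concede this yourself. Moving that term back to the right-hand side is not possible either, since it would again require $\phi_{xxt}=\psi_{1xxx}+\dots$.

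The second term is fatal: $\int\phi_{xx}\int\xi_1^2\widetilde\Pi_{1xx}\,d\xi\,dx$. By \eqref{eq:Ger} and \eqref{eq:Pi-perturb}, $\widetilde\Pi_1$ contains $L_M^{-1}\widetilde G_t$ and $L_M^{-1}\bigl(\frac{u_1}{v}\widetilde G_x\bigr)$, so $\widetilde\Pi_{1xx}$ contains $\widetilde G_{txx}$ and $\widetilde G_{xxx}$. Neither the energy \eqref{eq:priass} nor any dissipation in the scheme controls third-order derivatives. Your claim that this term is bounded by $\frac18\|\phi_{xx}\|^2+C\int\|\widetilde G_{xx}\|^2_{\nu,M_\#}\,dx$ is therefore false, and integrating by parts only moves the loss onto $\phi_{xxx}$. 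Expressing $\widetilde\Pi_1$ through \eqref{eq:Pi-perturb} does not avoid the Chapman--Enskog expansion: $\Pi_1$ is its remainder, and that remainder is exactly what costs the extra derivative.

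The missing idea is to undo the Chapman--Enskog splitting before differentiating. By \eqref{eq:G},
\[
\tfrac43\bigl(\tfrac{\mu(\theta)u_{1x}}{v}\bigr)_x-\int\xi_1^2\Pi_{1x}\,d\xi=-\int\xi_1^2G_x\,d\xi,
\]
and the same identity holds for each shock profile. Starting from \eqref{eq:NSl}, the perturbed momentum equation then reads
\[
\psi_{1t}+\bigl(p-p^{S_1,-X_1}-p^C-p^{S_3,-X_3}\bigr)_x-\sum_{i=1,3}\dot X_i\,\partial_x(u_1^{S_i})^{-X_i}=-Q_1^C-\frac43\Bigl(\frac{\mu(\theta^C)u^C_{1x}}{v^C}\Bigr)_x-\int\xi_1^2\widetilde G_x\,d\xi,
\]
as in the appendix proof of Lemma~\ref{lem:macro-time-derivative}. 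Differentiate this once in $x$ and test with $-\phi_{xx}$. Then:
\begin{itemize}
\item there is no perturbative viscous term at all;
\item the only microscopic term satisfies $\int\phi_{xx}\int\xi_1^2\widetilde G_{xx}\,d\xi\,dx\le\frac{1}{128}\int\frac{p}{v}\phi_{xx}^2\,dx+C\int\|\widetilde G_{xx}\|^2_{\nu,M_\#}\,dx$;
\item the energy is exactly $-\int\phi_{xx}\psi_{1x}\,dx$.
\end{itemize}
The rest of your argument can then be kept essentially as is: the time term via the differentiated mass equation, the pressure coercivity $\int\frac{p}{v}\phi_{xx}^2\,dx$, and the profile and shift remainders. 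The additional term $\bigl(\bar p-p^{S_1,-X_1}-p^C-p^{S_3,-X_3}\bigr)_{xx}$ is a wave-interaction term controlled through $\|Q_{1x}\|_{L^2}$. The explicit contact-wave viscous term is a pure profile term with integrable decay.
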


\begin{proof}
Differentiating the first momentum equation in the perturbed system with respect to \(x\), we obtain
\begin{align*}
\begin{aligned}
&\psi_{1tx}+\myparas{p-\y{p}}_{xx}+\myparas{\bar{p}-\shockp{p}{1}-p^C-\shockp{p}{3}}_{xx}-\sum_{i=1,3}\dot{X}_i \partial_{xx}\shockw{u_1}{i} \\
&= -Q_{1x}^C-\frac{4}{3}\myparas{\frac{\mu\myparas{\theta^C}u_{1x}^C}{v^C}}_{xx}-\int \xi_1^2 \wtilde{G}_{xx} \,d\xi .
\end{aligned}
\end{align*}
Multiplying this equation by \(-\phi_{xx}\) and integrating over \(x\in\mathbb R\), we start from
\[
-\int \phi_{xx}\psi_{1tx}\,dx.
\]
By integration by parts in time,
\begin{align*}
-\int \phi_{xx}\psi_{1tx}\,dx
&=
-\frac{d}{dt}\int \phi_{xx}\psi_{1x}\,dx
+\int \phi_{xxt}\psi_{1x}\,dx \\
&=
-\frac{d}{dt}\int \phi_{xx}\psi_{1x}\,dx
-\int \phi_{xt}\psi_{1xx}\,dx .
\end{align*}
Hence,
\begin{align*}
-\int \phi_{xx}\psi_{1tx}\,dx
\le
-\frac{d}{dt}\int \phi_{xx}\psi_{1x}\,dx
+\frac{1}{128}\|\phi_{xt}\|_{L^2}^2
+
C\|\psi_{1xx}\|_{L^2}^2.
\end{align*}
Now differentiate the continuity equation
\[
\phi_t-\psi_{1x}-\sum_{i=1,3}\dot X_i \partial_x\shockw{v}{i}=0
\]
with respect to \(x\). Then
\[
\phi_{xt}
=
\psi_{1xx}
+
\sum_{i=1,3}\dot X_i \partial_{xx}\shockw{v}{i},
\]
so that
\begin{align*}
\|\phi_{xt}\|_{L^2}^2
&\le
C\|\psi_{1xx}\|_{L^2}^2
+
C\sum_{i=1,3} |\dot X_i|^2 \|\partial_{xx}\shockw{v}{i}\|_{L^2}^2 \\
&\le
C\|\psi_{1xx}\|_{L^2}^2
+
C\sum_{i=1,3}\delta_i^5 |\dot X_i|^2 \\
&\le
C\|\psi_{1xx}\|_{L^2}^2
+
C(\delta_0+\varepsilon)\sum_{i=1,3}\delta_i |\dot X_i|^2 .
\end{align*}
Therefore,
\begin{align}
\label{eq:phixt-control}
-\int \phi_{xx}\psi_{1tx}\,dx
\le
-\frac{d}{dt}\int \phi_{xx}\psi_{1x}\,dx
+
C\|\psi_{1xx}\|_{L^2}^2
+
C(\delta_0+\varepsilon)\sum_{i=1,3}\delta_i |\dot X_i|^2 .
\end{align}

Next, note that
\begin{align*}
\begin{aligned}
\myparas{p-\y{p}}_{xx}
=
-\frac{p}{v}\phi_{xx}
+\frac{2}{3v}\zeta_{xx}
-\frac{1}{v}\myparas{p-\y{p}}\y{v}_{xx}
-\frac{\phi}{v}\y{p}_{xx}
-\frac{2v_x}{v}\myparas{p-\y{p}}_x
-\frac{2\y{p}_x}{v}\phi_x .
\end{aligned}
\end{align*}
The leading term gives the desired coercivity:
\[
-\int \phi_{xx}\myparas{-\frac{p}{v}\phi_{xx}}\,dx
=
\int \frac{p}{v}\phi_{xx}^2\,dx .
\]
The remaining terms are lower-order error terms. For example,
\begin{align*}
-\int \frac{2}{3v}\phi_{xx}\zeta_{xx}\,dx
&\le
\frac{1}{128}\int \frac{p}{v}\phi_{xx}^2\,dx
+
C\|\zeta_{xx}\|_{L^2}^2, \\
\int \frac{1}{v}\myparas{p-\y{p}}\y{v}_{xx}\phi_{xx}\,dx
&\le
\frac{1}{128}\int \frac{p}{v}\phi_{xx}^2\,dx
+
C\int \abs{\y{v}_{xx}}^2\abs{(\phi,\zeta)}^2\,dx, \\
\int \frac{\phi\phi_{xx}}{v}\y{p}_{xx}\,dx
&\le
\frac{1}{128}\int \frac{p}{v}\phi_{xx}^2\,dx
+
C\int \abs{\y{p}_{xx}}^2\abs{\phi}^2\,dx, \\
\int \frac{2v_x\phi_{xx}}{v}\myparas{p-\y{p}}_x\,dx
&\le
\frac{1}{128}\int \frac{p}{v}\phi_{xx}^2\,dx
+
C(\delta_0+\varepsilon)^2\|(\phi_x,\zeta_x)\|_{L^2}^2 \\
&\qquad
+
C(\delta_0+\varepsilon)^2\int \abs{(\y{\theta}_x,\y{v}_x)}^2\abs{(\phi,\zeta)}^2\,dx, \\
\int \frac{2\y{p}_x}{v}\phi_{xx}\phi_x\,dx
&\le
\frac{1}{128}\int \frac{p}{v}\phi_{xx}^2\,dx
+
C(\delta_0+\varepsilon)^2\|\phi_x\|_{L^2}^2 .
\end{align*}

We next estimate the profile-error and microscopic terms:
\begin{align*}
-\int \phi_{xx} \myparas{\y{p}-\shockp{p}{1}-p^C-\shockp{p}{3}}_{xx}\,dx
&\le
\frac{1}{128}\int \frac{p}{v}\phi_{xx}^2\,dx
+
C \|Q_{1x}\|_{L^2}^2, \\
\sum_{i=1,3} \int \dot{X}_i \partial_{xx}\shockw{u_1}{i} \phi_{xx}\,dx
&\le
\frac{1}{128}\int \frac{p}{v}\phi_{xx}^2\,dx
+
C\sum_{i=1,3} \delta_i^3 |\dot{X}_i|^2, \\
\int \phi_{xx} \myparas{Q_{1x}^C + \frac{4}{3}\myparas{\frac{\mu\myparas{\theta^C}u^C_{1x}}{v^C}}_{xx}}\,dx
&\le
\frac{1}{128}\int \frac{p}{v}\phi_{xx}^2\,dx
+
C \|Q_{1x}^C\|_{L^2}^2, \\
\int \phi_{xx} \int \xi_1^2 \wtilde{G}_{xx}\,d\xi\,dx
&\le
\frac{1}{128}\int \frac{p}{v}\phi_{xx}^2\,dx
\\
&\qquad +
C \int \norm{\wtilde G_{xx}}_{\nu,M_\#}^2 dx .
\end{align*}

Combining these bounds with \eqref{eq:phixt-control}, we obtain
\begin{align*}
\begin{aligned}
& -\frac{d}{dt}\int \phi_{xx}\psi_{1x}\,dx +\int \frac{p}{2v}\phi_{xx}^2\,dx \\
&\quad \le
C \|(\psi_{1xx},\zeta_{xx})\|_{L^2}^2
+
C(\delta_0+\varepsilon)\sum_{i=1,3}\delta_i |\dot X_i|^2 +
C \int \abs{(\y{v}_{xx},\y{p}_{xx})}^2\abs{(\phi,\zeta)}^2\,dx
+
C \|Q_{1x}\|_{L^2}^2
\\
&\qquad 
+
C \|Q_{1x}^C\|_{L^2}^2 +
C(\delta_0+\varepsilon)^2 \|(\phi_x,\zeta_x)\|_{L^2}^2
+
C(\delta_0+\varepsilon)^2 \int \abs{(\y{\theta}_x,\y{v}_x)}^2\abs{(\phi,\zeta)}^2\,dx\\
&\qquad +
C \int \norm{\wtilde G_{xx}}_{\nu,M_\#}^2 dx .
\end{aligned}
\end{align*}

Finally, the terms involving \(Q_{1x}\), \(Q_{1x}^C\), \((\y{v}_{xx},\y{p}_{xx})\), and \((\y{\theta}_x,\y{v}_x)\) are estimated exactly as in the previous macroscopic energy bounds, using the profile decay estimates for the composite wave. In particular,
\[
\int \abs{(\y{v}_{xx},\y{p}_{xx})}^2\abs{(\phi,\zeta)}^2\,dx
+
\int \abs{(\y{\theta}_x,\y{v}_x)}^2\abs{(\phi,\zeta)}^2\,dx
\]
is bounded by
\[
C\delta_0 (\mathcal G_1^S+\mathcal G_3^S)
+
C\delta_C^2 \frac{1}{1+t}\int e^{-\frac{2c_0|x|^2}{1+t}}\abs{(\phi,\zeta)}^2\,dx
+
C\delta_0^3(\delta_1e^{-C\delta_1 t}+\delta_Ce^{-Ct}+\delta_3e^{-C\delta_3 t}),
\]
while
\[
\|Q_{1x}\|_{L^2}^2+\|Q_{1x}^C\|_{L^2}^2
\]
is bounded by
\[
C\delta_0^3(\delta_1e^{-C\delta_1 t}+\delta_3e^{-C\delta_3 t})+C\frac{\delta_C^4}{(1+t)^{5/4}}
+
C(\varepsilon+\delta_0)\delta_3^2 e^{-C\delta_3t}
+
C(\varepsilon+\delta_0)\delta_1^2 e^{-C\delta_1t}.
\]
This proves \eqref{eq:phi-xx-est}.
\end{proof}

\subsubsection{Mixed derivative estimates}

\begin{lemma}[Mixed derivative estimate]
    There exists \(C>0\) such that
\begin{align}
\begin{aligned}
    &\norm{\myparas{\phi_{xt},\psi_{xt},\zeta_{xt}}}_{L^2}^2 \\
&\leq C \norm{\myparas{\phi_{xx},\psi_{xx},\zeta_{xx}}}_{L^2}^2 + C(\delta_0+\varepsilon) \sum_{i=1,3}\delta_i\abs{\dot{X}_i}^2 + C \delta_0 (\mathcal{G}^{S}_1+\mathcal{G}^S_3)\\
&\quad + C\delta_C \frac{1}{1+t} \int e^{-\frac{2c_0\abs{x}^2}{1+t}}\abs{\myparas{\phi,\psi,\zeta}}^2 dx+ C(\delta_0+\varepsilon)\norm{\myparas{\phi_x,\psi_x,\zeta_x}}_{L^2}^2 \\
&\quad +C\int \norm{\widetilde G_{xx}}_{\nu,M_\#}^2 \,dx + C(\delta_0+\varepsilon)\int \norm{\widetilde G_{x}}_{\nu,M_\#}^2 \, dx\\
&\quad + C\delta_0(\delta_0+\varepsilon)(\delta_1e^{-C\delta_1t}+\delta_3e^{-C\delta_3t})+ C\frac{\delta_C}{(1+t)^{\frac{5}{4}}}.
\end{aligned}
\label{eq:mixed-derivative-est}
\end{align}
\end{lemma}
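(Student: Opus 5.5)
The estimate is algebraic rather than an energy estimate. I would differentiate the perturbation system \eqref{eq:pesy1}--\eqref{eq:pesy2} once in $x$ and solve each equation for its time derivative, exactly as in the proof of Lemma~\ref{lem:macro-time-derivative} one derivative lower. Then I would take $L^2$ norms and bound each resulting term.

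\textbf{The $\phi_{xt}$ component.} The continuity equation gives it directly:
\[
\phi_{xt}=\psi_{1xx}+\sum_{i=1,3}\dot X_i\,\partial_{xx}(v^{S_i})^{-X_i}.
\]
By Lemma~\ref{lem:bs}, $\|\partial_{xx}(v^{S_i})^{-X_i}\|_{L^2}^2\le C\delta_i^5$, which is already done in the proof of \eqref{eq:phi-xx-est}. Hence
\[
\|\phi_{xt}\|_{L^2}^2\le C\|\psi_{1xx}\|_{L^2}^2+C(\delta_0+\varepsilon)\sum_{i=1,3}\delta_i|\dot X_i|^2 .
\]

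\textbf{The $\psi_{1xt}$ component.} I would use the $x$-derivative of the momentum equation written in the proof of \eqref{eq:phi-xx-est}:
\[
\psi_{1xt}=-(p-\bar p)_{xx}+\sum_{i=1,3}\dot X_i\,\partial_{xx}(u_1^{S_i})^{-X_i}+\tfrac43\Bigl(\tfrac{\mu(\theta)u_{1x}}{v}-\tfrac{\mu(\bar\theta)\bar u_{1x}}{\bar v}\Bigr)_{xx}-Q_{1x}-\int\xi_1^2\widetilde G_{xx}\,d\xi .
\]
I keep the microscopic flux in terms of $\widetilde G_{xx}$, not $\widetilde\Pi_1$, to avoid the regularity loss flagged before \eqref{eq:low-order-differential}; the shock parts are cancelled by \eqref{eq:NSls}. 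The terms are then bounded as follows.
\begin{itemize}
\item The expansion of $(p-\bar p)_{xx}$ from that proof gives $C\|(\phi_{xx},\zeta_{xx})\|_{L^2}^2$.
\item The same expansion also produces lower-order terms with weights $|\bar v_{xx}|^2+|\bar p_{xx}|^2$ and $|(\bar v_x,\bar\theta_x)|^2$. These are bounded by $C\delta_0(\mathcal G_1^S+\mathcal G_3^S)$, the Gaussian contact term, and the exponential remainders, exactly as stated at the end of that proof.
\item The shift terms give $C\delta_i^5|\dot X_i|^2$, as above.
\item The $Q_{1x}$ bounds already recorded there give $C\delta_0(\delta_0+\varepsilon)(\delta_1e^{-C\delta_1t}+\delta_3e^{-C\delta_3t})+C\delta_C(1+t)^{-5/4}$.
\item The microscopic flux gives $C\int\|\widetilde G_{xx}\|_{\nu,M_\#}^2dx$, using $|\int\xi_1^2 g\,d\xi|\le C\|g\|_{M_\#}$.
\end{itemize}

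\textbf{The $\psi_{kxt}$ ($k=2,3$) and $\zeta_{xt}$ components.} These are handled the same way from \eqref{eq:pesy2}.
\begin{itemize}
\item In the $\zeta$ equation, the term $(pu_{1x}-\bar p\bar u_{1x})_x$ gives $C\|\psi_{1xx}\|_{L^2}^2$ plus lower-order weighted terms as above.
\item The quadratic terms such as $\frac{\mu(\theta)}{v}(\psi_{2x}^2+\psi_{3x}^2)$ and the $u_{1x}^2$ differences are handled by $\|(\phi,\psi,\zeta)_x\|_{L^\infty}\le C\varepsilon$, which follows from the a priori bound \eqref{eq:prioriass} and Sobolev. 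They produce $C\varepsilon\|(\phi_{xx},\psi_{xx},\zeta_{xx})\|_{L^2}^2+C(\delta_0+\varepsilon)\|(\phi_x,\psi_x,\zeta_x)\|_{L^2}^2$.
\item For the microscopic terms in \eqref{eq:pesy2}, I would use $\Pi_{1x}=G_{xx}\cdot$(nothing) directly, i.e.\ write $\int\xi_1\xi_j\Pi_{1x}\,d\xi=\int\xi_1\xi_j G_x\,d\xi$ modulo macroscopic viscous terms, via \eqref{eq:G}. After one $x$-derivative this yields $\widetilde G_{xx}$.
\item The products of $\psi_j$ or $u_1$ with $\Pi_{1x}$ contain $\widetilde G_x$ and the lower order profile parts. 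These give $C(\delta_0+\varepsilon)\int\|\widetilde G_x\|^2_{\nu,M_\#}dx$ plus shock microscopic parts, which are bounded by Lemma~\ref{lem:bs1} and absorbed into the $\mathcal G_i^S$ and exponential terms.
\end{itemize}

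\textbf{Main obstacle.} The delicate point is avoiding a derivative loss in the microscopic moments. Writing $\Pi_1$ through \eqref{eq:Ger} would bring in $\widetilde G_{txx}$ or $\widetilde G_{xxx}$, which are not controlled. I would therefore go back to the undecomposed form \eqref{eq:NSl}, where the flux is $-\int\xi_1^2G_x\,d\xi$. Subtracting the Chapman--Enskog viscous parts explicitly leaves only $\widetilde G_{xx}$, at the cost of terms with $\bar U_{xx}$ and $\psi_{xx}$, which the right-hand side allows. Summing the component bounds and taking $\varepsilon,\delta_0$ small gives \eqref{eq:mixed-derivative-est}.
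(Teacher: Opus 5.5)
Your overall plan matches the paper's: differentiate once in $x$, solve for the time derivatives, bound them in $L^2$, and express the microscopic fluxes through $\widetilde G_{xx}$. However, the identities you write down are inconsistent, and the inconsistency hides terms you cannot estimate.

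\emph{The $\psi_{1xt}$ identity.} Your formula keeps the viscous term $\frac43\bigl(\mu(\theta)u_{1x}/v-\mu(\bar\theta)\bar u_{1x}/\bar v\bigr)_{xx}$ from \eqref{eq:pesy1}, but replaces $\widetilde\Pi_{1xx}$ by $\widetilde G_{xx}$. By \eqref{eq:G}, $\int\xi_1^2G_x\,d\xi=-\frac43(\mu(\theta)u_{1x}/v)_x+\int\xi_1^2\Pi_{1x}\,d\xi$, so the viscous flux is counted twice. More importantly, this term gets no bullet, and it cannot get one. It contains $\mu(\theta)\psi_{1xxx}/v$, and no third derivative appears on the right-hand side or in the a priori norm.

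\emph{The $\psi_k$ and $\zeta$ equations.} Taking these from \eqref{eq:pesy2} has the same problem. The terms $(\mu(\theta)\psi_{kx}/v)_{xx}$ and $\bigl(\alpha_{\rm{th}}(\theta)\theta_x/v-\alpha_{\rm{th}}(\bar\theta)\bar\theta_x/\bar v\bigr)_{xx}$ carry $\psi_{kxxx}$ and $\zeta_{xxx}$, and you never address them.

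So your closing claim is wrong: undoing the Chapman--Enskog split does not cost only $\bar U_{xx}$ and $\psi_{xx}$ terms. After differentiation these viscous terms are third order. They must cancel exactly against the viscous part hidden in $\Pi_1$; they cannot be estimated.

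\emph{The fix.} Never introduce the solution's viscous terms; this is what the paper does, as in the appendix proof of Lemma~\ref{lem:macro-time-derivative}. From the undecomposed balance laws \eqref{eq:NSl}, subtract:
\begin{itemize}
\item the undecomposed moment equations of the shifted Boltzmann shocks, i.e.\ the moments of \eqref{eq:bse} with flux $-\int\xi_1^2\partial_x(G^{S_i})^{-X_i}\,d\xi$, not \eqref{eq:NSls};
\item the contact system \eqref{eq:ctident}.
\end{itemize}
The resulting system has the following structure.
\begin{itemize}
\item The only viscous terms left belong to the contact wave, e.g.\ $\frac43\bigl(\mu(\theta^C)u^C_{1x}/v^C\bigr)_{xx}$ and $\bigl(\alpha_{\rm{th}}(\theta^C)\theta^C_x/v^C\bigr)_{xx}$. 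These are pure profile terms, bounded through Lemma~\ref{lem:contact-wave-estimate0}.
\item The viscous-heating terms you list under $\zeta$ do not occur.
\item The microscopic terms are $\int\xi_1^2\widetilde G_{xx}\,d\xi$, $u_{1x}\int\xi_1^2\widetilde G_x\,d\xi$, $u_k\int\xi_1\xi_k\widetilde G_{xx}\,d\xi$, $\bigl(u_1-(u_1^{S_i})^{-X_i}\bigr)_x\int\xi_1^2\partial_x(G^{S_i})^{-X_i}\,d\xi$, and $\bigl(u_1-(u_1^{S_i})^{-X_i}\bigr)\int\xi_1^2\partial_{xx}(G^{S_i})^{-X_i}\,d\xi$.
\end{itemize}
With this system, your term-by-term bounds (shift terms, $(p-\bar p)_{xx}$, $Q_{1x}$, weighted profile terms, microscopic moments) go through and give exactly the stated estimate.
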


\begin{proof}
Differentiating the perturbed system with respect to \(x\), we obtain
\begin{align*}
\begin{aligned}
&\phi_{xt}-\psi_{1xx}-\sum_{i=1,3} \dot{X}_i \partial_{xx}\shockw{v}{i} = 0,\\
&\psi_{1tx} + \myparas{p-p^{S_1,-X_1}-p^C-p^{S_3,-X_3}}_{xx}-\sum_{i=1,3} \dot{X}_i \partial_{xx}\shockw{u_1}{i} \\
&\qquad \quad= -Q_{1x}^C-\frac{4}{3}\myparas{\frac{\mu\myparas{\theta^C}u_{1x}^C}{v^C}}_{xx}-\int \xi_1^2 \wtilde{G}_{xx} d\xi,\\
&\psi_{itx}=-\int \xi_1\xi_i \wtilde{G}_{xx} d\xi,\quad (i=2,3),\\
&\zeta_{tx} +\myparas{pu_{1x}-p^{S_1,-X_1}\partial_x\shockw{u_1}{1}-p^Cu_{1x}^C-p^{S_3,-X_3}\partial_x\shockw{u_1}{3}}_x-\sum_{i=1,3}\dot{X}_i\partial_{xx}\shockw{\theta}{i}\\
&\qquad = -\frac{1}{2}\int \xi_1 \abs{\xi}^2 \wtilde{G}_{xx} d\xi +u_{1x}\int \xi_1^2 \wtilde{G}_x d\xi +u_1\int \xi_1^2 \wtilde{G}_{xx} d\xi \\
&\qquad\quad +\sum_{i=1,3} \myparas{u_1-\shockw{u_1}{i}}_x\int \xi_1^2 \partial_x\shockw{G}{i} d\xi\\
&\qquad\quad +\sum_{i=1,3} \myparas{u_1-\shockw{u_1}{i}}\int \xi_1^2 \partial_{xx}\shockw{G}{i} d\xi\\
&\qquad\quad +\sum_{k=2}^3 u_{kx} \int \xi_1\xi_k \wtilde{G}_x d\xi+\sum_{k=2}^3 u_k \int \xi_1\xi_k \wtilde{G}_{xx} d\xi
\\
&\qquad\quad-\myparas{\frac{\alpha_{\rm{th}}\myparas{\theta^C}\theta_x^C}{v^C}}_{xx}-\myparam{\frac{4}{3}\mu\myparas{\theta^C}\frac{\myparas{u_{1x}^C}^2}{v^C}+Q_2^C}_x .
\end{aligned}
\end{align*}

We estimate \(\phi_{xt}\), \(\psi_{xt}\), and \(\zeta_{xt}\) separately.

\medskip
\noindent\textit{Step 1: estimate of \(\phi_{xt}\).}
Multiplying the first equation by \(\phi_{xt}\) and integrating over \(x\), we get
\begin{align*}
\|\phi_{xt}\|_{L^2}^2
&\le
\frac{1}{128}\|\phi_{xt}\|_{L^2}^2
+
C\|\psi_{1xx}\|_{L^2}^2
+
\frac{1}{128}\|\phi_{xt}\|_{L^2}^2
+
C\sum_{i=1,3}\delta_i^5|\dot X_i|^2.
\end{align*}
Hence,
\begin{align}
\label{eq:phixt-est}
\|\phi_{xt}\|_{L^2}^2
\le
C\|\psi_{1xx}\|_{L^2}^2
+
C(\delta_0+\varepsilon)\sum_{i=1,3}\delta_i|\dot X_i|^2.
\end{align}

\medskip
\noindent\textit{Step 2: estimate of \(\psi_{1tx}\).}
Multiplying the differentiated first momentum equation by \(\psi_{1tx}\) and integrating over \(x\), we obtain
\begin{align*}
\|\psi_{1tx}\|_{L^2}^2
\le\;&
\frac{1}{128}\|\psi_{1tx}\|_{L^2}^2
+
C\|(p-\bar p)_{xx}\|_{L^2}^2
+
C\|Q_{1x}\|_{L^2}^2
+
C\sum_{i=1,3}\delta_i^3|\dot X_i|^2 \\
&+
C\|Q_{1x}^C\|_{L^2}^2
+
C\int \norm{\wtilde G_{xx}}_{\nu,M_\#}^2\,dx .
\end{align*}
Since
\[
(p-\bar p)_{xx}
=
-\frac{p}{v}\phi_{xx}
+\frac{2}{3v}\zeta_{xx}
-\frac{1}{v}(p-\bar p)\bar v_{xx}
-\frac{\phi}{v}\bar p_{xx}
-\frac{2v_x}{v}(p-\bar p)_x
-\frac{2\bar p_x}{v}\phi_x,
\]
we have
\begin{align*}
\|(p-\bar p)_{xx}\|_{L^2}^2
\le\;&
C\|(\phi_{xx},\zeta_{xx})\|_{L^2}^2
+
C(\delta_0+\varepsilon)^2\|(\phi_x,\zeta_x)\|_{L^2}^2 \\
&+
C\int \abs{(\bar v_{xx},\bar p_{xx})}^2\abs{(\phi,\zeta)}^2\,dx
+
C(\delta_0+\varepsilon)^2\int \abs{(\bar v_x,\bar\theta_x)}^2\abs{(\phi,\zeta)}^2\,dx .
\end{align*}
Therefore,
\begin{align}
\label{eq:psi1tx-est}
\begin{aligned}
\|\psi_{1tx}\|_{L^2}^2
\le\;&
C\|(\phi_{xx},\zeta_{xx})\|_{L^2}^2
+
C(\delta_0+\varepsilon)\sum_{i=1,3}\delta_i|\dot X_i|^2
+
C(\delta_0+\varepsilon)^2\|(\phi_x,\zeta_x)\|_{L^2}^2 \\
&+
C\int \abs{(\bar v_{xx},\bar p_{xx})}^2\abs{(\phi,\zeta)}^2\,dx
+
C(\delta_0+\varepsilon)^2\int \abs{(\bar v_x,\bar\theta_x)}^2\abs{(\phi,\zeta)}^2\,dx \\
&+
C\|Q_{1x}\|_{L^2}^2
+
C\|Q_{1x}^C\|_{L^2}^2
+
C\int \norm{\wtilde G_{xx}}_{\nu,M_\#}^2\, dx .
\end{aligned}
\end{align}

\medskip
\noindent\textit{Step 3: estimate of \(\psi_{itx}\), \(i=2,3\).}
Multiplying
\[
\psi_{itx}=-\int \xi_1\xi_i \wtilde{G}_{xx}\,d\xi
\]
by \(\psi_{itx}\) and integrating in \(x\), we get
\begin{equation}
\label{eq:psiitx-est}
\|\psi_{itx}\|_{L^2}^2
\le
C\int \norm{\wtilde G_{xx}}_{\nu,M_\#}^2 \, dx,
\qquad i=2,3.
\end{equation}

\medskip
\noindent\textit{Step 4: estimate of \(\zeta_{tx}\).}
Multiplying the differentiated energy equation by \(\zeta_{tx}\) and integrating over \(x\), we obtain
\begin{align*}
& \|\zeta_{tx}\|_{L^2}^2 \\
& \, \le
C\Bigl\|\myparas{pu_{1x}-p^{S_1,-X_1}\partial_x\shockw{u_1}{1}-p^Cu_{1x}^C-p^{S_3,-X_3}\partial_x\shockw{u_1}{3}}_x\Bigr\|_{L^2}^2 \\
&\quad +
C\sum_{i=1,3}\delta_i^3|\dot X_i|^2
+
C\|Q_{2x}\|_{L^2}^2
+
C\|Q_{2x}^C\|_{L^2}^2
+
C\|\theta_{xxx}^C\|_{L^2}^2 \\
&\quad +
C\sum_{i=1,3}\int \abs{\bigl(u_1-\shockw{u_1}{i}\bigr)_x}^2 \abs{\partial_x\shockw{v}{i}}^2\,dx\\
&\quad +
C(\delta_0+\varepsilon)\sum_{i=1,3}\int \abs{u_1-\shockw{u_1}{i}}^2 \abs{\partial_x\shockw{v}{i}}^2\,dx \\
&\quad +
C\int \norm{\wtilde G_{xx}}_{\nu,M_\#}^2\, dx
+
C(\delta_0+\varepsilon)\int \norm{\wtilde G_x}_{\nu,M_\#}^2\, dx.
\end{align*}
Now
\begin{align*}
&\abs{\myparas{pu_{1x}-\bar p\,\bar u_{1x}}_x}\\
&\quad \le
C \abs{\psi_{1xx}}
+
C \abs{(p-\bar p)_x}\abs{(\bar u_{1x},\psi_{1x})}
+
C \abs{\bar p_x}\abs{\psi_{1x}}
+
C\abs{(\phi,\zeta)}\abs{\bar u_{1xx}},
\end{align*}
and therefore
\begin{align*}
&\Bigl\|\myparas{pu_{1x}-\bar p\,\bar u_{1x}}_x\Bigr\|_{L^2}^2\\
&\quad \le
C\|\psi_{1xx}\|_{L^2}^2
+
C(\delta_0+\varepsilon)^2\|(\zeta_x,\psi_x,\phi_x)\|_{L^2}^2 \\
&\qquad +
C(\delta_0+\varepsilon)^2\int \abs{(\bar\theta_x,\bar v_x)}^2\abs{(\phi,\zeta)}^2\,dx +
C\int \abs{\bar u_{1xx}}^2\abs{(\phi,\zeta)}^2\,dx .
\end{align*}
Hence
\begin{align}
\label{eq:zetatx-est}
\begin{aligned}
& \|\zeta_{tx}\|_{L^2}^2 \\
& \quad \le
C\|\psi_{1xx}\|_{L^2}^2
+
C(\delta_0+\varepsilon)^2\|(\zeta_x,\psi_x,\phi_x)\|_{L^2}^2
+
C(\delta_0+\varepsilon)\sum_{i=1,3}\delta_i|\dot X_i|^2 \\
&\qquad +
C\int \abs{(\bar\theta_x,\bar v_x)}^2\abs{(\phi,\zeta)}^2\,dx
+
C\int \abs{\bar u_{1xx}}^2\abs{(\phi,\zeta)}^2\,dx \\
&\qquad +
C\|Q_{2x}\|_{L^2}^2
+
C\|Q_{2x}^C\|_{L^2}^2
+
C\|\theta_{xxx}^C\|_{L^2}^2 \\
&\qquad +
C\sum_{i=1,3}\int \abs{\partial_x\bigl(u_1-\shockw{u_1}{i}\bigr)}^2 \abs{\partial_x\shockw{v}{i}}^2\,dx
\\
&\qquad +
C(\delta_0+\varepsilon)\sum_{i=1,3}\int \abs{u_1-\shockw{u_1}{i}}^2 \abs{\partial_x\shockw{v}{i}}^2\,dx \\
&\qquad +
C\int \norm{\wtilde G_{xx}}_{\nu,M_\#}^2 \, dx
+
C(\delta_0+\varepsilon)\int \norm{\wtilde G_x}_{\nu,M_\#}^2 \, dx.
\end{aligned}
\end{align}

The profile terms in \eqref{eq:psi1tx-est} and \eqref{eq:zetatx-est} are estimated exactly as in the previous macroscopic estimates. In particular,
\[
\int \abs{(\bar v_{xx},\bar p_{xx},\bar u_{1xx})}^2\abs{(\phi,\zeta)}^2\,dx
+
\int \abs{(\bar\theta_x,\bar v_x)}^2\abs{(\phi,\zeta)}^2\,dx
\]
is bounded by
\[
C\delta_0 (\mathcal G_1^S+\mathcal G_3^S)
+
C\delta_C^2 \frac{1}{1+t}\int e^{-\frac{2c_0|x|^2}{1+t}}\abs{(\phi,\psi,\zeta)}^2\,dx
+
C\delta_0^3(\delta_1e^{-C\delta_1t}+\delta_3e^{-C\delta_3t}),
\]
while
\[
\|Q_{1x}\|_{L^2}^2+\|Q_{1x}^C\|_{L^2}^2+\|Q_{2x}\|_{L^2}^2+\|Q_{2x}^C\|_{L^2}^2+\|\theta_{xxx}^C\|_{L^2}^2
\]
is bounded by
\[
C\delta_0^3(\delta_1e^{-C\delta_1t}+\delta_3e^{-C\delta_3t})
+
C\frac{\delta_C^4}{(1+t)^{\frac{5}{4}}}
+
C(\varepsilon+\delta_0)\delta_3^2 e^{-C\delta_3t}
+
C(\varepsilon+\delta_0)\delta_1^2 e^{-C\delta_1t}.
\]
Finally, the shock-interaction terms are controlled by the already established wave-interaction bounds, giving
\begin{align*}
   & \sum_{i=1,3}\int \abs{\bigl(u_1-\shockw{u_1}{i}\bigr)_x}^2 \abs{\partial_x\shockw{v}{i}}^2\,dx
+
(\delta_0+\varepsilon)\int \abs{u_1-\shockw{u_1}{i}}^2 \abs{\partial_x\shockw{v}{i}}^2\,dx\\
&
\le
C\delta_0(\mathcal G_1^S+\mathcal G_3^S)
+
C\delta_0^3(\delta_1e^{-C\delta_1t}+\delta_3e^{-C\delta_3t}).
\end{align*}

Collecting \eqref{eq:phixt-est}, \eqref{eq:psi1tx-est}, \eqref{eq:psiitx-est}, and \eqref{eq:zetatx-est}, we obtain \eqref{eq:mixed-derivative-est}.
\end{proof}

\subsection{High-order microscopic estimates}

We now turn to the high-order estimates for the microscopic component. In contrast to the macroscopic part, the main difficulty here is that spatial and temporal derivatives of the microscopic equation generate nontrivial commutator terms involving the linearized collision operator, the composite background profile, and the shock modulation. To close the high-order energy estimates, it is therefore necessary to isolate the basic microscopic identities first and then estimate the differentiated quantities \(\widetilde G_x\), \(\widetilde G_t\), and the second-order derivatives in a systematic way.

\subsubsection{Key microscopic identity}
 In this subsection, we collect the auxiliary microscopic estimates that will be used repeatedly in the proofs of the high-order bounds for \(\widetilde G_x\), \(\widetilde G_t\), and the second-order microscopic derivatives. The main purpose of these lemmas is to control the interaction between the differentiated microscopic terms and the composite background profile, especially near the shock layers where the microscopic part of the Boltzmann shock remains essential.

The first group of lemmas provides weighted interaction estimates between the perturbation of the macroscopic variables and generic microscopic quantities. These bounds allow us to convert factors such as \(v-\shockw{v}{i}\), \(v_x-\partial_x\shockw{v}{i}\), and their derivatives into the shock dissipation \(\mathcal G_i^S\), lower-order macroscopic norms, and exponentially decaying interaction remainders.

\begin{lemma}\label{lem:kc1}For $k\geq 0$ and $\delta_1>0$, assume that
\[
\norm{H}_{M_\#}
\le C\,\delta_1^{\,k}\,\bigl|\bigl((v^{S_1})^{-X_1}\bigr)_x\bigr|.
\]
Then
\begin{align*}
\iint_{\mathbb R\times\mathbb R^3}
\bigl|v-\bigl(v^{S_1}\bigr)^{-X_1}\bigr|
\left|H\frac{G}{M_\#}\right|\,d\xi\,dx
\le\;&
C\delta_1^{2k+2}\mathcal G_1^S
+
C\delta_1^{2k+4}e^{-C\delta_1 t}\int_{\mathbb R}\eta(U\mid\bar U)\,dx \\
&+
C\delta_1^{2k+3}\bigl(\delta_3^2e^{-C\delta_3 t}+\delta_0^2e^{-C\delta_1t}\bigr)
+
\frac{1}{\widetilde\alpha}
\int \norm{G}_{M_\#}^2\,dx,
\end{align*}
where \(\widetilde\alpha=O(1)\) and $\mathcal G_1^S$ has been defined in \eqref{eq:skGt}
\end{lemma}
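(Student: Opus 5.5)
We reduce the statement to a weighted estimate for the macroscopic mismatch $v-(v^{S_1})^{-X_1}$ against the $1$-shock weight $|\partial_x(v^{S_1})^{-X_1}|$.

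\emph{Step 1: Cauchy--Schwarz in $\xi$ and Young in $x$.} By Cauchy--Schwarz in $\xi$ and the hypothesis on $H$,
\[
\int_{\mathbb R^3}\Bigl|H\frac{G}{M_\#}\Bigr|\,d\xi\le \norm{H}_{M_\#}\norm{G}_{M_\#}\le C\delta_1^k\,\bigl|\partial_x(v^{S_1})^{-X_1}\bigr|\,\norm{G}_{M_\#}.
\]
Young's inequality in $x$ with weight $\widetilde\alpha$ then gives
\[
\frac{1}{\widetilde\alpha}\int\norm{G}_{M_\#}^2\,dx+C\widetilde\alpha\,\delta_1^{2k}\int_{\mathbb R}\bigl|\partial_x(v^{S_1})^{-X_1}\bigr|^2\,\bigl|v-(v^{S_1})^{-X_1}\bigr|^2\,dx.
\]
Since $\widetilde\alpha=O(1)$, it remains to bound the last integral by
\[
C\delta_1^2\mathcal G_1^S+C\delta_1^4e^{-C\delta_1t}\int\eta\,dx+C\delta_1^3\bigl(\delta_3^2e^{-C\delta_3t}+\delta_0^2e^{-C\delta_1t}\bigr).
\]

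\emph{Step 2: decomposition of the mismatch.} As in \eqref{eq:interaction-est}, write
\[
v-(v^{S_1})^{-X_1}=\phi+\bigl((v^{S_3})^{-X_3}-v^*\bigr)+\bigl(v^C-v_*\bigr).
\]
For the $\phi$-part, use $|\partial_x(v^{S_1})^{-X_1}|\le C\delta_1^2$ from Lemma~\ref{lem:bs} to pull out one factor of the square. Then split with $1=\varphi_1^2+(1-\varphi_1^2)$:
\begin{itemize}
\item On $\varphi_1^2$ we get $C\delta_1^2\mathcal G_1^S$ directly from \eqref{eq:skGt}.
\item On $1-\varphi_1^2\le 2\varphi_3$, Lemma~\ref{lem:intsk} gives $\varphi_3|\partial_x(v^{S_1})^{-X_1}|\le C\delta_1^2e^{-C\delta_1t}$. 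Combined with the other $\delta_1^2$ factor and Lemma~\ref{lem:rel-entropy-equiv}, this yields $C\delta_1^4e^{-C\delta_1t}\int\eta\,dx$.
\end{itemize}
For the two profile parts, apply Lemma~\ref{lem:wave-interaction-L2}, estimates \eqref{eq:wave-interaction-L2} and \eqref{eq:wave-interaction-L2b}, which give $C\delta_1^3\delta_C^2e^{-C\delta_1t}+C\delta_1^3\delta_3^2(e^{-C\delta_1t}+e^{-C\delta_3t})$. Since $\delta_C,\delta_3\le\delta_0$, these are absorbed into $C\delta_1^3(\delta_3^2e^{-C\delta_3t}+\delta_0^2e^{-C\delta_1t})$.

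\emph{Main obstacle.} The delicate point is exponent bookkeeping in Step~2. The factor $\delta_1^{2k}$ must be combined with the $\delta_1^2$ gained from $|\partial_x(v^{S_1})^{-X_1}|$ and with the $\delta_1^{3}$ from the interaction lemma. This is what produces the powers $2k+2$, $2k+4$ and $2k+3$ in the statement. In the region where $\varphi_1<1$, i.e.\ between the two shocks, we must ensure that the exponential decay comes from Lemma~\ref{lem:intsk} rather than from $\mathcal G_1^S$. This in turn requires the shift separation \eqref{eq:shiftsp1}, which holds under the bootstrap assumption.
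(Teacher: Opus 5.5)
Your proposal is correct and follows the paper's proof essentially step by step. The shared steps are: Young's inequality combined with the hypothesis on $H$; the decomposition $v-(v^{S_1})^{-X_1}=\phi+(v^C-v_*)+\bigl((v^{S_3})^{-X_3}-v^*\bigr)$; the cutoff splitting of the $\phi$-part into a $\mathcal G_1^S$ contribution and an exponentially decaying one via Lemma~\ref{lem:intsk}; and Lemma~\ref{lem:wave-interaction-L2} for the two profile parts. Your use of $1-\varphi_1^2\le 2\varphi_3$ is only a slightly tidier version of the paper's splitting $\phi=\varphi_1\phi+\varphi_3\phi$.
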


\begin{proof}
For convenience, we omit the shift $X_1$. By Young's inequality, for any \(\widetilde\alpha>0\),
\begin{align*}
\iint_{\mathbb R\times\mathbb R^3}
\bigl|v-v^{S_1}\bigr|
\left|H\frac{G}{M_\#}\right|\,d\xi\,dx
\le\;&
\widetilde\alpha
\int_{\mathbb R}
|v-v^{S_1}|^2\norm{H}_{M_\#}^2\,dx \\
&+
\frac{1}{\widetilde\alpha}
\int_{\mathbb R}
\norm{G}_{M_\#}^2\,dx .
\end{align*}
Using the assumption on \(H\), we obtain
\begin{align*}
\iint_{\mathbb R\times\mathbb R^3}
\bigl|v-v^{S_1}\bigr|
\left|H\frac{G}{M_\#}\right|\,d\xi\,dx
\le\;&
C\widetilde\alpha\,\delta_1^{2k}
\int_{\mathbb R}
|v-v^{S_1}|^2\,|(v^{S_1})_x|^2\,dx \\
&+
\frac{1}{\widetilde\alpha}
\int_{\mathbb R}
\norm{G}_{M_\#}^2\,dx .
\end{align*}

We now decompose
\[
v-v^{S_1}
=
\phi +(v^C-v_*)+(v^{S_3}-v^*),
\]
and therefore
\begin{align*}
\int_{\mathbb R}|v-v^{S_1}|^2 |(v^{S_1})_x|^2\,dx
\le\;&
C\int_{\mathbb R}\phi^2 |(v^{S_1})_x|^2\,dx \\
&+
C\int_{\mathbb R}|v^C-v_*|^2 |(v^{S_1})_x|^2\,dx \\
&+
C\int_{\mathbb R}|v^{S_3}-v^*|^2 |(v^{S_1})_x|^2\,dx .
\end{align*}

For the perturbation term, since \(|(v^{S_1})_x|\lesssim \delta_1^2\) and
\[
\varphi_1+\varphi_3=1,
\]
we have
\begin{align*}
\int_{\mathbb R}\phi^2 |(v^{S_1})_x|^2\,dx
&\le
C\delta_1^2\int_{\mathbb R}\phi^2 |(v^{S_1})_x|\,dx \\
&\le
C\delta_1^2\int_{\mathbb R}|\varphi_1\phi|^2 |(v^{S_1})_x|\,dx
+
C\delta_1^2\int_{\mathbb R}|\varphi_3\phi|^2 |(v^{S_1})_x|\,dx .
\end{align*}
By the definition of \(\mathcal G_1^S\) in \eqref{eq:skGt} and Lemma~\ref{lem:intsk},
\begin{align*}
\int_{\mathbb R}\phi^2 |(v^{S_1})_x|^2\,dx
\le
C\delta_1^2\mathcal G_1^S
+
C\delta_1^4 e^{-C\delta_1 t}\int_{\mathbb R}\eta(U|\bar U)\,dx .
\end{align*}

Next, the interaction terms between the \(1\)-shock and the contact wave / \(3\)-shock are controlled by the previously established wave-interaction estimate (Lemma \ref{lem:wave-interaction-L2}): 
\begin{align*}
&\int_{\mathbb R}|v^C-v_*|^2 |(v^{S_1})_x|^2\,dx
+
\int_{\mathbb R}|v^{S_3}-v^*|^2 |(v^{S_1})_x|^2\,dx\\
&\qquad \qquad \qquad \qquad \le
C\delta_1^{3}\bigl(\delta_3^2e^{-C\delta_3 t}+(\delta_C^2+\delta_3^2)e^{-C\delta_1 t}\bigr).
\end{align*}

Combining the above bounds, we conclude that
\begin{align*}
& \iint_{\mathbb R\times\mathbb R^3}
\bigl|v-v^{S_1}\bigr|
\left|H\frac{G}{M_\#}\right|\,d\xi\,dx \\
& \quad \le 
C\wtilde \alpha\delta_1^{2k+2}\mathcal G_1^S
+
C\wtilde \alpha\delta_1^{2k+4}e^{-C\delta_1 t}\int_{\mathbb R}\eta(U|\bar U)\,dx \\
&\qquad +
C\wtilde \alpha\delta_1^{2k+3}\bigl(\delta_3^2e^{-C\delta_3 t}+(\delta_C^2+\delta_3^2)e^{-C\delta_1t}\bigr)
+
\frac{1}{\widetilde\alpha}
\int_{\mathbb R}
\norm{G}_{M_\#}^2\,dx .
\end{align*}
This proves the lemma.
\end{proof}

\begin{lemma}\label{lem:kc2}
Let \(|\alpha|=1,2\). Assume that
\[
\norm{H}_{\nu,M_\#}
\le C\,\delta_1^{\,k}\,\bigl|(v^{S_1})_x\bigr|.
\]
Then
\begin{align*}
\iint_{\mathbb R\times\mathbb R^3}
\bigl|\partial_x^\alpha (v-v^{S_1})\bigr|
\left|H\frac{G}{M_\#}\right|\,d\xi\,dx
\le\;&
C\delta_1^{2k+3}
\bigl(\delta_3^{2(|\alpha|+1)}e^{-C\delta_3 t}
+\delta_C^{2}e^{-Ct}\bigr) \\
&+
C\delta_1^{2k+2}\|\partial^\alpha\phi\|_{L^2_x}^2
+
\frac{1}{\widetilde\alpha}
\int_{\mathbb R}\norm{G}_{M_\#}^2\,dx,
\end{align*}
where \(\widetilde\alpha=O(1)\).
\end{lemma}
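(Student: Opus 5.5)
I will follow the proof of Lemma~\ref{lem:kc1} closely, suppressing the shift $X_1$ as before. First apply Young's inequality with parameter $\widetilde\alpha$:
\[
\iint \bigl|\partial_x^\alpha (v-v^{S_1})\bigr|\left|H\frac{G}{M_\#}\right|d\xi\,dx
\le \widetilde\alpha\int_{\mathbb R}|\partial_x^\alpha(v-v^{S_1})|^2\norm{H}_{M_\#}^2\,dx+\frac{1}{\widetilde\alpha}\int_{\mathbb R}\norm{G}_{M_\#}^2\,dx .
\]
Since $\norm{H}_{M_\#}\le\norm{H}_{\nu,M_\#}\le C\delta_1^k|(v^{S_1})_x|$, the first term is bounded by $C\widetilde\alpha\,\delta_1^{2k}\int|\partial_x^\alpha(v-v^{S_1})|^2|(v^{S_1})_x|^2dx$.

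Next decompose, exactly as in Lemma~\ref{lem:kc1},
\[
\partial_x^\alpha(v-v^{S_1})=\partial_x^\alpha\phi+\partial_x^\alpha(v^C-v_*)+\partial_x^\alpha\bigl(v^{S_3}-v^*\bigr).
\]
The difference from the zeroth-order case is that the perturbation piece is no longer routed through $\mathcal G_1^S$; instead we simply use $|(v^{S_1})_x|\le C\delta_1^2$ from Lemma~\ref{lem:bs}. This gives $\int|\partial_x^\alpha\phi|^2|(v^{S_1})_x|^2dx\le C\delta_1^4\|\partial^\alpha\phi\|_{L^2_x}^2$. Multiplied by $\delta_1^{2k}$, this is better than the claimed $\delta_1^{2k+2}\|\partial^\alpha\phi\|_{L^2_x}^2$. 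No localization by $\varphi_i$ or a coercive shock term is needed, because derivatives of the perturbation are already controlled in the dissipative norms of \eqref{eq:pries}.

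The two profile-interaction pieces are exactly the quantities in Lemma~\ref{lem:wave-interaction-derivative}. By \eqref{eq:wave-interaction-derivative-1}, their sum is bounded by
\[
C\delta_1^3\delta_C^2e^{-C\delta_1 t}+C\delta_1^3\delta_3^{2(|\alpha|+1)}\bigl(e^{-C\delta_3 t}+e^{-C\delta_1 t}\bigr).
\]
Multiplying by $\delta_1^{2k}$ produces the $\delta_1^{2k+3}$ prefactor in the statement.

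The main obstacle is matching the exact time-decay form stated, $\delta_3^{2(|\alpha|+1)}e^{-C\delta_3t}+\delta_C^2e^{-Ct}$, whereas Lemma~\ref{lem:wave-interaction-derivative} yields rates $e^{-C\delta_1t}$.

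For the contact term I would redo the splitting argument behind Lemma~\ref{lem:wave-interaction-derivative}. By Lemma~\ref{lem:contact-wave-estimate0}, $\partial_x^\alpha v^C$ carries an extra factor $(1+t)^{-|\alpha|/2}$ and a Gaussian centered at $x=0$. The $1$-shock, by \eqref{eq:shiftsp1}, sits at distance $\gtrsim t$ from the origin. Splitting $\mathbb R$ at the midpoint, one factor is exponentially small on each half, so decay of the form $e^{-ct}$ should be obtainable there, with constants absorbed into $C$.

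For the $3$-shock term, the derivative bounds $|\partial_x^\alpha v^{S_3}|\le C\delta_3^{|\alpha|+1}e^{-C\delta_3|z_3|}$ together with the separation give the $\delta_3^{2(|\alpha|+1)}$ factor. The $e^{-C\delta_1t}$ part can be absorbed by writing the final bound with $e^{-C\min(\delta_1,\delta_3)t}$ if necessary. If an exact match fails, I would accept the rates from Lemma~\ref{lem:wave-interaction-derivative}, since only time-integrability, giving $O(\delta_0)$ contributions, is used downstream.

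Finally, fix $\widetilde\alpha=O(1)$ and collect the three pieces to conclude.
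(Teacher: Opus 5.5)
Your argument is the paper's argument: Young's inequality with $\widetilde\alpha$, then the hypothesis on $H$, then the splitting $\partial_x^\alpha(v-v^{S_1})=\partial_x^\alpha\phi+\partial_x^\alpha(v^C-v_*)+\partial_x^\alpha(v^{S_3}-v^*)$, then the sup bound on $(v^{S_1})_x$ for the $\phi$-piece (your $\delta_1^4$ is sharper than the paper's $\delta_1^2$), and finally Lemma~\ref{lem:wave-interaction-derivative} for the two profile pieces.

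On the decay rates, the paper's own proof gets no further than you do. It ends with $\delta_C^2e^{-C\delta_1 t}$ and simply drops the $\delta_3^{2(|\alpha|+1)}e^{-C\delta_1 t}$ part, so your fallback is exactly what is proved.

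Do not pursue the midpoint-splitting upgrade to a $\delta_1$-independent $e^{-ct}$; it fails on the half containing $x=0$. There the contact Gaussian is not small, and the only decay comes from the $1$-shock tail, of size about $\delta_1^2e^{-c\delta_1|\sigma_1 t+X_1(t)|}$. Since the Young step is sharp, this piece genuinely decays only like $e^{-c\delta_1 t}$, so the $e^{-Ct}$ in the statement must be read as $e^{-C\delta_1 t}$.
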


\begin{proof}
For convenience, we omit the shift $X_1$. By Young's inequality, for any \(\widetilde\alpha>0\),
\begin{align*}
\iint_{\mathbb R\times\mathbb R^3}
\bigl|\partial^\alpha_x (v-v^{S_1})\bigr|
\left|H\frac{G}{M_\#}\right|\,d\xi\,dx
\le\;&
\widetilde\alpha
\int_{\mathbb R}
|\partial^\alpha_x (v-v^{S_1})|^2\norm{H}_{M_\#}^2\,dx \\
&+
\frac{1}{\widetilde\alpha}
\int_{\mathbb R}
\norm{G}_{M_\#}^2\,dx .
\end{align*}
Using the assumption on \(H\), we infer
\begin{align*}
\iint_{\mathbb R\times\mathbb R^3}
\bigl|\partial^\alpha_x (v-v^{S_1})\bigr|
\left|H\frac{G}{M_\#}\right|\,d\xi\,dx
\le\;&
C\widetilde\alpha\,\delta_1^{2k}
\int_{\mathbb R}
|\partial^\alpha_x (v-v^{S_1})|^2\,|(v^{S_1})_x|^2\,dx \\
&+
\frac{1}{\widetilde\alpha}
\int_{\mathbb R}
\norm{G}_{M_\#}^2\,dx .
\end{align*}

We now decompose
\[
\partial_x^\alpha (v-v^{S_1})
=
\partial_x^\alpha\phi
+
\partial_x^\alpha(v^C-v_*)
+
\partial_x^\alpha(v^{S_3}-v^*),
\]
and therefore
\begin{align*}
\int_{\mathbb R}
|\partial_x^\alpha (v-v^{S_1})|^2\,|(v^{S_1})_x|^2\,dx
\le\;&
C\int_{\mathbb R}|\partial_x^\alpha\phi|^2\,|(v^{S_1})_x|^2\,dx \\
&+
C\int_{\mathbb R}|\partial_x^\alpha(v^C-v_*)|^2\,|(v^{S_1})_x|^2\,dx \\
&+
C\int_{\mathbb R}|\partial_x^\alpha(v^{S_3}-v^*)|^2\,|(v^{S_1})_x|^2\,dx .
\end{align*}

For the perturbation term, since \(\|(v^{S_1})_x\|_{L^\infty_x}\le C\delta_1^2\), we obtain
\begin{align*}
\int_{\mathbb R}|\partial_x^\alpha\phi|^2\,|(v^{S_1})_x|^2\,dx
\le
C\delta_1^2\|\partial_x^\alpha\phi\|_{L^2_x}^2.
\end{align*}

For the interaction terms involving the viscous contact wave and the \(3\)-shock profile, using the previously established derivative interaction bounds (Lemma~\ref{lem:wave-interaction-derivative}), we derive that 
\begin{align*}
\int_{\mathbb R}
|\partial_x^\alpha (v-v^{S_1})|^2\,|(v^{S_1})_x|^2\,dx
\le\; 
C\delta_1^2\|\partial_x^\alpha\phi\|_{L^2_x}^2  +
C\delta_1^{3}
\bigl(\delta_3^{2(|\alpha|+1)}e^{-C\delta_3 t}
+\delta_C^2e^{-C\delta_1t}\bigr).
\end{align*}

Substituting this estimate into the previous bound, we conclude that
\begin{align*}
\iint_{\mathbb R\times\mathbb R^3}
\bigl|\partial_x^\alpha (v-v^{S_1})\bigr|
\left|H\frac{G}{M_\#}\right|\,d\xi\,dx
\le\;&
C\delta_1^{2k+2}\|\partial^\alpha\phi\|_{L^2_x}^2 \\
&+
C\delta_1^{2k+3}
\bigl(\delta_3^{2(|\alpha|+1)}e^{-C\delta_3 t}
+\delta_C^{2}e^{-C\delta_1t}\bigr) \\
&+
\frac{1}{\widetilde\alpha}
\int_{\mathbb R}
\norm{G}_{M_\#}^2\,dx .
\end{align*}
This proves the lemma.
\end{proof}

\begin{lemma}\label{lem:kc3}
Let \(|\beta|=0,1\). Assume that for \(j=0,1\),
\[
\norm{\partial_x^j H}_{\nu,M_\#} 
\le
C\,\delta_1^{k+j}\,\bigl|(v^{S_1})_x\bigr|.
\]
Then
\begin{align*}
&\iint_{\mathbb R\times\mathbb R^3}
\bigl|\partial_x\bigl(v-(v^{S_1})^{-X_1}\bigr)\bigr| \,
\bigl|\partial_x^\beta P_1(\xi_1H)\bigr| \,
\Bigl|\frac{G}{M_\#}\Bigr|\,d\xi\,dx \\
&\le
C(\delta_0+\varepsilon)
\int_{\mathbb R}
\norm{G}_{\nu,M_\#}^2\,dx \\
&\quad
+
C(\delta_0+\varepsilon)^{|\beta|}\delta_1^{k}
\bigl(
\delta_3^{2}\delta_1e^{-C\delta_3 t}
+\delta_0^2\delta_1e^{-C\delta_1 t}
\bigr)
+
C(\delta_0+\varepsilon)^{|\beta|}\delta_1^{k+1}
\|\partial_x\phi\|_{L^2_x}^2 .
\end{align*}
\end{lemma}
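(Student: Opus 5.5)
We follow the template of Lemmas~\ref{lem:kc1} and \ref{lem:kc2}, suppressing the shift $X_1$. The first step is a pointwise-in-$x$ bound on the kinetic factor. Since $P_1=I-P_0$, where $P_0$ is a finite-rank projection onto the span of the $\chi_j$ (Gaussian-decaying functions depending smoothly on $(v,u,\theta)$), we have
\[
\norm{P_1(\xi_1 H)}_{M_\#}\le C\norm{H}_{\nu,M_\#}.
\]
This requires $M_\#$ to be chosen so that $M^2/M_\#$ is integrable, as is standard. For $|\beta|=1$ we differentiate:
\[
\partial_x P_1(\xi_1H)=P_1(\xi_1 H_x)-\partial_xP_0(\xi_1H).
\]
The commutator term $\partial_x P_0$ produces factors of $\partial_x(v,u,\theta)=\partial_x(\phi,\psi,\zeta)+\partial_x\bar U$, so that
\[
\norm{\partial_xP_1(\xi_1H)}_{M_\#}\le C\norm{H_x}_{\nu,M_\#}+C\bigl(|\partial_x(\phi,\psi,\zeta)|+|\bar U_x|\bigr)\norm{H}_{\nu,M_\#}.
\]
Using the hypothesis with $j=0,1$, the a priori bound $\|(\phi,\psi,\zeta)_x\|_{L^\infty}\le C\varepsilon$ from Sobolev and \eqref{eq:priass}, and $|\bar U_x|\le C\delta_0$, we get
\[
\norm{\partial_x^\beta P_1(\xi_1H)}_{M_\#}\le C(\delta_0+\varepsilon)^{|\beta|}\delta_1^{k}|(v^{S_1})_x|
\]
for both $\beta$. (When $|\beta|=1$ the hypothesis gives the better factor $\delta_1^{k+1}\le\delta_0\delta_1^k$ for the main part.)

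The second step applies Cauchy--Schwarz in $\xi$ and then Young's inequality in $x$, with a weight tuned to produce the $C(\delta_0+\varepsilon)\int\norm{G}_{\nu,M_\#}^2$ term:
\[
\text{LHS}\le C(\delta_0+\varepsilon)\int\norm{G}_{\nu,M_\#}^2dx+\frac{C(\delta_0+\varepsilon)^{2|\beta|}\delta_1^{2k}}{\delta_0+\varepsilon}\int |\partial_x(v-v^{S_1})|^2|(v^{S_1})_x|^2dx .
\]
We then decompose
\[
\partial_x(v-v^{S_1})=\phi_x+\partial_x(v^C-v_*)+\partial_x(v^{S_3}-v^*).
\]
For the $\phi_x$ part we use $|(v^{S_1})_x|^2\le C\delta_1^4$, which gives $\delta_1^4\|\phi_x\|^2$. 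For the interaction parts we use Lemma~\ref{lem:wave-interaction-derivative} with $|\alpha|=1$, which gives $C\delta_1^3(\delta_C^2e^{-C\delta_1t}+\delta_3^4(e^{-C\delta_1t}+e^{-C\delta_3t}))$.

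The remaining step is bookkeeping the powers so that the $1/(\delta_0+\varepsilon)$ loss is absorbed. For the $\phi_x$ term, $\delta_1^{2k+4}/(\delta_0+\varepsilon)\le\delta_1^{k+1}$ whenever $k\ge0$, since $\delta_1\le\delta_0$. For the interaction terms, $\delta_1^{2k+3}\delta_3^4/(\delta_0+\varepsilon)\le \delta_1^{k}\delta_1\delta_3^2$ and $\delta_1^{2k+3}\delta_C^2/(\delta_0+\varepsilon)\le\delta_1^k\delta_1\delta_0^2$. The factor $(\delta_0+\varepsilon)^{2|\beta|}$ is at most $(\delta_0+\varepsilon)^{|\beta|}$. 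Together these give exactly the claimed right-hand side.

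The main obstacle is the commutator $\partial_xP_0$ in the case $|\beta|=1$. It contains $\partial_x(\phi,\psi,\zeta)$, which is only controlled in $L^\infty$ through $H^2$ smallness. This is why we take $L^\infty$ bounds on the macroscopic derivative there rather than attempting any $L^2$ pairing. We must also check that the gain from $\delta_1^{j}$ in the hypothesis compensates the lack of extra smallness in $|\bar U_x|$. Near the $1$-shock $|\bar U_x|\lesssim\delta_1^2+\delta_0$, which suffices. If the weight balance fails, the fallback is to pick $\widetilde\alpha=\delta_0+\varepsilon$ exactly as above and accept that this choice is what forces the prefactor $(\delta_0+\varepsilon)^{|\beta|}$ in the statement.
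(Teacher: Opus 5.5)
Your route is the paper's. You bound the $\xi$-integral at each $x$ using the hypothesis, apply Young's inequality in $x$, and split $\partial_x(v-v^{S_1})=\phi_x+\partial_x(v^C-v_*)+\partial_x(v^{S_3}-v^*)$. You then use Lemma~\ref{lem:wave-interaction-derivative} with $|\alpha|=1$, and for $|\beta|=1$ you control the commutator from differentiating the projection by $\|(v_x,u_x,\theta_x)\|_{L^\infty}\le C(\delta_0+\varepsilon)$. Using $\delta_0+\varepsilon$ instead of the paper's $\delta_1$ as the Young weight is harmless. Your power counting is also correct; the $\delta_3^4e^{-C\delta_1 t}$ part of the interaction bound goes into the $\delta_0^2\delta_1^{k+1}e^{-C\delta_1 t}$ term.

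One step, however, is false as written: the claim $\|P_1(\xi_1H)\|_{M_\#}\le C\|H\|_{\nu,M_\#}$, and likewise $\|P_1(\xi_1H_x)\|_{M_\#}\le C\|H_x\|_{\nu,M_\#}$ when $|\beta|=1$. The norm $\|\xi_1H\|_{M_\#}$ carries the weight $\xi_1^2$, which is not dominated by $1+|\xi|$. For $H=\sqrt{M_\#}\,\mathbf{1}_{B(Re_1,1)}$ the ratio $\|\xi_1H\|_{M_\#}/\|H\|_{\nu,M_\#}$ grows like $R^{1/2}$. Subtracting $P_0(\xi_1H)$ does not help, since only that finite-rank part is bounded by $\|H\|_{\nu,M_\#}$. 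The hypothesis controls $H$ only in $\|\cdot\|_{\nu,M_\#}$, so your bound at each $x$ does not follow.

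The fix is to not place $P_1(\xi_1H)$ alone in $L^2_\xi(M_\#^{-1})$. Instead, bound the pairing directly and split the velocity weight between the two factors:
\[
\int_{\mathbb R^3}\frac{|\xi_1H|\,|G|}{M_\#}\,d\xi\le\|H\|_{\nu,M_\#}\,\|G\|_{\nu,M_\#}.
\]
This follows from $|\xi_1|\le(1+|\xi|)^{1/2}(1+|\xi|)^{1/2}$ and Cauchy--Schwarz. The pieces $P_0(\xi_1H)$ and $(\partial_xP_0)(\xi_1H)$ are Maxwellian-type functions times $\langle\xi_1H,\chi_j\rangle_M=O(\|H\|_{\nu,M_\#})$, so they pair with $\|G\|_{M_\#}$. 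This is what the paper's $I_1$ and $J_1$ steps do, and it is why the conclusion contains $\|G\|_{\nu,M_\#}$ rather than $\|G\|_{M_\#}$.

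Replace your estimate with $\int_{\mathbb R^3}|\partial_x^\beta P_1(\xi_1H)|\,|G|/M_\#\,d\xi\le C(\delta_0+\varepsilon)^{|\beta|}\delta_1^{k}|(v^{S_1})_x|\,\|G\|_{\nu,M_\#}$ at each $x$. With that change, your Young step, the decomposition, and the bookkeeping go through unchanged.
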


\begin{proof}
For convenience, we omit the shift $X_1$. We divide the proof into the cases \(|\beta|=0\) and \(|\beta|=1\).

\medskip
\noindent\emph{Case 1: \(|\beta|=0\).}
Since
\[
P_1(\xi_1H)
=
\xi_1H-\sum_{j=0}^4\langle \xi_1H,\chi_j\rangle_M\chi_j,
\]
we write
\begin{align*}
&\iint
\bigl|\partial_x^\alpha(v-v^{S_1})\bigr| \,
\bigl|P_1(\xi_1H)\bigr| \,
\Bigl|\frac{G}{M_\#}\Bigr|\,d\xi\,dx \\
&\le
\iint
\bigl|\partial_x^\alpha(v-v^{S_1})\bigr|
\, \bigl|\xi_1H\bigr|
\, \Bigl|\frac{G}{M_\#}\Bigr|\,d\xi\,dx \\
&\qquad
+
C\iint
\bigl|\partial_x^\alpha(v-v^{S_1})\bigr|
\,\bigl|\langle \xi_1H,\chi_j\rangle_M\bigr|
\,\bigl|M\bigr|
\,\Bigl|\frac{G}{M_\#}\Bigr|\,d\xi\,dx
=:I_1+I_2 .
\end{align*}

For \(I_1\), Holder's inequality in \(\xi\) gives
\begin{align*}
I_1
\le&
\int
\bigl|\partial_x^\alpha(v-v^{S_1})\bigr|
\, \norm{H}_{\nu,M_\#} \, \norm{G}_{\nu,M_\#} 
\, dx .
\end{align*}
Using the assumption with \(j=0\), we obtain
\begin{align*}
I_1
\le&
C\delta_1^k
\int
\bigl|\partial_x^\alpha(v-v^{S_1})\bigr| \,
\bigl|(v^{S_1})_x\bigr| \,
\norm{G}_{\nu,M_\#} \,
dx \\
\le&
C\delta_1^{k-1}
\int
\bigl|\partial_x^\alpha(v-v^{S_1})\bigr|^2 \,
\bigl|(v^{S_1})_x\bigr|^2\,dx
+
C\delta_1
\int
\norm{G}_{\nu,M_\#}^2 \,dx .
\end{align*}
If \(|\alpha|=0\), we use the zeroth-order interaction estimate (Lemma \ref{lem:wave-interaction-L2}); if \(|\alpha|=1\), we use the derivative interaction estimate (Lemma \ref{lem:wave-interaction-derivative}). In this case,
\begin{align*}
I_1
\le&
C\delta_1
\int
\norm{G}_{\nu,M_\#}^2\,dx \\
&+
C\delta_1^{k+1}
\bigl(
\delta_1\delta_3^{4}e^{-C\delta_3 t}
+\delta_C^2\delta_1e^{-C\delta_1t}
\bigr)
+
C\delta_1^{k+1}
\|\partial_x\phi\|_{L^2_x}^2 .
\end{align*}
The term \(I_2\) is treated in exactly the same way, since
\[
|\langle \xi_1H,\chi_j\rangle_M|
\le
C\left(
\int \frac{(1+|\xi|)|H|^2}{M_\#}\,d\xi
\right)^{1/2},
\qquad
\left|\frac{M}{M_\#}\right|\le C.
\]
This proves the estimate for \(|\beta|=0\). Also, we can compute this for $|\alpha|=0$. That is,

\begin{align} \label{eq:kc3-1}
&\iint_{\mathbb R\times\mathbb R^3}
\bigl|v-(v^{S_1})^{-X_1}\bigr| \,
\bigl|\partial_x^\beta P_1(\xi_1H)\bigr| \,
\Bigl|\frac{G}{M_\#}\Bigr|\,d\xi\,dx \nonumber \\
&\le
C\delta_1
\int_{\mathbb R}
\norm{G}_{\nu,M_\#}^2\,dx 
+
C\delta_1^{k+1}
\bigl(
\delta_3^{4}\delta_1e^{-C\delta_3 t}
+\delta_0^2\delta_1e^{-C\delta_1 t}
\bigr) \nonumber \\
&\quad +C\delta_1^{k+1}\mathcal G_1^S
+
C\delta_1^{k+3} e^{-C\delta_1 t}\int_{\mathbb R}\eta(U|\bar U)\,dx .
\end{align}

\medskip
\noindent\emph{Case 2: \(|\beta|=1\).}
Differentiating the projection gives
\[
\partial_x P_1(\xi_1H)
=
\xi_1H_x
-
\sum_{j=0}^4
\Bigl(
\partial_x\langle \xi_1H,\chi_j\rangle_M\,\chi_j
+
\langle \xi_1H,\chi_j\rangle_M\,\partial_x\chi_j
\Bigr).
\]
Hence
\begin{align*}
\left|\partial_x P_1(\xi_1H)\right|
\le&
|\xi_1H_x|
+
C|(v_x,u_x,\theta_x)|\,|M|
\, \norm{H}_{\nu,M_\#} +
C|M|\,\norm{H_x}_{\nu,M_\#}.
\end{align*}
Accordingly,
\begin{align*}
&\iint
\bigl|\partial_x^\alpha(v-v^{S_1})\bigr| \,
\bigl|\partial_xP_1(\xi_1H)\bigr| \,
\Bigl|\frac{G}{M_\#}\Bigr|\,d\xi\,dx \\
&\le
\iint
\bigl|\partial_x^\alpha(v-v^{S_1})\bigr|
\,\bigl|\xi_1H_x\bigr| \,
\Bigl|\frac{G}{M_\#}\Bigr|\,d\xi\,dx \\
&\qquad
+
C\iint
\bigl|(v_x,u_x,\theta_x)\bigr|\,
\bigl|\partial_x^\alpha(v-v^{S_1})\bigr| \,
\norm{H}_{\nu,M_\#} \,
\Bigl|\frac{MG}{M_\#}\Bigr|\,d\xi\,dx \\
&\qquad
+
C\iint
\bigl|\partial_x^\alpha(v-v^{S_1})\bigr| \,
\norm{H_x}_{\nu,M_\#} \,
\Bigl|\frac{MG}{M_\#}\Bigr|\,d\xi\,dx
=:J_1+J_2+J_3 .
\end{align*}

Using the assumptions with \(j=0,1\), together with the a priori bound
\[
\|(v_x,u_x,\theta_x)\|_{L^\infty_x}\le C(\delta_0+\varepsilon),
\]
we estimate each \(J_\ell\) exactly as above and obtain
\begin{align*}
J_\ell
\le&
C(\delta_0+\varepsilon)
\int
\norm{G}_{\nu,M_\#}^2 \,dx \\
&+
C(\delta_0+\varepsilon)\delta_1^{k}
\bigl(
\delta_1\delta_3^{|\alpha|+2}e^{-C\delta_3 t}
+\delta_0^2\delta_1^{|\alpha|+1}e^{-C\delta_1 t}
\bigr)
\\
&\qquad +
C(\delta_0+\varepsilon)\delta_1^{k+2}
\|\partial^\alpha\phi\|_{L^2_x}^2 ,
\qquad \ell=1,2,3.
\end{align*}
Summing these bounds gives the desired estimate for \(|\beta|=1\), and the proof is complete.
\end{proof}

 The next lemma gives the differentiated source estimate needed in the high-order microscopic analysis.

\begin{lemma}\label{lem:kc4} 
  There exists a constant $C>0$ such that 
  \begin{align*} 
    & \iint_{\mathbb R\times\mathbb R^3} \partial_x \myparab{ -\frac{1}{v}P_1(\xi_1M_x) +\sum_{i=1,3}\frac{1}{\shockw{v}{i}}P_1^{S_i}\myparas{\xi_1\partial_x\shockw{M}{i}} } \frac{|G|}{M_\#}\,d\xi\,dx \\ 
    &\leq C\wtilde{\alpha}(\delta_0+\varepsilon)^2\norm{\myparas{\phi_x,\psi_x,\zeta_x}}_{L^2}^2 + C\wtilde{\alpha}\delta_0^3(\delta_1e^{-C\delta_1t}+\delta_3e^{-C\delta_3t}) + C\wtilde{\alpha}\frac{\delta_C^4}{(1+t)^{3/2}} \\ 
    &\quad + C\wtilde{\alpha}(\varepsilon+\delta_0)^2\delta_3^2 e^{-C\delta_3t} + C\wtilde{\alpha}(\varepsilon+\delta_0)^2\delta_1^2 e^{-C\delta_1t} + \frac{1}{\wtilde{\alpha}}\int_{\mathbb R}\norm{G}_{M_\#}^2 \,dx \\ 
    &\quad + C\wtilde{\alpha}\norm{\myparas{\psi_{xx},\zeta_{xx}}}_{L^2}^2 + C\wtilde{\alpha}(\delta_0+\varepsilon)^2 (\mathcal{G}_1^S+\mathcal{G}_3^S), 
  \end{align*} 
  where \(\wtilde{\alpha}=O(1)\). 
  \end{lemma}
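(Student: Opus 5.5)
The plan is to first split the source by Young's inequality,
\[
\iint \partial_x[\cdots]\,\frac{|G|}{M_\#}\,d\xi\,dx\le \widetilde\alpha\int_{\mathbb R}\Bigl\|\partial_x\Bigl[-\tfrac1vP_1(\xi_1M_x)+\sum_{i=1,3}\tfrac{1}{(v^{S_i})^{-X_i}}P_1^{S_i}\bigl(\xi_1\partial_x(M^{S_i})^{-X_i}\bigr)\Bigr]\Bigr\|_{M_\#}^2dx+\frac1{\widetilde\alpha}\int_{\mathbb R}\|G\|_{M_\#}^2dx.
\]
The whole lemma then reduces to an $L^2_xL^2_\xi(M_\#)$ bound on the differentiated bracket, call it $\partial_x\mathcal S$.

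To get that bound, expand $M_x=M\bigl(\partial_v\log M\,v_x+\nabla_u\log M\cdot u_x+\partial_\theta\log M\,\theta_x\bigr)$, which is linear in $(v_x,u_x,\theta_x)$ with $\xi$-polynomial coefficients times $M$. Do the same for each shock term. Then write $(v_x,u_x,\theta_x)=(\phi_x,\psi_x,\zeta_x)+\bar U_x$ with $\bar U_x=\sum_i\partial_x(U^{S_i})^{-X_i}+U^C_x$. After this decomposition $\mathcal S$ splits into three pieces:
\[
\mathcal S=\mathcal S_{\rm pert}+\mathcal S_{\rm diff}+\mathcal S_C .
\]
\begin{itemize}
\item $\mathcal S_{\rm pert}=-\tfrac1vP_1\bigl(\xi_1 D_UM\cdot(\phi_x,\psi_x,\zeta_x)\bigr)$ is the perturbative part.
\item $\mathcal S_{\rm diff}=\sum_i\bigl[\tfrac{1}{(v^{S_i})^{-X_i}}P_1^{S_i}(\xi_1 D_UM^{S_i}\cdot)-\tfrac1vP_1(\xi_1D_UM\cdot)\bigr]\partial_x(U^{S_i})^{-X_i}$ is the profile-difference part.
\item $\mathcal S_C=-\tfrac1vP_1(\xi_1D_UM\cdot U^C_x)$ is the contact part.
\end{itemize}
Differentiating once more in $x$ produces terms of the following types:
\begin{itemize}
\item $(\phi_{xx},\psi_{xx},\zeta_{xx})$ times bounded kernels;
\item $(\phi_x,\psi_x,\zeta_x)$ times $U_x$;
\item $\partial_x$ of the coefficient difference times $\partial_x U^{S_i}$;
\item coefficient differences times $\partial_{xx}U^{S_i}$;
\item contact second derivatives.
\end{itemize}

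Each type will be handled as follows.
\begin{itemize}
\item The $\psi_{xx},\zeta_{xx}$ terms give $C\|(\psi_{xx},\zeta_{xx})\|^2$. I expect the $\phi_{xx}$ contribution to cancel or vanish after projection. The pressure-type mode $\xi_1 M\,\partial_vM$ lies in the macroscopic space, since $\partial_v M=-M/v$ makes $\xi_1 M$ a collision invariant times $M$, so $P_1$ kills it. This is precisely why only $\psi_{xx},\zeta_{xx}$ appear in the statement.
\item Quadratic terms $(\phi_x,\psi_x,\zeta_x)\cdot U_x$ are bounded using $\|U_x\|_{L^\infty}\le C(\delta_0+\varepsilon)$ by Sobolev embedding and the a priori bound \eqref{eq:prioriass}. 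This yields $(\delta_0+\varepsilon)^2\|(\phi_x,\psi_x,\zeta_x)\|^2$.
\item For $\mathcal S_{\rm diff}$, the coefficient difference is Lipschitz in $U-(U^{S_i})^{-X_i}$. It multiplies $|\partial_x(v^{S_i})^{-X_i}|$ or its derivative, which are $\lesssim\delta_i|\partial_x v^{S_i}|$ by Lemma~\ref{lem:bs}. We write $U-(U^{S_i})^{-X_i}=(\phi,\psi,\zeta)+(\text{contact}-U_*)+(\text{other shock}-U^*)$ as in Lemma~\ref{lem:kc1}. The perturbation part is split with $\varphi_1+\varphi_3=1$ into $\mathcal G_i^S$ and the exponentially small remainder of Lemma~\ref{lem:intsk}; the wave parts are controlled by Lemmas~\ref{lem:wave-interaction-L2} and \ref{lem:wave-interaction-derivative}. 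The derivative of the coefficient difference brings $(\phi_x,\ldots)$ times $|\partial_x v^{S_i}|\le C\delta_0^2$, which is absorbed into the first-order term. Together these give $(\delta_0+\varepsilon)^2\mathcal G_i^S$, $\delta_0^3\delta_ie^{-C\delta_it}$ and $(\varepsilon+\delta_0)^2\delta_i^2e^{-C\delta_it}$.
\item For $\mathcal S_C$, Lemma~\ref{lem:contact-wave-estimate0} gives $\|\partial_x^2U^C\|_{L^2}^2+\|\partial_x U^C\,\partial_xU\|^2\lesssim\delta_C^2(1+t)^{-3/2}$. The stated $\delta_C^4$ power should follow by pairing with the $\delta_C$-small factor in $U_x$, or otherwise from $\delta_C\le\delta_0$.
\end{itemize}

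I expect the main obstacle to be the profile-difference term at the differentiated level. Both projections $P_1$ and $P_1^{S_i}$ depend on $x$, so $\partial_x$ of their difference produces $\partial_x(U-U^{S_i})\,\partial_xU^{S_i}$ and $(U-U^{S_i})\,\partial_x^2U^{S_i}$. To handle these I will invoke Lemma~\ref{lem:kc3} with $H$ built from $\partial_x M^{S_i}$, taking $k=1$ so that the hypothesis $\|\partial_x^jH\|\le C\delta_1^{k+j}|(v^{S_1})_x|$ holds by Lemma~\ref{lem:bs}. I will use \eqref{eq:kc3-1} for the zeroth-order factor, keeping everything summable in $t$ and within the stated bound.
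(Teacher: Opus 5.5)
Your route is essentially the paper's. You apply Young's inequality with weight $\widetilde\alpha$. You then split the differentiated source into a perturbative part, a contact part and shock coefficient/projector mismatches, and control these by:
\begin{itemize}
\item smooth dependence of $M$ and $P_1$ on $U$;
\item the decomposition of $U-(U^{S_i})^{-X_i}$;
\item the cutoff splitting into $\mathcal G_i^S$ plus Lemma~\ref{lem:intsk};
\item the wave-interaction lemmas.
\end{itemize}
Your linearization $M_x=D_UM\cdot U_x$ is a cleaner bookkeeping of the paper's $\partial_x\bigl(M-(M^{S_1})^{-X_1}-(M^{S_3})^{-X_3}\bigr)$ split. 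Your observation $P_1(\xi_1\partial_vM)=-\frac1vP_1(\xi_1M)=0$ is exactly why no $\phi_{xx}$ appears.

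The step that fails is your last bullet, the $\delta_C^4$ power. The term $-\frac1vP_1\bigl(\xi_1D_UM\cdot U^C_{xx}\bigr)$ in $\partial_x\mathcal S_C$ is \emph{linear} in the contact wave. Its $v^C_{xx}$ part vanishes, but $-\frac1vP_1(\xi_1\partial_\theta M)\,\theta^C_{xx}$ does not, since $P_1(\xi_1\partial_\theta M)$ is the nonzero heat-flux Burnett function. By self-similarity in $x/\sqrt{1+t}$ with a nontrivial profile, $\int|\theta^C_{xx}|^2\,dx\asymp\delta_C^2(1+t)^{-3/2}$. Neither of your proposed fixes works:
\begin{itemize}
\item there is no factor of $U_x$ in this term to pair with;
\item $\delta_C\le\delta_0$ goes the wrong way, since $\delta_C^4\le\delta_C^2$ and not conversely.
\end{itemize}
No other term on the right-hand side can absorb it either. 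Take $(\phi,\psi,\zeta)=0$ on a late time slice and $G$ a suitable multiple of the positive part of the source; the shock terms then decay exponentially, while the left side is $\gtrsim\widetilde\alpha\,\delta_C^2(1+t)^{-3/2}$.

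So your argument, carried out honestly, proves the estimate with $C\widetilde\alpha\,\delta_C^2(1+t)^{-3/2}$ in place of $C\widetilde\alpha\,\delta_C^4(1+t)^{-3/2}$, and the fourth power should not be expected. The paper's proof has the same soft spot. It treats $\mathcal H_1$ ``in the same way as $\mathcal H_2$'', but the contact contribution in $\mathcal H_2$ is quartic ($\int|v^C_x|^4$), while $\mathcal H_1$ contains the linear $U^C_{xx}$ term. The weaker bound is all that is used later: the $\mathcal T_6$ estimate in Lemma~\ref{lem:GxE} carries only $C\delta_C(1+t)^{-3/2}$, consistent with Remark~\ref{rem:kc4-alpha0}. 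State the lemma with $\delta_C^2$ rather than trying to manufacture $\delta_C^4$.

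A smaller point: after your Young step, do not invoke Lemma~\ref{lem:kc3}. It is a paired estimate whose output contains $(\delta_0+\varepsilon)\int\|G\|_{\nu,M_\#}^2\,dx$, which is not on the right-hand side of the target. What you need are direct bounds on
\begin{itemize}
\item $\int|\partial_x(U-(U^{S_i})^{-X_i})|^2|\partial_x(v^{S_i})^{-X_i}|^2\,dx$, and
\item $\int|U-(U^{S_i})^{-X_i}|^2|\partial_x^2(v^{S_i})^{-X_i}|^2\,dx$.
\end{itemize}
Your previous bullet already obtains these from the cutoff splitting and Lemmas~\ref{lem:wave-interaction-L2}--\ref{lem:wave-interaction-derivative}.
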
 
  \begin{proof} 
    Set \[ \mathfrak R := -\frac{1}{v}P_1(\xi_1M_x) +\sum_{i=1,3}\frac{1}{\shockw{v}{i}}P_1^{S_i}\Bigl(\xi_1\partial_x\shockw{M}{i} \Bigr) . \] 
    We only prove the case \(|\alpha|=1\). 
    We first decompose the Maxwellian part: 
    \begin{align*} 
      &-\frac{1}{v}\xi_1 M_x + \sum_{i=1,3}\frac{1}{\shockw{v}{i}}\xi_1 \partial_x \shockw{M}{i} \\
      &\quad = -\frac{1}{v}\xi_1 \myparas{M-\shockw{M}{1}-\shockw{M}{3}}_x +\sum_{i=1,3}\myparas{\frac{1}{\shockw{v}{i}}-\frac{1}{v}}\xi_1 \partial_x\shockw{M}{i}, 
    \end{align*} 
    and similarly for the projected part, 
    \begin{align*} 
      &-\frac{1}{v}\langle \xi_1 M_x, M\rangle_M\chi + \sum_{i=1,3} \frac{1}{\shockw{v}{i}}\bigl\langle \xi_1 \partial_x \shockw{M}{i}, \shockw{\chi}{i}\bigr\rangle_{S_i}\shockw{\chi}{i}\\ 
      &\qquad = -\frac{1}{v}\bigl\langle \xi_1 \myparas{M-\shockw{M}{1}-\shockw{M}{3}}_x , \chi \bigr\rangle_M \chi \\
      &\qquad \quad + \sum_{i=1,3} \myparas{\frac{1}{\shockw{v}{i}}-\frac{1}{v}}\bigl\langle \xi_1 \partial_x\shockw{M}{i}, \chi \bigr\rangle_M \chi\\
      &\qquad \quad + \sum_{i=1,3} \frac{1}{\shockw{v}{i}} \Biggl\{\bigl\langle \xi_1 \partial_x\shockw{M}{i}, \shockw{\chi}{i} \bigr\rangle_{S_i}\shockw{\chi}{i} \Biggr. \\ 
      &\qquad \qquad \qquad \qquad \qquad \qquad \qquad \qquad  \Biggl.-\bigl\langle \xi_1 \partial_x\shockw{M}{i}, \chi \bigr\rangle_M\chi\Biggr\}. 
    \end{align*} 
    Accordingly, we write \[ \mathfrak R=\sum_{j=1}^5 \mathcal K_j, \] where 
    \begin{align*} 
      \mathcal{K}_1 :=& \iint \myparam{-\frac{1}{v}\xi_1\myparas{M-\shockw{M}{1}-\shockw{M}{3}}_x}\frac{|G|}{M_\#} \,d\xi dx,\\ 
      \mathcal{K}_{2i} :=& \iint \myparas{\frac{1}{\shockw{v}{i}}-\frac{1}{v}}\xi_1 \partial_x\shockw{M}{i} \frac{|G|}{M_\#} \,d\xi dx,\qquad (i=1,3),\\ 
      \mathcal{K}_{3} :=& -\iint \frac{1}{v} \Bigl\langle \xi_1\myparas{M-\shockw{M}{1}-\shockw{M}{3}}_x ,\chi \Bigr\rangle_M \chi \frac{|G|}{M_\#} \,d\xi dx,\\
      \mathcal{K}_{4i} :=& \iint \myparas{\frac{1}{\shockw{v}{i}}-\frac{1}{v}}\Bigl\langle \xi_1 \partial_x\shockw{M}{i}, \chi \Bigr\rangle_M \chi \frac{|G|}{M_\#} \,d\xi dx,\qquad (i=1,3),\\ 
      \mathcal{K}_{5i} :=& \iint \frac{1}{\shockw{v}{i}} \Bigl\{\Bigl\langle \xi_1\partial_x\shockw{M}{i},\shockw{\chi}{i} \Bigr\rangle_{S_i}\shockw{\chi}{i} \\
      & \qquad \qquad \qquad \qquad  -\Bigl\langle \xi_1\partial_x\shockw{M}{i},\chi \Bigr\rangle_M \chi\Bigr\} \frac{|G|}{M_\#} \,d\xi dx,\qquad (i=1,3), 
    \end{align*} 
    and \(\mathcal K_j:=\mathcal K_{j1}+\mathcal K_{j3}\) for \(j=2,4,5\). 
    Using Young's inequality and the smooth dependence of \(M\) and \(\chi\) on the macroscopic variables, it suffices to estimate the differentiated forms of the representative terms \[ \mathcal H_1,\ \mathcal H_2,\ \mathcal H_{3i},\ \mathcal H_{4i},\ \mathcal H_{5i}, \] defined by 
    \begin{align*} 
      \mathcal{H}_1 := & \iint \Bigl|P_1\xi_1\partial_{xx} \Bigl(M-\shockw{M}{1}-\shockw{M}{3}\Bigr) \Bigr|\,\Bigl|\frac{G}{M_\#} \Bigr| \,d\xi\, dx, \\ 
      \mathcal{H}_2 := & \iint |v_x| \, \Bigl|\xi_1\Bigl(M-\shockw{M}{1}-\shockw{M}{3}\Bigr)_x\Bigr|\, \Bigl|\frac{G}{M_\#}\Bigr| \,d\xi \, dx, \\ 
      \mathcal{H}_{3i} := & \iint \Bigl|v-\shockw{v}{i}\Bigr|\, \Bigl|\xi_1\partial_{xx}\shockw{M}{i}\Bigr| \, \Bigl|\frac{G}{M_\#}\Bigr| \,d\xi \, dx,\qquad (i=1,3), \\ 
      \mathcal{H}_{4i} := & \iint \Bigl|\Bigl(v-\shockw{v}{i}\Bigr)_x\Bigr|\,\Bigl|\xi_1\partial_x\shockw{M}{i}\Bigr|\,\Bigl|\frac{G}{M_\#}\Bigr| \,d\xi \, dx,\qquad (i=1,3), \\ 
      \mathcal{H}_{5i} := & \iint |v_x| \, \Bigl|v-\shockw{v}{i}\Bigr|\, \Bigl|\xi_1\partial_x\shockw{M}{i}\Bigr| \, \Bigl|\frac{G}{M_\#}\Bigr| \,d\xi \, dx,\qquad (i=1,3). 
    \end{align*} 
    $\mathcal H_1$ comes from $\mathcal K_1+\mathcal K_3$. We only spell out the estimate of \(\mathcal H_2\), since the others are handled in the same way by Lemmas~\ref{lem:kc1}--\ref{lem:kc3}, the derivative wave-interaction estimate, and the standard decay bounds for the composite wave. By the cutoff decomposition, 
    \begin{align*} 
      &\Bigl|\Bigl(M-\shockw{M}{1}-\shockw{M}{3}\Bigr)_x\Bigr| \\
      &\quad \leq \varphi_1\Bigl|\Bigl(M-\shockw{M}{1}\Bigr)_x\Bigr| +\varphi_3\Bigl|\Bigl(M-\shockw{M}{3}\Bigr)_x\Bigr| +C\varphi_3 \Bigl|\shockw{M}{1}\,\partial_x\shockw{v}{1}\Bigr| \\
      &\qquad +C \varphi_1 \Bigl|\shockw{M}{3} \, \partial_x\shockw{v}{3}\Bigr|, 
    \end{align*} 
    and therefore 
    \begin{align*} 
      \mathcal H_2 \leq\;& \frac{1}{\wtilde{\alpha}}\iint \frac{|G|^2}{M_\#} \,d\xi dx + C\wtilde{\alpha}\sum_{i=1,3} \Biggl\{ \int \varphi_i^2|v_x|^2 \, \Bigl|\Bigl(v-\shockw{v}{i}\Bigr)_x\Bigr|^2 \,dx \Biggr.\\
      &\qquad \qquad \qquad \qquad \qquad \qquad \quad \Biggl.+ \int (1-\varphi_i)^2 |v_x|^2\, \Bigl|\partial_x\shockw{v}{i}\Bigr|^2 \,dx \Biggr\}. 
    \end{align*} 
    For \(i=1\), we decompose 
    \[ 
    v_x-\partial_x\shockw{v}{1} = \phi_x+\bigl(v^C-v_*\bigr)_x+\bigl(\shockw{v}{3}-v^*\bigr)_x 
    \] 
    and obtain 
    \begin{align*} 
      &\int \varphi_1^2 |v_x|^2 \, \Bigl|\Bigl(v-\shockw{v}{1}\Bigr)_x\Bigr|^2 dx \\
      & \qquad \le  C(\delta_0+\varepsilon)^2 \|\phi_x\|_{L^2}^2 + C\delta_0^3(\delta_1e^{-C\delta_1t}+\delta_3e^{-C\delta_3t})+ C\int |v_x^C|^4 \,dx \\
      & \qquad \qquad + C(\varepsilon+\delta_0)^2\int \varphi_1^2 \, \Bigl|\partial_x\shockw{v}{3}\Bigr|^2 \,dx. 
    \end{align*} 
    Since \[ \int |v_x^C|^4 \,dx \le C\frac{\delta_C^4}{(1+t)^{3/2}}, \qquad \int \varphi_1^2 \Bigl|\partial_x\shockw{v}{3}\Bigr|^2 \,dx \le C\delta_3^2 e^{-C\delta_3t}, \] it follows that 
    \begin{align*} 
      \mathcal H_2 \leq\;& \frac{1}{\wtilde{\alpha}}\int \norm{G}_{M_\#}^2 \,dx + C\wtilde{\alpha}(\delta_0+\varepsilon)^2\|\phi_x\|_{L^2}^2 + C\wtilde{\alpha}\delta_0^3(\delta_1e^{-C\delta_1t}+\delta_3e^{-C\delta_3t})\\ 
      &+ C\wtilde{\alpha}\frac{\delta_C^4}{(1+t)^{3/2}} + C\wtilde{\alpha}(\varepsilon+\delta_0)^2\delta_3^2 e^{-C\delta_3t}. 
    \end{align*} 
    The term \(\mathcal H_1\) can be treated in the same way as \(\mathcal H_2\). The key point is that the microscopic projection eliminates the leading density contribution, so no \(\phi_{xx}\)-term appears in the estimate.
    The remaining terms \(\mathcal H_{3i}\), \(\mathcal H_{4i}\), and \(\mathcal H_{5i}\), together with the differentiated projector-mismatch terms arising from \(\mathcal K_3\), \(\mathcal K_{4i}\), and \(\mathcal K_{5i}\), 
    are estimated by the same combination of Young's inequality, the smooth dependence of \(M\) and \(\chi\) on the macroscopic variables, 
    Lemmas~\ref{lem:kc1}--\ref{lem:kc3}, and the derivative interaction bounds. Summing all these contributions gives the stated inequality. 
  \end{proof}

\begin{remark}\label{rem:kc4-alpha0} 
  The above estimate is formulated only for \(|\alpha|=1\). 
  The reason is that the corresponding undifferentiated argument (\(\alpha=0\)) does not close at the zeroth-order level because of the contact-wave contribution. 
  Indeed, \[ \int \varphi_1^2 |v_x^C|^2 \,dx \le C\frac{\delta_C}{1+t}\int_{-\infty}^{\sigma_1 t/4} e^{-c x^2/(1+t)}\,dx, \] and the right-hand side is not globally time-integrable. 
  By contrast, \[ \int \varphi_1^2 |v_{xx}^C|^2 \,dx \le C\frac{\delta_C}{(1+t)^2}\int_{-\infty}^{\sigma_1 t/4} e^{-c x^2/(1+t)}\,dx \le C\frac{\delta_C}{(1+t)^{3/2}}, \] which is integrable in time. 
  This explains why the contact contribution must be excluded from the zeroth-order argument and why the differentiated estimate \(|\alpha|=1\) is the relevant form used in the high-order microscopic analysis. 
\end{remark}

\begin{lemma}\label{lem:kc5} 
  Let \(P(\xi)\) be a fixed polynomial in \(\xi\), and assume that \(g\in \mathfrak Z^\perp\), so that \(L_M^{-1}g\) is well-defined. Then 
  \begin{align} \label{eq:kc5} 
    \left( \int_{\mathbb R^3_\xi} P(\xi)\,(L_M^{-1}g)_x\,d\xi \right)^2 \le\;& \frac{C}{\lambda_{\mathrm{mic}}^{2}} \int_{\mathbb R^3_\xi} \frac{(1+|\xi|)^{-1}|g_x|^2}{M_\#}\,d\xi \\ 
    &+ \frac{C}{\lambda_{\mathrm{mic}}^{4}} \left( \int_{\mathbb R^3_\xi} \frac{(1+|\xi|)|M_x|^2}{M_\#}\,d\xi \right) \left( \int_{\mathbb R^3_\xi} \frac{(1+|\xi|)^{-1}|g|^2}{M_\#}\,d\xi \right). \nonumber 
  \end{align} 
\end{lemma}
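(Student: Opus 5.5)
The plan is to differentiate the identity $L_M(L_M^{-1}g)=g$ in $x$ and solve for $(L_M^{-1}g)_x$. Set $h:=L_M^{-1}g\in\mathfrak Z^\perp$. Since $L_M h=\mathcal N(M,h)+\mathcal N(h,M)$ depends on $x$ through $M$ as well as through $h$, differentiating gives
\[
L_M h_x=g_x-\mathcal N(M_x,h)-\mathcal N(h,M_x).
\]
The function $h_x$ need not be microscopic, because the projection $P_1=P_1^{[M]}$ also depends on $x$. I would therefore split $h_x=P_1h_x+P_0h_x$. The macroscopic part is determined by differentiating the orthogonality relations $\int h\,\psi_j\,d\xi=0$ for the collision invariants, which gives $\int h_x\psi_j\,d\xi=0$ directly, since the $\psi_j$ are independent of $x$. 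In fact $h$ is orthogonal to the invariants in the plain $L^2_\xi$ pairing, so the same holds for $h_x$. Thus $h_x\in\mathfrak Z_M^\perp$ in the appropriate sense, and $L_M^{-1}$ may be applied:
\[
h_x=L_M^{-1}\bigl(g_x-\mathcal N(M_x,h)-\mathcal N(h,M_x)\bigr).
\]
Here $g_x$ and the collision terms have vanishing invariant moments, so the inverse is well defined.

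Next I would pair with the polynomial $P(\xi)$ and use Cauchy--Schwarz in $\xi$ with weights $(1+|\xi|)^{\pm1}M_\#^{\pm1}$:
\[
\Bigl(\int P\,h_x\,d\xi\Bigr)^2\le\Bigl(\int\frac{(1+|\xi|)^{-1}|P|^2M_\#^2}{M_\#}\,d\xi\Bigr)\int\frac{(1+|\xi|)|h_x|^2}{M_\#}\,d\xi .
\]
The first factor is a finite constant because of the Gaussian decay of $M_\#$. The second factor is then handled by the weighted inverse estimate (Appendix A) used earlier for \eqref{eq:K4-Gt}:
\[
\|L_M^{-1}f\|_{\nu,M_\#}\le \lambda_{\mathrm{mic}}^{-1}\|(1+|\xi|)^{-1/2}f\|_{M_\#}.
\]
Applied to $g_x$, this produces the first term on the right of \eqref{eq:kc5}, with the factor $C\lambda_{\mathrm{mic}}^{-2}$.

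For the commutator terms I would invoke the bilinear collision bound for hard spheres (Lemma \ref{lem:collision-gain-loss}):
\[
\|(1+|\xi|)^{-1/2}\mathcal N(f_1,f_2)\|_{M_\#}\le C\bigl(\|f_1\|_{\nu,M_\#}\|f_2\|_{M_\#}+\|f_1\|_{M_\#}\|f_2\|_{\nu,M_\#}\bigr).
\]
Taking $f_1=M_x$ and $f_2=h$ (and symmetrically), this bounds the commutator by $C\|M_x\|_{\nu,M_\#}\|h\|_{\nu,M_\#}$. Applying the inverse estimate once more gives $\|h\|_{\nu,M_\#}\le\lambda_{\mathrm{mic}}^{-1}\|(1+|\xi|)^{-1/2}g\|_{M_\#}$. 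Together with the outer $\lambda_{\mathrm{mic}}^{-1}$ from the first inversion, and then squaring, this yields the second term with the factor $\lambda_{\mathrm{mic}}^{-4}$.

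I expect the main obstacle to be the weight bookkeeping in the collision bound. The estimate needs the $\nu$-weight on $h$ and the dual weight $(1+|\xi|)^{-1}$ on $g$, while $M_x$ carries the full $(1+|\xi|)$ weight. Matching these requires the sharp hard-sphere estimate with a single $\nu$-weight on one factor. Because $M$ and $M_\#$ are close under the a priori bounds, the norms $\|\cdot\|_{M}$ and $\|\cdot\|_{M_\#}$ are comparable, so the coercivity constant $\lambda_{\mathrm{mic}}$ of $L_M$ can be taken uniform in $x$, which makes the argument pointwise in $x$.
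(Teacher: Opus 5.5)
Your proposal is correct and follows essentially the paper's route (differentiate $L_M(L_M^{-1}g)=g$, invert, and apply the weighted inverse bound \eqref{eq:collision-inverse} twice together with a bilinear collision estimate). Your bookkeeping is in fact slightly cleaner: you check that $h_x$ and the source stay in the zero-moment space, which the paper leaves implicit, and you close with \eqref{eq:collision-Q} exactly as stated plus the $\nu$-norm bound on $h=L_M^{-1}g$, whereas the paper invokes a variant with the dual weight $(1+|\xi|)^{-1}$ on the second factor that is not the estimate recorded in the appendix. One side remark is inaccurate---$\|\cdot\|_{M}$ and $\|\cdot\|_{M_\#}$ are not comparable when $\theta\neq\theta_\#$, since the Gaussian tails differ---but you do not need it, because Lemma~\ref{lem:collision-coercivity} directly gives the uniformity of $\lambda_{\mathrm{mic}}$ in $x$.
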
 
\begin{proof} Since \(P(\xi)\) is a fixed polynomial and \(M_\#\) is a global Maxwellian, we have \[ \left( \int_{\mathbb R^3_\xi} P(\xi)\,(L_M^{-1}g)_x\,d\xi \right)^2 \le C \int_{\mathbb R^3_\xi} \frac{|(L_M^{-1}g)_x|^2}{M_\#}\,d\xi . \] 
  Next, by differentiating the identity \(L_M(L_M^{-1}g)=g\), we obtain \[ (L_M^{-1}g)_x = L_M^{-1}(g_x) - L_M^{-1}\Bigl( \mathcal N(M_x,L_M^{-1}g)+\mathcal N(L_M^{-1}g,M_x) \Bigr). \] 
  Therefore, 
  \begin{align*} 
    \int_{\mathbb R^3_\xi} \frac{|(L_M^{-1}g)_x|^2}{M_\#}\,d\xi \le\;& C \int_{\mathbb R^3_\xi} \frac{|L_M^{-1}g_x|^2}{M_\#}\,d\xi \\ 
    &+ C \int_{\mathbb R^3_\xi} \frac{\bigl|L_M^{-1}(\mathcal N(M_x,L_M^{-1}g)+\mathcal N(L_M^{-1}g,M_x))\bigr|^2}{M_\#}\,d\xi . 
  \end{align*} 
  By the standard weighted estimate for \(L_M^{-1}\), \[ \int_{\mathbb R^3_\xi} \frac{|L_M^{-1}h|^2}{M_\#}\,d\xi \le \frac{C}{\lambda_{\mathrm{mic}}^2} \int_{\mathbb R^3_\xi} \frac{(1+|\xi|)^{-1}|h|^2}{M_\#}\,d\xi , \] we obtain 
  \begin{align*} 
    \int_{\mathbb R^3_\xi} \frac{|(L_M^{-1}g)_x|^2}{M_\#}\,d\xi \le\;& \frac{C}{\lambda_{\mathrm{mic}}^2} \int_{\mathbb R^3_\xi} \frac{(1+|\xi|)^{-1}|g_x|^2}{M_\#}\,d\xi \\ &+ \frac{C}{\lambda_{\mathrm{mic}}^2} \int_{\mathbb R^3_\xi} \frac{(1+|\xi|)^{-1}|\mathcal N(M_x,L_M^{-1}g)|^2}{M_\#}\,d\xi \\
     &+ \frac{C}{\lambda_{\mathrm{mic}}^2} \int_{\mathbb R^3_\xi} \frac{(1+|\xi|)^{-1}|\mathcal N(L_M^{-1}g,M_x)|^2}{M_\#}\,d\xi . 
    \end{align*} 
    Using the standard bilinear estimate for the collision operator, \[ \int_{\mathbb R^3_\xi} \frac{(1+|\xi|)^{-1}|\mathcal N(f,h)|^2}{M_\#}\,d\xi \le C \left( \int_{\mathbb R^3_\xi} \frac{(1+|\xi|)|f|^2}{M_\#}\,d\xi \right) \left( \int_{\mathbb R^3_\xi} \frac{(1+|\xi|)^{-1}|h|^2}{M_\#}\,d\xi \right), \] with \((f,h)=(M_x,L_M^{-1}g)\) and \((L_M^{-1}g,M_x)\), we deduce 
    \begin{align*} 
      \int_{\mathbb R^3_\xi} \frac{|(L_M^{-1}g)_x|^2}{M_\#}\,d\xi \le\;& \frac{C}{\lambda_{\mathrm{mic}}^2} \int_{\mathbb R^3_\xi} \frac{(1+|\xi|)^{-1}|g_x|^2}{M_\#}\,d\xi \\ 
      &+ \frac{C}{\lambda_{\mathrm{mic}}^2} \left( \int_{\mathbb R^3_\xi} \frac{(1+|\xi|)|M_x|^2}{M_\#}\,d\xi \right) \left( \int_{\mathbb R^3_\xi} \frac{(1+|\xi|)^{-1}|L_M^{-1}g|^2}{M_\#}\,d\xi \right). 
    \end{align*} 
    Applying once again the weighted estimate for \(L_M^{-1}\), \[ \int_{\mathbb R^3_\xi} \frac{(1+|\xi|)^{-1}|L_M^{-1}g|^2}{M_\#}\,d\xi \le \frac{C}{\lambda_{\mathrm{mic}}^2} \int_{\mathbb R^3_\xi} \frac{(1+|\xi|)^{-1}|g|^2}{M_\#}\,d\xi , \] we conclude that 
    \begin{align*} 
      \int_{\mathbb R^3_\xi} \frac{|(L_M^{-1}g)_x|^2}{M_\#}\,d\xi \le\;& \frac{C}{\lambda_{\mathrm{mic}}^2} \int_{\mathbb R^3_\xi} \frac{(1+|\xi|)^{-1}|g_x|^2}{M_\#}\,d\xi \\ 
      &+ \frac{C}{\lambda_{\mathrm{mic}}^4} \left( \int_{\mathbb R^3_\xi} \frac{(1+|\xi|)|M_x|^2}{M_\#}\,d\xi \right) \left( \int_{\mathbb R^3_\xi} \frac{(1+|\xi|)^{-1}|g|^2}{M_\#}\,d\xi \right). 
    \end{align*} 
    Combining this with the first inequality proves \eqref{eq:kc5}. 
  \end{proof}

\begin{lemma}\label{lem:kc6} Let \(P(\xi)\) be a fixed polynomial in \(\xi\). Recall that $\wtilde \Pi_1$ is defined by \eqref{eq:pi1w}.
  \begin{align} \label{eq:kc6} 
    \begin{aligned} 
      &\int_{\mathbb R} \left( \int_{\mathbb R^3_\xi} P(\xi)\,\partial_x \widetilde{\Pi}_1 \,d\xi \right)^2 dx\\
      \le\;& C(\delta_0+\varepsilon)\sum_{i=1,3}\delta_i \mathcal{G}^S_i + C(\delta_0+\varepsilon)\sum_{i=1,3}\delta_i |\dot X_i|^2 + C(\delta_0+\varepsilon)\norm{(\phi_x,\psi_x,\zeta_x)}_{L^2_x}^2\\ 
      &+ C(\delta_0+\varepsilon) \int_{\mathbb R} \norm{\wtilde G_{\textup{rem}}}_{\nu,M_\#}^2 + \norm{\wtilde G_t}_{\nu,M_\#}^2 + \norm{\wtilde G_x}_{\nu,M_\#}^2 dx \\ 
      &+ C \int_{\mathbb R} \norm{\wtilde G_{xx}}_{\nu,M_\#}^2 + \norm{\wtilde G_{tx}}_{\nu,M_\#}^2 \,dx + C(\delta_0+\varepsilon)\delta_0^2\myparas{\delta_1e^{-C\delta_1t}+\delta_3e^{-C\delta_3t}} \\ 
      & + C\frac{\delta_C}{(1+t)^{\frac{5}{4}}}. 
    \end{aligned} 
  \end{align} 
\end{lemma}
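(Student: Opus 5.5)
The plan is to start from the decomposition \eqref{eq:Pi-perturb} of $\widetilde\Pi_1$, already used for $K_4$, and differentiate it once in $x$. Each resulting piece has the form $\partial_x(L^{-1}g)$ with $L$ equal to $L_M$ or $L_i^S$. Lemma~\ref{lem:kc5} then reduces every moment $\int P(\xi)\partial_x(\cdot)\,d\xi$ to a weighted $L^2_\xi$ norm of $g_x$, plus a product of $\norm{M_x}_{\nu,M_\#}^2$ with a norm of $g$. Since $\norm{M_x}_{\nu,M_\#}\le C|(v_x,u_x,\theta_x)|$, and under the a priori assumption $|(v_x,u_x,\theta_x)|\le C(\delta_0+\varepsilon)$ pointwise, the second product always carries a factor $(\delta_0+\varepsilon)^2$. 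Concretely, $g$ runs over the following terms:
\begin{itemize}
\item the transport terms $\widetilde G_t-\frac{u_1}{v}\widetilde G_x+\frac1vP_1(\xi_1\widetilde G_x)$;
\item the nonlinear term $\mathcal N(\widetilde G,\widetilde G)$;
\item the mixed terms $\mathcal N(\widetilde G,G^{S_i})$ and $\mathcal N(G^{S_i},\widetilde G)$;
\item the shock–shock interaction $\mathcal N(G^{S_1},G^{S_3})$;
\item the shift terms $\dot X_i\partial_x G^{S_i}$;
\item the operator-mismatch term $J$ of \eqref{eq:J-term}.
\end{itemize}

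For the transport terms, the $g_x$ contribution produces $\widetilde G_{tx}$, $\widetilde G_{xx}$, together with $(u_1/v)_x\widetilde G_x$ and the $x$-derivatives of the projections $P_1$. This gives the unweighted term $C\int\norm{\widetilde G_{xx}}_{\nu,M_\#}^2+\norm{\widetilde G_{tx}}_{\nu,M_\#}^2\,dx$. The derivatives of the coefficients and projections are bounded by $C(\delta_0+\varepsilon)\norm{\widetilde G_x}_{\nu,M_\#}$, so they carry the small prefactor. The second product in Lemma~\ref{lem:kc5} carries $(\delta_0+\varepsilon)^2$ times $\norm{\widetilde G_t}^2_{\nu,M_\#}+\norm{\widetilde G_x}^2_{\nu,M_\#}$.

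For the nonlinear term, the standard bilinear estimate bounds $\partial_x\mathcal N(\widetilde G,\widetilde G)$ by $\norm{\widetilde G}\,\norm{\widetilde G_x}$. Splitting $\widetilde G=\widetilde G_C+\widetilde G_{\rm rem}$ and using \eqref{eq:GCe} together with Sobolev smallness $\norm{\widetilde G}_{L^\infty_x}\lesssim\varepsilon$ gives $C(\delta_0+\varepsilon)\int \norm{\widetilde G_x}^2_{\nu,M_\#}\,dx$. The $\widetilde G_C$-part is quadratic in $(u^C_{1x},\theta^C_x)$ and its derivatives. By Lemma~\ref{lem:contact-wave-estimate0}, its $L^2_x$ norm squared is $O(\delta_C(1+t)^{-5/4})$ or better; this is the source of the $\delta_C(1+t)^{-5/4}$ term. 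The mixed terms $\mathcal N(\widetilde G, G^{S_i})$ carry the factor $\norm{G^{S_i}}+\norm{G^{S_i}_x}\lesssim\delta_i^2$ (Lemma~\ref{lem:bs1}), so they are absorbed into $C\delta_0\int\norm{\widetilde G_{\rm rem}}^2+\norm{\widetilde G_x}^2$ plus contact remainders. The interaction term $\mathcal N(G^{S_1},G^{S_3})$ and its derivative are exponentially small through the separation \eqref{eq:shiftsp1} and Lemma~\ref{lem:intsk}, giving $\delta_0^2(\delta_1e^{-C\delta_1t}+\delta_3e^{-C\delta_3t})$. The shift terms give $|\dot X_i|^2\norm{\partial_x^2G^{S_i}}^2_{L^2_x}\lesssim \delta_i^{5}|\dot X_i|^2$, which is bounded by $C(\delta_0+\varepsilon)\delta_i|\dot X_i|^2$.

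I expect the main obstacle to be $\partial_x J$. Differentiating $L_M^{-1}-(L_i^S)^{-1}$ and the projector mismatches produces factors $|U-(U^{S_i})^{-X_i}|$ and $|\partial_x(U-(U^{S_i})^{-X_i})|$, multiplied by profile quantities bounded by $\delta_i^k|\partial_x (v^{S_i})^{-X_i}|$. Here I would write $U-U^{S_i}=(U-\bar U)+$ (contact and opposite-shock tails), exactly as in \eqref{eq:interaction-est}. The zeroth-order factor then gives $\delta_i\mathcal G_i^S$ plus the interaction remainders of Lemma~\ref{lem:wave-interaction-L2}. The first-order factor uses $\norm{\partial_x(v^{S_i})^{-X_i}}_{L^\infty}\lesssim\delta_i^2$ to produce $\delta_i^2\norm{(\phi_x,\psi_x,\zeta_x)}^2_{L^2}$, plus the derivative interaction bounds of Lemma~\ref{lem:wave-interaction-derivative}; this is the same mechanism as in Lemmas~\ref{lem:kc1}–\ref{lem:kc3}. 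The care needed is to avoid any term involving $\phi_{xx}$ or the undifferentiated contact density $|v^C_x|^2$. The latter is not time-integrable (Remark~\ref{rem:kc4-alpha0}), so I would keep every contact factor at least quadratic or differentiated, so that it yields $\delta_C(1+t)^{-5/4}$. Summing all contributions and integrating in $x$ gives \eqref{eq:kc6}.
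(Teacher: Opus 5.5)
Your proposal matches the paper's proof: you reduce the moment to $\iint|\widetilde\Pi_{1x}|^2/M_\#\,d\xi\,dx$ and split $\widetilde\Pi_1$ by the same decomposition into the transport/nonlinear, mixed-collision, shock--shock, shift and mismatch ($J$) pieces. You then apply Lemma~\ref{lem:kc5} to each piece and treat $\partial_x J$ through the splitting $U-(U^{S_i})^{-X_i}=(U-\bar U)+(\text{contact and opposite-shock tails})$ together with Lemmas~\ref{lem:intsk}--\ref{lem:wave-interaction-derivative}, as the paper does. The only difference is bookkeeping: you trace the $\delta_C(1+t)^{-5/4}$ term to the $\widetilde G_C$ part of $\mathcal N(\widetilde G,\widetilde G)$, whereas the paper attaches it in the $J$ step; also, your nonlinear bound should list $C(\delta_0+\varepsilon)\int\|\widetilde G_{\mathrm{rem}}\|_{\nu,M_\#}^2\,dx$ as well, and that term is already in the target.
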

\begin{proof} 
Recall that \[ \wtilde \Pi_1 := \Pi_1-\shock{\Pi_1}{1}-\shock{\Pi_1}{3}. \] 
By Young's inequality in \(\xi\), 
\begin{align} \label{eq:kc6-reduce} 
  \int_{\mathbb R} \left( \int_{\mathbb R^3_\xi} P(\xi)\,\wtilde \Pi_{1x}\,d\xi \right)^2 dx \le C \iint_{\mathbb R\times\mathbb R^3} \frac{|\wtilde \Pi_{1x}|^2}{M_\#}\,d\xi\,dx . 
\end{align} 
Recall the decomposition 
\begin{align*} 
  \wtilde \Pi_1 =&\; L_M^{-1}P_1\Biggl[\wtilde{G}_t-\frac{u_1}{v}\wtilde{G}_x+\frac{1}{v}P_1\Bigl(\xi_1\wtilde{G}_x\Bigr)-\mathcal N\Bigl(\wtilde{G},\wtilde{G}\Bigr)\Biggr] \\ 
  &\; -L_M^{-1}\Biggl[\mathcal N\Bigl(\wtilde{G},\shockw{G}{1}+\shockw{G}{3}\Bigr)+\mathcal N\Bigl(\shockw{G}{1}+\shockw{G}{3},\wtilde{G}\Bigr)\Biggr] \\ 
  &\; +L_M^{-1}\Biggl[\mathcal N\Bigl(\shockw{G}{1},\shockw{G}{3}\Bigr)+\mathcal N\Bigl(\shockw{G}{3},\shockw{G}{1}\Bigr)\Biggr] \\
  &\; -\dot{X}_1\bigl(L_{1}^S\bigr)^{-1}\Bigl(\partial_x\shockw{G}{1}\Bigr) -\dot{X}_3\bigl(L_{3}^S\bigr)^{-1}\Bigl(\partial_x\shock{G}{3}\Bigr) +J, 
\end{align*} 
where 
\begin{align*} 
  J :=&\; \sum_{i\in\{1,3\}} \Bigl(L_M^{-1}P_1-\bigl(L_{i}^S\bigr)^{-1}P_1^{S_i}\Bigr) \Bigl[\partial_t\shockw{G}{i}-\mathcal N\Bigl(\shockw{G}{i},\shockw{G}{i}\Bigr)\Bigr] \\ 
  &+ \sum_{i\in\{1,3\}} \Biggl[\frac{\shockw{u_1}{i}}{\shockw{v}{i}}\bigl(L_{i}^S\bigr)^{-1}P_1^{S_i}-\frac{u_1}{v}L_M^{-1}P_1\Biggr] \Bigl(\partial_x\shockw{G}{i}\Bigr) \\
  &+ \sum_{i\in\{1,3\}} \Biggl[\frac{1}{v}L_M^{-1}P_1\xi_1-\frac{1}{\shockw{v}{i}}\bigl(L_{i}^S\bigr)^{-1}P_1^{S_i}\xi_1\Biggr] \Bigl(\partial_x\shockw{G}{i}\Bigr), 
\end{align*} 
  and \(L_{i}^{S}:=L_{\shockw{M}{i}}\). Accordingly, write \[ \wtilde \Pi_1=\mathcal P^{(1)}+\mathcal P^{(2)}+\mathcal P^{(3)}+\mathcal P^{(4)}+\mathcal P^{(5)}, \] where the five pieces correspond to the five displayed lines above. We estimate each contribution to \(\iint |\mathcal P_x|^2/M_\#\). 
  \medskip \noindent
  \textit{Step 1: the main microscopic terms \(\mathcal P^{(1)}\).} 
  Applying Lemma~\ref{lem:kc5} to \[ g=\wtilde{G}_t-\frac{u_1}{v}\wtilde{G}_x+\frac{1}{v}P_1(\xi_1\wtilde{G}_x)-\mathcal N(\wtilde{G},\wtilde{G}), \] we obtain \begin{align} \label{eq:kc6-P1} 
    \iint \frac{|(\mathcal P^{(1)})_x|^2}{M_\#}\,d\xi\,dx \le\;& C \iint \frac{(1+|\xi|)^{-1}|g_x|^2}{M_\#}\,d\xi\,dx \\ 
    &+ C(\delta_0+\varepsilon) \iint \frac{(1+|\xi|)^{-1}|g|^2}{M_\#}\,d\xi\,dx . \nonumber 
  \end{align} 
  Using the product rule and the a priori smallness of \((v_x,u_x,\theta_x)\), we have 
  \begin{align*} 
    |g_x| \le\;& |\wtilde{G}_{tx}| + C(\delta_0+\varepsilon)|\wtilde{G}_x| + C|\wtilde{G}_{xx}| + C(\delta_0+\varepsilon)|\wtilde{G}_t| \\ 
    &+ C\,|\mathcal N(\wtilde{G},\wtilde{G})_x|, 
  \end{align*} 
  and similarly for \(g\). By the standard bilinear collision estimate and the bootstrap smallness, 
  \[ 
  \iint \frac{(1+|\xi|)^{-1}|\mathcal N(\wtilde{G},\wtilde{G})_x|^2}{M_\#}\,d\xi\,dx \le C(\delta_0+\varepsilon) \int \norm{\wtilde{G}_{\text{rem}}}_{\nu,M_\#}^2+\norm{\wtilde{G}_{\text{x}}}_{\nu,M_\#}^2\,dx . 
  \]
  Therefore, 
  \begin{align} \label{eq:kc6-P1-final} 
    \iint \frac{|(\mathcal P^{(1)})_x|^2}{M_\#}\,d\xi\,dx \le\;& C \int \norm{\wtilde G_{tx}}_{\nu,M_\#}^2 + \norm{\wtilde G_{xx}}_{\nu,M_\#}^2\,dx \\ 
    &+ C(\delta_0+\varepsilon) \int \norm{\wtilde G_{\text{rem}}}_{\nu,M_\#}^2 + \norm{\wtilde G_{x}}_{\nu,M_\#}^2 + \norm{\wtilde G_{t}}_{\nu,M_\#}^2\,dx . \nonumber 
  \end{align} 
    \medskip 
    \noindent
    \textit{Step 2: the mixed collision terms \(\mathcal P^{(2)}\).} 
    Using again Lemma~\ref{lem:kc5}, together with the standard bilinear estimate for \(\mathcal N\bigl(\wtilde{G},\shockw{G}{i}\bigr)\) and \(\mathcal N\bigl(\shockw{G}{i},\wtilde{G}\bigr)\), we obtain 
    \begin{align} \label{eq:kc6-P2-final} 
      \iint \frac{|(\mathcal P^{(2)})_x|^2}{M_\#}\,d\xi\,dx \le\;& C(\delta_0+\varepsilon) \int \norm{\wtilde G_{\text{rem}}}_{\nu,M_\#}^2 + \norm{\wtilde G_{x}}_{\nu,M_\#}^2\,dx \\ 
      &+ C(\delta_0+\varepsilon)^2\delta_0^3\myparas{\delta_1e^{-C\delta_1t}+\delta_3e^{-C\delta_3t}} . \nonumber 
    \end{align} 
    \medskip 
    \noindent
    \textit{Step 3: the shock--shock interaction terms \(\mathcal P^{(3)}\).} 
    Since \(\shockw{G}{1}\) and \(\shockw{G}{3}\) are exponentially localized and spatially separated, 
    \begin{align} \label{eq:kc6-P3-final} 
      \iint \frac{|(\mathcal P^{(3)})_x|^2}{M_\#}\,d\xi\,dx \le C(\delta_0+\varepsilon)^2\delta_0^3 \myparas{\delta_1e^{-C\delta_1t}+\delta_3e^{-C\delta_3t}} . 
    \end{align} 
    \medskip 
    \noindent
    \textit{Step 4: the shift terms \(\mathcal P^{(4)}\).} 
    By the explicit profile bounds, \[ \iint \frac{\bigl|\partial_x\bigl(\bigl(L_{i}^S\bigr)^{-1}\partial_x\shockw{G}{i}\bigr)\bigr|^2}{M_\#}\,d\xi\,dx \le C\delta_i^2, \qquad i=1,3. \] Hence 
    \begin{align} \label{eq:kc6-P4-final} 
      \iint \frac{|(\mathcal P^{(4)})_x|^2}{M_\#}\,d\xi\,dx \le C\sum_{i=1,3}\delta_i^2|\dot X_i|^2 \le C(\delta_0+\varepsilon)\sum_{i=1,3}\delta_i|\dot X_i|^2 . 
    \end{align} 
    \medskip 
    \noindent
    \textit{Step 5: the coefficient-mismatch terms \(J=\mathcal P^{(5)}\).} 
    These are the most delicate terms. We only record representative estimates; the remaining variants are treated in the same way. 
    First, for the difference of inverse linearized operators, 
    \begin{align*}
    \Bigl(L_M^{-1}-\bigl(L_{1}^S\bigr)^{-1} \Bigr) \bigl(\partial_x\shockw{G}{1}\bigr) =\; & L_M^{-1}\Bigl( \mathcal N\bigl(\shockw{M}{1}-M,\bigl(L_{1}^S\bigr)^{-1}\partial_x\shockw{G}{1}\bigr) \\
    &  + \mathcal N\bigl(\bigl(L_{1}^S\bigr)^{-1}\partial_x\shockw{G}{1},\shockw{M}{1}-M\bigr) \Bigr), 
    \end{align*}
    and Lemma~\ref{lem:kc5} gives 
    \begin{align*} 
      & \iint \frac{\Bigl|\partial_x\Bigl\{\bigl(L^{-1}_M-\bigl(L_{1}^S\bigr)^{-1}\bigr)\bigl(\partial_x\shockw{G}{1}\bigr)\Bigr\}\Bigr|^2}{M_\#}\,d\xi\,dx \\
      & \quad \le C(\delta_0+\varepsilon)^2\int \bigl|\partial_x\shockw{v}{1}\bigr|^2 \, \bigl|\bigl(v-\shockw{v}{1}
      \bigr)_x\bigr|^2\,dx \\
      & \quad \qquad + C(\delta_0+\varepsilon)^2\int \bigl|\partial_x\shockw{v}{1}\bigr|^2 \, \bigl|v-\shockw{v}{1}\bigr|^2\,dx . 
    \end{align*} 
    By Lemmas~\ref{lem:kc1}--\ref{lem:kc4}, this is bounded by 
    \begin{align} \label{eq:kc6-J1} 
      C(\delta_0+\varepsilon)^2\delta_1\mathcal G_1^S + C(\delta_0+\varepsilon)^2\delta_0^3\myparas{\delta_1e^{-C\delta_1t}+\delta_3e^{-C\delta_3t}} + C(\delta_0+\varepsilon)^2\norm{(\phi_x,\psi_x,\zeta_x)}_{L^2_x}^2 . 
    \end{align} 
    Next, for the coefficient mismatch 
    \begin{align*}
     & \frac{\shockw{u_1}{1}}{\shockw{v}{1}}\bigl(L_{1}^S\bigr)^{-1}-\frac{u_1}{v}L^{-1}_M \\
     &\quad = \frac{\shockw{u_1}{1}-u_1}{\shockw{v}{1}}\bigl(L_{1}^{S}\bigr)^{-1} -\frac{u_1}{v\shockw{v}{1}}\bigl(\shockw{v}{1}-v\bigr)\bigl(L_{1}^S\bigr)^{-1} +\frac{u_1}{v}\bigl(\bigl(L^{S}_{1}\bigr)^{-1}-L^{-1}_M\bigr), 
    \end{align*}
    each term is treated exactly as above, giving 
    \begin{align} \label{eq:kc6-J2} 
      C(\delta_0+\varepsilon)^2\delta_1\mathcal G_1^S + C(\delta_0+\varepsilon)^2\delta_0^3\myparas{\delta_1e^{-C\delta_1t}+\delta_3e^{-C\delta_3t}} + C(\delta_0+\varepsilon)^2\norm{(\phi_x,\psi_x,\zeta_x)}_{L^2_x}^2 . 
    \end{align} 
Finally, for the projector mismatch terms, for example 
\begin{align*}
    & \frac{1}{\shockw{v}{1}} L_M^{-1} \Biggl[\Bigl\langle \xi_1\partial_x\shockw{G}{1},\chi_j-\shockw{\chi_j}{1} \Bigr\rangle_M \chi_j\Biggr], \\ 
    & \frac{1}{\shockw{v}{1}} L_M^{-1} \Biggl[\Bigl\langle \xi_1\partial_x\shockw{G}{1},\shockw{\chi_j}{1} \Bigr\rangle_{S_1}\Bigl(\chi_j-\shockw{\chi_j}{1}\Bigr)\Biggr],
\end{align*}  
 we use the smooth dependence of \(\chi_j\) on the macroscopic variables and Lemma~\ref{lem:kc5} to obtain 
 \begin{align} \label{eq:kc6-J3} 
  C(\delta_0+\varepsilon)^2\delta_1\mathcal G_1^S + C(\delta_0+\varepsilon)^2\delta_0^3\myparas{\delta_1e^{-C\delta_1t}+\delta_3e^{-C\delta_3t}} + C(\delta_0+\varepsilon)^2\norm{(\phi_x,\psi_x,\zeta_x)}_{L^2_x}^2 . 
\end{align} 
The analogous \(i=3\) terms satisfy the same bounds. Therefore, 
\begin{align} \label{eq:kc6-P5-final} 
  \iint \frac{|(\mathcal P^{(5)})_x|^2}{M_\#}\,d\xi\,dx \le\;& C(\delta_0+\varepsilon)\sum_{i=1,3}\delta_i \mathcal G_i^S + C(\delta_0+\varepsilon)\norm{(\phi_x,\psi_x,\zeta_x)}_{L^2_x}^2 \\ 
  &+ C(\delta_0+\varepsilon)^2\delta_0^3\myparas{\delta_1e^{-C\delta_1t}+\delta_3e^{-C\delta_3t}} + C \frac{\delta_C}{(1+t)^\frac{5}{4}}. \nonumber 
\end{align} 
Combining \eqref{eq:kc6-reduce}, \eqref{eq:kc6-P1-final}, \eqref{eq:kc6-P2-final}, \eqref{eq:kc6-P3-final}, \eqref{eq:kc6-P4-final}, and \eqref{eq:kc6-P5-final}, we obtain \eqref{eq:kc6}. \end{proof}

\subsubsection{Estimate of $\wtilde{G}_x$}
We point out that some intermediate estimates are not closed at each fixed derivative level. For instance, the estimate of $\wtilde G_x$ contains higher-order terms such as $\wtilde G_{xx}$ on the {right-hand side}. This does not lead to a loss of derivatives, since the high-order energy estimate is closed simultaneously over all relevant derivative levels. More precisely, the terms involving $\wtilde G_{xx}$ are controlled by the dissipation part of the final high-order energy functional, after summing the estimates for different derivative orders with suitable weights. Therefore, the estimate for $\wtilde G_x$ should be understood as one component of the {coupled} high-order energy argument rather than as an independent closed estimate.

\begin{lemma}\label{lem:GxE}
There exists a constant $C>0$ such that
\begin{align}
\begin{aligned}
&\frac{d}{dt}\iint_{\R}\norm{\wtilde G_{x}}^2_{M_\#} \,dx
+\int_{\R}\norm{\wtilde G_x}_{\nu,M_\#}^2\,dx \\
&\le
C(\delta_0+\varepsilon)\|(\phi_x,\psi_x,\zeta_x)\|_{L^2_x}^2
+C\delta_0^2(\delta_0+\varepsilon)\bigl(\delta_1e^{-C\delta_1t}+\delta_3e^{-C\delta_3t}\bigr)
+C\frac{\delta_C}{(1+t)^{\frac{5}{4}}} \\
&\quad
+C\|(\psi_{xx},\zeta_{xx})\|_{L^2_x}^2
+C(\delta_0+\varepsilon)\sum_{i=1,3}\delta_i|\dot X_i|^2 
+C(\delta_0+\varepsilon)(\mathcal G_1^S+\mathcal G_3^S) \\
&\quad
+C(\delta_0+\varepsilon)\int_{\R}\norm{\wtilde G_{\textup{rem}}}_{\nu,M_\#}^2\,dx +C\int_\mathbb{R} \norm{\wtilde G_{xx}}_{\nu,M_\#}^2 dx.
\end{aligned}
\end{align}
\end{lemma}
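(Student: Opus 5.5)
We differentiate the microscopic perturbation equation \eqref{eq:pesy3} once in $x$, multiply by $\widetilde G_x/M_\#$, and integrate over $\mathbb R_x\times\mathbb R^3_\xi$. The time derivative yields $\frac12\frac{d}{dt}\int\norm{\widetilde G_x}_{M_\#}^2dx$. For the collision part we write $(L_M\widetilde G)_x=L_M\widetilde G_x+\mathcal N(M_x,\widetilde G)+\mathcal N(\widetilde G,M_x)$. The key preliminary point is that $\widetilde G_x$ is not exactly microscopic with respect to $M$. Instead we have $P_0\widetilde G_x=-P_0\bigl(\sum_i\partial_x(G^{S_i})^{-X_i}\bigr)-(\partial_xP_0)\widetilde G$, since $P_0G=0$ and $G^{S_i}$ is microscopic only with respect to $M^{S_i}$. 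We therefore split $\widetilde G_x=P_1\widetilde G_x+P_0\widetilde G_x$ and apply coercivity of $-L_M$ on $P_1\widetilde G_x$, which gives $\lambda_{\rm mic}\int\norm{P_1\widetilde G_x}_{\nu,M_\#}^2$. The $P_0$ part is bounded by $|(v_x,u_x,\theta_x)|\,\norm{\widetilde G}$ plus the projector mismatch $|U-U^{S_i,-X_i}|\,|\partial_xG^{S_i}|$. The first piece is $O(\delta_0+\varepsilon)$ times $\norm{\widetilde G_{\rm rem}}$ plus $\widetilde G_C$ terms; the $\widetilde G_C$ terms produce $\delta_C(1+t)^{-5/4}$. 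The mismatch piece is handled by Lemma~\ref{lem:kc1}, giving $\mathcal G_i^S$ and exponentially decaying interaction terms. This recovers the full $\int\norm{\widetilde G_x}_{\nu,M_\#}^2$ on the left up to absorbable errors. The commutator $\mathcal N(M_x,\widetilde G)$ is bounded by $C(\delta_0+\varepsilon)$ times the dissipation of $\widetilde G_{\rm rem}$ and $\widetilde G_C$.

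We then treat the right-hand side of the differentiated \eqref{eq:pesy3} term by term, pairing each term with $\widetilde G_x/M_\#$ and using Young with small weight on $\norm{\widetilde G_x}_{\nu,M_\#}^2$.

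\emph{Shift terms.} $\dot X_i\partial_{xx}(G^{S_i})^{-X_i}$ has norm $\le C\delta_i^{3}|\dot X_i|$ by Lemma~\ref{lem:bs1}, hence is bounded by $C(\delta_0+\varepsilon)\delta_i|\dot X_i|^2$.

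\emph{Transport terms.} For $(\frac{u_1}{v}\widetilde G_x)_x-\frac1v P_1(\xi_1\widetilde G_x)_x$, the top-order part $\frac{u_1}{v}\widetilde G_{xx}$ is integrated by parts and leaves only $(u_1/v)_x|\widetilde G_x|^2$. The streaming term $P_1(\xi_1\widetilde G_{xx})$ cannot be symmetrized against $M_\#$ after projection, so we bound it by $C\int\norm{\widetilde G_{xx}}_{\nu,M_\#}^2$ plus a small multiple of the dissipation. This is the source of the $\widetilde G_{xx}$ term in the statement, which is closed later in the coupled high-order scheme.

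\emph{Mismatch and profile terms.} The coefficient-mismatch terms $(\frac{u_1}{v}-\frac{u_1^{S_i}}{v^{S_i}})\partial_x G^{S_i}$, the projector mismatches, and $(L_M-L_i^S)G^{S_i}$ are differentiated once and estimated with Lemmas~\ref{lem:kc1}--\ref{lem:kc3}. These produce $(\delta_0+\varepsilon)(\mathcal G^S_i+\|(\phi_x,\psi_x,\zeta_x)\|^2)$ and interaction remainders controlled through Lemmas~\ref{lem:wave-interaction-L2}--\ref{lem:wave-interaction-derivative}. The fluid forcing $\partial_x[-\frac1vP_1(\xi_1M_x)+\sum_i\frac{1}{v^{S_i}}P_1^{S_i}(\xi_1\partial_xM^{S_i})]$ is precisely the content of Lemma~\ref{lem:kc4}. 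That lemma supplies $\|(\psi_{xx},\zeta_{xx})\|^2$, $(\delta_0+\varepsilon)^2\|(\phi_x,\psi_x,\zeta_x)\|^2$, $\mathcal G_i^S$, and the $\delta_C(1+t)^{-3/2}$ contact terms. Its favorable feature, that $P_1$ kills the $\phi_{xx}$ contribution, is the reason no $\phi_{xx}$ appears in the statement.

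\emph{Collision terms.} The nonlinear terms $\mathcal N(\widetilde G,\widetilde G)_x$ and the mixed terms $\mathcal N(G^{S_i},\widetilde G)_x$ are handled by the bilinear estimate together with the $L^\infty$ smallness from \eqref{eq:prioriass} in one factor. This gives $C(\delta_0+\varepsilon)(\int\norm{\widetilde G_{\rm rem}}_{\nu}^2+\norm{\widetilde G_x}_{\nu}^2)$ plus the contact contribution. The term $\mathcal N(G^{S_1},G^{S_3})_x$ is exponentially small by the separation \eqref{eq:shiftsp1}.

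The main obstacle is the first step: recovering coercivity for $\widetilde G_x$, which is not microscopic with respect to $M$, together with keeping every contact-wave contribution at differentiated level so that it is time-integrable. Remark~\ref{rem:kc4-alpha0} indicates the undifferentiated contact terms would fail, so every $\widetilde G_C$-type contribution must carry a derivative or be paired with $\delta_C(1+t)^{-5/4}$. I would handle both issues through the explicit formula for $P_0\widetilde G_x$ above, and then choose the Young weights so that all $\norm{\widetilde G_x}_\nu^2$ terms absorb into the left-hand side.
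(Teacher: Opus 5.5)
Your proposal is correct and follows essentially the same route as the paper. The paper also differentiates \eqref{eq:pesy3} in $x$, pairs with $\widetilde G_x/M_\#$, uses $(L_M\widetilde G)_x=L_M\widetilde G_x+\mathcal N(M_x,\widetilde G)+\mathcal N(\widetilde G,M_x)$, and treats the shift, transport, mismatch, fluid-forcing (Lemma~\ref{lem:kc4}) and collision terms one by one, leaving $C\int\|\widetilde G_{xx}\|_{\nu,M_\#}^2\,dx$ for the coupled closure.

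The one place you deviate, the $P_0/P_1$ splitting of $\widetilde G_x$, is vacuous. The microscopic subspace $\{g:\int g\,(1,\xi,|\xi|^2)\,d\xi=0\}$ does not depend on the Maxwellian, so $(G^{S_i})^{-X_i}$ is microscopic for $M$ and not only for $(M^{S_i})^{-X_i}$. Hence $\widetilde G$ and $\widetilde G_x$ have vanishing collision-invariant moments, $P_0\widetilde G_x\equiv0$ (both terms in your formula vanish), and \eqref{eq:collision-coercivity} applies to $\widetilde G_x$ directly, exactly as in the paper. What you call the main obstacle is therefore not one: genuine projector mismatches arise only on non-microscopic inputs such as $\xi_1\partial_x(G^{S_i})^{-X_i}$, and you already treat those.
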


\begin{proof}
Differentiate \eqref{eq:pesy3} with respect to $x$, multiply the resulting equation by
$\wtilde{G}_x/M_\#$, and integrate over $(x,\xi)\in\R\times\R^3$.
Then
\begin{align*}
\frac12\frac{d}{dt}\iint \frac{|\wtilde{G}_x|^2}{M_\#}\,d\xi\,dx
=
\mathcal T_1+\sum_{i=1,3}\mathcal T_{2i}
+\mathcal T_3+\sum_{i=1,3}\mathcal T_{4i}
+\sum_{i=1,3}\mathcal T_{5i}
+\mathcal T_6+\mathcal T_7+\mathcal T_8+\mathcal T_9+\mathcal T_{10},
\end{align*}
where
\begin{align*}
\mathcal T_1
&:= \iint (L_M\wtilde G)_x\,\frac{\wtilde G_x}{M_\#}\,d\xi\,dx,\\
\mathcal T_{2i}
&:= \iint \dot X_i\,\bigl(\partial_{xx}\shockw{G}{i}\bigr)\,\frac{\wtilde G_x}{M_\#}\,d\xi\,dx,\\
\mathcal T_3
&:= \iint \left(\frac{u_1}{v}\wtilde G_x-\frac1vP_1(\xi_1\wtilde G_x)\right)_x
\frac{\wtilde G_x}{M_\#}\,d\xi\,dx,\\
\mathcal T_{4i}
&:= \iint
\Biggl[
\left(\frac{u_1}{v}-\frac{\shockw{u_1}{i}}{\shockw{v}{i}}\right)\partial_x\shockw{G}{i}
\Biggr]_x
\frac{\wtilde G_x}{M_\#}\,d\xi\,dx,\\
\mathcal T_{5i}
&:= -\iint
\Biggl[
\frac1vP_1(\xi_1\partial_x\shockw{G}{i})
-\frac1{\shockw{v}{i}}\bigl(P_1^{S_i}\bigr)(\xi_1\partial_x\shockw{G}{i})
\Biggr]_x
\frac{\wtilde G_x}{M_\#}\,d\xi\,dx,\\
\mathcal T_6
&:= \iint
\Biggl[
-\frac1vP_1(\xi_1M_x)
+\frac1{\shockw{v}{1}}\bigl(P_1^{S_1}\bigr)(\xi_1\partial_x\shockw{M}{1})\Biggr.\\
&\qquad \qquad \Biggl. +\frac1{\shockw{v}{3}}\bigl(P_1^{S_3}\bigr)(\xi_1\partial_x\shockw{M}{3})
\Biggr]_x
\frac{\wtilde G_x}{M_\#}\,d\xi\,dx,\\
\mathcal T_7
&:= \iint \bigl(\mathcal N(\wtilde G,\wtilde G)\bigr)_x
\frac{\wtilde G_x}{M_\#}\,d\xi\,dx,\\
\mathcal T_8
&:= \sum_{i=1,3}\iint
\Bigl[\mathcal N\bigl(\wtilde G,\shockw{G}{i}\bigr)+\mathcal N\bigl(\shockw{G}{i},\wtilde G\bigr)\Bigr]_x
\frac{\wtilde G_x}{M_\#}\,d\xi\,dx,\\
\mathcal T_9
&:= \iint
\Bigl[\mathcal N\bigl(\shockw{G}{1},\shockw{G}{3}\bigr)+\mathcal N\bigl(\shockw{G}{3},\shockw{G}{1}\bigr)\Bigr]_x
\frac{\wtilde G_x}{M_\#}\,d\xi\,dx,\\
\mathcal T_{10}
&:= \sum_{i=1,3}\iint
\Bigl[(L_M-L_{i}^S)\shockw{G}{i}\Bigr]_x
\frac{\wtilde G_x}{M_\#}\,d\xi\,dx.
\end{align*}

We estimate these terms one by one.

\medskip
\noindent
\textbf{1. Estimate of $\mathcal T_1$.}
Since
\[
L_M h=\mathcal N(M,h)+\mathcal N(h,M),
\]
we have
\begin{equation}\label{eq:GxE-LM-diff}
(L_Mh)_x=L_Mh_x+\mathcal N(M_x,h)+\mathcal N(h,M_x).
\end{equation}
Hence
\begin{align*}
\mathcal T_1
&=
\iint L_M\wtilde G_x\,\frac{\wtilde G_x}{M_\#}\,d\xi\,dx
+\iint \Bigl(\mathcal N(M_x,\wtilde G)+\mathcal N(\wtilde G,M_x)\Bigr)\frac{\wtilde G_x}{M_\#}\,d\xi\,dx .
\end{align*}
By Lemma~\ref{lem:collision-coercivity}, more precisely \eqref{eq:collision-coercivity},
\begin{align*}
\iint L_M\wtilde G_x\,\frac{\wtilde G_x}{M_\#}\,d\xi\,dx
\le
-\lambda_{\mathrm{mic}} \iint \frac{(1+|\xi|)|\wtilde G_x|^2}{M_\#}\,d\xi\,dx .
\end{align*}
For the remaining terms, Young's inequality and the bilinear collision estimate
\eqref{eq:collision-Q} imply
\begin{align*}
&\left|
\iint \Bigl(\mathcal N(M_x,\wtilde G)+\mathcal N(\wtilde G,M_x)\Bigr)\frac{\wtilde G_x}{M_\#}\,d\xi\,dx
\right| \\
&\le
\frac{\lambda_{\mathrm{mic}}}{128}\iint \frac{(1+|\xi|)|\wtilde G_x|^2}{M_\#}\,d\xi\,dx
+C\iint \frac{(1+|\xi|)^{-1}}{M_\#}
\Bigl(
|\mathcal N(M_x,\wtilde G)|^2+|\mathcal N(\wtilde G,M_x)|^2
\Bigr)\,d\xi\,dx .
\end{align*}
Now we use the decomposition \eqref{eq:mic2}, namely
\[
\wtilde G=\wtilde G_{\text{rem}}+\wtilde G_C,
\]
together with the explicit form of $\wtilde G_C$, the smallness of $(v_x,u_x,\theta_x)$, and the standard bounds for the composite wave.
Applying \eqref{eq:collision-Q} once more, we obtain
\begin{align*}
&\iint \frac{(1+|\xi|)^{-1}}{M_\#}
\Bigl(
|\mathcal N(M_x,\wtilde G)|^2+|\mathcal N(\wtilde G,M_x)|^2
\Bigr)\,d\xi\,dx \\
&\le
C(\delta_0+\varepsilon)^2\|(\phi_x,\psi_x,\zeta_x)\|_{L^2_x}^2
+C(\delta_0+\varepsilon)\int \norm{\wtilde G_{\text{rem}}}_{\nu,M_\#}^2\,dx \\
&\qquad
+C\delta_0^3\bigl(\delta_1e^{-C\delta_1t}+\delta_3e^{-C\delta_3t}\bigr)
+C\frac{\delta_C}{(1+t)^{3/2}} .
\end{align*}
Therefore
\begin{align}\label{eq:GxE-T1}
\mathcal T_1
&\le
-\frac{127}{128}\lambda_{\mathrm{mic}}
\int \norm{\wtilde G_x}_{\nu,M_\#}^2\,dx \notag\\
&\quad
+C(\delta_0+\varepsilon)^2\|(\phi_x,\psi_x,\zeta_x)\|_{L^2_x}^2
+C(\delta_0+\varepsilon)\int \norm{\wtilde G_{\text{rem}}}_{\nu,M_\#}^2\,dx \notag\\
&\quad
+C\delta_0^3\bigl(\delta_1e^{-C\delta_1t}+\delta_3e^{-C\delta_3t}\bigr)
+C\frac{\delta_C}{(1+t)^{3/2}} .
\end{align}

\medskip
\noindent
\textbf{2. Estimate of $\mathcal T_{2i}$.}
By Young's inequality,
\begin{align*}
|\mathcal T_{2i}|
\le
\frac{\lambda_{\mathrm{mic}}}{128}\int \norm{\wtilde G_x}_{\nu,M_\#}^2 \,dx
+
C|\dot X_i|^2 \int \norm{\partial_{xx}\shockw{G}{i}}_{M_\#}^2\,dx .
\end{align*}
By Lemma~\ref{lem:bs1} with $k=2$, together with translation invariance of the shifted profile,
\[
\int \norm{\partial_{xx}\shockw{G}{i}}_{M_\#}^2 \,dx
\le C\delta_i^7
\le C(\delta_0+\varepsilon)\delta_i .
\]
Hence
\begin{equation}\label{eq:GxE-T2}
|\mathcal T_{2i}|
\le
\frac{\lambda_{\mathrm{mic}}}{128}\int \norm{\wtilde G_x}_{\nu,M_\#}^2\,dx
+
C(\delta_0+\varepsilon)\delta_i|\dot X_i|^2 .
\end{equation}

\medskip
\noindent
\textbf{3. Estimate of $\mathcal T_3$.}
Expanding the derivative gives
\begin{align*}
|\mathcal T_3|
&\le
C\iint \frac{|\wtilde G_x||\wtilde G_{xx}|}{M_\#}\,d\xi\,dx
+
C\iint |(u_{1x},v_x)|\frac{|\wtilde G_x|^2}{M_\#}\,d\xi\,dx .
\end{align*}
Using the a priori smallness of $(u_{1x},v_x)$ and Young's inequality, we obtain
\begin{equation}\label{eq:GxE-T3}
|\mathcal T_3|
\le
\frac{\lambda_{\mathrm{mic}}}{128}\int \norm{\wtilde G_x}_{\nu,M_\#}^2 \,dx
+
C\int \norm{\wtilde G_{xx}}_{\nu,M_\#}^2\,dx .
\end{equation}

\medskip
\noindent
\textbf{4. Estimate of $\mathcal T_{4i}$.}
After expanding the $x$-derivative and using Young's inequality, we have
\begin{align*}
|\mathcal T_{4i}|
&\le
\frac{\lambda_{\mathrm{mic}}}{128}\int \norm{\wtilde G_x}_{\nu,M_\#}^2 \,dx \\
&\quad
+
C\iint
\Bigl|
\bigl(u_1-\shockw{u_1}{i},\,v-\shockw{v}{i}\bigr)
\Bigr|^2
\frac{|\partial_{xx}\shockw{G}{i}|^2}{M_\#}\,d\xi\,dx \\
&\quad
+
C\iint
\Bigl|
\bigl(u_{1x}-\partial_x\shockw{u_1}{i},\,v_x-\partial_x\shockw{v}{i}\bigr)
\Bigr|^2
\frac{|\partial_x\shockw{G}{i}|^2}{M_\#}\,d\xi\,dx .
\end{align*}
We now use Lemma~\ref{lem:bs1}, the shock localization estimate \eqref{eq:shock-tail-proof},
the interaction lemmas Lemma~\ref{lem:wave-interaction-L2} and Lemma~\ref{lem:wave-interaction-derivative},
the cutoff estimate Lemma~\ref{lem:intsk}, and the previously established macroscopic comparison lemmas
Lemma~\ref{lem:kc1}--\ref{lem:kc3}. This yields
\begin{align}\label{eq:GxE-T4}
|\mathcal T_{4i}|
&\le
\frac{\lambda_{\mathrm{mic}}}{128}\int \norm{\wtilde G_x}_{\nu,M_\#}^2\,dx
+
C(\delta_0+\varepsilon)^2\mathcal G_i^S
+
C\delta_i^3e^{-C\delta_it}\int \eta(U|\y U)\,dx \notag\\
&\quad
+
C(\varepsilon+\delta_0)^2\delta_i^2e^{-C\delta_it}
+
C\delta_0^3\bigl(\delta_1e^{-C\delta_1t}+\delta_3e^{-C\delta_3t}\bigr)
+
C\delta_i^2\|\phi_x\|_{L^2_x}^2 .
\end{align}

\medskip
\noindent
\textbf{5. Estimate of $\mathcal T_{5i}$.}
Using the finite-dimensionality of $P_1$ and the smooth dependence of the basis
\(\{\chi_j\}_{j=0}^4\) on \((v,u,\theta)\), we write
\[
\bigl(P_1-\bigl(P_1^{S_i}\bigr)\bigr)f
=
\sum_{j=0}^4
\Bigl[
\langle f,\chi_j\rangle_M\chi_j
-
\bigl\langle f,\shockw{\chi_j}{i}\bigr\rangle_{S_i}\shockw{\chi_j}{i}
\Bigr].
\]
Hence, after one application of Young's inequality,
\begin{align*}
|\mathcal T_{5i}|
&\le
\frac{\lambda_{\mathrm{mic}}}{128}\int \norm{\wtilde G_x}_{\nu,M_\#}^2\,dx \\
&\quad
+
C\int
\Biggl[
\int
\Bigl\{
\frac1vP_1(\xi_1\partial_x\shockw{G}{i})
-\frac1{\shockw{v}{i}}P_1^{S_i}(\xi_1\partial_x\shockw{G}{i})
\Bigr\}_x
\,d\xi
\Biggr]^2 dx .
\end{align*}
The differentiated coefficient/projector mismatch in the last line is exactly of the same type as the
terms treated in Step~5 of the proof of Lemma~\ref{lem:kc6}; cf.\ \eqref{eq:kc6-J1}, \eqref{eq:kc6-J2}, and \eqref{eq:kc6-J3}.
Using those bounds, together with Lemma~\ref{lem:bs1} for the shock profile moments, we obtain
\begin{align}\label{eq:GxE-T5}
|\mathcal T_{5i}|
&\le
\frac{\lambda_{\mathrm{mic}}}{128}\int \norm{\wtilde G_x}_{\nu,M_\#}^2\,dx
+
C(\delta_0+\varepsilon)^2\mathcal G_i^S
+
C\delta_i^3e^{-C\delta_it}\int \eta(U|\y U)\,dx \notag\\
&\quad
+
C(\varepsilon+\delta_0)^2\delta_i^2e^{-C\delta_it}
+
C\delta_0^3\bigl(\delta_1e^{-C\delta_1t}+\delta_3e^{-C\delta_3t}\bigr)
+
C\delta_i^2\|\phi_x\|_{L^2_x}^2 .
\end{align}

\medskip
\noindent
\textbf{6. Estimate of $\mathcal T_6$.}
This is exactly the differentiated source term treated in Lemma~\ref{lem:kc4}.
Applying Lemma~\ref{lem:kc4} with $G=\wtilde G_x$ and choosing $\wtilde\alpha>0$ fixed,
we get
\begin{align}\label{eq:GxE-T6}
|\mathcal T_6|
&\le
\frac{\lambda_{\mathrm{mic}}}{128}\int \norm{\wtilde G_x}_{\nu,M_\#}^2\,dx
+
C(\delta_0+\varepsilon)^2\|(\phi_x,\psi_x,\zeta_x)\|_{L^2_x}^2 \notag\\
&\quad
+
C\delta_0^3\bigl(\delta_1e^{-C\delta_1t}+\delta_3e^{-C\delta_3t}\bigr)
+
C\frac{\delta_C}{(1+t)^{3/2}} \notag\\
&\quad
+
C(\varepsilon+\delta_0)^2\delta_3^2e^{-C\delta_3t}
+
C(\varepsilon+\delta_0)^2\delta_1^2e^{-C\delta_1t}
+
C\|(\psi_{xx},\zeta_{xx})\|_{L^2_x}^2 \notag\\
&\quad
+
C(\delta_0+\varepsilon)^2(\mathcal G_1^S+\mathcal G_3^S).
\end{align}

\medskip
\noindent
\textbf{7. Estimate of $\mathcal T_7$.}
Using
\[
(\mathcal N(f,g))_x=\mathcal N(f_x,g)+\mathcal N(f,g_x),
\]
we have
\[
(\mathcal N(\wtilde G,\wtilde G))_x
=
\mathcal N(\wtilde G_x,\wtilde G)+\mathcal N(\wtilde G,\wtilde G_x).
\]
Therefore, by Young's inequality and \eqref{eq:collision-Q},
\begin{align*}
|\mathcal T_7|
&\le
\frac{\lambda_{\mathrm{mic}}}{128}\int \norm{\wtilde{G}_{x}}_{\nu,M_\#}^2\,dx \\
&\quad
+
C\iint \frac{(1+|\xi|)^{-1}}{M_\#}
\Bigl(
|\mathcal N(\wtilde G_x,\wtilde G)|^2
+
|\mathcal N(\wtilde G,\wtilde G_x)|^2
\Bigr)\,d\xi\,dx .
\end{align*}
Using again \eqref{eq:mic2}, \eqref{eq:collision-Q}, and the a priori smallness,
\begin{align}\label{eq:GxE-T7}
&|\mathcal T_7|
\le
\frac{\lambda_{\mathrm{mic}}}{128}\int \norm{\wtilde G_x}_{\nu,M_\#}^2 \,dx
+
C(\delta_0+\varepsilon)\int \norm{\tilde G_{\text{rem}}}_{\nu,M_\#}^2 \,dx\nonumber \\
&\qquad \qquad +
C(\delta_0+\varepsilon)^2\|(\phi_x,\psi_x,\zeta_x)\|_{L^2_x}^2 .
\end{align}

\medskip
\noindent
\textbf{8. Estimate of $\mathcal T_8$.}
Expanding the derivative and using \eqref{eq:collision-Q},
\begin{align*}
|\mathcal T_8|
&\le
\frac{\lambda_{\mathrm{mic}}}{128}\int \norm{\wtilde G_x}_{\nu,M_\#}^2 \,dx \\
&\quad
+
C\sum_{i=1,3}\iint \frac{(1+|\xi|)^{-1}}{M_\#}
\Bigl(
\bigl|\mathcal N\bigl(\wtilde G_x,\shockw{G}{i}\bigr)\bigr|^2
+
\bigl|\mathcal N\bigl(\wtilde G,\partial_x\shockw{G}{i}\bigr)\bigr|^2 \\
&\hspace{3.8cm}
+
\bigl|\mathcal N\bigl(\partial_x\shockw{G}{i},\wtilde G\bigr)\bigr|^2
+
\bigl|\mathcal N\bigl(\shockw{G}{i},\wtilde G_x\bigr)\bigr|^2
\Bigr)\,d\xi\,dx .
\end{align*}
By Lemma~\ref{lem:bs1}, \eqref{eq:mic2}, the same mixed-collision bookkeeping used in
Step~2 of Lemma~\ref{lem:kc6}, and in particular \eqref{eq:kc6-P2-final}, we obtain
\begin{align}\label{eq:GxE-T8}
|\mathcal T_8|
&\le
\frac{\lambda_{\mathrm{mic}}}{128}\int \norm{\wtilde G_x}_{\nu,M_\#}^2\,dx
+
C(\delta_0+\varepsilon)\int \norm{\wtilde G_{\text{rem}}}_{\nu,M_\#}^2 \,dx \notag\\
&\quad
+
C(\delta_0+\varepsilon)^2(\mathcal G_1^S+\mathcal G_3^S)
+
C(\varepsilon+\delta_0)^2
\bigl(\delta_1^2e^{-C\delta_1t}+\delta_3^2e^{-C\delta_3t}\bigr) \notag\\
&\quad
+
C\delta_0^3\bigl(\delta_1e^{-C\delta_1t}+\delta_3e^{-C\delta_3t}\bigr)
+
C\sum_{i=1,3}\delta_i^3e^{-C\delta_it}\int \eta(U|\y U)\,dx .
\end{align}

\medskip
\noindent
\textbf{9. Estimate of $\mathcal T_9$.}
Similarly,
\begin{align*}
|\mathcal T_9| 
& \le
\frac{\lambda_{\mathrm{mic}}}{128}\int \norm{\wtilde G_x}_{\nu,M_\#}^2\,dx \\
&\quad +
C\iint \frac{(1+|\xi|)^{-1}}{M_\#}
\Bigl(
\bigl|\mathcal N\bigl(\partial_x\shockw{G}{1},\shockw{G}{3}\bigr)\bigr|^2
+
\bigl|\mathcal N\bigl(\shockw{G}{1},\partial_x\shockw{G}{3}\bigr)\bigr|^2 \\
&\hspace{1.8cm}
+
\bigl|\mathcal N\bigl(\partial_x\shockw{G}{3},\shockw{G}{1}\bigr)\bigr|^2
+
\bigl|\mathcal N\bigl(\shockw{G}{3},\partial_x\shockw{G}{1}\bigr)\bigr|^2
\Bigr)\,d\xi\,dx .
\end{align*}
By Lemma~\ref{lem:bs1}, the exponential localization and separation of the two shifted shock layers,
and exactly the same shock--shock interaction estimate as in Step~3 of Lemma~\ref{lem:kc6},
cf.\ \eqref{eq:kc6-P3-final}, we infer
\begin{equation}\label{eq:GxE-T9}
|\mathcal T_9|
\le
\frac{\lambda_{\mathrm{mic}}}{128}\int \norm{\wtilde G_x}_{\nu,M_\#}^2 \,dx
+
C(\varepsilon+\delta_0)^2\delta_3^2e^{-C\delta_3t}
+
C(\varepsilon+\delta_0)^2\delta_1^2e^{-C\delta_1t}.
\end{equation}

\medskip
\noindent
\textbf{10. Estimate of $\mathcal T_{10}$.}
Since
\[
L_M h=\mathcal N(M,h)+\mathcal N(h,M),
\]
we have
\begin{align*}
&(L_M-L_{i}^S)\shockw G{i} \\
&\qquad =
\mathcal N\bigl(M-\shockw M i,\shockw G{i}\bigr)
+\mathcal N\bigl(\shockw G{i},M-\shockw M i\bigr),
\end{align*}
and hence
\begin{align*}
& \Bigl((L_M-L_{i}^S)\shockw G{i}\Bigr)_x\\
&\qquad = \mathcal N\bigl(\bigl(M-\shockw M i\bigr)_x,\shockw G{i}\bigr)
+\mathcal N\bigl(M-\shockw M i,\partial_x\shockw G{i}\bigr) \\
&\qquad \quad +\mathcal N\bigl(\partial_x\shockw G{i},M-\shockw M i\bigr) +\mathcal N\bigl(\shockw G{i},(M-\shockw M i)_x\bigr).
\end{align*}
Therefore, by Young's inequality and \eqref{eq:collision-Q},
\begin{align*}
|\mathcal T_{10}|
&\le
\frac{\lambda_{\mathrm{mic}}}{128}\int \norm{\wtilde G_x}_{\nu,M_\#}^2 \,dx \\
&\quad
+
C\sum_{i=1,3}\iint \frac{(1+|\xi|)^{-1}}{M_\#}
\Bigl(
\bigl|\mathcal N\bigl((M-\shockw M i)_x,\shockw G{i}\bigr)\bigr|^2 \\
&\qquad +
\bigl|\mathcal N\bigl(M-\shockw M i,\partial_x\shockw G{i}\bigr)\bigr|^2 \\
&\qquad +
\bigl|\mathcal N\bigl(\partial_x\shockw G{i},M-\shockw M i\bigr)\bigr|^2 \\
&\qquad +
\bigl|\mathcal N\bigl(\shockw G{i},(M-\shockw M i)_x\bigr)\bigr|^2
\Bigr)\,d\xi\,dx .
\end{align*}
Now $(M-\shockw M i)$ and $(M-\shockw M i)_x$ are controlled by
\begin{align*}
&\bigl(v-\shockw v i,u-\shockw u i,\theta-\shockw\theta i\bigr),\\
&\qquad
\bigl(v_x-\partial_x\shockw v{i},u_x-\partial_x\shockw u{i},\theta_x-\partial_x\shockw\theta{i}\bigr),
\end{align*}
respectively. Using Lemma~\ref{lem:bs1}, Lemma~\ref{lem:intsk},
Lemma~\ref{lem:wave-interaction-L2}, Lemma~\ref{lem:wave-interaction-derivative},
and the macroscopic comparison lemmas Lemma~\ref{lem:kc1}--\ref{lem:kc4}, we conclude that
\begin{align}\label{eq:GxE-T10}
|\mathcal T_{10}|
&\le
\frac{\lambda_{\mathrm{mic}}}{128}\int \norm{\wtilde G_x}_{\nu,M_\#}^2 \,dx
+
C(\delta_0+\varepsilon)^2\|(\phi_x,\psi_x,\zeta_x)\|_{L^2_x}^2 \notag\\
&\quad
+
C\delta_0^3\bigl(\delta_1e^{-C\delta_1t}+\delta_3e^{-C\delta_3t}\bigr)
+
C(\varepsilon+\delta_0)^2\delta_3^2e^{-C\delta_3t}
+
C(\varepsilon+\delta_0)^2\delta_1^2e^{-C\delta_1t} \notag\\
&\quad
+
C(\delta_0+\varepsilon)^2(\mathcal G_1^S+\mathcal G_3^S)
+
C\sum_{i=1,3}\delta_i^3e^{-C\delta_it}\int \eta(U|\y U)\,dx .
\end{align}

Finally, summing \eqref{eq:GxE-T1}, \eqref{eq:GxE-T2}, \eqref{eq:GxE-T3},
\eqref{eq:GxE-T4}, \eqref{eq:GxE-T5}, \eqref{eq:GxE-T6}, \eqref{eq:GxE-T7},
\eqref{eq:GxE-T8}, \eqref{eq:GxE-T9}, and \eqref{eq:GxE-T10}, and absorbing all small fractions of
\[
\int \norm{\wtilde G_x}_{\nu,M_\#}^2 \,dx
\]
into the left-hand side, we obtain
\begin{align*}
&\frac{d}{dt}\iint \frac{|\wtilde G_x|^2}{M_\#}\,d\xi\,dx
+\int \norm{\wtilde G_x}_{M_\#}^2 \,dx \\
&\le
C(\delta_0+\varepsilon)^2\|(\phi_x,\psi_x,\zeta_x)\|_{L^2_x}^2
+C\delta_0^3\bigl(\delta_1e^{-C\delta_1t}+\delta_Ce^{-Ct}+\delta_3e^{-C\delta_3t}\bigr)
+C\frac{\delta_C}{(1+t)^{3/2}} \\
&\quad
+C(\varepsilon+\delta_0)^2\delta_3^2e^{-C\delta_3t}
+C(\varepsilon+\delta_0)^2\delta_1^2e^{-C\delta_1t}
+C\|(\psi_{xx},\zeta_{xx})\|_{L^2_x}^2 \\
&\quad +C(\delta_0+\varepsilon)\sum_{i=1,3}\delta_i|\dot X_i|^2
+C(\delta_0+\varepsilon)^2(\mathcal G_1^S+\mathcal G_3^S)
+C(\delta_0+\varepsilon)\int \norm{\wtilde G_{\text{rem}}}_{\nu,M_\#}^2 \,dx \\
&\quad
+C\int \norm{\wtilde G_{xx}}_{\nu,M_\#}^2 \,dx
+C\sum_{i=1,3}\delta_i^3e^{-C\delta_it}\int \eta(U\mid\bar U)\,dx .
\end{align*}
This proves the lemma.
\end{proof}

\subsubsection{Estimate of $\widetilde G_t$}

\begin{lemma}\label{lem:ddotX}
For each \(i=1,3\), one has
\begin{align}
\begin{aligned}
& |\ddot X_i|^2 \int \norm{\partial_x\shockw{G}{i}}_{M_\#}^2 dx  \\
& \; \le
C(\delta_0+\varepsilon)(\mathcal G_1^S+\mathcal G_3^S) +
C(\delta_0+\varepsilon)\sum_{j=1,3}\delta_j|\dot X_j|^2
\\
&\quad 
+C(\delta_0+\varepsilon)\sum_{|\beta|=1}\|\partial^\beta(\phi,\psi,\zeta)\|_{L^2_x}^2 +
C\frac{\delta_C}{1+t}
\int_{\R}e^{-\frac{2c_0|x|^2}{1+t}}|(\phi,\psi,\zeta)|^2\,dx
\\
&\quad 
+
C(\delta_0+\varepsilon)
\int_{\R}
\norm{\wtilde G_x}_{\nu,M_\#} \,dx +
C\frac{\delta_C}{(1+t)^{\frac{5}{4}}} +
C(\delta_0+\varepsilon)\delta_0\sum_{i=1,3}\delta_i^2e^{-C\delta_it}.
\end{aligned}
\label{eq:ddotX-est}
\end{align}
\end{lemma}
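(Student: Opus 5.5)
We will bound $\ddot X_i$ by differentiating the shift ODE \eqref{eq:shift} in time. The profile factor is harmless: by Lemma~\ref{lem:bs1} with $k=1$, $\int \norm{\partial_x\shockw{G}{i}}_{M_\#}^2dx\le C\delta_i^5$. It therefore suffices to prove
\[
\delta_i^5|\ddot X_i|^2\lesssim \text{(right-hand side of \eqref{eq:ddotX-est})}.
\]

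\emph{Step 1: expression for $\ddot X_i$.} Write $\dot X_i=-\frac{\mathfrak m_i}{\delta_i}\int a\,\mathcal W_i\cdot(\psi_1,\phi,\zeta)\,dx$. The profile weights are
\[
\mathcal W_i=\Bigl(\partial_x\shockw{u_1}{i},\ \partial_x\shockw{v}{i}\,\bar p/\bar v,\ \partial_x\shockw{\theta}{i}/\bar\theta\Bigr).
\]
Differentiating in $t$ gives two kinds of terms.

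The first kind has time derivatives falling on the profile quantities $a$, $\mathcal W_i$, $\bar p$, $\bar v$, $\bar\theta$. We use $\partial_t\shockw{h}{i}=-(\sigma_i+\dot X_i)\partial_x\shockw{h}{i}$ together with the contact bounds of Lemma~\ref{lem:contact-wave-estimate0}. This yields integrals of the form
\[
\frac{1}{\delta_i}\int\bigl(|\partial_{xx}\shockw{v}{i}|+|\partial_x\shockw{v}{i}|\,|\bar U_t|+|a_t|\,|\partial_x\shockw{v}{i}|\bigr)\,|(\phi,\psi,\zeta)|\,dx.
\]
By Lemma~\ref{lem:bs} we have $|\partial_{xx}\shockw{v}{i}|\le C\delta_i|\partial_x\shockw{v}{i}|$ and $|a_t|\le C\delta_i^{-1/2}|\partial_x\shockw{v}{i}|$. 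Cauchy--Schwarz against $|\partial_x\shockw{v}{i}|$ then bounds the square of these terms by $C\delta_i^{-1}\int|\partial_x\shockw{v}{i}|\,|(\phi,\psi,\zeta)|^2$. The factor $\delta_i^5$ leaves a large surplus.

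We then split with $\varphi_i+\varphi_{i'}=1$. The $\varphi_i$ part gives $\mathcal G_i^S$, and the $\varphi_{i'}$ part gives the exponentially small remainder via Lemma~\ref{lem:intsk} and the bootstrap bound. Contact contributions such as $u^C_{1x}\,\partial_x\shockw{v}{i}$ are handled with Lemma~\ref{lem:wave-interaction-L2}; they produce either the Gaussian weighted term or the exponentially small remainder. Terms containing $\dot X_i$ (from $\sigma_i+\dot X_i$) are bounded using $|\dot X_i|\le C\varepsilon$, which gives $C(\delta_0+\varepsilon)\delta_j|\dot X_j|^2$ after using $\delta_i^5\ll\delta_i$.

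\emph{Step 2: the perturbation time derivatives.} The second kind of term is $\frac{1}{\delta_i}\int a\,\mathcal W_i\cdot(\psi_{1t},\phi_t,\zeta_t)$. Its square is at most
\[
C\delta_i^{-2}\,\|\mathcal W_i\|_{L^2}^2\,\|(\phi_t,\psi_t,\zeta_t)\|_{L^2}^2\le C\delta_i\,\|(\phi_t,\psi_t,\zeta_t)\|_{L^2}^2,
\]
using $\|\partial_x\shockw{v}{i}\|_{L^2}^2\le C\delta_i^3$. Multiplying by $\delta_i^5$ leaves a factor $\delta_0^6$.

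We then insert Lemma~\ref{lem:macro-time-derivative}. Its right-hand side consists of $\mathcal G^S$, $\delta_j|\dot X_j|^2$, the Gaussian weighted term, $\|(\phi_x,\psi_x,\zeta_x)\|^2$, the contact decay $\delta_C(1+t)^{-5/4}$, the exponential remainders, and $\int\norm{\wtilde G_x}_{\nu,M_\#}^2$. After multiplication by $\delta_0^6\le C(\delta_0+\varepsilon)$, every one of these is among the terms allowed in \eqref{eq:ddotX-est}.

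\emph{Main obstacle.} The delicate point is the bookkeeping of powers of $\delta_i$ in Step 1. The $1/\delta_i$ in the shift ODE combined with the $\delta_i^{-1/2}$ from $a_t$ must be beaten by the factor $\delta_i^5$ and by the localization of the profile derivatives. Away from shock $i$, the $\varphi_{i'}$ part must be shown to yield only $\delta_0\delta_i^2e^{-C\delta_i t}$, not a non-decaying quantity; this is where Lemma~\ref{lem:intsk} and $\int\eta\le C\varepsilon^2$ are used. Once each piece is matched to a term of \eqref{eq:ddotX-est}, summing the pieces gives the lemma.
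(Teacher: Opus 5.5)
Your argument is correct and follows the paper's proof essentially step for step. Like the paper, you differentiate the modulation ODE \eqref{eq:shift}, bound the terms where $\partial_t$ falls on $a$ or on the profile coefficients by Cauchy--Schwarz against $|\partial_x(v^{S_i})^{-X_i}|$ (which gives $\mathcal G_i^S$ plus the cutoff and interaction remainders), and close the $(\phi_t,\psi_{1t},\zeta_t)$ term with Lemma~\ref{lem:macro-time-derivative}. The only differences are that the paper reduces to the stronger quantity $\delta_i^3|\ddot X_i|^2$ instead of $\delta_i^5|\ddot X_i|^2$, because that version is reused in Lemma~\ref{lem:ftxE} and your surplus of powers of $\delta_i$ gives it too, and two harmless imprecisions in your write-up: $a_t$ carries contributions from both shocks, and $\mathcal G_i^S$ is weighted by $\varphi_i^2$, so the split should be $\varphi_i^2$ versus $1-\varphi_i^2\le 2\varphi_{i'}$.
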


\begin{proof}
By Lemma~\ref{lem:bs1}, together with the equivalence between \(M_\#\) and the fixed global Maxwellian near the shock profile, we have
\begin{equation}\label{eq:ddotX-profile}
\int_{\R}\norm{\partial_x \shockw{G}{i}}_{M_\#}^2 \,dx
\le C\delta_i^5.
\end{equation}
Hence it suffices to estimate \(\delta_i^3|\ddot X_i|^2\). 

Set
\[
\mathfrak Y_i(t,x)
:=
a(t,x)
\Bigl(
\partial_x\shockw{u_1}{i}\,\psi_1
+
\partial_x\shockw{v}{i}\,\frac{\y p}{\y v}\phi
+
\frac{\partial_x\shockw{\theta}{i}}{\y\theta}\zeta
\Bigr)(t,x).
\]
By the definition of the dynamical shifts \eqref{eq:shift},
\[
\dot X_i(t)
=
-\frac{M_i}{\delta_i}\int_{\R}\mathfrak Y_i(t,x)\,dx,
\]
and therefore
\[
\ddot X_i(t)
=
-\frac{M_i}{\delta_i}\int_{\R}\partial_t\mathfrak Y_i(t,x)\,dx .
\]

Observe that
\begin{align}
\abs{\dot{X}_i} \leq C\delta_i^{-1} \norm{(\phi,\psi_1,\zeta)}_{L^{\infty}} \int \bigl|\partial_x\shockw{v}{i}\bigr| dx \leq C\varepsilon
\end{align}
by the bootstrap assumption.
Since \(M_i=O(1)\), it follows that
\begin{equation}\label{eq:ddotX-reduce}
\delta_i^3|\ddot X_i|^2
\le
C\delta_i\left(\int_{\R}\partial_t\mathfrak Y_i\,dx\right)^2.
\end{equation}

We decompose
\[
\partial_t\mathfrak Y_i
=
a_t\,\mathcal Z_i
+
a\,\partial_t\mathcal Z_i,
\qquad
\mathcal Z_i
:=
\partial_x\shockw{u_1}{i}\,\psi_1
+
\partial_x\shockw{v}{i}_x\,\frac{\y p}{\y v}\phi
+
\frac{\partial_x\shockw{\theta}{i}}{\y\theta}\zeta.
\]
Accordingly, write
\[
\left(\int_{\R}\partial_t\mathfrak Y_i\,dx\right)^2
\le
C(I_{i,1}+I_{i,2}+I_{i,3}),
\]
where
\begin{align*}
I_{i,1}
&:=
\Biggl(
\int_{\R}
a
\Bigl(
\partial_x\shockw{u_1}{i}\,\psi_{1t}
+
\partial_x\shockw{v}{i}\,\frac{\y p}{\y v}\phi_t
+
\frac{\partial_x\shockw{\theta}{i}}{\y\theta}\zeta_t
\Bigr)\,dx
\Biggr)^2,\\
I_{i,2}
&:=
\left(
\int_{\R}
a_t\,\mathcal Z_i\,dx
\right)^2,\\
I_{i,3}
&:=
\Biggl(
\int_{\R}
a
\Bigl(
(\partial_x\shockw{u_1}{i})_t\,\psi_1
+
\Bigl(\partial_x\shockw{v}{i}\frac{\y p}{\y v}\Bigr)_t\phi
+
\Bigl(\frac{\partial_x\shockw{\theta}{i}}{\y\theta}\Bigr)_t\zeta
\Bigr)\,dx
\Biggr)^2.
\end{align*}

For \(I_{i,1}\), using Lemma~\ref{lem:bs}, Lemma~\ref{lem:bs2}, and the bound
\[
\int_{\R}|\partial_x(v^{S_i})^{-X_i}|\,dx \le C\delta_i,
\]
we obtain by Young's inequality
\begin{align}
I_{i,1}
&\le \norm{\partial_x \shockw{v}{i}}^2_{L_x^2}\, \norm{(\phi_t,\psi_{1t},\zeta_t)}_{L^2_x}^2\nonumber\\
&\le C\delta_i^3\|(\phi_t,\psi_{1t},\zeta_t)\|_{L^2_x}^2 .
\label{eq:ddotX-I1}
\end{align}

For \(I_{i,2}\), we differentiate the weight \eqref{eq:wefun}. Since
\[
a_t
=
-(\sigma_1+\dot X_1)\,\partial_x(a_1)^{-X_1}
-(\sigma_3+\dot X_3)\,\partial_x(a_3)^{-X_3},
\]
and \(a_i'=\delta_i^{-1/2}(v^{S_i})'\), Lemma~\ref{lem:bs} implies
\[
|a_t|
\le
C\delta_1^{-1/2}|(v^{S_1})_x^{-X_1}|
+
C\delta_3^{-1/2}|(v^{S_3})_x^{-X_3}|.
\]
Therefore, by Young's inequality, \eqref{eq:skGt}, \eqref{eq:shock-tail-proof},
Lemma~\ref{lem:wave-interaction-L2}, and the exponential separation of the two shocks,
\begin{align}
I_{i,2} &\le C\Bigl(\int \delta_1^{-1/2}|\partial_x \shockw{v}{1}|\,|\partial_x \shockw{v}{i}| \,|(\phi,\psi_1,\zeta)| dx \Bigr)^2 \nonumber \\
& \qquad+ C\Bigl(\int \delta_3^{-1/2}|\partial_x \shockw{v}{3}|\,|\partial_x \shockw{v}{i}| \,|(\phi,\psi_1,\zeta)| dx \Bigr)^2 \nonumber\\
&\le C\Bigl(\delta_1^{-1}\int |\partial_x \shockw{v}{1}|^2 dx\Bigr)\Bigl(\int |\partial_x \shockw{v}{i}|^2\, |(\phi,\psi_1,\zeta)|^2 dx \Bigr) \nonumber \\
&\qquad + C\Bigl(\delta_3^{-1}\int |\partial_x \shockw{v}{3}|^2 dx\Bigr)\Bigl(\int |\partial_x \shockw{v}{i}|^2\, |(\phi,\psi_1,\zeta)|^2 dx \Bigr)\nonumber \\
&\le
C(\delta_1^2+\delta_3^2) \,\mathcal G_i^S
+
C(\varepsilon+\delta_0)^2\delta_i^2e^{-C\delta_it}
+
C\delta_i^3e^{-C\delta_it}\int_{\R}\eta(U\mid\y U)\,dx .
\label{eq:ddotX-I2}
\end{align}

For \(I_{i,3}\), we use the shifted-profile identities
\[
\partial_t \shockw{h}{i}
=
-(\sigma_i+\dot X_i)\partial_x \shockw{h}{i},
\]
valid for \(h=v^{S_i},u_1^{S_i},\theta^{S_i},G^{S_i}\), together with Lemma~\ref{lem:bs}, Lemma~\ref{lem:bs2},
\eqref{eq:wave-interaction-derivative-1}, and \eqref{eq:wave-interaction-derivative-3}.
Since each time derivative of a shifted shock profile contributes one additional \(x\)-derivative, we get
\begin{align}
I_{i,3}
\le\;&
C\delta_i\,\mathcal G_i^S
+
C(\varepsilon+\delta_0)^2\delta_i^2e^{-C\delta_it}
+
C\delta_i^3e^{-C\delta_it}\int_{\R}\eta(U\mid\y U)\,dx .
\label{eq:ddotX-I3}
\end{align}

Combining \eqref{eq:ddotX-reduce}, \eqref{eq:ddotX-I1}, \eqref{eq:ddotX-I2}, and \eqref{eq:ddotX-I3}, we arrive at
\begin{align}
\delta_i^3|\ddot X_i|^2
\le & \;
C\delta_i\|(\phi_t,\psi_{1t},\zeta_t)\|_{L^2_x}^2
+
C\delta_i\mathcal G_i^S
+
C(\varepsilon+\delta_0)^2\delta_i^2e^{-C\delta_it} \nonumber
\\
&\qquad +
C\delta_i^3e^{-C\delta_it}\int_{\R}\eta(U\mid\y U)\,dx .
\label{eq:ddotX-pre}
\end{align}
Now we invoke Lemma~\ref{lem:macro-time-derivative}, namely \eqref{eq:time-derivative-est}, and use \(\delta_i\le C(\delta_0+\varepsilon)\). This gives
\begin{align*}
\delta_i^3|\ddot X_i|^2
\le\;&
C(\delta_0+\varepsilon)\sum_{|\beta|=1}\|\partial^\beta(\phi,\psi,\zeta)\|_{L^2_x}^2
+
C(\delta_0+\varepsilon)(\mathcal G_1^S+\mathcal G_3^S)
\\
&+
C(\delta_0+\varepsilon)\sum_{i=1,3}\delta_i|\dot X_i|^2
+
C(\delta_0+\varepsilon)
\int_{\R}\norm{\wtilde G_{x}}_{\nu,M_\#}^2\,dx
\\
&+
C\delta_C^2\frac{1}{1+t}
\int_{\R}e^{-\frac{2c_0|x|^2}{1+t}}|(\phi,\psi,\zeta)|^2\,dx
+
C\frac{\delta_C}{(1+t)^{3/2}}
\\
&+
C(\varepsilon+\delta_0)^2\delta_i^2e^{-C\delta_it}
+
C\delta_i^3e^{-C\delta_it}\int_{\R}\eta(U\mid\y U)\,dx .
\end{align*}
Finally, combining this with \eqref{eq:ddotX-profile} proves \eqref{eq:ddotX-est}.
\end{proof}

\begin{remark}
This lemma only requires estimating \(\delta_i^5 |\ddot{X}|^2\). However, when estimating the second-order energy, one also needs to estimate \eqref{eq:ddotX-reduce}. Fortunately, the smallness in the estimate of \eqref{eq:ddotX-reduce} is still sufficiently good. Indeed, the only term for which the smallness needs to be absorbed is \eqref{eq:ddotX-I2}, and even the estimate of \eqref{eq:ddotX-I2} can be bounded with more than enough smallness.
\end{remark}

\begin{lemma}\label{lem:GtE}
There exists \(C>0\) such that
\begin{align}
\begin{aligned}
&\frac{d}{dt}\int_{\R}\norm{\wtilde G_{t}}^2_{M_\#}\,dx
+
\int_{\R}\norm{\wtilde G_t}_{\nu,M_\#}^2\,dx
\\
&\le
C(\delta_0+\varepsilon)\sum_{|\beta|=1}\|\partial^\beta(\phi,\psi,\zeta)\|_{L^2_x}^2
+
C\frac{\delta_C}{1+t}
\int_{\R}e^{-\frac{2c_0|x|^2}{1+t}}|(\phi,\psi,\zeta)|^2\,dx
\\
&\quad
+
C(\delta_0+\varepsilon)\delta_0^2\bigl(\delta_1e^{-C\delta_1t}+\delta_3e^{-C\delta_3t}\bigr)
+
C\frac{\delta_C}{(1+t)^{\frac{5}{4}}}
+ C\|(\psi_{xt},\zeta_{xt})\|_{L^2_x}^2
\\
&\quad
+
C(\delta_0+\varepsilon)\sum_{i=1,3}\delta_i|\dot X_i|^2
+
C\delta_0(\delta_0+\varepsilon)(\mathcal G_1^S+\mathcal G_3^S) +
C\delta_0\sum_{i=1,3}\delta_i^2e^{-C\delta_it}
\int_{\R}\eta(U\mid\bar U)\,dx
\\
&\quad
+
C(\delta_0+\varepsilon)\int_{\R}
\norm{\wtilde G_{\textup{rem}}}_{\nu,M_\#}^2 + \norm{\wtilde G_{x}}_{\nu,M_\#}^2 \,dx
+
C\int_{\R}
\norm{\wtilde G_{tx}}_{\nu,M_\#}^2 \,dx.
\end{aligned}
\label{eq:GtE}
\end{align}
\end{lemma}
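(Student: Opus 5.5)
The proof of Lemma~\ref{lem:GtE} will mirror that of Lemma~\ref{lem:GxE}, with $\partial_t$ in place of $\partial_x$. We differentiate the microscopic perturbation equation \eqref{eq:pesy3} in $t$, multiply by $\wtilde G_t/M_\#$, integrate over $\R\times\R^3$, and split the right-hand side into terms $\mathcal T_1^t,\dots,\mathcal T_{10}^t$ matching $\mathcal T_1,\dots,\mathcal T_{10}$ there.

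\textbf{Coercive term.} For $\mathcal T_1^t$ we use
\[
(L_M h)_t = L_M h_t + \mathcal N(M_t,h) + \mathcal N(h,M_t).
\]
By \eqref{eq:collision-coercivity} the first piece gives $-\lambda_{\mathrm{mic}}\int\norm{\wtilde G_t}_{\nu,M_\#}^2\,dx$. The commutator pieces are handled with \eqref{eq:collision-Q}, the splitting $\wtilde G=\wtilde G_{\rm rem}+\wtilde G_C$, and the bound
\[
|M_t|\lesssim |(\phi_t,\psi_t,\zeta_t)| + |\bar U_t|.
\]
Here $\bar U_t$ is controlled by $(\sigma_i+\dot X_i)\partial_x\shockw{v}{i}$ plus the contact-wave time derivatives. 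These contribute $(\delta_0+\varepsilon)\int\norm{\wtilde G_{\rm rem}}_{\nu,M_\#}^2$, a multiple of $\|(\phi_t,\psi_t,\zeta_t)\|^2$ (converted by Lemma~\ref{lem:macro-time-derivative} into first-order norms, $\mathcal G_i^S$, shift terms, the Gaussian contact term and $\int\norm{\wtilde G_x}^2_{\nu,M_\#}$), and profile remainders.

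\textbf{Transport and profile terms.} The transport term $\bigl(\frac{u_1}{v}\wtilde G_x-\frac1vP_1(\xi_1\wtilde G_x)\bigr)_t$ produces $\wtilde G_{tx}$, which we do not integrate by parts. Young's inequality bounds it by $\frac{\lambda_{\mathrm{mic}}}{128}\int\norm{\wtilde G_t}^2_{\nu,M_\#}+C\int\norm{\wtilde G_{tx}}^2_{\nu,M_\#}$. The coefficient part with $(u_{1t},v_t)$ gives $(\delta_0+\varepsilon)\int\norm{\wtilde G_x}^2$.

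The source term $\bigl(-\frac1vP_1(\xi_1M_x)+\sum_i\frac{1}{\shockw vi}P_1^{S_i}(\xi_1\partial_x\shockw Mi)\bigr)_t$ is treated as in Lemma~\ref{lem:kc4}. Its genuinely perturbative part contains $\psi_{xt},\zeta_{xt}$; this is the origin of the term $C\|(\psi_{xt},\zeta_{xt})\|^2$. As in Lemma~\ref{lem:kc4}, $P_1$ kills the density part, so no $\phi_{xt}$ appears. The remaining mismatch pieces are handled by Lemmas~\ref{lem:kc1}--\ref{lem:kc3}, with $\partial_t\shockw hi=-(\sigma_i+\dot X_i)\partial_x\shockw hi$ turning time derivatives of profiles into space derivatives.

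The mismatch terms $\mathcal T_{4}^t,\mathcal T_5^t,\mathcal T_{10}^t$, the collision terms $\mathcal T_7^t,\mathcal T_8^t$, and the shock--shock term $\mathcal T_9^t$ go the same way. We use Lemmas~\ref{lem:bs1}, \ref{lem:intsk}, \ref{lem:wave-interaction-L2} and \ref{lem:wave-interaction-derivative}, the decomposition $U-\shockw Ui=(U-\bar U)+(\bar U-\shockw Ui)$ with $\varphi_1+\varphi_3=1$, and the smallness of $\|\wtilde G\|_{L^\infty_x}$. These yield the $\mathcal G_i^S$, $\eta$-weighted exponential, and $\delta_0^2\delta_ie^{-C\delta_it}$ terms.

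\textbf{Main obstacle: the shift term.} Differentiating $\dot X_i\partial_x\shockw Gi$ in time gives
\[
\ddot X_i\,\partial_x\shockw Gi-\dot X_i(\sigma_i+\dot X_i)\,\partial_{xx}\shockw Gi .
\]
The second piece is bounded by $C\delta_i^7|\dot X_i|^2\le C(\delta_0+\varepsilon)\delta_i|\dot X_i|^2$. The $\ddot X_i$ piece needs Lemma~\ref{lem:ddotX}. After Young's inequality,
\[
\Bigl|\iint\ddot X_i\,\partial_x\shockw Gi\,\frac{\wtilde G_t}{M_\#}\Bigr|
\le \frac{\lambda_{\mathrm{mic}}}{128}\int\norm{\wtilde G_t}^2_{\nu,M_\#}
+C|\ddot X_i|^2\int\norm{\partial_x\shockw Gi}^2_{M_\#},
\]
and \eqref{eq:ddotX-est} bounds the last term with the prefactor $(\delta_0+\varepsilon)$ on every perturbative quantity. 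This smallness must be kept: otherwise the $\mathcal G_i^S$ and first-order terms could not later be absorbed by the macroscopic dissipation.

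Summing all pieces and absorbing the fractions of $\int\norm{\wtilde G_t}^2_{\nu,M_\#}$ gives \eqref{eq:GtE}.
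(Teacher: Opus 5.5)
Your proposal is correct and follows essentially the same route as the paper: you differentiate \eqref{eq:pesy3} in $t$, test against $\widetilde G_t/M_\#$, get coercivity from $(L_M\widetilde G)_t=L_M\widetilde G_t+\mathcal N(M_t,\widetilde G)+\mathcal N(\widetilde G,M_t)$, use Young's inequality on the transport term to produce $C\int\norm{\widetilde G_{tx}}_{\nu,M_\#}^2\,dx$, handle the $\ddot X_i$ part of the shift term through Lemma~\ref{lem:ddotX}, and treat the source, mismatch and collision terms as time-differentiated analogues of those in Lemma~\ref{lem:GxE}, with the source term giving $\|(\psi_{xt},\zeta_{xt})\|_{L^2_x}^2$. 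This matches the paper's decomposition into $\mathcal T_1,\dots,\mathcal T_{10}$ and the estimates it uses.
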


\begin{proof}
Differentiating \eqref{eq:pesy3} with respect to \(t\), multiplying the resulting equation by
\(\wtilde G_t/M_\#\), and integrating over \((x,\xi)\in\R\times\R^3\), we obtain

\begin{align*}
\frac12\frac{d}{dt}\iint \frac{|\wtilde G_t|^2}{M_\#}\,d\xi\,dx
=
\mathcal T_1+\sum_{i=1,3}\mathcal T_{2i}
+\mathcal T_3+\sum_{i=1,3}\mathcal T_{4i}
+\sum_{i=1,3}\mathcal T_{5i}
+\mathcal T_6+\mathcal T_7+\mathcal T_8+\mathcal T_9+\mathcal T_{10},
\end{align*}

where

\begin{align*}
\mathcal T_1
&:= \iint (L_M\wtilde G)_t\frac{\wtilde G_t}{M_\#}\,d\xi\,dx,\\
\mathcal T_{2i}
&:= \iint \dot X_i\,\partial_{xt}\bigl(\shockw G{i}\bigr) \frac{\wtilde G_t}{M_\#}\,d\xi\,dx + \iint \ddot X_i\,\partial_x\shockw G{i}\frac{\wtilde G_t}{M_\#}\,d\xi\,dx,\\
\mathcal T_3 &:= \iint \Bigl( \frac{u_1}{v}\wtilde G_x-\frac1vP_1(\xi_1\wtilde G_x)\Bigr)_t \frac{\wtilde G_t}{M_\#}\,d\xi\,dx,\\
\mathcal T_{4i} &:= \iint \Biggl[ \Bigl( \frac{u_1}{v}-\frac{\shockw{u_1}{i}}{\shockw{v}{i}} \Bigr)\partial_x\shockw G{i} \Biggr]_t \frac{\wtilde G_t}{M_\#}\,d\xi\,dx,\\
\mathcal T_{5i} &:= -\iint \Biggl[\frac{1}{v}P_1(\xi_1\partial_x\shockw{G}{i})-\frac{1}{\shockw{v}{i}}P_1^{S_i}(\xi_1\partial_x\shockw{G}{i})\Biggr]_t\frac{\wtilde{G}_t}{M_\#}\,d\xi\,dx,\\
\mathcal T_6&:= \iint\Biggl[-\frac{1}{v}P_1(\xi_1M_x)+\frac{1}{\shockw{v}{1}}P_1^{S_1}(\xi_1\partial_x\shockw{M}{1})\Biggr.\\
&\qquad \qquad \Biggl.+\frac{1}{\shockw{v}{3}}P_1^{S_3}(\xi_1\partial_x\shockw{M}{3})\Biggr]_t\frac{\wtilde{G}_t}{M_\#}\,d\xi\,dx,\\
\mathcal T_7&:= \iint \bigl(\mathcal N(\wtilde{G},\wtilde{G})\bigr)_t \frac{\wtilde{G}_t}{M_\#}\,d\xi\,dx,\\
\mathcal T_8&:= \sum_{i=1,3}\iint\Bigl(\mathcal N\bigl(\wtilde{G},\shockw{G}{i}\bigr)+\mathcal N\bigl(\shockw{G}{i},\wtilde{G}\bigr)\Bigr)_t\frac{\wtilde{G}_t}{M_\#}\,d\xi\,dx,\\
\mathcal T_9&:= \iint
\Bigl[
\mathcal N\bigl(\shockw{G}{1},\shockw{G}{3}\bigr)+\mathcal N\bigl(\shockw{G}{3},\shockw{G}{1}\bigr)
\Bigr]_t
\frac{\wtilde{G}_t}{M_\#}\,d\xi\,dx,\\
\mathcal T_{10}
&:= \sum_{i=1,3}\iint
\Bigl(
(L_M-L_{i}^S)\shockw{G}{i}
\Bigr)_t
\frac{\wtilde{G}_t}{M_\#}\,d\xi\,dx.
\end{align*}

\medskip
\noindent
\textit{Estimate of \(\mathcal T_1\).}
Since
\[
L_M h=\mathcal N(M,h)+\mathcal N(h,M),
\]
we have
\[
(L_Mh)_t=L_Mh_t+\mathcal N(M_t,h)+\mathcal N(h,M_t).
\]
Therefore,
\begin{align*}
\mathcal T_1
&=
\iint L_M\wtilde G_t\frac{\wtilde G_t}{M_\#}\,d\xi\,dx
+
\iint \Bigl(\mathcal N(M_t,\wtilde G)+\mathcal N(\wtilde G,M_t)\Bigr)\frac{\wtilde G_t}{M_\#}\,d\xi\,dx.
\end{align*}
By the coercivity estimate \eqref{eq:collision-coercivity},
\[
\iint L_M\wtilde G_t\frac{\wtilde G_t}{M_\#}\,d\xi\,dx
\le
-\lambda_{\mathrm{mic}}
\iint \frac{(1+|\xi|)|\wtilde G_t|^2}{M_\#}\,d\xi\,dx.
\]
Using Young's inequality, \eqref{eq:collision-Q}, the decomposition \eqref{eq:mic2},
and Lemma~\ref{lem:macro-time-derivative}, we obtain
\begin{align}\label{eq:GtE-T1}
\begin{aligned}
\mathcal T_1
\le\;&
-\frac{127}{128}\lambda_{\mathrm{mic}}
\int \norm{\wtilde G_t}_{\nu,M_\#}^2 \,dx
+
C(\delta_0+\varepsilon)^2\sum_{|\beta|=1}\|\partial^\beta(\phi,\psi,\zeta)\|_{L^2_x}^2
\\
&+
C\delta_C^2\frac{1}{1+t}
\int_{\R}e^{-\frac{2c_0|x|^2}{1+t}}|(\phi,\psi,\zeta)|^2\,dx
+
C\frac{\delta_C}{(1+t)^{3/2}}
\\
&+
C(\delta_0+\varepsilon)\int
\norm{\wtilde G_{\text{rem}}}_{\nu,M_\#}^2 + \norm{\wtilde G_x}_{\nu,M_\#}^2 \,dx
+
C\delta_0^3\bigl(\delta_1e^{-C\delta_1t}+\delta_3e^{-C\delta_3t}\bigr).
\end{aligned}
\end{align}

\medskip
\noindent
\textit{Estimate of \(\mathcal T_{2i}\).}
Since
\[
\partial_t\shockw G{i}=-(\sigma_i+\dot X_i)\partial_x\shockw G{i},
\qquad
\partial_{xt}\shockw G{i}=-(\sigma_i+\dot X_i)\partial_{xx}\shockw G{i},
\]
Lemma~\ref{lem:bs1} yields
\[
\iint \frac{|\shock G{i}_{xt}|^2}{M_\#}\,d\xi\,dx
\le C\delta_i^7
\le C(\delta_0+\varepsilon)\delta_i .
\]
Therefore, by Young's inequality and Lemma~\ref{lem:ddotX},
\begin{align}\label{eq:GtE-T2}
\begin{aligned}
|\mathcal T_{2i}|
\le\;&
\frac{\lambda_{\mathrm{mic}}}{64}
\int \norm{\wtilde G_{t}}_{\nu,M_\#}^2 \,dx
+
C(\delta_0+\varepsilon)\delta_i|\dot X_i|^2
\\
&+
C(\delta_0+\varepsilon)^2\sum_{|\beta|=1}\|\partial^\beta(\phi,\psi,\zeta)\|_{L^2_x}^2
+
C(\delta_0+\varepsilon)^2(\mathcal G_1^S+\mathcal G_3^S)
\\
&+
C(\delta_0+\varepsilon)\int_{\R}
\norm{\wtilde G_{x}}_{\nu,M_\#}^2 \,dx
\\
&+
C\delta_C^2\frac{1}{1+t}
\int_{\R}e^{-\frac{2c_0|x|^2}{1+t}}|(\phi,\psi,\zeta)|^2\,dx
+
C\frac{\delta_C}{(1+t)^{3/2}}
\\
&+
C(\varepsilon+\delta_0)^2\delta_i^2e^{-C\delta_it}
+
C\delta_i^3e^{-C\delta_it}\int_{\R}\eta(U\mid\bar U)\,dx .
\end{aligned}
\end{align}

\medskip
\noindent
\textit{Estimate of \(\mathcal T_3\).}
Expanding the time derivative gives terms of the form
\[
\frac{u_{1t}}{v}\wtilde G_x,\qquad
\frac{u_1}{v}\wtilde G_{xt},\qquad
\frac{v_t}{v^2}P_1(\xi_1\wtilde G_x),\qquad
\frac1v(P_1)_t(\xi_1\wtilde G_x),\qquad
\frac1vP_1(\xi_1\wtilde G_{xt}).
\]
Using Young's inequality, the boundedness of \(P_1\), and Lemma~\ref{lem:macro-time-derivative}, we get
\begin{align}\label{eq:GtE-T3}
\begin{aligned}
|\mathcal T_3|
\le\;&
\frac{\lambda_{\mathrm{mic}}}{128}
\int \norm{\wtilde G_t}_{\nu,M_\#}^2 \,dx
+
C\int \norm{\wtilde G_{tx}}_{\nu,M_\#}^2 \,dx +
C(\delta_0+\varepsilon)^2\sum_{|\beta|=1}\|\partial^\beta(\phi,\psi,\zeta)\|_{L^2_x}^2
\\
&+
C(\delta_0+\varepsilon)\int
\norm{\wtilde G_{\text{rem}}}_{\nu,M_\#}^2 + \norm{\wtilde G_{x}}_{\nu,M_\#}^2 \,dx
\\
&+
C\delta_C^2\frac{1}{1+t}
\int_{\R}e^{-\frac{2c_0|x|^2}{1+t}}|(\phi,\psi,\zeta)|^2\,dx .
\end{aligned}
\end{align}

\medskip
\noindent
\textit{Estimate of \(\mathcal T_{4i}\), \(\mathcal T_{5i}\), \(\mathcal T_6\), and \(\mathcal T_{10}\).}
These are the time-differentiated counterparts of the corresponding terms in the proof of Lemma~\ref{lem:GxE}.
When the derivative falls on a macroscopic coefficient, we use Lemma~\ref{lem:macro-time-derivative};
when it falls on the projected polynomial moments, we use Lemmas~\ref{lem:kc4}--\ref{lem:kc6};
and when it falls on a shifted shock profile, we use the identities
\[
\partial_t \shockw h i = -(\sigma_i+\dot X_i)\partial_x \shockw h i,
\qquad h=v^{S_i},u_1^{S_i},\theta^{S_i},G^{S_i},
\]
together with Lemmas~\ref{lem:bs}--\ref{lem:bs2},
Lemma~\ref{lem:intsk}, Lemma~\ref{lem:wave-interaction-L2},
and Lemma~\ref{lem:wave-interaction-derivative}.
Proceeding exactly as in Lemma~\ref{lem:GxE}, we obtain
\begin{align}\label{eq:GtE-T45610}
\begin{aligned}
&\sum_{i=1,3}\bigl(|\mathcal T_{4i}|+|\mathcal T_{5i}|\bigr)+|\mathcal T_6|+|\mathcal T_{10}|
\\
&\le
\frac{\lambda_{\mathrm{mic}}}{16}
\int \norm{\wtilde G_t}_{\nu,M_\#}^2 \,dx
+
C(\delta_0+\varepsilon)^2\sum_{|\beta|=1}\|\partial^\beta(\phi,\psi,\zeta)\|_{L^2_x}^2
\\
&\quad
+
C\|(\psi_{xt},\zeta_{xt})\|_{L^2_x}^2
+
C(\delta_0+\varepsilon)^2(\mathcal G_1^S+\mathcal G_3^S)
+
C(\delta_0+\varepsilon)\sum_{i=1,3}\delta_i|\dot X_i|^2
\\
&\quad
+
C(\delta_0+\varepsilon)\int
\norm{\wtilde G_{\text{rem}}}_{\nu,M_\#}^2 + \norm{\wtilde G_x}_{\nu,M_\#}^2 \,dx
\\
&\quad
+
C\delta_C^2\frac{1}{1+t}
\int_{\R}e^{-\frac{2c_0|x|^2}{1+t}}|(\phi,\psi,\zeta)|^2\,dx
+
C\frac{\delta_C}{(1+t)^{3/2}}
\\
&\quad
+
C\delta_0^3\bigl(\delta_1e^{-C\delta_1t}+\delta_3e^{-C\delta_3t}\bigr)
+
C(\varepsilon+\delta_0)^2\delta_1^2e^{-C\delta_1t}
+
C(\varepsilon+\delta_0)^2\delta_3^2e^{-C\delta_3t}
\\
&\quad
+
C\sum_{i=1,3}\delta_i^3e^{-C\delta_it}\int_{\R}\eta(U\mid \bar U)\,dx .
\end{aligned}
\end{align}

\medskip
\noindent
\textit{Estimate of \(\mathcal T_7\), \(\mathcal T_8\), and \(\mathcal T_9\).}
Using
\[
(\mathcal N(f,g))_t=\mathcal N(f_t,g)+\mathcal N(f,g_t),
\]
together with \eqref{eq:collision-Q}, Lemma~\ref{lem:bs1}, and the same bookkeeping as in
\eqref{eq:kc6-P2-final}--\eqref{eq:kc6-P3-final}, we infer
\begin{align}
\begin{aligned}
|\mathcal T_7|+|\mathcal T_8|
\le\;&
\frac{\lambda_{\mathrm{mic}}}{64}
\iint \frac{(1+|\xi|)|\wtilde G_t|^2}{M_\#}\,d\xi\,dx
\\
&+
C(\delta_0+\varepsilon)\int
\norm{\wtilde G_{\text{rem}}}_{\nu,M_\#}^2 + \norm{\wtilde G_x}_{\nu,M_\#}^2 \,dx
\\
&+
C(\delta_0+\varepsilon)^2(\mathcal G_1^S+\mathcal G_3^S)
+
C\delta_0^3\bigl(\delta_1e^{-C\delta_1t}+\delta_3e^{-C\delta_3t}\bigr)
\\
&+
C(\varepsilon+\delta_0)^2\bigl(\delta_1^2e^{-C\delta_1t}+\delta_3^2e^{-C\delta_3t}\bigr)
+
C\sum_{i=1,3}\delta_i^3e^{-C\delta_it}\int_{\R}\eta(U\mid\bar U)\,dx ,
\end{aligned}
\end{align}
and
\begin{equation}\label{eq:GtE-T9}
|\mathcal T_9|
\le
\frac{\lambda_{\mathrm{mic}}}{128}
\int \norm{\wtilde G_t}_{\nu,M_\#}^2 \,dx
+
C(\varepsilon+\delta_0)^2\delta_1^2e^{-C\delta_1t}
+
C(\varepsilon+\delta_0)^2\delta_3^2e^{-C\delta_3t}.
\end{equation}

Finally, summing \eqref{eq:GtE-T1}, \eqref{eq:GtE-T2}, \eqref{eq:GtE-T3}, \eqref{eq:GtE-T45610},
and \eqref{eq:GtE-T9}, and absorbing all small fractions of
\[
\int \norm{\wtilde G_t}_{\nu,M_\#}^2 \,dx
\]
into the left-hand side, we obtain \eqref{eq:GtE}. This completes the proof.
\end{proof}

\subsubsection{Estimate of second-order derivatives}

For later use, set
\[
\mathcal M := M-\shockw{M}{1}-\shockw{M}{3},
\qquad
\wtilde f = \wtilde G+\mathcal M .
\]
We repeatedly use the standard Maxwellian expansion estimate
\begin{align}\label{eq:f-high-macro-part}
&\norm{\partial^{\alpha_0}_t\partial^{\alpha_1}_x\mathcal M}_{M_\#}^2 \le
C\Big(
\varphi_1^2\bigl|\partial_t^{\alpha_0}\partial_x^{\alpha_1}\bigl(U-\shockw{U}{1}\bigr)\bigr|^2 +\varphi_1^2\bigl|\partial_t^{\alpha_0}\partial_x^{\alpha_1} \shockw{U}{3}\bigr|^2
\nonumber \\
&\qquad +\varphi_3^2\bigl|\partial_t^{\alpha_0}\partial_x^{\alpha_1}\bigl(U-\shockw{U}{3}\bigr)\bigr|^2 
+\varphi_3^2\bigl|\partial_t^{\alpha_0}\partial_x^{\alpha_1} \shockw{U}{1}\bigr|^2
\Big),
\end{align}
for $|\alpha_0|+|\alpha_1|\le 2$, where $U=(v,u,\theta)$ and $\shockw{U}{i}=\bigl(\shockw{v}{i},\shockw{u}{i},\shockw{\theta}{i}\bigr)$.

\begin{lemma}\label{lem:ftxE}
There exists a constant $C>0$ such that
\begin{align}
\begin{aligned}
&\frac{d}{dt}\iint_{\R}\norm{\wtilde f_{tx}}^2_{M_\#} \,dx
+\int_{\R}\norm{\wtilde G_{tx}}_{\nu,M_\#}^2\,dx
\\
&\le
C(\delta_0+\varepsilon)\sum_{i=1,3}\delta_i|\dot X_i|^2
+
C(\delta_0+\varepsilon)(\mathcal G_1^S+\mathcal G_3^S)
+
C(\delta_0+\varepsilon)\delta_0^2\bigl(\delta_1e^{-C\delta_1t}+\delta_3e^{-C\delta_3t}\bigr)
\\
&\quad
+
C\frac{\delta_C}{(1+t)^{\frac{5}{4}}}
+
C\delta_0\sum_{i=1,3}\delta_i^2e^{-C\delta_it}\int_{\R}\eta(U\mid\bar U)\,dx +
C(\delta_0+\varepsilon)\sum_{|\beta|=1}\|\partial^\beta(\phi,\psi,\zeta)\|_{L^2_x}^2
\\
&\quad
+
C(\delta_0+\varepsilon)\|(\phi_{xx},\psi_{xx},\zeta_{xx})\|_{L^2_x}^2
+
C(\delta_0+\varepsilon)\|(\phi_{xt},\psi_{xt},\zeta_{xt})\|_{L^2_x}^2
\\
&\quad
+
C(\delta_0+\varepsilon)\int_{\R}
\norm{\wtilde G_{\textup{rem}}}_{\nu,M_\#}^2 + \norm{\wtilde G_{t}}_{\nu,M_\#}^2 + \norm{\wtilde G_x}_{\nu,M_\#}^2 \,dx
\\
&\quad
+
C\frac{\delta_C}{1+t}
\int_{\R}e^{-\frac{2c_0|x|^2}{1+t}}|(\phi,\psi,\zeta)|^2\,dx .
\end{aligned}
\label{eq:ftxE}
\end{align}
\end{lemma}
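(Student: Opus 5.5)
The approach is to differentiate the equation for $\wtilde f$ (rather than for $\wtilde G$) once in $t$ and once in $x$, pair the result with $\wtilde f_{tx}/M_\#$, and split $\wtilde f_{tx}=\wtilde G_{tx}+\mathcal M_{tx}$, where $\mathcal M=M-\shockw{M}{1}-\shockw{M}{3}$. Working at the level of $\wtilde f$ matters because the streaming term is then exactly transport. Subtracting the shifted Boltzmann shock equations \eqref{eq:bse} from \eqref{eq:bel} gives
\[
\wtilde f_t+\frac{\xi_1-u_1}{v}\wtilde f_x=\mathcal N(f,f)-\sum_{i=1,3}\mathcal N\bigl(\shockw{F}{i},\shockw{F}{i}\bigr)+\sum_{i=1,3}\Bigl(\dot X_i-\frac{\xi_1-u_1}{v}+\frac{\xi_1-\shockw{u_1}{i}}{\shockw{v}{i}}\Bigr)\partial_x\shockw{F}{i}.
\]
After applying $\partial_t\partial_x$, the principal transport contribution $\iint \frac{\xi_1-u_1}{v}\wtilde f_{txx}\wtilde f_{tx}/M_\#$ integrates by parts to $\frac12\iint (\frac{\xi_1-u_1}{v})_x|\wtilde f_{tx}|^2/M_\#$. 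Its coefficient is small, of size $\delta_0+\varepsilon$, but it carries a factor $|\xi|$. I would control it by splitting $\wtilde f_{tx}$ into $\wtilde G_{tx}$, which is absorbed by the $\nu$-dissipation, and $\mathcal M_{tx}$, which is handled by the Maxwellian expansion \eqref{eq:f-high-macro-part} together with $\|(\phi_{xt},\psi_{xt},\zeta_{xt})\|_{L^2}^2$ and $\mathcal G_i^S$-type weighted terms.

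The collision terms are organized as in Lemmas~\ref{lem:GxE}--\ref{lem:GtE}. Write $\mathcal N(f,f)-\sum_i\mathcal N(\shockw{F}{i},\shockw{F}{i})$ as $L_M\wtilde G$ plus nonlinear, mixed and profile-mismatch terms, exactly as in \eqref{eq:pesy3}. Since $L_M$ annihilates the Maxwellian direction, pairing against $\wtilde f_{tx}$ decomposes into a pairing against $\wtilde G_{tx}$ and a pairing against $\mathcal M_{tx}$.
\begin{itemize}
\item The $\wtilde G_{tx}$ pairing gives the coercive term $-\lambda_{\mathrm{mic}}\int\norm{\wtilde G_{tx}}_{\nu,M_\#}^2$ via \eqref{eq:collision-coercivity}, up to commutators $\mathcal N(M_{tx},\wtilde G)$, $\mathcal N(M_t,\wtilde G_x)$ and $\mathcal N(M_x,\wtilde G_t)$. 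These are bounded by \eqref{eq:collision-Q} in terms of $(\delta_0+\varepsilon)$ times lower-order norms, using the decomposition \eqref{eq:mic2} to isolate the contact part $\wtilde G_C$, which contributes $\delta_C(1+t)^{-5/4}$.
\item The $\mathcal M_{tx}$ pairing is not coercive. I would instead use that the collision terms lie in the microscopic range: $\langle \mathcal N(\cdot,\cdot),\mathcal M_{tx}\rangle$ reduces to pairing with $P_1\mathcal M_{tx}$, which carries a small coefficient through $\mathcal M$'s dependence on $U-\shockw Ui$. This yields bounds by $(\delta_0+\varepsilon)$ times the second-order macroscopic norms, which is precisely the form appearing on the right-hand side.
\end{itemize}

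The shift terms $\dot X_i\partial_{txx}\shockw{F}{i}$ and $\ddot X_i\partial_{xx}\shockw{F}{i}$ are handled with Lemma~\ref{lem:bs1} (profile derivative bounds like $\delta_i^{9}$ in $L^2$) and Lemma~\ref{lem:ddotX}. The difference terms $\bigl(\frac{\xi_1-\shockw{u_1}{i}}{\shockw{v}{i}}-\frac{\xi_1-u_1}{v}\bigr)\partial_x\shockw{F}{i}$ are handled as $\mathcal T_{4i},\mathcal T_{5i}$ were: by Lemmas~\ref{lem:kc1}--\ref{lem:kc3}, the wave-interaction Lemmas~\ref{lem:wave-interaction-L2}--\ref{lem:wave-interaction-derivative}, and Lemma~\ref{lem:intsk}. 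These produce $(\delta_0+\varepsilon)\mathcal G_i^S$, the exponential remainders $\delta_0^2\delta_ie^{-C\delta_it}$, and $\delta_i^2e^{-C\delta_i t}\int\eta$. The contact part produces the Gaussian-weighted term and $\delta_C(1+t)^{-5/4}$, using $\|\theta^C_{xxt}\|$-type decay from Lemma~\ref{lem:contact-wave-estimate0}.

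I expect the main obstacle to be the loss of one derivative in the time–space mixed pairing. The time derivative of the Maxwellian coefficients brings in $\phi_t,\psi_t,\zeta_t$ and, through $\dot X_i$, terms of the form $\ddot X_i$. These must be routed through Lemma~\ref{lem:macro-time-derivative} and Lemma~\ref{lem:ddotX} with enough smallness in $\delta_i$. The top-order macroscopic term $\mathcal M_{txx}$ paired with $\mathcal M_{tx}$ should only appear as a total derivative after the integration by parts above. If a genuine $\phi_{txx}$ remained, I would rewrite it via the continuity equation, $\phi_{tx}=\psi_{1xx}+\sum_i\dot X_i\partial_{xx}\shockw vi$, and absorb it using the $\psi_{xx},\phi_{xx}$ dissipation from the second-order estimates.
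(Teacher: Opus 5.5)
Your outline is the paper's proof. You differentiate the $\wtilde f$-equation by $\partial_t\partial_x$, test with $\wtilde f_{tx}/M_\#$, split $\wtilde f_{tx}=\wtilde G_{tx}+\mathcal M_{tx}$, take coercivity from $\wtilde G_{tx}$ via \eqref{eq:collision-coercivity}, and route the rest through \eqref{eq:f-high-macro-part}, Lemmas~\ref{lem:kc1}--\ref{lem:kc6}, \ref{lem:macro-time-derivative} and \ref{lem:ddotX}. It differs from the paper in two places.

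\textbf{Normalization.} The paper multiplies the equation by $v$, so that $\xi_1\wtilde f_x$ has a constant coefficient and its top-order pairing vanishes identically. The price is the expansion $(v\wtilde f_t)_{tx}=v_{tx}\wtilde f_t+v_t\wtilde f_{tx}+v_x\wtilde f_{tt}+v\wtilde f_{ttx}$. This gives a $v$-weighted energy and a $\wtilde f_{tt}$ term that is not among the controlled quantities. Your form gives $\frac12\frac{d}{dt}\int\norm{\wtilde f_{tx}}^2_{M_\#}dx$ exactly and no $\wtilde f_{tt}$. In exchange, every transport commutator carries a factor $|\xi|$. Besides $(\frac{\xi_1-u_1}{v})_x|\wtilde f_{tx}|^2$, which you treat, the commutator $(\frac{\xi_1-u_1}{v})_t\wtilde f_{xx}\wtilde f_{tx}$ produces $(\delta_0+\varepsilon)\int\norm{\wtilde G_{xx}}^2_{\nu,M_\#}dx$. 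That term is not on the right of \eqref{eq:ftxE}; it is absorbed only after adding Lemma~\ref{lem:fxxE}, as in Corollary~\ref{cor:fxx-ftxE}. The paper's $u_{1t}\wtilde f_{xx}$ term has the same defect.

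\textbf{The cross term.} This is the more important difference. For $\iint vL_M\wtilde G_{tx}\,\mathcal M_{tx}/M_\#$ the paper cites only Young's inequality. That yields $C\norm{(\phi_{xt},\psi_{xt},\zeta_{xt})}_{L^2}^2$ with an $O(1)$ constant, not the factor $\delta_0+\varepsilon$ asserted in \eqref{eq:ftxE}. The factor matters: it is what allows $\lambda_2$ to be taken large in Proposition~\ref{prop:high-order-closure}. Your appeal to orthogonality of collision terms to collision invariants is the right mechanism, but it is not exact under the weight $1/M_\#$. Let $h=\partial_t\partial_x(\cdots)$ be a collision term and $p$ a polynomial in the span of $1,\xi,|\xi|^2$. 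Since $\partial_UM=p\,M$, you get $\int h\,\partial_UM/M_\#\,d\xi=\int h\,p\,(M-M_\#)/M_\#\,d\xi$, which is generally nonzero. By Cauchy--Schwarz it is bounded by $\bigl(\int\frac{|h|^2}{(1+|\xi|)M_\#}d\xi\bigr)^{1/2}\bigl(\int(1+|\xi|)p^2\frac{(M-M_\#)^2}{M_\#}d\xi\bigr)^{1/2}$. The second factor is $O(|U-U_\#|)$ because $\theta<2\theta_\#$. So the $P_0$-part of $\mathcal M_{tx}$ enters with smallness $\sup|U-U_\#|$, not automatically $\delta_0+\varepsilon$. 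You must choose $U_\#$ within $O(\delta_0+\varepsilon)$ of the entire range of $U$, which is compatible with Lemma~\ref{lem:collision-coercivity}. With that addition your argument justifies the stated prefactor, and the $P_1$-part is handled as you describe.

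A minor correction: since $F^{S_i}$ contains $M^{S_i}$, one has $\int\norm{\partial_x^3\shockw{F}{i}}^2_{M_\#}dx\sim\delta_i^7$ rather than $\delta_i^9$. This still suffices.
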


\begin{proof}
We start from the perturbed equation for $\wtilde f$:
\begin{align}
\begin{aligned}
&v\wtilde f_t-u_1\wtilde f_x+\xi_1\wtilde f_x
-\sum_{i=1,3}v\dot X_i\partial_x\shockw{F}{i}
+\sum_{i=1,3}
v\Bigl(
\frac{\xi_1-u_1}{v}-\frac{\xi_1-\shockw{u_1}{i}}{\shockw{v}{i}}
\Bigr)\partial_x\shockw{F}{i}
\\
&=
vL_M\wtilde G
+
v\mathcal N(\wtilde G,\wtilde G)
+
v\sum_{i=1,3}\Bigl((L_M-L_{i}^S)\shockw{G}{i}\Bigr)
\\
&\quad
+
v\sum_{i=1,3}
\Bigl(
\mathcal N\bigl(\wtilde G,\shockw{G}{i}\bigr)
+
\mathcal N\bigl(\shockw{G}{i},\wtilde G\bigr)
\Bigr)
\\
&\quad+
v\Bigl(
\mathcal N\bigl(\shockw{G}{1},\shockw{G}{3}\bigr)
+
\mathcal N\bigl(\shockw{G}{3},\shockw{G}{1}\bigr)
\Bigr).
\end{aligned}
\label{eq:ftx-start}
\end{align}
Apply $\partial_t\partial_x$ to \eqref{eq:ftx-start}, multiply by $\wtilde f_{tx}/M_\#$, and integrate over $(x,\xi)\in\R\times\R^3$.
We write the resulting terms as
\[
\mathcal I_1+\mathcal I_2+\sum_{i=1,3}\mathcal I_{3i}
+\sum_{i=1,3}\mathcal I_{4i}
+\mathcal I_5+\sum_{i=1,3}\mathcal I_{6i}
+\sum_{i=1,3}\mathcal I_{7i}
+\mathcal I_8+\mathcal I_9,
\]
corresponding respectively to
\[
(v\wtilde f_t)_{tx},\quad
(u_1\wtilde f_x)_{tx},\quad
\Bigl(v\dot X_i\partial_x\shockw F i\Bigr)_{tx},\quad
\Biggl(
v\Bigl(\frac{\xi_1-u_1}{v}-\frac{\xi_1-\shockw{u_1}{i}}{\shockw v i}\Bigr)\partial_x\shockw F i
\Biggr)_{tx},
\]
\[
(v\mathcal N(\wtilde G,\wtilde G))_{tx},\quad
\bigl(v(L_M-L_{i}^S)\shockw G i\bigr)_{tx},\quad
\Bigl(
v\bigl(\mathcal N\bigl(\wtilde G,\shockw G i\bigr)+\mathcal N\bigl(\shockw G i,\wtilde G\bigr)\bigr)
\Bigr)_{tx},
\]
\[
(vL_M\wtilde G)_{tx},\qquad
\Bigl(
v\bigl(\mathcal N\bigl(\shockw G1,\shockw G3\bigr)+\mathcal N\bigl(\shockw G3,\shockw G1\bigr)\bigr)
\Bigr)_{tx}.
\]

\medskip
\noindent
\textit{Step 1: the transport terms \(\mathcal I_1+\mathcal I_2\).}
Expanding derivatives, we have
\begin{align*}
\mathcal I_1
&=
\iint \frac{(v\wtilde f_t)_{tx}\wtilde f_{tx}}{M_\#}\,d\xi\,dx
\\
&=
\iint \frac{v_{tx}\wtilde f_t\wtilde f_{tx}}{M_\#}\,d\xi\,dx
+
\iint \frac{(v_t\wtilde f_{xx}+v_x\wtilde f_{tt})\wtilde f_{tx}}{M_\#}\,d\xi\,dx
+
\iint \frac{v\wtilde f_{ttx}\wtilde f_{tx}}{M_\#}\,d\xi\,dx.
\end{align*}
The last term gives
\[
\iint \frac{v\wtilde f_{ttx}\wtilde f_{tx}}{M_\#}\,d\xi\,dx
=
\frac{d}{dt}\iint \frac{v|\wtilde f_{tx}|^2}{2M_\#}\,d\xi\,dx
-
\iint \frac{v_t|\wtilde f_{tx}|^2}{2M_\#}\,d\xi\,dx.
\]
Since \(v\) is uniformly comparable to a positive constant, the energy
\(\iint v|\wtilde f_{tx}|^2/M_\#\) is equivalent to
\(\iint |\wtilde f_{tx}|^2/M_\#\).
Next, using
\[
\wtilde f_{tx}=\wtilde G_{tx}+\mathcal M_{tx},
\qquad
\wtilde f_t=\wtilde G_t+\mathcal M_t,
\]
together with \eqref{eq:f-high-macro-part}, Lemma~\ref{lem:macro-time-derivative}, and Young's inequality, we get
\begin{align}
\begin{aligned}
|\mathcal I_1|
\le\;&
\frac{d}{dt}\iint \frac{v|\wtilde f_{tx}|^2}{2M_\#}\,d\xi\,dx
+
C(\delta_0+\varepsilon)\int \norm{\wtilde G_{tx}}_{\nu,M_\#}^2\,dx
\\
&+
C(\delta_0+\varepsilon)\|(\phi_{xt},\psi_{xt},\zeta_{xt})\|_{L^2_x}^2
+
C(\delta_0+\varepsilon)\sum_{|\beta|=1}\|\partial^\beta(\phi,\psi,\zeta)\|_{L^2_x}^2
\\
&+
C(\delta_0+\varepsilon)\int \norm{\wtilde G_{t}}_{\nu,M_\#}^2 + \norm{\wtilde G_{x}}_{\nu,M_\#}^2 \,dx
\\
&+
C(\delta_0+\varepsilon)(\mathcal G_1^S+\mathcal G_3^S)
+
C\delta_0^3\bigl(\delta_1e^{-C\delta_1t}+\delta_3e^{-C\delta_3t}\bigr)
\\
&+
C\frac{\delta_C}{(1+t)^{3/2}}
+
C(\varepsilon+\delta_0)^2\delta_1^2e^{-C\delta_1t}
+
C(\varepsilon+\delta_0)^2\delta_3^2e^{-C\delta_3t}
\\
&+
C\sum_{i=1,3}\delta_i^3e^{-C\delta_it}\int_{\R}\eta(U\mid\y U)\,dx
+
C\delta_C^2\frac{1}{1+t}\int_{\R}e^{-\frac{2c_0|x|^2}{1+t}}|(\phi,\psi,\zeta)|^2\,dx .
\end{aligned}
\label{eq:ftx-I1}
\end{align}

Similarly,
\[
\mathcal I_2
=
-\iint \frac{(u_1\wtilde f_x)_{tx}\wtilde f_{tx}}{M_\#}\,d\xi\,dx .
\]
Expanding derivatives and integrating by parts in \(x\) in the top-order term
\(\iint u_1\wtilde f_{txx}\wtilde f_{tx}/M_\#\), we obtain
\begin{align}
\begin{aligned}
|\mathcal I_2|
\le\;&
C(\delta_0+\varepsilon)\int \norm{\wtilde G_{tx}}_{\nu,M_\#}^2\,dx
+
C(\delta_0+\varepsilon)\|(\phi_{xx},\psi_{xx},\zeta_{xx})\|_{L^2_x}^2
\\
&+
C(\delta_0+\varepsilon)\|(\phi_{xt},\psi_{xt},\zeta_{xt})\|_{L^2_x}^2
+
C(\delta_0+\varepsilon)\sum_{|\beta|=1}\|\partial^\beta(\phi,\psi,\zeta)\|_{L^2_x}^2
\\
&+
C(\delta_0+\varepsilon)\int \norm{\wtilde G_t}_{\nu,M_\#}^2 +\norm{\wtilde G_x}_{\nu,M_\#}^2\,dx
\\
&+
C(\delta_0+\varepsilon)(\mathcal G_1^S+\mathcal G_3^S)
+
C\delta_0^3\bigl(\delta_1e^{-C\delta_1t}+\delta_3e^{-C\delta_3t}\bigr)
\\
&+
C(\varepsilon+\delta_0)^2\delta_1^2e^{-C\delta_1t}
+
C(\varepsilon+\delta_0)^2\delta_3^2e^{-C\delta_3t}
+
C\sum_{i=1,3}\delta_i^3e^{-C\delta_it}\int_{\R}\eta(U\mid\bar U)\,dx .
\end{aligned}
\label{eq:ftx-I2}
\end{align}

\medskip
\noindent
\textit{Step 2: the shift term \(\mathcal I_{3i}\).}
Since
\[
\partial_t\shockw F i = -(\sigma_i+\dot X_i)\partial_x \shockw F i,
\qquad
\partial_{tx}\shockw F i = -(\sigma_i+\dot X_i)\partial_{xx}\shockw F i,
\]
we may expand
\begin{align*}
(v\dot X_i\partial_x\shockw F i)_{tx}
&=
v_{tx}\dot X_i\partial_x\shockw F i
+
v_t\dot X_i\partial_{xx}\shockw F i
+
v_x\ddot X_i\partial_x\shockw F i
\\ 
&\quad +
v_x\dot X_i\partial_{tx}\shockw F i
+
v\ddot X_i\partial_{xx}\shockw F i
+
v\dot X_i\partial_{txx}\shockw F i\\
&=:\sum_{l=1}^5 \mathcal I_{3i}^l
\end{align*}

For each term, we have the following estimates.

\begin{align*}
&|\mathcal I_{3i}^1| \le C(\delta_0+\varepsilon)\int \norm{\wtilde f_{tx}}_{M_\#}^2 dx +C(\delta_0+\varepsilon)\delta_i|\dot{X}_i|^2 \\
&|\mathcal I_{3i}^3| + |\mathcal I_{3i}^5| \le C(\delta_0+\varepsilon)\int \norm{\wtilde f_{tx}}_{M_\#}^2 dx + C\delta_i^3 |\ddot{X}_i|^2\\
&|\mathcal I_{3i}^2|+|\mathcal I_{3i}^4| +|\mathcal I_{3i}^6| \le C(\delta_0+\varepsilon)\int \norm{\wtilde f_{tx}}_{M_\#}^2 dx + C(\delta_0+\varepsilon)\delta_i|\dot{X}_i|^2
\end{align*}

By Young's inequality, \eqref{eq:ddotX-pre}, \eqref{eq:ftx-I2}, Lemma~\ref{lem:bs1}, and Lemma~\ref{lem:ddotX}, we obtain
\begin{align}
\begin{aligned}
|\mathcal I_{3i}|
\le\;&
C(\delta_0+\varepsilon)\int \norm{\wtilde G_{tx}}_{\nu,M_\#}^2 \,dx
+
C(\delta_0+\varepsilon)\delta_i|\dot X_i|^2
\\
&+
C(\delta_0+\varepsilon)(\mathcal G_1^S+\mathcal G_3^S)
+
C(\delta_0+\varepsilon)\sum_{|\beta|=1}\|\partial^\beta(\phi,\psi,\zeta)\|_{L^2_x}^2
\\
&+
C(\delta_0+\varepsilon)\int \norm{\wtilde G_x}_{\nu,M_\#}^2 + \norm{\wtilde G_t}_{\nu,M_\#}^2 \,dx
\\
&+
C\frac{\delta_C}{(1+t)^{3/2}}
+
C\delta_C^2\frac{1}{1+t}\int_{\R}e^{-\frac{2c_0|x|^2}{1+t}}|(\phi,\psi,\zeta)|^2\,dx
\\
&+
C(\varepsilon+\delta_0)^2\delta_i^2e^{-C\delta_it}
+
C\delta_i^3e^{-C\delta_it}\int_{\R}\eta(U\mid\bar U)\,dx .
\end{aligned}
\label{eq:ftx-I3}
\end{align}

\medskip
\noindent
\textit{Step 3: the coefficient-mismatch term \(\mathcal I_{4i}\).}
Write
\begin{align*}
&\frac{\xi_1-u_1}{v}-\frac{\xi_1-\shockw{u_1}{i}}{\shockw v i}\\
&\quad =
-\frac{\xi_1}{v\shockw v i}(v-\shockw v i)
+\frac{u_1}{v\shockw v i}(v-\shockw v i)
-\frac{1}{\shockw v i}(u_1-\shockw{u_1}{i}).
\end{align*}
After applying \(\partial_{tx}\), every term is a linear combination of products of:
\begin{itemize}
\item a first- or second-order macroscopic derivative of \(v,u_1\),
\item a factor \(v-\shockw v i\) or \(u_1-\shockw{u_1}{i}\),
\item and a profile derivative \(\partial_t^{\gamma_0}\partial_x^{\gamma_1} \shockw F i\) with \(|\gamma|=|\gamma_0|+|\gamma_1|\le 3\).
\end{itemize}
Therefore, using Young's inequality, Lemma~\ref{lem:bs1}, Lemma~\ref{lem:intsk},
Lemma~\ref{lem:wave-interaction-L2}, Lemma~\ref{lem:wave-interaction-derivative},
and the same weighted shock-interaction bounds as in Lemma~\ref{lem:GxE}, we get
\begin{align}
\begin{aligned}
|\mathcal I_{4i}|
\le\;&
C(\delta_0+\varepsilon)\int \norm{\wtilde G_{tx}}_{\nu,M_\#}^2 \,dx
+
C(\delta_0+\varepsilon)\mathcal G_i^S
\\
&+
C(\delta_0+\varepsilon)\sum_{|\beta|=1}\|\partial^\beta(\phi,\psi,\zeta)\|_{L^2_x}^2
+
C(\delta_0+\varepsilon)\|(\phi_{xx},\psi_{xx},\zeta_{xx})\|_{L^2_x}^2
\\
&+
C(\delta_0+\varepsilon)\|(\phi_{xt},\psi_{xt},\zeta_{xt})\|_{L^2_x}^2
+
C\delta_0^3\bigl(\delta_1e^{-C\delta_1t}+\delta_3e^{-C\delta_3t}\bigr)
\\
&+
C(\varepsilon+\delta_0)^2\delta_i^2e^{-C\delta_it}
+
C\delta_i^3e^{-C\delta_it}\int_{\R}\eta(U\mid\bar U)\,dx .
\end{aligned}
\label{eq:ftx-I4}
\end{align}

\medskip
\noindent
\textit{Step 4: the quadratic nonlinear term \(\mathcal I_5\).}
Expanding by Leibniz' rule,
\[
(v\mathcal N(\wtilde G,\wtilde G))_{tx}
=
v_{tx}\mathcal N(\wtilde G,\wtilde G)
+
v_t\bigl(\mathcal N(\wtilde G_x,\wtilde G)+\mathcal N(\wtilde G,\wtilde G_x)\bigr)
\]
\[
\qquad
+
v_x\bigl(\mathcal N(\wtilde G_t,\wtilde G)+\mathcal N(\wtilde G,\wtilde G_t)\bigr)
+
v\bigl(\mathcal N(\wtilde G_{tx},\wtilde G)+\mathcal N(\wtilde G_t,\wtilde G_x)
+\mathcal N(\wtilde G_x,\wtilde G_t)+\mathcal N(\wtilde G,\wtilde G_{tx})\bigr).
\]
Using \eqref{eq:collision-Q}, the one-dimensional Sobolev bound
\[
\left\|
\norm{\wtilde G_{\text{rem}}}_{\nu,M_\#}^2 
\right\|_{L^\infty_x}
\le
C\int \norm{\wtilde G_{\text{rem}}}_{\nu,M_\#}^2 + \norm{\wtilde G_{x}}_{\nu,M_\#}^2\,dx,
\]
and Young's inequality, we infer
\begin{align}
\begin{aligned}
|\mathcal I_5|
\le\;&
C(\delta_0+\varepsilon)\int \norm{\wtilde G_{tx}}_{\nu,M_\#}^2 \,dx
+
C(\delta_0+\varepsilon)\int \norm{\wtilde G_t}_{\nu,M_\#}^2 + \norm{\wtilde G_x}_{\nu,M_\#}^2 \,dx
\\
&+
C(\delta_0+\varepsilon)\int \norm{\wtilde G_{\text{rem}}}_{\nu,M_\#}^2 \,dx
+
C(\delta_0+\varepsilon)\sum_{|\beta|=1}\|\partial^\beta(\phi,\psi,\zeta)\|_{L^2_x}^2
\\
&+
C(\delta_0+\varepsilon)\|(\phi_{xx},\psi_{xx},\zeta_{xx})\|_{L^2_x}^2
+
C(\delta_0+\varepsilon)\|(\phi_{xt},\psi_{xt},\zeta_{xt})\|_{L^2_x}^2
\\
&+
C(\delta_0+\varepsilon)(\mathcal G_1^S+\mathcal G_3^S)
+
C\delta_0^3\bigl(\delta_1e^{-C\delta_1t}+\delta_3e^{-C\delta_3t}\bigr)
\\
&+
C(\varepsilon+\delta_0)^2\delta_1^2e^{-C\delta_1t}
+
C(\varepsilon+\delta_0)^2\delta_3^2e^{-C\delta_3t}
+
C\sum_{i=1,3}\delta_i^3e^{-C\delta_it}\int_{\R}\eta(U\mid\bar U)\,dx .
\end{aligned}
\label{eq:ftx-I5}
\end{align}

\medskip
\noindent
\textit{Step 5: the mixed shock terms \(\mathcal I_{6i}\), \(\mathcal I_{7i}\), and \(\mathcal I_9\).}
These terms are handled exactly as above by combining:
\begin{itemize}
\item the bilinear collision estimate \eqref{eq:collision-Q},
\item the profile bounds from Lemma~\ref{lem:bs1},
\item the interaction lemmas Lemma~\ref{lem:intsk}, Lemma~\ref{lem:wave-interaction-L2}, and Lemma~\ref{lem:wave-interaction-derivative},
\item and the already established lower-order bounds in Lemma~\ref{lem:GxE} and Lemma~\ref{lem:GtE}.
\end{itemize}
In particular, differentiating
\[
(L_M-L_{i}^S)\shockw G i
\]
in \((t,x)\) produces only coefficient mismatches of first and second order, hence these terms are of the same type as in Step~4 and the proof of Lemma~\ref{lem:GxE}. Consequently,
\begin{align}
\begin{aligned}
&\sum_{i=1,3}\bigl(|\mathcal I_{6i}|+|\mathcal I_{7i}|\bigr)+|\mathcal I_9|
\\
&\le
C(\delta_0+\varepsilon)\int \norm{\wtilde G_{tx}}_{\nu,M_\#}^2\,dx
+
C(\delta_0+\varepsilon)(\mathcal G_1^S+\mathcal G_3^S)
\\
&\quad
+
C(\delta_0+\varepsilon)\sum_{|\beta|=1}\|\partial^\beta(\phi,\psi,\zeta)\|_{L^2_x}^2
+
C(\delta_0+\varepsilon)\|(\phi_{xx},\psi_{xx},\zeta_{xx})\|_{L^2_x}^2
\\
&\quad
+
C(\delta_0+\varepsilon)\|(\phi_{xt},\psi_{xt},\zeta_{xt})\|_{L^2_x}^2
+
C(\delta_0+\varepsilon)\int
\norm{\wtilde G_{\text{rem}}}_{\nu,M_\#}^2 + \norm{\wtilde G_t}_{\nu,M_\#}^ + \norm{\wtilde G_x}_{\nu,M_\#}^2 \,dx
\\
&\quad
+
C\delta_0^3\bigl(\delta_1e^{-C\delta_1t}+\delta_3e^{-C\delta_3t}\bigr)
+
C\frac{\delta_C}{(1+t)^{3/2}}
\\
&\quad
+
C(\varepsilon+\delta_0)^2\delta_1^2e^{-C\delta_1t}
+
C(\varepsilon+\delta_0)^2\delta_3^2e^{-C\delta_3t}
+
C\sum_{i=1,3}\delta_i^3e^{-C\delta_it}\int_{\R}\eta(U\mid\bar U)\,dx .
\end{aligned}
\label{eq:ftx-I679}
\end{align}

\medskip
\noindent
\textit{Step 6: the linear collision term \(\mathcal I_8\).}
This is where the coercive term appears.
By differentiating \(vL_M\wtilde G\), we write
\[
(vL_M\wtilde G)_{tx}
=
vL_M\wtilde G_{tx}
+\mathcal R_{tx},
\]
where \(\mathcal R_{tx}\) is a sum of lower-order terms involving \(v_t,v_x,v_{tx}\),
\(M_t,M_x,M_{tx}\), and \(\wtilde G,\wtilde G_t,\wtilde G_x\).
Hence
\[
\mathcal I_8
=
\iint \frac{vL_M\wtilde G_{tx}\,\wtilde f_{tx}}{M_\#}\,d\xi\,dx
+
\iint \frac{\mathcal R_{tx}\,\wtilde f_{tx}}{M_\#}\,d\xi\,dx .
\]
Using \(\wtilde f_{tx}=\wtilde G_{tx}+\mathcal M_{tx}\), we split the first term:
\begin{align*}
\iint \frac{vL_M\wtilde G_{tx}\,\wtilde f_{tx}}{M_\#}\,d\xi\,dx
=
\iint \frac{v\wtilde G_{tx}L_M\wtilde G_{tx}}{M_\#}\,d\xi\,dx
+
\iint \frac{vL_M\wtilde G_{tx}\,\mathcal M_{tx}}{M_\#}\,d\xi\,dx .
\end{align*}
By \eqref{eq:collision-coercivity},
\[
\iint \frac{v\wtilde G_{tx}L_M\wtilde G_{tx}}{M_\#}\,d\xi\,dx
\le
-\lambda_{\mathrm{mic}}
\int \norm{\wtilde G_{tx}}_{\nu,M_\#}^2 \,dx .
\]
The cross term with \(\mathcal M_{tx}\) is controlled by \eqref{eq:f-high-macro-part}, Lemma~\ref{lem:macro-time-derivative}, and Young's inequality; the remainder \(\mathcal R_{tx}\) is estimated by \eqref{eq:collision-Q} exactly as in Steps~1, 4, and 5. Thus
\begin{align}
\begin{aligned}
\mathcal I_8
\le\;&
-\frac{127}{128}\lambda_{\mathrm{mic}}
\int \norm{\wtilde G_{tx}}_{\nu,M_\#}^2 \,dx 
+
C(\delta_0+\varepsilon)\sum_{|\beta|=1}\|\partial^\beta(\phi,\psi,\zeta)\|_{L^2_x}^2
\\
&+
C(\delta_0+\varepsilon)\|(\phi_{xt},\psi_{xt},\zeta_{xt})\|_{L^2_x}^2
+
C(\delta_0+\varepsilon)\int \norm{\wtilde G_{\text{rem}}}_{\nu,M_\#}^2 +\norm{\wtilde G_{t}}_{\nu,M_\#}^2+\norm{\wtilde G_x}_{\nu,M_\#}^2\,dx 
\\
&+
C(\delta_0+\varepsilon)(\mathcal G_1^S+\mathcal G_3^S)
+
C\delta_0^3\bigl(\delta_1e^{-C\delta_1t}+\delta_3e^{-C\delta_3t}\bigr)
\\
&+
C\frac{\delta_C}{(1+t)^{3/2}}
+
C(\varepsilon+\delta_0)^2\delta_1^2e^{-C\delta_1t}
+
C(\varepsilon+\delta_0)^2\delta_3^2e^{-C\delta_3t}
\\
&+
C\sum_{i=1,3}\delta_i^3e^{-C\delta_it}\int_{\R}\eta(U\mid\bar U)\,dx
+
C\delta_C^2\frac{1}{1+t}\int_{\R}e^{-\frac{2c_0|x|^2}{1+t}}|(\phi,\psi,\zeta)|^2\,dx .
\end{aligned}
\label{eq:ftx-I8}
\end{align}

Finally, summing \eqref{eq:ftx-I1}, \eqref{eq:ftx-I2}, \eqref{eq:ftx-I3},
\eqref{eq:ftx-I4}, \eqref{eq:ftx-I5}, \eqref{eq:ftx-I679}, and \eqref{eq:ftx-I8},
and absorbing the small multiples of
\[
\int \norm{\wtilde G_{tx}}_{\nu,M_\#}^2 \,dx 
\]
into the left-hand side, we obtain \eqref{eq:ftxE}.
\end{proof}

\begin{lemma}\label{lem:fxxE}
There exists a constant $C>0$ such that
\begin{align}
\begin{aligned}
&\frac{d}{dt}\int_{\R}\norm{\wtilde f_{xx}}^2_{M_\#}\,dx
+\int_{\R}\norm{\wtilde G_{xx}}_{\nu, M_\#}^2 \,dx
\\
&\le
C(\delta_0+\varepsilon)\sum_{i=1,3}\delta_i|\dot X_i|^2
+
C(\delta_0+\varepsilon)(\mathcal G_1^S+\mathcal G_3^S)
+
C\delta_0^2(\delta_0+\varepsilon)\bigl(\delta_1e^{-C\delta_1t}+\delta_3e^{-C\delta_3t}\bigr)
\\
&\quad
+
C\frac{\delta_C}{(1+t)^{\frac{5}{4}}}
+
C\delta_0\sum_{i=1,3}\delta_i^2e^{-C\delta_it}\int_{\R}\eta(U|\y U)\,dx+
C(\delta_0+\varepsilon)\sum_{|\beta|=1}\|\partial^\beta(\phi,\psi,\zeta)\|_{L^2_x}^2
\\
&\quad
+
C(\delta_0+\varepsilon)\|(\phi_{xx},\psi_{xx},\zeta_{xx})\|_{L^2_x}^2
+
C(\delta_0+\varepsilon)\|(\phi_{xt},\psi_{xt},\zeta_{xt})\|_{L^2_x}^2
\\
&\quad
+
C(\delta_0+\varepsilon)\int \norm{\wtilde G_{\text{rem}}}_{\nu,M_\#}^2 +\norm{\wtilde G_{t}}_{\nu,M_\#}^2+\norm{\wtilde G_x}_{\nu,M_\#}^2\,dx 
+
C\frac{\delta_C}{1+t}
\int_{\R}e^{-\frac{2c_0|x|^2}{1+t}}|(\phi,\psi,\zeta)|^2\,dx .
\end{aligned}
\label{eq:fxxE}
\end{align}
\end{lemma}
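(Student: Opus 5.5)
The proof of Lemma~\ref{lem:fxxE} will mirror that of Lemma~\ref{lem:ftxE}, with $\partial_t\partial_x$ replaced by $\partial_x^2$. We start again from the perturbed equation \eqref{eq:ftx-start} for $\wtilde f$. We apply $\partial_x^2$, multiply by $\wtilde f_{xx}/M_\#$, and integrate over $\R\times\R^3$. The resulting terms $\mathcal I_1,\dots,\mathcal I_9$ are split in the same way as before: transport, shift, coefficient mismatch, $\mathcal N(\wtilde G,\wtilde G)$, $(L_M-L_i^S)\shockw Gi$, the mixed collision terms, the linear collision term, and the shock--shock interaction.

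\textbf{The transport part.} The term $(v\wtilde f_t)_{xx}$ produces the time derivative
\[
\tfrac{d}{dt}\iint \frac{v|\wtilde f_{xx}|^2}{2M_\#}\,d\xi\,dx,
\]
which is equivalent to the energy in the lemma because $v\sim1$. The commutators $v_{xx}\wtilde f_t$, $v_x\wtilde f_{tx}$ and $v_t|\wtilde f_{xx}|^2$ carry small factors. To handle them we split $\wtilde f_{xx}=\wtilde G_{xx}+\mathcal M_{xx}$ and use \eqref{eq:f-high-macro-part} for the macroscopic piece. The free-streaming terms $\xi_1\wtilde f_{xxx}\wtilde f_{xx}$ and $u_1\wtilde f_{xxx}\wtilde f_{xx}$ are integrated by parts in $x$. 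The first is an exact derivative and vanishes. The second leaves $u_{1x}|\wtilde f_{xx}|^2$, which is bounded by $C(\delta_0+\varepsilon)$ times the right-hand norms.

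\textbf{Profile-dependent terms.} For the shift term we expand $(v\dot X_i\partial_x\shockw Fi)_{xx}$. Here we use Lemma~\ref{lem:bs1}, which gives $\|\partial_x^k\shockw Fi\|\lesssim\delta_i^{k+1/2}$, together with Young's inequality, to get $C(\delta_0+\varepsilon)\delta_i|\dot X_i|^2$. No $\ddot X_i$ appears, so this is easier than in Lemma~\ref{lem:ftxE}. For the coefficient mismatch and the $(L_M-L_i^S)$ terms we differentiate twice. Each product then contains a factor $\partial_x^{j}(U-\shockw Ui)$ with $j\le2$ times a localized profile derivative. These are bounded by Lemmas~\ref{lem:kc1}--\ref{lem:kc3}, Lemma~\ref{lem:wave-interaction-L2} and Lemma~\ref{lem:wave-interaction-derivative}, which yields the $\mathcal G_i^S$, exponential and $\|\partial^\beta(\phi,\psi,\zeta)\|$ terms. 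The nonlinear and mixed collision terms follow from \eqref{eq:collision-Q}, the 1D Sobolev bound, and \eqref{eq:mic2}. The contact part $\wtilde G_C$ contributes $\delta_C(1+t)^{-5/4}$ and the Gaussian-weighted terms, as in Lemma~\ref{lem:kc4} and Remark~\ref{rem:kc4-alpha0}.

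\textbf{Main obstacle: the linear collision term.} We write $(vL_M\wtilde G)_{xx}=vL_M\wtilde G_{xx}+\mathcal R_{xx}$. Coercivity \eqref{eq:collision-coercivity} gives $-\lambda_{\mathrm{mic}}\int\norm{\wtilde G_{xx}}_{\nu,M_\#}^2$, but only against $\wtilde G_{xx}$, whereas the multiplier is $\wtilde f_{xx}$. The cross term $\iint vL_M\wtilde G_{xx}\,\mathcal M_{xx}/M_\#$ has an $O(1)$ coefficient. My plan is to use self-adjointness of $L_M$ with respect to $M$ and the fact that $L_M$ annihilates $\ker L_M$. Writing $\mathcal M_{xx}=P_0\mathcal M_{xx}+P_1\mathcal M_{xx}$, the $P_0$ part pairs trivially once the weight is switched from $M_\#$ to $M$; the weight difference is $O(\delta_0+\varepsilon)$ because $M_\#$ is close to $M$. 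The $P_1\mathcal M_{xx}$ part involves only lower-order products, $|U_x|^2$ and the shock profiles, so it carries small coefficients times $\|(\phi_x,\psi_x,\zeta_x)\|$ and profile terms. If this structure argument fails, the fallback is Young's inequality, which leaves $C\|(\phi_{xx},\psi_{xx},\zeta_{xx})\|^2$ with an $O(1)$ constant. That term would then have to be absorbed when the estimate is combined with the macroscopic dissipation of Lemmas~\ref{lem:fosee} and \eqref{eq:phi-xx-est}, using a small weight in the final linear combination. Finally, collecting all terms and absorbing the small multiples of $\int\norm{\wtilde G_{xx}}_{\nu,M_\#}^2$ gives \eqref{eq:fxxE}.
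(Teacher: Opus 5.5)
Your overall route is the paper's. You apply $\partial_x^2$ to \eqref{eq:ftx-start}, test against $\wtilde f_{xx}/M_\#$, and reuse the decomposition of Lemma~\ref{lem:ftxE}, with the same simplifications the paper lists: no $\ddot X_i$, coercivity from $vL_M\wtilde G_{xx}$, and commutators such as $v_{xx}\wtilde f_t\wtilde f_{xx}$. You depart from the paper only at the cross term $\iint vL_M\wtilde G_{xx}\,\mathcal M_{xx}/M_\#\,d\xi\,dx$, and that is where the proposal has a genuine gap.

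You are right that \eqref{eq:f-high-macro-part} plus Young's inequality only yields an $O(1)$ multiple of $\|(\phi_{xx},\psi_{xx},\zeta_{xx})\|_{L^2_x}^2$. That is all the paper invokes there (``exactly as in Step~6 of Lemma~\ref{lem:ftxE}''), while \eqref{eq:fxxE} asserts the factor $\delta_0+\varepsilon$. Your repair does not produce that factor either. By self-adjointness of $L_M$ in $\langle\cdot,\cdot\rangle_M$ and $L_M\chi_j=0$,
\[
\int_{\R^3}\frac{L_Mg\,\chi_j}{M_\#}\,d\xi=\Bigl\langle g,\,L_M\Bigl(\chi_j\Bigl(\frac{M}{M_\#}-1\Bigr)\Bigr)\Bigr\rangle_M ,
\]
and the size of the right-hand side is governed by $|U-U_\#|$, not by $\delta_0+\varepsilon$.

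In this framework $M_\#$ is one fixed global Maxwellian. It is tied to the solution only through Lemma~\ref{lem:collision-coercivity}: $U\in B_{\eta_0}(U_\#)$ and $\theta_\#<\theta<2\theta_\#$. The strict inequality rules out $M_\#=M$, so what your argument actually gives is a coefficient $O(\eta_0)$ with $\eta_0$ a fixed constant. The claim ``the weight difference is $O(\delta_0+\varepsilon)$ because $M_\#$ is close to $M$'' is therefore an extra hypothesis. To justify it you would have to do three things, none of which the proposal carries out:
\begin{itemize}
\item choose $U_\#$ at distance $\lesssim\delta_0+\varepsilon$ from the range of $U$ (e.g.\ $\theta_\#=\inf\theta-c(\delta_0+\varepsilon)$);
\item check that $\lambda_{\mathrm{mic}}$ and the constants in \eqref{eq:collision-Q}, \eqref{eq:collision-inverse} and Lemma~\ref{lem:bs1} stay uniform as $U_\#$ approaches the wave states;
\item bound the $\langle\cdot,\cdot\rangle_M$-pairing above in the $M_\#$-weighted norms, which needs integrability of Gaussian ratios such as $M^2/M_\#$.
\end{itemize}
An $O(\eta_0)$ coefficient might suffice for the final closure if $\eta_0$ can be taken small against constants independent of $\eta_0$. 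That, however, is a different statement from \eqref{eq:fxxE} and needs its own justification.

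Your fallback does not rescue the statement. It proves a different inequality, and that inequality cannot be closed in Proposition~\ref{prop:high-order-closure}. Lemma~\ref{lem:fosee} and \eqref{eq:phi-xx-est} carry $O(1)$ multiples of $\int\norm{\wtilde G_{xx}}_{\nu,M_\#}^2\,dx$ (and Lemma~\ref{lem:fosee} also of $\wtilde G_{tx}$). That is why \eqref{eq:gamma-choice-1} takes the weight $\lambda_2$ of the $\wtilde f_{xx}$, $\wtilde f_{tx}$ estimates large. An $O(1)$ multiple of $\|(\phi_{xx},\psi_{xx},\zeta_{xx})\|_{L^2_x}^2$ in \eqref{eq:fxxE} would instead force $\lambda_2$ to be small compared with the $O(1)$ macroscopic dissipation. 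Both requirements hold only if a product of unrelated $O(1)$ constants happens to be smaller than the product of the dissipation constants. So a small weight in the final linear combination is not available. The smallness must come from the cross term itself, through a quantified closeness of $M_\#$ to the local Maxwellians as described above.
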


\begin{proof}[Proof sketch]
Apply \(\partial_x^2\) to \eqref{eq:ftx-start}, multiply by \(\wtilde f_{xx}/M_\#\), and integrate over
\((x,\xi)\in\R\times\R^3\).
The resulting terms are treated by the same decomposition as in the proof of Lemma~\ref{lem:ftxE},
with the following simplifications.

\smallskip
\noindent
(1) There is no \(\ddot X_i\)-term.
Indeed,
\[
\partial_x^2(v\dot X_i\partial_x\shockw F i)
=
v_{xx}\dot X_i\partial_x\shockw F i
+
2v_x\dot X_i\partial_{xx}\shockw F i
+
v\dot X_i\partial^3_x\shockw F i,
\]
so only \(\dot X_i\) appears. Hence the shift contribution is easier than in the
\(\wtilde f_{tx}\)-estimate and is controlled directly by Lemma~\ref{lem:bs1} and Young's inequality.

\smallskip
\noindent
(2) The coercive term again comes from \((vL_M\wtilde G)_{xx}\).
Writing
\[
(vL_M\wtilde G)_{xx}=vL_M\wtilde G_{xx}+\mathcal R_{xx},
\]
we have
\[
\iint \frac{vL_M\wtilde G_{xx}\,\wtilde f_{xx}}{M_\#}\,d\xi\,dx
=
\iint \frac{v\wtilde G_{xx}L_M\wtilde G_{xx}}{M_\#}\,d\xi\,dx
+
\iint \frac{vL_M\wtilde G_{xx}\,\mathcal M_{xx}}{M_\#}\,d\xi\,dx .
\]
The first term is estimated by \eqref{eq:collision-coercivity}; the second by
\eqref{eq:f-high-macro-part}, exactly as in Step~6 of Lemma~\ref{lem:ftxE}.

\smallskip
\noindent
(3) The only terms which are genuinely different from the \(\wtilde f_{tx}\)-case are those where
two spatial derivatives hit the transport coefficients. A representative example is
\[
\iint \frac{v_{xx}\wtilde f_t\,\wtilde f_{xx}}{M_\#}\,d\xi\,dx .
\]
Using \(\wtilde f_t=\wtilde G_t+\mathcal M_t\), Young's inequality, \eqref{eq:f-high-macro-part},
and Lemma~\ref{lem:macro-time-derivative}, we obtain
\begin{align*}
& \left|
\iint \frac{v_{xx}\wtilde f_t\,\wtilde f_{xx}}{M_\#}\,d\xi\,dx
\right| \\
&\quad \le
C(\delta_0+\varepsilon)\int \norm{\wtilde G_{xx}}_{\nu,M_\#}^2+\norm{\wtilde G_t}_{\nu,M_\#}^2 \,dx
+
C(\delta_0+\varepsilon)\|(\phi_{xx},\psi_{xx},\zeta_{xx})\|_{L^2_x}^2
\\
&\qquad
+
C(\delta_0+\varepsilon)\sum_{|\beta|=1}\|\partial^\beta(\phi,\psi,\zeta)\|_{L^2_x}^2
+
C\delta_C^2\frac{1}{1+t}\int_{\R}e^{-\frac{2c_0|x|^2}{1+t}}|(\phi,\psi,\zeta)|^2\,dx
\\
&\qquad
+
C\frac{\delta_C}{(1+t)^{3/2}}
+
C(\delta_0+\varepsilon)(\mathcal G_1^S+\mathcal G_3^S)
+
C\sum_{i=1,3}\delta_i^3e^{-C\delta_it}\int_{\R}\eta(U\mid\bar U)\,dx .
\end{align*}
The term with \(u_{1xx}\wtilde f_x\wtilde f_{xx}\) is handled in the same way.

\smallskip
\noindent
(4) All remaining terms are identical in structure to those treated in the proof of
Lemma~\ref{lem:ftxE}, with \(t\)-derivatives replaced by \(x\)-derivatives.
Hence they are estimated by the same ingredients:
Lemma~\ref{lem:bs1}, Lemma~\ref{lem:intsk}, Lemma~\ref{lem:wave-interaction-L2},
Lemma~\ref{lem:wave-interaction-derivative}, Lemma~\ref{lem:GxE}, Lemma~\ref{lem:GtE},
Lemma~\ref{lem:macro-time-derivative}, and \eqref{eq:collision-Q}.

Collecting all contributions and absorbing the small multiples of
\[
\int \norm{\wtilde G_{xx}}_{\nu,M_\#}^2 \,dx
\]
into the left-hand side, we obtain \eqref{eq:fxxE}.
\end{proof}

\begin{corollary}\label{cor:fxx-ftxE}
There exists a constant \(C>0\) such that
\begin{align}
\begin{aligned}
&\frac{d}{dt}\int_{\R}\norm{\wtilde f_{xx}}^2_{M_\#}+\norm{\wtilde f_{tx}}^2_{M_\#}\,dx
+
\int_{\R}\norm{\wtilde G_{xx}}_{\nu,M_\#}^2 + \norm{\wtilde G_{tx}}_{\nu,M_\#}^2\,dx
\\
&\le
C(\delta_0+\varepsilon)\sum_{i=1,3}\delta_i|\dot X_i|^2
+
C(\delta_0+\varepsilon)(\mathcal G_1^S+\mathcal G_3^S)
+
C\delta_0^2(\delta_0+\varepsilon)\bigl(\delta_1e^{-C\delta_1t}+\delta_3e^{-C\delta_3t}\bigr)
\\
&\quad
+
C\frac{\delta_C}{(1+t)^{\frac{5}{4}}}
+
C\delta_0\sum_{i=1,3}\delta_i^2e^{-C\delta_it}\int_{\R}\eta(U\mid\bar U)\,dx+
C(\delta_0+\varepsilon)\sum_{|\beta|=1}\|\partial^\beta(\phi,\psi,\zeta)\|_{L^2_x}^2
\\
&\quad
+
C(\delta_0+\varepsilon)\|(\phi_{xx},\psi_{xx},\zeta_{xx})\|_{L^2_x}^2
+
C(\delta_0+\varepsilon)\|(\phi_{xt},\psi_{xt},\zeta_{xt})\|_{L^2_x}^2
\\
&\quad
+
C(\delta_0+\varepsilon)\int_{\R}
\norm{\wtilde G_{\textup{rem}}}_{\nu,M_\#}^2+\norm{\wtilde G_{x}}_{\nu,M_\#}^2+\norm{\wtilde G_{xt}}_{\nu,M_\#}^2\,dx +
C\frac{\delta_C}{1+t}
\int_{\R}e^{-\frac{2c_0|x|^2}{1+t}}|(\phi,\psi,\zeta)|^2\,dx .
\end{aligned}
\end{align}
\end{corollary}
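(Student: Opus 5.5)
The plan is to obtain Corollary~\ref{cor:fxx-ftxE} by adding the two differential inequalities of Lemma~\ref{lem:ftxE} and Lemma~\ref{lem:fxxE}. No new energy identity is needed. The left-hand sides add to exactly the time derivative of $\int(\norm{\wtilde f_{xx}}^2_{M_\#}+\norm{\wtilde f_{tx}}^2_{M_\#})\,dx$ plus the combined dissipation $\int(\norm{\wtilde G_{xx}}^2_{\nu,M_\#}+\norm{\wtilde G_{tx}}^2_{\nu,M_\#})\,dx$. The remaining work is bookkeeping on the right-hand sides, followed by one absorption step.

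I will go through the right-hand sides of \eqref{eq:ftxE} and \eqref{eq:fxxE} term by term. Most terms appear in both estimates with identical structure and enter the corollary with a doubled constant $C$:
\begin{itemize}
\item the shift term $(\delta_0+\varepsilon)\sum_i\delta_i|\dot X_i|^2$;
\item the shock coercive term $\mathcal G_1^S+\mathcal G_3^S$;
\item the exponentially decaying interaction remainders;
\item the contact term $\delta_C(1+t)^{-5/4}$;
\item the Gaussian-weighted $L^2$ term;
\item the term $\delta_0\sum_i\delta_i^2e^{-C\delta_it}\int\eta(U|\bar U)\,dx$;
\item the macroscopic norms of first order, $\partial^\beta$ with $|\beta|=1$, and of second order, $\partial_x^2$ and $\partial_t\partial_x$.
\end{itemize}
The lower-order microscopic norms $\wtilde G_{\rm rem},\wtilde G_t,\wtilde G_x$ also carry the factor $(\delta_0+\varepsilon)$ in both. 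I may drop $\wtilde G_t$ from the displayed list or keep it, since it is harmless either way. The only cross-coupling to check is that neither lemma places an \emph{unweighted} $\wtilde G_{xx}$ or $\wtilde G_{tx}$ on its right side.

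The one place needing care is where the proofs of Lemmas~\ref{lem:ftxE}--\ref{lem:fxxE} generate small multiples of the top-order dissipation of the \emph{other} derivative. Examples are $(\delta_0+\varepsilon)\int\norm{\wtilde G_{xx}}^2_{\nu,M_\#}$ from the $v_{xx}\wtilde f_t\wtilde f_{xx}$ term in Lemma~\ref{lem:fxxE}, and terms with $\wtilde G_{tx}$ arising from $\mathcal I_2$ through integration by parts. In the sum these carry a factor $C(\delta_0+\varepsilon)$ and are absorbed into the combined left-hand dissipation once $\delta_0+\varepsilon$ is small. This is the main point to verify: every top-order cross term must come with the small factor $(\delta_0+\varepsilon)$, not an $O(1)$ constant. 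Any residual $C(\delta_0+\varepsilon)\int\norm{\wtilde G_{xt}}^2_{\nu,M_\#}$ can simply be kept on the right, as in the stated corollary, and absorbed later in the final closure of Proposition~\ref{prop:priest}.

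Finally, I will renormalize the dissipation coefficient to $1$ after absorption by adjusting $C$. The $\ddot X_i$ contributions were already reduced via Lemma~\ref{lem:ddotX} inside Lemma~\ref{lem:ftxE}, and no $\ddot X_i$ appears in Lemma~\ref{lem:fxxE}. So no new shift estimate is needed, and the corollary follows.
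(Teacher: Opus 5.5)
Your route is the paper's: the corollary is obtained by adding \eqref{eq:ftxE} and \eqref{eq:fxxE}, and your term-by-term matching of the right-hand sides is correct. One step is not. You cannot ``drop $\wtilde G_t$ from the displayed list.''

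Both lemmas place $C(\delta_0+\varepsilon)\int_{\R}\norm{\wtilde G_t}_{\nu,M_\#}^2\,dx$ on the right. Removing a nonnegative term from the right side of an upper bound is not allowed. On $\R$ this term is not controlled by $\norm{\wtilde G_{xt}}_{\nu,M_\#}$, since there is no Poincar\'e inequality. Nor is it controlled by the other displayed terms without an additional argument, which you have not supplied.

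What the sum actually yields is the stated inequality with $\norm{\wtilde G_{t}}_{\nu,M_\#}^2$ in the last microscopic group. The $\norm{\wtilde G_{xt}}_{\nu,M_\#}^2$ in the statement appears to be a misprint for it; the paper's one-line proof, ``add the two lemmas,'' produces exactly the $\wtilde G_t$ version. So keep the term. This is also the form used downstream: in Proposition~\ref{prop:high-order-closure}, the $\wtilde G_t$ contribution is absorbed by the $\lambda_1$-weighted dissipation from Lemma~\ref{lem:GtE}.

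Separately, no absorption step is needed here. As stated, neither lemma has $\wtilde G_{xx}$ or $\wtilde G_{tx}$ on its right-hand side, so the left-hand sides add directly with coefficient $1$. The cross terms you mention are not couplings to the other derivative: the $v_{xx}\wtilde f_t\wtilde f_{xx}$ term and the integration by parts in $\mathcal I_2$ each produce that lemma's own top-order dissipation. Those contributions were already absorbed inside the respective proofs. In any case, ``renormalizing the dissipation coefficient by adjusting $C$'' would not change a coefficient on the left-hand side.
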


\begin{proof}
This follows immediately by adding \eqref{eq:ftxE} and \eqref{eq:fxxE}.
\end{proof}

\subsection{Closure of the high-order estimate}

\subsubsection{Closure of the high-order estimate}

We define a Gaussian-weighted spacetime functional:
\begin{align}\label{def:W}
\mathcal W_C(T)
:=
\int_0^T \frac{1}{1+t}\int_{\mathbb R} e^{-\frac{2c|x|^2}{1+t}} |(\phi,\psi,\zeta)|^2\,dx\,dt,
\end{align}
and
\begin{align}\label{eq:kinetic-high-good}
\mathcal K_{\mathrm{high}}(t):= \int_{\mathbb R}
\norm{\wtilde G_{xx}}_{\nu,M_\#}^2+\norm{\wtilde G_{xt}}_{\nu,M_\#}^2+\norm{\wtilde G_{xx}}_{\nu,M_\#}^2+\norm{\wtilde G_{t}}_{\nu,M_\#}^2\,dx
\end{align}

\begin{proposition}\label{prop:high-order-closure}
Under the bootstrap bound \(\mathcal{E}(T)^2\le \varepsilon^2\) in \eqref{eq:priass}, there exists a constant \(C>0\) such that
\begin{align}
\begin{aligned}
&\sup_{t\in[0,T]}
\left\{
\|(\phi_x,\psi_x,\zeta_x)(t)\|_{L^2_x}^2
+
\int_{\mathbb R}
\norm{\wtilde G_{x}}_{M_\#}^2 + \norm{\wtilde G_{t}}_{M_\#}^2 + \norm{\wtilde f_{xx}}_{M_\#}^2 + \norm{\wtilde f_{xt}}_{M_\#}^2\,dx
\right\}
\\
&\quad
+
\int_0^T
\|(\phi_{xx},\psi_{xx},\zeta_{xx},\phi_{xt},\psi_{xt},\zeta_{xt})(t)\|_{L^2_x}^2\,dt +
\int_0^T \mathcal K_{\mathrm{high}}(t) \,dt
\\
&\le
C \mathcal{E}(0)^2
+
C\delta_0^{\frac{1}{2}}
+
C(\delta_0+\varepsilon)
\int_0^T \sum_{i=1,3}\delta_i |\dot X_i|^2\,dt
+
C(\delta_0+\varepsilon)\int_0^T \sum_{i=1,3}\mathcal G_i^S\,dt
\\
&\quad
+
C(\delta_0+\varepsilon)
\int_0^T \sum_{|\beta|=1}\|\partial^\beta(\phi,\psi,\zeta)\|_{L^2_x}^2\,dt
+
C(\delta_0+\varepsilon)\int_0^T \int_{\mathbb R}
\norm{\wtilde G_{\textup{rem}}}_{\nu,M_\#}^2\,dx\,dt
\\
&\quad
+
C\delta_C\mathcal W_C(T).
\end{aligned}
\label{eq:high-order-closure}
\end{align}
\end{proposition}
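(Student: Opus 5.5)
We prove Proposition~\ref{prop:high-order-closure} by a weighted summation of the differential inequalities of this section, chosen so that every top-order term on a right-hand side is absorbed by a dissipation term on some left-hand side. The building blocks are: the first-order spatial energy estimate (Lemma~\ref{lem:fosee}), which dissipates $(\psi_{xx},\zeta_{xx})$; the second-order estimate \eqref{eq:phi-xx-est}, which gives $\phi_{xx}$ through the functional $-\int\phi_{xx}\psi_{1x}\,dx$; the mixed-derivative bound \eqref{eq:mixed-derivative-est}, which is algebraic in time; the microscopic estimates Lemma~\ref{lem:GxE} and Lemma~\ref{lem:GtE}; and Corollary~\ref{cor:fxx-ftxE} for $\wtilde f_{xx},\wtilde f_{tx}$.

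\textbf{Hierarchy of constants.} Fix $1=\lambda_0\gg\lambda_1\gg\lambda_2\gg\lambda_3$ and form
\[
\mathcal H(t):=\lambda_0\int\!\norm{\wtilde f_{xx}}^2_{M_\#}+\norm{\wtilde f_{tx}}^2_{M_\#}dx+\lambda_1\int\!\norm{\wtilde G_x}^2_{M_\#}+\norm{\wtilde G_t}^2_{M_\#}dx+\lambda_2 E_1(t)-\lambda_3\int\phi_{xx}\psi_{1x}\,dx ,
\]
where $E_1$ is the weighted $H^1$-type energy in \eqref{eq:eoppz1}. The order of absorption is as follows.

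\begin{enumerate}
\item Corollary~\ref{cor:fxx-ftxE} gives full dissipation of $\wtilde G_{xx},\wtilde G_{tx}$ at cost $O(1)$. Its right-hand side contains only $(\delta_0+\varepsilon)$-small multiples of lower quantities and of $\|(\phi_{xx},\ldots,\zeta_{xt})\|^2$.
\item Lemmas~\ref{lem:GxE} and~\ref{lem:GtE} carry $C\int\norm{\wtilde G_{xx}}^2_{\nu,M_\#}+\norm{\wtilde G_{tx}}^2_{\nu,M_\#}$ and $C\|(\psi_{xx},\zeta_{xx},\psi_{xt},\zeta_{xt})\|^2$ on their right-hand sides. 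Multiplying by $\lambda_1$ makes the first absorbable by step 1.
\item Lemma~\ref{lem:fosee} with weight $\lambda_2$ supplies $\psi_{xx},\zeta_{xx}$ dissipation. This dominates the $\lambda_1 C\|(\psi_{xx},\zeta_{xx})\|^2$ term provided $\lambda_1\ll\lambda_2$.
\end{enumerate}

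Here the ordering conflicts with step 1. To resolve it, I would first substitute \eqref{eq:mixed-derivative-est} for $\|(\psi_{xt},\zeta_{xt})\|^2$, so that it becomes $\|(\phi_{xx},\psi_{xx},\zeta_{xx})\|^2$ plus lower-order and $\wtilde G_{xx}$ terms. I would then note that in Lemmas~\ref{lem:GxE}--\ref{lem:GtE} the $\wtilde G_{xx},\wtilde G_{tx}$ terms can be taken with an arbitrarily small constant $\eta$ (Young's inequality against $\lambda_{\rm mic}$), at the price of $C_\eta\int\norm{\wtilde G_x}^2_{\nu,M_\#}$, which is already dissipated. Thus the true ordering is $\lambda_2\gg\lambda_1$, with $\eta$ chosen last.

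\textbf{Recovering $\phi_{xx}$ and the mixed derivatives.} Adding $\lambda_3$ times \eqref{eq:phi-xx-est} yields $\int\phi_{xx}^2$ dissipation, at the cost of $C\lambda_3\|(\psi_{1xx},\zeta_{xx})\|^2$, which is absorbed because $\lambda_3\ll\lambda_2$. In turn, $\lambda_2 C\delta_0\|\phi_{xx}\|^2$ from \eqref{eq:eoppz1} is absorbed since $\delta_0\ll\lambda_3/\lambda_2$. Once $\phi_{xx},\psi_{xx},\zeta_{xx}$ are all controlled, \eqref{eq:mixed-derivative-est} gives $\int_0^T\|(\phi_{xt},\psi_{xt},\zeta_{xt})\|^2$ up to admissible terms, and the small $(\delta_0+\varepsilon)$ multiples of it in Corollary~\ref{cor:fxx-ftxE} close.

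\textbf{Equivalence and integration.} For small $\lambda_3$ the functional $\mathcal H$ is equivalent to the supremum quantity in \eqref{eq:high-order-closure}, because $|\int\phi_{xx}\psi_{1x}|$ is controlled by the $\wtilde f_{xx}$ energy plus $\|\psi_{1x}\|^2$. Note that $\phi_{xx}$ enters $\wtilde f_{xx}$ through $\mathcal M_{xx}$, using \eqref{eq:f-high-macro-part} and the moment-map inversion of Section~5. Integrating over $[0,T]$ then finishes the argument. The time-integrable sources integrate to at most $C\delta_0^{1/2}$: the terms $\delta_C(1+t)^{-5/4}$ and $\delta_0^2\delta_ie^{-C\delta_it}$ each give $\le C\delta_0$. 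The terms $\delta_i^2e^{-C\delta_it}\int\eta$ give $\le C\delta_0\varepsilon^2$ under \eqref{eq:prioriass}. The Gaussian terms are collected into $C\delta_C\mathcal W_C(T)$, and $\mathcal G_i^S$, $\delta_i|\dot X_i|^2$, first derivatives and $\wtilde G_{\rm rem}$ remain on the right-hand side with $(\delta_0+\varepsilon)$ factors.

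\textbf{Expected obstacle.} The main difficulty is the bookkeeping of the $\wtilde G_{xx}/\wtilde G_{tx}$ and $(\psi_{xt},\zeta_{xt})$ couplings, which carry $O(1)$ constants in the $\wtilde G_x,\wtilde G_t$ lemmas. Closing the estimate depends on being able to reduce those constants to $\eta$, and I expect this to require re-examining $\mathcal T_3$ (the streaming term), possibly via an integration by parts that moves the derivative onto $\wtilde G_x$.
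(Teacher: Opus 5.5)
Your strategy is the paper's. It is a weighted sum of Corollary~\ref{cor:fxx-ftxE}, Lemmas~\ref{lem:GxE}, \ref{lem:GtE}, \ref{lem:fosee} and \eqref{eq:phi-xx-est}, with \eqref{eq:mixed-derivative-est} used algebraically in time. The modified energy contains $-\lambda\int\phi_{xx}\psi_{1x}\,dx$, controlled through the $\wtilde f_{xx}$-energy and the moment map, and the argument ends with a time integration. The paper normalizes the $H^1_x$-energy to weight $1$ and gives the dominant weight to the $\wtilde f_{xx},\wtilde f_{xt}$ energy, which is your hierarchy rescaled.

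However, the ``conflict'' you detect after step~3 does not exist, and the $\eta$-device you introduce to resolve it is not valid. Among the available estimates, the only dissipation of $\int\norm{\wtilde G_x}^2_{\nu,M_\#}dx$ is the one produced by Lemma~\ref{lem:GxE} itself, with a fixed coefficient of size $\lambda_{\mathrm{mic}}$. A Young cost $C_\eta\int\norm{\wtilde G_x}^2_{\nu,M_\#}dx$ with $C_\eta\sim C^2/\eta$ therefore cannot be absorbed, whatever weight you put on that lemma, since both sides scale with $\lambda_1$. The $\wtilde G_{xx}$-coupling from the streaming term $\mathcal T_3$ is a genuine one-derivative loss at the level of the $\wtilde G$-equation. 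That is exactly why the top order is done on $\wtilde f$ and given the largest weight. Your closing claim that everything hinges on reducing these constants to $\eta$ is a misdiagnosis.

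Drop the device; your step~2 already contains the right mechanism. $O(1)$ couplings from lower-order estimates to higher-order dissipation are harmless, because Corollary~\ref{cor:fxx-ftxE} feeds back into lower-order quantities only through $(\delta_0+\varepsilon)$-small coefficients. With weight $1$ on the Corollary, take $1\gg\lambda_2\gg\lambda_3\gg\lambda_1$, and then take $\delta_0+\varepsilon$ small. The conditions work as follows.
\begin{itemize}
\item $\lambda_1,\lambda_2,\lambda_3\ll 1$ absorb the $O(1)$ multiples of $\int\norm{\wtilde G_{xx}}^2_{\nu,M_\#}+\norm{\wtilde G_{tx}}^2_{\nu,M_\#}dx$.
\item $\lambda_1,\lambda_3\ll\lambda_2$ absorb the $(\psi_{xx},\zeta_{xx})$ terms.
\item Substituting \eqref{eq:mixed-derivative-est} into the $C\|(\psi_{xt},\zeta_{xt})\|^2_{L^2_x}$ term of Lemma~\ref{lem:GtE} produces $C\lambda_1\|\phi_{xx}\|^2_{L^2_x}$. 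This requires $\lambda_1\ll\lambda_3$, a condition missing from your write-up.
\item $\lambda_2\delta_0\ll\lambda_3$ handles $C\delta_0\|\phi_{xx}\|^2_{L^2_x}$ in \eqref{eq:eoppz1}.
\item $\lambda_2(\delta_0+\varepsilon)\ll\lambda_1$ handles the small multiples of $\int\norm{\wtilde G_x}^2_{\nu,M_\#}+\norm{\wtilde G_t}^2_{\nu,M_\#}dx$.
\end{itemize}
The equivalence of the modified energy then holds up to an additive $O(\delta_0)$ profile contribution, which goes into $C\delta_0^{1/2}$. With this ordering the rest of your argument goes through as in the paper.
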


\begin{proof}
We combine the previously established high-order differential estimates:
the second-order macroscopic estimate, Lemma~\ref{lem:GxE}, Lemma~\ref{lem:GtE},
Lemma~\ref{lem:ftxE}, and Lemma~\ref{lem:fxxE}.
Thus there exist positive constants \(C_1,\dots,C_5\) such that, for any positive
parameters \(\lambda_1,\lambda_2,\lambda_3,\lambda_4\), one has
\begin{align}
\begin{aligned}
&\frac{d}{dt}
\Bigg\{
\|(\phi_x,\psi_x,\zeta_x)\|_{L^2_x}^2
+\lambda_1\int_{\mathbb R}
\norm{\wtilde G_{x}}_{M_\#}^2 + \norm{\wtilde G_{t}}_{M_\#}^2 \,dx 
+\lambda_2\int_{\mathbb R} \norm{\wtilde f_{xx}}_{M_\#}^2 + \norm{\wtilde f_{xt}}_{M_\#}^2\,dx
\Bigg\}
\\
&\quad
+\int_{\mathbb R}
\bigl(
\lambda_4 \phi_{xx}^2+\psi_{1xx}^2+\psi_{2xx}^2+\psi_{3xx}^2+\zeta_{xx}^2
\bigr)\,dx
+\lambda_3\|(\phi_{xt},\psi_{xt},\zeta_{xt})\|_{L^2_x}^2
\\
&\quad
+\frac{\lambda_1}{2}\int \norm{\wtilde G_x}_{\nu,M_\#}^2 + \norm{\wtilde G_t}_{\nu,M_\#}^2 \,dx
+\frac{\lambda_2}{2}\int \norm{\wtilde G_{xx}}_{\nu,M_\#}^2 + \norm{\wtilde G_{xt}}_{\nu,M_\#}^2 \,dx
\\
&\le
C_{\mathrm{high}}(\delta_0+\varepsilon)\sum_{i=1,3}\delta_i|\dot X_i|^2
+
C_{\mathrm{high}}(\delta_0+\varepsilon)\sum_{i=1,3}\mathcal G_i^S
+
C_{\mathrm{high}}(\delta_0+\varepsilon)\sum_{|\beta|=1}\|\partial^\beta(\phi,\psi,\zeta)\|_{L^2_x}^2
\\
&\quad
+
C_{\mathrm{high}}(\delta_0+\varepsilon)\int \norm{\wtilde G_{\text{rem}}}_{\nu,M_\#}^2 \,dx
+
C_{\mathrm{high}}\delta_C^2 \frac{1}{1+t}\int_{\mathbb R}e^{-\frac{2c|x|^2}{1+t}}|(\phi,\psi,\zeta)|^2\,dx
\\
&\quad
+
C_{\mathrm{mic}1}\int \norm{\wtilde G_{xx}}_{\nu,M_\#}^2 \,dx
+
C_{\mathrm{mic}2}\int \norm{\wtilde G_{xt}}_{\nu,M_\#}^2 \,dx
\\
&\quad
+
C_{\mathrm{mic}3}\int \norm{\wtilde G_{x}}_{\nu,M_\#}^2 \,dx
+
C_{\mathrm{mic}4}\int \norm{\wtilde G_{t}}_{\nu,M_\#}^2 \,dx
+
\lambda_4\frac{d}{dt}\int_{\mathbb R}\phi_{xx}\psi_{1x}\,dx ,
\end{aligned}
\label{eq:high-order-precombine}
\end{align}
where
\[
C_{\mathrm{high}}:=C_1+\lambda_1C_2+\lambda_2C_3+\lambda_3C_4+\lambda_4C_5,
\]
and similarly \(C_{\mathrm{mic}1},C_{\mathrm{mic}2},C_{\mathrm{mic}3},C_{\mathrm{mic}4}\) denote the corresponding linear combinations of
\(C_1,\dots,C_5\).

Choose \(\lambda_1,\lambda_2,\lambda_3,\lambda_4\) successively so that
\begin{align}
C_{\mathrm{mic}1}<\frac{\lambda_2}{4},
\qquad
C_{\mathrm{mic}2}<\frac{\lambda_2}{4},
\qquad
C_{\mathrm{mic}3}<\frac{\lambda_1}{8},
\qquad
C_{\mathrm{mic}4}<\frac{\lambda_1}{8}.
\label{eq:gamma-choice-1}
\end{align}
Then the four kinetic terms on the right-hand side of
\eqref{eq:high-order-precombine} are absorbed into the left-hand side.

Next, move the derivative term to the left and define
\begin{align}
&\mathcal{E}_h(t)
:=
\|(\phi_x,\psi_x,\zeta_x)(t)\|_{L^2_x}^2
+\lambda_1\int
\norm{\wtilde G_{x}}_{M_\#}^2 + \norm{\wtilde G_{t}}_{M_\#}^2 \,dx \nonumber\\
&\quad +\lambda_2\int
\norm{\wtilde f_{xx}}_{M_\#}^2 + \norm{\wtilde f_{xt}}_{M_\#}^2 \,dx
-\lambda_4\int_{\mathbb R}\phi_{xx}\psi_{1x}\,dx .
\label{eq:high-order-modified-energy}
\end{align}

By Young's inequality, for any \(\kappa_1>0\),
\begin{align}
    \begin{aligned}
        \lambda_4\left|\int_{\mathbb R}\phi_{xx}\psi_{1x}\,dx\right|
        \le&
        \kappa_1\|\phi_{xx}\|_{L^2_x}^2
        +
        C_{\kappa_1}\lambda_4^2\|\psi_{1x}\|_{L^2_x}^2 \\
        \le&\kappa_1\iint \frac{\abs{\wtilde{f}_{xx}}^2}{M_\#} d\xi dx + C\kappa_1\varepsilon^2\norm{(\phi_x,\psi_x,\zeta_x)}_{L^2}^2 + C\kappa_1\delta + C_{\kappa_1} \lambda_4^2 \norm{\psi_{1x}}_{L^2}^2.
    \end{aligned}
\label{eq:young-cross-high}
\end{align}
Choose \(\kappa_1>0\) sufficiently small so that the first term on the right-hand side of
\eqref{eq:young-cross-high} is absorbed into the \(\lambda_4\phi_{xx}^2\)-term already
present on the left-hand side of \eqref{eq:high-order-precombine}. Since
\(\psi_{1x}\) is lower order, the second term is controlled by the low-order energy.
Hence, for \(\lambda_4\) sufficiently small, the modified energy \(\mathcal{E}_h(t)\)
is equivalent to
\[
\|(\phi_x,\psi_x,\zeta_x)(t)\|_{L^2_x}^2
+
\mathcal K_{\mathrm{high}}
\]
up to a harmless multiple of \(\|(\phi_x,\psi_x,\zeta_x)(t)\|_{L^2_x}^2\).

Integrating in time over \([0,T]\), using \eqref{eq:gamma-choice-1}, and absorbing the
cross term contribution yields \eqref{eq:high-order-closure}.
\end{proof}

\section{Completion of the proof of the main proposition}
\setcounter{equation}{0}

We here complete the proof of the main a priori estimate stated in
Proposition~\ref{prop:priest}.  The zeroth-order analysis of Section~6 already
controls the basic \(L^2_x\)-size of the perturbation, the shock coercive terms
\(\mathcal G_i^S\), the modulation terms \(\delta_i|\dot X_i|^2\), and the zeroth-order
microscopic dissipation.  The differentiated analysis of Section~7 then supplies the
remaining first- and second-order macroscopic bounds together with the differentiated
microscopic estimates.  Thus, at this stage, the only quantity not yet directly
absorbed into the main a priori norm is the Gaussian-weighted space--time term $\mathcal W_C(T)$ of \eqref{def:W}:
\[
\mathcal W_C(T)
:=
\int_0^T \frac{1}{1+t}\int_{\mathbb R}
e^{-\frac{2c|x|^2}{1+t}} |(\phi,\psi,\zeta)(t,x)|^2\,dx\,dt .
\]
The following lemma shows that \(\mathcal W_C(T)\) is itself controlled by the quantities
already appearing in the zeroth- and differentiated estimates.

\begin{lemma}\label{lem:weighted-gaussian}
Under the bootstrap bound \(\mathcal{E}(T)^2\le \varepsilon^2\) in \eqref{eq:priass}, there exists a constant \(C>0\) such that
\begin{align}
\mathcal W_C(T)
\le\;&
C\sup_{t\in[0,T]}\|(\phi,\psi,\zeta)(t)\|_{L^2_x}^2
+
C\int_0^T \sum_{i=1,3}\delta_i |\dot X_i|^2\,dt
+
C\int_0^T \sum_{i=1,3}\mathcal G_i^S\,dt
\notag\\
&+
C\int_0^T \|(\phi_x,\psi_x,\zeta_x)\|_{L^2_x}^2\,dt
+
C(\delta_0+\varepsilon)\int_0^T \int_{\mathbb R}
\norm{\wtilde G_{\textup{rem}}}_{\nu,M_\#}^2\,dx\,dt
\notag\\
&+
C\int_0^T \int_{\mathbb R}
\norm{\wtilde G_t}_{\nu,M_\#}^2 + \norm{\wtilde G_x}_{\nu,M_\#}^2 \,dx\,dt
+
C\delta_0^{1/3}.
\label{eq:weighted-gaussian}
\end{align}
\end{lemma}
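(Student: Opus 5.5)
We plan to follow the standard heat-kernel argument of Huang--Li--Matsumura for viscous contact waves, adapted to the composite-wave setting as in \cite{kang2025time,huang2026timeasymptoticstabilitycompositewave}. Set
\[
w(t,x):=\frac{1}{\sqrt{1+t}}e^{-\frac{2c|x|^2}{1+t}},\qquad g(t,x):=\int_{-\infty}^x w(t,y)\,dy,
\]
so that $|g|\le C$ uniformly in $t$, and $g$ satisfies the heat-type identity $4c\,g_t+g_{xx}=0$, after adjusting $c$. The key inequality to aim for is the Huang--Li--Matsumura lemma. For any function $h$ with $\sup_t\|h\|_{L^2_x}^2+\int_0^T\|h_x\|_{L^2_x}^2\,dt<\infty$, it gives
\[
\int_0^T\frac{1}{1+t}\int w^2...
\]
More precisely, we use it in the form
\[
\int_0^T\int w^2 h^2\,dx\,dt\le C\sup_t\|h\|_{L^2}^2+C\int_0^T\|h_x\|_{L^2}^2\,dt .
\]
It is proved by multiplying $h$ against $h\,g^2$-type weights and integrating by parts. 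This lemma, however, only handles functions whose evolution we do not need. Since $w^2\cdot(1+t)$ is exactly the weight in $\mathcal W_C$ (up to the constant in the exponent), a direct application to $h=\phi,\psi,\zeta$ would already yield the first and fourth terms on the right-hand side of \eqref{eq:weighted-gaussian}.

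The cleaner route, which also explains the remaining terms, is to test the perturbation system \eqref{eq:pesy1}--\eqref{eq:pesy2} against $g$-weighted quantities. Following Huang--Li--Matsumura, we take the combination $(\psi_1\, g)$ in the continuity equation together with the pressure combination. Concretely, we multiply the $\phi$-equation by $\bar p\,\phi\,g$-type multipliers and the $\psi_1$-equation by $\psi_1 g$, and use the acoustic structure $p-\bar p=\frac{2}{3v}\zeta-\frac{\bar p}{v}\phi$. The time derivative falling on $g$ produces $-\int g_t(\cdots)=\frac{1}{4c}\int g_{xx}(\cdots)$, and this term contains the coercive weight $\frac{1}{1+t}e^{-2c x^2/(1+t)}|(\phi,\psi,\zeta)|^2$ for the non-acoustic (entropy/contact) mode. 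The acoustic modes are handled by the heat-kernel lemma above.

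The remaining contributions are sorted as follows. Boundary-in-time terms are bounded by $C\sup_t\|(\phi,\psi,\zeta)\|_{L^2}^2$, using $|g|\le C$. Viscous terms are bounded by $\int_0^T\|(\phi_x,\psi_x,\zeta_x)\|^2$. The shift terms $\dot X_i\partial_x(v^{S_i})^{-X_i}$ give $\int\delta_i|\dot X_i|^2$ plus $\mathcal G_i^S$. Shock-localized profile terms are controlled by $\mathcal G_i^S$, using Lemma~\ref{lem:intsk} and the interaction bounds. The kinetic terms $\int\xi\,\widetilde\Pi_{1x}$ are integrated by parts and bounded via Lemma~\ref{lem:kc5}-type estimates by $\widetilde G_t,\widetilde G_x$ and $(\delta_0+\varepsilon)\widetilde G_{\rm rem}$. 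The source terms $Q^C,Q^I$ are time-integrable by Lemmas~\ref{lem:contact-gaussian} and \ref{lem:QI-term}. After Young's inequality, these last contributions produce the $\delta_0^{1/3}$ remainder, coming from products like $\delta_0^{4/3}e^{-C\delta_0 t}$ integrated in time.

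The main obstacle will be the cross terms where the contact weight $w^2$ meets the shock layers and the $\delta_C$-independent cubic terms. We will split $\mathbb R$ using $\varphi_1,\varphi_3$ and the separation \eqref{eq:shiftsp1}. Away from the contact, the Gaussian weight is exponentially small in $t$ on the shock supports; near it, we absorb by smallness $C(\delta_0+\varepsilon)\mathcal W_C(T)$ into the left-hand side.
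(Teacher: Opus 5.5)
There is a genuine gap in how you control the acoustic part of $\mathcal W_C(T)$. The inequality you attribute to Huang--Li--Matsumura, $\int_0^T\!\int w^2h^2\,dx\,dt\le C\sup_t\|h\|_{L^2}^2+C\int_0^T\|h_x\|_{L^2}^2\,dt$ for arbitrary $h$, is false with a constant independent of $T$. The actual lemma carries an extra term $C\int_0^T\langle h_t,hg^2\rangle\,dt$, and that term cannot be dropped. For example, take $h=\ell^{-1/2}\Psi(x/\ell)$ with a fixed bump $\Psi$ and $\ell(t)=\sqrt{1+t}\,\log(2+t)$: the right-hand side stays bounded while the left-hand side grows like $\log\log T$. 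So there is no ``direct application to $h=\phi,\psi,\zeta$''.

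The heat-kernel multiplier only works for quantities whose evolution is a diffusion plus controllable errors. In the paper these are two kinds of quantity. The first is the entropy mode $\mathcal A_G=\bar p\phi+\zeta$, built from the continuity and \emph{energy} equations so that the first-order flux $\bar p\psi_{1x}$ cancels (see \eqref{eq:Gaussian1}). The second is the transverse velocities $\psi_2,\psi_3$. They are tested against $\mathcal A_Gh^2$ and $\psi_ih^2$, as in \eqref{eq:Gaussian2}. The square is essential: multipliers linear in $g$, like your $\bar p\phi g$ and $\psi_1g$, only produce $w$-weighted cross terms, never $\int w^2|\cdot|^2$. Moreover, the pair you propose to combine, the $\phi$- and $\psi_1$-equations, is the acoustic pair, not the non-acoustic mode.

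Two minor points. First, with $w=(1+t)^{-1/2}e^{-\beta x^2/(1+t)}$ one has the forward equation $g_t=\frac{1}{4\beta}g_{xx}$; your sign is reversed, though this is harmless. Second, you should take $\beta=c$ rather than $2c$, because a more concentrated Gaussian weight does not dominate the one in $\mathcal W_C$.

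For $h=\phi$ or $\psi_1$, the term $\langle h_t,hg^2\rangle$ contains the hyperbolic flux, e.g. $\int\psi_{1x}\phi\, g^2\,dx$. This is bounded only by $\|\psi_{1x}\|_{L^2}\|\phi\|_{L^2}$, which is not controlled by $\int_0^T\|\psi_{1x}\|_{L^2}^2dt+\sup_t\|\phi\|_{L^2}^2$ uniformly in $T$. Combining the two equations only turns it into a multiple of the indefinite term $\int\phi\psi_1\,gw\,dx$, which is not time-integrable either.

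The missing idea is the cross multiplier of \eqref{eq:Gaussian7}. Set $H=\int_{-\infty}^xw^2\,dy$ and $\mathcal B_G=\frac23\zeta-\bar p\phi$, so that $p-\bar p=\mathcal B_G/v$ exactly. Then $\psi_{1t}\approx-(\mathcal B_G/v)_x$ and $\mathcal B_{Gt}\approx-\frac53\bar p\psi_{1x}$. Hence, up to viscous, kinetic and lower-order terms, $(\psi_1\mathcal B_G)_t=-\bigl(\frac1{2v}\mathcal B_G^2+\frac56\bar p\psi_1^2\bigr)_x$. Integrating $\frac{d}{dt}\int H\psi_1\mathcal B_G\,dx$ by parts then yields the positive-definite $w^2$-weighted quantity $\int_0^T\!\int w^2\bigl(\frac1{2v}\mathcal B_G^2+c\,\bar p\psi_1^2\bigr)$. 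The boundary-in-time terms are bounded by $\sup_t\|(\phi,\psi,\zeta)\|_{L^2}^2$. The term $\int\!\int\psi_1\mathcal B_GH_t$ is harmless because $|H_t|\le C(1+t)^{-3/2}$.

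Since $(\mathcal A_G,\mathcal B_G)$ is an invertible linear function of $(\phi,\zeta)$, the two estimates together control $\mathcal W_C(T)$. With this in place, your bookkeeping matches the paper. That includes the shift, shock-localized, kinetic and $Q^C,Q^I$ terms, the absorption of $C(\delta_0+\varepsilon)\mathcal W_C(T)$, and the $\delta_0^{1/3}$ remainder.
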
 

\begin{proof}
Set
\[
\omega_G(t,x):=\frac{1}{\sqrt{1+t}}e^{-\frac{\beta |x|^2}{1+t}},
\quad
h(t,x):=\int_{-\infty}^x \omega_G(t,y)\,dy,
\quad
H(t,x):=\int_{-\infty}^x \omega_G(t,y)^2\,dy .
\]
Then
\[
h_x=\omega_G,\qquad H_x=\omega_G^2,\qquad
\omega_{Gt}=\frac{1}{4\beta}\omega_{Gxx},\qquad
h_t=\frac{1}{4\beta}h_{xx},
\]
and \(h\), \(H\) are uniformly bounded on \([0,T]\times\mathbb R\).

We apply weighted multipliers directly to the macroscopic perturbation system
\eqref{eq:pesy1}--\eqref{eq:pesy2}.  More precisely, we multiply the three momentum
equations by \(\psi_i h^2\) \((i=1,2,3)\), and we combine the continuity equation
\eqref{eq:pesy1}$_1$ with the temperature equation \eqref{eq:pesy2}$_2$ against
\[
H\Bigl(\frac23\zeta-\bar p\,\phi\Bigr).
\]
After summing the resulting identities and integrating by parts in \(x\), the terms in
which one derivative falls on \(H\) produce \(H_x=\omega_G^2\), hence yield the localized
coercive quantity
\[
\int_{\mathbb R}\omega_G(t,x)^2\,|(\phi,\psi,\zeta)(t,x)|^2\,dx .
\]

More precisely, one obtains
\begin{align}
\frac{d}{dt}\mathcal J_G(t)
+
c\int_{\mathbb R}\omega_G(t,x)^2\,|(\phi,\psi,\zeta)(t,x)|^2\,dx
\le
\mathcal R_G(t)+\mathcal K_G(t),
\label{eq:WG-identity}
\end{align}
where \(\mathcal J_G(t)\) is a weighted interaction functional satisfying
\begin{equation}
|\mathcal J_G(t)|
\le
C\|(\phi,\psi,\zeta)(t)\|_{L^2_x}^2 ,
\label{eq:WG-J-bound}
\end{equation}
\(\mathcal R_G(t)\) contains only macroscopic remainder terms, and \(\mathcal K_G(t)\)
contains the microscopic moments coming from the \(\Pi_1\)-terms in
\eqref{eq:pesy1}--\eqref{eq:pesy2}.

We first estimate the macroscopic part.  By construction, \(\mathcal R_G(t)\) is made of

\smallskip
\noindent
(i) terms in which derivatives hit the weights \(h\) and \(H\);

\smallskip
\noindent
(ii) the macroscopic error terms \(Q_1\), \(Q_2\), the profile interaction terms, and the
modulation terms already treated in the zeroth-order analysis;

\smallskip
\noindent
(iii) lower-order quadratic terms involving \((\phi_x,\psi_x,\zeta_x)\).

\smallskip

Since \(h\) and \(H\) are bounded, \(H_x=\omega_G^2\), and
\[
|\omega_{Gt}|+|h_t|
\le
C\left(\frac{1}{1+t}+\frac{|x|^2}{(1+t)^2}\right)\omega_G
\le
\frac{C}{1+t}\omega_G,
\]
all contributions of type (i) are bounded by
\[
C\sup_{t\in[0,T]}\|(\phi,\psi,\zeta)(t)\|_{L^2_x}^2
+
C\int_0^T \|(\phi_x,\psi_x,\zeta_x)\|_{L^2_x}^2\,dt .
\]
For the terms of type (ii), we invoke the macroscopic zeroth-order structure already
developed above: by \eqref{eq:macro-error-plus-shift}, the full collection of
macroscopic error terms and lower-order shift contributions is controlled by
\[
C\sum_{i=1,3}\delta_i |\dot X_i|^2
+
C\sum_{i=1,3}\mathcal G_i^S
+
C\delta_C\frac{1}{1+t}\int_{\mathbb R}e^{-2c|x|^2/(1+t)}|(\phi,\psi,\zeta)|^2\,dx
+
C\|(\phi_x,\psi_x,\zeta_x)\|_{L^2_x}^2 ,
\]
up to a harmless \( \lambda \mathcal D_{\mathrm{mac}}(U)\)-term, which we absorb by taking \(\lambda>0\)
sufficiently small.  The remaining pure profile terms are estimated by the Gaussian decay
of the viscous contact wave and the exponential localization of the shock profiles, hence
their time integrals are bounded by \(C\delta^{1/2}\) and the microscopic parts. 
{More precisely, we estimate the Gaussian terms as follows.} 

\begin{align}\label{eq:Gaussian1}
& \left(\bar p \phi + \zeta \right)_t - \sum_{i\in\{1,3\}} \dot{X}_i \left( \partial_x\shockw{v}{i} \bar p + \partial\shockw{\theta}{i} \right) + \bigl(p-\bar p\bigr)\psi_x \nonumber \\
& \quad = \bar p_t \phi - \bar u_x (p-\bar p)+\left(\alpha_{\rm {th}}(\theta)\frac{\theta_x}{v}-\alpha_{\rm{th}}(\bar \theta)\frac{\bar \theta_x}{\bar v}\right)_x  \nonumber \\
& \qquad + \left(\mu(\theta)\frac{(u_x)^2}{v}-\mu(\bar \theta)\frac{(\bar u_x)^2}{\bar v}\right) - Q_2 + \frac{\mu(\theta)(\psi_{2x}^2 + \psi_{3x}^2)}{v} + K_{\theta}
\end{align}

where

\begin{align*}
K_{\theta} :=& \sum_{j=2}^3 \psi_j \int \xi_1 \xi_j \Pi_{1x} d\xi - \int \xi_1 \frac{\abs{\xi}^2}{2} \wtilde \Pi_{1x} d\xi \\
& \quad + u_1\int \xi_1^2 \Pi_{1x} d\xi - \sum_{i\in\{1,3\}} \shockw{u_1}{i}\int \xi_1^2 \partial_x\shockw{\Pi_1}{i} d\xi.
\end{align*}

Take \(\mathcal A_G:=\bar p \phi + \zeta \) and observe that

\begin{align}\label{eq:Gaussian2}
&\left[\mathcal A_G^2 \frac{h^2}{2}\right]_t - \frac{1}{4\beta}\mathcal A_G^2 h \omega_{Gx} - \sum_{i\in\{1,3\}} \dot{X}_i \left( \partial_x\shockw{v}{i} \bar p + \partial_x\shockw{\theta}{i} \right) \mathcal A_G h^2 + \bigl(p-\bar p\bigr)\psi_x \mathcal A_G h^2 \nonumber \\
& = \left(\alpha_{\rm{th}}(\theta)\frac{\theta_x}{v}-\alpha_{\rm{th}}(\bar \theta)\frac{\bar \theta_x}{\bar v}\right)_x \mathcal A_G h^2 + \left[ \bar p_t \phi - \bar u_x (p-\bar p) + \left(\mu(\theta)\frac{(u_x)^2}{v}-\mu(\bar \theta)\frac{(\bar u_x)^2}{\bar v}\right) \nonumber\right. \\
& \left. \quad - Q_2 + \frac{\mu(\theta)(\psi_{2x}^2 + \psi_{3x}^2)}{v} + K_{\theta} \right] \mathcal A_G h^2
\end{align}

Applying the energy estimate to \eqref{eq:Gaussian2},

\begin{align} \label{eq:Gaussian3}
&\frac{1}{4\beta} \int_0^T \int \mathcal A_G^2 \omega_G^2 dx dt = \int \mathcal A_G(0,x)^2 \frac{h_0^2}{2} dx \nonumber \\
&\quad - \int \mathcal A_G^2 \frac{h^2}{2} dx + \sum_{i\in\{1,3\}} \int_0^T  \dot{X}_i \int \left( \partial_x \shockw{v}{i} \bar p + \partial_x \shockw{\theta}{i} \right) \mathcal A_G h^2 dx dt \nonumber \\
&\quad -\int_0^T\int \frac{1}{4\beta}(\mathcal A_G^2)_x h\omega_G dx dt -\int_0^T \int (p-\bar p)\psi_x \mathcal A_Gh^2 dx dt \nonumber\\
&\quad +\int_0^T \int \left[\bar p_t \phi -\bar u_x(p-\bar p) - Q_2 + \left(\mu(\theta)\frac{(u_x)^2}{v}-\mu(\bar \theta)\frac{(\bar u_x)^2}{\bar v}\right) \right] \mathcal A_Gh^2 dx dt \nonumber \\
&\quad -\int_0^T \int \left(\alpha_{\rm{th}}(\theta)\frac{\theta_x}{v}-\alpha_{\rm{th}}(\bar \theta)\frac{\bar \theta_x}{\bar v}\right) (\mathcal A_G h^2)_x dx dt +\int_0^T \int \left[\frac{\mu(\theta)(\psi_{2x}^2+\psi_{3x}^2)}{v}\right] \mathcal A_G h^2 dx dt \nonumber \\
&\quad +\int_0^T \int K_{\theta} \mathcal A_G h^2 dx dt =: \sum_{i=1}^9 J_i.
\end{align}

See (D.16) in \cite{kang2025time}. {Then} $J_i${,} $i=1,\ldots,7${, can be estimated as follows.}

\begin{align}\label{eq:Gaussian4}
&\sum_{i=1}^7 J_i \le C \sup_{t\in[0,T]} \norm{(\phi,\zeta)}_{L^2}^2 \nonumber\\
&\quad + C \sum_{i\in\{1,3\}} \delta_i \int_0^T \abs{\dot{X}_i}^2 dt + (\nu+C\delta_0) \int_0^T \int \omega_G^2 \abs{(\phi,\zeta)}^2 dx dt \nonumber\\
&\quad + C_{\nu} \int_0^T \norm{(\phi_x,\psi_x,\zeta_x)}_{L^2}^2 dt + C\sum_{i\in\{1,3\}}\int_0^T \mathcal{G}_i^S dt +C\delta_0^{\frac{1}{3}}.
\end{align}

Since $h$ is bounded, $J_8\le C \int_0^T \norm{\psi_x}^2_{L^2} dt$. {The remaining term} $J_9$ {is purely kinetic.} 

\begin{align}\label{eq:Gaussian5}
&J_9 \le \int_0^T \iint (\zeta+\bar p \phi) \left[\sum_{j=2}^3 \psi_k \xi_1 \xi_j \Pi_{1x} \right] d\xi dx dt \nonumber \\
&\quad -\int_0^T \iint (\zeta+ \bar p \phi) \left[\xi_1 \frac{\abs{\xi}^2}{2} \wtilde \Pi_{1x} d\xi\right] d\xi dx dt \nonumber \\
&\quad +\int_0^T \iint (\zeta+\bar p \phi)\left(u_1 \xi_1^2 \Pi_{1x} - \sum_{i\in\{1,3\}} \shockw{u_1}{i} \xi_1^2 \partial_x\shockw{\Pi_1}{i} \right) d\xi dx dt\\
&\qquad \quad =:J_{91}+J_{92}+J_{93}.
\end{align}

{Arguing as in Lemma}~\ref{lem:K1245}{, we obtain the following bounds.} 

\begin{align}\label{eq:Gaussian6}
&J_9 \le C(\delta_0+\varepsilon) \sum_{i\in\{1,3\}} \delta_i \int_0^T \abs{\dot{X}_i}^2 dt + C\delta_0 \int_0^T \int \omega_G^2 \abs{(\phi,\psi,\zeta)}^2 dx dt \nonumber\\
&\qquad + C \int_0^T \norm{(\phi_x,\psi_x,\zeta_x)}_{L^2}^2 dt + C\delta_0 \sum_{i\in\{1,3\}}\int_0^T \mathcal{G}_i^S dt \nonumber\\
&\qquad + C(\delta_0+\varepsilon)\int_0^T \int \norm{\wtilde G_{\text{rem}}}_{\nu,M_\#}^2\, dx \,dt \nonumber\\
&\qquad + C\int_0^T \int \norm{\wtilde G_t}_{\nu,M_\#}^2 + \norm{\wtilde G_x}_{\nu,M_\#}^2 \, dx\, dt +C\delta_0^{\frac{1}{3}}.
\end{align}

{Applying the same argument to} $H(\frac{2}{3}\zeta-\bar \phi)${, with} $\mathcal B_G:=\frac{2}{3}\zeta-\bar \phi${, we obtain the following estimate.}

\begin{align}\label{eq:Gaussian7}
& \int_0^T \int \omega_G^2 \left[\frac{1}{2v}\mathcal B_G^2 + \frac{4\bar p |\psi_1|^2}{3}\right] dx dt = \int H\psi_1\mathcal B_G|_{t=0}^{t=T} dx \nonumber \\
& \quad + \int_0^T \int v_x \frac{H}{2v^2}\mathcal B_G^2 dx dt - \int_0^T \int \psi\mathcal B_G H_t dx dt \nonumber \\
& \quad + \sum_{i\in\{1,3\}}\int_0^T \dot{X}_i \int H \left[\bar p \psi \partial_x\shockw{v}{i} -\psi \partial_x\shockw{\theta}{i} - \mathcal B_G \partial_x\shockw{u_1}{i}\right] dx dt \nonumber\\
& \quad -\int_0^T \int \bar p_x H \frac{4}{3}\psi_1^2 dx dt +\int_0^T \int \frac{2}{3} H \psi\left[(p-\bar p)\psi_x+\bar u_x(p-\bar p)\right] dx dt \nonumber\\
& \quad +\int_0^T \int H\psi \phi \bar p_t dx dt + \int_0^T \int \frac{2}{3}H\psi_1 \left(\mu(\theta)\frac{u_{1x}^2}{v}-\mu(\bar{\theta})\frac{(\bar u_{1x})^2}{\bar v}\right) dx dt \nonumber\\
& \quad +\int_0^T \int \left(H\mathcal B_G\right)_x\left(\mu(\theta)\frac{u_x}{v}-\mu(\y{\theta})\frac{\bar u_x}{\bar v}\right) dx dt  \nonumber \\
& \quad+ \int_0^T \int \frac{2}{3} (H\psi)_x\left(\alpha_{\rm{th}}(\theta)\frac{\theta_x}{v}-\alpha_{\rm{th}}(\bar \theta)\frac{\bar \theta_x}{\bar v}\right) dx dt  + \int_0^T \int \left[\frac{2}{3}HQ_2 + H\mathcal B_GQ_1\right] dx dt \nonumber\\
&\quad + \int_0^T \int \left[\frac{2}{3} H K_\theta + H\mathcal B_G K_u\right] dx dt
\end{align}
where 
\begin{align}
K_u:= -\int \xi_1^2 \wtilde \Pi_{1x} d\xi.
\end{align}

By using the result of \cite{kang2025time} and Lemma~\ref{lem:K1245}, we have 

\begin{align}\label{eq:Gaussian8}
& \int_0^T \int \omega_G^2 \left[\frac{1}{2v}\mathcal B_G^2 + \frac{4\bar p |\psi_1|^2}{3}\right] dx dt \le C \sup_{t\in[0,T]} \norm{(\phi,\psi,\zeta)}_{L^2}^2 \nonumber\\
&\quad + C \sum_{i\in\{1,3\}} \delta_i \int_0^T \abs{\dot{X}_i}^2 dt + C \delta_0 \int_0^T \int \omega_G^2 \abs{(\phi,\psi,\zeta)}^2 dx dt \nonumber\\
&\quad + C \int_0^T \norm{(\phi_x,\psi_x,\zeta_x)}_{L^2}^2 dt + C(\delta_0+\varepsilon)\sum_{i\in\{1,3\}}\int_0^T \mathcal{G}_i^S dt \nonumber\\
&\qquad + C(\delta_0+\varepsilon)\int_0^T \int \norm{\wtilde G_{\text{rem}}}_{\nu,M_\#}^2\, dx\, dt \nonumber\\
&\qquad + C\int_0^T \int \norm{\wtilde G_t}_{\nu,M_\#}^2 + \norm{\wtilde G_x}_{\nu,M_\#}^2 \,dx\, dt +C\delta_0^{\frac{1}{3}}.
\end{align}

Finally, consider the $\psi_{i},\, i=2,3$ cases. {Multiply} \eqref{eq:pesy2} {by} $\psi_ih^2${.}
\begin{align}\label{eq:Gaussian9}
h^2\psi_i\psi_{it} = h^2\psi_i\left(\frac{\mu(\theta)\psi_{ix}}{v}\right)_x - h^2 \int \psi_i \xi_1 \xi_i \Pi_{1x} d\xi.
\end{align}

Note that
\begin{align*}
  (\psi_i^2 \frac{h^2}{2})_t =& h^2\psi_i\psi_{it} + \frac{1}{4\beta}\omega_{Gx} \psi_i^2\\
  =& h^2\psi_i\psi_{it} + \left[\frac{1}{4\beta}\psi_i^2 h \omega_G\right]_x -\frac{1}{4\beta}\psi_i^2 \omega_G^2 -\frac{1}{4\beta} (\psi_i^2)_x h \omega_G.
\end{align*}

Therefore, 
\begin{align}\label{eq:Gaussian10}
&\frac{1}{4\beta}\int_0^T\int \psi_i^2 \omega_G^2 dx dt = \int \psi_i(0,x)^2 \frac{h_0^2}{2} dx \nonumber\\
&\qquad -\int \psi_i^2 \frac{h^2}{2}dx -\int_0^T \int \frac{1}{4\beta} (\psi_i^2)_x h\omega_G dx dt \nonumber \\
&\qquad - \int_0^T (\psi_i^2 h)_x \frac{\mu(\theta)\psi_{ix}}{v} dx dt - \int_0^T h^2 \int K_{gi} dx dt
\end{align}
where
\begin{align*}
K_{gi}:= -\int \psi_i\xi_1 \xi_i \Pi_{1x} d\xi.
\end{align*}

By using Lemma~\ref{lem:K1245}, we have the following estimate.

\begin{align}\label{eq:Gaussian11}
&\int_0^T\int \psi_i^2 \omega_G^2 dx dt \le C \sup_{t\in[0,T]} \norm{(\phi,\psi,\zeta)}_{L^2}^2 \nonumber\\
&\qquad + C \sum_{i\in\{1,3\}} \delta_i \int_0^T \abs{\dot{X}_i}^2 dt + C \delta_0 \int_0^T \int W^2 \abs{(\phi,\psi,\zeta)}^2 dx dt \nonumber\\
&\qquad + C \int_0^T \norm{(\phi_x,\psi_x,\zeta_x)}_{L^2}^2 dt + C(\delta_0+\varepsilon)\sum_{i\in\{1,3\}}\int_0^T \mathcal{G}_i^S dt \nonumber\\
&\qquad + C(\delta_0+\varepsilon)\int_0^T \int \norm{\wtilde G_{\text{rem}}}_{\nu,M_\#}^2 \,dx \,dt \nonumber\\
&\qquad + C\int_0^T \int \norm{\wtilde G_t}_{\nu,M_\#}^2+\norm{\wtilde G_x}_{\nu,M_\#}^2 d\xi dx dt +C\delta_0^{\frac{1}{3}}.
\end{align}

{Combining} \eqref{eq:Gaussian4}, \eqref{eq:Gaussian6}, \eqref{eq:Gaussian8}, and \eqref{eq:Gaussian11} {proves Lemma}~\ref{lem:weighted-gaussian}{.}
\end{proof}

\begin{proof}[\textbf{Proof of Proposition~\ref{prop:priest}}]
We combine the zeroth-order estimate from Section~6, the differentiated estimates from
Section~7, and the Gaussian-weighted estimate of Lemma~\ref{lem:weighted-gaussian}.

For \(t\in[0,T]\), let \(\mathfrak{R}(t)^2\) denote the full left-hand side of
\eqref{eq:pries}, with \(T\) replaced by \(t\). Since \eqref{eq:pries} does not
explicitly contain the first-order time derivatives, we also introduce the auxiliary quantity
\[
\mathcal E_t(t)^2
:=
\int_0^t \|(\phi_t,\psi_t,\zeta_t)(s)\|_{L^2_x}^2\,ds .
\]
It is enough to prove
\[
\mathfrak{R}(t)^2+\mathcal E_t(t)^2
\le
C\bigl(\mathcal{E}(0)^2+\delta_0^{1/2}\bigr),
\qquad 0\le t\le T.
\]

We also set
\[
\mathcal W_C(t)
:=
\int_0^t \frac{1}{1+s}\int_{\mathbb R}
e^{-\frac{2c|x|^2}{1+s}}|(\phi,\psi,\zeta)(s,x)|^2\,dx\,ds ,
\]
and
\[
\mathcal K(t)
:=
\int_0^t\int_{\mathbb R} \norm{\wtilde G_{\text{rem}}}_{\nu,M_\#}^2 \,dx\,ds + \mathcal K_{\mathrm{high}}(t).
\]

\smallskip
\noindent
\emph{Step 1: Zeroth-order control.}
By Lemma~\ref{lem:rel-entropy-equiv}, more precisely by
\eqref{eq:weighted-rel-entropy-equiv}, the weighted relative entropy is equivalent to
the \(L^2_x\)-norm of the macroscopic perturbation.  In addition, \eqref{eq:E0-equiv}
identifies the zeroth-order energy with
\[
\|(\phi,\psi,\zeta)(t)\|_{L^2_x}^2
+
\int_{\mathbb R}
\norm{\wtilde G_{\text{rem}}}_{M_\#}^2\,dx .
\]
Moreover, by the definition of the dissipation \(\mathcal D_{\mathrm{mac}}(U)\) in \eqref{eq:energy2},
together with the bootstrap bound \(\mathcal{E}(T)^2\le \varepsilon^2\) in \eqref{eq:priass}, one has
\[
\mathcal D_{\mathrm{mac}}(U)(t)\ge c\,\|(\psi_x,\zeta_x)(t)\|_{L^2_x}^2 .
\]
Therefore Proposition~\ref{prop:zero-order}, namely \eqref{eq:zero-order-prop}, yields
\begin{align}
\begin{aligned}
&\sup_{0\le s\le t}
\left\{
\|(\phi,\psi,\zeta)(s)\|_{L^2_x}^2
+
\int_{\mathbb R}
\norm{\wtilde G_{\text{rem}}}_{M_\#}^2\,dx
\right\}
\\
&\quad
+
\int_0^t \sum_{i=1,3}\delta_i |\dot X_i(s)|^2\,ds
+
\int_0^t \sum_{i=1,3}\mathcal G_i^S(s)\,ds
+
\int_0^t \|(\psi_x,\zeta_x)(s)\|_{L^2_x}^2\,ds
\\
&\quad
+
\int_0^t\int_{\mathbb R}
\norm{\wtilde G_{\text{rem}}}_{\nu,M_\#}^2 \,dx\,ds
\\
&\le
C\bigl(\mathcal{E}(0)^2+\delta_0^{1/2}\bigr)
+
C\delta_C\mathcal W_C(t)
+
C\int_0^t\int_{\mathbb R}
\norm{\wtilde G_t}_{\nu,M_\#}^2 + \norm{\wtilde G_x}_{\nu,M_\#}^2 \,dx\,ds .
\end{aligned}
\label{eq:priest-step1}
\end{align}

\smallskip
\noindent
\emph{Step 2: The low-order differentiated estimate.}
We next integrate \eqref{eq:low-order-differential} over \([0,t]\).  The energy
functional there contains the cross term \(\int_{\mathbb R} v\psi_1\phi_x\,dx\), but by
the bootstrap bound \(\mathcal{E}(T)^2\le \varepsilon^2\) in \eqref{eq:priass} and Young's inequality,
\[
\left|\int_{\mathbb R} v\psi_1\phi_x\,dx\right|
\le
\frac14\|\phi_x\|_{L^2_x}^2 + C\|\psi_1\|_{L^2_x}^2 ,
\]
and the \(L^2_x\)-norm of \(\psi_1\) is already controlled by \eqref{eq:priest-step1}.
Using also the bound for
\(\int_0^t \|(\psi_x,\zeta_x)(s)\|_{L^2_x}^2\,ds\) furnished by
\eqref{eq:priest-step1}, we obtain
\begin{align}
\begin{aligned}
&\sup_{0\le s\le t}
\left\{
\|\phi_x(s)\|_{L^2_x}^2
+
\int_{\mathbb R}
\norm{\wtilde G_{\text{rem}}}_{M_\#}^2 \,dx
\right\}
+
\mathcal E_t(t)^2
\\
&\quad
+
\int_0^t\int_{\mathbb R}
\norm{\wtilde G_{\text{rem}}}_{\nu,M_\#}^2\,dx\,ds
\\
&\le
C\bigl(\mathcal{E}(0)^2+\delta_0^{1/2}\bigr)
+
C\varepsilon^2\int_0^t \|\psi_{1xx}(s)\|_{L^2_x}^2\,ds
+
C\delta_C \mathcal W_C(t)
+
C\mathcal K_{\mathrm{high}}(t).
\end{aligned}
\label{eq:priest-step2}
\end{align}

\smallskip
\noindent
\emph{Step 3: High-order closure.}
We now invoke Proposition~\ref{prop:high-order-closure}, namely
\eqref{eq:high-order-closure}.  This estimate provides the remaining first-order spatial
derivatives \((\psi_x,\zeta_x)\), the second-order macroscopic terms
\[
(\phi_{xx},\psi_{xx},\zeta_{xx},\phi_{xt},\psi_{xt},\zeta_{xt}),
\]
and the differentiated microscopic quantities
\[
\widetilde G_x,\qquad \widetilde G_t,\qquad \widetilde G_{xx},\qquad \widetilde G_{xt}.
\]
Its right-hand side depends only on lower-order quantities, namely
\[
\int_0^t \sum_{i=1,3}\delta_i |\dot X_i|^2\,ds,\qquad
\int_0^t \sum_{i=1,3}\mathcal G_i^S\,ds,\qquad
\int_0^t \sum_{|\beta|=1}\|\partial^\beta(\phi,\psi,\zeta)\|_{L^2_x}^2\,ds,
\]
and
\[
\int_0^t\int_{\mathbb R}
\norm{\wtilde G_{\text{rem}}}_{\nu,M_\#}^2 \,dx\,ds ,
\]
which are precisely the quantities controlled by \eqref{eq:priest-step1} and
\eqref{eq:priest-step2}. Therefore
\begin{align}
\begin{aligned}
&\sup_{0\le s\le t}
\left\{
\|(\phi_x,\psi_x,\zeta_x)(s)\|_{L^2_x}^2
+
\int_{\mathbb R}
\norm{\wtilde G_{x}}_{M_\#}^2+\norm{\wtilde G_{t}}_{M_\#}^2+\norm{\wtilde G_{xx}}_{M_\#}^2+\norm{\wtilde G_{tx}}_{M_\#}^2 \,dx
\right\}
\\
&\quad
+
\int_0^t
\|(\phi_{xx},\psi_{xx},\zeta_{xx},\phi_{xt},\psi_{xt},\zeta_{xt})(s)\|_{L^2_x}^2\,ds
\\
&\quad
+
\int_0^t\mathcal{K}_{\mathrm{high}}(s)\,ds
\\
&\le
C\bigl(\mathcal{E}(0)^2+\delta_0^{1/2}\bigr)
+
C\delta_C\mathcal W_C(t)
+
C(\delta_0+\varepsilon)\bigl(\mathfrak{R}(t)^2+\mathcal E_t(t)^2\bigr).
\end{aligned}
\label{eq:priest-step3}
\end{align}

\smallskip
\noindent
\emph{Step 4: Control of the Gaussian-weighted term.}
We now apply Lemma~\ref{lem:weighted-gaussian} on the interval \([0,t]\).  Since the
right-hand side of \eqref{eq:weighted-gaussian} contains only the lower-order quantities
already controlled by \eqref{eq:priest-step1}, \eqref{eq:priest-step2}, and
\eqref{eq:priest-step3}, we obtain
\begin{equation}
\delta_C\mathcal W_C(t)
\le
C\delta_C\bigl(\mathcal{E}(0)^2+\delta_0^{1/3}\bigr)
+
C\delta_C(\delta_0+\varepsilon)\bigl(\mathcal{E}(t)^2+\mathcal E_t(t)^2\bigr).
\label{eq:priest-step4}
\end{equation}

\smallskip
\noindent
\emph{Step 5: Final absorption.}
Combining \eqref{eq:priest-step1}, \eqref{eq:priest-step2},
\eqref{eq:priest-step3}, and \eqref{eq:priest-step4}, we find
\[
\mathfrak{R}(t)^2+\mathcal E_t(t)^2
\le
C\bigl(\mathcal{E}(0)^2+\delta_0^{1/2}\bigr)
+
C\varepsilon^2\int_0^t \|\psi_{1xx}(s)\|_{L^2_x}^2\,ds
+
C(\delta_0+\varepsilon)\bigl(\mathfrak{R}(t)^2+\mathcal E_t(t)^2\bigr).
\]
Since \(\int_0^t \|\psi_{1xx}(s)\|_{L^2_x}^2\,ds\) is already one of the components of
\(\mathfrak{R}(t)^2\), it follows that
\[
\mathfrak{R}(t)^2+\mathcal E_t(t)^2
\le
C\bigl(\mathcal{E}(0)^2+\delta_0^{1/2}\bigr)
+
C(\delta_0+\varepsilon)\bigl(\mathfrak{R}(t)^2+\mathcal E_t(t)^2\bigr).
\]
Choosing \(\delta_0>0\) and \(\varepsilon>0\) sufficiently small, we absorb the last
term into the left-hand side and conclude that
\[
\mathfrak{R}(t)^2+\mathcal E_t(t)^2
\le
C\bigl(\mathcal{E}(0)^2+\delta_0^{1/2}\bigr),
\qquad 0\le t\le T.
\]
In particular,
\[
\mathfrak{R}(t)^2
\le
C\bigl(\mathcal{E}(0)^2+\delta_0^{1/2}\bigr),
\qquad 0\le t\le T.
\]
Taking \(t=T\), and recalling the definition of \(\mathfrak{R}(T)^2\), we obtain exactly
\eqref{eq:pries}.  This proves Proposition~\ref{prop:priest}.
\end{proof}

\section{Applications}
\setcounter{equation}{0}

\subsection{Pointwise convergence for a single shock}
In this subsection, we specialize the analysis to the case where the background profile
consists of a single shifted Boltzmann shock profile only. Accordingly, there is no
contact wave, no second shock component, and no wave interaction.

\begin{theorem}[Away-from-the-shock convergence in $L^\infty_x$ for a single shock]
\label{thm:single-shock-away}
Assume that the hypotheses of Proposition~\ref{prop:priest-single} hold.
Then, for every \(T>0\), there exist positive constants \(C\) and \(c\), independent of
\(\kappa\in(0,\kappa_0]\), such that for all \((\tau,y)\in[0,T]\times\mathbb R\),
\begin{align}
\left\|
f^\kappa(\tau,y,\cdot)-M_{E,\kappa}(\tau,y,\cdot)
\right\|_{M_\#}
\le
C\,\kappa 
+
C e^{-c|y-s\tau-X^\kappa(\tau)|/\kappa}.
\label{eq:single-shock-away-pointwise}
\end{align}
\end{theorem}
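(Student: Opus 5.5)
The plan is to split the error by the triangle inequality through the rescaled shifted Boltzmann shock profile:
\[
f^\kappa - M_{E,\kappa} = \bigl(f^\kappa - (F^{S,\kappa})^{-X^\kappa}\bigr) + \bigl((F^{S,\kappa})^{-X^\kappa} - M_{E,\kappa}\bigr),
\]
where $(F^{S,\kappa})^{-X^\kappa}(\tau,y,\xi)=F^{S}\bigl((y-s\tau-X^\kappa(\tau))/\kappa,\xi\bigr)$. This is exactly the rescaling of $\bar f$ in the single-shock case, with $X^\kappa(\tau)=\kappa X(\tau/\kappa)$ as in \eqref{eq:rescaled-shifts}.

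\emph{Second term.} This term is pure profile analysis and produces the exponential tail. Write $F^S=M^S+G^S$. By Lemma~\ref{lem:bs}, for $z<0$ we have $|(v^S,u^S,\theta^S)(z)-U_-|\le C\delta e^{-c\delta|z|}$, and for $z>0$ the analogous bound holds with $U_+$. The smooth dependence of $M[U]$ on $U$ (the Lipschitz bound \eqref{eq:Maxwellian-Lipschitz-single}) then gives
\[
\|M^S(z)-M_\pm\|_{M_\#}\le C\delta e^{-c\delta|z|}.
\]
Lemma~\ref{lem:bs1} gives $\|G^S(z)\|_{M_\#}\le C\delta^2e^{-c\delta|z|}$. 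Substituting $z=(y-s\tau-X^\kappa)/\kappa$ yields $C e^{-c|y-s\tau-X^\kappa(\tau)|/\kappa}$, with $\delta$ absorbed into the constants since $\delta$ is fixed.

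\emph{First term.} This is the pointwise perturbation, which must be shown to be $O(\kappa)$ uniformly in $y$. In the normalized variables, $f-\bar f=(M-\bar M)+\widetilde G$ with $\bar f=\bar M+G^{S,-X}$. I would use the one-dimensional Sobolev inequality $\|g\|_{L^\infty_x}^2\le C\|g\|_{L^2_x}\|g_x\|_{L^2_x}$, applied both to $(\phi,\psi,\zeta)$, which controls $M-\bar M$ via the Lipschitz bound, and to $\|\widetilde G\|_{M_\#}$. The single-shock version of Proposition~\ref{prop:priest} (Proposition~\ref{prop:priest-single}) bounds $\sup_t\bigl(\|(\phi,\psi,\zeta)\|_{H^1}^2+\int\|\widetilde G\|^2+\|\widetilde G_x\|^2\,dx\bigr)$ by $C(\mathcal E(0)^2+\delta_0^{1/2})$. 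With no contact wave there is no $\widetilde G_C$ and no Gaussian term, and the profile error is absent in the single-shock case. Since pointwise values are invariant under the scaling $(\tau,y)\mapsto(\tau/\kappa,y/\kappa)$, this only gives $\|f^\kappa-\bar f^\kappa\|_{L^\infty_{\tau,y}M_\#}\le C(\mathcal E(0)+\delta_0^{1/4})$. That is small but not $O(\kappa)$.

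\emph{Main obstacle.} The difficulty is upgrading this to $C\kappa$. I would exploit the well-preparedness \eqref{eq:initic2-main}, which after normalization gives $\mathcal E_{\rm ini}^2\le\varepsilon_1^2$. The crucial point is that in the single-shock case the $\delta_0^{1/2}$ source in \eqref{eq:pries} comes only from contact and interaction terms, so it disappears; with no other waves the estimate should read $\mathcal E(t)^2\le C\mathcal E(0)^2$. I would therefore re-inspect the single-shock a priori estimate to confirm that every inhomogeneous term vanishes, leaving a homogeneous bound. It would then remain to interpret the hypotheses of Proposition~\ref{prop:priest-single} as providing $\mathcal E(0)\lesssim\kappa$ for the perturbation around the shifted Boltzmann shock, rather than around the Riemann data. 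If those hypotheses only provide $\kappa^{1/2}$ in $L^2$, the $L^\infty$ bound requires an extra factor. I would obtain it from the interpolation above, using the $\kappa$-weighted derivative bounds: in the $y$-variable, $\|\partial_y g\|_{L^2_y}=\kappa^{-1/2}\|\partial_x g\|_{L^2_x}$ and $\|g\|_{L^2_y}=\kappa^{1/2}\|g\|_{L^2_x}$, and tracking these weights would produce the factor $\kappa$. This bookkeeping is where I expect the proof to be most delicate.
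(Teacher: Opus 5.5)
Your decomposition through the rescaled shifted profile $\bar f^\kappa$ is exactly the paper's proof. So are the two estimates you use: the Hilbert-valued Sobolev bound together with \eqref{eq:Maxwellian-Lipschitz-single} for $f^\kappa-\bar f^\kappa$, and the exponential tail of $F^S$ for $\bar f^\kappa-M_{E,\kappa}$. Also, Proposition~\ref{prop:priest-single} already states the homogeneous bound $\mathcal E(T)^2\le C\mathcal E(0)^2$, so the $\delta_0^{1/2}$ issue never arises.

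\textbf{The gap.} The gap is the final upgrade to $C\kappa$, and the mechanism you propose cannot supply it. The one-dimensional interpolation is exactly scale-invariant:
\[
\|g^\kappa\|_{L^2_y}\,\|\partial_y g^\kappa\|_{L^2_y}
=\kappa^{1/2}\|g\|_{L^2_x}\cdot\kappa^{-1/2}\|g_x\|_{L^2_x}
=\|g\|_{L^2_x}\|g_x\|_{L^2_x}.
\]
This is forced, since $\sup_y|g^\kappa(y)|=\sup_x|g(x)|$ for $g^\kappa(y)=g(y/\kappa)$, so tracking the weights produces no power of $\kappa$. Likewise, the weights $\kappa,\kappa^2,\kappa^4$ in \eqref{eq:initic2-main} exactly compensate the Jacobian. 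After normalization one only has $\mathcal E_{\mathrm{ini}}\le\varepsilon_1$. The mismatch between $M[U_0^E]$ and the smooth profile $F^S$ adds a further $\kappa$-independent amount to $\mathcal E(0)$. The hypotheses of Proposition~\ref{prop:priest-single} do not involve $\kappa$ at all. What your argument actually proves is
\[
\|f^\kappa(\tau,y,\cdot)-M_{E,\kappa}(\tau,y,\cdot)\|_{M_\#}
\le C\,\mathcal E(0)+Ce^{-c|y-s\tau-X^\kappa(\tau)|/\kappa},
\]
where $\mathcal E(0)$ is the normalized initial energy.

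\textbf{The paper has the same gap.} The paper's proof reaches exactly this bound. In its last line it then replaces $C\mathcal E^\kappa(0)$ by $C\kappa\,\mathcal E(0)$ without justification. The quantity $\mathcal E^\kappa$ is never defined, and because the pointwise quantity being estimated is scale-invariant, no choice of $\kappa$-weights can make this step valid.

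\textbf{The $C\kappa$ bound is false under the stated hypotheses.} Take $\delta\lesssim\varepsilon_1^2$ and
\[
f_0^\kappa(y,\xi)=F^S(y/\kappa,\xi)+\text{(a nonnegative bump of height}\sim\varepsilon_1\text{ and width }\kappa\text{ centred at }y=1).
\]
This datum satisfies \eqref{eq:initic2-main} and yields a global solution obeying Proposition~\ref{prop:priest-single}. At $(\tau,y)=(0,1)$, where $X^\kappa(0)=0$, the left side of \eqref{eq:single-shock-away-pointwise} is of order $\varepsilon_1$. The right side is $C\kappa+Ce^{-c/\kappa}$, which tends to $0$. To get the stated estimate you need an additional hypothesis, for example that the normalized initial perturbation around the Boltzmann shock satisfies $\mathcal E(0)\le C\kappa$. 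Under that hypothesis your first two steps close at once.
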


\medskip

Let
\[
\bar U(t,x)=U^S(x-st-X(t))
=
(\bar v,\bar u,\bar\theta)(t,x)
\]
be the shifted single shock profile, where \(U^S\) is the monotone Boltzmann 3-shock
profile, $s=\sigma$ is the shock speed and \(X(t)\) is the modulation shift. In analogy with the weighted entropy
construction in the composite-wave case, we introduce the single-shock weight
\begin{equation}
a^{\mathrm{sh}}(t,x)
:=
1+\frac{1}{\sqrt{\delta}}\bigl(\bar v(t,x)-v_-\bigr),
\label{eq:single-shock-weight}
\end{equation}
where
\[
\delta:=|v_+-v_-|
\]
denotes the shock strength.

\begin{align*}
p_- = \frac{2\theta_-}{3v_-},\quad \sigma_- = \sqrt{\frac{5p_-}{3v_-}}.
\end{align*}

These constants are chosen according to the admissible range of the shock
strengths and the associated shock speeds, and they will be used in the
definition of the coefficient \(\mathfrak m\) below.
We now define the dynamical shift $X$ by the system of ordinary differential equations
\begin{equation}\label{eq:shifts}
\begin{aligned}
\dot X(t)
&=
-\frac{\mathfrak m}{\delta}
\int_{\mathbb R}
a^{\mathrm{sh}}
\left(
\left((u_1^{S})^{-X}\right)_x\psi_1
+
\frac{\left((v^{S})^{-X}\right)_x\,\y{p}}{\y{v}}\phi
+
\frac{\left((\theta^{S})^{-X}\right)_x}{\y{\theta}}\zeta
\right)\,dx,\\
X(0)&=0,
\end{aligned}
\end{equation}
where the constant $\mathfrak m$ is {defined as follows:}
\begin{align}\label{eq:ssm}
\mathfrak m:=\frac{20}{3}\frac{p_-}{(\sigma_-)^3(v_-)^2}\frac{5+3\gamma}{10+3\gamma}.
\end{align}

We define the associated single-shock coercive quantity by
\begin{equation}
\mathcal G_{\mathrm{sh}}(t)
:=
\int_{\mathbb R}\bigl|\bar v_x(t,x)\bigr|\,
|(\phi,\psi,\zeta)(t,x)|^2\,dx .
\label{eq:def-Gsh}
\end{equation}

\subsubsection{Single-shock closure scheme}

We first record the single-shock analogue of the main a priori estimate. Its proof is
obtained by combining the zeroth-order estimate, the low-order differentiated estimate,
and the high-order closure, exactly as in the composite-wave case, but with all
contact-generated and interaction-generated remainders removed.
{
    We recall the definition fo the energy $\mathcal E(T)^2$:
\begin{equation}\label{eq:priass-single}
\begin{aligned}
\mathcal{E}(T)^2
:=
\sup_{t\in[0,T]}
\Biggl\{
&\|(\phi,\psi,\zeta)(t)\|_{H^1_x}^2
+\int_{\mathbb R}\norm{\wtilde G}_{M_\#}^2\,+\norm{\wtilde G_x}_{M_\#}^2\,dx\\
&\qquad+\int_{\mathbb R}\norm{\wtilde G_t}_{M_\#}^2\,+\norm{\wtilde f_{xx}}_{M_\#}^2\,+\norm{\wtilde f_{tx}}_{M_\#}^2\,dx\Biggr\}.
\end{aligned}
\end{equation}
}
\begin{proposition}[Main a priori estimate for a single 3-shock]
\label{prop:priest-single}
Assume that the background profile consists of a single shifted Boltzmann shock profile
only. Then there exist positive constants \(\delta_0\), \(\varepsilon\), and \(C\),
together with a global Maxwellian \(M_\#:=M[U_\#]\), such that the following holds.

Suppose that \((U,f)=(v,u,\theta,f)\) solves the perturbation system around the single
shifted shock profile on \([0,T]\), with modulation \(X(t)\), and assume that 
\[
\delta\in(0,\delta_0),
\qquad
\mathcal{E}(T)^2\le \varepsilon^2 .
\]
Then
\begin{align}
\begin{aligned}
\mathcal{E}(T)^2
&+\int_0^T \delta |\dot X(t)|^2\,dt
+\int_0^T \mathcal G_{\mathrm{sh}}(t)\,dt
+\sum_{|\beta|=1}\int_0^T \|\partial^\beta(\phi,\psi,\zeta)(t)\|_{L^2_x}^2\,dt
\\
&\quad
+\int_0^T
\|(\phi_{xx},\psi_{xx},\zeta_{xx},\phi_{xt},\psi_{xt},\zeta_{xt})(t)\|_{L^2_x}^2\,dt
\\
&\quad
+\int_0^T\int_{\mathbb R}
\norm{\widetilde G}_{\nu,M_\#}^2 + \norm{\widetilde G_x}_{\nu,M_\#}^2 + \norm{\widetilde G_t}_{\nu,M_\#}^2 + \norm{\widetilde G_{xx}}_{\nu,M_\#}^2 + \norm{\widetilde G_{xt}}_{\nu,M_\#}^2 \,dx\,dt
\\
&\le
C\,\mathcal{E}(0)^2 .
\end{aligned}
\label{eq:pries-single}
\end{align}
Here \(\mathcal G_{\mathrm{sh}}(t)\) is the single-shock coercive quantity defined in
\eqref{eq:def-Gsh}.
\end{proposition}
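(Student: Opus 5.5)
The plan is to rerun the composite-wave proof of Proposition~\ref{prop:priest} with only one wave present, noting at each step which terms drop out. The decomposition is now $\bar U=U^S(x-st-X(t))$, $\widetilde G=G-(G^S)^{-X}$, with weight $a^{\mathrm{sh}}$ from \eqref{eq:single-shock-weight} and shift given by \eqref{eq:shifts}. With no contact wave, $\widetilde G_C\equiv0$ and $\widetilde G_{\rm rem}=\widetilde G$. This explains why $\mathcal E(T)$ in \eqref{eq:priass-single} carries $\widetilde G$ itself. With no second shock, the cutoffs $\varphi_i$ are unnecessary; formally $\varphi\equiv1$, so $\mathcal G_i^S$ becomes $\mathcal G_{\mathrm{sh}}$ of \eqref{eq:def-Gsh}.

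\textbf{Zeroth order.} The weighted relative entropy identity \eqref{eq:weighted-entropy} holds verbatim with $Q_1=Q_2=0$. The $a$-contraction argument of Lemma~\ref{lem:actr1} then applies unchanged, using the modulation \eqref{eq:shifts} with $\mathfrak m$ as in \eqref{eq:ssm}, the estimates of $\mathcal B^{\rm sh}$ and $\mathcal B_2-\mathfrak G$, and the weighted Poincar\'e inequality of Lemma~\ref{lem:poincare}. This produces
\[
-\alpha\,\mathcal G_{\mathrm{sh}}-\tfrac{\delta}{4\mathfrak m}|\dot X|^2 .
\]
Two sources of extra error disappear. The $\frac{C}{\delta_* t^2}$ term arose only from $\varphi_{ix}$, so it is absent, and Lemma~\ref{lem:short-time-estimate} is not needed. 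The interaction remainders $\delta_1^{4/3}(\delta_3^{4/3}+\delta_C^{4/3})e^{-C\delta_1t}$ and the Gaussian terms are also gone. The remaining pieces $\mathcal B_3$–$\mathcal B_6$, $\mathcal Y_{4,5,6}$ and $K_1$–$K_6$ are handled by Lemmas~\ref{lem:B3}, \ref{lem:B4B5B6}, \ref{lem:Y456}, \ref{lem:K1245} and \ref{lem:K6}. In each of them every profile-generated term is now either $\lesssim\delta\,\mathcal G_{\mathrm{sh}}$, a small multiple of the dissipation, or a term $\lesssim \delta^2 e^{-C\delta t}\!\int\eta$ that was produced only by the $1-\varphi_i$ splitting and therefore vanishes.

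Next, add a small multiple $\kappa_1$ of the microscopic estimate of Proposition~\ref{prop:mic-diss}, which now applies directly to $\widetilde G$. The only inhomogeneous sources there were $\delta_C(1+t)^{-5/4}$ and $\delta_0^3\delta_ie^{-C\delta_it}$. Both came from $\widetilde G_C$ and from the $U-(U^{S_1})^{-X_1}$ interaction decomposition, and both disappear, so the source estimate becomes homogeneous. This gives the analogue of Proposition~\ref{prop:zero-order} with right-hand side $C\mathcal E(0)^2$ plus $C\int_0^t(\|\widetilde G_t\|^2+\|\widetilde G_x\|^2)$ and $C\delta\int_0^t\|(\phi_t,\psi_t,\zeta_t)\|^2$. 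There is no $\delta_0^{1/2}$ term and no $\mathcal W_C$ term.

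\textbf{Higher order.} I would repeat the differentiated estimates with every contact term deleted. These are Lemma~\ref{lem:macro-time-derivative}, the $\phi_x$ compensating estimate, Lemma~\ref{lem:fosee}, the $\phi_{xx}$ and mixed estimates, Lemmas~\ref{lem:GxE}, \ref{lem:ddotX}, \ref{lem:GtE}, \ref{lem:ftxE}, \ref{lem:fxxE}, and Proposition~\ref{prop:high-order-closure}. In Lemmas~\ref{lem:kc1}–\ref{lem:kc3} I take $v-v^S=\phi$ exactly, so their bounds contain only $\delta^{2k+2}\mathcal G_{\mathrm{sh}}$ and $\|\partial^\alpha\phi\|^2$. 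The weighted combination with $\lambda_1,\dots,\lambda_4$ is then chosen exactly as in \eqref{eq:gamma-choice-1}. Lemma~\ref{lem:weighted-gaussian} is not needed at all.

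\textbf{Main obstacle.} The hard step is bookkeeping: I must check that no inhomogeneous source survives. In the composite case several $O(\delta_0^3e^{-C\delta t})$ terms appear, for example in \eqref{eq:interaction-est} and in Lemma~\ref{lem:kc4}. Each must be traced to either a second wave or a cutoff tail, since any genuine one-shock residual would spoil the clean bound $C\mathcal E(0)^2$. I expect all of them to come from those two sources, with one possible exception. The profile remainder $\mathcal R_{\rm prof}$ in \eqref{eq:Rprof} compares the exact kinetic shock $F^S$ with its Chapman–Enskog part. Here I would use \eqref{eq:Gers}, which makes this comparison exact with $\Pi_1^S$, together with Lemma~\ref{lem:bs1}, to bound $|\int \mathcal R_{\rm prof}\widetilde G/M_\#|\le C\delta\,\mathcal G_{\mathrm{sh}}+\tfrac{\lambda_{\rm mic}}{64}\int\|\widetilde G\|_\nu^2$. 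Finally, sum the zeroth-order and high-order estimates, and absorb the $C(\delta_0+\varepsilon)$ multiples of the left-hand side as in Step 5 of the proof of Proposition~\ref{prop:priest}. This gives \eqref{eq:pries-single}.
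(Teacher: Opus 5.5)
Your proposal is correct and follows the same route as the paper. The paper proves this proposition by setting $\delta_1=\delta_C=0$ in the composite-wave argument, so that the interaction terms, the $\varphi_i$-cutoff errors (and with them the $\delta_0^{1/2}$ contribution) and $\widetilde G_C$ all drop out, leaving $\widetilde G_{\mathrm{rem}}=\widetilde G$; your write-up is a more detailed version of exactly this bookkeeping, with one small correction: $\mathcal R_{\mathrm{prof}}$ in \eqref{eq:Rprof} is not a comparison of $F^S$ with its Chapman--Enskog part but a Maxwellian/projector mismatch between the states $\bar U$ and $U$, so it is bounded pointwise by $C|\bar v_x|\,|(\phi,\psi,\zeta)|$ through smooth dependence on the macroscopic state, which yields your claimed bound (indeed with $C\delta^2\mathcal G_{\mathrm{sh}}$) without any use of \eqref{eq:Gers}.
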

\begin{proof}


For a single 3-shock, we set $\delta_1=\delta_C=0$. Then the interaction estimates, as well as the error terms generated by the cutoff functions $\varphi_i$ (See \eqref{eq:intske}) introduced to separate the two shocks in the {a priori estimates} \eqref{eq:pries}, disappear. In particular, {the term} $\delta_0^{1/2}$ {on the right-hand side of the composite-wave a priori estimate} \eqref{eq:pries} {arises from the wave interactions and is therefore absent in the single-shock case.} In addition, the zeroth perturbed microscopic variable $\wtilde G_{\mathrm{rem}}=\wtilde G = G-\bigl(G^S\bigr)^{-X}$ since there is no contact discontinuity. All the remaining estimates follow directly from the estimates established in the previous section. 
\end{proof}

\begin{lemma}[Exponential localization of the single shock profile]
\label{lem:single-shock-tail}
Let
\[
F^{S}(z,\xi)=M[U^{S}](z,\xi)+G^{S}(z,\xi)
\]
be the single Boltzmann shock profile connecting the end Maxwellians
\[
M_-:=M[U_-],
\qquad
M_+:=M[U_+].
\]
Assume that \(F^S\) is smooth and converges exponentially to its end states. Then there
exist positive constants \(C\) and \(c\) such that
\begin{align}
\|F^{S}(z,\cdot)-M_\pm\|_{M_\#}
+
|U^{S}(z)-U_\pm|
\le
Ce^{-c|z|},
\qquad \pm z\ge0 .
\label{eq:single-shock-tail-basic}
\end{align}

Let the shifted \(\kappa\)-shock profile be defined by
\[
\bar f^\kappa(\tau,y,\xi)
:=
F^{S}\!\left(\frac{y-s\tau-X^\kappa(\tau)}{\kappa},\xi\right),
\]
and let the associated modulated Euler Maxwellian be
\[
M_{E,\kappa}(\tau,y,\xi)
:=
\begin{cases}
M_-(\xi), & y\le s\tau+X^\kappa(\tau),\\[2mm]
M_+(\xi), & y> s\tau+X^\kappa(\tau).
\end{cases}
\]
Then for every \(T>0\), the pointwise bound
\begin{align}
\left\|
\bar f^\kappa(\tau,y,\cdot)-M_{E,\kappa}(\tau,y,\cdot)
\right\|_{M_\#}
\le
Ce^{-c|y-s\tau-X^\kappa(\tau)|/\kappa}
\label{eq:single-shock-tail-pointwise}
\end{align}
holds for all \((\tau,y)\in[0,T]\times\mathbb R\). In particular, for every \(h>0\),
\begin{align}
\sup_{\substack{0\le \tau\le T\\ |y-s\tau-X^\kappa(\tau)|\ge h}}
\left\|
\bar f^\kappa(\tau,y,\cdot)-M_{E,\kappa}(\tau,y,\cdot)
\right\|_{M_\#}
\le
Ce^{-ch/\kappa}.
\label{eq:single-shock-tail-scaled}
\end{align}
\end{lemma}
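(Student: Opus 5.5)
The plan is to treat the profile decay \eqref{eq:single-shock-tail-basic} as the only real input and then get \eqref{eq:single-shock-tail-pointwise} and \eqref{eq:single-shock-tail-scaled} by rescaling, with no dynamics involved.

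\emph{Step 1: the basic tail bound.} For the macroscopic part we invoke Lemma~\ref{lem:bs} with $\kappa=1$. Since $\delta$ is fixed and small, the rate $C\delta$ there is a fixed positive constant, and we call it $c$. This gives $|U^S(z)-U_\pm|\le C\delta e^{-c|z|}$ for $\pm z\ge 0$.

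For the kinetic part we split $F^S-M_\pm=(M[U^S]-M[U_\pm])+G^S$.

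\begin{itemize}
\item The Maxwellian difference is handled by the Lipschitz dependence of $M[U]$ on $U$ in the $\|\cdot\|_{M_\#}$ norm, i.e.\ \eqref{eq:Maxwellian-Lipschitz-single}. This requires $M_\#$ to be chosen so that $M[U]^2/M_\#$ is integrable uniformly for $U$ near the end states, which is true for $\delta$ small. It gives $\|M[U^S]-M[U_\pm]\|_{M_\#}\le C|U^S-U_\pm|$.
\item The microscopic part is bounded by Lemma~\ref{lem:bs1}, which gives $\|G^S\|_{M_\#}\le C\delta^2e^{-c|z|}$.
\end{itemize}

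Adding the two pieces yields \eqref{eq:single-shock-tail-basic} after renaming constants.

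\emph{Step 2: the pointwise bound.} Set $z=(y-s\tau-X^\kappa(\tau))/\kappa$, so that $\bar f^\kappa(\tau,y,\cdot)=F^S(z,\cdot)$. By the definition of $M_{E,\kappa}$, the case $y\le s\tau+X^\kappa(\tau)$ corresponds exactly to $z\le 0$, where the comparison state is $M_-$. The case $y>s\tau+X^\kappa(\tau)$ corresponds to $z>0$, where the comparison state is $M_+$. Applying Step 1 in each case gives
\[
\|\bar f^\kappa-M_{E,\kappa}\|_{M_\#}\le Ce^{-c|z|}=Ce^{-c|y-s\tau-X^\kappa(\tau)|/\kappa}.
\]
This holds for every $(\tau,y)$, with constants independent of $\tau$, $\kappa$ and $T$. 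We only need $X^\kappa$ to be a well-defined function; its boundedness from Lemma~\ref{lem:shift-compactness} is not used here.

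\emph{Step 3: the uniform bound away from the shock.} Estimate \eqref{eq:single-shock-tail-scaled} follows from Step 2 because $e^{-c r/\kappa}$ is decreasing in $r$.

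\emph{Main obstacle.} The only delicate point is the choice of $M_\#$ with uniform constants. We need the weighted norm to control both the Maxwellian difference and $G^S$ simultaneously along the whole profile. This is why $M_\#$ has to be chosen as in Lemma~\ref{lem:bs1}, close to the shock profile and with a temperature larger than half of all the temperatures that occur. Everything else is bookkeeping.
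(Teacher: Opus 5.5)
Your proposal is correct and follows the paper's argument: Steps~2 and~3 are exactly the paper's substitution $z=(y-s\tau-X^\kappa(\tau))/\kappa$, followed by the case split $z\le 0$ versus $z>0$ and the monotonicity of the exponential. The only difference is that the paper takes \eqref{eq:single-shock-tail-basic} to be precisely the assumed exponential convergence of the profile to its end Maxwellians, whereas you derive it from Lemmas~\ref{lem:bs} and~\ref{lem:bs1} together with \eqref{eq:Maxwellian-Lipschitz-single} (including the correct caveat on the choice of $M_\#$), which is a harmless and more explicit justification.
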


\begin{proof}
The estimate \eqref{eq:single-shock-tail-basic} is precisely the exponential convergence
of the travelling shock profile to its end Maxwellians.

To prove the pointwise estimate \eqref{eq:single-shock-tail-pointwise}, fix
\((\tau,y)\in[0,T]\times\mathbb R\), and set
\[
z:=\frac{y-s\tau-X^\kappa(\tau)}{\kappa}.
\]
Then
\[
\bar f^\kappa(\tau,y,\xi)=F^S(z,\xi).
\]

If \(z\le0\), then by the definition of \(M_{E,\kappa}\),
\[
M_{E,\kappa}(\tau,y,\xi)=M_-(\xi),
\]
and therefore, using \eqref{eq:single-shock-tail-basic} with the minus sign,
\[
\left\|
\bar f^\kappa(\tau,y,\cdot)-M^{E,\kappa}(\tau,y,\cdot)
\right\|_{M_\#}
=
\|F^S(z,\cdot)-M_-\|_{M_\#}
\le
Ce^{-c|z|}.
\]

If \(z>0\), then
\[
M_{E,\kappa}(\tau,y,\xi)=M_+(\xi),
\]
and \eqref{eq:single-shock-tail-basic} with the plus sign gives
\[
\left\|
\bar f^\kappa(\tau,y,\cdot)-M_{E,\kappa}(\tau,y,\cdot)
\right\|_{M_\#}
=
\|F^S(z,\cdot)-M_+\|_{M_\#}
\le
Ce^{-c|z|}.
\]

Since
\[
|z|=\frac{|y-s\tau-X^\kappa(\tau)|}{\kappa},
\]
the two cases combine to yield
\eqref{eq:single-shock-tail-pointwise}.

Finally, if
\[
|y-s\tau-X^\kappa(\tau)|\ge h,
\]
then \eqref{eq:single-shock-tail-pointwise} implies
\[
\left\|
\bar f^\kappa(\tau,y,\cdot)-M_{E,\kappa}(\tau,y,\cdot)
\right\|_{M_\#}
\le
Ce^{-ch/\kappa}.
\]
Taking the supremum over all such \((\tau,y)\) gives
\eqref{eq:single-shock-tail-scaled}.
\end{proof}


\begin{proof}[\textbf{Proof of Theorem~\ref{thm:single-shock-away}}]
For the single-shock problem, define
\[
\bar U^\kappa(\tau,y)
:=
U^S\!\left(\frac{y-s\tau-X^\kappa(\tau)}{\kappa}\right),
\qquad
\bar M^\kappa(\tau,y,\xi)
:=
M[\bar U^\kappa(\tau,y)](\xi),
\]
and
\[
\bar f^\kappa(\tau,y,\xi)
:=
F^S\!\left(\frac{y-s\tau-X^\kappa(\tau)}{\kappa},\xi\right).
\]
By the decomposition \(F^S=M[U^S]+G^S\), we have
\[
\bar f^\kappa(\tau,y,\xi)
=
\bar M^\kappa(\tau,y,\xi)
+
G^S\!\left(\frac{y-s\tau-X^\kappa(\tau)}{\kappa},\xi\right).
\]

Let
\[
U^\kappa=(v^\kappa,u^\kappa,\theta^\kappa),
\qquad
(\phi^\kappa,\psi^\kappa,\zeta^\kappa)
:=
U^\kappa-\bar U^\kappa .
\]
Hence
\begin{equation}
f^\kappa-\bar f^\kappa
=
\bigl(M[U^\kappa]-\bar M^\kappa\bigr)
+
\widetilde G^\kappa .
\label{eq:single-shock-away-pointwise-decomp}
\end{equation}

By Proposition~\ref{prop:priest-single}, for every fixed \(T>0\),
\[
\mathcal{E}^\kappa(T)^2\le C\,\mathcal{E}^\kappa(0)^2 .
\]
Recalling the definition of \(\mathcal{E}^\kappa(T)\), we infer that
\begin{equation}
\sup_{0\le\tau\le T}
\|(\phi^\kappa,\psi^\kappa,\zeta^\kappa)(\tau)\|_{H^1_y}
\le
C\,\mathcal{E}^\kappa(0),
\label{eq:single-shock-away-H1-fluid}
\end{equation}
and
\begin{equation}
\sup_{0\le\tau\le T}
\|\widetilde G_{\mathrm{rem}}^\kappa(\tau,\cdot,\cdot)\|_{H^1_y(L^2_\xi(M_\#))}
\le
C\,\mathcal{E}^\kappa(0).
\label{eq:single-shock-away-H1-micro}
\end{equation}

Applying Lemma~\ref{lem:H1-Linfty-Hilbert-single} with \(H=\mathbb R^3\) to
\((\phi^\kappa,\psi^\kappa,\zeta^\kappa)\), and with
\(H=L^2_\xi(M_\#^{-1})\) to \(\widetilde G^\kappa\), we obtain
\begin{equation}
\sup_{0\le\tau\le T}\sup_{y\in\mathbb R}
|U^\kappa(\tau,y)-\bar U^\kappa(\tau,y)|
\le
C\,\mathcal{E}^\kappa(0),
\label{eq:single-shock-away-Linfty-fluid}
\end{equation}
and
\begin{equation}
\sup_{0\le\tau\le T}\sup_{y\in\mathbb R}
\|\widetilde G^\kappa(\tau,y,\cdot)\|_{M_\#}
\le
C\,\mathcal{E}^\kappa(0).
\label{eq:single-shock-away-Linfty-micro}
\end{equation}

Next we compare the Maxwellian parts. Since the single shock profile \(U^S\) connects the
fixed end states \(U_-\) and \(U_+\), its range is contained in a compact set
\[
K_S\Subset \mathbb R_+\times\mathbb R\times\mathbb R_+ .
\]
By \eqref{eq:single-shock-away-Linfty-fluid}, and after possibly shrinking the bootstrap
constant \(\kappa_0\), the range of \(U^\kappa\) on \([0,T]\times\mathbb R\) is
contained in a fixed compact set
\[
K\Subset \mathbb R_+\times\mathbb R\times\mathbb R_+
\]
depending only on \(K_S\) and \(\kappa_0\). Therefore
Lemma~\ref{lem:Maxwellian-Lipschitz-single} applies uniformly on \(K\), and yields
\begin{equation}
\sup_{0\le\tau\le T}\sup_{y\in\mathbb R}
\|M[U^\kappa(\tau,y)]-\bar M^\kappa(\tau,y)\|_{M_\#}
\le
C\,\mathcal{E}^\kappa(0).
\label{eq:single-shock-away-Maxwellian}
\end{equation}

Combining \eqref{eq:single-shock-away-pointwise-decomp},
\eqref{eq:single-shock-away-Linfty-micro}, and
\eqref{eq:single-shock-away-Maxwellian}, we obtain the uniform estimate
\begin{equation}
\|f^\kappa(\tau,y,\cdot)-\bar f^\kappa(\tau,y,\cdot)\|_{M_\#}
\le
C\,\mathcal{E}^\kappa(0),
\qquad
(\tau,y)\in[0,T]\times\mathbb R.
\label{eq:single-shock-away-pointwise-step1}
\end{equation}

On the other hand, Lemma~\ref{lem:single-shock-tail} gives the pointwise exponential tail estimate
\begin{equation}
\|\bar f^\kappa(\tau,y,\cdot)-M_{E,\kappa}(\tau,y,\cdot)\|_{M_\#}
\le
C e^{-c|y-s\tau-X^\kappa(\tau)|/\kappa},
\qquad
(\tau,y)\in[0,T]\times\mathbb R.
\label{eq:single-shock-away-pointwise-step2}
\end{equation}

Therefore, by the triangle inequality,

\begin{align*}
& \left\|
f^\kappa(\tau,y,\cdot)-M_{E,\kappa}(\tau,y,\cdot)
\right\|_{M_\#} \\
&\quad \le
\left\|
f^\kappa(\tau,y,\cdot)-\bar f^\kappa(\tau,y,\cdot)
\right\|_{M_\#} +
\left\|
\bar f^\kappa(\tau,y,\cdot)-M_{E,\kappa}(\tau,y,\cdot)
\right\|_{M_\#}
\\
&\quad \le
C\,\mathcal{E}^\kappa(0)
+
C e^{-c|y-s\tau-X^\kappa(\tau)|/\kappa}\\
&\quad \le
C\, \kappa \mathcal{E}(0) + C e^{-c|y-s\tau-X^\kappa(\tau)|/\kappa}
\end{align*}

which proves \eqref{eq:single-shock-away-pointwise}.
\end{proof}
\hide

\begin{proof}[\textbf{Proof of Theorem \ref{thm:single-shock-away}}]
For the single-shock problem, define
\[
\bar U^\kappa(\tau,y)
:=
U^S\!\left(\frac{y-s\tau-X^\kappa(\tau)}{\kappa}\right),
\qquad
\bar M^\kappa(\tau,y,\xi)
:=
M[\bar U^\kappa(\tau,y)](\xi),
\]
and
\[
\bar f^\kappa(\tau,y,\xi)
:=
F^S\!\left(\frac{y-s\tau-X^\kappa(\tau)}{\kappa},\xi\right).
\]
Then
\[
\bar f^\kappa(\tau,y,\xi)
=
\bar M^\kappa(\tau,y,\xi)
+
G^S\!\left(\frac{y-s\tau-X^\kappa(\tau)}{\kappa},\xi\right).
\]

Let
\[
U^\kappa=(v^\kappa,u^\kappa,\theta^\kappa),
\qquad
(\phi^\kappa,\psi^\kappa,\zeta^\kappa)
:=
U^\kappa-\bar U^\kappa .
\]
Since there is no contact component in the present setting,
\[
\widetilde G_0^\kappa\equiv0,
\qquad
\widetilde G^\kappa=\widetilde G_{\mathrm{rem}}^\kappa,
\]
and therefore
\begin{equation}
f^\kappa-\bar f^\kappa
=
\bigl(M[U^\kappa]-\bar M^\kappa\bigr)
+
\widetilde G_{\mathrm{rem}}^\kappa .
\label{eq:single-shock-away-pointwise-decomp1}
\end{equation}

By Proposition~\ref{prop:priest-single}, for every fixed \(T>0\),
\[
\mathcal{E}^\kappa(T)^2\le C\,\mathcal{E}^\kappa(0)^2.
\]
Hence
\[
\sup_{0\le\tau\le T}
\|(\phi^\kappa,\psi^\kappa,\zeta^\kappa)(\tau)\|_{H^1_y}
\le
C\,\mathcal{E}^\kappa(0),
\]
and
\[
\sup_{0\le\tau\le T}
\|\widetilde G_{\mathrm{rem}}^\kappa(\tau,\cdot,\cdot)\|_{H^1_y(L^2_\xi(M_\#^{-1}))}
\le
C\,\mathcal{E}^\kappa(0).
\]
By Lemma~\ref{lem:H1-Linfty-Hilbert-single}, it follows that
\[
\sup_{0\le\tau\le T}\sup_{y\in\mathbb R}
|U^\kappa(\tau,y)-\bar U^\kappa(\tau,y)|
\le
C\,\mathcal{E}^\kappa(0),
\]
and
\[
\sup_{0\le\tau\le T}\sup_{y\in\mathbb R}
\|\widetilde G_{\mathrm{rem}}^\kappa(\tau,y,\cdot)\|_{L^2_\xi(M_\#^{-1})}
\le
C\,\mathcal{E}^\kappa(0).
\]
By the local Lipschitz continuity of the Maxwellian map on compact positive sets,
\[
\sup_{0\le\tau\le T}\sup_{y\in\mathbb R}
\|M[U^\kappa(\tau,y)]-\bar M^\kappa(\tau,y)\|_{L^2_\xi(M_\#^{-1})}
\le
C\,\mathcal{E}^\kappa(0).
\]
Combining these bounds with \eqref{eq:single-shock-away-pointwise-decomp1}, we obtain the
uniform estimate
\begin{equation}
\|f^\kappa(\tau,y,\cdot)-\bar f^\kappa(\tau,y,\cdot)\|_{L^2_\xi(M_\#^{-1})}
\le
C\,\mathcal{E}^\kappa(0),
\qquad
(\tau,y)\in[0,T]\times\mathbb R.
\label{eq:single-shock-away-pointwise-step1-1}
\end{equation}

Next, Lemma~\ref{lem:single-shock-tail} gives the pointwise exponential tail estimate
\begin{equation}
\|\bar f^\kappa(\tau,y,\cdot)-M^{E,\kappa}(\tau,y,\cdot)\|_{L^2_\xi(M_\#^{-1})}
\le
C e^{-c|y-s\tau-X^\kappa(\tau)|/\kappa},
\qquad
(\tau,y)\in[0,T]\times\mathbb R.
\label{eq:single-shock-away-pointwise-step2-1}
\end{equation}

Therefore, by the triangle inequality,
\begin{align*}
\left\|
f^\kappa(\tau,y,\cdot)-M^{E,\kappa}(\tau,y,\cdot)
\right\|_{L^2_\xi(M_\#^{-1})}
&\le
\left\|
f^\kappa(\tau,y,\cdot)-\bar f^\kappa(\tau,y,\cdot)
\right\|_{L^2_\xi(M_\#^{-1})}
\\
&\quad
+
\left\|
\bar f^\kappa(\tau,y,\cdot)-M^{E,\kappa}(\tau,y,\cdot)
\right\|_{L^2_\xi(M_\#^{-1})}
\\
&\le
C\,\mathcal{E}^\kappa(0)
+
C e^{-c|y-s\tau-X^\kappa(\tau)|/\kappa},
\end{align*}
which proves \eqref{eq:single-shock-away-pointwise}.

Finally, if \((\tau,y)\in\Omega^\kappa_{h,T}\), then
\[
|y-s\tau-X^\kappa(\tau)|\ge h,
\]
so \eqref{eq:single-shock-away-pointwise} immediately yields
\eqref{eq:single-shock-away-modulated}.
\end{proof}

\begin{proof}[\textbf{Proof of Theorem \ref{thm:single-shock-away}}]
For the single-shock problem, define
\[
\bar U^\kappa(\tau,y)
:=
U^S\!\left(\frac{y-s\tau-X^\kappa(\tau)}{\kappa}\right),
\qquad
\bar M^\kappa(\tau,y,\xi)
:=
M[\bar U^\kappa(\tau,y)](\xi),
\]
and
\[
\bar f^\kappa(\tau,y,\xi)
:=
F^S\!\left(\frac{y-s\tau-X^\kappa(\tau)}{\kappa},\xi\right).
\]
Then
\[
\bar f^\kappa(\tau,y,\xi)
=
\bar M^\kappa(\tau,y,\xi)
+
G^S\!\left(\frac{y-s\tau-X^\kappa(\tau)}{\kappa},\xi\right).
\]

Let
\[
U^\kappa=(v^\kappa,u^\kappa,\theta^\kappa)
\]
denote the macroscopic field associated with \(f^\kappa\), and set
\[
(\phi^\kappa,\psi^\kappa,\zeta^\kappa)
:=
U^\kappa-\bar U^\kappa .
\]
Since there is no contact component in the present setting, we have
\[
\widetilde G_0^\kappa\equiv0,
\qquad
\widetilde G^\kappa=\widetilde G_{\mathrm{rem}}^\kappa .
\]
Hence
\begin{equation}
f^\kappa-\bar f^\kappa
=
\bigl(M[U^\kappa]-\bar M^\kappa\bigr)
+
\widetilde G_{\mathrm{rem}}^\kappa .
\label{eq:single-shock-away-decomp}
\end{equation}

By Proposition~\ref{prop:priest-single}, for every fixed \(T>0\),
\[
\mathcal{E}^\kappa(T)^2\le C\,\mathcal{E}^\kappa(0)^2 .
\]
Recalling the definition of \(\mathcal{E}^\kappa(T)\), and using
\(\widetilde G^\kappa=\widetilde G_{\mathrm{rem}}^\kappa\), we infer that
\begin{equation}
\sup_{0\le\tau\le T}
\|(\phi^\kappa,\psi^\kappa,\zeta^\kappa)(\tau)\|_{H^1_y}
\le
C\,\mathcal{E}^\kappa(0),
\label{eq:single-shock-away-H1-fluid1}
\end{equation}
and
\begin{equation}
\sup_{0\le\tau\le T}
\|\widetilde G_{\mathrm{rem}}^\kappa(\tau,\cdot,\cdot)\|_{H^1_y(L^2_\xi(M_\#^{-1}))}
\le
C\,\mathcal{E}^\kappa(0).
\label{eq:single-shock-away-H1-micro1}
\end{equation}

Applying Lemma~\ref{lem:H1-Linfty-Hilbert-single} with \(H=\mathbb R^3\) to
\((\phi^\kappa,\psi^\kappa,\zeta^\kappa)\), and with
\(H=L^2_\xi(M_\#^{-1})\) to \(\widetilde G_{\mathrm{rem}}^\kappa\), we obtain
\begin{equation}
\sup_{0\le\tau\le T}\sup_{y\in\mathbb R}
|U^\kappa(\tau,y)-\bar U^\kappa(\tau,y)|
\le
C\,\mathcal{E}^\kappa(0),
\label{eq:single-shock-away-Linfty-fluid1}
\end{equation}
and
\begin{equation}
\sup_{0\le\tau\le T}\sup_{y\in\mathbb R}
\|\widetilde G_{\mathrm{rem}}^\kappa(\tau,y,\cdot)\|_{L^2_\xi(M_\#^{-1})}
\le
C\,\mathcal{E}^\kappa(0).
\label{eq:single-shock-away-Linfty-micro1}
\end{equation}

Next we compare the Maxwellian parts. Since the single shock profile \(U^S\) connects the
fixed end states \(U_-\) and \(U_+\), its range is contained in a compact set
\[
K_S\Subset \mathbb R_+\times\mathbb R\times\mathbb R_+ .
\]
By \eqref{eq:single-shock-away-Linfty-fluid1}, and after possibly shrinking the bootstrap
constant \(\kappa_0\), the range of \(U^\kappa\) on \([0,T]\times\mathbb R\) is
contained in a fixed compact set
\[
K\Subset \mathbb R_+\times\mathbb R\times\mathbb R_+
\]
depending only on \(K_S\) and \(\kappa_0\). Therefore
Lemma~\ref{lem:Maxwellian-Lipschitz-single} applies uniformly on \(K\), and yields
\begin{equation}
\sup_{0\le\tau\le T}\sup_{y\in\mathbb R}
\|M[U^\kappa(\tau,y)]-\bar M^\kappa(\tau,y)\|_{L^2_\xi(M_\#^{-1})}
\le
C\,\mathcal{E}^\kappa(0).
\label{eq:single-shock-away-Maxwellian1}
\end{equation}

Combining \eqref{eq:single-shock-away-decomp},
\eqref{eq:single-shock-away-Linfty-micro1}, and
\eqref{eq:single-shock-away-Maxwellian1}, we obtain
\begin{equation}
\sup_{0\le\tau\le T}\sup_{y\in\mathbb R}
\|f^\kappa(\tau,y,\cdot)-\bar f^\kappa(\tau,y,\cdot)\|_{L^2_\xi(M_\#^{-1})}
\le
C\,\mathcal{E}^\kappa(0).
\label{eq:single-shock-away-step1}
\end{equation}

On the other hand, Lemma~\ref{lem:single-shock-tail} yields
\begin{equation}
\sup_{(\tau,y)\in\Omega^\kappa_{h,T}}
\|\bar f^\kappa(\tau,y,\cdot)-M^{E,\kappa}(\tau,y,\cdot)\|_{L^2_\xi(M_\#^{-1})}
\le
Ce^{-ch/\kappa}.
\label{eq:single-shock-away-step2}
\end{equation}

Therefore, by the triangle inequality,
\[
\|f^\kappa-M^{E,\kappa}\|_{L^2_\xi(M_\#^{-1})}
\le
\|f^\kappa-\bar f^\kappa\|_{L^2_\xi(M_\#^{-1})}
+
\|\bar f^\kappa-M^{E,\kappa}\|_{L^2_\xi(M_\#^{-1})}.
\]
Taking the supremum over \((\tau,y)\in\Omega^\kappa_{h,T}\), and using
\eqref{eq:single-shock-away-step1} and \eqref{eq:single-shock-away-step2}, yields
\eqref{eq:single-shock-away-modulated}. Since \(\mathcal{E}^\kappa(0)\to0\) and
\(e^{-ch/\kappa}\to0\) for every fixed \(h>0\), the convergence statement follows.
\end{proof}\unhide
 \begin{remark}
Assume in addition that \(X^\kappa(0)=0\). 
By Proposition~\ref{prop:priest-single},
we have
\[
|X^\kappa(\tau)|
\le
\left|\int_0^T \dot X^\kappa(s)\,ds\right|
=\kappa \int_0^{T/\kappa} \left|\dot X(s)\right| ds
\le
\kappa^{1/2}T^{1/2}
\left(\int_0^{T/\kappa} |\dot X(s)|^2\,ds\right)^{1/2}
\le
CT^{1/2}\kappa^{1/2}\delta^{-1/2}.
\]
Hence
\[
\sup_{0\le\tau\le T}|X^\kappa(\tau)|
\le
CT^{1/2} \kappa^{1/2}\to0
\qquad\text{as }\kappa\to0.
\]

Therefore, for every fixed \(h>0\), if \(\kappa\) is sufficiently small so that
\[
\sup_{0\le\tau\le T}|X^\kappa(\tau)|\le \frac h2,
\]
then
\[
|y-s\tau|\ge h
\quad\Longrightarrow\quad
|y-s\tau-X^\kappa(\tau)|\ge \frac h2.
\]
Applying Theorem~\ref{thm:single-shock-away} with \(h/2\) in place of \(h\), we conclude
that the same convergence statement holds uniformly on the region
\[
\{(\tau,y)\in[0,T]\times\mathbb R:\ |y-s\tau|\ge h\},
\]
that is, away from the unmodulated Euler shock curve \(y=s\tau\).
\end{remark}

\subsection{Rarefaction-contact-shock composite wave}

We first recall the $1$-rarefaction wave for the compressible Euler system. Let
\[
(v^r,u^r,\theta^r)(t,x)
=
(v^r,u^r,\theta^r)\!\left(\frac{x}{t}\right)
\]
be the self-similar $1$-rarefaction wave fan solving
\[
v_t-u_{1x}=0,\qquad
u_{1t}+p_x=0,\qquad
u_{it}=0 \ \ (i=2,3),\qquad
\left(\theta+\frac{|u|^2}{2}\right)_t+(pu_1)_x=0,
\]
with Riemann initial data
\[
(v,u,\theta)(0,x)=
\begin{cases}
(v_-,u_-,\theta_-), & x<0,\\
(v_*,u_*,\theta_*), & x>0,
\end{cases}
\]
where
\[
u_-=(u_{1-},0,0),\qquad u_*=(u_{1*},0,0).
\]

To construct a smooth approximate rarefaction wave for the Boltzmann equation, we introduce the smooth solution $w^{R,\kappa}$ of the Burgers equation
\[
w^{R,\kappa}_t+w^{R,\kappa} w^{R,\kappa}_x=0,
\qquad
w^{R,\kappa}(0,x)=\frac{w_*+w_-}{2}+\frac{w_*-w_-}{2}\tanh \frac x\kappa.
\]
We then define the smooth approximate rarefaction wave $(v^{R,\kappa},u^{R,\kappa},\theta^{R,\kappa})(t,x)$ by
\begin{equation}\label{eq:rare1}
\begin{aligned}
&w_-=\lambda_{1-}:=\lambda_1(v_-,\theta_-),
\quad
w_*=\lambda_{1*}:=\lambda_1(v_*,\theta_*), \quad \lambda_1(v,\mathfrak s_*) := \lambda_1(v,\theta(v;\mathfrak s_*)),\\
&\lambda_1\bigl(v^{R,\kappa}(t,x),\theta^{R,\kappa}(t,x)\bigr)=w^{R,\kappa}(t+1,x),\\
&u_1^{R,\kappa}(t,x)=u_{1*}-\int_{v_*}^{v^{R,\kappa}(t,x)}\lambda_1(v,s_*)\,dv,\\
&\mathfrak s\bigl(v^{R,\kappa}(t,x),\theta^{R,\kappa}(t,x)\bigr)=\mathfrak s_*,\\
&u_i^{R,\kappa}(t,x)\equiv 0,\qquad i=2,3.
\end{aligned}
\end{equation}
Here $\lambda_1(v,s_*)$ denotes the first characteristic speed along the isentrope $\mathfrak s=\mathfrak s_*$.

The construction of the smooth approximate rarefaction wave is standard in the Boltzmann--Euler setting; see \cite{huang2010asymptotic}, \cite{matsumura1986asymptotics}, \cite{xin1993zero} and \cite{huang2010hydrodynamic}. Here, we record only the properties of the rarefaction wave that are used in our analysis. The remaining properties are mainly needed to derive the a priori estimate. For the a priori estimate related to the rarefaction-contact discontinuity-shock composite wave, we refer the reader to \cite{wang2025}.

\begin{lemma}\label{lem:Yilem}
Let
\[
\delta_R:=|v_*-v_-|
\sim |u_{1*}-u_{1-}|
\sim |\theta_*-\theta_-|
\]
denote the strength of the rarefaction wave. Then the smooth approximate $1$-rarefaction wave $(v^{R,\kappa},u_1^{R,\kappa},\theta^{R,\kappa})(t,x)$ defined in \eqref{eq:rare1} satisfies the following properties.

\begin{enumerate}
\item
For all $x\in\mathbb{R}$ and $t\ge0$,
\[
u_{1x}^{R,\kappa}=\frac{3v^{R,\kappa}}{4}w_x^{R,\kappa}>0,
\qquad
v_x^{R,\kappa}=\frac{3v^{R,\kappa}}{\sqrt{10\theta^{R,\kappa}}}u_{1x}^{R,\kappa}>0,
\qquad
\theta_x^{R,\kappa}=-\frac{2\theta^{R,\kappa}}{3v^R}v_x^{R,\kappa}<0.
\]

\item {There exists} $\kappa_0>0$ {such that, for every} $\kappa\in(0,\kappa_0]${,} $p\in[1,+\infty]${, and} $t>0${,} 
\begin{align}\label{eq:rarmai}
\norm{(v^{R,\kappa}-v^{r},u_1^{R,\kappa}-u^r,\theta^{R,\kappa}-\theta^r)(\cdot,t)}_{L^p} \le C_p \delta_R \kappa^{1/p}
\end{align} 

\end{enumerate}
\end{lemma}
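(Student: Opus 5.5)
The plan is to reduce both properties to the classical Burgers-equation facts for the smooth data $w^{R,\kappa}(0,x)=\frac{w_*+w_-}{2}+\frac{w_*-w_-}{2}\tanh\frac{x}{\kappa}$. The rarefaction case means $w_-=\lambda_{1-}<\lambda_{1*}=w_*$, so the initial datum is strictly increasing. The method of characteristics then gives a global smooth solution $w^{R,\kappa}(t,x)=w^{R,\kappa}(0,x_0)$ along $x=x_0+t\,w^{R,\kappa}(0,x_0)$, with
\[
w^{R,\kappa}_x=\frac{w^{R,\kappa}_{0}{}'(x_0)}{1+t\,w^{R,\kappa}_{0}{}'(x_0)}>0 .
\]
For property (1), I would differentiate the defining relations in \eqref{eq:rare1}, in which $v^{R,\kappa}$, $\theta^{R,\kappa}$ and $u^{R,\kappa}_1$ are all functions of the single variable $w=w^{R,\kappa}(t+1,x)$.

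On the isentrope we have $\theta=\theta(v;\mathfrak s_*)$ with $\theta v^{2/3}$ constant, so $\theta_x=-\frac{2\theta}{3v}v_x$. Next, $u_{1x}=-\lambda_1(v,\mathfrak s_*)v_x$, where $\lambda_1=-\sqrt{-p_v|_{\mathfrak s}}=-\sqrt{\frac{10\theta}{9v^2}}$ (using $p=\frac{2\theta}{3v}$ and $\gamma=5/3$). This gives $u_{1x}=\frac{\sqrt{10\theta}}{3v}v_x$, which is the second identity. Finally, differentiating $\lambda_1(v,\theta(v))=w$ along the isentrope gives $\partial_v\lambda_1\,v_x=w_x$. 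From $\lambda_1\propto -v^{-4/3}$ one gets $\partial_v\lambda_1=-\frac{4}{3v}\lambda_1$, and combining this with $u_{1x}=-\lambda_1 v_x$ yields $u_{1x}=\frac{3v}{4}w_x$. The signs then follow from $w_x>0$. These are routine computations.

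For property (2), I would first prove the Burgers estimate
\[
\norm{w^{R,\kappa}(t,\cdot)-w^r(x/t)}_{L^p}\le C_p|w_*-w_-|\,\kappa^{1/p},
\]
where $w^r(x/t)$ is the centered rarefaction fan. The comparison rests on the fact that characteristics from $x_0$ with $|x_0|\gg\kappa$ carry a value exponentially close to $w_\pm$. Consequently the two functions differ by at most $2|w_*-w_-|$ only on a set of measure $O(\kappa)$, up to exponentially small tails; more precisely, $|w^{R,\kappa}-w^r|\le C\delta_R e^{-c|x_0|/\kappa}$ where $x_0$ is the foot of the characteristic, and the map $x_0\mapsto x$ has Jacobian at least $1$. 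Interpolating between $L^\infty$ (bounded by $\delta_R$) and $L^1$ (bounded by $C\delta_R\kappa$) gives the $\kappa^{1/p}$ rate.

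The main obstacle is the time shift $t+1$ in \eqref{eq:rare1}. The difference between $w^r((x)/(t+1))$ and $w^r(x/t)$ is supported on a fan of width $O(\delta_R)$ and has size $O(\delta_R/t)$ there, which is not of order $\kappa^{1/p}$ uniformly down to $t\to0$. I would handle this either by reading the shift in the rescaled sense, in which case the shift becomes $\kappa$ after scaling $(t,x)\mapsto(t/\kappa,x/\kappa)$ and contributes a fan mismatch of width $O(\kappa)$, consistent with \eqref{eq:rarmai}; or by absorbing it into the same $O(\kappa)$-width estimate. Once the Burgers bound is in place, I would transfer it to $(v,u_1,\theta)$ using the Lipschitz continuity of the inverse of $v\mapsto\lambda_1(v,\mathfrak s_*)$ and of the Riemann-invariant relations on the compact state set. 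Since the exact fan $(v^r,u^r,\theta^r)$ satisfies the same algebraic relations with $w^r$, this yields \eqref{eq:rarmai} with a constant $C_p$ independent of $\kappa\le\kappa_0$.
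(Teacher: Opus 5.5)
Your derivation of property (1) is correct. The route for (2) is also the standard one, which the paper takes simply by citing Xin: a Burgers estimate for $w^{R,\kappa}$ against the centered fan, $L^\infty$–$L^1$ interpolation, then transfer through the Lipschitz inverse of $v\mapsto\lambda_1(v,\mathfrak s_*)$.

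The gap is in the $L^1$ bound, which is the core of (2). Your claim that $w^{R,\kappa}(t,\cdot)$ and $w^r(\cdot/t)$ differ by $O(\delta_R)$ only on a set of measure $O(\kappa)$ is false once $t\gg\kappa/\delta_R$. With $w_0:=w^{R,\kappa}(0,\cdot)$, the characteristic map $x_0\mapsto x_0+t\,w_0(x_0)$ stretches $\{|x_0|\le\kappa\}$ onto an interval of length $2\kappa+t(w_*-w_-)\tanh 1$, that is, onto the whole fan. So your pointwise bound $C\delta_Re^{-c|x_0|/\kappa}$ has to be integrated against the Jacobian $1+t\,w_0'(x_0)$. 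That requires an upper bound on the Jacobian; the lower bound $\ge 1$ you cite goes the wrong way. Done this way, you only get $C\delta_R(\kappa+t\delta_R)$, which is not uniform in $t$.

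The missing ingredient is that inside the fan the error is also small. For $x_0>0$ one has $x/t=w_0(x_0)+x_0/t$, hence $0\le w^r(x/t)-w_0(x_0)\le\min\{x_0/t,\,w_*-w_0(x_0)\}$, and similarly for $x_0<0$. Therefore
\[
\int_{\mathbb R}|w^{R,\kappa}-w^r|\,dx\le\int_{\mathbb R}\min\Bigl\{\frac{|x_0|}{t},\,C\delta_Re^{-c|x_0|/\kappa}\Bigr\}\bigl(1+t\,w_0'(x_0)\bigr)\,dx_0\le C\delta_R\kappa+\int_{\mathbb R}|x_0|\,w_0'(x_0)\,dx_0\le C\delta_R\kappa .
\]
More simply, Kru\v{z}kov's $L^1$-contraction applies between the classical solution $w^{R,\kappa}$ and the entropy solution $w^r(x/t)$ with step data. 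It gives $\|w^{R,\kappa}(t)-w^r(\cdot/t)\|_{L^1}\le\|w_0-w^r_0\|_{L^1}=(w_*-w_-)\kappa\ln 2$ for all $t$.

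Your diagnosis of the time shift is also off. The states $w_\pm=\lambda_{1\pm}$ are of order one, not small, so the fans $w^r(x/(t+1))$ and $w^r(x/t)$ are displaced by a distance of about $|\lambda_1|$. For monotone profiles, $\int|f-g|\,dx=\int_{w_-}^{w_*}|x_f(w)-x_g(w)|\,dw$, where $x_f,x_g$ are the inverses. Hence the $L^1$ distance between the two fans is $\int_{w_-}^{w_*}|w|\,dw\approx|\lambda_1|\delta_R$ for every $t$, not only as $t\to0$. The literal shift therefore cannot be absorbed into an $O(\kappa)$-width estimate. The rescaled reading is forced: the shift is $\kappa$ in physical variables, equivalently $t+1$ in the normalized ones. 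With that reading, the same level-set formula gives an extra error $\kappa\int_{w_-}^{w_*}|w|\,dw\le C\delta_R\kappa$. Combining this with the corrected $L^1$ bound, interpolating with the trivial $L^\infty$ bound, and transferring through the Riemann-invariant relations yields \eqref{eq:rarmai}.
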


\begin{proof}
The proof is the same as that in \cite{xin1993zero}. 

\end{proof}
{
\begin{theorem}[Rarefaction--contact--shock case]\label{cor:rarefaction-contact-shock}
Let \(U_+\), \(\kappa_0\), \(\delta_{\mathrm{res},0}\), \(\varepsilon_{\mathrm{res},0}\), and \(M_\#=M[U_\#]\) be as in Theorem~\ref{thm:main}.
Let \(0<\delta\le \delta_{\mathrm{res},0}\), and let
\[
U^E=(v^E,u^E,\theta^E)(t,x)
\]
be a Riemann solution to the one-dimensional compressible Euler system \eqref{eq:cesL}, connecting
\[
U_-:=(v_-,u_{1-},\theta_-)
\qquad \text{to} \qquad
U_+,
\]
through two intermediate states \(U_*\) and \(U^*\), with total wave strength
\[
\delta=\delta_R+\delta_C+\delta_S
\]
where \(\delta_R\), \(\delta_C\), and \(\delta_S\) {are the strengths}
of the rarefaction, contact, and shock components, respectively.
Assume that \(U^E\) is a superposition of a \(1\)-rarefaction wave, a \(2\)-contact discontinuity,
and a \(3\)-shock wave.

For each \(0<\varepsilon_1\le \varepsilon_{\mathrm{res},0}\) and \(0<\kappa\le \kappa_0\), let
\(f_0^\kappa\) be a nonnegative smooth initial datum satisfying the well-preparedness condition
\eqref{eq:initic2-main}. Let \(f^\kappa\) be the corresponding solution
to the Boltzmann equation \eqref{eq:bslk} on a time interval \([0,T]\), where \(T>0\) is arbitrary.

Then \(f^\kappa\) exists uniquely on \([0,T]\). Moreover,
\begin{align}\label{eq:hlmt-main}
&\int_0^T \iint_{\mathbb{R}\times\mathbb{R}^3}
\left|f^\kappa-M[U^E]\right|^2
\,d\xi\,dy\,dt
\nonumber\\
&\qquad \le
C\kappa^{1/2}\Bigl[\kappa^{1/2}\bigl((\varepsilon_1+\delta_{\mathrm{res},0})^2+\delta_{\mathrm{res},0}^{1/2}\bigr)T
+
(\delta_R+\delta_C)T^{3/2}
+
\delta_{\mathrm{res},0}^2T^{3/2}\Bigr].
\end{align}
Here the positive constant \(C\) is independent of \(\kappa\) and \(T\).
\end{theorem}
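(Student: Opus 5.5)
We mirror the proof of Theorem~\ref{thm:main}, replacing the $1$-shock by the smooth approximate $1$-rarefaction wave \eqref{eq:rare1}. Work first at $\kappa=1$ with the composite approximate wave
\[
\bar U=U^{R}+U^{C}+\bigl(U^{S_3}\bigr)^{-X_3}-U_*-U^*,
\qquad
\bar f=M[\bar U]+\bigl(G^{S_3}\bigr)^{-X_3},
\]
where the only shift is $X_3$, defined by the single ODE \eqref{eq:shift} with $a=a_3^{-X_3}$. The a priori estimate analogous to Proposition~\ref{prop:priest} is taken from \cite{wang2025}, as the paper indicates.

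The new features relative to the shock--contact--shock case are the following.
\begin{itemize}
\item The rarefaction contributes the good sign term $-\int a\,u^R_{1x}(\cdots)\ge0$ in $\mathcal J^{\rm good}$, by Lemma~\ref{lem:Yilem}(1).
\item The rarefaction errors $Q^R$ decay like $(1+t)^{-1}$ in $L^\infty$ and are integrable after interpolation, contributing $\delta_R^{1/3}$-type terms in place of $\delta_0^{1/2}$.
\item The rarefaction--shock interaction is handled with the cutoffs $\varphi_1,\varphi_3$. Since $\lambda_1<\sigma_3$ uniformly, there is a separation analogous to \eqref{eq:shiftsp1}.
\end{itemize}
Global continuation then follows exactly as in Section~5 via Proposition~\ref{prop:lcext}. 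The output is the bound $\int_0^{T/\kappa}\delta_3|\dot X_3|^2\,dt\le C$, together with uniform control of $\|(\phi,\psi,\zeta)\|_{L^2}^2+\int\|\widetilde G\|_{M_\#}^2$ by $\mathcal E(0)^2+\delta^{1/2}$.

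Next we rescale as in \eqref{eq:rescaled-solution}--\eqref{eq:rescaled-shifts}. The argument of Lemma~\ref{lem:rescaled-perturbation} gives
\[
\int_0^T\!\!\iint |f^\kappa-\bar f^\kappa|^2/M_\#\le C\kappa T\bigl((\varepsilon_1+\delta)^2+\delta^{1/2}\bigr),
\]
and Lemma~\ref{lem:shift-compactness} gives $|X_3^\kappa(\tau)|\le CT^{1/2}\kappa^{1/2}$. We then split $\bar f^\kappa-M[U^E]$ into three pieces, each controlled by the Maxwellian Lipschitz bound \eqref{eq:Maxwellian-Lipschitz-single}.
\begin{itemize}
\item \emph{Rarefaction piece.} By \eqref{eq:rarmai} with $p=2$, it contributes $C\delta_R^2\kappa T\le C\delta_R\kappa^{1/2}T^{3/2}$ for $\kappa\le1$.
\item \emph{Contact piece.} By \eqref{eq:contma}, it contributes $C\delta_C\kappa^{1/2}T^{3/2}$.
\item \emph{Shock piece.} The profile width is $O(\kappa)$ by Lemma~\ref{lem:bs}, giving $C\delta_S\kappa T$. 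The shift displacement contributes $C\delta^2\|X_3^\kappa\|_{L^1}\le C\delta^2T^{3/2}\kappa^{1/2}$, exactly as in \eqref{eq:main-step-3}.
\end{itemize}
Summing these contributions yields \eqref{eq:hlmt-main}.

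The main obstacle is the $\kappa$-dependence of the rarefaction. At the normalized scale, the approximate rarefaction built from $\tanh(x/\kappa)$ becomes a fixed-width ($\tanh x$) profile evolved by Burgers. Its error terms must therefore satisfy $\kappa$-independent bounds such as $\|w_x\|_{L^p}\le C\delta_R^{1/p}(1+t)^{-1+1/p}$, so that $\int_0^\infty\|Q^R\|\,\|(\phi,\psi,\zeta)\|\,dt$ stays bounded by a small power of $\delta$ uniformly in $T$ and $\kappa$. If the $\delta_R$ terms at zeroth order are only time-integrable up to a logarithm, the fallback is the standard rarefaction estimate $\|\,|u^R_{1x}|^{1/2}(\phi,\zeta)\|_{L^2}^2$ absorbed into the good term, with the remaining $\int\|w_{xx}\|$ terms integrable. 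This replaces $\delta_0^{1/2}$ by a comparable small power and does not affect the final structure of \eqref{eq:hlmt-main}.
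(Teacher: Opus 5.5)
Your proposal is correct and follows the paper's proof. Like the paper, you cite Proposition~\ref{prop:Ypro} from \cite{wang2025} together with the continuation argument, rescale, control $f^\kappa-\bar f^{\,\kappa}$ and $X_3^\kappa$ via Lemmas~\ref{lem:rescaled-perturbation} and \ref{lem:shift-compactness}, and then compare $\bar f^{\,\kappa}$ with $M[U^E]$ through the rarefaction (Lemma~\ref{lem:Yilem}), contact \eqref{eq:contma}, shock-width and shift-displacement pieces.

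There are two cosmetic slips, neither affecting the conclusion. First, $\delta_R^2\kappa T\le\delta_R\kappa^{1/2}T^{3/2}$ fails when $T<\delta_R^2\kappa$; that term (like $C\delta_S\kappa T$) is instead absorbed into the first term $C\kappa\bigl((\varepsilon_1+\delta_{\mathrm{res},0})^2+\delta_{\mathrm{res},0}^{1/2}\bigr)T$. Second, the shock-layer width is $O(\kappa/\delta_S)$ rather than $O(\kappa)$, and this is exactly what yields your bound $C\delta_S\kappa T$.
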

}
\vspace{0.5cm}

In this case, since the 3-shock will be shifted only, for simplicity we work in the moving frame and use the normalized equation:
\begin{align}
    \begin{aligned}\label{eq:Yibel}
        &f_t-\sigma f_z-\frac{u_1}{v}f_z+\frac{\xi_1}{v}f_z=\mathcal N(f,f)\\
        &f(0,z,\xi)=f_0(z,\xi).
    \end{aligned}
\end{align}
We also need the following result. 
\begin{proposition} \label{prop:Ypro}
    (see \cite{wang2025}) For each $(v_+,u_+,\theta_+)$ with $v_+,\theta_+>0$, {let} $(v_-,u_-,\theta_-)$ {be such that the Riemann solution of the Euler equations} \eqref{eq:cesL} {consists of a} $1$-rarefaction wave, a $2$-contact discontinuity, and a $3$-shock wave. Suppose that $f(t,y,\xi)$ is a solution to \eqref{eq:Yibel} {for} $t\in[0,T]${.} Then there exist positive constants $C_0,\delta_{\mathrm{res},0},\varepsilon_{\mathrm{res},0}$ ($\delta_{\mathrm{res},0},\varepsilon_{\mathrm{res},0}<1$) and a global Maxwellian $M_\#=M[U_\#](v_\#,\theta_\#>0)$ independent of time $T$ such that if the wave strength $\delta_R+\delta_C+\delta_S\leq \delta_{\mathrm{res},0}$ and $\mathcal{E}(T)\leq \varepsilon$, {then} 
    \begin{align}
        \begin{aligned}
            &\mathcal{E}(T)^2 + \delta_S\int_0^T \abs{\dot{X}(t)}^2 dt + \int_0^T \myparas{\mathcal G^R+\mathcal G^S}dt + \sum_{\abs{\beta}=1}\int_0^T \norm{\partial^\beta(\phi,\psi,\zeta)}_{L^2}^2dt\\
            &+\int_0^T\myparas{\norm{(\phi_{zz},\psi_{zz},\zeta_{zz})}_{L^2}^2+\norm{(\phi_{zt},\psi_{zt},\zeta_{zt})}_{L^2}^2} dt \\
            &+\int_0^T \int \norm{\wtilde G_{\textup{rem}}}_{\nu,M_\#}^2 + \norm{\wtilde G_z}_{\nu,M_\#}^2 +\norm{\wtilde G_t}_{\nu,M_\#}^2 +\norm{\wtilde G_{zz}}_{\nu,M_\#}^2 + \norm{\wtilde G_{zt}}_{\nu,M_\#}^2 dz dt\\
            &\leq C_0\myparas{\mathcal{E}(0)^2+\delta_{\mathrm{res},0}^{1/2}},
        \end{aligned}
    \end{align}
    where $\partial^\beta=\partial_t^{\beta_0}\partial_z^{\beta_1}$, $|\beta|=|\beta_1|+|\beta_0|$ denotes the derivatives with respect to $z$ or $t$, and 
    \begin{align}
        \begin{aligned}
            & \mathcal G^R:=\int\abs{v^R_z}\abs{(\phi,\zeta)}^2 dy,\\
            & \mathcal G^S:=\int\Bigl|\partial_z\myparas{v^{S_3}}^{-X_3}\Bigr|\abs{\myparas{\phi,\psi,\zeta}}^2 dy.
        \end{aligned}
    \end{align}
\end{proposition}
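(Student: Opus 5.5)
The plan is to reproduce the architecture of the proof of Proposition~\ref{prop:priest}, with the $1$-shock replaced by the smooth approximate rarefaction wave of \eqref{eq:rare1} (taken with $\kappa=1$ after normalization). Because only the $3$-shock is shifted, the equation is posed in the moving frame \eqref{eq:Yibel}. Concretely, I would define the composite profile $\bar U=U^R+U^C+(U^{S_3})^{-X_3}-U_*-U^*$ and the kinetic ansatz $\bar f=M[\bar U]+(G^{S_3})^{-X_3}$. The perturbations $(\phi,\psi,\zeta)$, $\widetilde G$ and $\widetilde G_{\mathrm{rem}}=\widetilde G-\widetilde G_C$ are defined exactly as in \eqref{eq:pevar} and \eqref{eq:mic2}. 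The weight is the single-shock weight $a=1+\delta_S^{-1/2}\bigl((v^{S_3})^{-X_3}-v^*\bigr)$, and the shift $X_3$ is defined by the ODE \eqref{eq:shift} with $i=3$ only.

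\textbf{Step 1: zeroth order.} I would derive the weighted relative entropy identity \eqref{eq:weighted-entropy} for $\int a\,\eta(U\mid\bar U)\,dz$. The shock terms are handled as in Lemma~\ref{lem:actr1}: combine $-\frac{\delta_S}{2\mathfrak m_3}|\dot X_3|^2$, $\mathcal B^{\mathrm{sh}}$, $\mathcal B_2-\mathfrak G$ and the diffusion term, change variables to $y_3$, and apply the Poincar\'e inequality of Lemma~\ref{lem:poincare}. This produces $-\alpha\mathcal G^S$.

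The genuinely new ingredient is the rarefaction part of $\mathcal J_2+\mathcal J_3$, namely the analogue of $\mathfrak j_{21}$ and $\mathfrak j_{31}$ with $u^R_{1z}>0$. By the signs in Lemma~\ref{lem:Yilem}(1), this part is a \emph{negative} quadratic form $-c\,|v^R_z|\,|(\phi,\zeta)|^2$, which yields the good term $\mathcal G^R$ for free. The cutoff $\varphi_i$ is then only needed to separate the rarefaction region from the shock. The rarefaction error $Q^R$, for example $(\mu u^R_{1z}/v^R)_z$, has $\|\cdot\|_{L^2}\lesssim \delta_R^{1/8}(1+t)^{-7/8}$ in the usual way. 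Since this is not time integrable against $\|(\phi,\psi,\zeta)\|_{L^2}$ alone, I would instead use the interpolation $\|Q^R\|_{L^1}\|(\psi,\zeta)\|_{L^\infty}$ together with $\|\psi\|_{L^\infty}^2\le\|\psi\|\,\|\psi_z\|$, which gives an integrable bound of order $\delta_R^{1/2}$.

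The kinetic terms $K_1,\dots,K_6$ and the microscopic estimate of Proposition~\ref{prop:mic-diss} carry over verbatim, with wave interactions now between the rarefaction, the contact wave and the $3$-shock. These interactions are exponentially small in $t$ because the $3$-shock moves away with speed $\sigma_3>\lambda_1$ and the contact wave is centered. The contact part is handled as before, and the Gaussian term $\mathcal W_C$ is absorbed via Lemma~\ref{lem:weighted-gaussian}. One caveat is that the bound $\frac{C}{\delta_* t^2}$ coming from the cutoff derivative is singular at $t=0$; I would deal with it through the short-time estimate of Lemma~\ref{lem:short-time-estimate}.

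\textbf{Step 2: higher order.} I would run the same hierarchy as in Section~7. This consists of the estimates for $\phi_t,\psi_t,\zeta_t$, the compensating estimate for $\phi_z$, the first- and second-order spatial estimates, the mixed-derivative estimate, and the microscopic estimates for $\widetilde G_z$, $\widetilde G_t$, $\widetilde f_{zz}$ and $\widetilde f_{zt}$. These are combined as in Proposition~\ref{prop:high-order-closure}. The only additional profile terms involve $\partial_z^k U^R$, which satisfy $\|\partial_z^kU^R\|_{L^p}\lesssim \min(\delta_R,(1+t)^{-1+1/p})$. Their products with the perturbation are bounded either by $\mathcal G^R$ (using $|U^R_{zz}|\lesssim |U^R_z|$) or by integrable-in-time remainders. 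The moving-frame term $-\sigma f_z$ is antisymmetric and contributes nothing. Summing everything and absorbing with $\delta_{\mathrm{res},0}$ and $\varepsilon$ small gives the stated bound with remainder $C_0\delta_{\mathrm{res},0}^{1/2}$.

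\textbf{Main obstacle.} I expect the hardest step to be the zeroth-order interaction between the rarefaction and the $3$-shock weight. The weight $a$ is not constant across the rarefaction fan, and the rarefaction error decays only like $(1+t)^{-1}$ in $L^\infty$. The plan is to check that $a_z$ vanishes to exponential order on the support of $U^R_z$, so that the product $a_z\,U^R_z$ is $O(\delta_S^{3/2}\delta_R e^{-c\delta_S t})$ in the spirit of Lemma~\ref{lem:wave-interaction-L2}. I would also check that the lack of time integrability of $Q^R$ is compensated by $\mathcal G^R$ and by the dissipation, which is what produces the $\delta_{\mathrm{res},0}^{1/2}$ loss on the right-hand side.
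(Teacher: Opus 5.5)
The paper does not prove this proposition; it is imported from \cite{wang2025}. Your outline follows the natural reconstruction of that argument: a single shift and weight for the $3$-shock, the rarefaction part of $\mathcal J_2+\mathcal J_3$ producing $\mathcal G^R$, interpolation for $Q^R$, then the hierarchy of Section~7. The rarefaction part is indeed a negative definite form in $(\phi,\zeta)$ once the Euler equation for $\theta^R$ and $u^R_{1z}>0$ are used. However, one step fails as you wrote it.

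\textbf{The gap: the definition of $\widetilde G_{\mathrm{rem}}$.} You take $\widetilde G_{\mathrm{rem}}=\widetilde G-\widetilde G_C$ exactly as in \eqref{eq:mic2}, removing only the contact wave's Chapman--Enskog part, and then say that Proposition~\ref{prop:mic-diss} carries over verbatim.

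\emph{Why the estimate fails.} The source $-\frac1vP_1(\xi_1M_z)$ in the equation for $\widetilde G_{\mathrm{rem}}$ still contains the rarefaction piece $-\frac{3}{2v\theta}P_1\bigl[\xi_1M\bigl(\xi_1u^R_{1z}+\frac{|\xi-u|^2}{2\theta}\theta^R_z\bigr)\bigr]$. Pairing it with $\widetilde G_{\mathrm{rem}}$ leaves $C\|(u^R_{1z},\theta^R_z)\|_{L^2}^2\sim C\min\{\delta_R^2,\delta_R(1+t)^{-1}\}$ on the right-hand side. Its time integral is of order $\delta_R\log(1+\delta_RT)$.

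\emph{Why this is not just a lossy bound.} $G$ is close to its Chapman--Enskog approximation, so your $\widetilde G_{\mathrm{rem}}$ contains the first-order microscopic part of the rarefaction. Hence $\int_0^T\!\int\|\widetilde G_{\mathrm{rem}}\|_{\nu,M_\#}^2\,dz\,dt$ itself grows like $\log T$. No $T$-independent bound of the stated form can hold with your definition.

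\emph{The fix.} Also subtract $\widetilde G_R:=\frac{3}{2v\theta}L_M^{-1}P_1\bigl[\xi_1M\bigl(\xi_1u^R_{1z}+\frac{|\xi-u|^2}{2\theta}\theta^R_z\bigr)\bigr]$. In other words, build the subtracted term from the gradients of $U^R+U^C$ rather than $U^C$ alone; this is the standard device in the kinetic rarefaction literature. The new source $\partial_t\widetilde G_R$ is then governed by $\|U^R_{zz}\|_{L^2}^2\lesssim\min\{\delta_R^2,(1+t)^{-2}\}$ and by terms of the form $|U^R_z|\,|U_t|$, both harmless. The bound $\sup_t\int\|\widetilde G_R\|_{M_\#}^2\,dz\lesssim\delta_R^2$ is what the later rescaling step needs. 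The same splitting has to be used wherever $\widetilde G$ is decomposed in the kinetic terms, e.g.\ in the analogue of \eqref{eq:K4-QGG}.

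\textbf{Two smaller points.}

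First, with only one shock you do not need the cutoff at all. The map $z\mapsto y_3$ sends $\mathbb R$ onto $(0,1)$, so Lemma~\ref{lem:poincare} can be applied to the whole diffusion term. This removes the $C/(\delta_*t^2)$ term and the separate short-time estimate.

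Second, your treatment of $Q^R$ gives, after integrating in time, $C\delta_R^{1/3}A^{1/3}$ with $A:=\sup_t\|(\psi,\zeta)\|_{L^2}^2$, not $\delta_R^{1/2}$ directly. One more Young step, $\delta_R^{1/3}A^{1/3}\le\eta A+C_\eta\delta_R^{1/2}$, gives the stated remainder.
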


{
\begin{proof}[Proof of Theorem \ref{cor:rarefaction-contact-shock}]
By Proposition~\ref{prop:Ypro}, together with the same local existence and continuation argument
used in the proof of Theorem~\ref{thm:main}, there exists a unique global normalized solution
in the moving frame, together with a shift \(X_3=X_3(t)\).

Let \(f\) denote this normalized solution, and define the rescaled physical solution and shift by
\eqref{eq:rescaled-solution} and \eqref{eq:rescaled-shifts}. Equivalently,
\[
f^\kappa(\tau,y,\xi)=f\!\left(\frac{\tau}{\kappa},\frac{y}{\kappa},\xi\right),
\qquad
X_3^\kappa(\tau)=\kappa X_3\!\left(\frac{\tau}{\kappa}\right).
\]
Then \(f^\kappa\) exists uniquely on \([0,T]\) for every \(T>0\), since \(f\) exists globally
on \([0,\infty)\).

We first compare \(f^\kappa\) with the rescaled composite approximate wave
\(\bar f^{\,\kappa}\). By Proposition~\ref{prop:Ypro}, after the same scaling argument as in
the proof of Theorem~\ref{thm:main}, we obtain the perturbation estimate
\begin{align}
\int_0^T\int_{\mathbb R} \norm{f^\kappa-\bar f^{\kappa}}_{M_\#}^2
\,dy\,d\tau
\le
C\kappa\bigl((\varepsilon_1+\delta_{\mathrm{res},0})^2+\delta_{\mathrm{res},0}^{1/2}\bigr)T .
\label{eq:main-step-1-rare-cor}
\end{align}

{By Lemma} \ref{lem:shift-compactness}{,}
\[
|X_3^\kappa(t)| \le CT^{1/2}\kappa^{1/2}
\qquad t\in[0,T].
\]

Based on this, we compare the rescaled composite wave with the shifted Euler Maxwellian profile.
Combining the smooth rarefaction approximation estimate from Lemma~\ref{lem:Yilem}
with the corresponding contact-wave estimate, we obtain
\begin{align}
\int_0^T\int_{\mathbb R}
\norm{\bar f^{\,\kappa}-M_{X_3^\kappa}[U^E]}_{M_\#}^2\,dy\,d\tau
\le
C(\delta_R+\delta_C)\kappa^{1/2}T^{3/2}+C\delta_S\kappa T .
\label{eq:main-step-2-rare-cor}
\end{align}

Finally, by the mean value theorem and the exponential decay of the Euler shock profile,
for each \(\tau\in[0,T]\),
\begin{align*}
\int_{\mathbb R} \norm{M_{X_3^\kappa}[U^E]-M[U^E]}_{M_\#}^2
\,dy
\le
C\delta_{\mathrm{res},0}^2|X_3^\kappa(\tau)|.
\end{align*}
Integrating in \(\tau\) yields
\begin{align}
\int_0^T\int_{\mathbb R}
 \norm{M_{X_3^\kappa}[U^E]-M[U^E]}_{M_\#}^2 \,dy\,d\tau
\le
C\delta_{\mathrm{res},0}^2\|X_3^\kappa\|_{L^1(0,T)}.
\label{eq:main-step-3-rare-cor}
\end{align}

Combining \eqref{eq:main-step-1-rare-cor}, \eqref{eq:main-step-2-rare-cor},
and \eqref{eq:main-step-3-rare-cor}, and using that \(M_\#\) is uniformly bounded
from above and below by positive constants, we conclude that
\begin{align*}
&\int_0^T \int_{\mathbb R} \norm{f^\kappa-M[U^E]}_{M_\#}^2
\,dy\,d\tau\\
&\qquad \le
C\kappa\bigl((\varepsilon_1+\delta_{\mathrm{res},0})^2+\delta_{\mathrm{res},0}^{1/2}\bigr)T
+
C(\delta_R+\delta_C)\kappa^{1/2}T^{3/2}
+
C\delta_{\mathrm{res},0}^2\kappa^{1/2}T^{3/2}.
\end{align*}
This is exactly \eqref{eq:hlmt-main}.
\end{proof}
}

\hide
\section*{Acknowledgment}

CK is partially supported by NSF-CAREER 2047681. This material is partly based upon work supported by the National Science Foundation under Grant No. DMS-2424139, while one of the authors (C.K.) was in residence at the Simons Laufer Mathematical Sciences Institute in Berkeley, California, during the Fall 2025 semester.
\unhide
\appendix
\section{Appendix}
\setcounter{equation}{0}
\subsection{Properties of the collision operator}

In this appendix, we collect several standard estimates for the hard-sphere collision operator that will be used repeatedly in the main text. We refer to \cite{liu2004energy,liu2006nonlinear} for the basic macro--micro framework and weighted estimates.

\subsubsection{Gain--loss decomposition and weighted estimates}

We first recall the decomposition of the collision operator into the gain and loss parts:
\begin{equation}\label{eq:Q-gain-loss}
\mathcal N(g,h)=\mathcal N^+(g,h)-\mathcal N^-(g,h).
\end{equation}

The following weighted $L^2$ estimates are standard for the hard-sphere collision operator.

\begin{lemma}\label{lem:collision-gain-loss}
There exists a positive constant $C$ such that
\begin{align}
\int_{\mathbb{R}^3}\frac{(1+|\xi|)^{-1}|\mathcal N^-(g,h)|^2}{M_\#}\,d\xi
&\le
C \norm{g}_{\nu,M_\#}^2\, \norm{h}_{M_\#}^2,
\label{eq:collision-loss}
\\
\int_{\mathbb{R}^3}\frac{(1+|\xi|)^{-1}|\mathcal N_+(g,h)|^2}{M_\#}\,d\xi
&\le
C \norm{g}_{M_\#}^2 \, \norm{h}_{\nu,M_\#}^2 
\label{eq:collision-gain}
\end{align}
Consequently,
\begin{align}
&\int_{\mathbb{R}^3}\frac{(1+|\xi|)^{-1}|\mathcal N(g,h)|^2}{M_\#}\,d\xi\nonumber \\
&\quad \le
C\Bigl[ \norm{g}_{\nu,M_\#}^2\, \norm{h}_{M_\#}^2 +  \norm{g}_{M_\#}^2 \, \norm{h}_{\nu,M_\#}^2  \Bigr].
\label{eq:collision-Q}
\end{align}
Here $M_\#$ is a reference global Maxwellian and the above estimates hold on a uniform class of functions comparable to $M_\#$.
\end{lemma}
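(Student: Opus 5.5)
The plan is to prove the two bounds separately: first the loss bound \eqref{eq:collision-loss}, then the gain bound \eqref{eq:collision-gain}. The combined estimate \eqref{eq:collision-Q} then follows from \eqref{eq:Q-gain-loss} and $(a+b)^2\le 2a^2+2b^2$.

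\textbf{Loss term.} For hard spheres,
\[
\mathcal N^-(g,h)(\xi)=g(\xi)\int_{\mathbb R^3}\int_{\mathbb S^2_+}|(\xi-\xi_*)\cdot\Omega|\,h(\xi_*)\,d\Omega\,d\xi_*
= \pi\, g(\xi)\int|\xi-\xi_*|\,h(\xi_*)\,d\xi_*.
\]
Bound $|\xi-\xi_*|\le (1+|\xi|)(1+|\xi_*|)$ and apply Cauchy--Schwarz in $\xi_*$ with the splitting $h=(h/\sqrt{M_\#})\sqrt{M_\#}$. This gives
\[
\Bigl|\int|\xi-\xi_*|h\,d\xi_*\Bigr|\le C(1+|\xi|)\,\norm{h}_{M_\#},
\]
since $\int(1+|\xi_*|)^2M_\#\,d\xi_*<\infty$. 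Therefore
\[
(1+|\xi|)^{-1}|\mathcal N^-|^2/M_\# \le C(1+|\xi|)\,|g|^2/M_\#\,\norm{h}_{M_\#}^2,
\]
and integrating in $\xi$ yields \eqref{eq:collision-loss}.

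\textbf{Gain term.} This is the main point. Set $g=\sqrt{M_\#}\,\tilde g$ and $h=\sqrt{M_\#}\,\tilde h$. Conservation of energy gives $M_\#(\xi')M_\#(\xi_*')=M_\#(\xi)M_\#(\xi_*)$, so
\[
\mathcal N^+(g,h)/\sqrt{M_\#(\xi)} = \iint|(\xi-\xi_*)\cdot\Omega|\sqrt{M_\#(\xi_*)}\,\tilde g(\xi')\tilde h(\xi_*')\,d\Omega\,d\xi_*.
\]
Apply Cauchy--Schwarz with weight $|(\xi-\xi_*)\cdot\Omega|\sqrt{M_\#(\xi_*)}$:
\[
|\cdot|^2\le \Bigl(\iint|(\xi-\xi_*)\cdot\Omega|\sqrt{M_\#(\xi_*)}\Bigr)\Bigl(\iint|(\xi-\xi_*)\cdot\Omega|\sqrt{M_\#(\xi_*)}\,|\tilde g(\xi')|^2|\tilde h(\xi_*')|^2\Bigr).
\]
The first factor is at most $C(1+|\xi|)$, and this cancels the weight $(1+|\xi|)^{-1}$. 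Integrating in $\xi$ and changing variables $(\xi,\xi_*)\mapsto(\xi',\xi_*')$, which has unit Jacobian and preserves $|(\xi-\xi_*)\cdot\Omega|$, reduces the task to bounding
\[
\iint|(\xi'-\xi_*')\cdot\Omega|\,|\tilde g(\xi')|^2|\tilde h(\xi_*')|^2 \sqrt{M_\#(\xi_*)}\,d\Omega\,d\xi'\,d\xi_*'.
\]
Drop $\sqrt{M_\#(\xi_*)}\le C$ and use $|\xi'-\xi_*'|\le(1+|\xi'|)(1+|\xi_*'|)$. This bounds the integral by $\norm{g}_{\nu,M_\#}^2\norm{h}_{\nu,M_\#}^2$, which is slightly worse than the stated \eqref{eq:collision-gain} because both factors carry a weight.

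To move the full weight onto $h$, use $|\xi'-\xi_*'|\le |\xi'|+|\xi_*'|$, placing the $|\xi'|$ growth on $g$ only where necessary. The expected obstacle is that the asymmetric form needs the weight only on $h$. I would first try to keep the factor $\sqrt{M_\#(\xi_*)}$ to absorb the growth in $|\xi'|$: when $|\xi'|\gg|\xi_*'|$, energy conservation forces one of $\xi,\xi_*$ to be large. If that fails, I would rely on the standard Grad-type estimate from \cite{liu2004energy,liu2006nonlinear} on the stated class of functions comparable to $M_\#$, which is the qualifier the lemma includes.
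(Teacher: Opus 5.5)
Your loss estimate is correct. Your reduction of the gain term is also correct: the Cauchy--Schwarz step with weight $|(\xi-\xi_*)\cdot\Omega|\sqrt{M_\#(\xi_*)}$ cancels the factor $(1+|\xi|)^{-1}$, and the involution $(\xi,\xi_*)\mapsto(\xi',\xi_*')$ is legitimate. After that, dropping $\sqrt{M_\#(\xi_*)}$ and using $|\xi'-\xi_*'|\le|\xi'|+|\xi_*'|$ gives the symmetric gain bound $C(\norm{g}_{\nu,M_\#}^2\norm{h}_{M_\#}^2+\norm{g}_{M_\#}^2\norm{h}_{\nu,M_\#}^2)$, which already yields \eqref{eq:collision-Q}.

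The genuine gap is the one-sided bound \eqref{eq:collision-gain}. You never establish it, and neither fallback supplies it. The energy heuristic does not help: $\xi$ can carry all the energy while $\xi_*$ stays in the bulk of $M_\#$. The estimate usually quoted from the macro--micro references is the symmetric one, \eqref{eq:collision-Q}, not \eqref{eq:collision-gain}. The qualifier about functions comparable to $M_\#$ adds no mechanism. The paper itself gives no argument beyond the citation, so as written \eqref{eq:collision-gain} is left unproved.

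The missing idea is to integrate $\sqrt{M_\#(\xi_*)}$ out in $\Omega$ rather than drop it. For fixed $(\xi',\xi_*')$, the point $\xi_*=\xi_*'+((\xi'-\xi_*')\cdot\Omega)\Omega$ runs over the sphere $\partial B(c,r)$ with $c=\frac{\xi'+\xi_*'}{2}$ and $r=\frac{|\xi'-\xi_*'|}{2}$. On a hemisphere, $|(\xi'-\xi_*')\cdot\Omega|\,d\Omega=\frac r2\,d\sigma$. Write $\sqrt{M_\#}(y)=Ae^{-a|y-u_\#|^2}$ and $c'=c-u_\#$. Then
\[
w(\xi',\xi_*'):=\frac12\int_{\mathbb S^2}|(\xi'-\xi_*')\cdot\Omega|\,\sqrt{M_\#}\bigl(\xi_*'+((\xi'-\xi_*')\cdot\Omega)\Omega\bigr)\,d\Omega=\frac{\pi A}{a|c'|}e^{-a(|c'|^2+r^2)}\sinh(2ar|c'|)\le C\min\{|c'|^{-1},r\}\,e^{-a(|c'|-r)^2}.
\]
This is uniformly bounded. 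If $|c'|\ge r/2$, use $\min\{2/r,r\}\le\sqrt2$; if $|c'|<r/2$, use $re^{-ar^2/4}\le C$. Geometrically, only an $O(r^{-2})$ fraction of the collision sphere meets the bulk of $M_\#$, and this beats the kernel growth $O(r)$. Hence
\[
\int_{\mathbb R^3}\frac{(1+|\xi|)^{-1}|\mathcal N^+(g,h)|^2}{M_\#}\,d\xi\le C\iint w(\xi',\xi_*')\,\frac{|g(\xi')|^2\,|h(\xi_*')|^2}{M_\#(\xi')M_\#(\xi_*')}\,d\xi'\,d\xi_*'\le C\norm{g}_{M_\#}^2\norm{h}_{M_\#}^2.
\]
This is stronger than \eqref{eq:collision-gain}, since neither factor carries a weight. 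Combined with your loss bound, it gives \eqref{eq:collision-Q} through $|\mathcal N|^2\le2|\mathcal N^+|^2+2|\mathcal N^-|^2$.
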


\subsubsection{Coercivity of the linearized collision operator}

Let $M=M[U]$ where $U=(v,u,\theta)$ be a local Maxwellian and let $L_M$ denote the linearized collision operator around $M$. Let $\mathfrak Z_M$ be the null space of $L_M$, and let $\mathfrak Z_M^\perp$ be its orthogonal complement.

\begin{lemma}\label{lem:collision-coercivity}
Let \(U_\#=(v_\#,u_\#,\theta_\#)\), \(M_\#:=M[U_\#]\). Then there exists a constant \(\eta_0>0\) such that the following holds. Let 
\(\mathcal K_\#\Subset \R_{>0}\times \R^3\times \R_{>0}\) be a compact set satisfying 
\[
    \mathcal K_\# \subset B_{\eta_0}(U_\#),\qquad \theta_\#<\inf_{U\in\mathcal K_\#}\theta \le \sup_{U\in\mathcal K_\#}\theta<2\theta_\#
\]
where $B_{\eta_0}(U_\#):=\{U\in\R_{>0}\times\R^3\times \R_{>0}:|U-U_\#|<\eta_0\}$. For \(U=(v,u,\theta)\in\mathcal K_\#\), set \(M=M[U]\). Then there exists a constant \(\lambda_{\mathrm{mic}}>0\), {depending only on} \(\mathcal K_\#\) and \(U_\#\), such that 
\begin{align}\label{eq:collision-coercivity}
    -\int_{\R^3}\frac{gL_Mg}{M_\#}\,d\xi \ge \lambda_{\mathrm{mic}}\int_{\R^3}\frac{(1+|\xi|)g^2}{M_\#} \, d\xi 
\end{align}
for every \(U \in \mathcal K_\#\) and every \(g\in\mathfrak Z_M^\perp\). Moreover, 
\begin{align}\label{eq:collision-inverse}
    \int_{\R^3}\frac{(1+|\xi|)}{M_\#}|L_M^{-1}g|^2 \, d\xi \le \lambda_{\mathrm{mic}}^{-2} \int_{\R^3}\frac{(1+|\xi|)^{-1}g^2}{M_\#}\, d\xi
\end{align}
Here \(\mathfrak Z_M\) is the null-space of \(L_M\).
\end{lemma}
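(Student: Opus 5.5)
We prove Lemma~\ref{lem:collision-coercivity} by transferring the standard coercivity of the linearized hard-sphere operator, which holds in the weight $M^{-1}$, to the fixed weight $M_\#^{-1}$. The transfer rests on the closeness assumptions $|U-U_\#|<\eta_0$ and $\theta_\#<\theta<2\theta_\#$.

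\textbf{Step 1: self-weighted coercivity.} We first record the classical decomposition
\[
L_M g=-\nu_M(\xi)\,g+K_M g,
\qquad
\nu_M(\xi)\sim 1+|\xi|,
\]
where $K_M$ is compact in $L^2(M^{-1})$ (Grad's splitting). Standard spectral theory then gives, for $g\in\mathfrak Z_M^\perp$ (orthogonality in $\langle\cdot,\cdot\rangle_M$),
\[
-\int \frac{g\,L_M g}{M}\,d\xi\;\ge\;\lambda_0\int\frac{(1+|\xi|)g^2}{M}\,d\xi .
\]
The constant $\lambda_0$ is uniform for $U$ in a compact set, because the operator depends continuously on $U$ and $L_M$ is unitarily conjugate, by translation and scaling in $\xi$, to $L_{M[1,0,1]}$.

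\textbf{Step 2: change of weight.} We next write
\[
\frac{1}{M_\#}=\frac{1}{M}\cdot\frac{M}{M_\#}.
\]
Because $\theta_\#<\theta<2\theta_\#$, the ratio $M/M_\#$ behaves like $e^{c|\xi|^2}$ with $c>0$, so $L^2(M_\#^{-1})$ is strictly stronger than $L^2(M^{-1})$. This is also why $L_M$ can act boundedly there. We then split
\[
-\int\frac{g\,L_Mg}{M_\#}\,d\xi=\int\frac{\nu_M\, g^2}{M_\#}\,d\xi-\int \frac{g\,K_Mg}{M_\#}\,d\xi .
\]
The first term already has the desired lower bound $c\int(1+|\xi|)g^2/M_\#\,d\xi$. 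For the second, we follow Caflisch and Liu--Yang--Yu: the kernel of $K_M$, viewed in $L^2(M_\#^{-1})$, splits into a small part (large velocities, or the kernel near its singularity) and a compact remainder. The key requirement here is $\theta<2\theta_\#$, since it keeps the conjugated kernel $\sqrt{M_\#^{-1}}\,K_M\sqrt{M_\#}$ bounded.

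\textbf{Step 3: the nullspace component (main obstacle).} The compact remainder has to be controlled using $g\in\mathfrak Z_M^\perp$. The difficulty is that this orthogonality is in $\langle\cdot,\cdot\rangle_M$, not in the $M_\#$-weighted inner product. We will argue by contradiction and compactness, uniformly over $U\in\mathcal K_\#$.
\begin{itemize}
\item Suppose sequences $U_n$, $g_n$ exist with $\int (1+|\xi|)g_n^2/M_\#\,d\xi=1$ and the quadratic form tending to $0$.
\item Extract weak limits. By compactness of the remainder part, $g_n$ converges strongly on bounded velocity sets.
\item Dissipation forces the limit $g$ to lie in $\mathfrak Z_{M_U}$. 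Orthogonality passes to the limit, since $\chi_j/M$ is polynomial and therefore lies in the dual space. Hence $g\in \mathfrak Z\cap\mathfrak Z^\perp=\{0\}$.
\item The normalization then forces mass to escape to large $|\xi|$, where the $\nu$ term dominates, which is a contradiction.
\end{itemize}
The smallness $\eta_0$ is used to make the $M$-versus-$M_\#$ discrepancy in the nonlocal part perturbative. This step is where we expect the real work.

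\textbf{Step 4: inverse bound.} Estimate \eqref{eq:collision-inverse} follows from \eqref{eq:collision-coercivity}. Set $h=L_M^{-1}g\in\mathfrak Z_M^\perp$. Then
\[
\lambda_{\mathrm{mic}}\|h\|_{\nu,M_\#}^2\le -\int\frac{h\,g}{M_\#}\,d\xi\le \Bigl(\int\frac{(1+|\xi|)^{-1}g^2}{M_\#}\,d\xi\Bigr)^{1/2}\|h\|_{\nu,M_\#}
\]
by Cauchy--Schwarz, which after dividing by $\|h\|_{\nu,M_\#}$ and squaring gives the claim.
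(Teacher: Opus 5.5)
The paper gives no proof of this lemma. It lists it in the appendix as a standard estimate taken from the Liu--Yang--Yu energy-method papers cited there, and it copies their hypotheses $\theta_\#<\theta<2\theta_\#$ and $|U-U_\#|<\eta_0$.

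\textbf{What is sound.} Your Steps 1, 2 and 4 are fine. Your observation that $\theta<2\theta_\#$ is exactly the integrability threshold for the conjugated Grad kernel $k_M(\xi,\xi_*)\,w(\xi)/w(\xi_*)$, with $w=(M/M_\#)^{1/2}$, is correct. Step 4 is the standard way to obtain \eqref{eq:collision-inverse} from \eqref{eq:collision-coercivity}.

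\textbf{The gap is in Step 3.} Step 3 carries the whole content of the lemma, and it does not work as written.
\begin{itemize}
\item The bullet ``dissipation forces the limit $g$ to lie in $\mathfrak{Z}_{M_U}$'' assumes the quadratic form vanishes only on the null space. That fact comes from the H-theorem symmetrization, which is available only for the weight $M^{-1}$.
\item Write $g=\sqrt{M}f$. Then your form is $\int (M/M_\#)\,f\,(-\mathcal{L}f)\,d\xi$, where $\mathcal{L}=M^{-1/2}L_MM^{1/2}$ is self-adjoint.
\item Since $\theta>\theta_\#$ strictly, $M/M_\#$ is a genuinely nonconstant Gaussian weight, so this form is not symmetric. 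Nothing in your argument shows it is nonnegative on $\mathfrak{Z}_M^\perp$, let alone that its zero set is $\mathfrak{Z}_M$.
\item Your sequence $U_n$ ranges over the fixed set $\mathcal K_\#$, so the limit point $U$ again has $\theta>\theta_\#$. What you need at $U$ is precisely the qualitative version of the lemma at $U$. The argument is therefore circular, and $\eta_0$ never enters.
\end{itemize}
Compactness in $U$ can upgrade pointwise positive definiteness to a uniform constant, but it cannot produce positive definiteness in the first place.

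\textbf{How to repair it.} Make the contradicting sequence converge to $U_\#$; this is what determines $\eta_0$.
\begin{enumerate}
\item Suppose that for every $n$ there are $U_n$ with $|U_n-U_\#|<1/n$ and $\theta_n>\theta_\#$, and $g_n\in\mathfrak{Z}_{M_n}^\perp$ with $\int(1+|\xi|)g_n^2/M_\#\,d\xi=1$ and $-\int g_nL_{M_n}g_n/M_\#\,d\xi\le 1/n$.
\item Then $M_n/M_\#\to 1$ locally uniformly, and the exponents in $w_n(\xi)/w_n(\xi_*)$ tend to zero, far below the threshold encoded by $\theta<2\theta_\#$.
\item Consequently the parts of the conjugated $K$-form with $|\xi|>R$ or $|\xi_*|>R$ are small uniformly in $n$ for $R$ large. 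The truncated kernels converge in Hilbert--Schmidt norm to the self-weighted kernel at $U_\#$.
\item Orthogonality passes to the limit exactly as you say. So the weak limit $g$ lies in $\mathfrak{Z}_{M_\#}^\perp$ and makes the \emph{self-weighted} form at $U_\#$ nonpositive, whence $g=0$ by Step 1.
\item Then the $K$-part of the form tends to zero while the $\nu$-part stays bounded below by a positive constant, which is a contradiction.
\end{enumerate}
This yields a coercivity constant uniform on $B_{\eta_0}(U_\#)\cap\{\theta>\theta_\#\}$, hence on any admissible $\mathcal K_\#$.

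A direct perturbative version should also work. On $|\xi|\le R$ the weight $M/M_\#$ is within a factor $1+O(\eta_0R^2)$ of a constant, and on $|\xi|>R$ the collision frequency dominates $K$. Choose $R$ first and then $\eta_0$. The truncation to $|\xi|\le R$ does not preserve $g\in\mathfrak{Z}_M^\perp$, so this route needs extra care that the contradiction argument avoids.
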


\subsection{Auxiliary lemmas for the away-from-the-shock convergence}

We now record two elementary lemmas needed to pass from the a priori estimate to the
pointwise-in-space control of the perturbation.

\begin{lemma}[One-dimensional Sobolev embedding with values in a Hilbert space]
\label{lem:H1-Linfty-Hilbert-single}
Let \(H\) be a Hilbert space. Then for every \(g\in H^1(\mathbb R_y;H)\),
\begin{equation}
\sup_{y\in\mathbb R}\|g(y)\|_H^2
\le
2\|g\|_{L^2_y(H)}\|g_y\|_{L^2_y(H)}
\le
\|g\|_{H^1_y(H)}^2 .
\label{eq:H1-Linfty-Hilbert-single}
\end{equation}
In particular,
\[
H^1(\mathbb R_y;H)\hookrightarrow C_b^0(\mathbb R_y;H).
\]
\end{lemma}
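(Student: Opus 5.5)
The plan is the standard fundamental-theorem-of-calculus argument, carried out in the Hilbert space $H$. I first work with $g\in C_c^\infty(\mathbb R_y;H)$ and then extend by density. Smooth compactly supported $H$-valued functions are dense in $H^1(\mathbb R_y;H)$, and the bound will show that the embedding is continuous into $L^\infty$. Hence limits of such functions are bounded and continuous, which gives both the inequality and the inclusion into $C_b^0(\mathbb R_y;H)$.

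For smooth compactly supported $g$, the scalar function $y\mapsto \|g(y)\|_H^2$ is $C^1$. Its derivative is
\[
\frac{d}{dy}\|g(y)\|_H^2 = 2\,\mathrm{Re}\,\langle g(y),g_y(y)\rangle_H,
\]
by the polarization / inner-product rule. Integrating from $-\infty$ to $y$ and from $y$ to $+\infty$, the boundary terms vanish, so
\[
\|g(y)\|_H^2=\int_{-\infty}^y 2\,\mathrm{Re}\langle g,g_y\rangle_H\,dy' = -\int_y^{\infty}2\,\mathrm{Re}\langle g,g_y\rangle_H\,dy'.
\]
Averaging the two expressions gives
\[
\|g(y)\|_H^2\le \int_{\mathbb R}\|g\|_H\|g_y\|_H\,dy'.
\]
Cauchy--Schwarz in $y$ then bounds this by $\|g\|_{L^2_y(H)}\|g_y\|_{L^2_y(H)}$.

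This is in fact sharper than the first stated inequality by a factor of two, so it certainly implies it. The second inequality is simply AM--GM: $2ab\le a^2+b^2=\|g\|_{H^1_y(H)}^2$.

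The only point needing care is the density and limit step. Given a general $g\in H^1(\mathbb R_y;H)$, I take approximants $g_n$ (by mollification and cutoff, which commute with the $H$-valued structure). The bound shows that $(g_n)$ is Cauchy in $\sup_y\|\cdot\|_H$. The uniform limit is therefore continuous, and it agrees almost everywhere with $g$, so the continuous representative satisfies the estimate. I expect no real obstacle here: the argument is identical to the scalar case, since only the inner-product structure and Cauchy--Schwarz are used.
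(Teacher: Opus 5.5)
Your proof is correct and follows essentially the same route as the paper: the identity $\frac{d}{dy}\|g(y)\|_H^2 = 2\langle g_y, g\rangle_H$ for $g\in C_c^\infty(\mathbb R_y;H)$, integration in $y$, Cauchy--Schwarz, and extension by density. Your additions are all valid refinements: averaging the two one-sided integrals gives the sharper bound $\sup_y\|g(y)\|_H^2\le \|g\|_{L^2_y(H)}\|g_y\|_{L^2_y(H)}$, writing $\mathrm{Re}$ covers complex $H$, and you spell out the uniform-Cauchy and a.e.-identification step that the paper leaves as ``by density''.
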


\begin{proof}
It is enough to prove \eqref{eq:H1-Linfty-Hilbert-single} for
\(g\in C_c^\infty(\mathbb R_y;H)\), the general case following by density.
For such \(g\),
\[
\frac{d}{dy}\|g(y)\|_H^2
=
2\langle g_y(y),g(y)\rangle_H .
\]
Hence, for every \(y\in\mathbb R\),
\[
\|g(y)\|_H^2
=
2\int_{-\infty}^y \langle g_z(z),g(z)\rangle_H\,dz
\le
2\int_{\mathbb R}\|g_z(z)\|_H\,\|g(z)\|_H\,dz .
\]
By Cauchy--Schwarz,
\[
\|g(y)\|_H^2
\le
2\|g_y\|_{L^2_y(H)}\|g\|_{L^2_y(H)} .
\]
Taking the supremum in \(y\) proves \eqref{eq:H1-Linfty-Hilbert-single}.
\end{proof}

\begin{lemma}[Local Lipschitz continuity of the Maxwellian map]
\label{lem:Maxwellian-Lipschitz-single}
Let \(K\Subset \mathbb R_+\times\mathbb R\times\mathbb R_+\) be compact. Then there exists
\(C_K>0\) such that
\begin{equation}
\|M[U]-M[V]\|_{M_\#}
\le
C_K |U-V|,
\qquad U,V\in K.
\label{eq:Maxwellian-Lipschitz-single}
\end{equation}
\end{lemma}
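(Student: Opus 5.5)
The plan is to write $M[U]-M[V]$ as a line integral in the macroscopic parameters and bound the weighted $L^2_\xi$ norm of the derivative uniformly on a convex neighbourhood of $K$. Set $U_s:=V+s(U-V)$, $s\in[0,1]$. Then
\[
M[U](\xi)-M[V](\xi)=\int_0^1 D_UM[U_s](\xi)\,(U-V)\,ds ,
\]
and Minkowski's inequality in $L^2_\xi(M_\#^{-1})$ gives
\[
\|M[U]-M[V]\|_{M_\#}\le |U-V|\sup_{s\in[0,1]}\|D_UM[U_s]\|_{M_\#}.
\]
It then suffices to show that $W\mapsto \|D_UM[W]\|_{M_\#}$ is bounded on the set of all segments joining points of $K$. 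That set is contained in the convex hull $\widehat K$ of $K$, which is again compact. It also stays in $\mathbb R_+\times\mathbb R\times\mathbb R_+$, since that half-space product is convex.

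Next, compute the derivative explicitly. From $M[U]=v^{-1}(2\pi\theta)^{-3/2}e^{-|\xi-u|^2/(2\theta)}$ we get $\partial_vM=-M/v$, $\partial_{u}M=\frac{\xi-u}{\theta}M$, and $\partial_\theta M=\bigl(-\frac{3}{2\theta}+\frac{|\xi-u|^2}{2\theta^2}\bigr)M$. Hence $|D_UM[W](\xi)|\le C_{\widehat K}(1+|\xi|^2)M[W](\xi)$ uniformly for $W\in\widehat K$. Consequently
\[
\|D_UM[W]\|_{M_\#}^2\le C\int_{\mathbb R^3}(1+|\xi|)^4\frac{M[W]^2}{M_\#}\,d\xi .
\]
The ratio $M[W]^2/M_\#$ is a Gaussian with exponent $-|\xi-u_W|^2/\theta_W+|\xi-u_\#|^2/(2\theta_\#)$. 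This exponent is integrable, with polynomial weights, precisely when $\theta_W<2\theta_\#$.

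This integrability condition is the only real obstacle. The argument needs the standing convention, already used throughout the paper and in Lemma~\ref{lem:collision-coercivity}, that $M_\#$ is chosen with $\sup_{\widehat K}\theta<2\theta_\#$. I would state this explicitly as part of the meaning of ``$K$ compact'' here, i.e.\ $K$ is taken inside the admissible set associated with $M_\#$. If $K$ satisfies it, so does $\widehat K$, because the constraint is convex in $\theta$. Under this condition the integral is a continuous function of $W$ on the compact set $\widehat K$, hence bounded by some $C_K^2$, which yields \eqref{eq:Maxwellian-Lipschitz-single}.
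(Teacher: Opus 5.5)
Your proof is correct and follows essentially the same route as the paper's: the fundamental theorem of calculus along the segment joining $V$ to $U$, the observation that each $\partial_{W_j}M[W]$ is a polynomial of degree at most $2$ in $\xi$ times $M[W]$, and a uniform Gaussian bound on $M[W]^2/M_\#$ over a compact set containing all such segments. Your convex hull $\widehat K$ is a cleaner choice than the paper's ``enlarging $K$ slightly.''

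One point where you are more careful than the paper is worth keeping. The paper asserts $M[W]^2/M_\#\le Ce^{-c|\xi|^2}$ from compactness and positivity of $v,\theta$ alone. That bound fails once $\theta_W\ge 2\theta_\#$, and then even $M[U]-M[V]$ need not lie in $L^2_\xi(M_\#^{-1})$. So the restriction $\sup_K\theta<2\theta_\#$, as in Lemma~\ref{lem:collision-coercivity}, is a genuinely necessary hypothesis rather than a mere convention.
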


\begin{proof}
Let \(U,V\in K\), and define \(U_\vartheta:=(1-\vartheta)V+\vartheta U\),
\(\vartheta\in[0,1]\).  Enlarging \(K\) slightly if necessary, we may assume that the
whole segment \(\{U_\vartheta:\vartheta\in[0,1]\}\) lies in a compact set
\(K'\Subset \mathbb R_+\times\mathbb R\times\mathbb R_+\).  By the fundamental theorem
of calculus,
\[
M[U]-M[V]
=
\int_0^1 DM[U_\vartheta](U-V)\,d\vartheta .
\]
Hence
\[
\|M[U]-M[V]\|_{M_\#}
\le
|U-V|
\sup_{W\in K'}
\|DM[W]\|_{\mathcal L(\mathbb R^3,L^2_\xi(M_\#))}.
\]
It therefore suffices to bound \(DM[W]\) uniformly on \(K'\).  Each partial derivative
\(\partial_{W_j}M[W]\) is of the form \(P_j(\xi;W)M[W](\xi)\), where \(P_j\) is a
polynomial of degree at most \(2\) in \(\xi\), with coefficients depending continuously
on \(W\).  Since \(K'\) is compact and bounded away from vacuum and zero temperature,
there exist positive constants \(C,c\) such that
\[
\frac{M[W](\xi)^2}{M_\#(\xi)}
\le
Ce^{-c|\xi|^2},
\qquad W\in K',\ \xi\in\mathbb R^3.
\]
Therefore
\[
\sup_{W\in K'}
\sum_j \|\partial_{W_j}M[W]\|_{M_\#}
<\infty,
\]
which implies \eqref{eq:Maxwellian-Lipschitz-single}.
\end{proof}

\subsection{Proof of Lemmas~\ref{lem:bs}, \ref{lem:bs1} and \ref{lem:bs2}}
\begin{proof}
    We do not reproduce the proof of the corresponding estimates for {the }\(3\){-shock wave} here, since it is essentially the same as the proof given in \cite{wang2025}.
    In particular, the estimates for the \(3\)-shock profile will be used in the form established in \cite{LiuYu2013ARMA}. 
    Our purpose in the above discussion is only to make explicit the scalar parametrization and the auxiliary estimates from \cite{wang2025}, which are invoked in the proof of \cite{LiuYu2013ARMA}.
    We first recall the {scalar} parametrization of a Boltzmann shock profile. By (295) in \cite{LiuYu2013ARMA}, there exists a scalar function \(\eta=\eta(y)\) such that 
    \begin{equation}\label{eq:scalar-param-zeta}
        \left\{
            \begin{aligned}
                \eta_y &= a_0(\eta)\bigl(\eta-\eta_-\bigr)\bigl(\eta-\eta_+\bigr), \\
                \eta(-\infty) &= \eta_-, \qquad \eta(+\infty) = \eta_+,
            \end{aligned}
        \right.
    \end{equation}
    where \(\eta_->0>\eta_+\), \(|\eta_- - \eta_+| = O(\delta_3)\), and \(a_0\) is a smooth function satisfying \(a_0(\eta)\ge c>0\). The elementary consequences of \eqref{eq:scalar-param-zeta}, including the monotonicity, exponential decay, the higher derivative estimates of \(\eta\), are obtained in \cite[(291)]{LiuYu2013ARMA}. In particular, we shall use 
    \begin{equation}\label{eq:zeta-basic-estimates}
        \eta_y<0, \qquad |\eta-\eta_\pm| \lesssim \delta_3 e^{-c \delta_3 |y|} \quad \text{as } y\to \pm \infty, 
    \end{equation}
    and
    \begin{equation}\label{eq:zeta-derivative-estimates}
    |\eta_y| \lesssim \delta_3^2 e^{-c\delta_3 |y|}, \qquad |\partial_y^k \eta| \lesssim \delta_3^{k-1}|\eta_y|, \qquad k\ge 2.
    \end{equation}
    by (295) in \cite{LiuYu2013ARMA} and Lemma A.1 in \cite{wang2025}. Under this parametrization, the macroscopic part of the shock profile satisfies 
    \begin{equation}\label{eq:macro-zeta-comparison}
        \bigl|v_y^{S_3}\bigr| \sim \bigl|u_{1y}^{S_3}\bigr| \sim \bigl| \theta_y^{S_3}\bigr| \sim |\eta_y|.
    \end{equation}
    Moreover, by (289) and (291) in \cite{LiuYu2013ARMA}, the microscopic component can be represented in the form
    \begin{equation}
        G^{S_3}(y,\xi) = \sqrt{M_\#} \, \Gamma(\chi(\eta(y)),\xi),
    \end{equation}
    where \(M_\#\) is a fixed global Maxwellian, \(\Gamma\) is smooth and \(\chi=\chi(\eta)\) is determined by 
    \begin{equation}\label{eq:chi-equation}
        \chi(\eta) = b(\eta,\chi(\eta)) (\eta- \eta_-) (\eta-\eta_+),
    \end{equation}
    where \(b\) is smooth and uniformly bounded above and below by {positive} constants. Consequently,
    \begin{equation}\label{eq:G-chi-relation}
        \Bigl( \int_{\R^3} \frac{(1+|\xi|)\bigl|G^{S_3}(y,\xi)\bigr|^2}{M_\#}\, d\xi \Bigr)^{\frac12} = |\chi(\eta(y))|\, c_0 (\chi(\eta(y))),
    \end{equation}
    where \(c_0\) is smooth and satisfies \(c_0\ge c>0\).

    By referring to \cite{wang2025}, we obtain the corresponding estimate for the \(3\)-shock. We prove the \(1\)-shock estimate by using the reflection symmetry from the corresponding \(3\)-shock estimate. Let 
    \begin{align*}
    U^{S_1} = (v^{S_1},u^{S_1},\theta^{S_1})
    \end{align*}
    be the \(1\)-shock profile connecting \(U_-=(v_-,u_-,\theta_-)\) to \(U_*=(v_*,u_*,\theta_*)\).
    Define the reflected profiles by 
    \begin{align*}
        \wtilde v^{S_3}(z) := v^{S_1}(-z), \quad \wtilde u^{S_3}(z) := -u^{S_1}(-z), \quad \wtilde \theta^{S_3}(z) := \theta^{S_1}(-z).
    \end{align*}
    Then \(\wtilde U^{S_3}\) is a \(3\)-shock profile connecting \(\wtilde U^*=(v_*,-u_*,\theta_*)\) to \(\wtilde U_+=(v_-,-u_-,\theta_-)\). In particular, we denote $\wtilde v^*=v_*$, $\wtilde v_+=v_-$, $\wtilde \theta^*= \theta_*$, $\wtilde p^*=p_*$ and $\wtilde p_+=p_-$. Note that $\wtilde p^{S_3}(z) = p^{S_1}(-z)$ since $p=p(v,\theta)$. Moreover the shock strength is preserved under reflection, namely, $\wtilde \delta_3=\delta_1$.
    Applying the \(3\)-shock estimate to the reflected profile gives 
    \begin{align*}
    &\Biggl|\frac{\wtilde p^{S_3}-\wtilde p_+}{\wtilde v^{S_3}-\wtilde v_+}- \frac{\wtilde p^{S_3}-\wtilde p^*}{\wtilde v^{S_3}-\wtilde v^*}-\frac{5\wtilde p^*}{9\bigl(\wtilde v^*\bigr)^2}\Bigl(\frac{10\mu(\wtilde \theta^*)-9\alpha_{\rm th}(\wtilde \theta^*)}{10\mu(\wtilde \theta^*)+3\alpha_{\rm th}(\wtilde\theta^*)}+3\Bigr)\Biggr| \le C\wtilde \delta_3^2.
    \end{align*}
    Using the above identifications, this becomes
    \begin{align*}
        &\Biggl|\frac{p^{S_1}-p_-}{v^{S_1}-v_-}- \frac{p^{S_1}-p_*}{v^{S_1}-v_*}-\frac{5p_*}{9\bigl(v_*\bigr)^2}\Bigl(\frac{10\mu(\theta_*)-9\alpha_{\rm th}(\theta_*)}{10\mu(\theta_*)+3\alpha_{\rm th}(\theta_*)}+3\Bigr)\Biggr| \le C\delta_1^2.
    \end{align*}
    This proves the desired \(1\)-shock estimate. The remaining estimates for {the }\(1\){-shock} can be derived in a similar manner. 
\end{proof}

\subsection{Proof of Lemma~\ref{lem:intsk}}

\begin{proof}
The proof follows a similar argument to that in \cite{han2023large}. We provide the details for completeness. Let
\[
x_1(t):=\sigma_1 t+X_1(t),
\qquad
x_3(t):=\sigma_3 t+X_3(t).
\]
By the exponential localization of each viscous shock profile, there exist positive constants $c$ and $C$ such that
\begin{equation}\label{eq:shock-tail-proof}
\bigl|\partial_x(v^{S_i})^{-X_i}(t,x)\bigr|
\le
C\delta_i^2 e^{-c\delta_i|x-x_i(t)|},
\qquad i=1,3.
\end{equation}
Moreover, in the present $1$-shock/$3$-shock configuration, the two shifted shock locations remain separated linearly in time. More precisely, after possibly decreasing $\delta_0>0$, we may assume that {there exist constants} $c_1,c_3>0$ {such that}
\begin{equation}\label{eq:shock-separation-proof}
x_1(t)\le -c_1 t,
\qquad
x_3(t)\ge c_3 t,
\qquad t>0.
\end{equation}

We first prove the estimate for $\varphi_3 |(v^{S_1})_x^{-X_1}|$. By the definition of $\varphi_1$ and $\varphi_3$, if $\varphi_3(t,x)\neq0$, then
\[
x\ge \frac{x_1(t)}{2}.
\]
Since $x_1(t)<0$ by \eqref{eq:shock-separation-proof}, it follows that
\[
x-x_1(t)
\ge
-\frac{x_1(t)}{2}
\ge
\frac{c_1}{2}\,t>0.
\]
Hence, by \eqref{eq:shock-tail-proof},
\[
\varphi_3(t,x)\bigl|\partial_x(v^{S_1})^{-X_1}(t,x)\bigr|
\le
C\delta_1^2
e^{-c\delta_1|x-x_1(t)|}
\le
C\delta_1^2 e^{-C\delta_1 t}.
\]
This proves the first estimate in \eqref{eq:intsk1}.

Similarly, if $\varphi_1(t,x)\neq0$, then
\[
x\le \frac{x_3(t)}{2}.
\]
Since $x_3(t)>0$ by \eqref{eq:shock-separation-proof}, we obtain
\[
x_3(t)-x
\ge
\frac{x_3(t)}{2}
\ge
\frac{c_3}{2}\,t>0.
\]
Using again \eqref{eq:shock-tail-proof}, we get
\[
\varphi_1(t,x)\bigl|\partial_x(v^{S_3})^{-X_3}(t,x)\bigr|
\le
C\delta_3^2 e^{-C\delta_3 t},
\]
which proves the second pointwise estimate.

We next prove the integral estimates. By the pointwise bound just proved,
\[
\int_{\mathbb R}\varphi_3(t,x)\bigl|\partial_x(v^{S_1})^{-X_1}(t,x)\bigr|\,dx
\le
C\delta_1^2
\int_{\{x\ge x_1(t)/2\}} e^{-c\delta_1|x-x_1(t)|}\,dx.
\]
Since $x\ge x_1(t)/2$ implies $|x-x_1(t)|\ge |x_1(t)|/2\ge c_1 t/2$, we have
\[
\int_{\{x\ge x_1(t)/2\}} e^{-c\delta_1|x-x_1(t)|}\,dx
\le
C\int_{c_1 t/2}^{\infty} e^{-c\delta_1 z}\,dz
\le
C\delta_1^{-1}e^{-C\delta_1 t}.
\]
Therefore,
\[
\int_{\mathbb R}\varphi_3(t,x)\bigl|\partial_x(v^{S_1})^{-X_1}(t,x)\bigr|\,dx
\le
C\delta_1 e^{-C\delta_1 t}.
\]
The estimate
\[
\int_{\mathbb R}\varphi_1(t,x)\bigl|\partial_x(v^{S_3})^{-X_3}(t,x)\bigr|\,dx
\le
C\delta_3 e^{-C\delta_3 t}
\]
is proved in the same way.

This completes the proof.
\end{proof}

\subsection{Proof of Lemma~\ref{lem:wave-interaction-L2}}

\begin{proof}
The proof follows a similar argument to that in \cite{kang2025time}. We provide the details for completeness. We only prove \eqref{eq:wave-interaction-L2} and \eqref{eq:wave-interaction-L2b}, since
\eqref{eq:wave-interaction-L2-3}--\eqref{eq:wave-interaction-L2-3b} follow in exactly the same way.

Set
\[
\xi_1(t):=\sigma_1 t+X_1(t),
\qquad
\xi_3(t):=\sigma_3 t+X_3(t).
\]
By the standard pointwise estimates for the viscous shock profiles and the viscous contact wave obtained earlier, we have
\begin{align}
\label{eq:proof-int-1}
|\partial_x(v^{S_1})^{-X_1}(t,x)|
&\le
C\delta_1^2 e^{-c\delta_1|x-\xi_1(t)|}, \quad x-\xi_1(t)\in \mathbb{R}
\\
\label{eq:proof-int-2}
|(v^{S_3}-v^*,\theta^{S_3}-\theta^*)^{-X_3}(t,x)|
&\le
C\delta_3 e^{-c\delta_3|x-\xi_3(t)|}, \quad x-\xi_3(t)\leq0,
\\
\label{eq:proof-int-3}
|(v^C-v_*,\theta^C-\theta_*)(t,x)|
&\le
C\delta_C(1+t)^{-1/2}e^{-c_0x^2/(1+t)}, \quad x\leq 0.
\end{align}
Moreover, since the shock speeds are distinct and the shifts are bounded in the regime of interest, there exists a positive constant \(c_1\) such that
\begin{equation}
\label{eq:proof-int-sep}
|\xi_3(t)-\xi_1(t)|\ge c_1(1+t),
\qquad
|\xi_1(t)|+|\xi_3(t)|\ge c_1(1+t).
\end{equation}

We first prove \eqref{eq:wave-interaction-L2}. We divide into two regions $A_1:=\{x-\sigma_1t\geq -\frac{\sigma_1}{2}t\}$ and $A_2:=\mathbb{R}-A_1$. Since 
\begin{align*}
    & x-\xi_1(t) \geq -\frac{\sigma_1}{2}t + C\varepsilon t\geq -\frac{\sigma_1t}{4}>0 \quad \text{for}\, x\in A_1,\\
    & x\leq \frac{\sigma_1}{2}t <0,\quad \text{for} \, x\in A_2,
\end{align*}
we have the following observations.
\begin{align} \label{eq:keysep1}
    \abs{\partial_x\shockw{v}{1}}\leq C \delta_1^2 e^{-C\delta_1\abs{x-\xi_1(t)}} \leq C \delta_1^2 e^{-\frac{C\delta_1}{2}\abs{x-\xi_1(t)}}e^{-C\delta_1t}.
\end{align}

By \eqref{eq:proof-int-1}, \eqref{eq:proof-int-3} and \eqref{eq:keysep1},
\begin{align*}
    &\left\|
    |\partial_x(v^{S_1})^{-X_1}|
    \,|(v^C-v_*,\theta^C-\theta_*)|
    \right\|_{L^2_x}^2
    \\
    &\le
    \int_{A_1}|\partial_x(v^{S_1})^{-X_1}|^2
    \,|(v^C-v_*,\theta^C-\theta_*)|^2 dx + \int_{A_2}|\partial_x(v^{S_1})^{-X_1}|^2
    \,|(v^C-v_*,\theta^C-\theta_*)|^2 dx\\
    &\le C\delta_C^2 \int_{A_1} \delta_1^4 e^{-2c\delta_1\abs{x-\xi_1(t)}} dx + \frac{C}{1+t}\int_{A_2} \delta_1^4\delta_C^2 e^{-2c\delta_1\abs{x-\xi_1(t)}}e^{-2c_0x^2/(1+t)} dx \\
    &\le C\delta_C^2 \delta_1^3 e^{-C\delta_1t} + C\delta_C^2\delta_1^3e^{-Ct} 
\end{align*}
Therefore
\[
\left\|
|\partial_x(v^{S_1})^{-X_1}|
\,|(v^C-v_*,\theta^C-\theta_*)|
\right\|_{L^2_x}^2
\le
C\delta_1^3\delta_C^2 e^{-c\delta_1 t},
\]
which gives \eqref{eq:wave-interaction-L2}.

Next, we prove \eqref{eq:wave-interaction-L2b}. 
Separate $\mathbb{R}$ into 
\[
\Omega_1 := \{x:x-\sigma_3t\leq -\frac{\sigma_3t}{2}\}, \qquad \Omega_2 := \mathbb{R}\setminus \Omega_1.
\]
Since 
\begin{align*}
    &x-\xi_3(t)=x-\sigma_3t-X_3(t)\leq -\frac{\sigma_3t}{2}+C\varepsilon t\leq -\frac{\sigma_3t}{4}<0 \quad \text{for}\, x\in \Omega_1,\\
    &x-\xi_1(t)=x-\sigma_1t-X_1(t)\geq \frac{\sigma_3t}{2}-\sigma_1t-C\varepsilon t\geq \frac{\sigma_3 t}{4}>0 \quad \text{for}\, x\in \Omega_2,
\end{align*}
we have the following estimate.
\begin{align*}
    &\norm{\abs{\partial_x\shockw{v}{1}}\abs{\myparas{\shockw{v}{3}-v^*,\shockw{\theta}{3}-\theta^*}}}^2\\
    &\le \int_{\Omega_1}\abs{\partial_x\shockw{v}{1}}^2\abs{\myparas{\shockw{v}{3}-v^*,\shockw{\theta}{3}-\theta^*}}^2 dx\\
    &\qquad+ \int_{\Omega_2} \abs{\partial_x\shockw{v}{1}}^2\abs{\myparas{\shockw{v}{3}-v^*,\shockw{\theta}{3}-\theta^*}}^2 dx \\
    &\le C\delta_1^4\delta_3^2\int_{\Omega_1} e^{-2c\delta_1\abs{x-\xi_1(t)}}e^{-2c\delta_3\abs{x-\xi_3(t)}} dx + C\delta_1^4\delta_3^2\int_{\Omega_2} e^{-2c\delta_1\abs{x-\xi_1(t)}} dx \\
    &\le C\delta_1^3\delta_3^2e^{-c\delta_3t}+C\delta_1^3\delta_3^2e^{-c\delta_1t}
\end{align*}
Thus
\[
\left\|
|\partial_x(v^{S_1})^{-X_1}|
\,|(v^{S_3}-v^*,\theta^{S_3}-\theta^*)^{-X_3}|
\right\|_{L^2_x}^2
\le
C\delta_1^3\delta_3^2
\bigl(e^{-c\delta_3 t}+e^{-c\delta_1 t}\bigr),
\]
and after taking square roots,
\[
\left\|
|\partial_x(v^{S_1})^{-X_1}|
\,|(v^{S_3}-v^*,\theta^{S_3}-\theta^*)^{-X_3}|
\right\|_{L^2_x}
\le
C\delta_1^{3/2}\delta_3
\bigl(e^{-c\delta_3 t}+e^{-c\delta_1 t}\bigr).
\]
This proves \eqref{eq:wave-interaction-L2b}.
\end{proof}

\subsection{Proof of Lemma~\ref{lem:macro-time-derivative}}

\begin{proof}
    {The solution satisfies the following equations:}
\begin{align*}
\begin{aligned}
&v_t-u_{1x}=0\\
&u_{1t}+p_x=-\int \xi_1^2 G_x d\xi\\
&u_{it}=-\int \xi_1\xi_i G_x d\xi,\quad (i=2,3)\\
&\theta_t+pu_{1x}=u_1\int \xi_1^2 G_x d\xi+\sum_{i=2}^3 u_i \int \xi_1\xi_i G_x d\xi-\frac{1}{2}\int \xi_1\abs{\xi}^2 G_x d\xi.
\end{aligned}
\end{align*}

For the {approximate} solutions, we have the following systems:
\begin{align*}
\begin{aligned}
&v_t^C-u_{1x}^C=0\\
&u_{1t}^C+p_x^C=\frac{4}{3}\myparas{\frac{\mu\myparas{\theta^C}u_{1x}^C}{v^C}}_x+Q_1^C\\
&u_i^C = 0\quad (i=2,3)\\
&\theta_t^C + p^Cu_{1x}^C = \myparas{\frac{\alpha_{\rm{th}}\myparas{\theta^C}\theta_x^C}{v^C}}_x+\frac{4}{3}\mu\myparas{\theta^C}\frac{\myparas{u_{1x}^C}^2}{v^C}+Q_2^C
\end{aligned}
\end{align*}

and

\begin{align*}
\begin{aligned}
&\shock{v}{i}_t+\dot{X}_i\shock{v}{i}_x-\shock{u_1}{i}_x=0\\
&\shock{u_1}{i}_t+\dot{X}_i\shock{u_1}{i}_x+\shock{p}{i}_x=-\int\xi_1^2 \shock{G}{i}_x d\xi\\
&\shockw{u_j}{i}\equiv 0 \quad(j=2,3)\\
&\shock{\theta}{i}_t+\dot{X}_i\shock{\theta}{i}_x+\shockw{p}{i}\shock{u_1}{i}_x\\
&\qquad =\shockw{u_1}{i}\int \xi_1^2 \shock{G}{i}_x d\xi -\frac{1}{2}\int \xi_1\abs{\xi}^2 \shock{G}{i}_xd\xi.
\end{aligned}
\end{align*}
{Therefore, the perturbation system can be written in terms of the microscopic part} $\wtilde{G}${ as follows:}

\begin{align*}
\begin{aligned}
&\phi_t-\psi_{1x}-\sum_{i=1,3} \dot{X}_i \shock{v}{i}_x = 0\\
&\psi_{1t} + \myparas{p-\shockp{p}{1}-p^C-\shockp{p}{3}}_x-\sum_{i=1,3} \dot{X}_i \shock{u_1}{i}_x \\
& \qquad = -Q_1^C-\frac{4}{3}\myparas{\frac{\mu\myparas{\theta^C}u_{1x}^C}{v^C}}_x-\int \xi_1^2 \wtilde{G}_x d\xi\\
&\psi_{it}=-\int \xi_1\xi_i \wtilde{G}_x d\xi,\quad (i=2,3)\\
&\zeta_t +\myparas{pu_{1x}-\shockp{p}{1}\shock{u_1}{1}_x-p^Cu_{1x}^C-\shockp{p}{3}\shock{u_1}{3}_x}-\sum_{i=1,3}\dot{X}_i\shock{\theta}{i}_x\\
&\qquad \qquad = -\frac{1}{2}\int \xi_1 \abs{\xi}^2 \wtilde{G}_x d\xi +u_1\int \xi_1^2 \wtilde{G}_x d\xi \\
&\qquad \qquad +\sum_{i=1,3} \myparas{u_1-\shockw{u_1}{i}}\int \xi_1^2 \shock{G}{i}_x d\xi+\sum_{j=2}^3 u_j \int \xi_1\xi_j \wtilde{G}_x d\xi\\
&\qquad \qquad -\myparas{\frac{\alpha_{\rm{th}}\myparas{\theta^C}\theta_x^C}{v^C}}_x-\frac{4}{3}\mu\myparas{\theta^C}\frac{\myparas{u_{1x}^C}^2}{v^C}-Q_2^C.
\end{aligned}
\end{align*}


We can easily get the following estimate for $\phi_t$.

\begin{align*}
\begin{aligned}
\norm{\phi_t}_{L^2}^2 \leq C \int \abs{\psi_{1x}}^2 dx + C\sum_{i=1,3} \abs{\dot{X}_i}^2 \delta_i^2 \int \abs{\shock{v}{i}_x} dx \leq   C \int \abs{\psi_{1x}}^2 dx + C\sum_{i=1,3} \delta_i^3 \abs{\dot{X}_i}^2  
\end{aligned}
\end{align*}

{To estimate} $\psi_{1t}${, observe that}
\begin{align*}
\begin{aligned}
\myparas{p-\y{p}}_x =& \myparas{\frac{\zeta}{v}-\frac{\y{\theta}\phi}{v\y{v}}}_x \\
\leq & C \abs{\myparas{\zeta_x,\zeta\y{v}_x,\zeta\phi_x,\phi_x,\y{\theta}_x\phi,\y{v}_x\phi,\phi_x\phi}}\\
\leq & C\abs{\myparas{\zeta_x,\phi_x}}+C\abs{\myparas{\y{\theta}_x,\y{v}_x}}\abs{\myparas{\zeta,\phi}}.
\end{aligned}
\end{align*}
{Using this, we estimate as follows:}
\begin{align*}
\begin{aligned}
& \int \psi_{1t}\myparas{p-\shockp{p}{1}-p^C-\shockp{p}{3}}_x dx \\
& \quad \leq \frac{1}{64}\norm{\psi_{1t}}_{L^2}^2 + C\int \abs{Q_1}^2 dx +C \int \abs{\myparas{p-\y{p}}_x}^2 dx  \\
& \quad \leq \frac{1}{64}\norm{\psi_{1t}}_{L^2}^2 + C\int \abs{Q_1}^2 dx + C\int \abs{\myparas{\phi_x,\zeta_x}}^2 dx \\
& \qquad + C \int \abs{\myparas{\y{\theta}_x,\y{v}_x}}^2 \abs{\myparas{\zeta,\phi}}^2 dx .
\end{aligned}
\end{align*}

By using Young's inequality, we have

\begin{align*}
\begin{aligned}
\int \psi_{1t}\frac{4}{3} \myparas{\frac{\mu\myparas{\theta^C}u_{1x}^C}{v^C}}_x dx \leq & C \int \abs{\psi_{1t}} \abs{\myparas{u_{1xx}^C,\theta_x^Cu_{1x}^C,u_{1x}^Cv_x^C}} dx \\
\leq & \frac{1}{64}\norm{\psi_{1t}}_{L^2}^2 + C \int \abs{Q_1^C}^2 dx
\end{aligned}
\end{align*}
and 
\begin{align*}
\begin{aligned}
\int \psi_{1t} \int \xi_1^2 \wtilde{G}_x d\xi dx \leq & \frac{1}{64}\norm{\psi_{1t}}_{L^2}^2 +C \int \myparam{\int \xi_1^2 \wtilde{G}_x d\xi}^2 dx\\
\leq & \frac{1}{64}\norm{\psi_{1t}}_{L^2}^2 + C \int \norm{\wtilde G_x}_{\nu,M_\#}^2 dx.
\end{aligned}
\end{align*}

Therefore, we have the following estimate,
\begin{align*}
\begin{aligned}
\norm{\psi_{1t}}_{L^2}^2\leq &C \sum_{i=1,3}\delta_i^3 \abs{\dot{X}_i}+ C\int \abs{\myparas{\phi_x,\zeta_x}}^2 dx+C\int \abs{Q_1}^2 dx+ C \int \abs{\myparas{\y{\theta}_x,\y{v}_x}}^2 \abs{\myparas{\zeta,\phi}}^2 dx \\
&+C \int \abs{Q_1^C}^2 dx + C \int \norm{\wtilde G_x}_{\nu,M_\#}^2 dx .
\end{aligned}
\end{align*}


For the estimate of $\psi_{it}$, we have 

\begin{align*}
\begin{aligned}
\int \abs{\psi_{it}}^2 dx = \norm{\psi_{it}}_{L^2}^2,
\end{aligned}
\end{align*}
and
\begin{align*}
\begin{aligned}
\int \psi_{it} \int \xi_1\xi_i \wtilde{G}_x d\xi dx \leq& \frac{1}{64}\norm{\psi_{it}}_{L^2}^2 + C \int \myparam{\int \xi_1\xi_i \wtilde{G}_x d\xi}^2 dx\\
\leq & \frac{1}{64}\norm{\psi_{it}}_{L^2}^2 + C \int \norm{\wtilde G_x}_{\nu,M_\#}^2 dx.
\end{aligned}
\end{align*}
Therefore,
\begin{align*}
\begin{aligned}
\norm{\psi_{it}}_{L^2}^2 \leq C \int \norm{\wtilde G_x}_{\nu,M_\#}^2 dx.
\end{aligned}
\end{align*}

To estimate $\zeta_t$, we can observe 
\begin{align*}
\begin{aligned}
& \int \zeta_t\myparas{pu_{1x}-\shockp{p}{1}\shock{u_1}{1}_x-p^Cu_{1x}^C-\shockp{p}{3}\shock{u_1}{3}_x} dx\\ 
& \leq \frac{1}{64} \norm{\zeta_t}_{L^2}^2 + C \int \abs{Q_2}^2 dx + \int \abs{\myparas{pu_{1x}-\y{p}\y{u}_{1x}}}^2 dx\\
& \leq  \frac{1}{64} \norm{\zeta_t}_{L^2}^2 + C \int \abs{Q_2}^2 dx + C \int \abs{\psi_{1x}}^2 dx + C \int \abs{\y{u}_{1x}}^2\abs{\myparas{\phi,\zeta}}^2 dx.  
\end{aligned}
\end{align*}
{Furthermore, since} $\norm{\theta_{xx}^C}_{L^2}^2$ {decays like} $(1+t)^{-3/2}${, it is integrable in time. We have the following inequality:}

\begin{align*}
\begin{aligned}
& \int \zeta_t \myparas{u_1-\shockw{u_1}{i}}\int \xi_1^2 \shock{G}{i}_x d\xi dx \\
& \qquad \leq  \frac{1}{64} \norm{\zeta_t}_{L^2}^2 + C\int \myparas{u_1-\shockw{u_1}{i}}^2 \myparam{\int \xi_1^2 \shock{G}{i}_x d\xi}^2 dx\\
& \qquad \leq  \frac{1}{64} \norm{\zeta_t}_{L^2}^2  + C\int \myparas{u_1-\shockw{u_1}{i}}^2 \abs{\shock{v}{i}_x}^2 dx.
\end{aligned}
\end{align*}

{The remaining terms are estimated in the same manner as the} $\psi_{1t}${-term.} \end{proof}

\subsection{Proof of Lemma~\ref{lem:fosee}}

\begin{proof}
    {Arguing as in} \cite{wang2025}{, we obtain}
    \begin{align*}
&\myparas{\frac{\y{p}\theta}{2v}\phi_x^2+\frac{3\y{p}v}{4}\psi_{1x}^2+\sum_{k=2}^3\frac{\psi_{kx}^2}{2}+\frac{\zeta_x^2}{2}}_t+2\y{p}\psi_{1xx}^2\mu\myparas{\y{\theta}}+\sum_{k=2}^3\frac{\mu(\theta)}{v}\psi_{kxx}^2+\frac{\alpha_{\rm{th}}\myparas{\y{\theta}}}{v}\zeta_{xx}^2\\
&=-\sum_{i=1,3}\dot{X}_i\myparab{\phi_{xx}\shock{v}{i}_x\frac{\y{p}\theta}{v}+\psi_{1xx}\shock{u_1}{i}_x\frac{3\y{p}v}{2}+\zeta_{xx}\shock{\theta}{i}_x}+\frac{\phi_x^2}{2}\myparas{\frac{\y{p}\theta}{v}}_t\\
&+\frac{\psi_{1x}^2}{2}\myparas{\frac{3\y{p}v}{2}}_t-\myparas{\frac{\y{p}\theta}{v}}_x\phi_x\phi_t+\myparas{\frac{\y{p}\theta}{v}}_x\phi_x\psi_{1x}-\myparas{\frac{3\y{p}v}{2}}_x\psi_{1x}\psi_{1t}+\psi_{1xx}\y{p}v\y{\theta}_x\myparas{\frac{1}{v}-\frac{1}{\y{v}}}\\
&-\psi_{1xx}\y{p}v\y{v}_x\myparas{\frac{\theta}{v^2}-\frac{\y{\theta}}{\y{v}^2}}-\psi_{1x}\y{p}_x\zeta_x+\zeta_{xx}u_{1x}\myparas{p-\y{p}}-2\y{p}v\psi_{1xx}\myparas{\myparas{\mu(\theta)-\mu\myparas{\y{\theta}}}\frac{u_{1x}}{v}}_x\\
&-2\y{p}v\psi_{1xx}\myparab{\myparas{\mu\myparas{\y{\theta}}}_x\myparas{\frac{u_{1x}}{v}-\frac{\y{u}_{1x}}{\y{v}}}-\mu\myparas{\y{\theta}}\frac{v_x}{v^2}\psi_{1x}+\mu\myparas{\y{\theta}}\myparam{\y{u}_{1x}\myparas{\frac{1}{v}-\frac{1}{\y{v}}}}_x}\\
&-\zeta_{xx}\myparas{\myparas{\alpha_{\rm{th}}(\theta)-\alpha_{\rm{th}}\myparas{\y{\theta}}}\frac{\theta_x}{v}}_x+\sum_{k=2}^3\frac{\psi_{kx}^2}{2}\myparas{\frac{\mu(\theta)}{v}}_{xx}-\zeta_{xx}\sum_{k=2}^3\frac{\mu(\theta)}{v}\psi_{kx}^2+\zeta_{xx}Q_2\\
&+\psi_{1xx}\frac{3\y{p}v}{2}Q_1 -\zeta_{xx}\myparab{\myparas{\alpha_{\rm{th}}\myparas{\y{\theta}}}_x\myparas{\frac{\theta_{x}}{v}-\frac{\y{\theta}_{x}}{\y{v}}}-\alpha_{\rm{th}}\myparas{\y{\theta}}\frac{v_x}{v^2}\zeta_{x}+\alpha_{\rm{th}}\myparas{\y{\theta}}\myparam{\y{\theta}_{x}\myparas{\frac{1}{v}-\frac{1}{\y{v}}}}_x}\\
&-\frac{4}{3}\zeta_{xx}\myparas{\frac{\mu(\theta)u_{1x}^2}{v}-\frac{\mu\myparas{\y{\theta}}\y{u}_{1x}^2}{\y{v}}}+\psi_{1xx}\frac{3\y{p}v}{2}\int \xi_1^2 \widetilde{\Pi}_{1x} d\xi +\sum_{k=2}^3 \psi_{kxx}\int \xi_1\xi_i \Pi_{1x}d\xi\\
& + \zeta_{xx}\int \xi_1 \frac{\abs{\xi}^2}{2}\widetilde{\Pi}_{1x} d\xi - \zeta_{xx}\sum_{k=2}^3 \psi_k\int \xi_1\xi_i \Pi_{1x}d\xi\\
&-\zeta_{xx}\myparab{u_1\int \xi_1^2 \Pi_{1x}d\xi-\shockw{u_1}{1}\int\xi_1^2 \shock{\Pi_1}{1}_xd\xi -\shockw{u_1}{3}\int \xi_1^2 \shock{\Pi_1}{3}_xd\xi}\\
&+ (\mathcal R^{(1)})_x.
\end{align*} 
Here \(\mathcal R^{(1)}\) denotes a collection of lower-order flux terms produced by integration by parts and differentiation of the coefficients. Since the estimate is carried out over \(\mathbb R_x\), these divergence terms do not contribute after integration in \(x\).

Collecting the above identity, estimating the fluid error terms as in the zeroth-order energy argument, and controlling the microscopic terms by the auxiliary estimates from the high-order microscopic analysis, we obtain \eqref{eq:eoppz1}.
\end{proof}
 
\bibliographystyle{amsplain}
\bibliography{bibliography}

\end{document}